\newcommand{\bbc}{{\mathbb C}}
\newcommand{\bbp}{{\mathbb P}}
\newcommand{\bbq}{{\mathbb Q}}
\newcommand{\bbr}{{\mathbb R}}
\newcommand{\bbz}{{\mathbb Z}}
\newcommand{\al}{{\alpha}}
\newcommand{\be}{{\beta}}
\newcommand{\gam}{{\gamma}}
\newcommand{\Gam}{{\Gamma}}
\newcommand{\del}{{\delta}}
\newcommand{\Del}{{\Delta}}
\newcommand{\vep}{{\varepsilon}}
\newcommand{\lam}{{\lambda}}
\newcommand{\Lam}{{\Lambda}}
\newcommand{\sig}{{\sigma}}
\newcommand{\om}{{\omega}}
\newcommand{\gB}{{\mathfrak B}}
\newcommand{\gC}{{\mathfrak C}}
\newcommand{\gI}{{\mathfrak I}}
\newcommand{\gS}{{\mathfrak S}}
\newcommand{\gt}{{\mathfrak t}}
\newcommand{\ch}{{\operatorname{ch}}\,}
\newcommand{\aff}{{\operatorname {Aff}}}
\newcommand{\kernel}{{\operatorname {Ker}}}
\newcommand{\sgn}{{\operatorname {sgn}}}
\newcommand{\m}{{\operatorname {M}}}
\newcommand{\h}{{\operatorname {H}}}
\newcommand{\im}{{\operatorname {Im}}}
\newcommand{\gal}{{\operatorname{Gal}}}
\newcommand{\gl}{{\operatorname{GL}}}
\newcommand{\spl}{{\operatorname{SL}}}
\newcommand{\sst}{{\operatorname{ss}}}
\newcommand{\pfaff}{{\operatorname{Pfaff}}}
\newcommand{\sep}{{\operatorname{sep}}}
\newcommand{\sym}{{\operatorname{Sym}}}
\newcommand{\zero}{{\operatorname{Zero}}}
\newcommand{\Hom}{{\operatorname{Hom}}}
\newcommand{\rep}{representation}
\newcommand{\pv}{prehomogeneous vector space}
\newcommand{\Z}{\bbz}
\newcommand{\Q}{\bbq}
\newcommand{\R}{\bbr}
\newcommand{\C}{\bbc}
\newcommand{\p}{\bbp}
\newcommand{\mk}{k^{\times}}
\newcommand{\rv}{V_{k}}
\newcommand{\kableadd}%
{Department of Mathematics\\ Cornell University\\
Ithaca NY 14853}
\newcommand{\sub}{\subset}
\newcommand{\bk}{\backslash}
\newcommand{\ti}{\widetilde}
\newcommand{\mzer}{\setminus \{ 0\}}
\newcommand{\ccd}{,\ldots,}
\newcommand{\lan}{\langle}
\newcommand{\ran}{\rangle}
\def\varddots{\mathinner{
\mkern1mu%
 \raise\p@\hbox{.}\mkern2mu%
 \raise4\p@\hbox{.}\mkern2mu%
 \raise7\p@\vbox{\kern7\p@\hbox{.}}%
\mkern1mu}}
\theoremstyle{plain}
\newtheorem{thm}{Theorem}[section]
\newtheorem{lem}[thm]{Lemma}
\newtheorem{cor}[thm]{Corollary}
\newtheorem{prop}[thm]{Proposition}
\theoremstyle{definition}
\newtheorem{defn}[thm]{Definition}
\newtheorem{cond}[thm]{Condition}
\newtheorem{property}[thm]{Property}
\theoremstyle{remark}
\newtheorem{rem}[thm]{Remark}        
\newcommand{\weyl}{\mathbb W}
\newcommand{\Conv}{\operatorname{Conv}}
\newcommand{\Span}{\operatorname{Span}}
\newcommand{\semi}{\mathrm {ss}}
\newcommand{\coorde}{{\mathbbm e}}
\newcommand{\coordf}{{\mathbbm f}}
\newcommand{\coorda}{{\mathbbm a}}
\newcommand{\diag}{{\mathrm{diag}}}
\newcommand{\size}{\tiny}
\newcommand{\Ex}{\mathrm{Ex}}
\newcommand{\tallstrut}{\rule[-12pt]{-0.2cm}{28pt}}
\newcommand{\tinsert}{\quad\tallstrut}
\newcommand{\bbma
}{\mathbbm a}
\newcommand{\bbmp}{\mathbbm p}
\newcommand{\bbmq}{\mathbbm q}
\newcommand{\bbmr}{\mathbbm r}
\newcommand{\Prg}{\mathrm{Prg}}
\begin{document}

\address[K. Tajima]
{National Institute of Technology, Sendai College, Natori Campus, 
48 Nodayama, Medeshima-Shiote, Natori-shi, Miyagi, 981-1239, Japan}
\email{kazuaki.tajima.a8@tohoku.ac.jp}

\address[A. Yukie]
{Department of Mathematics, Graduate School of Science, 
Kyoto University, Kyoto 606-8502, Japan}
\email{yukie.akihiko.7x@kyoto-u.ac.jp}
\thanks{The second author was partially supported by 
Grant-in-Aid (C) (20K03512)\\}

\keywords{prehomogeneous, vector spaces, stratification, GIT}
\subjclass[2010]{11S90, 14L24}

\title{On the GIT stratification of prehomogeneous vector spaces III}
\author{Kazuaki Tajima}
\author{Akihiko Yukie}
\maketitle

\begin{abstract}
We determine all orbits of the \pv{} 
$(\gl_5\times \gl_4,\wedge^2\aff^5 \otimes \aff^4)$ 
rationally over an arbitrary 
perfect field whose characteristic is not $2$ in this paper.  
\end{abstract}

\section{Introduction}
\label{sec:introduction}

This is part three of a series of four papers. 
In Part I, we determined the set $\gB$ of vectors 
which parametrizes the GIT (geometric invariant theory) 
stratification \cite{tajima-yukie} 
of the four \pv s (1)--(4) in \cite{tajima-yukie-GIT1}. 
Even though the set $\gB$ was determined, 
there may be strata $S_{\be}$ ($\be\in\gB$) 
which are the empty set. In Part II, we determined 
which strata $S_{\be}$ are non-empty for the \pv s (1), (2) in 
\cite{tajima-yukie-GIT1}. 

In this part, we consider the following \pv:
\begin{equation}
\label{eq:PV}
G=\gl_5\times \gl_4,\;
V=\wedge^2 \aff^5\otimes \aff^4.
\end{equation}
This is the \pv{} (3) in \cite{tajima-yukie-GIT1}. 

For a general introduction to this series of papers, 
see the introduction of \cite{tajima-yukie-GIT1}. 

Throughout this paper, $k$ is a fixed field. 
We shall assume that $k$ is a perfect field in 
the main theorem and in Sections  \ref{sec:non-empty}, 
\ref{sec:empty-strata}. In that case 
the algebraic closure $\overline k$ coincides with 
the separable closure $k^{\sep}$. 
If $X,Y$ are schemes, algebraic groups, etc., over $k$ then 
$X,Y$ are said to be {\it $k$-forms} of each other if 
$X\times_k k^{\sep}\cong Y\times_k k^{\sep}$. 

Let $Q_0(v)\in \sym^2\aff^4$ be the quadratic form 
$v_1v_4-v_2v_3$.
\begin{defn}
\label{defn:Ex-defn}
\begin{itemize}
\item[(1)]
$\Ex_n(k)$ is the set of conjugacy classes of 
homomorphisms from $\gal(\overline k/k)$ to $\gS_n$.
\item[(2)]
$\Prg_2(k)$ is the set of $k$-isomorphism classes of 
$k$-forms of $\mathrm{PGL}_2$. 
\item[(3)]
$\mathrm{QF}_4(k)$ 
is the set of $k$-isomorphism classes of 
algebraic groups of the form $\text{GO}(Q)^{\circ}$ 
where $Q\in \sym^2 \aff^4$. Let 
$\mathrm{IQF}_4(k)\sub \mathrm{QF}_4(k)$ be the subset  
consisting of inner forms of $\text{GO}(Q_0)^{\circ}$. 
\end{itemize}
\end{defn}

Note that if $n=2,3$ then $\Ex_n(k)$ coincides with 
the set of $k$-isomorphism classes of 
separable extensions of $k$ of degree up to $n$.

The following theorem is our main theorem in this part.

\begin{thm} {\bf (Main Theorem)}
\label{thm:main}
Suppose that $k$ is a perfect field. 
\begin{itemize}
\item[(1)]
For the \pv{} (\ref{eq:PV}), there are $61$ non-empty strata $S_{\be}$.  
\item[(2)]
Suppose that $\ch(k)\not=2$. 
If $S_{\be}\not=\emptyset$ then $G_k\backslash S_{\be\,k}$ 
is either (i) a single point (abbreviated as SP from now on) 
(ii) $\Ex_2(k)$ (iii) $\Ex_3(k)$ 
(iv) $\Prg_2(k)$ or (v) $\mathrm{IQF}_4(k)$. Moreover the number of $S_{\be}$'s 
for (i)--(v) are as follows.
\begin{center}
\begin{tabular}{|c|c|c|c|c|c|}
\hline
\rule[-8pt]{-0.2cm}{22pt}
\upshape Type & $\;$ \hskip 5pt \upshape SP \hskip 5pt $\;$ 
& $\Ex_2(k)$ & $\Ex_3(k)$ & $\Prg_2(k)$ & $\mathrm{IQF}_4(k)$ \\
\hline
\rule[-8pt]{-0.2cm}{22pt}
\upshape Number of $S_{\be}$'s & \upshape 43 & \upshape 12 
& \upshape 3 & \upshape 2 & \upshape 1 \\
\hline
\end{tabular}
\end{center}
\end{itemize}
\end{thm}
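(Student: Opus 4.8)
The plan is to proceed stratum by stratum, exactly as in Parts I and II, so that the bulk of the work is organized by the combinatorial data $\be\in\gB$ already computed in \cite{tajima-yukie-GIT1}. First I would recall the GIT stratification: each non-empty stratum $S_{\be}$ is a $G$-homogeneous fibration over the flag-type variety attached to the unstable parabolic $P_{\be}$, with fibre essentially the set of $\be$-stable points of a smaller representation $Z_{\be}$ of the Levi $M_{\be}$. The key structural fact (established in the general setup of this series, e.g. \cite{tajima-yukie}) is that $G_k\bk S_{\be\,k}$ is in bijection with $M_{\be\,k}\bk Z_{\be\,k}^{\strict}$, so the problem reduces to classifying, rationally over $k$, the relevant orbits for each of the $61$ triples $(M_{\be},Z_{\be},\be)$. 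Part (1) — that there are exactly $61$ non-empty strata — I would treat by the same non-emptiness analysis carried out for the \pv s (1),(2) in Part II: for each $\be\in\gB$ one exhibits a $k$-rational (indeed $\overline k$-rational suffices for non-emptiness) $\be$-stable point in $Z_{\be}$, or else shows $Z_{\be}^{\strict}=\emptyset$ by a dimension or weight-count obstruction; tallying the survivors gives $61$.

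For part (2) the heart is the case-by-case identification of each $M_{\be\,k}\bk Z_{\be\,k}^{\strict}$ with one of the five model sets SP, $\Ex_2(k)$, $\Ex_3(k)$, $\Prg_2(k)$, $\mathrm{IQF}_4(k)$. I would sort the $61$ strata by the isomorphism type of the pair $(M_{\be},Z_{\be})$ up to the parabolic's action. Many strata have $Z_{\be}$ a point or a single $M_{\be}$-orbit over $\overline k$ with trivial (or connected, hence cohomologically trivial by Hilbert 90 / Lang for the relevant tori and $\gl_n$-type factors) stabilizer, giving SP — these should account for the $43$. The strata giving $\Ex_2(k)$ or $\Ex_3(k)$ are those where $Z_{\be}^{\strict}$ over $\overline k$ is a single orbit but the stabilizer in $M_{\be}$ has component group $\gS_2$ or $\gS_3$ acting through a sub-representation that is genuinely permutation-like (a binary or ternary "splitting" of a subspace of $\aff^5$ or $\aff^4$); then Galois descent turns the orbit set into $H^1(k,\gS_2)=\Ex_2(k)$ resp. a subset of $H^1(k,\gS_3)=\Ex_3(k)$, and one checks the map is onto. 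The $\Prg_2(k)$ cases arise where the stabilizer (modulo a cohomologically trivial normal part) is $\pgl_2$ or an $\spl_2$/$\gl_2$ that forces forms of $\pgl_2$; here I would invoke that $H^1(k,\pgl_2)$ classifies quaternion algebras, matching the definition of $\Prg_2(k)$. The single $\mathrm{IQF}_4(k)$ stratum is the one whose fibre data is governed by a rank-$4$ quadratic form: using $Q_0(v)=v_1v_4-v_2v_3$ and the hypothesis $\ch(k)\neq2$, the relevant stabilizer is $\mathrm{GO}(Q_0)^{\circ}$ and its inner forms, so $H^1(k,\mathrm{GO}(Q_0)^{\circ}\!/Z)$ (the adjoint/inner part) gives precisely $\mathrm{IQF}_4(k)$; the $\ch\neq2$ assumption is exactly what makes quadratic forms behave well (diagonalization, Witt cancellation, $\spin_4\cong\spl_2\times\spl_2$) and is why it is imposed only in part (2).

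The main obstacle, and where the real labor lies, is the middle tier of strata — those where $Z_{\be}^{\strict}$ is \emph{not} a single $\overline k$-orbit, or where the stabilizer is a non-trivial extension so that the descent is controlled by a non-abelian $H^1$ that is not visibly one of the five models. For each such $\be$ one must (a) compute the $M_{\be}$-orbits on $Z_{\be}^{\strict}(\overline k)$ explicitly, typically by reducing $Z_{\be}$ to a classical \pv{} of small rank (a pair of binary/ternary forms, a $2\times2\times2$ or $2\times 2 \times 3$ "cube", a pencil of conics, etc.) whose rational orbit theory is classical; (b) compute the stabilizer as a $k$-group scheme, carefully tracking the center of $\gl_5\times\gl_4$ and the scalar tori, since these contribute connected (cohomologically trivial) factors that must be quotiented out before reading off the $H^1$; and (c) prove the resulting pointed-set map $\ker^1$-type sequence collapses to exactly SP, $\Ex_2$, $\Ex_3$, $\Prg_2$, or $\mathrm{IQF}_4$, \emph{including surjectivity}, which for the $\Ex_3$ and $\mathrm{IQF}_4$ cases requires exhibiting enough rational points to realize every class. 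I expect several of these to reduce, after stripping tori, to the rational orbit computations for the \pv s already handled in \cite{tajima-yukie-GIT1} and in the classical Sato–Kimura list, so the write-up would cite those; the genuinely new effort is the bookkeeping of the $61$ stabilizers and verifying the count $43+12+3+2+1=61$ is consistent with part (1).
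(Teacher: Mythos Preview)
Your overall architecture matches the paper's: compute $\gB$ (done in Part~I), decide for each $\be$ whether $S_{\be}\neq\emptyset$ (Sections~\ref{sec:non-empty},~\ref{sec:empty-strata}), and for the $61$ survivors identify $G_k\backslash S_{\be\,k}$ by computing stabilizers and reading off the appropriate $\h^1$. The identification of the five target sets with the five types of stabilizer cohomology is also the paper's mechanism.

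There is, however, one genuine gap. You assert as a ``key structural fact'' that $G_k\backslash S_{\be\,k}\cong M_{\be\,k}\backslash Z^{\sst}_{\be\,k}$. The general theorem (Theorem~\ref{KKN}) only gives $G_k\backslash S_{\be\,k}\cong P_{\be\,k}\backslash Y^{\sst}_{\be\,k}$, with $Y_{\be}=Z_{\be}\oplus W_{\be}$. Collapsing this to $M_{\be\,k}\backslash Z^{\sst}_{\be\,k}$ requires showing, for every $x\in Z^{\sst}_{\be\,k}$ and every $y\in W_{\be\,k}$, that some $u\in U_{\be\,k}$ satisfies $ux=(x,y)$. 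This is \emph{not} automatic: it is Property~\ref{property:W-eliminate} in the paper, proved stratum by stratum via Condition~\ref{cond:unipotent-eliminate-condition} and the explicit unipotent computations that occupy a large fraction of Section~\ref{sec:non-empty}. For each of the $61$ non-empty $\be$ the paper writes down $n(u)R(i)$, identifies the $W_{\be}$-coordinates as explicit polynomial functions of the $u_{ijk}$, and applies Lemma~\ref{lem:connected-criterion} to verify both surjectivity over $k$ and connectedness of $G_R\cap U_{\be}$ (the latter needed for the Galois descent in Proposition~\ref{prop:W-eliminate}). Without this step the reduction to $(M_{\be},Z_{\be})$ is not justified, and nothing in the general GIT stratification package supplies it for you.

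A secondary point: your plan to show emptiness by ``dimension or weight-count obstruction'' understates what is needed. The paper's method (Section~\ref{sec:empty-strata}) is, for each of the $231$ remaining $\be$, to use the $M^s_{\be}$-action to move a generic $x\in Z_{\be}$ into a coordinate subspace (via Lemmas~II--4.1 through II--4.6 and Lemmas~\ref{lem:222-eliminate},~\ref{lem:case85}) and then exhibit an explicit 1PS in $G_{\text{st},\be}$ with strictly positive weights on the surviving coordinates; in a handful of cases (e.g.\ $\be_2$, $\be_{18}$, $\be_{72}$) one instead finds an open-orbit point and applies Lemma~\ref{lem:empty-criterion}. This is combinatorially heavy but not conceptually different from Part~II.
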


As we pointed out in Part I, the orbit decomposition of 
these \pv s is known over $\C$ (see \cite{ozekic}). 
Our approach answers to rationality questions and provide 
the inductive structure of orbits rationally over $k$. 
The \pv{} (\ref{eq:PV}) is the ``quintic case'' and it 
was proved in \cite{wryu} that the set of generic rational orbits 
is in bijective correspondence with $\Ex_5(k)$. Integral orbits 
of this case was considered by Bhargava in \cite{bhargava4}. 

In the process of proving Theorem \ref{thm:main}, 
we end up with determining the set of generic rational orbits 
of the following \pv s $(G,V)$. 

\begin{itemize}
\item[(1)]
$G=\gl_5\times \gl_3$, $V=\wedge^2 \aff^5\otimes \aff^3$
\item[(2)]
$G=\gl_3^2\times \gl_2$, 
$V=\wedge^2 \aff^3\oplus \aff^3\otimes \aff^3\otimes \aff^2$.
\item[(3)]
$G=\gl_4\times \gl_2^2$, 
$V=\aff^4\otimes \aff^2\oplus \wedge^2\aff^4\otimes \aff^2$.
\item[(4)]
$G=\gl_4\times \gl_3$, 
$V=\wedge^2 \aff^4\otimes \aff^3$ and 
$\wedge^2 \aff^4\otimes \aff^3\oplus \aff^4$. 
\item[(5)]
$G=\gl_3\times \gl_2^2\times \gl_1$, 
$V=\wedge^2 \aff^3\oplus \aff^3\otimes \aff^2\otimes \aff^2$.
\item[(6)]
$G=\gl_3\times \gl_2^2\times \gl_1$, 
$V=\wedge^2 \aff^3\otimes \aff^2 \oplus \aff^3\otimes \aff^2\otimes \aff^2$.
\item[(7)]
$G=\gl_2^3$, 
$V=\aff^2\otimes \aff^2\otimes \aff^2\oplus \aff^2$.
\item[(8)]
$G=\gl_2^3$, 
$V=\aff^2\otimes \aff^2\otimes \aff^2\oplus \aff^2\oplus \aff^2$.
\end{itemize}
For details on the precise definitions of actions and 
interpretations of rational orbits, 
see Sections \ref{sec:rational-orbits-53}--\ref{sec:rational-orbits-222-22}. 

These cases are either irreducible, 2-simple or 3-simple \pv s.
The case (1) was considered in \cite{ochiai}, \cite{yukiem} over $k$ such that 
$\ch(k)=0$. 
Prehomogeneous vector spaces which are 2-simple or 3-simple 
were considered in \cite{kimura-2simple-typeI}, \cite{kimura-2simple-typeII}, 
\cite{kimura-2simple-typeI-inv}, \cite{kimura-2simple-some-inv}, 
\cite{kimura-3simple}.
The second case of (4) is the case (3) of \cite[p.396]{kimura-2simple-typeI}.
Relative invariants of this case are constructed in the case (4a) of 
Theorem 5.8 (the case (3) of Theorem 8.1) \cite[pp.465, 478]{kimura-2simple-some-inv}.
In this case, there are two relative invariant polynomials
($P_1(x),P_2(x)$ in Proposition \ref{prop:section8-b-P1-value} 
and (\ref{eq:43-4-P2-equivariant})). They should be more or less 
$P_1(v),P_2(v)$ in \cite[p.465]{kimura-2simple-some-inv}.
Note that the case (4a) in \cite[p.465]{kimura-2simple-some-inv} 
is bigger than the second case of the above (4), but 
$P_1(v),P_2(v)$ in \cite[p.465]{kimura-2simple-some-inv} 
give rise to relative invariant polynomials on 
the second case of the above (4) by restriction. 
We define $P_1(x),P_2(x)$ in a superficially different 
manner in Section \ref{sec:rational-orbits-wedge43}.   
The cases (2), (7), (8) are the cases (5), (2), (3) of 
\cite[p.187]{kimura-3simple}.
The case (5) is in the form of \pv s in THEOREM 1.1 
\cite[p.161]{kimura-3simple}.
The case (3) contains a 2-simple \pv{} of trivial type and is not contained 
in the list of \cite{kimura-3simple}.
The case (6) is not reduced in the sense that the 
component $\aff^3\otimes \aff^2\otimes \aff^2$ is not reduced
and is not contained 
in the list of \cite{kimura-3simple} either.

Since the ground field is $\C$ for the above papers and  
$k$ is an arbitrary perfect field in this paper, 
we carry out some Lie algebra computations to make sure 
that some cases are regular \pv{} regardless of $\ch(k)$.

The organization of this paper is as follows. 
In Section \ref{sec:notation}, we discuss notations 
used in this part. In Section \ref{sec:notation-related-git}, 
we briefly review the notion of GIT stratification and 
discuss related notations. 
We have to deal with many reducible 
\pv s in this part and we shall discuss the notion of 
``regularity'' formulated in Section 2 of \cite{kato-yukie-jordan}. 
In \cite{kato-yukie-jordan}, the regularity was used mainly for
irreducible \pv s. Even though the definition of regularity 
is the same as the one in \cite{kato-yukie-jordan}, 
it is necessary to consider the relation between 
the number of independent characters and the number 
of independent relative invariant polynomials. 
We shall discuss this issue in Section \ref{sec:regularity}. 
In Section \ref{sec:rational-orbits-general}, 
we review known results (\cite{wryu}, \cite{tajima-yukie-GIT2})
on rational orbits for some \pv s.  
In Sections \ref{sec:rational-orbits-53}--\ref{sec:rational-orbits-222-22}, 
we determine the set of generic rational orbits of 
some \pv s which appear as 
$(M_{\be},Z_{\be})$ such that $Z^{\sst}_{\be}\not=\emptyset$. 
In Section \ref{sec:non-empty}, we show that there are 
$61$ non-empty strata $S_{\be}$. Assuming $\ch(k)\not=2$, 
we determine $G_k\backslash S_{\be\,k}$ for such $\be$. 
In Section \ref{sec:empty-strata}, we show that the 
remaining strata $S_{\be}$ are the empty set. 
The method is similar to the one in 
\cite{tajima-yukie-GIT2}.

The \pv{} (\ref{eq:PV}) is a rather big \pv. 
If a \pv{} $(G',V')$ appears as $(M_{\be},Z_{\be})$ 
of a \pv{} $(G,V)$, then $(G',V')$ is ``smaller'' than $(G,V)$ 
in some sense. In that case, 
the GIT stratification of $(G',V')$ should follow from 
that of $(G,V)$. 
In Section \ref{sec:smaller-pv-s}, we prove a proposition 
by which the consideration of most strata of 
$(G',V')$ is reduced to that of $(G,V)$ with possible 
relatively easy computer computations. 
We shall consider GIT stratifications which follow from 
this series of papers in the future.

The authors would like to thank the referee for reading the manuscript
carefully, giving useful suggestions and pointing out many mistakes.

\section{Notation}
\label{sec:notation}

We discuss notations used in this part.   

When we quote statements from Part I \cite{tajima-yukie-GIT1} 
and Part II \cite{tajima-yukie-GIT2}, we write like 
Theorem I--2.1, Lemma II--4.4. 
 
Let $\ch (k)$ be the characteristic of $k$.
The separable closure and the algebraic closure of $k$ 
are denoted by $k^{\sep},\overline k$ respectively. 
We often have to refer to a set consisting of a single point. 
We use the notation SP for such a set.

If $X$ is a variety over $k$ and $x\in X$ then 
$X_k$ is the set of $k$-rational points of $X$ and 
${\mathrm T}_x(X)$ is the tangent space at $x$ of $X$. 
Let $k[\vep]/(\vep^2)$ be the ring of dual numbers. 
We identify ${\mathrm T}_x(X)$ with the space of 
$k[\vep]/(\vep^2)$-rational points of $X$ which reduce to $x$. 

If $G$ is an algebraic group over $k$ then 
$G^{\circ}$ is the identity component of $G$, i.e.,
the connected component containing the unit element 
of $G$. Unless otherwise stated, $e_G$ is the unit element of $G$
(since we use the notation $e_{ijk}$ later, it is necessary to use a 
slightly different notation for unit elements of groups). 

Let $X^*(G)$ (resp. $X_*(G)$) 
be the group of characters (resp. the group of one parameter subgroups)
of $G$.  We abbreviate ``one parameter subgroup'' of algebraic groups 
as ``1PS'' from now on. 
Suppose that $\chi$ is a non-trivial character of $G$.
It is said to be {\it primitive} if $\psi$ is a character of $G$ and 
$\chi=\psi^a$ for an integer $a$ then $a=\pm 1$. 
If $\rho:G\to \gl(V)$ is a \rep{} of $G$ then 
$\wedge^2 \rho,\sym^3 \rho$, etc., are the induced 
\rep s on $\wedge^2 V,\sym^3 V$, etc.

If $V$ is a vector space then 
we denote the dual space of $V$ by $V^*$. 
If $V$ is a \rep{} of $G$ then $V^*$ becomes a 
\rep{} of $G$ where the action of $g\in G$ is defined by 
$V^*\ni f\mapsto (V\ni v \mapsto f(g^{-1}v))$.

If $\sig,\tau\in\gal(k^{\sep}/k)$ then we define 
$\sig\tau(x)=\tau(\sig(x))$ for $x\in k^{\sep}$. So the 
action of $\gal(k^{\sep}/k)$ is a right action. 
We use the notation such as $x^{\sig}$.  
If $G$ is an algebraic group over $k$ then 
we denote by $\h^1(k,G)$ the Galois cohomology set with coefficients in 
$G$. We use the notation such as $h=(h_{\sig})_{\sig}$
for 1-cochains. The cocycle condition is $h_{\sig\tau}=h_{\tau}h_{\sig}^{\tau}$ 
and $h_1=(h_{1,\sig})_{\sig},h_2=(h_{2,\sig})_{\sig}$ are equivalent if there exists 
$g\in G_{k^{\sep}}$ such that $h_{1,\sig}=g^{-1}h_{2,\sig}g^{\sig}$ 
for all $\sig$. We denote the trivial 1-cocycle (i.e., $h_{\sig}=1$ for all
$\sig$) and its cohomology class  by $1$. 
Note that $\h^1(k,G)$ may not have a group structure 
if $G$ is not commutative.

Let $\gl_n$ (resp. $\spl_n$) be the general linear group 
(resp. special linear group), 
$\m_{n,m}$ the space of $n\times m$ matrices and $\m_n=\m_{n,n}$. 
If $G$ is an algebraic group then 
the tangent space ${\mathrm T}_{e_G}(G)$ at $e_G$ has a structure 
of a Lie algebra. If $G=\gl_n$ then we can identify 
the Lie algebra of $G$ with $\m_n$ 
by considering $I_n+\vep A$ for $A\in\m_n$. 

Let $E_{ij}$ be the matrix whose $(i,j)$-entry is $1$ and other 
entries are $0$. We use this notation only in the 
situation where the size of $E_{ij}$ will be clear from the
context. Let $\mathrm{PGL}_n=\gl_n/\{tI_n\mid t\in\gl_1\}$. 
If $A$ is an $n\times n$ matrix then we say $A$ is an 
alternating matrix if ${}^tA=-A$ and the diagonal entries are $0$. 
The latter condition is necessary if $\ch(k)=2$.
We may write alternating matrices like
\begin{equation*}
\begin{pmatrix}
0 & a & b \\
* & 0 & c \\
* & * & 0
\end{pmatrix}
\end{equation*}
because entries in $*$ are determined by other entries. 
We denote the set of $k$-rational points of $\gl_n$, etc., by 
$\gl_n(k)$, etc. We sometimes use 
the notation $[x_1\ccd x_n]$ to express column vectors 
to save space.  We denote the unit matrix of dimension $n$ 
by $I_n$.  We use the notation $\diag(g_1\ccd g_m)$ 
for the block diagonal matrix whose diagonal blocks are 
$g_1\ccd g_m$.  We often use the following matrices
\begin{equation}
\label{eq:J-defn}
\tau_0 = 
\begin{pmatrix}
0 & 1 \\
1 & 0
\end{pmatrix}, \quad
J = 
\begin{pmatrix}
0 & 1 \\
-1 & 0 
\end{pmatrix}. 
\end{equation}

If $A=(a_{ij})$ is a $2n\times 2n$ alternating matrix 
then $\pfaff(A)$ is the Pfaffian of $A$. It has the property 
that $\pfaff(A)^2=\det A$, $\pfaff(gA{}^tg)= (\det g)\,\pfaff(A)$ 
for $g\in \gl_{2n}$. We choose the sign so that $\pfaff(A)=1$ 
for the matrix 
\begin{math}
A = \sum_{i=1}^n(E_{2i-1,2i}-E_{2i,2i-1}).
\end{math}

For $u=(u_{ij})\in \aff^{n(n-1)/2}$ ($1\leq j<i\leq n$), 
let $n_n(u)$ be the lower triangular matrix whose diagonal entries are $1$ 
and the $(i,j)$-entry is $u_{ij}$ for $i>j$. 
If $v_1\ccd v_m$ are elements of a vector space $V$ then 
$\lan v_1\ccd v_m\ran$ is the subspace spanned by 
$v_1\ccd v_m$. We sometimes use the notation 
$\text{Span}\{v_1\ccd v_m\}$ also. 
For the rest of this paper, 
tensor products are always over $k$.

Let $n_1=5,n_2=4$.  
We use parabolic subgroups which consist 
of lower triangular  blocks. 
Let $i=1,2$ and $j_{i\,0}=0<j_{i\,1}<\cdots<j_{i\,N_i}=n_i$. 
We use the notation $P_{i,[j_{i1}\ccd j_{i\,N_i-1}]}$ 
(resp. $M_{i,[j_{i1}\ccd j_{i\,N_i-1}]}$)
for the parabolic subgroup (resp. reductive subgroup) of $\gl_{n_i}$ 
in the form 
\begin{equation*}
\begin{pmatrix} 
P_{11} & 0 \cdots 0 & 0 \\
\vdots & \ddots & 
\begin{matrix}
0 \\ \vdots \\ 0
\end{matrix}  \\ 
P_{N_i\,1} & \cdots & P_{N_i\,N_i} 
\end{pmatrix}, \quad
\begin{pmatrix} 
M_{11} & 0 \cdots 0 & 0 \\
\begin{matrix}
0 \\ \vdots \\ 0
\end{matrix}
& \ddots & 
\begin{matrix}
0 \\ \vdots \\ 0
\end{matrix}  \\ 
0 & 0 \cdots 0 & M_{N_i\,N_i} 
\end{pmatrix}
\end{equation*}
where the size of $P_{kl},M_{kl}$ is 
$(j_{i\,k}-j_{i\,k-1})\times (j_{i\,l}-j_{i\,l-1})$. 
If $N_i=1$ then we use the notation $P_{i,\emptyset},M_{i,\emptyset}$.

We put 
\begin{equation}
\label{eq:parabolic-defn}
\begin{aligned} 
P_{[j_{11}\ccd j_{1\,N_1-1}],[j_{21}\ccd j_{2\,N_2-1}]}
& = P_{1,[j_{11}\ccd j_{1\,N_1-1}]}
\times P_{2,[j_{21}\ccd j_{2\,N_2-1}]}, \\
M_{[j_{11}\ccd j_{1\,N_1-1}], [j_{21}\ccd j_{2\,N_2-1}]}
& = M_{1,[j_{11}\ccd j_{1N_1-1}]}
\times M_{2,[j_{a1}\ccd j_{2\,N_2-1}]}.
\end{aligned}
\end{equation}
If $N_i=1$ then we replace $[j_{i\,1}\ccd j_{i\,N_i-1}]$ by 
$\emptyset$. 
Let 
\begin{equation}
\label{eq:M1-defn}
M^{\text{st}}_{[j_{1\,1}\ccd j_{1\,N_1-1}],[j_{2\,1}\ccd j_{2\,N_2-1}]}
= (\spl_5 \times \spl_4)\cap
(M_{1,[j_{1\,1}\ccd j_{1\,N_1-1}]}
\times M_{2,[j_{2\,1}\ccd j_{2\,N_2-1}]})
\end{equation}
and $M^s_{[j_{1\,1}\ccd j_{1\,N_1-1}],[j_{2\,1}\ccd j_{2\,N_2-1}]}$ 
be the semi-simple part of 
$M_{[j_{1\,1}\ccd j_{1\,N_1-1}],[j_{2\,1}\ccd j_{2\,N_2-1}]}$. 

We consider many \rep s of groups of the 
form $M_{[j_{1\,1}\ccd j_{1\,N_1-1}],[j_{2\,1}\ccd j_{2\,N_2-1}]}$
in later sections. 
We use notations such as 
\begin{equation}
\label{eq:Lam-defn}
\Lam^{m,i}_{j,[c,d]} \quad \text{where $m=d-c+1$}. 
\end{equation}
The meaning of this notation is that this is 
$\wedge^i \aff^m$ as a vector space where 
$\aff^m$ is the standard \rep{} of $\gl_m$ 
and the indices $j,[c,d]$ mean that 
the block from the $(c,c)$-entry to 
the $(d,d)$-entry of the $j$-th factor $\gl_{n_j}$ 
of $M_{[j_{11}\ccd j_{1\,N_1-1}],[j_{21}\ccd j_{2\,N_2-1}]}$ 
($n_1=5,n_2=4$) is acting on this vector space. 
For example, 
$M_{[1],[2]}$ consists of elements of the form 
$(\diag(t_1,g_1),\diag(g_2,g_3))$ where 
$t_1,\in\gl_1,g_1\in \gl_4,g_2,g_3\in\gl_2$. 
Then $\Lam^{2,1}_{2,[1,2]}$ is the standard \rep{} of $g_2\in \gl_2$
identified with the element $(I_5,\diag(g_2,I_2))$. 
We denote by $1$ the   
trivial \rep{} of $M^s_{[j_{1\,1}\ccd j_{1\,N_1-1}],[j_{2\,1}\ccd j_{2\,N_2-1}]}$.

\section{Notation related to the GIT stratification}
\label{sec:notation-related-git}

We basically follow \cite{tajima-yukie-GIT1} for the notation 
related to the GIT stratification.  
However, there will be many indices in this part, 
and the subscript ``1'' such as in $G_1$ in 
\cite{tajima-yukie-GIT1}, \cite{tajima-yukie}  
may cause confusion. So we slightly change the notation 
related to the GIT stratification in this part. 
We include a short review of the formulation of
the GIT stratification for the sake of the reader. 
We consider a general situation where the group is split 
because that is the case in Section \ref{sec:smaller-pv-s}.
The formulation of \cite{tajima-yukie-GIT1} is that of  
Corollary 1.4 of \cite{tajima-yukie} where 
some scalar directions are removed from 
$G_{\beta}$ to consider stability.

Let $G$ be a connected split reductive group, $V$ a finite dimensional 
representation of $G$ both defined over $k$. We do not assume that 
$(G,V)$ is a \pv{} in this section. The notion of \pv s will be
reviewed in the next section. 
We assume that there is a connected split reductive subgroup 
$G_{\text{st}}$ of $G$, 
a split torus $T_0\sub Z(G)$ (the center of $G$), 
such that $T_0\cap G_{\text{st}}$ is finite and $G=T_0G_{\text{st}}$ 
as algebraic groups. We assume that there is a 
rational character $\chi$ of $T_0$ such that the action of 
$t\in T_0$ is given by the scalar multiplication by $\chi(t)$. 
We mainly consider the case where $\chi$ is non-trivial.  
Let $(T_0\cap G_{\text{st}})\sub T_{\text{st}}\sub G_{\text{st}}$ 
be a maximal split torus, 
$X_*(T_{\text{st}}),X^*(T_{\text{st}})$ the group of 1PS's
and the group of rational characters respectively. 
The subscript ``st'' stands for ``stability'' since $G_{\text{st}}$
is the subgroup by which we measure the stability. 
These $T_0$, $G_{\text{st}}$ and $T_{\text{st}}$ correspond to 
$T_0$, $G_1$ and $T$ in \cite[p.3]{tajima-yukie-GIT1}.

If $(G,V)$ is the \pv{} (\ref{eq:PV}) then we choose 
$T=\{(t_1,t_2)\in G\mid t_1,t_2\;\text{diagonal}\}$ (the center of $G$)  
and 
\begin{equation}
\label{eq:T0-defn}
G_{\text{st}}=\spl_5\times \spl_4,\; 
T_{\text{st}}=T\cap G_{\text{st}},\;
T_0=\{(t_1I_5,t_2I_4)\mid t_1,t_2\in\gl_1\}.
\end{equation}
Note that $T_0\cap G_{\text{st}}$ is finite and 
$G=T_0G_{\text{st}}$ as algebraic groups. 
The action of $(t_1I_5,t_2I_4)$ on $V$ is by scalar multiplication 
by $t_1^2t_2$.

We put 
\begin{equation*}
\mathfrak {t}=X_*(T_{\text{st}})\otimes \R, \;
\mathfrak {t}_{\Q}=X_*(T_{\text{st}})\otimes \Q, \;
\mathfrak t^*=X^*(T_{\text{st}})\otimes \R, \;
\mathfrak t^*_{\Q}=X^*(T_{\text{st}})\otimes \Q.
\end{equation*}
Let $\weyl =N_{G}(T)/T$ be the Weyl group
of $G$. The Weyl group of $G_{\text{st}}$ 
coincides with $\weyl$.  
For the \pv{} (\ref{eq:PV}), 
$\weyl\cong \gS_5\times \gS_4$.  
There is a natural action of $\weyl$ on $\gt^*$. 
There is a natural pairing 
$\langle \;  ,\;  \rangle_T :X^*(T_{\text{st}})\times 
X_*(T_{\text{st}})\rightarrow \Z$  defined by 
$t^{\langle \chi ,\lambda \rangle_{T_{\text{st}}}}=\chi (\lambda (t))$ for 
$\chi \in X^*(T_{\text{st}}),\lambda \in X_*(T_{\text{st}})$. 
This is a perfect paring (\cite[pp.113--115]{borelb}). 
There exists an inner product 
$(\;  , \; )$
on $\mathfrak{t}$ which is invariant under the actions of $\weyl$.
We may assume that this inner product is rational, i.e., 
$(\lambda,\nu)\in \Q$ for all 
$\lambda,\nu\in \mathfrak t_{\Q}$. 
Let $\|\; \|$ be the norm on $\mathfrak t$ defined 
by $(\;  , \; )$. We choose a Weyl chamber 
$\mathfrak{t}_{+}\subset \mathfrak{t}$ 
for the action of $\weyl$.

For $\lambda\in\mathfrak t$, 
let $\beta =\beta(\lambda)$ be the element of
$\mathfrak t^*$ such that 
$\langle \beta, \nu\rangle = (\lambda, \nu)$
for all $\nu\in \mathfrak t$. 
The map $\lambda\mapsto \beta(\lambda)$ is a bijection 
and we denote the inverse map by 
$\lambda=\lambda(\beta)$. 
There is a unique positive rational
number $a$ such that $a\lambda(\beta)\in X_*(T_{\text{st}})$
and is indivisible. We use the notation 
$\lambda_{\beta}$ for $a\lambda(\beta)$. 
Identifying $\mathfrak t$ with $\mathfrak t^*$ by the map $\lam\mapsto \be(\lam)$, 
we have a $\weyl$-invariant inner product 
$(\;  , \;  )_{*}$ on $\mathfrak{t^*}$, 
the norm $\|\; \|_{*}$ determined by $(\;  , \;  )_{*}$ 
and a Weyl chamber $\mathfrak t^*_{+}$. 
Note that choosing a $\weyl$-invariant inner product  
$(\;  , \;  )$ is equivalent to choosing a $\weyl$-invariant inner product  
$(\;  , \;  )_{*}$. 

For the \pv{} (\ref{eq:PV}), we can identify $\gt^*$ with 
\begin{align*}
& \left\{(a_{11}\ccd a_{15},a_{21}\ccd a_{24})\in\R^9
\,\vrule\, \sum_{i=1}^5 a_{1i}=
\sum_{i=1}^4 a_{2i}=0\right\}.
\end{align*}
We choose $(\;  , \;  )_{*}$ and $\gt^*_+$ so that 
\begin{align*}
(a,b)_* & = \sum_{i=1}^5 a_{1i}b_{1i}+\sum_{i=1}^4 a_{2i}b_{2i}, \\\
\gt^*_+ & = \{(a_{11}\ccd a_{15},a_{21}\ccd a_{24})\in \gt^*\mid 
a_{11}\leq \cdots \leq a_{15},a_{21}\leq \cdots\leq a_{24} \}.
\end{align*}

Let $N=\dim V$. We choose a coordinate system 
$v=(v_1,\dots ,v_N)$ on $V$ by which $T$ (and so $T_{\text{st}}$ also) 
acts diagonally. 
Let $\gamma_i \in \mathfrak t^{\ast }$ and $\coorda_i$ be 
the weight and the coordinate vector 
which correspond to $i$-th coordinate. 
The reason why we use this notation unlike 
the notation $e_i$ in \cite{tajima-yukie} is that
we would like to use the notation $\coorde_i$
for the coordinate vectors of $\aff^5$. 

Let $\Gam=\{\gam_1\ccd \gam_N\}$. 
For a subset $\gI\subset \Gam$, 
we denote the convex hull of $\mathfrak I$ by $\Conv \mathfrak I$. 
Let $\p(V)$ be the projective space associated with $V$ 
and $\pi_V:V\setminus\{0\}\to \p(V)$ the natural map. 
For $\gI\subset \Gam$ 
such that $0\notin \Conv \mathfrak I$, let $\beta $ be the 
closest point of $\Conv\mathfrak  I$ to the origin. 
Then $\beta $ lies in $\mathfrak t^*_{\mathbb Q}$. 

\begin{defn}
\label{defn:gB-defn}
\begin{itemize}
\item[(1)]
$\gC$ is the set of all $\beta$ obtained in the above manner.
\item[(2)]
$\gB=\gC\cap \mathfrak t_+^*$. 
\end{itemize}
\end{defn}
The GIT stratification is parametrized by $\gB$. 
However, we have to use the set $\gC$ in Section \ref{sec:smaller-pv-s}.

For the \pv{} (\ref{eq:PV}), $N=40$. 
Let $\coorde_i$ be the coordinate vector of 
$\aff^5$ with respect to the $i$-th coordinate
and $\coordf_i$ the coordinate vector of 
$\aff^4$ with respect to the $i$-th coordinate. 
We put $e_{i_1i_2i_3}=(\coorde_{i_1}\wedge \coorde_{i_2})\otimes \coordf_{i_3}$
for $i_1,i_2=1\ccd 5,i_3=1\ccd 4$. The numbering used in \cite{tajima-yukie-GIT1} 
for (\ref{eq:PV}) is as follows. 

\vskip 10pt

\tiny

\begin{center}
 
\begin{tabular}{|c|c|c|c|c|c|c|c|c|c|}
\hline
1 & 2 & 3 & 4 & 5 & 6 & 7 & 8 & 9 & 10 \\
\hline
$e_{121}$ & $e_{131}$ & $e_{141}$ & $e_{151}$ & $e_{231}$ 
& $e_{241}$ & $e_{251}$ & $e_{341}$ & $e_{351}$ & $e_{451}$ \\
\hline
11 & 12 & 13 & 14 & 15 & 16 & 17 & 18 & 19 & 20 \\
\hline
$e_{122}$ & $e_{132}$ & $e_{142}$ & $e_{152}$ & $e_{232}$ 
& $e_{242}$ & $e_{252}$ & $e_{342}$ & $e_{352}$ & $e_{452}$ \\
\hline
21 & 22 & 23 & 24 & 25 & 26 & 27 & 28 & 29 & 30 \\
\hline
$e_{123}$ & $e_{133}$ & $e_{143}$ & $e_{153}$ & $e_{233}$ 
& $e_{243}$ & $e_{253}$ & $e_{343}$ & $e_{353}$ & $e_{453}$ \\
\hline
31 & 32 & 33 & 34 & 35 & 36 & 37 & 38 & 39 & 40 \\
\hline
$e_{124}$ & $e_{134}$ & $e_{144}$ & $e_{154}$ & $e_{234}$ 
& $e_{244}$ & $e_{254}$ & $e_{344}$ & $e_{354}$ & $e_{454}$ \\
\hline
\end{tabular}

\end{center}

\normalsize

\vskip 10pt

With this numbering, let $\mathbbm a_1= e_{121}$, etc., 
and $\gam_i$ be the character corresponding to $\mathbbm a_i$ 
for $i=1\ccd 40$.

For $\be\in\gC$, we define
\begin{align*} 
& Y_{\beta }= \Span \{\coorda_i\,|\, (\gamma_i,\beta )_{*}
\geq (\beta ,\beta )_{*}\},\quad 
Z_{\beta }= \Span \{\coorda_i\,|\, (\gamma_i,\beta )_{*}
=(\beta ,\beta )_{*}\}, \\
& W_{\beta }=\Span \{ \coorda_i\,|\, (\gamma_i,\beta )_{*}
>(\beta ,\beta )_{*}\}
\end{align*}
where $\Span$ is the spanned subspace. 
Clearly $Y_{\beta}=Z_{\beta}\oplus W_{\beta}$.

If $\lambda $ is a 1PS of $G$, we define 
\begin{align*}
P(\lambda ) & = \left  \{p\in G\; \Big |\; 
\lim_{t\rightarrow 0}\lambda (t)p\lambda(t)^{-1} 
\; \textrm{exists} \right \},\; M(\lambda) = Z_G(\lambda) \;\text{(the centralizer)}, \\
U(\lambda ) & = \left  \{p\in G\; \Big |\; 
\lim_{t\rightarrow 0}\lambda (t)p\lambda(t)^{-1}= 1 \right \}. 
\end{align*} 
The group $P(\lambda )$ is a parabolic subgroup of 
$G$ (\cite[p.148]{Springer-LAG}) with Levi part $M(\lambda)$ 
and unipotent radical $U(\lambda)$. 
We put $P_{\beta}=P(\lambda_{\beta})$, 
$M_{\beta }=Z_G(\lambda_{\beta })$  and 
$U_{\beta }=U(\lambda_{\beta })$.

Let $\chi_{\beta}$ be the indivisible rational character of $M_{\beta}$
such that it  is trivial on $T_0$ and that 
the restriction of $\chi_{\beta}^a$ 
to $T_{\text{st}}$ coincides with $b \be$ 
for some positive integers $a,b$. 
There is a slight ambiguity on the domain of definition of $\chi_{\be}$
in \cite{tajima-yukie} and so we specified the domain of 
definition of $\chi_{\be}$ to be $M_{\be}$. 
We put 
\begin{align*}
M^{\text{st}}_{\be} & = (M_{\be}\cap G_{\text{st}})^{\circ} \;
\text{(the identity component)}, \\
G_{\be} & =\{g\in M_{\beta }\,|\,\chi_{\beta}(g)=1\}^{\circ }, \\ 
G_{\text{st},\beta} & =\{g\in M^{\text{st}}_{\beta}\mid \chi_{\be}(g)=1\}^{\circ}
= (G_{\be}\cap G_{\text{st}})^{\circ}. 
\end{align*}
In the situation of Corollary 1.4 \cite[p.264]{tajima-yukie} and 
\cite[p.4]{tajima-yukie-GIT1},  
the above $G_{\be}$ was considered (not explicitly in \cite[p.264]{tajima-yukie})
with $G$ replaced by
$G_1$ in \cite{tajima-yukie} ($G_1$ in \cite{tajima-yukie} is $G_{\text{st}}$ here. 
So the above $G_{\text{st},\beta}$ is $(G^1)_{\be}^{\circ}$ of 
\cite[p.264]{tajima-yukie} and \cite[p.4]{tajima-yukie-GIT1}. 
Let $M^s_{\be}$ be the semi-simple part of $M_{\be}$.

If $(G,V)$ is the \pv{} (\ref{eq:PV}) and 
$M_{\be}$ is in the form (\ref{eq:parabolic-defn}), 
then $M^{\text{st}}_{\be}$ is the group (\ref{eq:M1-defn}). 
Note that $M^{\text{st}}_{\be}$ in (\ref{eq:M1-defn}) is connected.

The group $G_{\beta }$ acts on $Z_{\beta }$. 
Note that $M_{\beta }$ and $G_{\beta }$ are defined over $k$, and 
since $\langle \chi_{\beta},\lambda_{\beta}\rangle$ is a positive
multiple of $\|\beta\|_{*}$,
$M_{\beta}=G_{\beta}\im(\lambda_{\beta})$.
Moreover, if $\nu$ is any rational 1PS in $G_{\beta}$, 
$(\nu,\lambda_{\beta})=0$.

Let $\p(V)^{\semi}$ (resp. $\mathbb P(Z_{\beta })^{\semi}$)
be the set of semi-stable points of $\p(V)$ (resp. 
$\mathbb P(Z_{\beta })$) with respect to 
the action of $G_{\text{st}}$ (resp. $G_{\text{st},\be}$). 
Since there is a difference between $V$ and $\p(V)$
(resp. $Z_{\beta}$ and $\mathbb P(Z_{\beta })$), 
we removed some scalar directions from 
$G$ (resp. $G_{\beta}$) and considered stability with respect to 
$G_{\text{st}}$ (resp. $G_{\text{st},\be}$).  
For the notion of semi-stable points, see \cite{mufoki}. 
We regard $\mathbb P(Z_{\beta })^{\semi}$ as a subset of $\p(V)$. 
Put 
\begin{align*}
& V^{\semi} = \pi_V^{-1}(\p(V)^{\semi}),\; 
Z_{\beta }^{\semi}=\pi_V^{-1}(\mathbb P(Z_{\beta })^{\semi }), \;
Y_{\beta }^{\semi}=\{(z,w)\,|\, z\in Z_{\beta}^{\semi},w\in W_{\beta}\}. 
\end{align*}
We define $S_{\beta }=GY_{\beta }^{\semi }$. 
Note that $S_{\beta }$ can be the empty set. 
We denote the set of $k$-rational points of 
$S_{\beta}$, etc., by $S_{\beta \,k}$, etc.


The following theorem is COROLLARY 1.4 \cite[p.264]{tajima-yukie}. 
\begin{thm} 
\label{KKN} 
Suppose that $k$ is a perfect field. Then  we have 
\begin{align*} 
V_k\setminus \{0\} = V^{\semi } _k
\coprod \coprod_{\beta \in \mathfrak{B}} S_{\beta \, k}. 
\end{align*}
Moreover, $S_{\beta \,k}\cong G_{k}\times_{P_{\beta \,k}} Y_{\beta \,k}^{\semi }$. 
\end{thm}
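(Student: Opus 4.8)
The plan is to deduce the statement by Galois descent from the instability stratification over the algebraic closure; this is precisely Corollary 1.4 of \cite{tajima-yukie}, and I indicate how its proof is organized. Over $\overline k$ (which equals $k^{\sep}$ since $k$ is perfect), the Hesselink--Kempf--Kirwan--Ness stratification of the unstable locus of $V\setminus\{0\}$ for the $G_{\text{st}}$-action gives $V\setminus(V^{\semi}\cup\{0\})=\coprod_{\beta\in\gB}S_{\beta,\overline k}$ into $G$-invariant, pairwise disjoint, locally closed strata, with the sweeping map $\mu_\beta\colon G\times_{P_\beta}Y_\beta^{\semi}\to S_\beta$, $(g,y)\mapsto g\cdot y$, an isomorphism of varieties (for the notion of semistability see \cite{mufoki}). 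Because $G$ is split, $X^*(T_{\text{st}})$ is a trivial Galois module, so each $\beta\in\gB$, and hence $\lambda_\beta\in X_*(T_{\text{st}})$, is Galois-fixed; consequently $P_\beta$, $M_\beta$, $G_\beta$, $U_\beta$, $Z_\beta$, $W_\beta$, $Y_\beta$, the character $\chi_\beta$, and the semistable loci $Y_\beta^{\semi}$, $Z_\beta^{\semi}$ are all defined over $k$, and therefore so are $S_\beta$ and $\mu_\beta$. Since $\mu_{\beta,\overline k}$ is an isomorphism, $\mu_\beta$ is an isomorphism of $k$-schemes.

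Next I would pass to $k$-rational points for the first assertion. A finite disjoint union of $k$-subvarieties of $V\setminus\{0\}$ stays disjoint on $k$-points, and any $v\in V_k\setminus\{0\}$ lies, over $\overline k$, in exactly one of $V^{\semi}$ or the $S_\beta$; since that piece is defined over $k$ and $v$ is $k$-rational, $v$ is already a $k$-point of it. This gives $V_k\setminus\{0\}=V^{\semi}_k\coprod\coprod_{\beta\in\gB}S_{\beta,k}$.

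For the second assertion, since $\mu_\beta$ is a $k$-isomorphism it suffices to identify $(G\times_{P_\beta}Y_\beta^{\semi})(k)$ with $G_k\times_{P_{\beta,k}}Y_{\beta,k}^{\semi}$. The quotient $\rho\colon G\times Y_\beta^{\semi}\to G\times_{P_\beta}Y_\beta^{\semi}$ is a $P_\beta$-torsor which is Zariski-locally trivial, because $P_\beta$ is parabolic in the connected group $G$ and $G\to G/P_\beta$ has Zariski-local sections (\cite[p.148]{Springer-LAG}). For $v\in S_{\beta,k}$ the fiber $\rho^{-1}(v)$ is then a $P_\beta$-torsor over $\spec k$; since $G$ is split, parabolic subgroups of the type of $P_\beta$ form a single $G_k$-conjugacy class, which forces this torsor to be trivial --- equivalently, Kempf's optimal 1PS attached to the unstable vector $v$ is unique, hence $k$-rational, and one can conjugate its parabolic to $P_\beta$ inside $G_k$. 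Thus $v$ is carried into $Y_{\beta,k}^{\semi}$ by some element of $G_k$, and this produces the desired point of $G_k\times_{P_{\beta,k}}Y_{\beta,k}^{\semi}$; conversely the map $G_k\times_{P_{\beta,k}}Y_{\beta,k}^{\semi}\to S_{\beta,k}$ is injective by the definition of the contracted product.

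The step I expect to be the main obstacle is the last one: a priori $(G\times_{P_\beta}Y_\beta^{\semi})(k)$ could be strictly larger than $G_k\times_{P_{\beta,k}}Y_{\beta,k}^{\semi}$, the discrepancy being controlled by the kernel of $\h^1(k,P_\beta)\to\h^1(k,G)$ pulled back along $k$-points of the base. Ruling this out is exactly where the splitness hypothesis on $G$ is used, through $G_k$-transitivity on parabolics of a fixed type together with the uniqueness of Kempf's optimal 1PS of an unstable vector --- the content already packaged in Corollary 1.4 of \cite{tajima-yukie}, which we invoke.
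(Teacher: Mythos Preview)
Your proposal is appropriate: the paper does not prove this theorem at all but simply cites it as Corollary~1.4 of \cite{tajima-yukie}, and you correctly identify this and sketch the argument behind that corollary. Your outline---HKKN stratification over $\overline k$, descent of all the strata data using that $G$ is split, and then Kempf's rationality theorem for the optimal destabilizing one-parameter subgroup to handle the passage to $k$-points of the contracted product---is the standard route and matches what is done in \cite{tajima-yukie}.

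One small clarification on your last paragraph: the triviality of the $P_\beta$-torsor over a $k$-point $v\in S_{\beta,k}$ is not merely a consequence of $G_k$-transitivity on parabolics of a given type; it genuinely requires Kempf's theorem that the optimal 1PS of a $k$-rational unstable point is itself $k$-rational (for $k$ perfect), together with the fact that all optimal 1PS's for $v$ are $P(\lambda)_k$-conjugate. You do mention Kempf's uniqueness, so the ingredient is present, but the logical dependence runs through Kempf rather than through the conjugacy of parabolics alone. With that adjustment your sketch is sound.
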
  

Note that the set $\gB$, not $\gC$, is used in the above theorem. 
The point of Corollary 1.4 \cite[p.264]{tajima-yukie} is 
that even though the stability was considered for 
$G_{\text{st}}$ and $G_{\be,\text{st}}$, the inductive structure 
of $S_{\be}$ is with respect to the action of $G$.

%

\section{Regularity}
\label{sec:regularity}

In \cite{kato-yukie-jordan}, 
we discussed the notion of regularity 
mainly for irreducible \pv s. 
The notion of regularity was defined for 
not necessarily irreducible \pv s in 
\cite{kato-yukie-jordan}. In this paper, we have to deal with 
many reducible \pv s and it will be necessary
to clarify the relation between the number of 
independent characters and the number of 
independent relative invariant polynomials. 

Let $G$ be a connected reductive group, 
$Z$ the identity component of the center of $G$,  
$V_1\ccd V_N\not=\{0\}$  finite dimensional 
\rep s of $G$ over $k$  
and $\chi_1\ccd \chi_N$ characters of $Z$ over $k$. 
We assume that $t\in Z$ acts on $V_i$ 
by multiplication by $\chi_i(t)$.  
Let $\Gam \sub X^*(Z)$ be the subgroup generated by 
$\chi_1\ccd \chi_N$. We assume that 
$\Gam\cong \Z^N$. This implies that 
$\{\chi_1\ccd \chi_N\}$ is a basis of $\Gam$. 
Let $V=V_1\oplus \cdots \oplus V_N$. 

\begin{defn}\
\label{defn:PV-defn}
In the above situation,  $(G,V)$ is called a \pv{}   
if it satisfies the following properties.
\begin{itemize}
\item[(1)]
There exists a Zariski open orbit. 
\item[(2)]
There exists a non-constant polynomial $\Del(x)\in k[V]$ 
and a character $\chi$ of $G$  
such that $\Del(gx)=\chi(g)\Del(x)$. 
\end{itemize}
\end{defn}

The polynomial $\Del(x)$ in (2) is called a {\it relative invariant polynomial}.

\begin{lem}
\label{lem:Pi-homogeneos}
If $\Del(x)$ is a relative invariant polynomial then 
it is homogeneous with respect to each of $V_1\ccd V_N$.
\end{lem}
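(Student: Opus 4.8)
The plan is to exploit the action of the central torus $Z$ together with the fact that $\{\chi_1,\dots,\chi_N\}$ is a basis of $\Gam\cong\Z^N$. First I would fix the relative invariant $\Del(x)$ with character $\chi$ of $G$, and restrict attention to the subtorus $Z$. Decompose $\Del$ according to multidegree: write $\Del = \sum_{\underline d} \Del_{\underline d}$ where $\underline d=(d_1,\dots,d_N)\in\Z_{\geq 0}^N$ and $\Del_{\underline d}$ is the component that is homogeneous of degree $d_i$ in the $V_i$-variables for each $i$. The goal is to show exactly one $\Del_{\underline d}$ is nonzero.

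The key computation is the following. For $t\in Z$, the assumption is that $t$ acts on $V_i$ by the scalar $\chi_i(t)$, hence on a monomial of multidegree $\underline d$ by $\prod_i \chi_i(t)^{d_i}$. Therefore $\Del(tx) = \sum_{\underline d} \big(\prod_i \chi_i(t)^{d_i}\big)\Del_{\underline d}(x)$. On the other hand the relative invariance gives $\Del(tx)=\chi(t)\Del(x) = \sum_{\underline d}\chi(t)\Del_{\underline d}(x)$. Since the $V_i$-multigrading is a direct-sum decomposition of $k[V]$, I can compare components of each fixed multidegree $\underline d$: for every $\underline d$ with $\Del_{\underline d}\neq 0$ one gets $\prod_i \chi_i(t)^{d_i} = \chi(t)$ as characters of $Z$, i.e. $\sum_i d_i\chi_i = \chi|_Z$ in $X^*(Z)$. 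Now if $\underline d$ and $\underline d{}'$ both have nonzero component, then $\sum_i(d_i-d_i')\chi_i = 0$ in $X^*(Z)$; because $\{\chi_1,\dots,\chi_N\}$ is a $\Z$-basis of the free group $\Gam$, this forces $d_i = d_i'$ for all $i$. Hence there is at most one multidegree contributing, so $\Del=\Del_{\underline d}$ for a single $\underline d$, which is precisely the assertion that $\Del$ is homogeneous in each $V_i$ separately.

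One small point to address: one should either work over $k^{\sep}$ (or $\overline k$) to have enough $k$-points of $Z$ to separate characters, or simply phrase the argument scheme-theoretically, using that the identity $\Del(tx)=\chi(t)\Del(x)$ holds as an identity of polynomials in $t$ and $x$, so the comparison of multigraded components is an identity in $k[Z]\otimes k[V]$ and the linear independence of the distinct characters $\prod_i\chi_i^{d_i}$ of $Z$ (which holds since they are distinct elements of $X^*(Z)$, and distinct characters are linearly independent functions on a torus) does the separation. I would phrase it this latter way to avoid any hypothesis on the size of $k$. The main obstacle, such as it is, is purely bookkeeping: being careful that the $V_i$-multigrading really is a grading of $k[V]$ (true since $V=\bigoplus V_i$) and that the hypothesis $\Gam\cong\Z^N$ is exactly what rules out two different multidegrees collapsing to the same character of $Z$ — there is no serious analytic or geometric difficulty here.
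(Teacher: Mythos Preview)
Your argument is correct and essentially equivalent to the paper's, though the execution differs. The paper argues one factor $V_i$ at a time: using $\Gam\cong\Z^N$ it picks a one-parameter subgroup $\lam(t)\subset Z$ with $\chi_i(\lam(t))=t^a$ (some $a>0$) and $\chi_j(\lam(t))=1$ for $j\neq i$, so that $\lam(t)$ scales only the $V_i$-variables; then $\Del(\lam(t)x)=t^b\Del(x)$ forces homogeneity in $V_i$. You instead work on the character side, comparing multidegree components against the single character $\chi|_Z$ and invoking that $\{\chi_1,\dots,\chi_N\}$ is a $\Z$-basis of $\Gam$ to rule out two distinct multidegrees. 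These are dual uses of the same hypothesis (cocharacters versus characters), and your version dispatches all $N$ variables simultaneously rather than one at a time. Your remark about working with the identity in $k[Z]\otimes k[V]$ to avoid issues with small $k$ is a good precaution; the paper's phrasing implicitly relies on the same polynomial-identity reasoning.
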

\begin{proof}
We only consider $V_1$. 
Suppose that $\Del(gx)=\chi(g)\Del(x)$. 
Since $\Gam\cong \Z^N$, 
there exists a 1PS $\lam(t)\sub Z$
such that $\chi_1(\lam(t))=t^a$ ($a>0$) 
and $\chi_2(\lam(t))=\cdots =\chi_N(\lam(t))=1$. 
Then for $v_1\in V_1\ccd v_N\in V_N$ and $t\in\gl_1$, 
\begin{equation*}
\Del(\lam(t)(v_1\ccd v_N))
= \chi(\lam(t))\Del(v_1\ccd v_N)
= \Del(t^av_1,v_2\ccd v_N). 
\end{equation*}
There exists an integer $b$ such that 
$\chi(\lam(t))=t^b$. Then 
\begin{equation*}
\Del(t^av_1,v_2\ccd v_N)=t^b\Del(v_1\ccd v_N).
\end{equation*}
Therefore, $\Del(x)$ is homogeneous with respect to $V_1$. 
\end{proof}

The following proposition will be useful 
to show the existence of an open orbit. 
One can prove it 
by the same argument as in \cite[p.321]{kato-yukie-jordan}.  
\begin{prop}
\label{prop:open-orbit}
Suppose that $G$ is an algebraic group, 
$V$ a finite dimensional \rep{} of $G$ both defined over $k$, 
$x\in V$ and that $\dim {\mathrm T}_{e_G}(G_x)=\dim G-\dim V$. 
Then $Gx\sub V$ is Zariski open and the group scheme 
$G_x$ is smooth over $k$. 
\end{prop}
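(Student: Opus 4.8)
The plan is to exploit the dual-number description of the tangent space recalled in Section~\ref{sec:notation}: $\mathrm T_{e_G}(G_x)$ is the space of $k[\vep]/(\vep^2)$-points of $G_x$ reducing to $e_G$, which unwinds to $\{A\in \mathrm T_{e_G}(G):\, (e_G+\vep A)x=x \text{ in }V\otimes_k k[\vep]/(\vep^2)\}$, i.e.\ the kernel of the differential $d\mu_x:\mathrm T_{e_G}(G)\to \mathrm T_x(V)=V$ of the orbit map $\mu_x:G\to V$, $g\mapsto gx$. First I would record that $d\mu_x$ is surjective: its image has dimension $\dim G-\dim\ker d\mu_x=\dim G-\dim\mathrm T_{e_G}(G_x)$, which by hypothesis equals $\dim V=\dim \mathrm T_x(V)$. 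Since this holds over $k$ and everything is a linear-algebra statement, it holds after any base change, so $d\mu_{x}$ is surjective over $\overline k$ as well.

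Next I would invoke the standard fact about orbit maps for algebraic group actions: the orbit $Gx$ is a locally closed smooth subvariety of $V$, and over $\overline k$ its dimension equals $\dim G-\dim (G_x)_{\overline k, \mathrm{red}}$, while the image of $d\mu_x$ (over $\overline k$) is the tangent space to $Gx$ at $x$. Surjectivity of $d\mu_x$ over $\overline k$ then forces $\dim Gx=\dim V$, and since $Gx$ is irreducible, locally closed, and of full dimension in the irreducible variety $V$, it is Zariski open. This disposes of the openness assertion.

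For smoothness of the group scheme $G_x$ over $k$: since $\mu_x$ is the composite of the orbit map onto the open subvariety $Gx$ followed by the open immersion $Gx\hookrightarrow V$, and $G_x=\mu_x^{-1}(x)$ is the fiber, the relevant computation is that $G\to Gx$ is a smooth (in fact faithfully flat) morphism with fibers the cosets of $G_x$; equivalently one checks that $\dim_k \mathrm T_{e_G}(G_x)=\dim G-\dim Gx=\dim G-\dim V$, which is exactly the hypothesis, so the tangent space at the identity has the expected dimension $\dim G_x$. Because $G_x$ is a group scheme over $k$, homogeneity propagates smoothness from the identity point to all of $G_x$ (a group scheme locally of finite type over a field is smooth iff it is smooth at the identity iff $\dim_k\mathrm T_{e}(G_x)=\dim G_x$). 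I expect the main obstacle to be handling the non-reducedness carefully: one must not presume $G_x$ is reduced, so I would phrase the dimension count at the scheme-theoretic level (comparing $\dim_k\mathrm T_{e_G}(G_x)$ with $\dim G_x$, the latter read off from $\dim Gx$), and conclude smoothness, hence a posteriori reducedness and geometric reducedness, from that equality rather than assuming it. The rest is the routine verification, identical to the argument on \cite[p.321]{kato-yukie-jordan}, that the differential of the orbit map computes the tangent space of the stabilizer via dual numbers, which I would only sketch.
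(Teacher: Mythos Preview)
The paper does not prove this proposition; it only remarks that ``one can prove it by the same argument as in \cite[p.321]{kato-yukie-jordan}.'' Your sketch is correct and is precisely the standard argument that citation points to: identify $\mathrm T_{e_G}(G_x)$ with $\ker(d\mu_x)$ via dual numbers, use the hypothesis to force $d\mu_x$ surjective, conclude $\dim Gx=\dim V$ so the (locally closed, irreducible) orbit is open, and then read off smoothness of $G_x$ from the equality $\dim \mathrm T_{e_G}(G_x)=\dim G-\dim V=\dim G-\dim Gx=\dim G_x$ together with homogeneity. There is nothing to compare against, and your handling of the non-reducedness issue (computing $\dim G_x$ from the orbit dimension rather than assuming reducedness) is exactly the care the argument requires.
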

\begin{prop} 
\label{prop:regularity}
Suppose that $G$ is an algebraic group,  
$V$ a finite dimensional \rep{} of $G$ both defined over $k$ 
and that there exists a point 
$w\in \rv$ such that $U=Gw$ is Zariski open and that  
$G_w$ is reductive (and so smooth as a group scheme).
Then 
\begin{itemize}
\item[(1)] $U$ is affine,  
\item[(2)] $U_{k^{\sep}}$ is a single $G_{k^{\sep}}$-orbit.
\item[(3)] There exists a relative invariant polynomial and so 
$(G,V)$ is a \pv. 
\end{itemize}
\end{prop}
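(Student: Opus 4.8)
The plan is to exploit the classical theory of reductive group actions with open orbit. For (1), the key fact is that the complement $V \setminus U$ is a closed $G$-invariant subset; since $U = Gw$ is open and $G_w$ is reductive, one wants to conclude that $V \setminus U$ is a hypersurface (pure codimension one), which by an affineness criterion would give that $U$ is affine. The cleanest route is via Matsushima's criterion (or its char-$p$ analogue in the reductive-stabilizer case): the homogeneous space $G/G_w \cong U$ is affine precisely because $G_w$ is reductive. So I would invoke that criterion directly: $U \cong G/G_w$ as varieties over $k^{\sep}$ (using smoothness of $G_w$ from the hypothesis), and Matsushima's theorem says a homogeneous space $G/H$ of a reductive group is affine iff $H$ is reductive. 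This gives (1) at least over $k^{\sep}$, and affineness descends to $k$.

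For (2), I would argue that $U_{k^{\sep}}$ is a single orbit as follows. Any $G_{k^{\sep}}$-orbit in $V_{k^{\sep}}$ of dimension equal to $\dim U$ is open (by the same dimension count as in Proposition~\ref{prop:open-orbit}), hence its closure is an irreducible component of $V_{k^{\sep}}$; but $V_{k^{\sep}}$ is irreducible, so there can be only one such orbit, and since $U_{k^{\sep}}$ is open and $G_{k^{\sep}}$-stable it must coincide with that orbit. Concretely: $U_{k^{\sep}}$ is open, nonempty, and irreducible, and the $G_{k^{\sep}}$-orbit of any of its points is open in $V_{k^{\sep}}$, hence dense, hence meets the open set $U_{k^{\sep}}$ and therefore (being $G$-stable and contained in $U_{k^{\sep}}$) equals $U_{k^{\sep}}$.

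For (3), this is where the reductivity of $G_w$ really pays off. Since $U \cong G/G_w$ is affine (part (1)) and $V \setminus U$ is closed of pure codimension one — this pure-codimension-one statement needs justification: $V$ is smooth (it is affine space), $U$ is affine and open, and a proper closed subset of a smooth variety whose complement is affine must be pure of codimension one — the ideal of $V \setminus U$ is defined by a single nonzero polynomial $\Del(x)$ up to a unit, because $k[V]$ is a UFD (polynomial ring) so a height-one ideal is principal, provided $V \setminus U$ is irreducible or at worst we take $\Del$ to be the product of the defining polynomials of its components. Then $\Del$ is $G$-semi-invariant: for $g \in G$, $g \cdot \Del$ has the same zero set (namely $V \setminus U$, which is $G$-invariant), so $g\cdot\Del = \chi(g)\Del$ for a scalar $\chi(g) \in k^{\sep\times}$, and $\chi$ is visibly a character of $G$. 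Thus $\Del$ is a relative invariant polynomial and $(G,V)$ is a \pv{} by Definition~\ref{defn:PV-defn}.

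The main obstacle I anticipate is \emph{verifying the pure-codimension-one claim for $V \setminus U$ in a characteristic-free way}, together with the descent arguments ensuring $\Del$ can be taken over $k$ rather than merely $k^{\sep}$. The codimension-one point is really the heart of the matter: it follows from the fact that the complement of an affine open subset of a smooth (or even just normal, noetherian, finite-dimensional) variety is pure of codimension one — a standard consequence of Serre's affineness criterion or of the fact that an affine morphism from an affine scheme that is an open immersion cannot omit a subset of codimension $\geq 2$ from a normal variety. For the rationality of $\Del$: the zero locus $V \setminus U$ is defined over $k$, hence so is its ideal, hence the generator $\Del$ can be chosen in $k[V]$ (the principal generator of a $k$-rational height-one prime, or more simply the radical-ideal generator, is unique up to $k^{\times}$); then $\chi$ automatically takes values in $k^{\times}$ on $G_k$ and extends to a character defined over $k$. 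I expect the authors handle this by the cited argument from \cite[p.321]{kato-yukie-jordan} rather than re-deriving Matsushima, but the skeleton above is the shape of the proof.
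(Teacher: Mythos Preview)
Your approach matches the paper's: parts (1) and (2) are simply cited from Proposition~2.2 of \cite{kato-yukie-jordan} (which is the Matsushima-type argument you outline), and for (3) the paper proves the pure-codimension-one claim exactly as you sketch --- set $W = V \setminus \bigcup_i F_i$ where the $F_i$ are the codimension-one components of $V\setminus U$, note $U \subset W$ are both affine with $W\setminus U$ of codimension $\geq 2$, invoke normality of $W$ to conclude $U = W$, then read off the semi-invariants $\Del_i$ from the $G$-invariant hypersurfaces $F_i$. One caveat: Matsushima's criterion as usually stated assumes $G$ reductive, which the proposition does not, but the direction you need ($H$ reductive smooth $\Rightarrow G/H$ affine for affine $G$) holds in general via Haboush--Nagata.
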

\begin{proof}

(1), (2) are proved in Proposition 2.2 \cite[p.310]{kato-yukie-jordan}. 

(3) Let $F_1\ccd F_m$ be all irreducible components of codimension $1$ 
of $V\setminus U$. Since $\bigcup_i F_i$ is $G$-invariant and 
$G$ is irreducible, $F_1\ccd F_m$ are $G$-invariant. 
Let $W=V\setminus \left(\bigcup_i F_i\right)$. Then 
$U\sub W$ are both affine and the codimension of $W\setminus U$
is greater than or equal to $2$. Since $W$ is regular, it is normal. 
Therefore, any regular function on $U$ extends to $W$. 
So $W=U$. 

Since the origin $0$ does not belong to $U$, 
$V\setminus U\not=\emptyset$.  Suppose that $\Del_i(x)\in k[V]$ 
and $F_i$ is the zero set of $\Del_i(x)$ for $i=1\ccd m$.
Since $F_i$ is $G$-invariant, it is also the zero set of $\Del_i(gx)$ 
for all $g\in G$. Therefore, there exists a character 
$\psi_i$ of $G$ such that $\Del_i(gx)=\psi_i(g)\Del_i(x)$. 
Therefore, there exists a relative invariant polynomial. 
This proves (3). 
\end{proof}

\begin{defn}
\label{defn:regularity}
If $(G,V)$ is a \pv{} which satisfies the condition of 
the above proposition then $(G,V)$ is said to be {\it regular}. 
\end{defn}
\begin{prop}
\label{prop:number-of-components}
Suppose that $(G,V)$ is a regular \pv{} 
as in Proposition \ref{prop:regularity} 
and $V\setminus U=F_1\cup \cdots \cup F_m$ is the irreducible decomposition 
($F_1\ccd F_m$ are distinct). Then $m\leq N$.  
\end{prop}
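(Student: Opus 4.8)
The plan is to show that each irreducible component $F_i$ of $V\setminus U$ gives rise to an independent character of $G$, and that these characters land in the lattice generated by $\chi_1\ccd\chi_N$, which has rank $N$. By the proof of Proposition \ref{prop:regularity}(3), each $F_i$ is $G$-invariant of codimension $1$, hence is the zero set of an irreducible polynomial $\Del_i(x)\in k[V]$, and there is a character $\psi_i$ of $G$ with $\Del_i(gx)=\psi_i(g)\Del_i(x)$. By Lemma \ref{lem:Pi-homogeneos}, each $\Del_i$ is homogeneous in each of $V_1\ccd V_N$; say $\Del_i$ has degree $d_{ij}$ in $V_j$. Then for $t\in Z$ acting on $V_j$ by $\chi_j(t)$, we get $\psi_i(t)=\prod_{j=1}^N \chi_j(t)^{d_{ij}}$, so the restriction $\psi_i|_Z$ equals $\sum_j d_{ij}\chi_j$ in $X^*(Z)$.

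First I would reduce to a statement about the lattice $\Gam\cong\Z^N$ inside $X^*(Z)$. It suffices to prove that the vectors $(d_{i1}\ccd d_{iN})\in\Z^N$ for $i=1\ccd m$ are linearly independent over $\Q$; since $\Z^N$ has rank $N$, this forces $m\leq N$. Equivalently, the restricted characters $\psi_1|_Z\ccd\psi_m|_Z$ are linearly independent in $X^*(Z)\otimes\Q$. Suppose not: then some non-trivial integer relation $\prod_i \psi_i^{a_i}$ is trivial on $Z$ (after clearing denominators), i.e. $\prod_i \Del_i^{a_i}$, viewed as a rational function, has all its $V_j$-degrees zero and is fixed by $Z$ up to a character trivial on $Z$. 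The key point is that the $\Del_i$ are the defining equations of \emph{distinct} irreducible hypersurfaces, so the $\Del_i$ are pairwise non-associate irreducible polynomials; hence $\prod_{a_i>0}\Del_i^{a_i}$ and $\prod_{a_i<0}\Del_i^{-a_i}$ are coprime polynomials, and a relation between them forcing the quotient to be $Z$-invariant and homogeneous of degree $0$ in every $V_j$ would make the quotient a unit in $k[V]$, hence a constant — contradicting that the two products are non-constant and coprime unless the relation is trivial.

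The cleanest way to organize the last step is: consider the homomorphism $\Phi:\Z^m\to X^*(Z)$, $(a_i)_i\mapsto \sum_i a_i(\psi_i|_Z)=\sum_i a_i\sum_j d_{ij}\chi_j$, whose image lies in $\Gam\cong\Z^N$. I claim $\Phi$ is injective. If $\sum_i a_i(\psi_i|_Z)=0$ then, writing $D^+=\prod_{a_i>0}\Del_i^{a_i}$ and $D^-=\prod_{a_i<0}\Del_i^{-a_i}$, the function $D^+/D^-$ is homogeneous of degree $0$ with respect to each $V_j$ (because the total $\chi_j$-weight cancels and each $\Del_i$ is $V_j$-homogeneous) and transforms under all of $G$ by the character $\prod_i\psi_i^{a_i}$; but being a ratio of coprime polynomials that is $V_j$-homogeneous of degree $0$ for every $j$ means numerator and denominator are both scalars, so $a_i=0$ for all $i$. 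Therefore $\Phi$ is an injection $\Z^m\hookrightarrow\Gam\cong\Z^N$, giving $m\leq N$.

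I expect the main obstacle to be the bookkeeping in the last step: making rigorous the passage from ``$\sum_i a_i(\psi_i|_Z)=0$ in $X^*(Z)$'' to ``$D^+/D^-$ is a nonzero constant'', in particular checking that cancellation of the $Z$-weight really forces the multidegree of $D^+$ to equal that of $D^-$ in each $V_j$ separately (this uses that $\{\chi_1\ccd\chi_N\}$ is a \emph{basis} of $\Gam$, i.e. $\Gam\cong\Z^N$, which is part of the standing hypotheses), and that coprime polynomials with equal multidegrees whose ratio is $G$-semi-invariant must be proportional — for which one invokes unique factorization in $k[V]$ together with $G$-invariance of the irreducible factors $F_i$ exactly as in the proof of Proposition \ref{prop:regularity}(3). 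Everything else is formal.
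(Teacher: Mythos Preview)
Your overall strategy matches the paper's: associate to each $F_i$ an irreducible relative invariant $\Del_i$ with character $\psi_i$, assume a nontrivial relation $\sum_i a_i(\psi_i|_Z)=0$, and derive a contradiction via unique factorization. The gap is in how you extract the contradiction.

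The assertion ``being a ratio of coprime polynomials that is $V_j$-homogeneous of degree $0$ for every $j$ means numerator and denominator are both scalars'' is false. Multidegree $0$ only says $D^+$ and $D^-$ have the \emph{same} multidegree, not that this multidegree is zero. Equivalently, $Z$-invariance of $D^+/D^-$ in no way forces $D^+/D^-$ to be constant: there are plenty of $Z$-invariant nonconstant rational functions on $V$. Your last paragraph flags this passage as the obstacle but the proposed fix (``coprime polynomials with equal multidegrees whose ratio is $G$-semi-invariant must be proportional'') is not a theorem either --- every $\Del_i/\Del_j$ with $i\neq j$ is $G$-semi-invariant, and you can adjust multidegrees at will without making anything proportional.

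What is missing is the step from $Z$-invariance to full $G$-invariance, together with the use of the open orbit. Since $[G,G]$ is connected semi-simple it admits no nontrivial characters, so each $\psi_i$ is already trivial on $[G,G]$; combined with $G=Z\cdot[G,G]$, a character of $G$ is trivial as soon as its restriction to $Z$ is. Hence $\sum_i a_i(\psi_i|_Z)=0$ forces $\prod_i\psi_i^{a_i}=1$ on all of $G$, so $D^+/D^-$ is genuinely $G$-invariant. Now the open orbit $U=Gw$ enters: any $G$-invariant rational function is constant on $U$, hence constant. Then $D^+=cD^-$ with $c\in k^\times$ contradicts unique factorization in $k[V]$ since the $\Del_i$ are pairwise non-associate irreducibles. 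This is exactly the paper's argument, and the open-orbit hypothesis is not dispensable here.
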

\begin{proof}
Suppose that $m>N$ and that $F_i$ is the zero set of $\Del_i$ for all $i$. 
Since $\Del_i(x)$ is homogeneous with respect to $V_1\ccd V_N$, 
there exist integers $c_1\ccd c_m$ such that $(c_1\ccd c_m)\not=(0\ccd 0)$ 
and $\prod_{i=1}^m\Del_i(tx)^{c_i}=\prod_{i=1}^m\Del_i(x)^{c_i}$ 
for all $t\in Z$.  Since the derived subgroup $[G,G]$ is connected 
semi-simple, $\psi_1\ccd \psi_m$ are trivial on $[G,G]$. 
Since $G=Z\cdot [G,G]$, $\prod_{i=1}^m\Del_i(x)^{c_i}\in k(V)$  
is invariant under the action of $G$. However, since $U=Gw$ 
is Zariski open,  $\prod_{i=1}^m\Del_i(x)^{c_i}\in k^{\times}$.    
This is a contradiction since $k[V]$ is UFD and $F_1\ccd F_m$ 
are distinct.  
\end{proof}

\begin{cor}
\label{cor:reducible-sep-orbit}
In the situation of  Proposition \ref{prop:number-of-components}, 
if $m=N$ then 
$\{x\in V_{k^{\sep}}\mid \Del_1(x)\ccd \Del_N(x)\not=0\}=G_{k^{\sep}}w$. 
\end{cor}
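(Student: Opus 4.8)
The plan is to show both inclusions. The inclusion $G_{k^{\sep}}w \subseteq \{x \in V_{k^{\sep}} \mid \Del_1(x)\cdots\Del_N(x) \neq 0\}$ is almost immediate: by construction each $F_i$ is the zero set of $\Del_i$, so $U = V \setminus (F_1 \cup \cdots \cup F_m) = \{x \mid \Del_1(x)\cdots\Del_m(x) \neq 0\}$; since $m = N$ this is exactly the right-hand side, and $w \in U$ with $U$ being $G$-invariant gives $G_{k^{\sep}}w \subseteq U_{k^{\sep}}$. The content is in the reverse inclusion, and this is where I would spend the effort.

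First I would invoke part (2) of Proposition \ref{prop:regularity}, which says $U_{k^{\sep}}$ is a single $G_{k^{\sep}}$-orbit, namely $G_{k^{\sep}}w$. So it suffices to check that $U_{k^{\sep}} = \{x \in V_{k^{\sep}} \mid \Del_1(x)\cdots\Del_N(x) \neq 0\}$ as subsets of $V_{k^{\sep}}$. We already have $U = V \setminus (F_1 \cup \cdots \cup F_m)$ over $k$ from the proof of Proposition \ref{prop:regularity}(3), where $F_1, \ldots, F_m$ are precisely the codimension-one irreducible components of $V \setminus U$. With $m = N$ and $F_i = \zero(\Del_i)$, this reads $U = \{x \mid \prod_{i=1}^N \Del_i(x) \neq 0\}$ as a locus over $k$; base-changing the complement, $V_{k^{\sep}} \setminus U_{k^{\sep}} = \bigcup_i (F_i)_{k^{\sep}} = \{x \in V_{k^{\sep}} \mid \prod_{i=1}^N \Del_i(x) = 0\}$, which is the desired identification of $k^{\sep}$-points. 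Combining with Proposition \ref{prop:regularity}(2) finishes the argument.

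The one point requiring a little care — and the step I expect to be the main (minor) obstacle — is making sure that the set-theoretic description $U = \{\prod \Del_i \neq 0\}$ genuinely holds over $k$ before base change, i.e. that $V \setminus U$ has no codimension-$\geq 2$ components contributing extra zeros and that each $F_i$ really is cut out by the single polynomial $\Del_i$ (not a proper power or a product). The first is exactly the normality argument already run in the proof of Proposition \ref{prop:regularity}(3): $W = V \setminus \bigcup_i F_i$ is regular hence normal, any regular function on $U$ extends to $W$, so $W = U$ and $V \setminus U$ is pure of codimension one. For the second, since $k[V]$ is a UFD and $F_i$ is an irreducible hypersurface, one may take $\Del_i$ to be the (squarefree) irreducible defining equation, and then $V \setminus U$ is exactly the zero locus of $\prod_i \Del_i$ as a set. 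Base change to $k^{\sep}$ preserves the set of points, so $\{x \in V_{k^{\sep}} \mid \prod_i \Del_i(x) \neq 0\} = U_{k^{\sep}} = G_{k^{\sep}}w$ by Proposition \ref{prop:regularity}(2), as claimed.
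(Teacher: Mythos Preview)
Your proof is correct and follows exactly the route the paper intends: the corollary is stated without proof because it is immediate from the identity $U = V\setminus\bigcup_i F_i = \{\prod_i \Del_i \neq 0\}$ established in the proof of Proposition~\ref{prop:regularity}(3), combined with Proposition~\ref{prop:regularity}(2) giving $U_{k^{\sep}} = G_{k^{\sep}}w$. Your care in noting that $V\setminus U$ is pure of codimension one (so no extra components lurk) and that the $\Del_i$ may be taken irreducible is precisely the content needed to make the set-theoretic identification honest.
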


There is an alternative definition of \pv s, specifying a character. 
Let $G,V$ be as in the beginning of this section. 
We fix a non-trivial primitive rational character $\chi$ of $G$. 
The following is an alternative definition of 
\pv s. 

\begin{defn}
\label{defn:PV-alternative}
In the above situation, $(G,V,\chi)$ is
called a \pv{} if it satisfies the properties 
(1) of Definition \ref{defn:PV-defn} and 
there exists a non-constant polynomial $\Del(x)\in k[V]$ 
and a positive integer $a$ such that $\Del(gx)=\chi(g)^a\Del(x)$. 
\end{defn}

$\Del(x)$ in the above definition is called a 
relative invariant polynomial. 
If $(G,V,\chi)$ is a \pv{} then 
$\Del(x)$ in the above definition is 
essentially unique, i.e., 
if $\Del_1(x),\Del_2(x)$ are relative invariant 
polynomials then there exist positive integers
$a,b$ and $c\in \mk$ 
such that $\Del_1(x)^a=c\Del_2(x)^b$.  
We define 
$V^{\sst}=\{x\in V\mid \Del(x)\not=0\}$.
This definition does not depend on the 
choice of $\Del(x)$.

Suppose that $(G,V,\chi)$ is a \pv{} in the above sense. 
Let $Z$ be the center of $G$. 
We assume that there is a split torus $T_0\sub Z$ 
which acts on $V$ by scalar multiplication by 
a non-trivial character $\psi$. 
Let $G_{\text{st}}=(\kernel(\chi))^{\circ}$. 
Since $\chi,\psi$ are non-trivial, 
$\dim G_{\text{st}}=\dim G-1$ and $\psi$ is surjective.
Therefore, $G=T_0G_{\text{st}}$. We can choose $T_0$ so that 
$G_{\text{st}}\cap T_0$ is finite. 
Then this is the situation of Section \ref{sec:notation-related-git}.
Moreover, $\Del(x)$ of 
Definition \ref{defn:PV-alternative} is 
indeed an invariant polynomial with respect to the action of 
$G_{\text{st}}$. Therefore, points of $V^{\sst}$ are pull backs from 
semi-stable points of $\p(V)$ in the sense 
of GIT.

Suppose that the condition of Corollary \ref{cor:reducible-sep-orbit} 
is satisfied. If moreover, a positive power of 
$\chi$ is $\psi_1^{a_1}\cdots \psi_m^{a_m}$ 
with $a_1\ccd a_m>0$ then $\Del(x)$ is a constant multiple of 
a positive power of $\Del_1(x)^{a_1}\cdots \Del_m(x)^{a_m}$. 
The converse is true, i.e., if 
$\Del(x)$ is a constant multiple of a positive power of  
$\Del_1(x)^{a_1}\cdots \Del_m(x)^{a_m}$ then 
a positive power of $\chi$ is $\psi_1^{a_1}\cdots \psi_m^{a_m}$. 
Therefore, $V^{\sst}=\{x\in V\mid \Del_1(x)\ccd \Del_m(x)\not=0\}$. 
So $V^{\sst}_{k^{\sep}}=G_{k^{\sep}}w$ is a single $G_{k^{\sep}}$-orbit.
It turns out later that in our case the condition of 
Corollary \ref{cor:reducible-sep-orbit} and this additional condition
are satisfied for all 
$(M_{\be},Z_{\be})$ where we have to verify 
$Z^{\sst}_{\be\,k^{\sep}}$ is a single $M_{\be\,k^{\sep}}$-orbit.

For the rest of this section, we review the notion of 
universally generic element in \cite{kable-yukie-quinary}. 
We consider a slightly more general situation than in 
\cite{kable-yukie-quinary}.

Let $S$ be a finite set of primes in $\Z$. 
Let $R$ be the ring generated over $\Z$ by 
$\{p^{-1}\mid p\in S\}$. 
Let $G$ be a smooth group scheme over $R$ 
with connected geometric fibers, 
$V$ a free $R$-module of finite rank, 
$\gl_R(V)$ the group scheme over $R$ 
of $R$-module isomorphims of $V$ 
and $G\to \gl_R(V)$ a homomorphism. We regard $V$ 
as  a scheme over $R$. 
If $k$ is an algebraically closed field and 
$R\to k$ is a homomorphism then 
$G\times_R k$ is an algebraic group over $k$ 
and $V\times_R k$ is a \rep{} of $G\times_R k$ 
over $k$.  Suppose that $w\in V$. 
We denote the image of $w$ in  
$V\times_R k$ by $w(k)$. 

\begin{defn}
\label{defn:universally-generic}
If for any $k$ as above $(G\times_R k) w(k)\sub V\times_R k$ 
is Zariski open then we say that 
$w$ is {\it universally generic} outside $S$. 
If $S=\emptyset$ then we say $w$ is universally generic.  
\end{defn}

The following proposition can be proved 
in the same manner as in Proposition 1 
\cite[p. 279]{kable-yukie-quinary} 
and so we do not provide the proof. 

\begin{prop}
\label{prop:universally-generic}
Suppose that $S,R,G,V,w$ are as above 
and that $w$ is universally generic outside $S$. 
Suppose that $(\pi,W)$ is a finite dimensional 
\rep{} of $G\times_R \Q$ defined over $R$. If 
$\Phi:V\times_R \Q\to W\times_R \Q$ is a 
$G\times_R \Q$-equivariant morphism and that 
$\Phi(w)\in W_R$. Then $\Phi$ is defined over $R$. 
\end{prop}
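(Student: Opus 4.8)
The plan is to mimic the argument of Proposition 1 in \cite[p.~279]{kable-yukie-quinary}, exploiting the fact that the open orbit of a universally generic element controls the equivariant morphism at every geometric point. First I would reduce to showing that the morphism $\Phi$, a priori only defined over $\Q$, carries no denominators beyond those already inverted in $R$; since $V\times_R\Q$ and $W\times_R\Q$ are defined over $R$, this is the assertion that $\Phi$ comes from a morphism of the $R$-schemes $V$ and $W$, equivalently that each coefficient of the polynomial map $\Phi$ lies in $R$ rather than merely in $\Q$. Write $\Phi$ in coordinates with coefficients in $\Q$; clearing denominators, there is a minimal positive integer $n$, a product of primes, such that $n\Phi$ has coefficients in $R$. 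The goal is to show $n$ is a unit in $R$, i.e. that every prime $p$ dividing $n$ lies in $S$.

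So suppose $p\notin S$ divides $n$, and reduce everything modulo $p$: the reduction $R\to\overline{\bbf_p}$ (choosing an algebraic closure) gives an algebraic group $G\times_R\overline{\bbf_p}$ acting on $V\times_R\overline{\bbf_p}$, and by hypothesis $w(\overline{\bbf_p})$ has Zariski-open orbit. The key point is that $\Phi(w)\in W_R$ by assumption, so $\Phi(w)$ reduces to a well-defined element of $W\times_R\overline{\bbf_p}$; combining this with $G\times_R\overline{\bbf_p}$-equivariance of the reduction of $\tfrac{n}{p}\,(n\Phi)$—wait, more carefully: consider $\Psi := \tfrac1p(n\Phi)$, which has coefficients in $\tfrac1p R$ but whose reduction $\overline{n\Phi}$ vanishes mod $p$ only if $p\mid n\Phi$ coefficientwise, which is false by minimality of $n$. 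Thus $\overline{n\Phi}$ is a nonzero $G\times_R\overline{\bbf_p}$-equivariant polynomial map $V\times_R\overline{\bbf_p}\to W\times_R\overline{\bbf_p}$ which is divisible by $p$ after evaluation at... Let me restate the mechanism: the real claim is that an equivariant morphism defined over $\Q$ that sends a point with open orbit into $W_R$ must already be defined over $R$, because on the open orbit $\Phi$ is determined by its single value $\Phi(w)$ together with equivariance, and $\Phi(w)\in W_R$ forces $\Phi$ to be $R$-integral on the open orbit, hence (the complement of the open orbit having codimension $\geq 1$, or rather by density and the separatedness of $W$) everywhere.

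Making that last sentence precise is the main obstacle. The cleanest route: since $w$ is universally generic outside $S$, Proposition \ref{prop:regularity}(2)-type reasoning gives that over any algebraically closed $k$ with $R\to k$, the orbit $(G\times_R k)w(k)$ is open and dense in $V\times_R k$; and the morphism $(g\mapsto g\cdot\Phi(w))$ from $G$ to $W$, together with the orbit map $g\mapsto g\cdot w$, fit into a diagram showing $\Phi$ restricted to the open orbit factors through these $R$-defined maps, so it is $R$-defined there. Then one invokes that $V$ is $R$-smooth (being a free module), the open orbit is fiberwise dense, and a rational map between $R$-schemes that is defined on a fiberwise-dense open and whose target is separated extends uniquely over $R$ precisely because it already exists over $\Q$ and agrees on an $R$-dense open with an $R$-morphism; two $R$-morphisms agreeing on a schematically dense open of a reduced scheme coincide. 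This forces the $\Q$-morphism $\Phi$ to equal an $R$-morphism, i.e. $\Phi$ is defined over $R$. I would carry out: (i) set up $n$ and the coordinate description; (ii) the equivariance-plus-single-value argument showing $R$-integrality on the open orbit fiber by fiber; (iii) the density/separatedness extension step; and I expect (ii) combined with the uniformity over all residue characteristics outside $S$ to be where all the actual content sits, exactly as in \cite{kable-yukie-quinary}.
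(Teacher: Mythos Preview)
The paper does not give its own proof here; it simply refers to Proposition~1 of \cite[p.~279]{kable-yukie-quinary}, and your minimal-$n$ reduction-mod-$p$ strategy is exactly that argument. You abandon it one sentence too early and then switch to a vaguer density argument, but the first approach was already complete in spirit: having chosen $n$ minimal with $n\Phi$ defined over $R$ and a prime $p\notin S$ dividing $n$, the reduction $\overline{n\Phi}$ over $\overline{\bbf_p}$ is nonzero by minimality, yet $\overline{n\Phi}\bigl(w(\overline{\bbf_p})\bigr)=\overline{n\,\Phi(w)}=0$ because $p\mid n$ and $\Phi(w)\in W_R$; equivariance then forces $\overline{n\Phi}$ to vanish on the whole orbit $(G\times_R\overline{\bbf_p})\cdot w(\overline{\bbf_p})$, which is Zariski open and dense since $w$ is universally generic outside $S$, so $\overline{n\Phi}=0$ identically, contradicting minimality of $n$. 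That is the entire proof; your second paragraph's scheme-theoretic extension argument is not needed and is harder to make precise (the orbit map $G\to V$ need not be an isomorphism onto its image, so defining $\Phi$ on the orbit as $g\cdot w\mapsto g\cdot\Phi(w)$ requires the well-definedness check you get for free from the equivariance of $\overline{n\Phi}$ in the first approach).
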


We make a remark on the notion of Castling transform. 
Suppose that $0<m<n$ are integers (then $n>1$). 
Let $G$ be an algebraic group and $V$ an $n$-dimension \rep{} 
of $G$ over $k$.  Let $\aff^m,\aff^{n-m}$ be the standard \rep s 
of $\gl_m,\gl_{n-m}$ respectively. 
Let $G_1=G\times \spl_m$, $G_2=G\times \spl_{n-m}$.
Then $(G_1,V\otimes \aff^m)$ and 
$(G_2,V^*\otimes \aff^{n-m})$ 
are called {\it Castling transforms} of each other. 
Let 
\begin{align*}
U_1 & =\{x=(x_1\ccd x_m)\in V\otimes \aff^m\mid 
x_1\ccd x_m\;\text{are linearly independent}\}, \\
U_2 & =\{y=(y_1\ccd y_{n-m})\in V^*\otimes \aff^{n-m}\mid 
y_1\ccd y_{n-m}\;\text{are linearly independent}\}. 
\end{align*}
Then the actions of $\spl_m,\spl_{n-m}$ on $U_1,U_2$ 
are free respectively, $\spl_m\backslash U_1$, 
$\spl_{n-m}\backslash U_2$ are affine and the 
coordinate rings are generated by Pl\"ucker coordinates
(see Theorem 2.1 \cite[p.20]{dolgachev-invariant} for the proof 
over any field).

Since the codimensions of $U_1,U_2$ are greater than $1$, 
regular functions on $U_1,U_2$ can be regarded as elements 
of $k[V\otimes \aff^m],k[V^*\otimes \aff^{n-m}]$ respectively. 
Since Pl\"ucker coordinates on 
$V\otimes \aff^m,V^*\otimes \aff^{n-m}$
coincide except possibly with the choice of 
signs, $\spl_m\backslash U_1$, 
$\spl_{n-m}\backslash U_2$ are isomorphic. Since 
$\spl_m,\spl_{n-m}$ have no non-trivial characters, 
relative invariant polynomials on 
$V\otimes \aff^m,V^*\otimes \aff^{n-m}$  
correspond  bijectively and $G_1,G_2$-orbits in 
$U_1,U_2$ correspond bijectively also.  
Since $\h^1(k,\spl_m),\h^1(k,\spl_{n-m})$ are
trivial, the correspondence of orbits 
is bijective rationally over $k$. 
By the above consideration, 
$(G_1,V\otimes \aff^m)$ is a \pv{}  
if and only if $(G_2,V^*\otimes \aff^{n-m})$ 
is a \pv. If so, since the stabilizers of corresponding 
points are isomorphic, the regularity 
of these \pv s coincide. 
The reader should see Proposition 18 \cite[p.68]{saki} 
for details (the argument there works over any field). 

If both $(G_1,V\otimes \aff^m)$, 
$(G_2,V^*\otimes \aff^{n-m})$ are \pv s
then the number, say $l$, of distinct invariant 
hypersurfaces for these \pv s coincide. 
If $\Del_1\ccd \Del_l$ are distinct relative 
invariant polynomials on $V\otimes \aff^m$ 
and $\Del_1^*\ccd \Del_l^*$ are the corresponding 
relative invariant polynomials on 
$V^*\otimes \aff^{n-m}$  then 
$G_1$-orbits in $\{x\in V\otimes \aff^m\mid \Del_1(x)\ccd \Del_l(x)\not=0\}$
and  $G_2$-orbits in 
$\{y\in V^*\otimes \aff^{n-m}\mid \Del^*_1(y)\ccd \Del^*_l(y)\not=0\}$
correspond  bijectively.  In particular  
$\{x\in V\otimes \aff^m\mid \Del_1(x)\ccd \Del_l(x)\not=0\}$ is a single 
$G_1$-orbit if and only if 
$\{y\in V^*\otimes \aff^{n-m}\mid \Del^*_1(y)\ccd \Del^*_l(y)\not=0\}$
is a single $G_2$-orbit. If $l=1$ then 
$G_{1\,k}\backslash (V\otimes \aff^m)^{\sst}_k$ 
and $G_{2\,k}\backslash (V^*\otimes \aff^{n-m})^{\sst}_k$ 
are in bijective correspondence. 
 
Note that any relative invariant polynomial 
on $V\otimes \aff^m$ is invariant for the action of 
$\spl_m$. The situation is similar for $V^*\otimes \aff^{n-m}$.
The only issue we have to be careful is 
the correspondence of rational orbits. 
Let $N$ be the binomial number 
\begin{math}
\left(
\begin{smallmatrix}
n \\m 
\end{smallmatrix}
\right).
\end{math}
If $R$ is the quotient of $k[z_1\ccd z_N]$ by Pl\"ucker relations, 
both $\gl_m\backslash U_1,\gl_{n-m}\backslash U_2$ 
are isomorphic to $\mathrm{Proj}(R)$ this time rather than 
$\mathrm{Spec}(R)$.  Since $\h^1(k,\gl_m),\h^1(k,\gl_{n-m})$
are trivial, this bijection is rational over $k$ also. 
Note that if $x=(x_1\ccd x_m)\in U_1$ then 
$H=\lan x_1\ccd x_m\ran\sub V$ is a subspace of dimension $m$.
Then $\{f\in V^*\mid {}^{\forall}x\in H, f(x)=0\}\sub V^*$ 
is a subspace of dimension $n-m$. By taking a basis 
$\{y_1\ccd y_{n-m}\}$, the orbit of $y=(y_1\ccd y_{n-m})$ 
is the orbit corresponding to the orbit of $x$. 
By this correspondence, 
($G_k\times \gl_m(k)$)-orbits in 
$\{x\in V_k\otimes k^m\mid \Del_1(x)\ccd \Del_m(x)\not=0\}$
and ($G_k\times \gl_{n-m}(k)$)-orbits in  
$\{y\in V^*_k\otimes k^{n-m}\mid \Del^*_1(y)\ccd \Del^*_m(y)\not=0\}$
correspond  bijectively also.

\section{Rational orbits (1)}
\label{sec:rational-orbits-general}

In this section and the next six sections, we consider
rational orbits of some \pv s which appear as 
$(M_{\be_i},Z_{\be_i})$. 
We describe known cases of rational orbits of 
some \pv s in this section. 
Even tough the results are known for all cases 
(see \cite{wryu}) except for the last case, 
we will have to consider \pv s which in some sense 
contain the \pv s in this section. 
In order to show results in later sections, we need Lie algebra 
computations for the \pv s in this section also. 
Therefore,  we reprove some known results in 
\cite{wryu} in a slightly different manner using the
notion of regularity.  
In the next six sections, we consider cases which require
more labor. 

For the rest of this paper, 
$\{\bbmp_{n,1}\ccd \bbmp_{n,n}\}$ is the standard basis of $\aff^n$. 
We put $p_{n,i_1\cdots i_m}=\bbmp_{n, i_1}\wedge \cdots\wedge \bbmp_{n, i_m}$
for $1\leq i_1\ccd i_m\leq n$. 
When we have to consider more than one standard \rep s of 
$\gl_n$'s, we may use different letters such as 
$\mathbbm q_{n,i},\mathbbm r_{n,i}$ to avoid confusion. 
In that case, we use notation such as  
$q_{n,i_1\cdots i_m}$, $r_{n,i_1\cdots i_m}$. 
We identify $\wedge^n \aff^n$ with $\aff^1$, 
so that $p_{n,12\cdots n},q_{n,12\cdots n}$, etc., correspond to $1$.

Let $\Ex_n(k)$, $\mathrm{Prg}_n(k),\text{QF}_n(k)$ be as in 
Definition \ref{defn:Ex-defn}.

\subsection{Case I}

We consider the natural action of $G=\gl_2^3$ on
$V=\aff^2\otimes \aff^2\otimes \aff^2$.  
Let 
\begin{equation}
\label{eq:w-efn-222}
w = \bbmp_{2,1}\otimes \bbmp_{2,1}\otimes \bbmp_{2,1}
+ \bbmp_{2,2}\otimes \bbmp_{2,2}\otimes \bbmp_{2,2}\in V.
\end{equation}
We identify $V$ with the space of pairs of $2\times 2$ matrices
so that $g=(g_1,g_2,g_3)$ acts on $(A_1,A_2)$ by 
\begin{math}
g_3 \left(
\begin{smallmatrix}
g_1 A_1 {}^tg_2 \\
g_1 A_2 {}^tg_2 \\
\end{smallmatrix}
\right)
\end{math}
treating $[g_1 A_1 {}^tg_2,g_1 A_2 {}^tg_2]$ as a column vector. 
Then we can identify $w$ with the element
\begin{equation}
\label{eq:w-efn-222-matrix}
(w_1,w_2) = \left(
\begin{pmatrix}
1 & 0 \\
0 & 0 
\end{pmatrix},
\begin{pmatrix}
0 & 0 \\
0 & 1 
\end{pmatrix}
\right).
\end{equation}

Note that $k[\vep]/(\vep^2)$ is the ring of dual numbers.
Let $X=(x_{ij}),Y=(y_{ij}),Z=(z_{ij})\in\m_2$. 
Then it is easy to show the following proposition.
We do not provide the details.
\begin{prop}
\label{prop:ie-alg-222}
$(e_G+\vep(X,Y,Z))w=w$ 
if and only if $X,Y,Z$ are diagonal matrices and 
$x_{11}+y_{11}+z_{11}=x_{22}+y_{22}+z_{22}=0$. 
So $\dim {\mathrm T}_{e_G}(G_w)=4$. 
\end{prop}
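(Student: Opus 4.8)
The plan is to compute the tangent space $\mathrm T_{e_G}(G_w)$ directly by differentiating the action at the identity. Working with the matrix description \eqref{eq:w-efn-222-matrix}, an element $e_G+\vep(X,Y,Z)$ acts on the pair $(w_1,w_2)$, and I would expand the action
\begin{equation*}
(I_2+\vep Z)\begin{pmatrix} (I_2+\vep X)w_1(I_2+\vep\,{}^tY) \\ (I_2+\vep X)w_2(I_2+\vep\,{}^tY)\end{pmatrix}
\end{equation*}
keeping only terms up to order $\vep$. The condition $(e_G+\vep(X,Y,Z))w=w$ then becomes a pair of linear equations in $X,Y,Z$, namely $Xw_1+w_1{}^tY+z_{11}w_1+z_{12}w_2=0$ and $Xw_2+w_2{}^tY+z_{21}w_1+z_{22}w_2=0$, where I have used that $Z$ acts on the column vector $[w_1,w_2]$ by left multiplication.

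The key step is to write these two $2\times 2$ matrix equations out entry by entry. Since $w_1=E_{11}$ and $w_2=E_{22}$, the products $Xw_1$, $w_1{}^tY$, $Xw_2$, $w_2{}^tY$ are all easy to compute: $Xw_1$ has first column equal to the first column of $X$ and second column zero, $w_1{}^tY$ has first row equal to the first row of $Y$ and second row zero, and similarly for the terms involving $w_2$. Collecting the eight scalar equations, I expect the off-diagonal equations to force $x_{12}=x_{21}=y_{12}=y_{21}=0$ (so $X,Y$ are diagonal) together with $z_{12}=z_{21}=0$ (so $Z$ is diagonal as well, since the cross terms $z_{12}w_2$ and $z_{21}w_1$ land in the diagonal but in the "wrong" block), and the diagonal equations to give $x_{11}+y_{11}+z_{11}=0$ and $x_{22}+y_{22}+z_{22}=0$. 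This matches the claimed description.

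Finally, I would count dimensions: once $X,Y,Z$ are constrained to be diagonal we have six free parameters $x_{11},x_{22},y_{11},y_{22},z_{11},z_{22}$, and the two linear relations cut this down to $\dim\mathrm T_{e_G}(G_w)=6-2=4$, as asserted. The only mild subtlety — and the one place to be careful — is bookkeeping the action of the third factor $g_3$ (here $I_2+\vep Z$) correctly as left multiplication on the stacked column vector $[w_1,w_2]^t$, so that it is $z_{12}$ (not $z_{21}$) that multiplies $w_2$ in the first equation; getting this index convention right is what makes the off-diagonal entries of $Z$ drop out cleanly. Since the computation is entirely routine linear algebra in $2\times 2$ matrices, the paper is justified in stating it without details, and this sketch is exactly the verification a reader would carry out.
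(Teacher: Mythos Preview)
Your proposal is correct and is exactly the direct computation the paper has in mind; the paper itself omits all details, saying only ``it is easy to show the following proposition. We do not provide the details.'' Your derivation of the two matrix equations $Xw_i+w_i{}^tY+z_{i1}w_1+z_{i2}w_2=0$ and the entry-by-entry analysis (with the careful tracking of how $z_{12}w_2$ contributes to the $(2,2)$-entry of the first equation and $z_{21}w_1$ to the $(1,1)$-entry of the second) are precisely the verification a reader is expected to carry out.
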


Since $\dim {\mathrm T}_{e_G}(G_w)=4=\dim G-\dim V$, 
Proposition \ref{prop:open-orbit} implies that 
$Gw\sub V$ is Zariski open and that $G_w$ is smooth over $k$. 

We regard elements of $V$ as $2\times 2$ matrices with entries 
in linear forms of $v=(v_1,v_2)$ where the last $\aff^2$ factor 
corresponds to $v$. 
For $x=x(v)\in V$, let 
$F_x(v)$ its determinant and $P(x)$ the discriminant of 
$F_x(v)$. It is easy to see that $F_w(v)=v_1v_2$. 
The following proposition is easy and we do not provide the details. 
\begin{prop}
\label{prop:222-invariant}
$P(x)$ is a homogeneous degree $4$ polynomial on $V$,   
$P((g_1,g_2,g_3)x)=(\det g_1\det g_2\det g_3)^2 P(x)$ 
and $P(w)=1$.  
\end{prop}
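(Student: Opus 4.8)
The plan is to establish each of the three assertions about $P$ directly from the description of $P(x)$ as the discriminant of the binary quadratic form $F_x(v)=\det(x(v))$. First I would record the structure of the construction. An element $x\in V=\aff^2\otimes\aff^2\otimes\aff^2$ is, after the identification in the text, a pair of $2\times 2$ matrices, hence an element of $\m_2$ with entries that are linear forms in $v=(v_1,v_2)$; writing $x(v)=v_1A_1+v_2A_2$, the determinant $F_x(v)=\det(v_1A_1+v_2A_2)$ is a binary quadratic form $\alpha v_1^2+\beta v_1v_2+\gamma v_2^2$ whose coefficients $\alpha,\beta,\gamma$ are quadratic polynomials in the coordinates of $x$ (indeed $\alpha=\det A_1$, $\gamma=\det A_2$, $\beta$ the mixed term). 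Its discriminant is $P(x)=\beta^2-4\alpha\gamma$, a polynomial in the entries of $x$.

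For homogeneity: since each of $\alpha,\beta,\gamma$ is homogeneous of degree $2$ in $x$, the combination $\beta^2-4\alpha\gamma$ is homogeneous of degree $4$ in $x$. (One should note that $P$ is not identically zero, which is guaranteed by the value computation below, so degree $4$ is genuine.) For the value at $w$: from $F_w(v)=v_1v_2$ one reads $\alpha=\gamma=0$, $\beta=1$, so $P(w)=1^2-0=1$.

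For the equivariance, the key point is to track how $F_x$ transforms under $g=(g_1,g_2,g_3)$. Acting by $g_1$ on the first factor and $g_2$ on the second replaces $x(v)$ by $g_1\,x(v)\,{}^tg_2$, so $\det(g_1 x(v){}^tg_2)=(\det g_1)(\det g_2)\det(x(v))$, i.e.\ this part of the action multiplies $F_x$ by the scalar $(\det g_1)(\det g_2)$. Acting by $g_3$ on the third factor performs a linear substitution on the variable $v$; if $g_3$ sends $v\mapsto {}^tg_3^{-1}v$ or the appropriate dual substitution dictated by the identification in the text, then $F_{g_3 x}(v)=F_x({}^th v)$ for a matrix $h$ with $\det h=(\det g_3)^{\pm1}$, and the discriminant of a binary quadratic form transforms under a linear substitution of the variables by the square of the determinant of that substitution. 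Combining, $P(gx)=(\det g_1)^2(\det g_2)^2(\det g_3)^2\,P(x)$, which is exactly the claimed formula; I would double-check the exponent on $\det g_3$ by verifying it is $2$ via the discriminant transformation law $\operatorname{Disc}(Q\circ h)=(\det h)^2\operatorname{Disc}(Q)$ and then cross-check the whole identity against $P(w)=1$.

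The main obstacle I anticipate is purely bookkeeping: pinning down precisely which linear action $g_3$ induces on the variable $v$ under the stated identification of $V$ with pairs of matrices (column vector versus row vector, $g_3$ versus ${}^tg_3^{-1}$), so that the power of $\det g_3$ comes out as $2$ rather than $-2$ or $1$. Once the convention is fixed the computation is a one-line application of multiplicativity of the determinant together with the quadratic transformation law for discriminants; no serious difficulty remains, which is consistent with the paper's remark that the details are omitted.
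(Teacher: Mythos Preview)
Your argument is correct and is exactly the approach implicit in the paper, which omits the details as ``easy.'' The only point you flagged as uncertain is the bookkeeping for $g_3$: under the paper's convention (acting by $g_3$ on the column vector $(A_1,A_2)$), one has $(g\cdot x)(v)=g_1\,x(v g_3)\,{}^tg_2$ with $v$ a row vector, so $F_{gx}(v)=(\det g_1)(\det g_2)\,F_x(vg_3)$ and the discriminant picks up $(\det g_3)^2$; there is no ambiguity since $\det({}^tg_3)=\det g_3$.
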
 

If $x\in V$, $P(x)\not=0$ then the field generated by roots 
of $F_x(v)$ belongs to $\Ex_2(k)$.  
Suppose that $F/k$ is a separable quadratic extension. 
We choose $a=(a_1,a_2)\in k^2$ so that $f_a(t)= t^2+a_1t+a_2\in k[t]$ 
is irreducible with distinct roots $\al=(\al_1,\al_2)$ 
and that $F=k(\al_1)$. 

Let $\tau_0$ be the element in (\ref{eq:J-defn}). 
We put
\begin{equation}
\label{eq:S20-wa-defn}
\begin{aligned}
& \tau  = (\tau_0,\tau_0,\tau_0), \;
\tau'  = (\tau_0,-\tau_0,-\tau_0), \\
& x_F = \left(
\begin{pmatrix}
0 & 1 \\
1 & a_1
\end{pmatrix},
\begin{pmatrix}
1 & a_1 \\
a_1 & a_1^2-a_2
\end{pmatrix}
\right), \;
h_{\al} = 
\begin{pmatrix}
1 & -1 \\
-\al_1 & \al_2
\end{pmatrix} 
\end{aligned}
\end{equation}
and $g_{\al}=(-(\al_1-\al_2)^{-1}h_{\al},h_{\al},h_{\al})$. 
Then $\tau,\tau'$ fix $w$. 
Also, $w$ (resp. $x_F$) corresponds to the trivial extension 
(resp. quadratic extension $F$) of $k$ and  $x_F=g_{\al}w$. 

\begin{prop}
\label{prop:222-regular-sect}
\begin{itemize}
\item[(1)]
$G^{\circ}_w$ consists of 
elements of the form 
\begin{equation}
\label{eq:fix-222}
(\diag(t_1,t_2),\diag(t_3,t_4),
\diag((t_1t_3)^{-1},(t_2t_4)^{-1})).
\end{equation}
\item[(2)]
$G_w=G_w \rtimes \{1,\tau\}=G_w \rtimes \{1,\tau'\}\cong \gl_1^4 \rtimes \Z/2\Z$.
\end{itemize}
\end{prop}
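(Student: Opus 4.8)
The plan is to prove (1) by exhibiting an explicit connected $4$-dimensional subgroup of $G_w$ and invoking the dimension count already in hand, and to prove (2) by producing a surjection $G_w\to\Z/2\Z$ whose kernel is exactly that subgroup.

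For (1): Let $H$ denote the set of triples appearing in \eqref{eq:fix-222}. First I would check directly, in the matrix model \eqref{eq:w-efn-222-matrix}, that $H$ fixes $w=(w_1,w_2)$: with $g_1=\diag(t_1,t_2)$ and $g_2=\diag(t_3,t_4)$ one has $g_1w_1{}^tg_2=t_1t_3E_{11}$ and $g_1w_2{}^tg_2=t_2t_4E_{22}$, and the diagonal third component $g_3=\diag((t_1t_3)^{-1},(t_2t_4)^{-1})$ rescales the two $2\times 2$ matrices back to $w_1,w_2$. Clearly $H\cong\gl_1^4$ is connected of dimension $4$. By Proposition \ref{prop:ie-alg-222} together with Proposition \ref{prop:open-orbit} (applied to the equality $\dim{\mathrm T}_{e_G}(G_w)=4=\dim G-\dim V$ already noted after Proposition \ref{prop:ie-alg-222}), $G_w$ is smooth of dimension $4$. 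Since $H$ is a connected subgroup of $G_w$ of the same dimension as the identity component $G_w^{\circ}$, and both are irreducible, $H=G_w^{\circ}$, which is (1).

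For (2): It has already been observed that $\tau$ and $\tau'$ fix $w$; since $\tau_0^2=I$ we get $\tau^2=\tau'^2=e_G$, so $\{1,\tau\}$ and $\{1,\tau'\}$ are order-$2$ subgroups of $G_w$, and neither lies in $G_w^{\circ}$ because the third component $\tau_0$ (resp. $-\tau_0$) is anti-diagonal rather than diagonal. The key step is to construct a homomorphism $\phi\colon G_w\to\{\pm 1\}$ with $\kernel\phi=G_w^{\circ}$. I would do this by recording the transformation law of $F_x(v)=\det x(v)$: writing $g=(g_1,g_2,g_3)$, one checks $(gx)(v)=g_1\,x({}^tg_3v)\,{}^tg_2$, hence $F_{gx}(v)=(\det g_1)(\det g_2)\,F_x({}^tg_3v)$. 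Applying this with $x=w$ and $g\in G_w$ gives $v_1v_2=(\det g_1)(\det g_2)\,({}^tg_3v)_1({}^tg_3v)_2$, which forces $g_3$ to be either diagonal or anti-diagonal; set $\phi(g)=1$ in the first case and $\phi(g)=-1$ in the second. This is a homomorphism with $\phi(\tau)=-1$, so it is surjective. To see $\kernel\phi=G_w^{\circ}$, note $H\subset\kernel\phi$ is immediate; conversely, if $g\in G_w$ has $g_3=\diag(c_1,c_2)$ then $gw=w$ becomes $g_1w_1{}^tg_2=c_1^{-1}E_{11}$ and $g_1w_2{}^tg_2=c_2^{-1}E_{22}$, and since $g_1w_1{}^tg_2$ and $g_1w_2{}^tg_2$ are the outer products of the first (resp. second) columns of $g_1$ and of $g_2$, proportionality to $E_{11}$ and to $E_{22}$ forces the columns of $g_1$ and of $g_2$ to lie along the standard axes, i.e.\ $g_1,g_2$ are diagonal, whence $g\in H$. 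Thus $G_w/G_w^{\circ}\cong\Z/2\Z$, so $\{1,\tau\}$ and likewise $\{1,\tau'\}$ are complements to the normal subgroup $G_w^{\circ}\cong\gl_1^4$, giving $G_w=G_w^{\circ}\rtimes\{1,\tau\}=G_w^{\circ}\rtimes\{1,\tau'\}\cong\gl_1^4\rtimes\Z/2\Z$.

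The only slightly delicate points are getting the placement of the transpose on $g_3$ in the transformation law for $F_x$ right, and the short rank-one argument identifying $\kernel\phi$ with $H$; the remainder is routine bookkeeping, so I expect no real obstacle.
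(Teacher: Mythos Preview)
Your proof is correct. Part (1) matches the paper's argument exactly: exhibit the torus $H$, check it fixes $w$, and use the dimension count from Proposition~\ref{prop:ie-alg-222} together with Proposition~\ref{prop:open-orbit} to conclude $H=G_w^{\circ}$.

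For part (2) your route differs slightly from the paper's. The paper uses the $\gS_3$-symmetry of $\aff^2\otimes\aff^2\otimes\aff^2$: from the equivariant map to $\sym^2\aff^2$ associated to the third factor one gets that $g_3$ is diagonal or anti-diagonal; by the analogous maps for the first and second factors, each of $g_1,g_2$ is likewise diagonal or anti-diagonal, and then a compatibility check leaves only the case where all three are diagonal (after one application of $\tau$). You instead use only the third-factor map to see that $g_3$ is diagonal or anti-diagonal, package this as a surjection $\phi:G_w\to\{\pm1\}$, and identify $\kernel\phi$ directly via the rank-one (outer product) structure of $g_1E_{11}{}^tg_2$ and $g_1E_{22}{}^tg_2$. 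Your argument is a bit more self-contained and avoids the appeal to the two further equivariant maps; the paper's argument, on the other hand, makes the underlying symmetry manifest and generalises more readily when that symmetry is present. Either way the conclusion is the same, and your handling of $\tau'$ (it differs from $\tau$ by an element of $G_w^{\circ}$) matches the paper's.
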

\begin{proof}
(1) It is easy to see that elements of the form (\ref{eq:fix-222}) fix $w$. 
Since $\dim {\mathrm T}_{e_G}(G_w)=4$, 
$G^{\circ}_{w}$ consists of elements of the form (\ref{eq:fix-222}).

(2) Suppose that $g=(g_1,g_2,g_3)\in G_w$. 
Then $(\det g_1)(\det g_2)g_3$ fixes $F_w(v)=v_1v_2$. 
If we replace $g$ by $g\tau$, $g_3$ is replaced by $g_3\tau_0$. 
It is well-known that multiplying $\tau_0$ to
$g_3$ if necessary, we may assume that $g_3$ is a diagonal matrix. 
Then $g_1 w_i {}^t g_2$ is a scalar multiple of $w_i$ for $i=1,2$. 

We can construct two other equivariant maps to $\sym^2 \aff^2$ 
using the first and the second factors of $\aff^2$. So 
each of $g_1,g_2$ is a diagonal matrix or a diagonal matrix times $\tau_0$. 
The only possibility is both $g_1,g_2$ are diagonal matrices.  
Then it is easy to see that $g\in G^{\circ}_w$. 
Since $\tau'=\tau (I_2,-I_2,-I_2)$ and $(I_2,-I_2,-I_2)\in G^{\circ}_w$, 
the last statement follows. 
\end{proof}

Let $\sig\in\gal(F/k)$ be the non-trivial element.
As in Proposition 3.6 \cite[p.305]{wryu}, 
one can show that 
\begin{align*}
G^{\circ}_{x_F} & = g_{\al}\{(\diag(t_1,t_1^{\sig}),\diag(t_3,t_3^{\sig}),
\diag((t_1t_3)^{-1},(t_1^{\sig}t_3^{\sig})^{-1}))
\mid t_1,t_2\in {\mathrm R}_{F/k}\gl_1\}
g_{\al}^{-1} \\
& \cong ({\mathrm R}_{F/k}\gl_1)^2.
\end{align*}
Note that $g_{\al}^{\sig}=g_{\al}\tau'$. 

Since $G_w$ is reductive and 
$\h^1(k,G^{\circ}_w),\h^1(k,G^{\circ}_{x_F})$
are trivial, the following proposition follows without any assumption 
on $\ch(k)$
(see Theorem 5.3 \cite[p.310]{wryu}, LEMMA (1.8) \cite[p.120]{yukiel}). 

\begin{prop}
\label{prop:222-rat-orbits}
$(G,V)$ a regular \pv{} and $G_k\backslash V^{\sst}_k$ 
is in bijective correspondence  with $\Ex_2(k)$ 
by associating the field generated by roots of $F_x(v)$ 
to $x\in V^{\sst}_k$. 
\end{prop}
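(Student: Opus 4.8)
The plan is to deduce Proposition \ref{prop:222-rat-orbits} from the structural results already assembled, namely Propositions \ref{prop:open-orbit}, \ref{prop:regularity}, \ref{prop:222-invariant} and \ref{prop:222-regular-sect}, together with the standard Galois-cohomology dictionary for rational orbits in a single geometric orbit. First I would observe that $G_w$ is reductive: by Proposition \ref{prop:222-regular-sect} it is $\gl_1^4\rtimes \Z/2\Z$, which is reductive regardless of $\ch(k)$, and $Gw$ is Zariski open by the dimension count following Proposition \ref{prop:ie-alg-222}. Hence Proposition \ref{prop:regularity} applies and shows that $(G,V)$ is a regular \pv{} with $V^{\sst}_{k^{\sep}}=G_{k^{\sep}}w$ a single $G_{k^{\sep}}$-orbit. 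One should check that $V^{\sst}$, defined via the relative invariant $P$ of Proposition \ref{prop:222-invariant}, coincides with the open orbit $Gw$; this holds because $P$ is irreducible (it is, up to scalar, the discriminant of the binary quadratic form $F_x(v)$, and $V\setminus Gw$ has a single codimension-one component by the argument in the proof of Proposition \ref{prop:regularity}(3) combined with Proposition \ref{prop:number-of-components} giving $m\le N=1$).

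Next I would set up the cohomological bijection. Since $V^{\sst}_{k^{\sep}}=G_{k^{\sep}}w$ is a single orbit, the standard twisting argument gives a bijection between $G_k\backslash V^{\sst}_k$ and $\ker(\h^1(k,G_w)\to \h^1(k,G))$. Because $\h^1(k,G)=\h^1(k,\gl_2^3)$ is trivial (Hilbert 90), this is all of $\h^1(k,G_w)$. Using $G_w=G_w^{\circ}\rtimes\{1,\tau\}$ with $G_w^{\circ}\cong\gl_1^4$ and $\h^1(k,\gl_1^4)=1$, the inflation–restriction / exact-sequence computation for the semidirect product reduces $\h^1(k,G_w)$ to $\h^1(k,\Z/2\Z)=\Hom(\gal(\overline k/k),\Z/2\Z)=\Ex_2(k)$. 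This is precisely the reference to Theorem 5.3 of \cite{wryu} and Lemma (1.8) of \cite{yukiel} invoked in the excerpt, so I would cite those for the bookkeeping rather than reprove it.

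Finally I would identify the invariant attached to a rational point explicitly. Given $x\in V^{\sst}_k$, the binary quadratic form $F_x(v)=\det(x(v))$ has discriminant $P(x)\ne 0$, hence splits $k[v]$-linearly over a separable quadratic (or trivial) extension $F_x/k$; the class of $F_x$ in $\Ex_2(k)$ is the invariant. To see this matches the cohomological bijection, I would use the section elements: $w$ corresponds to the split algebra $k\times k$ and $x_F=g_\al w$ with $g_\al^{\sig}=g_\al\tau'$ (and $\tau'=\tau\cdot(I_2,-I_2,-I_2)$ with $(I_2,-I_2,-I_2)\in G_w^{\circ}$), so the cocycle $\sig\mapsto g_\al^{-1}g_\al^{\sig}$ representing the class of $x_F$ maps to the nontrivial element of $\h^1(k,\Z/2\Z)$ cutting out exactly $F$. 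Since every class in $\Ex_2(k)$ arises from some such $x_F$ and the assignment $x\mapsto F_x$ is $G_k$-invariant, this gives the claimed bijection. The main obstacle is not any single hard step but the care needed to verify that the two descriptions of the invariant—the field generated by roots of $F_x$ and the image of the orbit class under $\h^1(k,G_w)\to\h^1(k,\Z/2\Z)$—genuinely agree; this is handled by tracking the explicit elements $\tau,\tau',g_\al,h_\al$ of \eqref{eq:S20-wa-defn}, exactly as in Proposition 3.6 of \cite{wryu}.
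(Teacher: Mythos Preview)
Your proof is correct and follows essentially the same route as the paper: reductivity of $G_w$ gives regularity and a single $G_{k^{\sep}}$-orbit on $V^{\sst}$, then Galois cohomology and the cited references (Theorem 5.3 of \cite{wryu}, Lemma (1.8) of \cite{yukiel}) identify $G_k\backslash V^{\sst}_k$ with $\Ex_2(k)$ via $F_x$. One small imprecision: the long exact sequence attached to $1\to G_w^{\circ}\to G_w\to \Z/2\Z\to 1$ together with $\h^1(k,\gl_1^4)=1$ only shows that the fiber of $\h^1(k,G_w)\to\h^1(k,\Z/2\Z)$ over the \emph{basepoint} is trivial; for bijectivity one must also check that the twisted identity components $G^{\circ}_{x_F}\cong(\mathrm{R}_{F/k}\gl_1)^2$ have trivial $\h^1$, which is exactly why the paper records both $\h^1(k,G^{\circ}_w)=1$ and $\h^1(k,G^{\circ}_{x_F})=1$ before invoking Lemma (1.8) of \cite{yukiel}.
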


\subsection{Case II}

We consider the natural action of $G=\gl_3^2\times \gl_2$ on 
$V=\aff^3\otimes \aff^3\otimes \aff^2$. We can identify 
$V$ with the space of pairs of $3\times 3$ matrices. 

Let 
\begin{equation}
\label{eq:sec6-w-defn-sec4}
\begin{aligned}
w_1 & = 
\begin{pmatrix}
1 & 0 & 0 \\
0 & -1 & 0 \\
0 & 0 & 0 
\end{pmatrix}, \; 
w_2 = 
\begin{pmatrix}
0 & 0 & 0 \\
0 & 1 & 0 \\
0 & 0 & -1 
\end{pmatrix}
\end{aligned}
\end{equation}
and $w=(w_1,w_2)\in V$. 

Let $\{\bbmp_{3,1},\bbmp_{3,2},\bbmp_{3,3}\}$, 
$\{\bbmq_{3,1},\bbmq_{3,2},\bbmq_{3,3}\}$ be the standard bases
for the first and the 
second $\aff^3$ respectively. Let $\{\bbmr_{2,1},\bbmr_{2,2}\}$ 
be the standard basis of $\aff^2$. 
We regard 
\begin{align*}
w_1 & =\bbmp_{3,1}\otimes \bbmq_{3,1} \otimes \bbmr_{2,1} 
-\bbmp_{3,2}\otimes \bbmq_{3,2}\otimes \bbmr_{2,1}, \\
w_2 & =\bbmp_{3,2}\otimes \bbmq_{3,2}\otimes \bbmr_{2,2}
-\bbmp_{3,3}\otimes \bbmq_{3,3} \otimes \bbmr_{2,2}. 
\end{align*}

Note that $k[\vep]/(\vep^2)$ is the ring of dual numbers as before.
We determine the Lie algebra of ${\mathrm T}_{e_G}(G_w)$. 
Let $X=(x_{ij}),Y=(y_{ij})\in\m_3,Z=(z_{ij})\in\m_2$. 
We express elements of ${\mathrm T}_{e_G}(G)$ in the form
$e_G+\vep(X,Y,Z)$.  

\begin{lem}
\label{lem:S3-lie-alg}
$\dim {\mathrm T}_{e_G}(G_w)=4$. 
\end{lem}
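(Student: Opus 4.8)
The statement is a purely linear-algebraic computation of the dimension of the tangent space to the stabiliser $G_w$ at the identity, for $w=(w_1,w_2)$ as in \eqref{eq:sec6-w-defn-sec4}. The plan is to write out the infinitesimal action and reduce to a system of linear equations in the entries of $X,Y\in\m_3$ and $Z\in\m_2$. Concretely, $e_G+\vep(X,Y,Z)$ acts on the pair $(A_1,A_2)$ of $3\times 3$ matrices (with the $\aff^2$ factor indexing the pair) by sending $A_j$ to $A_j+\vep\bigl(XA_j+A_j{}^tY\bigr)+\vep\sum_{i} z_{ji}A_i$, where the last term comes from the action of the $\gl_2$ factor on the pair. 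Setting the $\vep$-coefficient equal to zero gives the two matrix equations
\begin{equation*}
X w_1 + w_1{}^tY + z_{11} w_1 + z_{12} w_2 = 0, \qquad
X w_2 + w_2{}^tY + z_{21} w_1 + z_{22} w_2 = 0.
\end{equation*}
So the first step is just to record these equations.

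Next I would solve the system by inspecting entries. Because $w_1,w_2$ are diagonal with entries $(1,-1,0)$ and $(0,1,-1)$ respectively, the products $Xw_j$ and $w_j{}^tY$ pick out specific rows/columns of $X$ and $Y$, and the off-diagonal entries of the two matrix equations force most off-diagonal entries of $X$ and $Y$ to vanish; I expect to find that $X$ and $Y$ must be diagonal, say $X=\diag(x_1,x_2,x_3)$, $Y=\diag(y_1,y_2,y_3)$. The diagonal entries of the two equations then give a small linear system relating $x_i,y_i,z_{jj}$, together with $z_{12}=z_{21}=0$. Counting the free parameters in the resulting solution space yields the dimension; the claim is that this number is $4$.

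The main potential obstacle is simply bookkeeping: getting the infinitesimal action of the $\gl_2$ factor on the \emph{pair} exactly right (the $z_{ji}$ mixing $w_1$ and $w_2$) and then not mis-solving the resulting $\{0,\pm1\}$-sparse linear system. There is no conceptual difficulty, and the analogous computation was carried out for Case I in Proposition \ref{prop:ie-alg-222}; the role of this lemma is exactly the same, namely to then invoke Proposition \ref{prop:open-orbit} to conclude that $Gw$ is Zariski open (note $\dim G-\dim V = (9+9+4)-18 = 4$, matching the claimed tangent-space dimension) and that $G_w$ is smooth. Accordingly I would present the equations, note that solving them forces $X,Y$ diagonal with $z_{12}=z_{21}=0$, exhibit the $4$-parameter family of solutions explicitly, and conclude $\dim{\mathrm T}_{e_G}(G_w)=4$.
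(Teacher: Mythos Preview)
Your proposal is correct and follows essentially the same approach as the paper. The paper writes down the two matrix equations $Xw_i+w_i{}^tY+z_{i1}w_1+z_{i2}w_2=0$ ($i=1,2$), states that they force $X,Y,Z$ to be diagonal with the four relations $x_{11}+y_{11}+z_{11}=0$, $x_{22}+y_{22}+z_{11}=0$, $x_{22}+y_{22}+z_{22}=0$, $x_{33}+y_{33}+z_{22}=0$ (so in fact $z_{11}=z_{22}$), and omits the entry-by-entry details—exactly what you outline.
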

\begin{proof}
Let $X,Y,Z$ be as above. 
Then $(e_G+\vep(X,Y,Z))w=w$ if and only if
\begin{align*}
& Xw_{21}+w_{21}{}^tY + z_{11}w_{21}+z_{12}w_{22}=0, \\
& Xw_{22}+w_{22}{}^tY + z_{21}w_{21}+z_{22}w_{22}=0. 
\end{align*}
This implies that $X,Y,Z$ are diagonal matrices and 
\begin{equation}
\label{eq:S3-lie-alg}
\begin{aligned}
& x_{11}+y_{11}+z_{11}=0,\;
x_{22}+y_{22}+z_{11}=0, \\
& x_{22}+y_{22}+z_{22}=0,\;
x_{33}+y_{33}+z_{22}=0
\end{aligned}
\end{equation}
(we do not provide the details).  
\end{proof}

It is easy to see that the following subgroup of $G$
\begin{equation}
\label{eq:stab-332}
\left\{
(\diag(t_{11},t_{12},t_{13}),\diag(t_{21},t_{22},t_{23}),t_3I_2)\in G\,\vrule\,
t_{11}t_{21}=
t_{12}t_{22}=
t_{13}t_{23}=t_3^{-1}
\right\}
\end{equation}
fix $w$. 
By Lemma \ref{lem:S3-lie-alg}, 
$\dim {\mathrm T}_{e_G}(G_w)=4=\dim G-\dim V$. So Proposition \ref{prop:open-orbit} 
implies that $Gw\sub V$ is Zariski open and that $G_w$ is smooth over $k$. 
Therefore, we obtain the following corollary. 
\begin{cor}
\label{cor:S3-regular}
$G^{\circ}_w$ coincides with the group in (\ref{eq:stab-332}) 
and $G^{\circ}_w\cong \gl_1^4$. Therefore, $(G,V)$ is a regular \pv. 
\end{cor}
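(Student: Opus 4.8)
The plan is to deduce Corollary~\ref{cor:S3-regular} directly from Lemma~\ref{lem:S3-lie-alg} together with Propositions~\ref{prop:open-orbit} and~\ref{prop:regularity}, exactly as was done for Case~I. First I would observe that an elementary count gives $\dim G-\dim V = (9+9+4)-18 = 4$, which by Lemma~\ref{lem:S3-lie-alg} equals $\dim {\mathrm T}_{e_G}(G_w)$. Hence the hypothesis of Proposition~\ref{prop:open-orbit} is satisfied for the point $w$, so $Gw\subset V$ is Zariski open and the group scheme $G_w$ is smooth over $k$.

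Next I would identify $G^{\circ}_w$ with the explicit torus in~(\ref{eq:stab-332}). One inclusion is the easy verification already recorded before the corollary: every element of the form $(\diag(t_{11},t_{12},t_{13}),\diag(t_{21},t_{22},t_{23}),t_3I_2)$ with $t_{11}t_{21}=t_{12}t_{22}=t_{13}t_{23}=t_3^{-1}$ fixes $w_1$ and $w_2$, hence $w$. For the reverse inclusion I would argue on dimensions: the subgroup in~(\ref{eq:stab-332}) is a torus isomorphic to $\gl_1^4$ (three free parameters among the $t_{1i}$, say, and $t_3$, subject to the three constraints determining the $t_{2i}$, leaving $4$ free coordinates), so it is a $4$-dimensional connected subgroup of $G_w$. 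Since $G_w$ is smooth of dimension $\dim {\mathrm T}_{e_G}(G_w)=4$, its identity component $G^{\circ}_w$ is also $4$-dimensional and connected, and a $4$-dimensional connected subgroup of a $4$-dimensional connected group must be the whole group. Therefore $G^{\circ}_w$ equals the torus in~(\ref{eq:stab-332}) and $G^{\circ}_w\cong\gl_1^4$.

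Finally, regularity: $G^{\circ}_w$ is a torus, hence reductive, and $G_w$ is smooth, so $(G,V)$ satisfies the hypotheses of Proposition~\ref{prop:regularity} with $w$ as the chosen point. By parts~(1)--(3) of that proposition, $Gw$ is affine, $(Gw)_{k^{\sep}}$ is a single $G_{k^{\sep}}$-orbit, and there exists a relative invariant polynomial, so $(G,V)$ is a \pv; by Definition~\ref{defn:regularity} it is regular.

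I do not expect a real obstacle here: the only slightly delicate point is the dimension bookkeeping for~(\ref{eq:stab-332}), i.e.\ checking that it is genuinely $4$-dimensional and connected so that it cannot be a proper subgroup of $G^{\circ}_w$, and that the reductivity of $G_w$ is immediate once one knows $G^{\circ}_w$ is a torus (the component group of $G_w$ being finite, $G_w$ is reductive as well). Everything else is a direct invocation of the already-established Lemma~\ref{lem:S3-lie-alg} and Propositions~\ref{prop:open-orbit} and~\ref{prop:regularity}, so the proof should be only a few lines.
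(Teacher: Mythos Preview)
Your proposal is correct and follows exactly the same approach as the paper: the paper's argument is the paragraph immediately preceding the corollary, which observes that the torus in~(\ref{eq:stab-332}) fixes $w$, that $\dim {\mathrm T}_{e_G}(G_w)=4=\dim G-\dim V$ by Lemma~\ref{lem:S3-lie-alg}, and then invokes Proposition~\ref{prop:open-orbit}. Your write-up is slightly more explicit about the dimension count for the torus and about why reductivity of $G^{\circ}_w$ suffices, but the logic is identical.
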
 
Since $k$ is arbitrary and  $G,V,w$ are defined over $\Z$, 
$w$ is universally generic 
(see Definition \ref{defn:universally-generic}).

Let 
\begin{equation}
\label{eq:tau12-defn}
\begin{aligned}
\tau_1 & = 
\left(
\begin{pmatrix}
0 & 1 & 0 \\
1 & 0 & 0 \\
0 & 0 & 1 
\end{pmatrix},
\begin{pmatrix}
0 & 1 & 0 \\
1 & 0 & 0 \\
0 & 0 & 1 
\end{pmatrix},
\begin{pmatrix}
-1 & 0 \\   
1 & 1 
\end{pmatrix}\right), \\
\tau_2 & = 
\left(
\begin{pmatrix}
1 & 0 & 0 \\
0 & 0 & 1 \\
0 & 1 & 0 
\end{pmatrix},
\begin{pmatrix}
1 & 0 & 0 \\
0 & 0 & 1 \\
0 & 1 & 0 
\end{pmatrix},
\begin{pmatrix}
1 & 1 \\
0 & -1 
\end{pmatrix}\right).
\end{aligned}
\end{equation}
Then $\tau_1,\tau_2$ fix $w$ and generate 
a subgroup of $G_k$ isomorphic to $\gS_3$. 
We denote the subgroup of $G_k$ generated by 
$\tau_1,\tau_2$ by $H$.

Let $x=(x_1,x_2)\in V$. Regarding $x_2$ as a $3\times 3$ matrix with 
entries in linear forms in two variables $v=(v_1,v_2)$, let $F_x(v)$ 
be its determinant. Let $\zero(x)\sub \p^1$ be the zero set of
$F_x(v)$. Then $F_w(v)=v_1v_2(v_1-v)$ and $\zero(w)=\{(1,0),(0,1),(1,1)\}$.

\begin{prop}
\label{prop:S3-stabilizer}
$G_w=G^{\circ}_w\rtimes H\cong G^{\circ} \rtimes \gS_3$. 
\end{prop}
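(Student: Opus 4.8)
The plan is to show $G_w = G_w^{\circ} \rtimes H$ by first establishing the inclusion $G_w^{\circ} \rtimes H \subseteq G_w$ (which is already done: the group in \eqref{eq:stab-332} fixes $w$ by Corollary \ref{cor:S3-regular}, and $\tau_1,\tau_2$ fix $w$ by the remarks preceding the proposition, so $H$ fixes $w$), and then establishing the reverse inclusion $G_w \subseteq G_w^{\circ} \cdot H$. Since $G_w^{\circ}$ is normal in $G_w$ and $G_w^{\circ} \cap H = \{e_G\}$ (the elements of $H$ have the wrong shape to lie in the diagonal torus \eqref{eq:stab-332}, except the identity), once both inclusions hold the semidirect product structure and the isomorphism $G_w^{\circ} \rtimes H \cong \gl_1^4 \rtimes \gS_3$ follow from Corollary \ref{cor:S3-regular}.

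First I would take an arbitrary $g = (g_1, g_2, g_3) \in G_{k^{\sep}, w}$ and exploit the relative invariant. By Proposition-style reasoning as in Case I: $g$ fixes $w$, hence $(\det g_1)(\det g_2) g_3$ fixes $F_w(v) = v_1 v_2 (v_1 - v_2)$, so $g_3$ (up to scalar) permutes the three points $\{(1{:}0), (0{:}1), (1{:}1)\} = \zero(w) \subset \p^1$. The group of projective transformations of $\p^1$ permuting these three points is $\gS_3$, and I would check that the images of $\tau_1, \tau_2$ in $\pgl_2$ realize this full $\gS_3$. Multiplying $g$ on the right by a suitable element of $H$, I may therefore assume $g_3$ is scalar, say $g_3 = t_3 I_2$; then $g_1 w_i\, {}^tg_2$ is a scalar multiple of $w_i$ for $i = 1,2$.

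Next, with $g_3$ scalar, I would use the equivariant maps into $\sym^2 \aff^3$ built from the first and from the second $\aff^3$ factor (the analogue of the determinant-type argument in Case I): applying $(g_1, g_2)$ must preserve, up to scalars, the pair of rank-conditions cut out by $w_1, w_2$ on each $\aff^3$. Concretely, $w_1, w_2$ simultaneously diagonalize in the standard basis and their ``supports'' are $\{1,2\}$ and $\{2,3\}$; the condition that $g_1 w_i\,{}^tg_2 \in k^{\times} w_i$ forces $g_1$ and $g_2$ to be monomial matrices whose underlying permutation stabilizes the flag of supports $\{1,2\} \subset \{1,2,3\} \supset \{2,3\}$, i.e. is trivial. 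Hence $g_1, g_2$ are diagonal, and combined with $g_3$ scalar and the defining relations (which come out of the fixed-point equations, cf. Lemma \ref{lem:S3-lie-alg}), $g$ lies in the group \eqref{eq:stab-332}, i.e. in $G_w^{\circ}$. This gives $G_{k^{\sep}, w} \subseteq G_w^{\circ} \cdot H$ and hence equality; since everything in sight ($G_w^{\circ}$, $H$, $w$) is defined over $k$, the same holds over $k$.

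The main obstacle I expect is the bookkeeping in the monomial-matrix step: verifying that preserving both $w_1$ and $w_2$ up to scalars genuinely pins down $g_1, g_2$ to be diagonal rather than merely monomial, i.e. that no nontrivial permutation survives once we have already used up the $\gS_3$ freedom to make $g_3$ scalar. The cleanest way to handle this is to note that a nontrivial monomial $g_1$ would force a corresponding nontrivial permutation action on $\zero(w)$ via one of the auxiliary $\sym^2$ invariants, contradicting $g_3$ being scalar; alternatively one checks the finitely many monomial possibilities directly. The rest is routine, paralleling Case I and using Corollary \ref{cor:S3-regular}.
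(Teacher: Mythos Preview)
Your overall strategy and the first half of the argument match the paper exactly: use the equivariance of $F_w$ to see that $g_3$ permutes $\zero(w)=\{(1{:}0),(0{:}1),(1{:}1)\}$, then multiply by an element of $H$ to make $g_3$ scalar, which forces $g_1 w_i\,{}^t g_2\in k^{\times}w_i$ for $i=1,2$.

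The divergence is in how you pass from this last condition to ``$g_1,g_2$ diagonal'', and here there is a genuine gap. The ``equivariant maps into $\sym^2\aff^3$'' you invoke do not exist: the analogy with Case~I breaks because there the three tensor factors were all $2$-dimensional and the determinant construction was symmetric in them, whereas in $\aff^3\otimes\aff^3\otimes\aff^2$ only the $\aff^2$ factor yields a determinant (the cubic $F_x$). There is no natural polynomial map $Z_3\to\sym^2\aff^3$ playing the role you want. More seriously, your intermediate claim that $g_1,g_2$ are \emph{monomial} is not justified by anything you wrote. What kernel/image considerations (your ``rank-conditions'') actually give is: since $\ker w_1=\langle e_3\rangle$, $\im w_1=\langle e_1,e_2\rangle$, $\ker w_2=\langle e_1\rangle$, $\im w_2=\langle e_2,e_3\rangle$, preservation up to scalar forces $a_{12}=a_{13}=a_{31}=a_{32}=0$ and $b_{12}=b_{13}=b_{31}=b_{32}=0$. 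But this still leaves $a_{21},a_{23},b_{21},b_{23}$ free, so $g_1,g_2$ are not yet monomial; a second pass through the off-diagonal entries of $g_1 w_1\,{}^t g_2$ and $g_1 w_2\,{}^t g_2$ is needed to kill these. That is precisely what the paper does: it reads off the $(1,3),(2,3),(3,1),(3,2),(3,3)$-entries of $g_1 w_1\,{}^t g_2$ and the $(1,1),(1,2),(1,3),(2,1),(3,1)$-entries of $g_1 w_2\,{}^t g_2$ to obtain the first batch of vanishings, then the $(1,2),(2,1)$-entries of $g_1 w_1\,{}^t g_2$ and the $(2,3),(3,2)$-entries of $g_1 w_2\,{}^t g_2$ to finish. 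Your fallback suggestion (``check the finitely many monomial possibilities directly'') presupposes monomiality, which is exactly what is not yet in hand.

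In short: same strategy, same reduction to $g_3$ scalar, but for the remaining step you should do the paper's direct entry computation (or the equivalent kernel/image argument followed by the residual $(1,2),(2,1),(2,3),(3,2)$ checks), not appeal to nonexistent $\sym^2\aff^3$ invariants.
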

\begin{proof}
Suppose that $g=(g_1,g_2,g_3)\in G_w$ where 
$g_1=(a_{ij}),g_2=(b_{ij}),g_3=(c_{ij})$.  
Since $F_{gx}(v) = (\det g_1)(\det g_2)F_x(vg_3)$, $g_3$ induces a 
permutation of $(1,0),(0,1),(1,1)\in \p^1$. 
Multiplying an element of $H$, we may assume that $g_3$ fixes 
$(1,0),(0,1),(1,1)\in \p^1$. Then $g_3$ is a scalar matrix. 
This implies that $g_1 w_i {}^t g_2$ is a scalar multiple of 
$w_i$ for $i=1,2$. 

By considering the $(1,3),(2,3),(3,1),(3,2),(3,3)$-entries 
(resp. $(1,1),(1,2),(1,3)$, $(2,1),(3,1)$-entries) of 
$g_1 w_1 {}^t g_2$ (resp. $g_1 w_2 {}^t g_2$), 
$a_{12}=a_{13}=a_{31}=a_{32}=b_{12}=b_{13}=b_{31}=b_{32}=0$. 
Then by the $(1,2),(2,1)$-entries (resp. $(2,3),(3,2)$-entries) 
of $g_1 w_1 {}^t g_2$ (resp. $g_1 w_2 {}^t g_2$), 
$a_{21}=a_{23}=b_{21}=b_{23}=0$. Therefore, $g_1,g_2$ are diagonal matrices
and $g\in G^{\circ}_w$. Since $G^{\circ}_w$ is a normal subgroup pf 
$G_w$, the proposition follows. 
\end{proof}

Let $P(x)$ be the discriminant of $F_x(v)$. 
The following proposition is obvious. 
\begin{prop}
\label{prop:332-invariant}
$P(x)$ is a homogeneous degree $12$ polynomial on $V$,   
$P((g_1,g_2,g_3)x)$ $=(\det g_1\det g_2)^4(\det g_3)^6 P(x)$ 
and $P(w)=1$.
\end{prop}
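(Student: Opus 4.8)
The plan is to reduce everything to classical facts about the discriminant of a binary cubic form. Write $F_x(v) = \det(v_1 x_1 + v_2 x_2)$, the determinant of the $3 \times 3$ matrix whose entries are linear in $v = (v_1, v_2)$. Expanding the determinant, $F_x(v) = c_0(x) v_1^3 + c_1(x) v_1^2 v_2 + c_2(x) v_1 v_2^2 + c_3(x) v_2^3$, where each $c_i(x)$ is a homogeneous polynomial of degree $3$ on $V$ (a $3\times 3$ determinant is degree $3$ in the matrix entries, and those entries are linear in $x$). The discriminant of a binary cubic $c_0 v_1^3 + c_1 v_1^2 v_2 + c_2 v_1 v_2^2 + c_3 v_2^3$ is the classical polynomial $\Delta = c_1^2 c_2^2 - 4 c_0 c_2^3 - 4 c_1^3 c_3 + 18 c_0 c_1 c_2 c_3 - 27 c_0^2 c_3^2$, which is homogeneous of degree $4$ in $(c_0, c_1, c_2, c_3)$. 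Substituting the forms $c_i(x)$ shows at once that $P(x) = \Delta(c_0(x), \ldots, c_3(x))$ is homogeneous of degree $4 \cdot 3 = 12$ on $V$.

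For the equivariance I would start from the identity $F_{gx}(v) = (\det g_1)(\det g_2)\, F_x(v g_3)$ established in the proof of Proposition~\ref{prop:S3-stabilizer}, which realizes $F_{gx}$ as $F_x$ precomposed with the linear substitution $v \mapsto v g_3$ and then multiplied by the scalar $\lambda = (\det g_1)(\det g_2)$. Two standard transformation rules for the discriminant then finish the job. Since $\Delta$ is homogeneous of degree $4$ in the coefficients, replacing $F_x$ by $\lambda F_x$ multiplies $\Delta$ by $\lambda^4$. And for a binary form of degree $n$, the substitution $v \mapsto v A$ with $A \in \gl_2$ multiplies the discriminant by $(\det A)^{n(n-1)}$, which for $n = 3$ is $(\det A)^6$; I would either quote this or verify it on the generic cubic by a one-line computation. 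Combining, $P(gx) = \Delta(F_{gx}) = \lambda^4 (\det g_3)^6 \Delta(F_x) = (\det g_1 \det g_2)^4 (\det g_3)^6 P(x)$.

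Finally, the normalization follows by direct computation: $F_w(v) = \det \diag(v_1, v_2 - v_1, -v_2) = v_1 v_2 (v_1 - v_2) = v_1^2 v_2 - v_1 v_2^2$, so $(c_0, c_1, c_2, c_3) = (0, 1, -1, 0)$ and $\Delta = c_1^2 c_2^2 = 1$, i.e.\ $P(w) = 1$. There is no real obstacle here — as the paper notes, the statement is elementary — and the only point deserving a little care is pinning down the exponent $6 = n(n-1)$ in the substitution rule (equivalently, that the discriminant of a binary cubic has weight $6$ under $\gl_2$) and making sure the convention chosen for $F_x$ is consistent with the already-recorded formula for $F_{gx}$.
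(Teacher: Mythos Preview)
Your proof is correct. The paper itself gives no proof of this proposition, declaring it ``obvious'' just before the statement; your argument is exactly the natural verification one would supply, and all three ingredients---the degree count via the cubic discriminant, the transformation law $F_{gx}(v)=(\det g_1)(\det g_2)F_x(vg_3)$ combined with the standard weights $4$ and $n(n-1)=6$ of the binary-cubic discriminant, and the explicit evaluation $F_w(v)=v_1v_2(v_1-v_2)$---are handled correctly.
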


We describe rational orbits together with the next case
in Proposition \ref{prop:Rational-orbits-3-cases}.

\subsection{Case III}

We next consider the natural action of $G=\gl_4\times \gl_2$ on 
$V=(\wedge^2 \aff^4)\otimes \aff^2$. 
We identify $V$ with the space of 
pairs of $4\times 4$ alternating matrices. 
Let $\{\bbmp_{4,1}\ccd \bbmp_{4,4}\}$ (resp. $\{\bbmq_{2,1},\bbmq_{2,2}\}$)
be the standard basis of $\aff^4$ (resp. $\aff^2$).  
We use the notation such as $p_{4,12}=\bbmp_{4,1}\wedge \bbmp_{4,2}$, etc. 

Let 
\begin{equation}
\label{eq:sec7-w-defn-sec4}
\begin{aligned}
w & = p_{4,12}\otimes \bbmq_{2,1}+p_{4,34}\otimes \bbmq_{2,2} \\
& = \left(
\begin{pmatrix}
0 & 1 & 0 & 0 \\
-1 & 0 & 0 & 0 \\
0 & 0 & 0 & 0 \\
0 & 0 & 0 & 0
\end{pmatrix},
\begin{pmatrix}
0 & 0 & 0 & 0 \\
0 & 0 & 0 & 0 \\
0 & 0 & 0 & 1 \\
0 & 0 & -1 & 0
\end{pmatrix}
\right)\in V. 
\end{aligned}
\end{equation}

We express elements of ${\mathrm T}_{e_G}(G)$ 
as $e_G+\vep(X,Y)$ where $X=(x_{ij})\in\m_4$, $Y=(y_{ij})\in\m_2$. 
\begin{lem}
\label{lem:section7-Lie-alg}
$\dim {\mathrm T}_{e_G}(G_w)=8$. 
\end{lem}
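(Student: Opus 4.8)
The plan is to compute the infinitesimal stabilizer directly and check that it has the expected dimension $8 = \dim G - \dim V = 20 - 12$. Writing $g_1 = I_4 + \vep X$ with $X = (x_{ij})\in\m_4$ and $g_2 = I_2 + \vep Y$ with $Y = (y_{ij})\in\m_2$, and writing $w = (w_1,w_2)$ for the pair of $4\times 4$ alternating matrices in (\ref{eq:sec7-w-defn-sec4}), the condition $(e_G + \vep(X,Y))w = w$ becomes, after discarding the $\vep^2$-term and remembering that $g_2$ mixes the two slots of a pair of matrices, the linear system
\begin{align*}
X w_1 + w_1\,{}^tX + y_{11}w_1 + y_{12}w_2 & = 0, \\
X w_2 + w_2\,{}^tX + y_{21}w_1 + y_{22}w_2 & = 0.
\end{align*}
So the lemma reduces to showing that the space of solutions $(X,Y)$ of this system has dimension $8$.

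First I would use the decomposition $\aff^4 = \aff^2\oplus\aff^2$ into the first two and the last two coordinates and write $X$ in $2\times 2$ blocks, $X = \left(\begin{smallmatrix} X_{11} & X_{12} \\ X_{21} & X_{22}\end{smallmatrix}\right)$; in block form $w_1 = \left(\begin{smallmatrix} J & 0 \\ 0 & 0\end{smallmatrix}\right)$ and $w_2 = \left(\begin{smallmatrix} 0 & 0 \\ 0 & J\end{smallmatrix}\right)$. Expanding each of the two matrix equations blockwise, the lower-left block of the first equation reads $X_{21}J = 0$ and its lower-right block reads $y_{12}J = 0$, while the upper-left block of the second equation reads $y_{21}J = 0$ and its upper-right block reads $X_{12}J = 0$. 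Since $J$ is invertible, this already forces $X_{12} = X_{21} = 0$ and $y_{12} = y_{21} = 0$, i.e. $Y$ is diagonal; the remaining off-diagonal blocks give no new information.

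What is left is the pair of $2\times 2$ equations $X_{ii}J + J\,{}^tX_{ii} + y_{ii}J = 0$ for $i = 1,2$. Here the one genuinely computational point is the identity $MJ + J\,{}^tM = (\tr M)J$, valid for every $M\in\m_2$ and in every characteristic, which is a one-line check; it turns the remaining equations into $\tr X_{11} + y_{11} = 0$ and $\tr X_{22} + y_{22} = 0$. Hence a solution is obtained by choosing $X_{11}, X_{22}\in\m_2$ arbitrarily, with $y_{11} = -\tr X_{11}$, $y_{22} = -\tr X_{22}$ and all other entries zero, so the solution space has dimension $4 + 4 = 8$, as claimed. The only thing to be careful about is getting the infinitesimal action right — in particular the mixing of the two slots by $g_2$ — and keeping the block bookkeeping straight; there is no serious obstacle, and this dimension count is exactly what is needed to invoke Proposition \ref{prop:open-orbit} afterwards.
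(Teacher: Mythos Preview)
Your proof is correct and follows the same direct computation as the paper: both set up the linearized stabilizer equations $Xw_i + w_i{}^tX + y_{i1}w_1 + y_{i2}w_2 = 0$ and solve them to find the $8$-dimensional space of solutions described in (\ref{eq:42-lie-alg-sec4}). The paper writes out the $4\times 4$ matrices $X',Y'$ entry by entry, whereas you organize the same computation via the $2\times 2$ block decomposition and the identity $MJ + J\,{}^tM = (\tr M)J$, which is a cleaner presentation of the same argument.
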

\begin{proof}
It is easy to see that 
$(e_G+\vep(X,Y))w=w+\vep(X',Y')$ where 
\begin{align*}
X' & = X
\begin{pmatrix}
0 & 1 & 0 & 0 \\
-1 & 0 & 0 & 0 \\
0 & 0 & 0 & 0 \\
0 & 0 & 0 & 0
\end{pmatrix}
+ \begin{pmatrix}
0 & 1 & 0 & 0 \\
-1 & 0 & 0 & 0 \\
0 & 0 & 0 & 0 \\
0 & 0 & 0 & 0
\end{pmatrix}
{}^t X 
+ \begin{pmatrix}
0 & y_{11} & 0 & 0 \\
-y_{11} & 0 & 0 & 0 \\
0 & 0 & 0 & y_{12} \\
0 & 0 & -y_{12} & 0
\end{pmatrix}  \\
& = \begin{pmatrix}
0  & x_{11}+x_{22}+y_{11} & x_{32} & x_{42} \\ 
-x_{11}-x_{22}-y_{11}  & 0 & -x_{31} & -x_{41} \\ 
-x_{32}  & x_{31} & 0 & y_{12} \\
-x_{42}  & x_{41} & -y_{12} & 0 \\
\end{pmatrix}, \\
Y' & = X
\begin{pmatrix}
0 & 0 & 0 & 0 \\
0 & 0 & 0 & 0 \\
0 & 0 & 0 & 1 \\
0 & 0 & -1 & 0
\end{pmatrix}
+ \begin{pmatrix}
0 & 0 & 0 & 0 \\
0 & 0 & 0 & 0 \\
0 & 0 & 0 & 1 \\
0 & 0 & -1 & 0
\end{pmatrix}
{}^t X 
+ \begin{pmatrix}
0 & y_{21} & 0 & 0 \\
-y_{21} & 0 & 0 & 0 \\
0 & 0 & 0 & y_{22} \\
0 & 0 & -y_{22} & 0
\end{pmatrix}
\end{align*}

\begin{align*}
& = \begin{pmatrix}
0  & y_{21} & -x_{14} & x_{13} \\ 
-y_{21}  & 0 & -x_{24} & x_{23} \\ 
x_{14}  & x_{24} & 0 & x_{33}+x_{44}+y_{22} \\
-x_{13}  & -x_{23} & -x_{33}-x_{44}-y_{22} & 0 \\
\end{pmatrix}.
\end{align*}

So $e_G+\vep(X,Y)\in {\mathrm T}_{e_G}(G_w)$
if and only if $X,Y$ are in the following form 
\begin{equation}
\label{eq:42-lie-alg-sec4}
X = 
\begin{pmatrix}
x_{11} & x_{12} & 0 & 0 \\
x_{21} & x_{22} & 0 & 0 \\
0 & 0 & x_{33} & x_{34} \\
0 & 0 & x_{43} & x_{44} 
\end{pmatrix},\; 
Y = 
\begin{pmatrix}
-x_{11}-x_{22} & 0 \\
0 & -x_{33}-x_{44} 
\end{pmatrix}
\end{equation}
\end{proof}

Let $\tau_0$ be as in (\ref{eq:J-defn}) and 
\begin{equation*}
\tau_1 = 
\begin{pmatrix}
0 & I_2 \\
I_2 & 0 
\end{pmatrix},\;
\tau=(\tau_1,\tau_0). 
\end{equation*}
Then $\tau$ fixes $w$. 

\begin{prop}
\label{prop:S5-regular}
\begin{itemize}
\item[(1)]
$Gw\sub V$ is Zariski open and $G_w$ is smooth
over $k$.
\item[(2)]
$G^{\circ}_w\cong \gl_2\times \gl_2$. 
\item[(3)]
$G_w = G^{\circ}_w \rtimes \{1,\tau\} \cong (\gl_2\times \gl_2)\rtimes \Z/2\Z$.
\end{itemize}
\end{prop}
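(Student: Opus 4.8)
I will prove the three assertions in the order (1), then (2), then (3), using the same pattern as in Cases I and II. First, Lemma \ref{lem:section7-Lie-alg} gives $\dim {\mathrm T}_{e_G}(G_w) = 8$. Since $\dim G = \dim(\gl_4\times\gl_2) = 16+4 = 20$ and $\dim V = 6\cdot 2 = 12$, we have $\dim {\mathrm T}_{e_G}(G_w) = 8 = \dim G - \dim V$. Applying Proposition \ref{prop:open-orbit} directly yields assertion (1): $Gw\subset V$ is Zariski open and the group scheme $G_w$ is smooth over $k$.

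\textbf{Assertion (2).} The idea is to exhibit an explicit subgroup of $G^{\circ}_w$ of dimension $8$ and then conclude by smoothness. The natural candidate, coming from the block structure in \eqref{eq:42-lie-alg-sec4}, is the set of pairs $(\diag(h_1,h_2), \diag((\det h_1)^{-1},(\det h_2)^{-1}))$ with $h_1,h_2\in\gl_2$ acting block-diagonally on $\aff^4$. One checks by direct computation that such elements fix $w$: the block $h_1$ acting on $\langle \bbmp_{4,1},\bbmp_{4,2}\rangle$ scales $p_{4,12}$ by $\det h_1$, which is cancelled by the first coordinate $(\det h_1)^{-1}$ of the $\gl_2$-factor, and symmetrically for $h_2$ and $p_{4,34}$. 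This subgroup is connected, smooth, and isomorphic to $\gl_2\times\gl_2$ (dimension $8$). Since $G^{\circ}_w$ is smooth of dimension $8$ by part (1) and Lemma \ref{lem:section7-Lie-alg}, and contains this $8$-dimensional connected subgroup, the two coincide, proving (2). In particular $G_w$ is reductive.

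\textbf{Assertion (3).} Since $(I_4,-I_2)\in G^{\circ}_w$ (it scales both $p_{4,12}$ and $p_{4,34}$ by $1$ as $\det(-I_2)=1$... actually it acts trivially on $\wedge^2\aff^4$), I should be a bit careful, but the upshot is that $G^{\circ}_w$ is normal in $G_w$, so it suffices to analyze $G_w/G^{\circ}_w$ and show it has order $2$, generated by the image of $\tau = (\tau_1,\tau_0)$. One verifies $\tau$ fixes $w$ (it swaps $p_{4,12}\leftrightarrow p_{4,34}$ and $\bbmq_{2,1}\leftrightarrow\bbmq_{2,2}$ simultaneously) and $\tau\notin G^{\circ}_w$ (it does not preserve the decomposition $\aff^4 = \langle\bbmp_{4,1},\bbmp_{4,2}\rangle\oplus\langle\bbmp_{4,3},\bbmp_{4,4}\rangle$; rather it swaps the two summands). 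For the reverse inclusion, take $g=(g_1,g_2)\in G_w$. The pair $(w_1,w_2)$ of alternating forms determines a pencil, and $g_2\in\gl_2$ permutes the two degenerate members of the pencil (the two rank-$2$ forms $w_1,w_2$, whose locus is cut out by $\pfaff$ of the generic member of the pencil, a binary quadratic form in the pencil parameter with roots at $[1:0]$ and $[0:1]$). After multiplying by $\tau$ if necessary we may assume $g_2$ is upper triangular fixing both $[1:0]$ and $[0:1]$, hence diagonal; then $g_1$ must preserve each of $\ker w_1 = \langle\bbmp_{4,3},\bbmp_{4,4}\rangle$ and $\ker w_2 = \langle\bbmp_{4,1},\bbmp_{4,2}\rangle$, forcing $g_1$ block-diagonal, and the scaling constraints then put $g\in G^{\circ}_w$. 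Hence $G_w = G^{\circ}_w\rtimes\{1,\tau\} \cong (\gl_2\times\gl_2)\rtimes\Z/2\Z$.

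\textbf{Main obstacle.} The routine part is parts (1) and (2); the genuine work is the reverse inclusion in (3), i.e.\ showing every element of $G_w$ lies in $G^{\circ}_w\cup \tau G^{\circ}_w$. The cleanest route is the pencil-of-alternating-forms argument: the invariant-theoretic content is that $\pfaff$ applied to the generic member $s w_1 + t w_2$ of the pencil is a binary form in $(s,t)$ whose two roots $[1:0],[0:1]$ are permuted by the $\gl_2$-action, so that a suitable group element brings $g_2$ to diagonal form; after that the kernels of $w_1,w_2$ pin down $g_1$. One must check these Pfaffian computations are valid regardless of $\ch(k)$ (they are, since $\pfaff$ is defined integrally and the relevant forms have odd... here even but nonzero constant coefficients), but no essential difficulty arises in characteristic $2$ here because we never divide.
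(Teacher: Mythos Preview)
Your proof is correct. Parts (1) and (2) match the paper's argument exactly (modulo a typo in the paper, which writes ``$4=24-20$'' where it should read ``$8=20-12$''; your arithmetic is the correct one).

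For part (3) you take a genuinely different route from the paper. The paper argues via the automorphism structure of $G^{\circ}_w$: conjugation by any $g\in G_w$ gives an automorphism of $\spl_2\times\spl_2\subset G^{\circ}_w$, and since $\operatorname{Out}(\spl_2\times\spl_2)\cong\Z/2\Z$ is generated by the swap (realized by $\tau$), one may reduce to the case where $g$ centralizes $\spl_2\times\spl_2$; Schur's lemma then forces $g_1=\diag(\alpha I_2,\beta I_2)$, and the rest is direct. Your argument instead uses the pencil of alternating forms: the Pfaffian $F_w(v)=v_1v_2$ has two roots in $\mathbb{P}^1$ permuted by $g_2$, so after applying $\tau$ one gets $g_2$ diagonal; then preservation of the kernels $\ker w_1=\langle\bbmp_{4,3},\bbmp_{4,4}\rangle$ and $\ker w_2=\langle\bbmp_{4,1},\bbmp_{4,2}\rangle$ forces $g_1$ block-diagonal. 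This is closer in spirit to the Case~I argument (Proposition~\ref{prop:222-regular-sect}) and to how the paper constructs the invariant $P(x)$ just after this proposition. Both approaches work in arbitrary characteristic; yours is slightly more elementary in that it avoids invoking the structure of $\operatorname{Out}(\spl_2\times\spl_2)$, while the paper's approach generalizes more readily to the later cases (e.g.\ Proposition~\ref{prop:S5-regular-sec7} and Proposition~\ref{prop:Gw-section8}) where Schur-type arguments recur.

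Two minor wording points: your aside about $(I_4,-I_2)$ is a distraction and can be dropped; and ``upper triangular fixing both $[1{:}0]$ and $[0{:}1]$'' should simply read ``fixes both $[1{:}0]$ and $[0{:}1]$, hence diagonal.''
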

\begin{proof}
(1) Since $\dim {\mathrm T}_{e_G}(G_w)=4=24-20=\dim G-\dim V$, 
$Gw\sub V$ is Zariski open and $G_w$ is smooth
over $k$ by Proposition \ref{prop:open-orbit}. 

(2) If $h_1,h_2\in \gl_2$ then elements of the form
\begin{equation*}
(\diag(h_1,h_2),\diag(\det(h_1)^{-1},\det(h_2)^{-1}))
\end{equation*}
fix $w$. Since the dimension of the set of 
such elements is $8$, $G^{\circ}_w$ consists of
such elements and $G^{\circ}_w\cong \gl_2\times \gl_2$.

(3) We may assume that $k=\overline k$. 
Suppose that $g=(g_1,g_2)\in G_w$. 
Note that $\spl_2\times \spl_2\sub G^{\circ}_w$. 
Conjugation by $g$ induces an automorphism of $G^{\circ}_w$ 
and on $\spl_2\times \spl_2\sub G^{\circ}_w$. 
By multiplying $\tau$ if necessary, we may assume that 
$g$ induces an inner automorphism of $\spl_2\times \spl_2$.  
By multiplying an element of $\spl_2\times \spl_2$ if necessary,
we may assume that $g$ commutes with elements of 
$\spl_2\times \spl_2$.  So $g$ is in block form 
and Schur's lemma implies that 
$g_1$ is in the form $\diag(\al I_2,\be I_2)$. 
Since the two components of $w$ are linearly independent,
$g_2$ is a diagonal matrix. Then it is easy to see that 
$g\in G^{\circ}_w$. 
\end{proof}

\begin{cor}
\label{cor:S5-regular}
$(G,V)$ is a regular \pv.  
\end{cor}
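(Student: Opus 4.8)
The plan is to deduce this immediately from Proposition \ref{prop:S5-regular} together with Proposition \ref{prop:regularity} and Definition \ref{defn:regularity}. The only thing that needs checking is that the stabilizer $G_w$ is reductive, since once that is known, Proposition \ref{prop:regularity} applies verbatim: part (1) of Proposition \ref{prop:S5-regular} already gives that $Gw\subset V$ is Zariski open, and the smoothness of $G_w$ is recorded there as well.

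First I would observe that by part (3) of Proposition \ref{prop:S5-regular} we have $G_w=G^{\circ}_w\rtimes\{1,\tau\}$ with $G^{\circ}_w\cong\gl_2\times\gl_2$. The identity component $G^{\circ}_w$ is reductive, being a product of general linear groups, and $G_w$ is an extension of the finite group $\Z/2\Z$ by $G^{\circ}_w$; a group scheme over $k$ whose identity component is reductive and whose component group is finite is reductive. Hence $G_w$ is reductive, and smooth over $k$ as already noted.

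Next I would invoke Proposition \ref{prop:regularity} with the point $w\in V_k$: since $U=Gw$ is Zariski open and $G_w$ is reductive, all of its conclusions hold, in particular part (3) gives that there is a relative invariant polynomial, so $(G,V)$ is a \pv{}. Finally, since $(G,V)$ is a \pv{} satisfying the hypothesis of Proposition \ref{prop:regularity} (open orbit with reductive stabilizer), Definition \ref{defn:regularity} says precisely that $(G,V)$ is regular. There is no genuine obstacle here; the substantive computations were carried out in Lemma \ref{lem:section7-Lie-alg} and Proposition \ref{prop:S5-regular}, and the corollary is a formal consequence.
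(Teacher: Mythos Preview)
Your proposal is correct and matches the paper's approach exactly: the paper states the corollary without proof, treating it as an immediate consequence of Proposition \ref{prop:S5-regular} via Definition \ref{defn:regularity}, since the stabilizer $G_w\cong(\gl_2\times\gl_2)\rtimes\Z/2\Z$ is visibly reductive and $Gw$ is Zariski open.
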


We regard $x$ as a $4\times 4$ alternating matrix with 
entries in linear forms in two variables $v=(v_1,v_2)$.  
Let $F_x(v)$ be its Pfaffian. It is a quadratic form of $v$. 
Let $P(x)$ be the discriminant of $F_x(v)$. 
The following proposition is easy to verify
(see \cite[p.306]{wryu} also). 

\begin{prop}
\label{prop:42-invariant}
$P(x)$ is a homogeneous degree $4$ polynomial on $V$
such that $P((g_1,g_2)x)=(\det g_1)^2(\det g_2)^2 P(x)$ 
and $P(w)=1$.  
\end{prop}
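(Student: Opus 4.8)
The plan is to verify the three assertions about $P(x)$ directly from the definition of $P$ as the discriminant of the Pfaffian $F_x(v)$, exploiting the equivariance properties of the Pfaffian recorded in Section~\ref{sec:notation}.

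First I would compute the degree. For $x = (x_1,x_2) \in V$ viewed as a $4\times 4$ alternating matrix $x_1 v_1 + x_2 v_2$ with entries linear in $v = (v_1,v_2)$, the Pfaffian $F_x(v) = \pfaff(x_1 v_1 + x_2 v_2)$ is a binary quadratic form in $v$ whose coefficients are quadratic in $x$ (since $\pfaff$ of a $4\times 4$ alternating matrix is a degree-$2$ polynomial in the entries, and the entries are linear in $x$). The discriminant of a binary quadratic form $av_1^2 + bv_1 v_2 + cv_2^2$ is $b^2 - 4ac$, which is quadratic in $(a,b,c)$; composing, $P(x)$ is a homogeneous polynomial of degree $4$ in $x$.

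Next I would handle the equivariance. Using $\pfaff(g_1 A\,{}^t g_1) = (\det g_1)\,\pfaff(A)$ for $g_1 \in \gl_4$, and the fact that $g_2 \in \gl_2$ acts on the $\aff^2$-factor by $v \mapsto v g_2$ (so it substitutes a linear change of variables into the binary form), one gets $F_{(g_1,g_2)x}(v) = (\det g_1)\, F_x(v g_2)$. Now I invoke the standard transformation law for the discriminant of a binary quadratic form under $\gl_2$: substituting $v \mapsto v g_2$ multiplies the discriminant by $(\det g_2)^2$, and scaling the whole form by $\det g_1$ multiplies the discriminant (which is quadratic in the coefficients) by $(\det g_1)^2$. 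Combining, $P((g_1,g_2)x) = (\det g_1)^2 (\det g_2)^2 P(x)$, as claimed. Finally, for $w$ in \eqref{eq:sec7-w-defn-sec4} one computes $w_1 v_1 + w_2 v_2 = (p_{4,12}v_1 + p_{4,34}v_2)$ as an alternating matrix, whose Pfaffian is $F_w(v) = v_1 v_2$ (by the sign normalization $\pfaff(\sum_i (E_{2i-1,2i}-E_{2i,2i-1})) = 1$ applied blockwise), and the discriminant of $v_1 v_2$ is $1$, so $P(w) = 1$.

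I expect no real obstacle here; the computation is entirely routine, which is presumably why the authors call it ``easy to verify.'' The only point requiring a little care is bookkeeping the two separate scaling factors in the discriminant transformation law (the $(\det g_1)^2$ coming from the overall scalar on the quadratic form and the $(\det g_2)^2$ coming from the linear substitution in $v$), and making sure the sign convention for $\pfaff$ is applied consistently so that $F_w(v) = v_1 v_2$ rather than $-v_1 v_2$; either way the discriminant is $1$, so even a sign slip there is harmless for the final claim $P(w)=1$.
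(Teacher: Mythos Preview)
Your proposal is correct and is exactly the direct verification the paper has in mind; the authors omit the proof entirely, saying only that it is ``easy to verify'' and pointing to \cite[p.306]{wryu}, and your argument via the Pfaffian equivariance $\pfaff(g_1 A\,{}^t g_1) = (\det g_1)\pfaff(A)$ together with the standard transformation law for the discriminant of a binary quadratic form is precisely that verification.
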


Let $(G,V)$ be the \rep{} of 
Corollaries \ref{cor:S3-regular}, \ref{cor:S5-regular}. 
Since these are regular \pv, 
$V^{\sst}_{k^{\sep}}$ is a single $G_{k^{\sep}}$-orbit. 
Then one can use the usual argument of Galois cohomology. 
We do not provide the details, but one can determine 
the stabilizers as in \S 3,4 \cite{wryu}. 
Then LEMMA (1.8) \cite[p.120]{yukiel}) implies the  
following proposition without any assumption on 
$\ch(k)$. 

\begin{prop}
\label{prop:Rational-orbits-3-cases}
Let $(G,V)$ be the \rep{} of 
Corollaries \ref{cor:S3-regular}, \ref{cor:S5-regular}. 
Then $G_k\backslash V^{\sst}_k$ is in bijective correspondence with 
$\Ex_3(k)$ or $\Ex_2(k)$ respectively 
by associating the field generated by roots of $F_x(v)$ 
to $x\in V^{\sst}_k$.  
\end{prop}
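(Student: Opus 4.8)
The plan is to run the standard Galois descent argument, using the fact (already established) that each of these representations is a regular prehomogeneous vector space, so that $V^{\sst}_{k^{\sep}}$ is a single $G_{k^{\sep}}$-orbit. First I would fix the base point $w\in V^{\sst}_k$ in each case and recall from Corollaries \ref{cor:S3-regular} and \ref{cor:S5-regular} (and Propositions \ref{prop:S3-stabilizer}, \ref{prop:S5-regular}) the structure of the stabilizer $G_w$: in Case II it is $G^{\circ}_w\rtimes \gS_3$ with $G^{\circ}_w\cong \gl_1^4$, and in Case III it is $G^{\circ}_w\rtimes \Z/2\Z$ with $G^{\circ}_w\cong \gl_2\times\gl_2$. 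Since $V^{\sst}_{k^{\sep}}$ is a single orbit, the general theory (LEMMA (1.8) of \cite[p.120]{yukiel}) gives a bijection $G_k\backslash V^{\sst}_k \cong \ker(\h^1(k,G_w)\to \h^1(k,G))$, and because $\h^1(k,\gl_n)$ is trivial for all $n$ (Hilbert 90), the map $\h^1(k,G_w)\to\h^1(k,G)$ is the zero map, so the orbit set is simply $\h^1(k,G_w)$.

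Next I would compute $\h^1(k,G_w)$ in each case. For Case III, the exact sequence $1\to G^{\circ}_w\to G_w\to \Z/2\Z\to 1$ with $\h^1(k,G^{\circ}_w)=\h^1(k,\gl_2\times\gl_2)=1$ and $\h^0$ surjectivity (the splitting by $\tau$ makes the sequence split, so $G_w\to\Z/2\Z$ is surjective on points) gives $\h^1(k,G_w)\cong \h^1(k,\Z/2\Z)=\Hom(\gal(\overline k/k),\Z/2\Z)=\Ex_2(k)$, using the identification of $\Ex_2(k)$ with separable quadratic-or-trivial extensions noted after Definition \ref{defn:Ex-defn}. For Case II, the analogous sequence $1\to\gl_1^4\to G_w\to\gS_3\to 1$ splits via $H$, and since the conjugation action of $\gS_3$ on $\gl_1^4$ matters, I would check that the resulting twisted $\h^1$ of the torus part still vanishes — this follows because the torus $\gl_1^4$ with any permutation action is an induced (quasi-split) torus, in fact a product of Weil restrictions of $\gl_1$, so Hilbert 90 still gives $\h^1=1$ for every twist; hence $\h^1(k,G_w)\cong\h^1(k,\gS_3)=\Ex_3(k)$ by definition of $\Ex_3(k)$ as conjugacy classes of homomorphisms $\gal(\overline k/k)\to\gS_3$.

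Then I would verify that under these bijections the class of $x\in V^{\sst}_k$ really corresponds to the field generated by the roots of $F_x(v)$. For this I would use that $F_x(v)$ is (up to the explicitly computed character factors in Propositions \ref{prop:332-invariant} and \ref{prop:42-invariant}) a binary form of degree $3$ (resp. $2$) whose splitting field is a separable étale algebra; writing $x=g\cdot w$ with $g\in G_{k^{\sep}}$, the cocycle $\sigma\mapsto g^{-1}g^{\sigma}\in G_w$ projects under $G_w\to\gS_n$ (via the action on the roots, which is exactly the permutation action described in the proofs of Propositions \ref{prop:S3-stabilizer} and \ref{prop:S5-regular}) to the cocycle defining the Galois action on the roots of $F_x$, hence to the class of that étale algebra in $\Ex_n(k)$. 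Conversely the explicit base points (e.g.\ $x_F$ in Case I, or the analogous ones in \S 3,4 of \cite{wryu}) realize each extension, so the map is surjective.

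The main obstacle I expect is the cocycle-bookkeeping in Case II: one must check carefully that the surjection $G_w\to\gS_3$ induces, on $\h^1$, an isomorphism rather than merely a surjection or injection, which requires both the vanishing of $\h^1$ of every $\gS_3$-twist of the torus $\gl_1^4$ (to kill the kernel) and surjectivity of $G_w(k^{\sep})\to\gS_3$ together with the fact that $\h^2$-type obstructions do not intervene because the sequence splits. Once that non-abelian $\h^1$ computation is in place, matching the cohomology class to the splitting field of $F_x$ is a routine unwinding of definitions, handled as in \S 3--4 of \cite{wryu}, and the assumption $\ch(k)\neq 2$ is not actually needed here because $G_w$ is reductive and smooth in all cases — so I would state the proposition without any characteristic hypothesis, exactly as written.
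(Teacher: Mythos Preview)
Your proposal is correct and follows essentially the same approach as the paper: the paper's proof is a sketch that says to use regularity to get a single $G_{k^{\sep}}$-orbit, compute the stabilizers as in \S3--4 of \cite{wryu}, and invoke LEMMA~(1.8) of \cite{yukiel}, which is precisely the nonabelian-cohomology fiber argument you spell out. Your explicit check that every $\gS_n$-twist of $G^{\circ}_w$ has trivial $\h^1$ (permutation torus in Case~II, $\mathrm{R}_{F/k}\gl_2$ in Case~III) is exactly the content hidden in the paper's citation, and your identification of the cohomology class with the \'etale algebra of roots of $F_x$ is the standard unwinding.
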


\subsection{Case IV}

We consider the natural action of 
$G=\gl_3\times \gl_2^2$ on $V=\aff^3\otimes \m_2$.
We identify $V$ with the space of elements of the form 
$(A_1,A_2,A_3)$ with $A_1,A_2,A_3\in\m_2$. 
Let 
\begin{equation}
\label{eq:322-generator2}
w=\mathbbm p_{3,1}\otimes (-q_{11}+q_{22})
+\mathbbm p_{3,2}\otimes q_{12}
+\mathbbm p_{3,3}\otimes q_{21}
\end{equation}
where $q_{11}=\bbmp_{2,1}\otimes \bbmp_{2,1}$, etc.  Let
\begin{equation}
\label{eq:B123-defn}
w_1 = 
\begin{pmatrix}
-1 & 0 \\
0 & 1 
\end{pmatrix},\;
w_2 = 
\begin{pmatrix}
0 & 1 \\
0 & 0 
\end{pmatrix},\;
w_3 = 
\begin{pmatrix}
0 & 0 \\
1 & 0
\end{pmatrix}.
\end{equation}
Then we regard that 
\begin{equation}
\label{eq:322-generator2-alternative}
w = (w_1,w_2,w_3). 
\end{equation}

We consider the Lie algebra of $G_w$. 
Let $X=(x_{ij})\in \m_3$, $Y=(y_{ij}),Z=(z_{ij})\in \m_2$. 
Then $(e_G+\vep(X,Y,Z))w=w$ if and only if 
\begin{align*}
& Yw_1 + w_1{}^tZ + x_{11}w_1+x_{12}w_2+x_{13}w_3 = 0, \\
& Yw_2 + w_2{}^tZ + x_{21}w_1+x_{22}w_2+x_{23}w_3 = 0, \\
& Yw_3 + w_3{}^tZ + x_{31}w_1+x_{32}w_2+x_{33}w_3 = 0.
\end{align*}

Straightforward considerations show the following lemma 
and so we do not provide the details. 

\begin{lem}
\label{lem:S14-XYZ-form}
$X,Y,Z$ are in the form 
\begin{align*}
& X = \begin{pmatrix}
-y_{11}-z_{11} & -2y_{12} & 2y_{21} \\
-y_{21} & -y_{11}-z_{22} & 0 \\
y_{12} & 0 & -y_{22}-z_{11}
\end{pmatrix}, \\
& Y = \begin{pmatrix}
y_{11} & y_{12} \\
y_{21} & y_{22} 
\end{pmatrix},\;
Z = \begin{pmatrix}
z_{11} & -y_{21} \\
-y_{12} & z_{22}
\end{pmatrix}
\end{align*}
where $y_{11}+z_{11}=y_{22}+z_{22}$. 
\end{lem}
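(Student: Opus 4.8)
The plan is to expand the three displayed matrix equations entrywise and solve the resulting linear system in the seventeen unknowns $x_{ij}$ ($1\leq i,j\leq 3$) and $y_{ij},z_{ij}$ ($1\leq i,j\leq 2$). Since $w_1=\diag(-1,1)$, $w_2=E_{12}$ and $w_3=E_{21}$, the products $Yw_a$ and $w_a{}^tZ$ are immediate to write down, and for each $a=1,2,3$ the four entries of the $a$-th equation give four scalar relations, twelve in all.

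First I would use the equation coming from the $w_1$-component. Because $w_1$ is diagonal, its $(1,1)$ and $(2,2)$ entries both determine $x_{11}$, giving $x_{11}=-y_{11}-z_{11}$ and $x_{11}=-y_{22}-z_{22}$; equating the two yields the relation $y_{11}+z_{11}=y_{22}+z_{22}$. The two off-diagonal entries give $x_{12}=z_{21}-y_{12}$ and $x_{13}=y_{21}-z_{12}$.

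Next I would use the equations coming from $w_2=E_{12}$ and $w_3=E_{21}$. Reading off their entries expresses $x_{21},x_{22},x_{31},x_{33}$ in terms of $y$ and $z$, forces $x_{23}=x_{32}=0$, and --- this is the key output --- gives $z_{12}=-y_{21}$ and $z_{21}=-y_{12}$, so that $Z$ has the asserted shape. Substituting these two identities back into the formulas for $x_{12}$ and $x_{13}$ above produces $x_{12}=-2y_{12}$ and $x_{13}=2y_{21}$, which completes the description of $X$. Conversely, a direct check shows that any triple $(X,Y,Z)$ of the displayed form subject to $y_{11}+z_{11}=y_{22}+z_{22}$ satisfies all three matrix equations, so the set so described is exactly ${\mathrm T}_{e_G}(G_w)$; as a consistency check its dimension is $5=\dim G-\dim V$.

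There is no real obstacle: the argument is routine linear algebra. The only point that needs a moment's attention is the apparent overdeterminacy --- twelve equations in seventeen unknowns, with the $w_1$-equation constraining $x_{11}$ twice --- which turns out to be consistent precisely when $y_{11}+z_{11}=y_{22}+z_{22}$, and that is why this single scalar relation is the only condition surviving on $(Y,Z)$ once $X$ has been eliminated. Keeping careful track of which entry of which equation produces which relation is essentially the whole content of the computation.
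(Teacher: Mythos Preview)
Your proposal is correct and is exactly the straightforward entrywise computation that the paper alludes to but omits (``Straightforward considerations show the following lemma and so we do not provide the details''). The key observations you isolate --- that the two diagonal entries of the $w_1$-equation force the scalar relation $y_{11}+z_{11}=y_{22}+z_{22}$, and that the $w_2$- and $w_3$-equations yield $z_{12}=-y_{21}$, $z_{21}=-y_{12}$, which then feed back to give $x_{12}=-2y_{12}$ and $x_{13}=2y_{21}$ --- are precisely the content of the computation.
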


Since 
$\dim {\mathrm T}_{e_G}(G_w)=5=\dim G-\dim V$, 
Proposition \ref{prop:open-orbit} implies the following corollary. 

\begin{cor}
\label{cor:S14-H-dense}
$Gw\sub V$ is Zariski dense and $G_w$ is smooth over $k$.
\end{cor}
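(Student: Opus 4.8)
The plan is to read off the corollary directly from the Lie algebra computation in Lemma~\ref{lem:S14-XYZ-form} together with Proposition~\ref{prop:open-orbit}. First I would record the dimension count: $\dim G = \dim\gl_3 + 2\dim\gl_2 = 9+4+4 = 17$ and $\dim V = 3\cdot 4 = 12$, so $\dim G - \dim V = 5$. Next I would verify from the explicit form of $X,Y,Z$ given in Lemma~\ref{lem:S14-XYZ-form} that the space of such triples has dimension exactly $5$: the free parameters are $y_{11},y_{12},y_{21},y_{22},z_{11}$ (for instance), since $z_{22}$ is then determined by the relation $y_{11}+z_{11}=y_{22}+z_{22}$, and $X$ is completely determined by $Y$ and $Z$. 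Hence $\dim{\mathrm T}_{e_G}(G_w) = 5 = \dim G - \dim V$.

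With this equality in hand, the corollary is immediate: Proposition~\ref{prop:open-orbit} applies verbatim with this $G$, $V$, and $w$, giving that $Gw \subset V$ is Zariski open (the statement says ``dense'' in the corollary, which follows from openness since $V$ is irreducible) and that the group scheme $G_w$ is smooth over $k$. So the proof is essentially a one-line appeal to the preceding proposition, once the dimension of the tangent space to the stabilizer has been pinned down from the lemma.

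I do not expect any real obstacle here, since the hard work — computing the defining equations for $(e_G+\vep(X,Y,Z))w = w$ and solving them to obtain the parametrized form of $X,Y,Z$ — has already been carried out in Lemma~\ref{lem:S14-XYZ-form}. The only thing to be careful about is the bookkeeping in the dimension count, namely confirming that the five displayed constraints among the entries of $X$ (the formulas for the nine entries in terms of $y_{ij},z_{ij}$) plus the two constraints forcing the off-diagonal entries of $Z$ to be $-y_{21},-y_{12}$ plus the single relation $y_{11}+z_{11}=y_{22}+z_{22}$ cut the ambient $\dim\m_3 + 2\dim\m_2 = 9+4+4 = 17$ down to exactly $5$. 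A quick consistency check: $17 - 9 - 2 - 1 = 5$, matching $\dim G - \dim V$, so the count is coherent and Proposition~\ref{prop:open-orbit} delivers both conclusions of the corollary.
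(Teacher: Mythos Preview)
Your proposal is correct and matches the paper's approach exactly: the paper simply notes that $\dim {\mathrm T}_{e_G}(G_w)=5=\dim G-\dim V$ and invokes Proposition~\ref{prop:open-orbit}. Your additional bookkeeping (identifying the five free parameters and reconciling ``open'' with ``dense'') is a helpful elaboration of the same one-line argument.
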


Let $\rho:\gl_2\times \gl_1\to \gl(\m_2)$ be the
homomorphism defined by 
$\rho(h,t)A=thAh^{-1}$ for $h\in\gl_2,t\in\gl_1,A\in \m_2$.
Let $L\sub \m_2$ be the subspace spanned by $w_1,w_2,w_3$. 
Obviously, $\dim L=3$. 
\begin{lem}
\label{lem:S14-L-invariant}
If $h\in\gl_2,t\in\gl_1$ then $\rho(h,t)L\sub L$. 
\end{lem}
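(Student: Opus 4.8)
## Proof Proposal for Lemma \ref{lem:S14-L-invariant}

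The plan is to identify $L$ explicitly. Since $w_1=\diag(-1,1)$, $w_2=E_{12}$, and $w_3=E_{21}$ are each trace-zero and are linearly independent (as $\dim L=3$ is already noted), $L$ coincides with the space $\lspl_2\sub\m_2$ of trace-zero matrices. First I would record this observation.

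Next I would use the two defining features of $\rho$. Scalar multiplication by $t\in\gl_1$ preserves any linear subspace, so it suffices to check that conjugation $A\mapsto hAh^{-1}$ maps $\lspl_2$ into itself. This is immediate from $\tr(hAh^{-1})=\tr(A)$: if $A$ is trace-zero then so is $hAh^{-1}$, hence $\rho(h,t)A=thAh^{-1}\in\lspl_2=L$ for all $h\in\gl_2$, $t\in\gl_1$.

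There is essentially no obstacle here; the only point requiring a sentence of justification is the identification $L=\lspl_2$, which follows from a dimension count together with the fact that $w_1,w_2,w_3$ are each traceless. Everything else is the standard fact that conjugation preserves the trace, valid over any field.
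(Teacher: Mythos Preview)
Your proof is correct and takes essentially the same approach as the paper: the paper's proof is the single sentence ``This lemma follows from the observation that $L$ is the subspace of matrices of trace $0$,'' and you have simply spelled out the details of why that observation suffices.
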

\begin{proof}
This lemma follows from the observation that
$L$ is the subspace of matrices of trace $0$. 
%
\end{proof}

For $(h,t)\in \gl_2\times \gl_1$, 
let $\rho_1(h,t)=(\rho_1(h,t)_{ij})\in\gl_3$ 
be the matrix such that 
\begin{equation*}
\begin{pmatrix}
\rho(h,t)w_1 & \rho(h,t)w_2 & \rho(h,t)w_3
\end{pmatrix}
= \begin{pmatrix}
w_1 & w_2 & w_3
\end{pmatrix}
\rho_1(h,t). 
\end{equation*}

We define a homomorphism $\phi:\gl_2\times \gl_1\to G$ by 
\begin{equation}
\label{eq:S14-phi-defn}
\phi(h,t)=({}^t\rho_1(h,t)^{-1},th,{}^th^{-1}).
\end{equation}
\begin{lem}
\label{lem:S14-w-fix}
The image of $\phi$ fixes $w$. 
\end{lem}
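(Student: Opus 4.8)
The plan is to verify directly that $\phi(h,t)$ fixes $w$ by unwinding the three defining actions and using the relation that defines $\rho_1(h,t)$. Recall that $G=\gl_3\times\gl_2^2$ acts on $V=\aff^3\otimes\m_2$, and writing $w=(w_1,w_2,w_3)$ as in (\ref{eq:322-generator2-alternative}) means $w=\sum_{i=1}^3\bbmp_{3,i}\otimes w_i$. The action of $(g,h_1,h_2)\in G$ sends $\bbmp_{3,i}\otimes A$ to $\sum_j (g)_{ji}\,\bbmp_{3,j}\otimes (h_1 A\,{}^th_2)$ (with the appropriate convention for which matrix-side each $\gl_2$ acts on, matching the conventions already fixed in this subsection). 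So the first step is simply to write out $\phi(h,t)\cdot w$ using $g={}^t\rho_1(h,t)^{-1}$, first $\gl_2$-factor $th$, second $\gl_2$-factor ${}^th^{-1}$, and collect the result as $\sum_i \bbmp_{3,i}\otimes(\text{something})$.

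Next I would compute the $\gl_2\times\gl_1$-part of the action on each slot: the pair $(th,{}^th^{-1})$ sends $A\in\m_2$ to $(th)A\,{}^t({}^th^{-1})=t\,hAh^{-1}=\rho(h,t)A$, which is exactly the map $\rho$ from Lemma \ref{lem:S14-L-invariant}. By Lemma \ref{lem:S14-L-invariant} this preserves $L=\lan w_1,w_2,w_3\ran$, and by the very definition of $\rho_1(h,t)$ we have $\big(\rho(h,t)w_1,\rho(h,t)w_2,\rho(h,t)w_3\big)=(w_1,w_2,w_3)\,\rho_1(h,t)$, i.e. $\rho(h,t)w_j=\sum_i(\rho_1(h,t))_{ij}w_i$. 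Feeding this in, $\phi(h,t)\cdot w=\sum_i\sum_j ({}^t\rho_1(h,t)^{-1})_{ji}\,\bbmp_{3,j}\otimes\rho(h,t)w_i$, and substituting $\rho(h,t)w_i=\sum_\ell(\rho_1(h,t))_{\ell i}w_\ell$ gives $\sum_{j,\ell}\big(\sum_i(\rho_1(h,t))_{\ell i}({}^t\rho_1(h,t)^{-1})_{ji}\big)\bbmp_{3,j}\otimes w_\ell$. The inner sum is $(\rho_1(h,t)\,\rho_1(h,t)^{-1})_{\ell j}=\delta_{\ell j}$, so the whole expression collapses to $\sum_j\bbmp_{3,j}\otimes w_j=w$.

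The only genuinely fiddly point — and thus the step I would be most careful about — is bookkeeping the transposes and the left/right placement of the two $\gl_2$-factors so that the action on the $\m_2$-slot is literally $\rho(h,t)$ and the action on the $\aff^3$-slot is literally $g={}^t\rho_1(h,t)^{-1}$ applied in the way that makes the matrix product $\rho_1\cdot\rho_1^{-1}$ appear (as opposed to ${}^t\rho_1\cdot(\ldots)$, which would not cancel). This is purely a matter of matching the conventions already fixed earlier in the subsection for how $G$ acts on $V=\aff^3\otimes\m_2$; once those are pinned down the cancellation is forced. I would present the computation compactly, noting that it is a formal consequence of the defining equation of $\rho_1$ together with associativity of matrix multiplication, and refer to Lemma \ref{lem:S14-L-invariant} for the well-definedness of $\rho_1$. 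No assumption on $\ch(k)$ is needed.
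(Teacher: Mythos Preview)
Your proof is correct and follows essentially the same approach as the paper. The paper carries out the identical computation in compact column-vector/transpose notation rather than your index notation: it writes $\phi(h,t)w = {}^t\rho_1(h,t)^{-1}\,{}^t\!\big((w_1\;w_2\;w_3)\rho_1(h,t)\big) = {}^t\rho_1(h,t)^{-1}\,{}^t\rho_1(h,t)\,(w_1\;w_2\;w_3)^t = w$, which is exactly your cancellation $\rho_1\rho_1^{-1}=I$ unwound.
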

\begin{proof}
The action of $g=(g_1,g_2,g_3)$ 
on $x=(x_1,x_2,x_3)$ is 
\begin{math}
g_1 \left(
\begin{smallmatrix}
g_2 x_1 {}^tg_3 \\
g_2 x_2 {}^tg_3 \\
g_2 x_3 {}^tg_3 
\end{smallmatrix}
\right)
\end{math}
treating $[g_2 x_1 {}^tg_3,g_2 x_2 {}^tg_3,g_2 x_3 {}^tg_3 ]$ 
as a column vector. 
So 
\begin{align*}
({}^t\rho_1(h,t)^{-1},th,{}^th^{-1}) w
& = {}^t\rho_1(h,t)^{-1}
\begin{pmatrix}
th w_1 h^{-1} \\
th w_2 h^{-1} \\
th w_3 h^{-1} 
\end{pmatrix} \\
& = {}^t\rho_1(h,t)^{-1}
\; {}^t\begin{pmatrix}
\rho(h,t) w_1 & 
\rho(h,t) w_2 & 
\rho(h,t) w_3
\end{pmatrix}  \\
& = {}^t\rho_1(h,t)^{-1}
\; {}^t 
(\begin{pmatrix}
w_1 & w_2 & w_3 
\end{pmatrix}
\rho_1(h,t)) \\
& = {}^t\rho_1(h,t)^{-1}\; {}^t\rho_1(h,t)
\begin{pmatrix}
w_1 \\ w_2 \\ w_3
\end{pmatrix} = \begin{pmatrix}
w_1 \\ w_2 \\ w_3
\end{pmatrix} \\
& = w. 
\end{align*}
\end{proof}
\begin{prop}
\label{prop:HR-circ}
$G_w=G^{\circ}_w = \im(\phi)$. 
\end{prop}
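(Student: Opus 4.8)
The plan is to show the three inclusions $\im(\phi)\sub G_w^\circ\sub G_w$ and $G_w\sub\im(\phi)$, so that all three coincide (then automatically $G_w=G_w^\circ$ is connected). The inclusion $\im(\phi)\sub G_w$ is already Lemma \ref{lem:S14-w-fix}. Since $\gl_2\times\gl_1$ is connected and $\phi$ is a morphism, $\im(\phi)$ is connected, hence $\im(\phi)\sub G_w^\circ$. By Corollary \ref{cor:S14-H-dense} we know $G_w$ is smooth of dimension $\dim G-\dim V=11-12$, wait — let me recompute: $\dim G=9+4+4=\dots$ actually $\dim(\gl_3\times\gl_2^2)=9+4+4=17$ and $\dim V=3\cdot 4=12$, so $\dim G_w=5$, matching Lemma \ref{lem:S14-XYZ-form}. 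So it suffices to prove $\dim\im(\phi)=5$ and $G_w\sub\im(\phi)$; equivalently, once $\dim\im(\phi)=5=\dim G_w^\circ$ we get $\im(\phi)=G_w^\circ$, and then only the component-group statement $G_w=G_w^\circ$ remains.

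First I would compute $\dim\im(\phi)$. The map $\phi\colon\gl_2\times\gl_1\to G$ has domain of dimension $5$; I would check its kernel is finite (in fact I expect the kernel to be trivial or a small finite group — e.g. the scalars $h=sI_2$ with suitable $t$ may or may not lie in the kernel depending on how $\rho$ acts), so that $\dim\im(\phi)=5$. Combined with connectedness and $\im(\phi)\sub G_w^\circ$ and $\dim G_w^\circ=5$, this forces $\im(\phi)=G_w^\circ$.

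Next, the main work: showing $G_w\sub\im(\phi)$, i.e. $G_w=G_w^\circ$. Following the pattern of the earlier stabilizer computations (Propositions \ref{prop:222-regular-sect}, \ref{prop:S3-stabilizer}, \ref{prop:S5-regular}), I would construct a relative invariant or an equivariant map that any $g\in G_w$ must respect, and use it to pin down the shape of $g$. The natural object here is the subspace $L\sub\m_2$ of trace-zero matrices (Lemma \ref{lem:S14-L-invariant}) together with the structure on $\aff^3\otimes\m_2$: given $g=(g_1,g_2,g_3)\in G_w$, the pair $(g_2,g_3)$ acts on the $\m_2$-factors, and the condition $gw=w$ says $g_1$ intertwines the $\aff^3$-structure with the induced action of $(g_2,g_3)$ on $\Span\{w_1,w_2,w_3\}=L$. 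One can then use that the determinant form on $\m_2$ (a nondegenerate quadratic form on the $2$-dim'l $\aff^2$-slots, or rather on $L$ the form $\det$ restricts to a nondegenerate form) is preserved up to scalar, forcing $(g_2,g_3)$ into the orthogonal-similitude group of $\det|_L\cong\sorth_3$-type data; since $\gl_2$ surjects onto $\orth(\det)^\circ$ via $A\mapsto hAh^{-1}$ scaled, one recovers $g_2=th$, $g_3={}^th^{-1}$ for some $(h,t)$ up to sign, and then $g_1$ is determined as ${}^t\rho_1(h,t)^{-1}$. The sign ambiguity $h\mapsto -h$ and the possibility of an outer automorphism (reflection in $\orth_3$) are the delicate points: one must check that every element of $\orth(\det|_L)$, including reflections, is realized by some $\gl_2$-element acting by conjugation (this is true since $\mathrm{PGL}_2\cong\sorth_3$ only gives $\sorth$, but $\orth_3=\sorth_3\times\{\pm 1\}$ and $-\id$ on $L$ is realized e.g. by $A\mapsto -{}^tA$ composed with a conjugation, or is simply not needed because it fails to preserve $w$); alternatively one checks directly by matrix entries as in Proposition \ref{prop:S3-stabilizer} that no extra component survives, so $G_w=G_w^\circ=\im(\phi)$.

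The hard part will be the last paragraph: ruling out extra connected components of $G_w$, i.e. showing there is no ``$\tau$-type'' element here (unlike Cases I--III where such elements exist). I expect this comes down to the fact that the relevant orthogonal group in dimension $3$ has its $-\id$ identified with an element of $\sorth_3$ (since $\det(-\id)=-1$ in odd dimension, $-\id\notin\sorth_3$, so actually $\orth_3=\sorth_3\sqcup(-\id)\sorth_3$), and one must verify that the reflection coset acts on $w$ in a way incompatible with fixing it — or is already absorbed into $\im(\phi)$ via the scalar $t$. I would resolve this by the explicit entry-by-entry argument: take $g=(g_1,g_2,g_3)\in G_w$, normalize using $\im(\phi)$ so that $g_2,g_3$ are as simple as possible (diagonal, say), and then read off from the equations $g_1w_i{}^t$-stuff$=w_i$ that $g_1$ is forced into $\im(\phi)$ as well, with no residual freedom. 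Since $\im(\phi)$ is connected and equals the whole stabilizer, $G_w=G_w^\circ$.
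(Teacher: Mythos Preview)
Your outline is right and matches the paper's: show $\im(\phi)\sub G_w^\circ$, compute $\dim\im(\phi)=5=\dim G_w$, conclude $\im(\phi)=G_w^\circ$, then argue $G_w=G_w^\circ$. Two points where you leave gaps:

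\textbf{The kernel of $\phi$.} This is actually trivial, not just finite: from $\phi(h,t)=({}^t\rho_1(h,t)^{-1},th,{}^th^{-1})$, the conditions $th=I_2$ and ${}^th^{-1}=I_2$ force $h=I_2$, $t=1$. So $\phi$ is an embedding and $\dim\im(\phi)=5$ immediately.

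\textbf{The component group.} Your $\gorth_3$ discussion is the right intuition but you never close it, and you end by deferring to an unspecified ``entry-by-entry argument.'' The paper's argument is short and avoids the $\orth_3$ vs.\ $\sorth_3$ tangle entirely. Since $G_w^\circ=\im(\phi)$ contains a copy of $\spl_2$ (namely $\phi(\spl_2\times\{1\})$), conjugation by any $g\in G_w$ gives an automorphism of this $\spl_2$. But $\spl_2$ has no outer automorphisms, so the automorphism is inner; multiplying $g$ by a suitable element of $\phi(\spl_2)\sub G_w^\circ$, one may assume $g$ commutes with all of $\phi(\spl_2)$. Now look at the torus:
\[
\phi(\diag(t,t^{-1}),1)=(\diag(1,t^2,t^{-2}),\,\diag(t,t^{-1}),\,\diag(t^{-1},t)).
\]
Commuting with this for all $t$ forces every component of $g$ to be diagonal. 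From there it is a direct check that diagonal elements of $G_w$ lie in $G_w^\circ$.

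Your fallback plan (``normalize using $\im(\phi)$ so that $g_2,g_3$ are diagonal'') is exactly this, but you did not say \emph{how} to achieve the normalization; the ``no outer automorphisms of $\spl_2$'' step is the mechanism. The $\gorth(\det|_L)$ route can be made to work too, but it requires tracking which similitudes of $L$ are realized by pairs $(g_2,g_3)$ versus by $\im(\phi)$, and your discussion of the reflection coset never reaches a conclusion.
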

\begin{proof}
It is easy to see that 
$\phi$ is an imbedding and so $\dim \phi(\gl_2\times \gl_1)=5=\dim G_w$. 
So $G^{\circ}_w = \im(\phi)$. 
Suppose that $k=\overline k$ and $g\in G_{w\,k}$. 
Since $\spl_2\sub \gl_2\times \gl_1$ has no outer automorphisms, 
by multiplying an element of $\spl_2$ if necessary, 
we may assume that $g$ commutes with elements of $\phi(\spl_2)$. 

Note that if $t\in\gl_1$ then with respect to the basis 
$\{E_{11},E_{21},E_{12},E_{22}\}$, 
\begin{equation*}
\rho(\diag(t,t^{-1}),1)=\diag(1,t^{-2},t^2,1). 
\end{equation*}
So 
\begin{equation*}
\phi(\diag(t,t^{-1}),1)=\left(
\diag(1,t^2,t^{-2}),
\diag(t,t^{-1}),
\diag(t^{-1},t)
\right).
\end{equation*}
Since $g$ commutes with this element, 
all components of $g$ are diagonal matrices. 
Then it is easy to show that $g\in G^{\circ}_w$.
\end{proof}

If $h=(h_{ij})\in\gl_2$ then by computation, 
\begin{equation}
\label{eq:rho1(g)}
\rho_1(h,1) = (\det h)^{-1} 
\begin{pmatrix}
h_{11}h_{22}+h_{12}h_{21} & h_{11}h_{21} & -h_{12}h_{22} \\
2h_{11}h_{12} & h_{11}^2 & -h_{12}^2 \\
-2h_{21}h_{22} & -h_{21}^2 & h_{22}^2
\end{pmatrix}.
\end{equation}

For the following proposition, 
see the consideration of $S_{\be_4}$
in Section 3 \cite{tajima-yukie-GIT2} 
(take the determinant of $\Phi(x)$ in (3.3) \cite{tajima-yukie-GIT2} 
identifying the dual space of $\m_2$ with $\m_2$).  
Note that $w$ in (\ref{eq:322-generator2-alternative}) corresponds to 
$R_{322}$ in Section 3 \cite{tajima-yukie-GIT2}. 
\begin{prop}
\label{prop:322-invariant}
We consider the natural action of 
$G=\gl_3\times \gl_2^2$ on 
$V=\aff^3\otimes \aff^2\otimes \aff^2$.
Then there is a homogeneous degree $6$ polynomial $P(x)$ on 
$V$ such that 
\begin{equation*}
P((g_1,g_2,g_3)x)=(\det g_1)^2(\det g_2)^3(\det g_3)^3 P(x)
\end{equation*}
for $(g_1,g_2,g_3)\in G$ and that 
\begin{math}
P(w)=1.  
\end{math}
Moreover, 
\begin{math}
\{x\in V_k\mid P(x)\not=0\}
= G_k w.
\end{math}
\end{prop}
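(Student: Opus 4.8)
The plan is to produce the polynomial $P(x)$ explicitly by working with the Castling-transformed picture $(G,V)$ with $V = \aff^3\otimes\m_2 = \aff^3\otimes\aff^2\otimes\aff^2$ and the element $w=(w_1,w_2,w_3)$ of (\ref{eq:322-generator2-alternative}), following the construction of $S_{\be_4}$ in Section~3 of \cite{tajima-yukie-GIT2}. First I would recall that in that reference one has an equivariant map $\Phi(x)$ from $V$ into a space of $3\times 3$ matrices (the ``$\Phi$'' of (3.3) in \cite{tajima-yukie-GIT2}, after identifying the dual of $\m_2$ with $\m_2$ via the trace form), and I would set $P(x) = \det\Phi(x)$. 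Since $\Phi$ is bilinear in each pair of $\aff^2$-factors and linear in $\aff^3$ in the appropriate sense, $\det\Phi(x)$ is homogeneous of degree $6$ in $x$; tracking the character of $\Phi$ under $G$ gives the transformation law $P((g_1,g_2,g_3)x)=(\det g_1)^2(\det g_2)^3(\det g_3)^3 P(x)$. The normalization $P(w)=1$ is then a direct finite computation with the explicit $w_1,w_2,w_3$ of (\ref{eq:B123-defn}): one writes out $\Phi(w)$ and checks its determinant is $1$ (adjusting the overall constant in $\Phi$ if necessary, which only affects $P$ by a scalar and can be absorbed).

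Next I would establish the orbit statement $\{x\in V_k\mid P(x)\neq 0\}=G_k w$. By Corollary~\ref{cor:S14-H-dense}, $Gw\subset V$ is Zariski open, and by Proposition~\ref{prop:HR-circ} the stabilizer $G_w=\im(\phi)\cong\gl_2\times\gl_1$ is reductive and connected; hence by Proposition~\ref{prop:regularity}, $(G,V)$ is a regular \pv{} and $V^{\sst}_{k^{\sep}}=G_{k^{\sep}}w$ is a single $G_{k^{\sep}}$-orbit, where $V^{\sst}=\{x\mid P(x)\neq 0\}$ once we know $P$ is (a power of) the relative invariant, i.e. that $V\setminus Gw$ has a single codimension-one component cut out by $P$. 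To see the latter I would check $P$ is irreducible (or at least that its zero locus is the full complement of the open orbit): the complement of the open orbit is $G$-invariant of codimension $\ge 1$, $P$ vanishes on it and is a relative invariant of minimal degree among the candidates, and degree $6$ matches what the character lattice computation in Section~\ref{sec:regularity} forces for the unique invariant hypersurface. Then rationality of the orbit, i.e. $V^{\sst}_k=G_k w$, follows from $\h^1(k,G_w)=\h^1(k,\gl_2\times\gl_1)$ being trivial (Hilbert 90), exactly as in the arguments of \S3--4 of \cite{wryu}.

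Finally, since the proposition is stated for the Castling partner $G=\gl_3\times\gl_2^2$ acting on $\aff^3\otimes\aff^2\otimes\aff^2$ rather than on $\aff^3\otimes\m_2$, I would invoke the Castling-transform discussion at the end of Section~\ref{sec:regularity}: relative invariant polynomials, orbits, and rational orbits correspond bijectively under Castling, the number of invariant hypersurfaces ($l=1$ here) is preserved, and the stabilizers are isomorphic, so the regularity and the single-orbit statement transfer verbatim. The degree and character of $P$ are dictated by this correspondence and by the normalization $P(R_{322})=1$, where $R_{322}$ is the element of Section~3 \cite{tajima-yukie-GIT2} corresponding to $w$.

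The main obstacle I expect is not the orbit-theoretic part—that is a routine application of Propositions~\ref{prop:regularity} and \ref{prop:HR-circ} plus vanishing of $\h^1$—but rather pinning down $P$ precisely: writing the equivariant map $\Phi$ in a form that makes the degree $6$ and the exact character $(\det g_1)^2(\det g_2)^3(\det g_3)^3$ transparent, verifying irreducibility of $\det\Phi$ so that it genuinely generates the ideal of the complement, and carrying out the normalization $P(w)=1$ with the correct sign/scalar. Once $\Phi$ is chosen as in (3.3) of \cite{tajima-yukie-GIT2}, all of this reduces to bookkeeping, but the bookkeeping with the three $\det$-twists and the Pl\"ucker-type identifications is where care is needed.
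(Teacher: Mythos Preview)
Your construction of $P(x)=\det\Phi(x)$ from the equivariant map $\Phi$ of (3.3) in \cite{tajima-yukie-GIT2}, together with the orbit argument via Corollary~\ref{cor:S14-H-dense}, Proposition~\ref{prop:HR-circ}, regularity (Proposition~\ref{prop:regularity}), and the vanishing of $\h^1(k,\gl_2\times\gl_1)$, is exactly what the paper intends: the paper simply refers the reader to the treatment of $S_{\be_4}$ in \cite{tajima-yukie-GIT2} and notes that $w$ corresponds to $R_{322}$ there. So your approach and the paper's approach coincide.

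One point to drop: the final paragraph on Castling transforms is unnecessary. The spaces $\aff^3\otimes\m_2$ and $\aff^3\otimes\aff^2\otimes\aff^2$ are \emph{literally the same} $G$-representation (via $\m_2\cong\aff^2\otimes\aff^2$, $A\leftrightarrow\sum A_{ij}\,\bbmp_{2,i}\otimes\bbmp_{2,j}$), not Castling partners, so there is nothing to transfer. Also, your worry about irreducibility of $P$ can be replaced by a direct appeal to Proposition~\ref{prop:number-of-components} and Corollary~\ref{cor:reducible-sep-orbit}: here $V$ is an irreducible $G$-module, so $N=1$, hence $m=1$, and $\{P\neq0\}_{k^{\sep}}=G_{k^{\sep}}w$ automatically. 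This avoids checking irreducibility of $\det\Phi$ by hand.
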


\begin{rem}
\label{rem:322-remark}
In the situation of Proposition \ref{prop:322-invariant}, 
we obtain the same result even if we change the 
action of $g\in \gl_3$ to $\aff^3\ni x\mapsto {}^tg^{-1}x\in\aff^3$.
This is because $\{{}^tg^{-1}\mid g\in \gl_3(k)\}=\gl_3(k)$. 
Similarly we can change the action of $g\in \gl_2$ to 
$\aff^2\ni x\mapsto {}^tg^{-1}x\in\aff^2$. 
\end{rem}

\subsection{Case V}

We consider $G=\gl_2^2\times \gl_1$ and
$V=\aff^2\otimes \aff^2\oplus \aff^2\oplus \aff^2$. 
We define an action of $G$ on $V$ so that 
$(g_1,g_2,t)(v_1\otimes v_2,v_3,v_4)=(g_1v_1\otimes g_2v_2,tg_1v_3,g_2v_4)$. 
for $(g_1,g_2,t)\in G$, $v_1\ccd v_4\in\aff^2$.

Let $V,W$ be vector spaces over $k$ and $0<m<n$ integers. 
We define a map
\begin{equation*}
\Phi:V\otimes \wedge^m \aff^n\oplus W\otimes \wedge^{n-m}\aff^n\to 
V\otimes W\otimes \wedge^n \aff^n
\cong V\otimes W, 
\end{equation*}
linear with respect to each component, so that 
\begin{math}
\Phi(v\otimes x,w\otimes y) = v\otimes w \otimes (x\wedge y)
\end{math}
for $v\in V,w\in W,x\in \wedge^m \aff^n,y\in \wedge^{n-m}\aff^n$. 
As we pointed out earlier, we identify 
$\wedge^n \aff^n$ with $\aff^1$ so that 
$p_{n,12\cdots n}$ corresponds to $1$.

The following lemma is obvious. 
\begin{lem}
\label{lem:natural-pairing}
In the above situation, 
\begin{equation*}
\Phi(g_1v\otimes g_3x,g_2w\otimes g_3y) = (\det g_3) 
(g_1,g_2)\Phi(v\otimes x,w\otimes y)
\end{equation*}
for $g_1\in\gl(V),g_2\in\gl(W),g_3\in \gl_n$ 
where the action of $(g_1,g_2)$ on $V\otimes W$ is the 
natural action. 
\end{lem}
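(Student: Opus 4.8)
The plan is to reduce the identity to the case of decomposable tensors and then verify it by a direct computation, the only nontrivial ingredient being the behaviour of the wedge product under the action of $\gl_n$.

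First I would observe that, since $\Phi$ is by construction linear in each of its two arguments and since the maps $v\otimes x\mapsto g_1v\otimes g_3x$ and $w\otimes y\mapsto g_2w\otimes g_3y$ are linear, both sides of the asserted equality are linear in each argument. Hence it suffices to check the identity when the first argument is a decomposable tensor $v\otimes x$ with $v\in V$, $x\in\wedge^m\aff^n$, and the second argument is $w\otimes y$ with $w\in W$, $y\in\wedge^{n-m}\aff^n$.

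For such arguments, unwinding the definition of $\Phi$ gives
\begin{equation*}
\Phi(g_1v\otimes g_3x,\,g_2w\otimes g_3y)=(g_1v)\otimes(g_2w)\otimes\bigl((g_3x)\wedge(g_3y)\bigr),
\end{equation*}
where $g_3$ acts on $\wedge^m\aff^n$ and $\wedge^{n-m}\aff^n$ through the exterior powers of the standard representation. The key step is the identity $(g_3x)\wedge(g_3y)=(\det g_3)\,(x\wedge y)$ in $\wedge^n\aff^n$. This holds because $\wedge^{\bullet}(g_3)$ is an algebra endomorphism of the exterior algebra $\wedge^{\bullet}\aff^n$, so that $\bigl(\wedge^m(g_3)\bigr)(x)\wedge\bigl(\wedge^{n-m}(g_3)\bigr)(y)=\bigl(\wedge^n(g_3)\bigr)(x\wedge y)$, and $\wedge^n(g_3)$ is multiplication by $\det g_3$ on the one-dimensional space $\wedge^n\aff^n$; under the identification $\wedge^n\aff^n\cong\aff^1$ this is scalar multiplication by $\det g_3$.

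Substituting this back, and using that the action of $(g_1,g_2)$ on $V\otimes W$ sends $v\otimes w$ to $g_1v\otimes g_2w$, we obtain
\begin{equation*}
(g_1v)\otimes(g_2w)\otimes\bigl((\det g_3)(x\wedge y)\bigr)=(\det g_3)\,(g_1,g_2)\bigl(v\otimes w\otimes(x\wedge y)\bigr)=(\det g_3)\,(g_1,g_2)\,\Phi(v\otimes x,\,w\otimes y),
\end{equation*}
which is the desired equality. The only point requiring any thought is the wedge-product identity above; the rest is bookkeeping, which is presumably why the authors call the lemma obvious.
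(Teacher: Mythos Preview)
Your proof is correct. The paper does not give a proof at all, simply stating that the lemma is obvious; your argument via reduction to decomposable tensors and the multiplicativity of $\wedge^\bullet(g_3)$ is exactly the natural verification one would supply.
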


The following proposition follows by applying 
Lemma \ref{lem:natural-pairing} twice. 
Also it is proved explicitly in 
Proposition 3.5 \cite{tajima-yukie-GIT2} 
during the consideration of $S_{\be_6}$ in Section 3 \cite{tajima-yukie-GIT2}.  
Note that the formulation in \cite{tajima-yukie-GIT2} is slightly 
different and there are extra $\gl_1$-factors, 
but the proof works.    
\begin{prop}
\label{prop:M2-2-2-invariant}
Let $G,V$ be as above.  
\begin{itemize}
\item[(1)]
There is a homogeneous degree $3$ polynomial $P_1(x)$ on $V$, 
linear with respect to each of $v_1\otimes v_2,v_3,v_4$ such that
$P_1(gx) = t(\det g_1)(\det g_2)P_1(x)$ for $g=(g_1,g_2,t)\in G,x\in V$
and that $P_1(q_{11}+q_{22},\bbmp_{2,1},\bbmp_{2,1})=1$ 
where $q_{11}=\bbmp_{2,1}\otimes \bbmp_{2,1}$, etc.  
\item[(2)]
For $x=(A,v_1,v_2)\in V$ where $A\in \aff^2\otimes \aff^2$, 
let $P_2(x)=\det A$ identifying $\aff^2\otimes \aff^2\cong \m_2$.
Then $P_2(gx)=(\det g_1)(\det g_2)P_2(x)$ and 
$\{x\in V_k\mid P_1(x),P_2(x)\not=0\}=G_k(q_{11}+q_{22},\bbmp_{2,1},\bbmp_{2,1})$.
\end{itemize}
\end{prop}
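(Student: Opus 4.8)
The plan is to prove the two parts of Proposition \ref{prop:M2-2-2-invariant} by producing the relative invariants explicitly via the pairing map $\Phi$ of Lemma \ref{lem:natural-pairing}, and then to deduce the orbit statement from a dimension count together with Proposition \ref{prop:open-orbit}. First I would set up notation: write $x=(A,v_3,v_4)$ with $A\in\aff^2\otimes\aff^2\cong\m_2$, $v_3,v_4\in\aff^2$. For part (2), the polynomial $P_2(x)=\det A$ is obviously linear in neither $v_3$ nor $v_4$ but degree $2$ in $A$, and the transformation law $P_2(gx)=(\det g_1)(\det g_2)P_2(x)$ is immediate from $\det(g_1 A\,{}^tg_2)=(\det g_1)(\det g_2)\det A$; note $t$ does not appear because the $\gl_1$-factor acts only on the $\aff^2$-summand carrying $v_3$. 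This part is essentially free.

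For part (1), I would construct $P_1$ by applying $\Phi$ with $n=2$: regard $v_3\in\aff^2=\wedge^1\aff^2$ and $A\in\aff^2\otimes\aff^2$, so that contracting the first $\aff^2$-factor of $A$ against $v_3$ (equivalently, applying $\Phi$ in the first tensor slot with $V=\aff^2$, $W=k$, $m=n-m=1$) yields an element of $\aff^2$ living in the ``second'' $\aff^2$; call it $u(A,v_3)$. Explicitly, if $A$ corresponds to the matrix whose columns span, this is $u = {}^tA\,v_3$ up to convention, an element of the second $\aff^2$ transforming by $g_2$ and scaled by $\det g_1$ (via the $\wedge^2\aff^2\cong\aff^1$ identification) and by $t$. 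Then pair $u(A,v_3)$ with $v_4$ via the alternating form on the second $\aff^2$, i.e.\ apply $\Phi$ again with $n=2$, $m=1$, to get
\begin{equation*}
P_1(x) = \Phi\bigl(u(A,v_3)\otimes 1,\;v_4\otimes 1\bigr)\in\wedge^2\aff^2\cong k.
\end{equation*}
By Lemma \ref{lem:natural-pairing} applied twice, $P_1(gx)=t(\det g_1)(\det g_2)P_1(x)$: the first application produces the $\det g_1$ and the $t$, the second produces the $\det g_2$. It is visibly trilinear in $v_1\otimes v_2$ (i.e.\ $A$), $v_3$, $v_4$, hence homogeneous of degree $3$. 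The normalization $P_1(q_{11}+q_{22},\bbmp_{2,1},\bbmp_{2,1})=1$ is a one-line check: with $A=I_2$, $u(I_2,\bbmp_{2,1})=\bbmp_{2,1}$ (or $\bbmp_{2,2}$ depending on convention), and pairing that against $\bbmp_{2,1}$ gives $\pm1$; the signs in $\Phi$ are fixed so that this is $+1$, matching the choice $p_{2,12}\mapsto 1$.

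For the orbit statement I would combine three things. First, $P_1$ and $P_2$ are manifestly $G$-semi-invariant with multiplicatively independent characters (the $\gl_1$-weight distinguishes them), so $\{x\mid P_1(x),P_2(x)\neq 0\}$ is $G$-stable and nonempty, containing $w_0:=(q_{11}+q_{22},\bbmp_{2,1},\bbmp_{2,1})$. Second, a direct computation of $\dim{\mathrm T}_{e_G}(G_{w_0})$ shows it equals $\dim G-\dim V = (4+4+1)-(4+2+2)=1$, so by Proposition \ref{prop:open-orbit} the orbit $Gw_0$ is Zariski open in $V$ and $G_{w_0}$ is smooth; equivalently one invokes the analogous fact already recorded in Proposition 3.5 of \cite{tajima-yukie-GIT2}. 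Third, since $\{P_1\neq0,P_2\neq0\}$ is irreducible (complement of two hypersurfaces in affine space) and contains the dense orbit $Gw_0$, and since the complement of $Gw_0$ in this set would be a proper closed $G$-stable subset not meeting either hypersurface, one argues — exactly as in the regularity discussion of Section \ref{sec:regularity}, using that $G_{w_0}$ is reductive (it is a one-dimensional torus by the explicit description one gets alongside the Lie-algebra computation) — that the complement is empty. The main obstacle is the last step: one must check that $\{P_1\neq0,P_2\neq0\}$ really is exactly the open orbit and not strictly larger, i.e.\ that there is no third invariant hypersurface hiding inside it. I would handle this by the argument of Proposition \ref{prop:number-of-components} and Corollary \ref{cor:reducible-sep-orbit}: $N=3$ summands but $P_1,P_2$ already give $2$ independent characters, and the computation of $G_{w_0}$ (reductive, smooth, of the expected dimension) forces $Gw_0$ to be affine with complement of pure codimension $1$; counting characters then shows there are at most $2$ invariant hypersurfaces, hence the codimension-one complement of $Gw_0$ is exactly $\{P_1=0\}\cup\{P_2=0\}$, giving $\{P_1\neq0,P_2\neq0\}=Gw_0=G_kw_0$ over any field since $\h^1(k,G_{w_0})$ is trivial for a split torus. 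Alternatively, and more cheaply, I would simply cite Proposition 3.5 of \cite{tajima-yukie-GIT2}, where this exact orbit equality is established during the analysis of $S_{\be_6}$, noting that the extra $\gl_1$-factors there do not affect the argument.
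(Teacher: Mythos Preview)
Your construction of $P_1$ by two applications of the pairing $\Phi$ from Lemma~\ref{lem:natural-pairing} is exactly what the paper does, and your fallback of citing Proposition~3.5 of \cite{tajima-yukie-GIT2} for the orbit statement in~(2) is also precisely the paper's approach; so the proposal is correct overall and matches the paper.

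One point in your self-contained argument for~(2) deserves tightening. You invoke Proposition~\ref{prop:number-of-components} and Corollary~\ref{cor:reducible-sep-orbit} to get ``at most $2$ invariant hypersurfaces,'' but those results only give $m\le N=3$ here (three summands, three independent central characters), and the corollary applies only when $m=N$. The bound $m\le 2$ does follow, but from a different observation: any relative invariant has nonzero value at $w_0$ (it lies in the open orbit), so its character is trivial on $G_{w_0}$; since your Lie-algebra/stabilizer computation gives $G_{w_0}\cong\gl_1$ with the restriction map $X^*(G)\to X^*(G_{w_0})$ surjective, the characters of basic invariants lie in a rank-$2$ sublattice of $X^*(G)\cong\Z^3$, whence $m\le 2$. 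With this in place, the proof of Proposition~\ref{prop:regularity}(3) identifies $Gw_0$ with $\{P_1\ne 0,P_2\ne 0\}$, and $\h^1(k,\gl_1)=\{1\}$ upgrades this to a single $G_k$-orbit. Alternatively, once you have $A=I_2$ and the stabilizer $\{(g,{}^tg^{-1},t)\}$, it is a two-line direct check that any $(v_3,v_4)$ with $P_1\ne 0$ can be moved to $(\bbmp_{2,1},\bbmp_{2,1})$, which avoids the hypersurface-counting entirely.
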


\section{Rational orbits (2)}
\label{sec:rational-orbits-53}

In this section and the subsequent five sections, 
we consider \rep s which appear as $(M_{\be},Z_{\be})$. 
We have to consider two questions. One is the existence of 
relative invariant polynomials and the other is the 
interpretation of generic rational orbits. 
For the sake of the GIT stratification, it is enough to 
determine whether or not $S_{\be}\not=\emptyset$. 
For that purpose we only have to construct non-zero 
relative invariant polynomials and $(M_{\be},Z_{\be})$ does not
have to be a \pv{}.  
However, to describe rational orbits in 
$S_{\be\,k}$, we need more information and it is convenient 
to show that $(M_{\be},Z_{\be})$ is a regular \pv. 
For that purpose, we sometimes have to assume that 
$\ch(k)\not=2$.

In this section let $G=\gl_5\times \gl_3$ and 
$V=\wedge^2 \aff^5\otimes \aff^3$. 
Rational orbits of this case was considered 
in \cite{yukiem} under the assumption 
$\ch(k)=0$.  The proof in \cite{yukiem} 
works as long as $\ch(k)\not=2,3$. 
However, we would like to minimize 
the number of primes excluded for $\ch(k)$.
So we make some modification of the 
argument so that the only prime excluded 
is $p=2$. Some of the considerations 
in this section are due to \cite{ochiai} if $k=\C$.

We sometimes consider over $\Z,\Q$ and then 
deduce certain assertions over any $k$ 
by the natural homomorphism $\Z\to k$.

We first assume $k=\Q$. 
Let $W=\aff^2$ be the standard \rep{} of the ring $\m_2$, 
$W_4=\sym^4 W$ and $W_2=\sym^2 W$. 
Let $\{\bbmp_{2,1},\bbmp_{2,2}\}$ be the standard basis 
of $W$ as before and 
$l_0=\bbmp_{2,1}^4,l_1=\bbmp_{2,1}^3\bbmp_{2,2}\ccd l_4=\bbmp_{2,2}^4\in W_4$.
Then $S_4=\{l_0,l_1,3l_2,l_3,l_4\}$ is a basis of $W_4$ over $\Q$. 
For $g\in\gl_2$, let $\rho(g)$ be the matrix of the action of $g\in \m_2$ 
on $W_4$ with respect to the basis $S_4$. We put
\begin{align*}
w_1 & = 
\begin{pmatrix}
0 & 0 & 0 & 1 & 0 \\
0 & 0 & -1 & 0 & 0 \\
0 & 1 & 0 & 0 & 0 \\
-1 & 0 & 0 & 0 & 0 \\
0 & 0 & 0 & 0 & 0 
\end{pmatrix}, \;
w_2 = 
\begin{pmatrix}
0 & 0 & 0 & 0 & -1 \\
0 & 0 & 0 & 2 & 0 \\
0 & 0 & 0 & 0 & 0 \\
0 & -2 & 0 & 0 & 0 \\
1 & 0 & 0 & 0 & 0 
\end{pmatrix}, \\
w_3 & = 
\begin{pmatrix}
0 & 0 & 0 & 0 & 0 \\
0 & 0 & 0 & 0 & 1 \\
0 & 0 & 0 & -1 & 0 \\
0 & 0 & 1 & 0 & 0 \\
0 & -1 & 0 & 0 & 0 
\end{pmatrix},\; 
w = 
\begin{pmatrix}
w_1 \\ w_2 \\ w_3
\end{pmatrix}
\in \wedge^2 \aff^5\otimes \aff^3=V.
\end{align*}
Let $L=\lan w_1,w_2,w_3\ran\sub \wedge^2 W_4$ 
be the subspace spanned by $w_1,w_2,w_3$ and 
$L_{\Z}$ the $\Z$-module spanned by $w_1,w_2,w_3$.  

\begin{lem}
\label{lem:S1-rho4-defined-integrally} 
\begin{itemize}
\item[(1)]
If $g\in\m_2(\Z)$ then entries of $\rho(g)$ 
are polynomials of entries of $g$ with coefficients in $\Z$.  
\item[(2)]
If $g_1,g_2\in\m_2(\Z)$ then $\rho(g_1g_2)=\rho(g_1)\rho(g_2)$. 
\end{itemize}
\end{lem}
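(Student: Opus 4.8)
The plan is to identify $\rho$ with $\sym^4$ of the standard action of the monoid $\m_2$ on $W=\aff^2$ and then to carry out the bookkeeping forced by the non-standard basis $S_4$. First I would work in the monomial basis $(l_0,l_1,l_2,l_3,l_4)$, $l_i=\bbmp_{2,1}^{4-i}\bbmp_{2,2}^{i}$. Writing $g=\left(\begin{smallmatrix} a & b \\ c & d\end{smallmatrix}\right)\in\m_2$, we have $g\cdot\bbmp_{2,1}=a\bbmp_{2,1}+c\bbmp_{2,2}$ and $g\cdot\bbmp_{2,2}=b\bbmp_{2,1}+d\bbmp_{2,2}$, so $g\cdot l_j=(a\bbmp_{2,1}+c\bbmp_{2,2})^{4-j}(b\bbmp_{2,1}+d\bbmp_{2,2})^{j}$; reading off the coefficient of $l_i$ gives a polynomial $m_{ij}\in\Z[a,b,c,d]$ with non-negative integer coefficients (sums of products of binomial coefficients). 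Hence in the monomial basis the action is described by the matrix $\rho_0(g)=(m_{ij}(g))$, whose entries are already $\Z$-polynomials in the entries of $g$, and since $\sym^4$ carries a composite of linear maps to the composite of the induced maps, $\rho_0(g_1g_2)=\rho_0(g_1)\rho_0(g_2)$.

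Next, passing from the monomial basis to $S_4=(l_0,l_1,3l_2,l_3,l_4)$ conjugates $\rho_0$ by $\diag(1,1,3,1,1)$, i.e. $\rho(g)_{ij}=\tfrac{\sigma_j}{\sigma_i}\,m_{ij}(g)$ with $\sigma=(1,1,3,1,1)$ (indices $0,\dots,4$). Conjugation preserves multiplicativity, so $\rho(g_1g_2)=\rho(g_1)\rho(g_2)$ holds over $\Q$, and once integrality of the entries is established it holds over $\Z$; this yields (2). For (1) the only entries that could acquire a denominator are the middle-row entries $\rho(g)_{2j}=\tfrac13 m_{2j}(g)$ with $j\neq 2$, so everything reduces to showing $3\mid m_{2j}$ in $\Z[a,b,c,d]$ for $j=0,1,3,4$.

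For that I would reduce modulo $3$ and use the Frobenius identity $(\alpha\bbmp_{2,1}+\gamma\bbmp_{2,2})^3\equiv\alpha^3\bbmp_{2,1}^3+\gamma^3\bbmp_{2,2}^3\pmod 3$: when $j\le 1$ the product $(a\bbmp_{2,1}+c\bbmp_{2,2})^{4-j}(b\bbmp_{2,1}+d\bbmp_{2,2})^{j}$ contains the factor $(a\bbmp_{2,1}+c\bbmp_{2,2})^3$, and when $j\ge 3$ it contains $(b\bbmp_{2,1}+d\bbmp_{2,2})^3$, so modulo $3$ the product is a linear form times a cube of a linear form, hence involves only the monomials $\bbmp_{2,1}^4,\bbmp_{2,1}^3\bbmp_{2,2},\bbmp_{2,1}\bbmp_{2,2}^3,\bbmp_{2,2}^4$ and never $\bbmp_{2,1}^2\bbmp_{2,2}^2$. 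Therefore the coefficient $m_{2j}$ of $\bbmp_{2,1}^2\bbmp_{2,2}^2$ vanishes mod $3$ for $j\neq 2$, as required; only the case $j=2$ produces an $\bbmp_{2,1}^2\bbmp_{2,2}^2$ term mod $3$, which is precisely why $l_2$ carries the factor $3$ in $S_4$. All of this is routine; the single point that must be gotten right is this mod-$3$ vanishing of the middle coefficient, and I do not anticipate a genuine obstacle.
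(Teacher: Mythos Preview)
Your argument is correct and follows essentially the same approach as the paper: both reduce to checking that the coefficient of $\bbmp_{2,1}^2\bbmp_{2,2}^2$ in $(\sym^4 g)l_j$ is divisible by $3$ for $j\neq 2$. The only difference is in the verification of this divisibility: the paper simply writes out the coefficients explicitly (e.g.\ $C_0=6g_{11}^2g_{21}^2$, $C_1=3g_{11}^2g_{21}g_{22}+3g_{11}g_{21}^2g_{12}$), while you use the Frobenius identity $(\alpha\bbmp_{2,1}+\gamma\bbmp_{2,2})^3\equiv\alpha^3\bbmp_{2,1}^3+\gamma^3\bbmp_{2,2}^3\pmod 3$ to see the vanishing conceptually. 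Your route is marginally slicker, but the substance is the same.
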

\begin{proof}
(1) We only have to verify that the coefficient of 
$\bbmp_{2,1}^2\bbmp_{2,2}^2$ in $(\sym^4 g) l_i$, say $C_i$, is divisible by $3$ 
for $i=0,1,3,4$. If $g=(g_{ij})$ then 
\begin{align*}
& C_0 = 6g_{11}^2g_{21}^2,\;
C_1 = 3g_{11}^2g_{21}g_{22}+3g_{11}g_{21}^2g_{12}.
\end{align*}
So $C_0,C_1$ are divisible by $3$. Similarly, $C_3,C_4$ 
are divisible by $3$. 
(2) is obvious.
\end{proof}

Note that if $u\in \Z$ then 
\begin{equation}
\label{eq:S1-rho-nu}
\rho(n_2(u)) = 
\begin{pmatrix}
1 & 0 & 0 & 0 & 0 \\
4u & 1 & 0 & 0 & 0 \\
2u^2 & u & 1 & 0 & 0 \\
4u^3 & 3u^2 & 6u & 1 & 0 \\
u^4 & u^3 & 3u^2 & u & 1
\end{pmatrix},\;
\rho({}^tn_2(u)) = 
\begin{pmatrix}
1 & u & 3u^2 & u^3 & u^4 \\
0 & 1 & 6u & 3u^2 & 4u \\
0 & 0 & 1 & u & 2u^2 \\
0 & 0 & 0 & 1 & 4u \\
0 & 0 & 0 & 0 & 1
\end{pmatrix}.
\end{equation}

By Lemma \ref{lem:S1-rho4-defined-integrally}, 
the natural homomorphism $\Z\to k$ induces a map 
$\rho:\m_2(k)\to \m_5(k)$ (we use the same notation $\rho$) 
such that $\rho(g_1g_2)=\rho(g_1)\rho(g_2)$ for $g_1,g_2\in \m_2(k)$.
Since $\rho(I_2)=I_5$ over $\Z$, the same is true over $k$. 
Therefore, $\rho(g)\in\gl_5(k)$ if $g\in\gl_2(k)$.

\begin{lem}
\label{lem:S1-invariant-subspace}
If $g\in\gl_2(k)$ and $A\in L$ then 
$\wedge^2 \rho(g)A\in L$.   
\end{lem}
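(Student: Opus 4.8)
Lemma \ref{lem:S1-invariant-subspace} asserts that the $3$-dimensional subspace $L = \langle w_1, w_2, w_3\rangle \subset \wedge^2 W_4$ is invariant under $\wedge^2\rho(g)$ for every $g \in \gl_2(k)$, where $\rho$ is the $\sym^4$ representation (rendered integrally via the basis $S_4 = \{l_0, l_1, 3l_2, l_3, l_4\}$).

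The plan is to reduce to generators of $\gl_2$. First I would observe that it suffices to prove the claim when $k = \Q$ (or even $k = \Z$ on the level of $L_\Z$): indeed, by Lemma \ref{lem:S1-rho4-defined-integrally}, the entries of $\rho(g)$ are integral polynomials in the entries of $g$, hence so are the entries of $\wedge^2\rho(g)$ in the chosen basis of $\wedge^2 W_4$; if I can write, for a Zariski-dense set of $g$ (say all $g \in \m_2(\Z)$), that $\wedge^2\rho(g)w_i = \sum_j c_{ij}(g) w_j$ with $c_{ij}(g) \in \Z[\text{entries of }g]$, then the identity $\wedge^2\rho(g)w_i - \sum_j c_{ij}(g)w_j = 0$ is a polynomial identity over $\Z$ and therefore descends to any $k$ via $\Z \to k$. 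So the whole problem is: verify the invariance for $\gl_2$ over $\Q$ (or $\Z$).

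Next I would use the Bruhat-type decomposition: $\gl_2$ is generated (as a group, over a field, up to the dense big cell) by the lower unipotent subgroup $\{n_2(u)\}$, the upper unipotent subgroup $\{{}^tn_2(u)\}$, and the diagonal torus. For the torus, $\rho(\diag(s,t))$ is diagonal in the basis $S_4$ (with entries $s^4, s^3t, s^2t^2, st^3, t^4$), so $\wedge^2\rho(\diag(s,t))$ is diagonal on the standard basis $\{p_{5,ij}\}$ of $\wedge^2 W_4$, and one checks directly that each $w_i$ — being a very sparse sum of such basis vectors — is an eigenvector, hence $\langle w_1, w_2, w_3\rangle$ is preserved. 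For the two unipotent pieces I would use the explicit matrices $\rho(n_2(u))$ and $\rho({}^tn_2(u))$ displayed in \eqref{eq:S1-rho-nu}, form $\wedge^2$ of each (a $10 \times 10$ matrix whose entries are polynomials in $u$), apply it to $w_1, w_2, w_3$, and check that each image lies in $L$; since $L$ is only $3$-dimensional and the $w_i$ have few nonzero entries, this is a short finite computation. Because these three subgroups generate $\gl_2$ (the diagonal torus together with both unipotents suffices, or one can invoke that products $n_2(u)\,{}^tn_2(u')\,\diag$ fill the big cell and invariance on a dense set plus continuity/polynomiality gives it everywhere), the claim follows over $\Q$, and then over $k$ by the descent argument above.

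The main obstacle I expect is purely bookkeeping: correctly computing $\wedge^2$ of the $5\times 5$ matrices in \eqref{eq:S1-rho-nu} and confirming closure of $L$ under the two unipotent actions without sign or index errors — in particular making sure the factor-of-$3$ normalization in $S_4$ (which makes $\rho$ integral) is consistently tracked when passing to $\wedge^2 W_4$. There is no conceptual difficulty: once the generation statement for $\gl_2$ and the $\Z$-polynomiality (already supplied by Lemma \ref{lem:S1-rho4-defined-integrally}) are in hand, everything is mechanical. One convenient shortcut worth trying: identify $L$ intrinsically — e.g. as the image of a natural $\gl_2$-equivariant map $\sym^2 W_2 \to \wedge^2 W_4$ or as an isotypic component of $\wedge^2\sym^4 W$ under $\spl_2$ (by Clebsch–Gordan, $\wedge^2\sym^4 W \cong \sym^6 W \oplus \sym^2 W$, and $L$ should be the $\sym^2 W$ summand, which is $3$-dimensional). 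If that identification can be made cleanly over $\Z[\tfrac12,\tfrac13]$ or better, invariance is immediate from equivariance and the explicit computation is avoided; but since the paper wants to exclude only $p=2$, I would still need the integral $\rho$ and the descent, so the direct generator check is the safe route.
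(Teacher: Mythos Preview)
Your proposal is correct and takes essentially the same approach as the paper: reduce to the generators $\diag(t_1,t_2)$, $n_2(u)$, ${}^t n_2(u)$ of $\gl_2(k)$ and verify directly (using the explicit matrices \eqref{eq:S1-rho-nu}) that each $\wedge^2\rho(g)w_i$ lies in $L$. The paper even records the results of the unipotent computation explicitly, e.g.\ $\wedge^2\rho(n_2(u))w_1 = w_1 - uw_2 + u^2 w_3$. Your descent-to-$\Q$ step is harmless but unnecessary: since diagonal matrices together with the two unipotent subgroups already generate $\gl_2(k)$ over any field, the generator check works directly over $k$ without invoking Zariski density or polynomiality.
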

\begin{proof}
It is enough to consider $n_2(u),{}^tn_2(u)$ and 
diagonal matrices. If $g=\diag(t_1,t_2)$ then 
the assertion is obvious. Explicitly, 
\begin{equation*}
\wedge^2 \rho(g) w_1 = t_1^5t_2^3 w_1,\; 
\wedge^2 \rho(g) w_2 = t_1^4t_2^4 w_2,\;
\wedge^2 \rho(g) w_3 = t_1^3t_2^5 w_3.
\end{equation*}

If $g=n_2(u)$ then 
\begin{equation*}
\wedge^2 \rho(g) w_1 = w_1 -uw_2 +u^2w_3,\; 
\wedge^2 \rho(g) w_2 = w_2 -2uw_3,\; 
\wedge^2 \rho(g) w_3 = w_3
\end{equation*}
and so $\wedge^2 \rho(g)L\sub L$.   
The consideration is similar for $g={}^tn_2(u)$. 
\end{proof}

For $g\in \gl_2$, let $\rho_1(g)=(\det g)^{-2}\rho(g)$. 
Then $\rho_1(g)=I_5$ if $g$ is a scalar matrix.  
For $g\in \gl_2$, let $\rho_2(g)=(\rho_2(g)_{ij})\in\gl_3$ 
be the matrix such that 
\begin{equation*}
\begin{pmatrix}
\wedge^2 \rho_1(g)w_1 & \wedge^2 \rho_1(g)w_2 & \wedge^2 \rho_1(g)w_3
\end{pmatrix}
= \begin{pmatrix}
w_1 & w_2 & w_3
\end{pmatrix}
\rho_2(g)
\end{equation*}
treating 
\begin{math}
\begin{pmatrix}
w_1 & w_2 & w_3
\end{pmatrix}
\end{math}
as a row vector. 

\begin{lem}
\label{lem:S1-rho-rho1-fix-w}
The image of $(\rho_1,{}^t\rho_2^{-1})$ 
fixes $w$. 
\end{lem}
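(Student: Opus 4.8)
The plan is to verify directly that $(\rho_1,{}^t\rho_2^{-1})(g)$ fixes $w$ for all $g \in \gl_2$, using the definition of the action of $G = \gl_5 \times \gl_3$ on $V = \wedge^2\aff^5 \otimes \aff^3$ together with the defining relation for $\rho_2(g)$. Recall that $g_1 \in \gl_5$ acts on $\wedge^2\aff^5$ by $\wedge^2\rho_1(g)$ when $g_1 = \rho_1(g)$, and $g_2 \in \gl_3$ acts on the $\aff^3$-factor in the standard way; writing $w = {}^t(w_1,w_2,w_3)$ as a column vector with entries in $\wedge^2\aff^5$, the element $(\rho_1(g), {}^t\rho_2(g)^{-1})$ sends $w$ to ${}^t\rho_2(g)^{-1}\,{}^t(\wedge^2\rho_1(g)w_1,\ \wedge^2\rho_1(g)w_2,\ \wedge^2\rho_1(g)w_3)$. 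By the defining equation of $\rho_2(g)$, the row vector $(\wedge^2\rho_1(g)w_1, \wedge^2\rho_1(g)w_2, \wedge^2\rho_1(g)w_3)$ equals $(w_1,w_2,w_3)\rho_2(g)$, hence the column vector of the $\wedge^2\rho_1(g)w_i$ is ${}^t\rho_2(g)\,{}^t(w_1,w_2,w_3)$; applying ${}^t\rho_2(g)^{-1}$ returns ${}^t(w_1,w_2,w_3) = w$. This is the same bookkeeping as in Lemma~\ref{lem:S14-w-fix}.

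The only genuine content is that $\rho_2(g)$ is well defined, i.e.\ that $\wedge^2\rho_1(g)w_1, \wedge^2\rho_1(g)w_2, \wedge^2\rho_1(g)w_3$ really lie in $L = \langle w_1,w_2,w_3\rangle$ and (so that $\rho_2$ is a homomorphism into $\gl_3$) that $w_1,w_2,w_3$ stay linearly independent under $\wedge^2\rho_1(g)$. The first point is exactly Lemma~\ref{lem:S1-invariant-subspace}: $\wedge^2\rho(g)L \subset L$, and since $\rho_1(g)$ differs from $\rho(g)$ only by the scalar $(\det g)^{-2}$, we get $\wedge^2\rho_1(g)L \subset L$. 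For the second point, note $\wedge^2\rho_1(g)$ is invertible (as $\rho_1(g) \in \gl_5$ for $g \in \gl_2$), so it preserves dimension; thus $\wedge^2\rho_1(g)$ restricts to an automorphism of $L$, and $\rho_2(g) \in \gl_3$ is its matrix in the basis $w_1,w_2,w_3$. Multiplicativity $\rho_2(g g') = \rho_2(g)\rho_2(g')$ follows from $\wedge^2\rho_1(g g') = \wedge^2\rho_1(g)\wedge^2\rho_1(g')$ and the fact that transposing the row-vector relation turns composition of endomorphisms of $L$ into the product of the corresponding matrices; one checks the order of multiplication is consistent with the row-vector convention, which is why ${}^t\rho_2^{-1}$ (rather than $\rho_2$) appears.

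I do not expect any real obstacle here: the statement is formal once Lemma~\ref{lem:S1-invariant-subspace} is in hand, and the explicit formulas there (the action on $w_1,w_2,w_3$ for $g$ diagonal and for $g = n_2(u)$, $g = {}^tn_2(u)$) even let one write $\rho_2(g)$ down on generators if desired. The one point to be careful about is the transpose/inverse in the second component of $(\rho_1,{}^t\rho_2^{-1})$: one must track whether $\rho_2(g)$ acts on $\aff^3$ or on its dual, and the "row vector" convention in the definition of $\rho_2$ is chosen precisely so that the contragredient ${}^t\rho_2(g)^{-1}$ is what cancels against ${}^t\rho_2(g)$ arising from the row-vector relation. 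With that convention fixed, the computation displayed above closes the proof.
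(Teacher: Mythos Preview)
Your proof is correct and follows essentially the same approach as the paper: both write $w$ as a column vector $(w_1,w_2,w_3)$, use the defining relation $(\wedge^2\rho_1(g)w_1,\wedge^2\rho_1(g)w_2,\wedge^2\rho_1(g)w_3) = (w_1,w_2,w_3)\rho_2(g)$ to convert the row-vector identity into ${}^t\rho_2(g)$ acting on the column vector, and then cancel against ${}^t\rho_2(g)^{-1}$. Your additional remarks on why $\rho_2(g)$ is well defined (via Lemma~\ref{lem:S1-invariant-subspace} and invertibility of $\wedge^2\rho_1(g)$) are accurate and spell out a point the paper takes for granted.
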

\begin{proof}
The action of $(g_1,g_2)\in \gl_5\times \gl_3$ on 
$x=[x_1,x_2,x_3]\in V$ is 
\begin{math}
g_2 \left(
\begin{smallmatrix}
\wedge^2 g_1 x_1 \\
\wedge^2 g_1 x_2 \\
\wedge^2 g_1 x_3 
\end{smallmatrix}
\right)
\end{math}
treating $[\wedge^2 g_1 x_1,\wedge^2 g_1 x_2,\wedge^2 g_1 x_3]$ 
as a column vector. 
So if $g\in\gl_2$ then 
\begin{align*}
(\wedge^2 \rho_1(g),{}^t\rho_2(g)^{-1}) w 
& = {}^t\rho_2(g)^{-1}
\begin{pmatrix}
\wedge^2 \rho_1(g) w_1 \\
\wedge^2 \rho_1(g) w_2 \\
\wedge^2 \rho_1(g) w_3
\end{pmatrix} \\
& = {}^t\rho_2(g)^{-1}
\; {}^t\begin{pmatrix}
\wedge^2 \rho_1(g) w_1 & 
\wedge^2 \rho_1(g) w_2 & 
\wedge^2 \rho_1(g) w_3
\end{pmatrix}  \\
& = {}^t\rho_2(g)^{-1}
\; {}^t 
(\begin{pmatrix}
w_1 & w_2 & w_3 
\end{pmatrix}
\rho_2(g)) \\
& = {}^t\rho_2(g)^{-1}\; {}^t\rho_2(g)
\begin{pmatrix}
w_1 \\ w_2 \\ w_3
\end{pmatrix} = \begin{pmatrix}
w_1 \\ w_2 \\ w_3
\end{pmatrix} \\
& = w. 
\end{align*}
\end{proof}

We show that $G_w\cong \mathrm{PGL}_2\times \gl_1$. 
For that purpose we first determine the Lie algebra 
of $G_w$. 

Let $k[\vep]/(\vep^2)$ be the ring of dual numbers, 
$X=(x_{ij})\in\m_5$ and $Y=(y_{ij})\in \m_3$. 
If $(e_G+\vep(X,Y))w = w + \vep(A,B,C)$ 
then 
\begin{equation}
\label{eq:S1-lie-algeba-Gw}
\begin{aligned}
A & = X w_1 + w_1 {}^t X + y_{11}w_1+y_{12}w_2+y_{13}w_3, \\
B & = X w_2 + w_2 {}^t X + y_{21}w_1+y_{22}w_2+y_{23}w_3, \\
C & = X w_3 + w_3 {}^t X + y_{31}w_1+y_{32}w_2+y_{33}w_3.
\end{aligned}
\end{equation}
So $e_G+\vep(X,Y)\in {\mathrm T}_{e_G}(G_w)$ if and only if the right hand sides 
of (\ref{eq:S1-lie-algeba-Gw}) are $0$. 
By long but straightforward computations, 
we obtain the following proposition. 
We do not provide the details.

\begin{prop}
\label{prop:S1-lie-alg-Gw}
Suppose that $\ch(k)\not=2$. Then 
$e_G+\vep(X,Y)\in {\mathrm T}_{e_G}(G_w)$ if and only if 
$X,Y$ are in the following form:
\begin{align*}
X & = \begin{pmatrix}
x_{11} & x_{12} & 0 & 0 & 0 \\
4x_{32} & x_{22} & 6x_{12} & 0 & 0 \\
0 & x_{32} & -x_{11}+2x_{22} & x_{12} & 0 \\
0  & 0 & 6x_{32} & -2x_{11}+3x_{22} & 4x_{12} \\
0 & 0 & 0  & x_{32} & -3x_{11}+4x_{22}
\end{pmatrix}, \\
Y & = 
\begin{pmatrix}
x_{11}-3x_{22} & x_{32} & 0  \\
2x_{12} & 2x_{11}-4x_{22} & 2x_{32}  \\
0 & x_{12} & 3x_{11}-5x_{22}
\end{pmatrix}.  
\end{align*}
\end{prop}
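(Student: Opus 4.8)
The plan is to solve outright the linear system that defines ${\mathrm T}_{e_G}(G_w)$. By the discussion preceding the statement, $e_G+\vep(X,Y)$ lies in ${\mathrm T}_{e_G}(G_w)$ exactly when the matrices $A,B,C$ of (\ref{eq:S1-lie-algeba-Gw}) all vanish. For any $X\in\m_5$ the matrix $Xw_i+w_i{}^tX$ is alternating, because ${}^t(Xw_i)={}^tw_i\,{}^tX=-w_i{}^tX$, and each $\sum_j y_{ij}w_j$ is alternating; hence $A,B,C$ are alternating and $A=B=C=0$ is a system of $30$ linear equations in the $34$ entries of $(X,Y)$. The goal is to show its solution space is precisely the $4$-parameter family displayed, with free parameters $x_{11},x_{12},x_{22},x_{32}$.

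I would first record the inclusion ``$\supseteq$'' conceptually, since this also explains where the four parameters come from. The three matrices $w_1,w_2,w_3$ have pairwise disjoint supports $\{(1,4),(2,3),(3,2),(4,1)\}$, $\{(1,5),(2,4),(4,2),(5,1)\}$, $\{(2,5),(3,4),(4,3),(5,2)\}$, so they are linearly independent over $k$ and $\rho_2$ is well defined. By Lemma~\ref{lem:S1-rho-rho1-fix-w} the image of $g\mapsto(\wedge^2\rho_1(g),{}^t\rho_2(g)^{-1})$ fixes $w$, and so does the central torus $\{(t_1I_5,t_2I_3)\mid t_1^2t_2=1\}$. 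Differentiating these at the identity produces tangent vectors fixing $w$: writing $g=I_2+\vep Z$ with $Z=(z_{ij})$ and using $\rho_1=(\det)^{-2}\rho$, one gets $X=d\rho(Z)-2\operatorname{tr}(Z)I_5$ on the $\gl_5$-side and $Y=-{}^td\rho_2(Z)$ on the $\gl_3$-side; here $d\rho(E_{21})$ is the derivation of $\sym^4W$ in the basis $S_4$ sending $l_0\mapsto 4l_1$, $l_1\mapsto(3l_2)$, $(3l_2)\mapsto 6l_3$, $l_3\mapsto l_4$, which gives precisely the $x_{32}$-pattern $(2,1)\mapsto4$, $(3,2)\mapsto1$, $(4,3)\mapsto6$, $(5,4)\mapsto1$, and symmetrically for $E_{12}$ and $x_{12}$; the diagonal of $\m_2$ together with the central $\gl_1$ accounts for $x_{11},x_{22}$. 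This reproduces exactly the displayed $(X,Y)$, so the solution space is at least $4$-dimensional. (Alternatively this inclusion may simply be checked by substituting the displayed forms into (\ref{eq:S1-lie-algeba-Gw}).)

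The reverse inclusion is the substantive part, and I would obtain it by expanding $A=B=C=0$ entrywise. Since $w_1=E_{14}-E_{23}+E_{32}-E_{41}$, $w_2=-E_{15}+2E_{24}-2E_{42}+E_{51}$, $w_3=E_{25}-E_{34}+E_{43}-E_{52}$ are each supported on four off-diagonal positions, each $Xw_i+w_i{}^tX$ is sparse with entries that are differences of entries of $X$, and every resulting scalar equation involves only a handful of the $x_{ij}$ together with $y_{i1},y_{i2},y_{i3}$. Working through $A=0$ first pins down most of the off-band entries of $X$ and expresses $y_{11},y_{12},y_{13}$ in terms of the remaining $x_{ij}$; doing the same with $B=0$ and $C=0$, then comparing the overlapping equations, forces the remaining relations (the band pattern $x_{21}=4x_{32}$, $x_{23}=6x_{12}$, $x_{43}=6x_{32}$, $x_{45}=4x_{12}$, vanishing of all entries of $X$ outside the indicated band, and the stated formulas for $Y$). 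The hypothesis $\ch(k)\ne2$ is used exactly where the coefficient $2$ inherited from $w_2$ appears: equations of the shape $2y_{ij}=(\text{linear in }X)$ must be divided by $2$ to determine $y_{ij}$, which is legitimate only when $2$ is a unit — in characteristic $2$ these instead become extra constraints on $X$ and the solution space is strictly larger. After the elimination the solution space is the asserted $4$-parameter family, matching the lower bound above.

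The only real obstacle here is the bookkeeping of the $30$ equations; there is no conceptual difficulty, which is why the authors call the computation ``long but straightforward''. If one wishes to shortcut the full row reduction, it is enough to exhibit a single $30\times30$ minor of the (integer) coefficient matrix of the system equal to $\pm$ a power of $2$: this shows the rank is $30$ over every field of characteristic $\ne2$, so the solution space is $4$-dimensional there, and since a $4$-dimensional family of solutions has already been produced, the two coincide, which is the assertion of the proposition.
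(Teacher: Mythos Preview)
Your proposal is correct and matches the paper's approach: the paper simply asserts ``by long but straightforward computations'' and omits all details, so any complete solution of the linear system $A=B=C=0$ suffices. Your organization is in fact cleaner than what the paper does at this point: you first establish the inclusion $\supseteq$ by differentiating the homomorphism of Lemma~\ref{lem:S1-rho-rho1-fix-w} (the paper carries out exactly this differentiation, but only later, inside the proof of Proposition~\ref{prop:S1-Gw}), and then reduce the reverse inclusion to a rank statement. One small slip: the element of $G=\gl_5\times\gl_3$ fixing $w$ is $(\rho_1(g),{}^t\rho_2(g)^{-1})$, not $(\wedge^2\rho_1(g),{}^t\rho_2(g)^{-1})$; the $\wedge^2$ enters only when describing the induced action on $V$. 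Your account of the role of $\ch(k)\neq 2$ is also slightly imprecise: the relevant phenomenon is that certain entries of $B$ (those coming from the $\pm 2$ entries of $w_2$, e.g.\ $B_{24}=2(x_{22}+x_{44}+y_{22})$) acquire an overall factor of $2$ and become vacuous in characteristic~$2$, so the rank drops --- rather than producing new constraints on $X$ as you wrote. Neither point affects the validity of your argument.
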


Since $\dim {\mathrm T}_{e_G}(G_w)=4=34-30=\dim G-\dim V$, 
the following corollary follows from Proposition \ref{prop:open-orbit}. 

\begin{cor}
\label{cor:S1-regular-dense}
If $\ch(k)\not=2$ then $Gw\sub V$ is Zariski open and 
$G_w$ is smooth over $k$. 
\end{cor}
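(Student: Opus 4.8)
The plan is to deduce Corollary~\ref{cor:S1-regular-dense} directly from Proposition~\ref{prop:open-orbit} by verifying the dimension hypothesis $\dim {\mathrm T}_{e_G}(G_w)=\dim G-\dim V$. First I would record the numerology: here $G=\gl_5\times\gl_3$ has dimension $25+9=34$, and $V=\wedge^2\aff^5\otimes\aff^3$ has dimension $\binom{5}{2}\cdot 3=10\cdot 3=30$, so $\dim G-\dim V=4$. Thus the corollary will follow once we know $\dim {\mathrm T}_{e_G}(G_w)=4$ when $\ch(k)\neq 2$.

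Next I would invoke Proposition~\ref{prop:S1-lie-alg-Gw}, which (under the hypothesis $\ch(k)\neq 2$) describes ${\mathrm T}_{e_G}(G_w)$ as the set of pairs $(X,Y)$ of the displayed shape. That description exhibits the tangent space as parametrized freely by the three entries $x_{11},x_{12},x_{32}$ together with $x_{22}$, since every other entry of $X$ and every entry of $Y$ is pinned down as an explicit linear combination of these four; hence $\dim {\mathrm T}_{e_G}(G_w)=4$. One should double-check that these four parameters really are independent, i.e.\ that the stated linear forms impose no hidden relation among $x_{11},x_{12},x_{22},x_{32}$ — this is immediate since $x_{11},x_{12},x_{22},x_{32}$ themselves appear among the free entries — so the count of $4$ is rigorous.

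Finally I would apply Proposition~\ref{prop:open-orbit} with $G$, $V$, and $x=w$: the hypothesis $\dim {\mathrm T}_{e_G}(G_w)=\dim G-\dim V$ has just been checked, so the conclusion is that $Gw\subset V$ is Zariski open and the group scheme $G_w$ is smooth over $k$. This gives exactly the two assertions of the corollary. The only place any real content enters is Proposition~\ref{prop:S1-lie-alg-Gw}, and that has already been established (the excerpt defers its ``long but straightforward'' computation); everything in the corollary's proof beyond that is bookkeeping with dimensions. So there is no genuine obstacle here — the sole thing to be careful about is that the hypothesis $\ch(k)\neq 2$ is needed precisely because Proposition~\ref{prop:S1-lie-alg-Gw} (and hence the dimension count) is only asserted under that assumption; without it the tangent space could jump and $w$ need not be generic.
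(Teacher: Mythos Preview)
Your proof is correct and follows essentially the same approach as the paper: the paper simply notes that $\dim {\mathrm T}_{e_G}(G_w)=4=34-30=\dim G-\dim V$ (from Proposition~\ref{prop:S1-lie-alg-Gw}) and then invokes Proposition~\ref{prop:open-orbit}. Your write-up is just a more explicit version of this same argument.
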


\begin{prop}
\label{prop:S1-Gw}
If $\ch(k)\not=2$ then 
$G_w=G^{\circ}_w\cong \mathrm{PGL}_2\times \gl_1$. 
\end{prop}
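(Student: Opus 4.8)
The plan is to exhibit an explicit isomorphism from $\mathrm{PGL}_2\times\gl_1$ onto $G_w$ and then to rule out extra components; the hypothesis $\ch(k)\ne 2$ is used throughout. First I would record that the $1$PS $\iota:\gl_1\to G$, $\iota(s)=(sI_5,s^{-2}I_3)$, is central in $G$ and acts on $V=\wedge^2\aff^5\otimes\aff^3$ by the scalar $s^2\cdot s^{-2}=1$, so $\iota(\gl_1)\sub G_w$. Next, $\rho_1$ is $(\det)^{-2}$ times the action on $\sym^4$ of the standard representation of $\gl_2$, hence is trivial on scalar matrices; by the defining relation for $\rho_2$ and the linear independence of $w_1,w_2,w_3$, the map ${}^t\rho_2^{-1}$ is trivial on scalars as well, so the homomorphism $\phi=(\rho_1,{}^t\rho_2^{-1})$ of Lemma \ref{lem:S1-rho-rho1-fix-w} factors through a homomorphism $\bar\phi:\mathrm{PGL}_2\to G_w$. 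Since $\iota(\gl_1)$ is central, the product map $\Psi:\mathrm{PGL}_2\times\gl_1\to G_w$, $(\bar g,s)\mapsto\bar\phi(\bar g)\iota(s)$, is a homomorphism of algebraic groups.

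I would then show that $\Psi$ is an isomorphism onto $G^\circ_w$. For injectivity on $\overline k$-points: if $\Psi(\bar g,s)=e_G$, comparing $\gl_5$-components gives $s\,\rho_1(g)=I_5$ for a lift $g\in\gl_2(\overline k)$, so $\sym^4 g$ is a scalar matrix; since $g$ is invertible this forces $g$ to be scalar (a nontrivial unipotent part of $g$ acts nontrivially on $\sym^4$, which is where $\ch(k)\ne 2$ enters), whence $\bar g=e$ and then $s=1$. Thus $\ker\Psi$ is finite and $\dim\im(\Psi)=\dim(\mathrm{PGL}_2\times\gl_1)=4$. By Corollary \ref{cor:S1-regular-dense}, $G_w$ is smooth of dimension $\dim G-\dim V=34-30=4$, so the connected subgroup $\im(\Psi)$ equals $G^\circ_w$. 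A short computation shows $d\Psi$ is injective: on $\mathrm{Lie}(\mathrm{PGL}_2)=\lspl_2$ it is the differential of $\sym^4$, which is nonzero and hence injective since $\lspl_2$ is simple for $\ch(k)\ne 2$, while $d\iota(\eta)=(\eta I_5,-2\eta I_3)$ does not lie in its image, so these $3+1$ dimensions fill the $4$-dimensional ${\mathrm T}_{e_G}(G_w)$ of Proposition \ref{prop:S1-lie-alg-Gw}. Hence $\Psi$ is separable, so an isomorphism $\mathrm{PGL}_2\times\gl_1\cong G^\circ_w$.

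It remains to prove $G_w=G^\circ_w$, which I would do over $\overline k$. For $g=(g_1,g_2)\in G_w$, conjugation by $g$ preserves $G_w$ (since $g$ fixes $w$) and hence fixes the characteristic subgroup $G^\circ_w\cong\mathrm{PGL}_2\times\gl_1$ setwise; in particular it normalizes the derived subgroup $\bar\phi(\mathrm{PGL}_2)$. Since $\mathrm{PGL}_2$ has no outer automorphisms, after multiplying $g$ by a suitable element of $\bar\phi(\mathrm{PGL}_2)\sub G_w$ we may assume $g$ centralizes $\bar\phi(\mathrm{PGL}_2)$; as inner automorphisms of $G$ fix central elements, $g$ then centralizes all of $G^\circ_w$. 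Consequently $g_1$ centralizes $\rho_1(\mathrm{SL}_2)$ in $\gl_5$ and $g_2$ centralizes ${}^t\rho_2(\mathrm{SL}_2)^{-1}$ in $\gl_3$. The $\mathrm{SL}_2$-module $\lan w_1,w_2,w_3\ran$ is the adjoint representation $\sym^2$, which is irreducible for $\ch(k)\ne 2$, and the $\mathrm{SL}_2$-module $\sym^4$ on $\aff^5$ has endomorphism algebra $\overline k$ for $\ch(k)\ne 2$, its composition factors occurring with multiplicity one; so Schur's lemma forces $g_1$ and $g_2$ to be scalar matrices. Then $g$ is central in $G$ and fixes $w$, hence $g\in\iota(\gl_1)\sub G^\circ_w$. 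The one point I expect to require care is the small-characteristic representation theory of $\sym^4$ — namely that for $\ch(k)=3$ it is uniserial with two distinct composition factors, so that $\mathrm{End}_{\mathrm{SL}_2}(\sym^4)=\overline k$ still holds; everything else is a dimension count combined with standard Schur's lemma and automorphism arguments.
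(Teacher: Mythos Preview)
Your proof is correct and follows essentially the same strategy as the paper: exhibit a surjection from $\gl_2\times\gl_1$ (equivalently, $\mathrm{PGL}_2\times\gl_1$) onto $G^\circ_w$ via $(\rho_1,{}^t\rho_2^{-1})$, verify smoothness by matching dimensions against Proposition~\ref{prop:S1-lie-alg-Gw}, and then rule out extra components by reducing to the case where $g$ centralizes the $\mathrm{PGL}_2$ and invoking Schur-type arguments on the $5$- and $3$-dimensional modules. The one substantive difference is in characteristic~$3$: you argue abstractly that the $5$-dimensional $\rho$-module is uniserial with non-isomorphic composition factors, hence has endomorphism algebra $\overline k$, whereas the paper instead writes down the block shape forced on $g_1$ by preserving the unique invariant line and then uses the further constraint that $g_1$ must preserve $\langle w_1,w_2,w_3\rangle$ to pin down the remaining entries; your route is slicker, though note that the $\rho$-module (defined via the lattice $S_4=\{l_0,l_1,3l_2,l_3,l_4\}$) is the Weyl module $V(4)$ rather than $\sym^4 k^2$ in characteristic~$3$, so the socle is the trivial module and the head is $L(4)$ --- your endomorphism argument goes through unchanged for this module.
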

\begin{proof}
Let $H=\gl_2\times \gl_1$. 
We define a homomorphism $\phi:H\to G$ 
by $\phi(g,t)=(t \rho_1(g),t^{-2}\, {}^t\rho_2(g)^{-1})$. 
By Lemma \ref{lem:S1-rho-rho1-fix-w}, 
$\phi(g,t)\in G_w$.  
Let $\psi$ be the differential of $t \rho_1(g)$. 
We compute the image of $\psi$. Note that   
${\mathrm T}_{e_H}(H)$ consists of 
elements of the form $(I_2+\vep A,1+c\vep)$ 
where $A=(a_{ij})\in \m_2,c\in\aff^1$.

If $A=E_{21},E_{12}$ (see Section \ref{sec:notation}) 
then (\ref{eq:S1-rho-nu}) 
implies that $(\det(I_2+\vep A))^{-2}\rho(I_2+\vep A)=\rho(I_2+\vep A)=I_5+\vep X$ 
where $X$ is as in  
Proposition \ref{prop:S1-lie-alg-Gw} with 
$x_{11}=x_{22}=0$ and $x_{12}=0,x_{32}=1$ 
or $x_{12}=1,x_{32}=0$. 
If $A=\diag(a_1,a_2)$ 
then $\det(I_2+\vep A)^{-2}=1-2(a_1+a_2)\vep$. 
So if we put $a_1-a_2=b$ then 
\begin{equation*}
\psi(I_2+\vep A,1+ c\vep) = I_5+\vep \diag(2b+c,b+c,c,-b+c,-2b+c). 
\end{equation*}
If $x_{11}=2b+c$, $x_{22}=b+c$ then 
$c = -x_{11}+2x_{22}$, 
$-b+c = -2x_{11}+3x_{22}$, 
$-2b+c = -3x_{11}+4x_{22}$. 
So, the dimension of the image of 
$\psi$ is $4=\dim {\mathrm T}_{e_G}(G_w)$.  Therefore, 
$\psi:{\mathrm T}_{e_H}(H)\to {\mathrm T}_{e_G}(G_w)$ 
is surjective.

Assuming that $k=\overline k$, 
we show that $\kernel(\phi)=\{(tI_2,1)\mid t\in\gl_1\}$ 
set theoretically. Then the dimension of the 
image of $\phi$ is $5-1=4=\dim G_w$. 
This implies that $\im(\phi)=G^{\circ}_w$. 

Suppose that $(g,t)\in \kernel(\phi)$. 
Then $\rho(g)$ is a scalar matrix. 
Since $\rho(g)\bbmp_{2,1}^4,\rho(g)\bbmp_{2,2}^4$ are 
scalar multiples of $\bbmp_{2,1}^4,\bbmp_{2,2}^4$ respectively, 
$g$ is a diagonal matrix, say $\diag(t_1,t_2)$. 
Since $\rho(g)$ is a scalar matrix, 
$t_1^4=t_1^3t_2$, which implies that $t_1=t_2$. 
Since $\rho_1(t_1I_2)=I_5$, $t=1$.  Therefore, 
$\kernel(\phi)=\{(t_1I_2,1)\mid t_1\in\gl_1\}$. 

As we pointed out above, $\im(\phi)=G^{\circ}_w$. 
Since $\im(\psi)={\mathrm T}_{e_G}(G_w)$, 
$\phi:H\to G^{\circ}_w$
is smooth surjective.  Since the fibers are smooth by the Jacobian 
criterion, $\phi^{-1}(e_G)=\{(tI_2,1)\mid t\in\gl_1\}$ 
as schemes.  Therefore, $G^{\circ}_w\cong \mathrm{PGL}_2\times \gl_1$.

Suppose that $g=(g_1,g_2)\in G_{w\,\overline k}$. Since all automorphims of 
$\mathrm{PGL}_2$ are inner, by multiplying an element of 
$\mathrm{PGL}_2$ to $g$, we may assume that 
$g_1$ commutes with elements of $\mathrm{PGL}_2$. 

If $\ch(k)\not=3$, it is easy to show that 
$\rho$ is an irreducible \rep (we are assuming $\ch(k)\not=2$). 
Therefore, by Schur's lemma, $g_1$ must be a scalar 
matrix.

Suppose that $\ch(k)=3$. We show that the only non-trivial 
invariant subspace is $U = \lan [0,0,1,0,0]\ran$. 
By (\ref{eq:S1-rho-nu}), $U$ is an invariant subspace
since $\ch(k)=3$. Suppose that $H\sub \aff^5$ is an invariant subspace.
Then it is invariant by the Lie algebra of $\spl_2$, 
$\tau_0$ in (\ref{eq:J-defn}) and $\diag(t,t^{-1})$. 

Coordinate vectors of $\aff^5$ are weight vectors with respect 
to $\diag(t,t^{-1})$ with distinct weights $4,2,0,-2,-4$. 
Therefore, $H$ is spanned by coordinate vectors contained in 
$H$. Let $\mathbbm e_0=[1,0,0,0,0]\ccd \mathbbm e_4=[0,0,0,0,1]$. 
By the action of the Lie algebra element 
$I_2+ n_2(\varepsilon)$, 
\begin{equation*}
\mathbbm e_0 \mapsto 4\mathbbm e_1,\;
\mathbbm e_1 \mapsto \mathbbm e_2,\;
\mathbbm e_3 \mapsto \mathbbm e_4
\end{equation*}
and similarly for $I_2+ {}^tn_2(\varepsilon)$.  
Since $\tau_0$ exchanges $\mathbbm e_0 \leftrightarrow \mathbbm e_4$
and $\mathbbm e_1 \leftrightarrow \mathbbm e_3$, 
if $H$ contains any one of $\mathbbm e_0,\mathbbm e_1,\mathbbm e_3,\mathbbm e_4$
then $H=\aff^5$. 

The above consideration implies that $g_1$ fixes the subspace
$U$ and acts on $\aff^5/U$ by scalar multiplication 
by Schur's lemma. Therefore, $g_1$ is in the form 
\begin{equation*}
g_1=\begin{pmatrix}
a & 0 & 0 & 0 & 0 \\
0 & a & 0 & 0 & 0 \\
b_1 & b_2 & b_3 & b_4 & b_5 \\
0 & 0 & 0 & a & 0 \\
0 & 0 & 0 & 0 & a \\
\end{pmatrix}
\end{equation*}
where $a,b_3\not=0$. Since $g\in G_{w\,\overline k}$, 
$g_1$ fixes the subspace $\lan w_1,w_2,w_3\ran$. 

By computation, 
\begin{align*}
g_1 w_1{}^t g_1 & = 
\begin{pmatrix}
0 & 0 & ab_4 & a^2 & 0 \\
0 & 0 & -ab_3 & 0 & 0 \\ 
-ab_4 & ab_3 & 0 & ab_1 & 0 \\
-a^2 & 0 & -ab_1 & 0 & 0 \\
0 & 0 & 0 & 0 & 0 
\end{pmatrix}, \\
g_1 w_2{}^t g_1 & = 
\begin{pmatrix}
0 & 0 & -ab_5 & 0 & -a^2 \\
0 & 0 & 2ab_4 & 2 a^2 & 0 \\
ab_5 & -2ab_4 & 0 & 2ab_2 & -ab_1 \\
0 & -2a^2 & -2ab_2 & 0 & 0 \\
a^2 & 0 & ab_1 & 0 & 0
\end{pmatrix}.
\end{align*}
By the $(1,3),(3,4)$-entries of $g_1w_1{}^tg_1$, 
$b_4=b_1=0$. By the $(1,4),(2,3)$-entries of $g_1w_1{}^tg_1$, 
$a^2=ab_3$ and so $a=b_3$. 
By the $(1,3),(3,4)$-entries of $g_1w_2{}^tg_1$, 
$b_5=b_2=0$. Therefore, $g_1=aI_5$. 

In all cases $g_1$ is a scalar matrix. 
Since $g\in G_w$, $g_2$ is a scalar matrix also. 
If $g=(t_1I_5,t_2I_3)$ then $t_2=t_1^{-2}$ and so 
$g\in \gl_1$. Therefore, $G_w=G^{\circ}_w$. 
\end{proof}

Since $G_w$ is reductive, the following corollary follows. 
\begin{cor}
\label{cor:S1-regular}
If $\ch(k)\not=2$ then 
$(G,V)$ is a regular \pv.  
\end{cor}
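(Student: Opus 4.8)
The plan is to observe that Corollary \ref{cor:S1-regular} is an immediate consequence of the two preceding results once the definition of regularity is unwound. Recall from Definition \ref{defn:regularity} that a \pv{} is called regular precisely when it satisfies the hypothesis of Proposition \ref{prop:regularity}: there is a point $w\in\rv$ whose orbit $Gw$ is Zariski open and whose stabilizer $G_w$ is reductive (hence smooth as a group scheme). So the whole task reduces to checking these two conditions for the explicit vector $w\in V$ constructed in this section, under the running assumption $\ch(k)\neq 2$.

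First I would invoke Corollary \ref{cor:S1-regular-dense}, which already gives that $Gw\subset V$ is Zariski open (and that $G_w$ is smooth over $k$) whenever $\ch(k)\neq 2$. Next I would invoke Proposition \ref{prop:S1-Gw}, which identifies the stabilizer as $G_w=G^{\circ}_w\cong \mathrm{PGL}_2\times \gl_1$; since $\mathrm{PGL}_2\times \gl_1$ is a connected reductive group, $G_w$ is reductive. Thus the hypothesis of Proposition \ref{prop:regularity} holds for $w$. In particular part (3) of that proposition produces a non-constant relative invariant polynomial, so $(G,V)$ is a \pv{} in the sense of Definition \ref{defn:PV-defn}, and therefore by Definition \ref{defn:regularity} it is a regular \pv.

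There is essentially no obstacle remaining at this point: all the substantive work — the Lie algebra computation showing $\dim{\mathrm T}_{e_G}(G_w)=\dim G-\dim V$ (Proposition \ref{prop:S1-lie-alg-Gw}), the resulting openness of the orbit, and the determination of the full stabilizer including the connectedness argument in every characteristic $\neq 2$ (notably the delicate $\ch(k)=3$ case) — is already carried out in Propositions \ref{prop:S1-lie-alg-Gw} and \ref{prop:S1-Gw}. The corollary is merely the bookkeeping step that packages ``open orbit plus reductive stabilizer'' into the word ``regular.'' The only hypothesis that must be carried along is $\ch(k)\neq 2$, which is exactly what is needed for the stabilizer computation, and hence for reductivity of $G_w$, to go through.
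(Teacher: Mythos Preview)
Your proposal is correct and matches the paper's approach exactly. The paper's proof is the single sentence ``Since $G_w$ is reductive, the following corollary follows,'' which is precisely the reasoning you spelled out: Corollary \ref{cor:S1-regular-dense} gives the open orbit and smoothness, Proposition \ref{prop:S1-Gw} gives $G_w\cong\mathrm{PGL}_2\times\gl_1$ reductive, and Definition \ref{defn:regularity} then applies.
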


If $\ch(k)\not=2$ then 
Corollary \ref{cor:reducible-sep-orbit} 
implies the existence of a relative invariant 
polynomial. However, we would like to 
construct a relative invariant polynomial explicitly
without assuming $\ch(k)\not=2$. 
Since the argument is similar as in \cite{yukiem}, 
we will be brief. We also would like to find an element 
for which the value of the relative invariant polynomial 
is no-zero even if $\ch(k)=2$.  

We put $w_1'=w_1,w_3'=w_3$, 
\begin{equation*}
w_2' = 
\begin{pmatrix}
0 & 0 & 0 & 0 & -1 \\
0 & 0 & 0 & 1 & 0 \\
0 & 0 & 0 & 0 & 0 \\
0 & -1 & 0 & 0 & 0 \\
1 & 0 & 0 & 0 & 0 
\end{pmatrix}, \quad
w' = (w_1',w_2',w_3'). 
\end{equation*}

Let $u_i=\bbmp_{3,i}$ for $i=1,2,3$. 
We regard elements of $V$ as $5\times 5$ alternating 
matrices $x=(x_{ij}(u))$ with entries which are linear combinations 
of $u=(u_1,u_2,u_3)$. Let 
$\pfaff_1(x)\ccd \pfaff_5(x)$ be the Pfaffians 
of $5$ main minors.  We choose the sign so that 
\begin{align*}
\pfaff_1(x) & = x_{23}x_{45}-x_{24}x_{35}+x_{25}x_{34}, \\
\pfaff_2(x) & = -(x_{13}x_{45}-x_{14}x_{35}+x_{15}x_{34}), \\
\pfaff_3(x) & = x_{12}x_{45}-x_{14}x_{25}+x_{15}x_{24}, \\
\pfaff_4(x) & = -(x_{12}x_{35}-x_{13}x_{25}+x_{15}x_{23}), \\
\pfaff_5(x) & = x_{12}x_{34}-x_{13}x_{24}+x_{14}x_{23}.
\end{align*}
We put $\pfaff(x)=[\pfaff_1(x)\ccd \pfaff_5(x)]$.
$\pfaff_1(x)\ccd \pfaff_5(x)$ are 
quadratic forms in $u$. 

Let $R=k[u_1,u_2,u_3]$. We regard $x\in \wedge^2_R R^5$. 
Then $x\wedge x\in \wedge^4_R R^5\cong \Hom_R(R^5,R)$. 
We can regard that $\pfaff_1(x)\ccd \pfaff_5(x)$ are the 
$5$-components of $\Hom_R(R^5,R)$ and 
\begin{equation*}
\pfaff((g_1,g_2)x(u)) = (\det g_1){}^tg_1^{-1}\pfaff(x(ug_2)). 
\end{equation*}
Let  
\begin{equation*}
\Phi(x) = \pfaff_1(x)\wedge \cdots \wedge \pfaff_5(x)
\in \wedge^5 \sym^2 \aff^3 \cong (\sym^2 \aff^3)^*.
\end{equation*}
Then with respect to a basis of $\sym^2 \aff^3$ and its dual basis,
\begin{equation}
\label{eq:S1-Phi-equivariant}
\Phi(gx) = (\det g_1)^4 (\det g_2)^4 \, {}^t(\sym^2 g_2)^{-1} \Phi(x). 
\end{equation}

Let 
\begin{equation*}
l_1=u_1^2,l_2=u_1u_2,l_3=u_1u_3,l_4=u_2^2,l_5=u_2u_3,l_6=u_3^2.
\end{equation*}
Then $\{l_1\ccd l_6\}$ is a basis of $\sym^2 \aff^3$. 
Let $\{l_1^*\ccd l_6^*\}$ be the dual basis. 
We identify $\wedge^5 \sym^2 \aff^3$ with $(\sym^2 \aff^3)^*$ 
so that $l_1^*=l_2\wedge \cdots \wedge l_6$, 
$l_2^*=-l_1\wedge l_3\wedge \cdots \wedge l_6$, etc. 
Easy computations show that 
\begin{equation}
\label{eq:S1-value-phi-w}
\begin{aligned}
& \begin{pmatrix}
\pfaff_1(w) \\
\pfaff_2(w) \\
\pfaff_3(w) \\
\pfaff_4(w) \\
\pfaff_5(w) 
\end{pmatrix}
= \begin{pmatrix}
-l_6 \\
-l_5 \\
-l_3-2l_4 \\
-l_2 \\
-l_1
\end{pmatrix},\;
\begin{pmatrix}
\pfaff_1(w') \\
\pfaff_2(w') \\
\pfaff_3(w') \\
\pfaff_4(w') \\
\pfaff_5(w') 
\end{pmatrix}
= \begin{pmatrix}
-l_6 \\
-l_5 \\
-l_3-l_4 \\
-l_2 \\
-l_1
\end{pmatrix}, \\
& \Phi(w) = l_4^*-2l_3^*,\;
\Phi(w') = l_4^*-l_3^*.
\end{aligned}
\end{equation}

For $g\in \gl_3$, with respect to the above bases, we put 
\begin{math}
(\sym^2 g)^* = (\det g)^4 \, {}^t(\sym^2 g)^{-1}. 
\end{math}
For $y=\sum_{i=1}^6 y_il_i^*$, we put 
\begin{equation*}
P_1(y) = -\det
\begin{pmatrix}
y_1 & y_2 & y_3 \\
y_2 & y_4 & y_5 \\
y_3 & y_5 & y_6
\end{pmatrix}. 
\end{equation*}
\begin{prop}
\label{prop:sym2*-invariant}
Over any field $k$, 
$P_1((\sym^2 g)^* y)= (\det g)^{10}P_1(y)$. 
\end{prop}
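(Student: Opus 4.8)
The plan is to reduce the identity to characteristic zero and there identify $P_1$ with the discriminant of a ternary quadratic form, keeping track of the two independent sources of powers of $\det g$.

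First I would note that $(\sym^2 g)^{*}=(\det g)^4\,{}^t(\sym^2 g)^{-1}$ has entries polynomial in the entries of $g$ over $\Z$: since $\det(\sym^2 g)$ is a character of $\gl_3$ agreeing with $(\det g)^4$ on scalars $tI_3$, we have $\det(\sym^2 g)=(\det g)^4$, so ${}^t(\sym^2 g)^{-1}=(\det g)^{-4}\,{}^t\mathrm{adj}(\sym^2 g)$ and the factor $(\det g)^4$ cancels. Hence $P_1((\sym^2 g)^{*}y)-(\det g)^{10}P_1(y)$ is a polynomial in $(g_{ij},y_1\ccd y_6)$ with integer coefficients, and it vanishes over every field $k$ as soon as it vanishes over $\C$, where it suffices to test it on the Zariski dense set of $\C$-points of affine space. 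So I may assume $\ch(k)=0$.

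Over a field of characteristic zero I would use the $\gl_3$-equivariant isomorphisms
\begin{equation*}
\wedge^5\sym^2\aff^3\;\cong\;(\sym^2\aff^3)^{*}\otimes\wedge^6\sym^2\aff^3\;\cong\;\sym^2((\aff^3)^{*})\otimes(\wedge^3\aff^3)^{\otimes 4},
\end{equation*}
the first being the standard isomorphism $\wedge^{5}M\cong M^{*}\otimes\wedge^{6}M$ for $\dim M=6$, the second using $\wedge^6\sym^2\aff^3\cong(\wedge^3\aff^3)^{\otimes 4}$ together with $(\sym^2 V)^{*}\cong\sym^2(V^{*})$ (valid in characteristic zero). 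Fixing the generator $\bbmp_{3,1}\wedge\bbmp_{3,2}\wedge\bbmp_{3,3}$ of $\wedge^3\aff^3$, an element $y=\sum_i y_i l_i^{*}$ corresponds to $q_y\otimes(\bbmp_{3,1}\wedge\bbmp_{3,2}\wedge\bbmp_{3,3})^{\otimes 4}$ for a unique ternary quadratic form $q_y$, and a direct check of the dual-basis matching (the factors attached to the squares $l_1,l_4,l_6$ being invertible in characteristic zero) shows that the Gram matrix of $q_y$ in the standard basis is a fixed nonzero scalar multiple of $S(y)=\left(\begin{smallmatrix} y_1&y_2&y_3\\ y_2&y_4&y_5\\ y_3&y_5&y_6\end{smallmatrix}\right)$.

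Since $(\sym^2 g)^{*}$ acts on $\wedge^5\sym^2\aff^3$ as $\wedge^{5}(\sym^{2}g)$, under the above identification it acts on the pair $(q_y,(\bbmp_{3,1}\wedge\bbmp_{3,2}\wedge\bbmp_{3,3})^{\otimes 4})$ by the natural (dual) action on the quadratic form together with multiplication by $\det(\sym^2 g)=(\det g)^4$ on the $\wedge^6$-factor. Therefore the Gram matrix of $q_{(\sym^2 g)^{*}y}$ equals $(\det g)^4\,{}^tg^{-1}\,(\text{Gram matrix of }q_y)\,g^{-1}$, and taking determinants of these $3\times 3$ matrices multiplies $\det S(y)$ by $(\det g)^{12}(\det g)^{-2}=(\det g)^{10}$; since $P_1(y)=-\det S(y)$ differs from the determinant of the Gram matrix of $q_y$ by the cube of that fixed scalar, which cancels on both sides, this yields $P_1((\sym^2 g)^{*}y)=(\det g)^{10}P_1(y)$. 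The one point needing care is keeping the $(\det g)^{-2}$ coming from the transformation of a ternary-quadratic-form discriminant on $\aff^3$ separate from the $(\det g)^{4}$ coming from the $\det$-of-$\sym^2$ twist, each entering to the third power as $P_1$ is cubic; the scalar-$1$PS check $(\sym^2(tI_3))^{*}=t^{10}I_6$ and $P_1(t^{10}y)=t^{30}P_1(y)=(\det tI_3)^{10}P_1(y)$ serves as a convenient consistency test.
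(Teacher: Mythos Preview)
Your proof is correct and follows the same overall strategy as the paper: reduce to characteristic zero by observing that both sides are integral polynomials in the entries of $g$ and $y$, then identify $P_1$ with a scalar multiple of the discriminant of a ternary quadratic form. The only difference is in execution. The paper explicitly computes the dual-basis correspondence, finding that $l_1^*,\ldots,l_6^*$ correspond to $v_1^2,2v_1v_2,2v_1v_3,v_2^2,2v_2v_3,v_3^2$ under the $\gl_3$-equivariant embedding $\iota:\sym^2\aff^3\hookrightarrow\aff^3\otimes\aff^3$, and hence that the standard discriminant $P_2(z)=\det\left(\begin{smallmatrix}2z_1&z_2&z_3\\z_2&2z_4&z_5\\z_3&z_5&2z_6\end{smallmatrix}\right)$ satisfies $P_2(z)=-8P_1(y)$; the known relative invariance of $P_2$ then gives the result after dividing by $8$ over $\Q$. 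You instead package the same content via the isomorphism $\wedge^{5}M\cong M^{*}\otimes\wedge^{6}M$ (for $M=\sym^2\aff^3$), which identifies $(\sym^2 g)^{*}$ with $\wedge^{5}(\sym^2 g)$ and then splits the action into the natural dual action on $\sym^2((\aff^3)^{*})$ and the $(\det g)^{4}$ twist on $\wedge^{6}\sym^2\aff^3\cong(\wedge^{3}\aff^3)^{\otimes 4}$. This lets you avoid computing the explicit constant $-8$, since any nonzero scalar relating the Gram matrix of $q_y$ to $S(y)$ cancels upon taking determinants. Both routes are valid; yours is more functorial, the paper's more concrete.
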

\begin{proof}
$P_1((\sym^2 g)^* y)$, $(\det g)^{10}P_1(y)$ 
are polynomials of $y,g,(\det g)^{-1}$ 
defined over $\Z$. So if we can prove the proposition 
over $\Z$ then by considering the natural 
homomorphism $\Z\to k$, the proposition follows for $k$ 
also.  So we consider the proposition over $\Q$.  

For $a\in \aff^3\otimes \aff^3,b\in 
(\aff^3)^*\otimes (\aff^3)^*$, let $(a,b)$ 
be the natural pairing. 
We can naturally identify $(\aff^3\otimes \aff^3)^*$ 
with $(\aff^3)^*\otimes (\aff^3)^*$ by this pairing. 
The map 
\begin{math}
\iota :\sym^2 \aff^3 \ni v_1v_2\mapsto 
\frac 12(v_1\otimes v_2+v_2\otimes v_1) \in \aff^3\otimes \aff^3
\end{math}
is equivariant with respect to the group action. 
There is a similar map $\iota^*$ for $\sym^2 (\aff^3)^*$ 
also.  

Let $\{v_1,v_2,v_3\}$ be the 
dual basis of $\{u_1,u_2,u_3\}$. 
Then $(\iota(u_{i_1}u_{i_2}),\iota^*(v_{j_1}v_{j_2}))=0$ 
unless $\{i_1,i_2\}=\{j_1,j_2\}$ and  
\begin{equation*}
(\iota (u_i^2),\iota^*(v_i^2)) = 1\; (i=1,2,3),\;
(\iota (u_iu_j),\iota^*(v_iv_j)) = \frac 12 \; 
(1\leq i<j\leq 3). 
\end{equation*}
By the above consideration, we can identify 
$l_1^*\ccd l_6^*$ with 
$v_1^2,2v_1v_2,2v_1v_3,v_2^2,2v_2v_3,v_3^2$. 
For $z=z_1v_1^2+z_2v_1v_2+z_3v_1v_3+z_4v_2^2+z_5v_2v_3+z_6v_3^2$, 
we put
\begin{equation*}
P_2(z) = \det 
\begin{pmatrix}
2z_1 & z_2 & z_3 \\
z_2 & 2z_4 & z_5 \\
z_3 & z_5 & 2z_6
\end{pmatrix}. 
\end{equation*}
Then it is well known that 
$P_2(z)$ is a relative invariant polynomial. 
So $P_2((\sym^2 g)^*z)= (\det g)^{10}P_2(z)$. 
Since $y$ can be identified with 
$z=y_1v_1^2+2y_2v_1v_2+2y_3v_1v_3+y_4v_2^2+2y_5v_2v_3+2y_6v_3^2$
and $P_2(z)=-8P_1(y)$, 
$8P_1((\sym^2 g)^*y)= 8(\det g)^{10}P_1(y)$.  
We can divide by $8$ over $\Q$ and 
$P_1((\sym^2 g)^*y)= (\det g)^{10}P_1(y)$.  
\end{proof}

Let $P(x)=P_1(\Phi(x))$. The following corollary 
follows from (\ref{eq:S1-Phi-equivariant}), 
(\ref{eq:S1-value-phi-w})  
and Proposition \ref{prop:sym2*-invariant}.
\begin{cor}
\label{cor:value-Phi-ww} 
\begin{itemize}
%
%
\item[(1)]
$P((g_1,g_2)x) = (\det g_1)^{12}(\det g_2)^{10}P(x)$. 
\item[(2)]
$P(w)=4$, $P(w')=1$. 
\end{itemize}
\end{cor}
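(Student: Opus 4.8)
The plan is to deduce Corollary~\ref{cor:value-Phi-ww} as a purely formal consequence of the three ingredients already assembled: the equivariance (\ref{eq:S1-Phi-equivariant}) of the map $\Phi$, the equivariance of $P_1$ from Proposition~\ref{prop:sym2*-invariant}, and the explicit values of $\Phi$ at $w$ and $w'$ recorded in (\ref{eq:S1-value-phi-w}). No new geometric input is needed; the corollary is just bookkeeping with characters and a short numerical evaluation.

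First I would prove part~(1). Recall $P(x)=P_1(\Phi(x))$. By (\ref{eq:S1-Phi-equivariant}) we have $\Phi(gx)=(\det g_1)^4(\det g_2)^4\,{}^t(\sym^2 g_2)^{-1}\Phi(x)$, and in the notation introduced just before Proposition~\ref{prop:sym2*-invariant} the operator $(\sym^2 g_2)^*=(\det g_2)^4\,{}^t(\sym^2 g_2)^{-1}$, so that $\Phi(gx)=(\det g_1)^4 (\det g_2)^{-4}\,(\sym^2 g_2)^*\Phi(x)$. Hmm — more carefully: $(\det g_1)^4(\det g_2)^4\,{}^t(\sym^2 g_2)^{-1} = (\det g_1)^4 (\det g_2)^{4}(\det g_2)^{-4}(\sym^2 g_2)^* = (\det g_1)^4 (\sym^2 g_2)^*$. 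So $\Phi(gx) = (\det g_1)^4 (\sym^2 g_2)^*\Phi(x)$. Applying Proposition~\ref{prop:sym2*-invariant} with $g=g_2$ and $y=\Phi(x)$ gives $P_1((\sym^2 g_2)^*\Phi(x)) = (\det g_2)^{10}P_1(\Phi(x))$. Since $P_1$ is homogeneous of degree $3$ (it is a $3\times 3$ determinant of linear entries in $y$), the scalar factor $(\det g_1)^4$ pulls out as $(\det g_1)^{12}$. Combining, $P(gx) = (\det g_1)^{12}(\det g_2)^{10}P(x)$, which is (1).

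For part~(2) I would simply feed the values from (\ref{eq:S1-value-phi-w}) into $P_1$. Since $\Phi(w)=l_4^*-2l_3^*$, in the coordinates $y=\sum y_i l_i^*$ this is the vector with $y_3=-2$, $y_4=1$ and all other $y_i=0$; then
\begin{equation*}
P_1(\Phi(w)) = -\det
\begin{pmatrix}
0 & 0 & -2 \\
0 & 1 & 0 \\
-2 & 0 & 0
\end{pmatrix}
= -\bigl(0 - 0 + (-2)(0 - (-2))\bigr) = -(-4) = 4,
\end{equation*}
so $P(w)=4$. Similarly $\Phi(w')=l_4^*-l_3^*$ has $y_3=-1$, $y_4=1$, and
\begin{equation*}
P_1(\Phi(w')) = -\det
\begin{pmatrix}
0 & 0 & -1 \\
0 & 1 & 0 \\
-1 & 0 & 0
\end{pmatrix}
= -(-1)(1) = 1,
\end{equation*}
giving $P(w')=1$. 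These hold over any field since $P_1$ is defined over $\Z$ and the values $4,1$ are images of integers.

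The only potential obstacle is a sign or normalization slip in matching $(\sym^2 g)^*$ with the factor appearing in (\ref{eq:S1-Phi-equivariant}); I would double-check that the $(\det g_2)^4$ in $\Phi$'s equivariance and the $(\det g)^4$ built into the definition of $(\sym^2 g)^*$ are the same power (they are, by construction), so that after cancellation exactly $(\det g_1)^4(\sym^2 g_2)^*$ survives and Proposition~\ref{prop:sym2*-invariant} applies verbatim with exponent $10$. Everything else is a routine $3\times 3$ determinant.
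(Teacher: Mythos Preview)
Your proof is correct and follows exactly the approach the paper indicates: the paper simply states that the corollary follows from (\ref{eq:S1-Phi-equivariant}), (\ref{eq:S1-value-phi-w}), and Proposition~\ref{prop:sym2*-invariant} without spelling out the details, and you have supplied precisely those details. The identification $\Phi(gx)=(\det g_1)^4(\sym^2 g_2)^*\Phi(x)$ together with the degree-$3$ homogeneity of $P_1$ and the $3\times 3$ determinant evaluations is exactly what is intended.
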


By Proposition \ref{prop:sym2*-invariant} and 
Corollary \ref{cor:value-Phi-ww}, 
$P(x)$ is a non-zero relative invariant polynomial 
without any assumption on $\ch(k)$.  
Even though $P(w')\not=0$ regardless of 
$\ch(k)$, $\dim T_{e_G}(G_{w'})\not=4$ if $\ch(k)=2$ 
and $G_{w'}$ may not be isomorphic to 
$\mathrm{PGL}_2\times \gl_1$ even if $\ch(k)\not=2$. 
So we had to use $w$ instead.

We are in the situation of of Section \ref{sec:regularity} 
where $m=N=1$. So if $\ch(k)\not=2$, 
by Corollary \ref{cor:reducible-sep-orbit}, 
$V^{\sst}_{k^{\sep}}=G_{k^{\sep}}w$. Therefore, we can use the 
standard argument of Galois cohomology.
Since $\mathrm{PGL}_2$ is the automorphism group of 
$\mathrm{PGL}_2$, 
$\h^1(k,\mathrm{PGL}_2)$ is in bijective correspondence
with $k$-forms of $\mathrm{PGL}_2$. 
However, we would like to be more explicit. 

Let $\ti T=\{(tI_5,t^{-2}I_3)\mid t\in\gl_1\}$. 
\begin{prop}
\label{prop:S1-orbit-final}
Suppose that $\ch(k)\not=2$. 
Then $G_k\backslash V^{\sst}$ is in bijective correspondence with 
$\mathrm{Prg}_2(k)$. If $x\in V^{\sst}_k$ then the corresponding 
$k$-form of $\mathrm{PGL}_2$ is $G_x/\ti T$.
%
\end{prop}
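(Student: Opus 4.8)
The plan is to run the standard Galois-cohomology argument attached to the stabilizer computation already in hand. First I would record that, since $\ch(k)\not=2$, Corollary \ref{cor:S1-regular} gives that $(G,V)$ is a regular \pv{} with $G_w\cong\mathrm{PGL}_2\times\gl_1$, and that by the discussion at the end of Section \ref{sec:regularity} (the case $m=N=1$, via Corollary \ref{cor:reducible-sep-orbit}) we have $V^{\sst}_{k^{\sep}}=G_{k^{\sep}}w$, a single $G_{k^{\sep}}$-orbit. Hence the map $x\mapsto\text{class of the cocycle }\sig\mapsto g_{\sig}$, where $g\in G_{k^{\sep}}$ satisfies $gw=x$ and $g_{\sig}=g^{-1}g^{\sig}$, sets up the usual bijection $G_k\backslash V^{\sst}_k\cong\ker(\h^1(k,G_w)\to\h^1(k,G))$. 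Since $G=\gl_5\times\gl_3$ has $\h^1(k,G)=1$ by Hilbert 90, this is all of $\h^1(k,G_w)$.

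Next I would compute $\h^1(k,G_w)$. Writing $G_w=G^{\circ}_w\cong\mathrm{PGL}_2\times\gl_1$, the projection to the $\gl_1$-factor and Hilbert 90 show $\h^1(k,\gl_1)=1$, so $\h^1(k,G_w)=\h^1(k,\mathrm{PGL}_2)$; more precisely the inclusion $\mathrm{PGL}_2\hookrightarrow G_w$ induces a bijection on $\h^1$ because $\gl_1$ is a direct factor with trivial $\h^1$ and (in the exact sequence of pointed sets) also trivial $\h^0$-to-$\h^1$ connecting contributions. Since $\mathrm{PGL}_2=\aut(\mathrm{PGL}_2)$, the set $\h^1(k,\mathrm{PGL}_2)$ is in canonical bijection with the set of $k$-isomorphism classes of $k$-forms of $\mathrm{PGL}_2$, which is exactly $\mathrm{Prg}_2(k)$ by Definition \ref{defn:Ex-defn}(2). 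Composing the bijections gives $G_k\backslash V^{\sst}_k\cong\mathrm{Prg}_2(k)$.

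Finally I would identify the form attached to a given $x\in V^{\sst}_k$ with the twisted group $G_x/\ti T$. Pick $g\in G_{k^{\sep}}$ with $gw=x$; then $G_x=gG_{w\,k^{\sep}}g^{-1}\cap G_k$, i.e. $G_x$ is the twist of $G_w$ by the cocycle $(g_{\sig})$ acting by inner automorphisms (all automorphisms of $\mathrm{PGL}_2$ being inner). The center-like subgroup $\ti T=\{(tI_5,t^{-2}I_3)\mid t\in\gl_1\}\subset G_w$ is exactly $\kernel(\phi)\cdot(\text{the }\gl_1\text{-factor})$ in the description of Proposition \ref{prop:S1-Gw} — that is, $\ti T$ is the $\gl_1$-factor of $G^{\circ}_w\cong\mathrm{PGL}_2\times\gl_1$ and is defined over $k$ and central, hence stable under the twisting cocycle. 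Therefore $G_x/\ti T$ is the inner twist of $G_w/\ti T\cong\mathrm{PGL}_2$ by $(g_{\sig})$, which is precisely the $k$-form of $\mathrm{PGL}_2$ corresponding to the class of $(g_{\sig})$ in $\h^1(k,\mathrm{PGL}_2)=\mathrm{Prg}_2(k)$. This gives the asserted explicit description and completes the proof.

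The main obstacle I expect is being careful with the bookkeeping of the pointed-set exact sequences: one must check that $\ti T$ really is the $\gl_1$-direct-factor of $G^{\circ}_w$ (not merely isogenous to it) so that $G_w/\ti T$ is honestly $\mathrm{PGL}_2$ and the twisting descends cleanly, and one must verify that the $\h^1$-bijection induced by $\mathrm{PGL}_2\hookrightarrow G_w$ is compatible with the orbit-to-cocycle map, so that the form read off from $G_x/\ti T$ is the same one that indexes the orbit of $x$. Both are routine but need the explicit $\phi$ from Proposition \ref{prop:S1-Gw}; everything else is the standard machinery already invoked for the earlier cases (cf. LEMMA (1.8) \cite[p.120]{yukiel}).
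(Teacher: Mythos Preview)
Your proposal is correct and follows essentially the same approach as the paper: both use the regularity of $(G,V)$ to get a single $G_{k^{\sep}}$-orbit, invoke the standard Galois-cohomology bijection $G_k\backslash V^{\sst}_k\cong\h^1(k,G_w)$ via $\h^1(k,G)=1$, split off the $\gl_1$-factor of $G_w\cong\mathrm{PGL}_2\times\gl_1$ to reduce to $\h^1(k,\mathrm{PGL}_2)=\mathrm{Prg}_2(k)$, and then identify the form attached to $x$ by noting that $\ti T=\phi(\{I_2\}\times\gl_1)$ is precisely the central $\gl_1$-factor so that $G_x/\ti T$ is the inner twist of $\mathrm{PGL}_2$ by the cocycle $(g_x^{-1}g_x^{\sig})$. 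The paper's argument for the last point is phrased as an explicit computation of the $k$-rational points of $G_x$ via the condition $h_{\sig}g^{\sig}h_{\sig}^{-1}=g$, while you use the equivalent language of inner twists; the content is the same.
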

\begin{proof}
The first statement has already been shown. 
Note that $\ti T$ is contained in the center of $G$ 
and $G_w = \phi(\gl_2)\ti T$. 
Since $\h^1(k,\gl_1)=\{1\}$, $(G_x/\ti T)_k\cong G_{x\,k}/\ti T_k$. 
Let $x\in V^{\sst}_k$. Then there exists $g_x\in G_{k^{\sep}}$ 
such that $x=g_x w$. So 
\begin{equation*}
G_{x\,k^{\sep}}=g_x \phi(\gl_2(k^{\sep})) g_x^{-1} \ti T_{k^{\sep}}.  
\end{equation*}

If $g\in \phi(\gl_2(k^{\sep})),t\in \ti T_{k^{\sep}}$ 
and $\sig\in\gal(k^{\sep}/k)$ 
then $g_x gt g_x^{-1}\in G_k$ if and only if $t\in \ti T_k$ and 
$g_x^{\sig} g^{\sig} (g_x^{\sig})^{-1}=g_x g g_x^{-1}$. 
This is equivalent to $(g^{-1}g_x^{\sig})g^{\sig}(g^{-1}g_x^{\sig})^{-1}=g$. 
Let $h_{\sig}=g^{-1}g_x^{\sig}$ and $h=\{h_{\sig}\}$. Then the class of 
the 1-cocycle $h$ is the element of $\h^1(k,G_w)$ corresponding to $x$. 
The above condition is equivalent to $h_{\sig}g^{\sig}h_{\sig}^{-1}=g$ 
for all $\sig$. The set of such $g$ is precisely the $k$-form of 
$\mathrm{PGL}_2$ corresponding to $x$. Therefore, 
$G_x/\ti T$ is the $k$-form of $\mathrm{PGL}_2$ corresponding to $x$. 
\end{proof}

\section{Rational orbits (3)}
\label{sec:rational-orbits-332.3}

In this section let 
$G_1=G_2=\gl_3$, $G_3=\gl_2$, $W_1=W_2=\aff^3$. 
We regard $W_1,W_2$ the standard \rep s of $G_1,G_2$  
respectively. Let $\aff^2$ be the standard \rep{} of $G_3=\gl_2$. 
We denote the standard basis of $W_1,W_2,\aff^2$ by 
$\{\bbmp_1,\bbmp_2,\bbmp_3\}$, 
$\{\bbmq_1,\bbmq_2,\bbmq_3\}$,
$\{\bbmr_1,\bbmr_2\}$ respectively. 
Let $G=G_1\times G_2\times G_3\times \gl_1$ and  
$V=\wedge^2 W_2 \oplus W_1\otimes W_2\otimes \aff^2$. 
We denote elements of $G,V$ by $g=(g_1,g_2,g_3,t),x=(x_1,x_2)$ 
respectively. 
We define an action of $t\in\gl_1$ on $V$ 
by $V\ni (x_1,x_2)\mapsto (tx_1,x_2)\in V$. 
With the above \rep s of $G_1,G_2,G_3$,  
$V$ is a \rep{} of $G$. 

Let 
\begin{equation}
\label{eq:set6-w1}
w_1 = \bbmq_1\wedge \bbmq_2-\bbmq_1\wedge \bbmq_3
+\bbmq_2\wedge \bbmq_3
= \begin{pmatrix}
0 & 1 & -1 \\
-1 & 0 & 1 \\
1 & -1 & 0 
\end{pmatrix}
\end{equation}
and $w_{21},w_{22}$ be the elements $w_1,w_2$ 
in (\ref{eq:sec6-w-defn-sec4}). We put 
$w_2=(w_{21},w_{22})$ and $w=(w_1,w_2)\in V$.

We determine the Lie algebra of ${\mathrm T}_{e_G}(G_w)$. 
Let $k[\vep]/(\vep^2)$ be the ring of dual numbers as before
and $X=(x_{ij}),Y=(y_{ij})\in\m_3,Z=(z_{ij})\in\m_2,a\in \aff^1$. 
We express elements of ${\mathrm T}_{e_G}(G)$ in the form
$e_G+\vep(X,Y,Z,a)$.  
\begin{lem}
\label{lem:lie-alg-sec6}
$\dim {\mathrm T}_{e_G}(G_w)=2$. 
\end{lem}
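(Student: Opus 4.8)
The plan is to compute the condition $(e_G + \vep(X,Y,Z,a))w = w$ directly, exactly as was done for the analogous cases in the previous sections (Lemma \ref{lem:S3-lie-alg} and the discussion preceding it), and then count the dimension of the resulting solution space. The component $w_1 \in \wedge^2 W_2$ and the component $w_2 = (w_{21}, w_{22}) \in W_1 \otimes W_2 \otimes \aff^2$ transform independently, and the scalar $t \in \gl_1$ (equivalently the parameter $a$) acts only on $w_1$, scaling it; so the equation $(e_G + \vep(X,Y,Z,a))w = w$ splits into two blocks of linear conditions.

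First I would handle the $\wedge^2 W_2$-component. The infinitesimal action of $(Y, a)$ on $w_1$ is $w_1 \mapsto w_1 + \vep(Y w_1 + w_1\,{}^t Y + a\, w_1)$ — wait, more precisely one must be careful that the $\gl_1$-factor does not act on $w_1$ here, so actually the condition reads $Y w_1 + w_1\,{}^t Y = 0$; since $w_1$ in (\ref{eq:set6-w1}) is a nondegenerate alternating $3\times 3$ matrix is impossible (odd size), but it has rank $2$ with kernel spanned by $\bbmp_1 + \bbmp_2 + \bbmp_3$, and the stabilizer in $\gl_3$ of this rank-$2$ alternating form is (up to the obvious $1$-dimensional ambiguities) of dimension $4$ — I would record the explicit form of $Y$ solving $Y w_1 + w_1\,{}^tY = 0$. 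Note here the $\gl_1$-factor genuinely is relevant: the $t$-action only touches $x_1$, so $Y w_1 + w_1\,{}^tY = 0$ is the correct first block of equations (the scalar $a$ does not enter).

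Next I would handle the $W_1 \otimes W_2 \otimes \aff^2$-component using $w_{21}, w_{22}$ from (\ref{eq:sec6-w-defn-sec4}); these are exactly the $w_1, w_2$ of Case II in Section \ref{sec:rational-orbits-general}, where it was already shown (Lemma \ref{lem:S3-lie-alg} and (\ref{eq:S3-lie-alg})) that the stabilizing $(X, Y, Z)$ must be diagonal with two linear relations among the six diagonal entries, giving a $4$-dimensional solution space for that component alone. The point of the present lemma is that imposing the $w_1$-condition on $Y$ simultaneously cuts this down. Since the $w_{21}, w_{22}$-block already forces $X, Y, Z$ diagonal, I only need to intersect "$Y$ diagonal satisfying (\ref{eq:S3-lie-alg})" with "$Y$ diagonal satisfying $Y w_1 + w_1\,{}^tY = 0$"; for diagonal $Y = \diag(y_1,y_2,y_3)$ one has $(Y w_1 + w_1\,{}^tY)_{ij} = (y_i + y_j)(w_1)_{ij}$, and since every off-diagonal entry of $w_1$ is nonzero this forces $y_1 + y_2 = y_1 + y_3 = y_2 + y_3 = 0$, hence $y_1 = y_2 = y_3 = 0$. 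Feeding $Y = 0$ back into (\ref{eq:S3-lie-alg}) leaves $x_{11} = -z_{11}$, $x_{22} = -z_{11}$, $x_{22} = -z_{22}$, $x_{33} = -z_{22}$, so among the remaining diagonal unknowns $x_{11},x_{22},x_{33},z_{11},z_{22}$ we get $x_{11}=x_{22}=x_{33}=-z_{11}=-z_{22}$, a $1$-dimensional family — and the scalar $a$ is completely free, contributing one more dimension. Hence $\dim {\mathrm T}_{e_G}(G_w) = 2$.

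The main obstacle, such as it is, is purely bookkeeping: one must write out the infinitesimal stabilizer equations for the $W_1\otimes W_2\otimes \aff^2$-component carefully enough to be sure the conclusion "$X,Y,Z$ diagonal plus (\ref{eq:S3-lie-alg})" is correct (this is cited from Lemma \ref{lem:S3-lie-alg}, so it can be quoted rather than redone), and then verify the claim that every off-diagonal entry of $w_1$ in (\ref{eq:set6-w1}) is nonzero so that the diagonal $Y$ is forced to vanish. No genuine difficulty arises; as in the comparable lemmas of this paper, the detailed matrix computation can reasonably be omitted with the phrase "we do not provide the details," after stating that $e_G + \vep(X,Y,Z,a) \in {\mathrm T}_{e_G}(G_w)$ if and only if $X = \diag(c,c,c)$, $Y = 0$, $Z = \diag(-c,-c)$ for some $c \in \aff^1$ and $a \in \aff^1$ arbitrary, whence the dimension is $2$.
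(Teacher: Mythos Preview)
Your overall strategy matches the paper's: quote Lemma~\ref{lem:S3-lie-alg} to reduce $(X,Y,Z)$ to diagonal form with the relations (\ref{eq:S3-lie-alg}), then impose the extra condition coming from $w_1$. However, you have the action of the $\gl_1$-factor backwards, and this makes your computation wrong even though the final dimension happens to come out right.

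The $\gl_1$-factor acts on $V$ by $(x_1,x_2)\mapsto(tx_1,x_2)$, so it \emph{does} act on the component $\wedge^2 W_2$ where $w_1$ lives (and acts trivially on $w_2$). Hence the infinitesimal condition for fixing $w_1$ is
\[
Yw_1 + w_1\,{}^tY + a\,w_1 = 0,
\]
exactly the expression you wrote first before ``correcting'' yourself. With $Y=\diag(y_1,y_2,y_3)$ this gives $y_i+y_j+a=0$ for all $i\neq j$, so $Y$ is a \emph{scalar} matrix, say $Y=\beta I_3$, and $a=-2\beta$ is determined. Feeding this into (\ref{eq:S3-lie-alg}) then forces $X=\alpha I_3$, $Z=-(\alpha+\beta)I_2$, with $\alpha,\beta$ free, giving dimension $2$. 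This is the paper's argument (up to a harmless $X\leftrightarrow Y$ typo in the paper).

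Your claimed description $X=cI_3$, $Y=0$, $Z=-cI_2$, $a$ arbitrary is not the tangent space of the stabilizer: for such an element one computes $(e_G+\vep(X,Y,Z,a))w_1=w_1+\vep\,a\,w_1$, which equals $w_1$ only if $a=0$. So your $2$-dimensional space meets the true tangent space only in a line. The correct count is two free parameters $(\alpha,\beta)$, not one parameter $c$ plus a free $a$; the two errors (forcing $Y=0$ and leaving $a$ free) cancel numerically but the argument as written is incorrect.
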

\begin{proof}
By definition, 
$e_G+\vep(X,Y,Z,a)\in {\mathrm T}_{e_G}(G_w)$ 
if and only if 
$(e_G+\vep(X,Y,Z,a))w_i=w_i$ for $i=1,2$.

It is proved in the proof of Lemma \ref{lem:S3-lie-alg} that 
$(e_G+\vep(X,Y,Z,a))w_2=w_2$ implies that $X,Y$ are diagonal 
matrices, $Z$ is a scalar matrix and (\ref{eq:S3-lie-alg}) is satisfied. 
Note that the last 
$\gl_1$ acts trivially on the second component of $V$. 
Also $(e_G+\vep(X,Y,Z,a))w_1=w_1$ if and only if 
\begin{equation*}
Xw_1 + w_1 {}^tX + aw_1=0.
\end{equation*}
This implies that $x_{11}+x_{22}=x_{11}+x_{33}=x_{22}+x_{33}$ 
and so $x_{11}=x_{22}=x_{33}$. Therefore, $X$ is a scalar matrix.
By (\ref{eq:S3-lie-alg}), $Y$ is a  scalar matrix also.  
We put $X=\al I_3,Y=\be I_3,Z=\gam I_2$. Then 
$\gam=-\al-\be,a=-2\al$. 
\end{proof}

It is easy to see that elements of the form 
\begin{math}
(t_1I_3,t_2I_3,(t_1t_2)^{-1}I_2,t_2^{-2})
\end{math}
fix $w$. 
By Lemma \ref{lem:lie-alg-sec6}
$\dim {\mathrm T}_{e_G}(G_w)=2=\dim G-\dim V$. So 
Proposition \ref{prop:open-orbit} implies 
that $Gw\sub V$ is Zariski open and that $G_w$ is smooth over $k$. 
Therefore, we obtain the following corollary. 
\begin{cor}
\label{cor:S3-regular-sec6}
\begin{itemize}
\item[(1)]
\begin{math}
G^{\circ}_w=\{(t_1I_3,t_2I_3,(t_1t_2)^{-1}I_2,t_2^{-2})\mid t_1,t_2\in\gl_1\}
\cong \gl_1^2.
\end{math}
\item[(2)]
$G_w$ is smooth reductive and so $(G,V)$ is a regular \pv. 
\end{itemize}
\end{cor}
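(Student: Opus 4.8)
The plan is to deduce everything from the tangent space computation of Lemma~\ref{lem:lie-alg-sec6} together with Propositions~\ref{prop:open-orbit} and~\ref{prop:regularity}, so that no substantial new calculation is required. First I would note that $\dim G - \dim V = (9+9+4+1) - (3+18) = 2$, which equals $\dim {\mathrm T}_{e_G}(G_w)$ by Lemma~\ref{lem:lie-alg-sec6}; this is already recorded in the text preceding the corollary, and by Proposition~\ref{prop:open-orbit} it gives that $Gw \subset V$ is Zariski open and that $G_w$ is smooth over $k$. In particular $\dim G^{\circ}_w = \dim {\mathrm T}_{e_G}(G_w) = 2$.

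For part~(1), I would consider the homomorphism $\phi \colon \gl_1^2 \to G$ over $k$ defined by $\phi(t_1,t_2) = (t_1 I_3,\, t_2 I_3,\, (t_1t_2)^{-1}I_2,\, t_2^{-2})$. It is injective, since $\phi(t_1,t_2) = e_G$ forces $t_1 I_3 = t_2 I_3 = I_3$ and hence $t_1 = t_2 = 1$; therefore $\phi$ is a closed immersion and its image $T' := \im(\phi)$ is a closed connected subgroup of $G$ isomorphic to $\gl_1^2$. By the remark preceding the corollary, $T'$ fixes $w$, so $T' \subseteq G^{\circ}_w$. Since $G^{\circ}_w$ is irreducible of dimension $2$ and $T'$ is a closed subvariety of the same dimension, $T' = G^{\circ}_w$; composing with $\phi^{-1}$ gives $G^{\circ}_w \cong \gl_1^2$, which proves~(1).

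For part~(2), I would observe that $G^{\circ}_w = T' \cong \gl_1^2$ is a torus, hence reductive; since $G_w / G^{\circ}_w$ is finite, $G_w$ is reductive, and it is smooth by the first paragraph. Thus $w$ satisfies the hypotheses of Proposition~\ref{prop:regularity}, and part~(3) of that proposition gives that $(G,V)$ is a regular \pv. I do not anticipate any genuine obstacle here: the only point meriting a word of care is that $\im(\phi)$ is closed, which is automatic since the image of a morphism of algebraic groups is closed, so that the dimension comparison forces $T' = G^{\circ}_w$.
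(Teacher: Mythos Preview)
Your proposal is correct and follows essentially the same approach as the paper: the corollary is stated as an immediate consequence of the paragraph preceding it, and your argument simply spells out that reasoning---the two-dimensional torus $T'$ visibly fixes $w$, the dimension count from Lemma~\ref{lem:lie-alg-sec6} and Proposition~\ref{prop:open-orbit} forces $T' = G^{\circ}_w$, and reductivity of a torus gives regularity via Proposition~\ref{prop:regularity}. The extra care you take in verifying that $\phi$ is a closed immersion is not in the paper but is a harmless elaboration of the same idea.
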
 
Since $k$ is arbitrary and  $G,V,w$ are defined over $\Z$, 
$w$ is universally generic 
(see Definition \ref{defn:universally-generic}).

Let 
\begin{equation}
\label{eq:tau12-defn-sec6}
\begin{aligned}
\tau_1 & = 
\left(
\begin{pmatrix}
0 & 1 & 0 \\
1 & 0 & 0 \\
0 & 0 & 1 
\end{pmatrix},
\begin{pmatrix}
0 & 1 & 0 \\
1 & 0 & 0 \\
0 & 0 & 1 
\end{pmatrix},
\begin{pmatrix}
-1 & 0 \\   
1 & 1 
\end{pmatrix},-1\right), \\
\tau_2 & = 
\left(
\begin{pmatrix}
1 & 0 & 0 \\
0 & 0 & 1 \\
0 & 1 & 0 
\end{pmatrix},
\begin{pmatrix}
1 & 0 & 0 \\
0 & 0 & 1 \\
0 & 1 & 0 
\end{pmatrix},
\begin{pmatrix}
1 & 1 \\
0 & -1 
\end{pmatrix},-1\right).
\end{aligned}
\end{equation}
Then $\tau_1,\tau_2$ fix $w$ and generate 
a subgroup of $G_k$ isomorphic to $\gS_3$. 
The above $\tau_1,\tau_2$ coincide with those 
in (\ref{eq:tau12-defn}) except for the last component $-1$. 
We denote the subgroup of $G_k$ generated by 
$\tau_1,\tau_2$ by $H$.

\begin{prop}
\label{prop:S3-stabilizer-sec6}
$G_w\cong G^{\circ}_w\times H\cong G^{\circ}_w\times \gS_3$. 
\end{prop}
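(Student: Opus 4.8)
The plan is to show that $G_w$ is generated by $G^\circ_w$ and $H$, that these two subgroups intersect trivially, and that $G^\circ_w$ is normal in $G_w$ (it always is, being the identity component), so that $G_w = G^\circ_w \rtimes H$; finally one checks the extension is in fact a direct product because $H$ centralizes $G^\circ_w$. The structure closely parallels the proof of Proposition~\ref{prop:S3-stabilizer}, so I would follow that template and only highlight the new features coming from the extra $\wedge^2 W_2$ component and the extra $\gl_1$ factor.

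First I would take $g = (g_1,g_2,g_3,t) \in G_{w\,\overline k}$ and use the relative invariant coming from the second component: regarding $x_2$ as a $3\times 3$ matrix of linear forms in $v=(v_1,v_2)$, its determinant $F_{x}(v)$ transforms by $(\det g_1)(\det g_2)$ up to the $g_3$-substitution, and $F_w(v) = v_1v_2(v_1-v_2)$ has zero set $\{(1,0),(0,1),(1,1)\}\subset\p^1$ exactly as in Case II. Hence $g_3$ permutes these three points projectively, and after multiplying $g$ by a suitable element of $H$ (whose $G_3$-components realize all of $\gS_3$ on these points, just as $\tau_1,\tau_2$ did in~(\ref{eq:tau12-defn})), we may assume $g_3$ fixes all three points, so $g_3$ is a scalar matrix. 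Then the argument of Proposition~\ref{prop:S3-stabilizer} applied to $g_1 w_{2i}\,{}^tg_2$ forces $g_1,g_2$ to be diagonal. Now the first component enters: $g$ must also fix $w_1 = \bbmq_1\wedge\bbmq_2 - \bbmq_1\wedge\bbmq_3 + \bbmq_2\wedge\bbmq_3$ up to the scalar $t$, i.e. $t^{-1}\wedge^2 g_2\, w_1 = w_1$; since $g_2 = \diag(s_1,s_2,s_3)$, comparing the three coordinates of $\wedge^2 g_2\,w_1$ gives $s_1 s_2 = s_1 s_3 = s_2 s_3$ (all equal to $t$), whence $s_1 = s_2 = s_3$, so $g_2$ is scalar; feeding this back into the diagonal relations from the second component shows $g_1$ is scalar and $g_3, t$ are then determined, so $g \in G^\circ_w$ by Corollary~\ref{cor:S3-regular-sec6}(1). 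This proves $G_{w\,\overline k} = G^\circ_{w\,\overline k}\cdot H$.

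Next I would check $G^\circ_w \cap H = \{e_G\}$: an element of $G^\circ_w$ has all matrix components scalar by Corollary~\ref{cor:S3-regular-sec6}(1), whereas a nontrivial element of $H \cong \gS_3$ has at least one non-scalar permutation component (indeed the $G_1$- and $G_2$-components of $\tau_1,\tau_2$ and their products are genuine permutation matrices for the non-identity elements of $\gS_3$), so the intersection is trivial. Since $G^\circ_w$ is normal, we get the semidirect product decomposition $G_w = G^\circ_w \rtimes H$. To upgrade this to a direct product, I would verify that $H$ centralizes $G^\circ_w$: a direct computation shows that conjugation of $(t_1 I_3, t_2 I_3, (t_1t_2)^{-1}I_2, t_2^{-2})$ by either $\tau_1$ or $\tau_2$ leaves it unchanged, because each $\tau_i$ acts on the scalar-matrix torus merely by permuting equal diagonal entries (and the $\gl_1$-component $-1$ acts trivially by multiplication). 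Hence $G^\circ_w$ and $H$ commute elementwise, giving $G_w = G^\circ_w \times H \cong G^\circ_w \times \gS_3$, and since all of this is visibly defined over $k$ (the generators of $H$ lie in $G_k$ and $G^\circ_w$ is $k$-split), the isomorphism holds over $k$.

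The main obstacle I anticipate is the bookkeeping in the step where one shows $g_1, g_2, g_3$ reduce to scalars: one must be careful that the determinant relative invariant of the second component is genuinely available (which it is, via Proposition~\ref{prop:332-invariant} applied to $w_2$), that the three marked points of $\p^1$ are indeed distinct so that $g_3$ is pinned down to a scalar, and that the constraints coming from $w_1$ and from $w_2$ are consistent — in particular that the $\gl_1$-twist $t$ on the first component does not give extra freedom. All of these are routine once set up, and the genuinely new input compared with Case~II is just the short computation with $\wedge^2 g_2\, w_1$, so I would present that computation explicitly and otherwise refer to the proof of Proposition~\ref{prop:S3-stabilizer}.
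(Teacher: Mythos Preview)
Your proposal is correct and follows essentially the same approach as the paper: reduce to diagonal $g_1,g_2$ and scalar $g_3$ via the second component (invoking Proposition~\ref{prop:S3-stabilizer}), then use the $\wedge^2 W_2$ component $w_1$ to force $g_2$ (hence $g_1$) scalar, and conclude the direct product from centrality of $G^\circ_w$. The paper is terser---it simply notes that $G^\circ_w$, consisting of tuples of scalar matrices, lies in the center of $G$ (hence of $G_w$), which immediately gives the direct product without separately checking the trivial intersection or the conjugation action of $\tau_1,\tau_2$; your explicit verifications of these points are fine but not needed once centrality is observed.
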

\begin{proof}
Suppose that $g=(g_1,g_2,g_3)\in G_w$.  
Since the last $\gl_1$ acts on the second component trivially, 
by Proposition \ref{prop:S3-stabilizer}, 
we may assume $g_1,g_2$ are diagonal matrices and 
$g_3$ is a scalar matrix after multiplying an element of $H$ 
if necessary. By considering $g_1 w_1{}^t g_2$, 
$g_1,g_2$ are scalar matrices. Then it is easy to see that 
$g\in G^{\circ}_w$. 
Since $G^{\circ}_w$ is contained in the center of $G_w$, 
$G_w\cong G^{\circ}_w\times H$. 
\end{proof}

Let $P_1(x)$ be the degree $12$ polynomial 
on $W_1\otimes W_2\otimes \aff^2$ in Proposition \ref{prop:332-invariant}.
Then 
\begin{equation}
\label{eq:sec6-P1}
P_1(gx) = (\det g_1)^4(\det g_2)^4(\det g_3)^6P_1(x). 
\end{equation}
and $P_1(w)=1$. 

We construct another relative invariant polynomial on $V$. 
We first construct an equivariant map from 
$W_1\otimes W_2\otimes \aff^2$ to $W_2\otimes W_2\otimes W_2$. 
Let $\Phi_1:W_1\otimes W_2\otimes \aff^2\to (W_1\otimes W_2\otimes \aff^2)^{6\otimes}$
be the map defined by 
\begin{equation*}
\Phi_1(x) = \overbrace{x\otimes \cdots \otimes x}^6.
\end{equation*}
We identify $\wedge^3 W_1,\wedge^3 W_2\cong \aff^1$, 
$\wedge^2 \aff^2\cong \aff^1$ in the usual manner. 
We define a linear map 
$\Phi_2:(W_1\otimes W_2\otimes \aff^2)^{6\otimes}\to (W_2\otimes \aff^2)^{6\otimes}$
so that 
\begin{align*}
& \Phi_2((v_{11}\otimes v_{12}\otimes v_{13})\otimes \cdots \otimes 
(v_{61}\otimes v_{62}\otimes v_{63})) \\
& = (v_{11}\wedge v_{21}\wedge v_{31}) (v_{41}\wedge v_{51}\wedge v_{61})
(v_{12}\otimes v_{13})\otimes \cdots \otimes 
(v_{62}\otimes v_{63}). 
\end{align*}
We define a linear map 
$\Phi_3:(W_2\otimes \aff^2)^{6\otimes}\to W_2^{6\otimes}$
so that 
\begin{align*}
& \Phi_3((v_{12}\otimes v_{13})\otimes \cdots \otimes 
(v_{62}\otimes v_{63})) \\
& = (v_{13}\wedge v_{43}) (v_{23}\wedge v_{53}) (v_{33}\wedge v_{63}) 
v_{12}\otimes \cdots \otimes v_{62}. 
\end{align*}
We define a linear map 
$\Phi_4:W_2^{6\otimes}\to W_2^{3\otimes}$
so that 
\begin{equation*}
\Phi_4(v_{12}\otimes \cdots \otimes v_{62})
= - (v_{12}\wedge v_{22}\wedge v_{42}) 
v_{32} \otimes v_{52} \otimes v_{62}.
\end{equation*}
Let $\Phi=\Phi_4\circ \Phi_3\circ \Phi_2\circ \Phi_1$. 

We define a polynomial $P_2(x)$ on $V$ by 
\begin{equation*}
P_2(x_1,x_2)=\frac 16(x_1\otimes x_1\otimes x_1) \wedge \Phi(x_2)
\end{equation*}
for $x_1\in \wedge^2 W_2,x_2\in W_1\otimes W_2\otimes \aff^2$ 
($\wedge$ is applied to all three components of 
$\Phi(x_2)\in W_2^{3\otimes}$). 
We shall explain in Proposition \ref{prop:S5-second-equivariant} 
that the coefficient $\tfrac 16$ can be justified. 

\begin{prop}
\label{prop:S5-second-equivariant}
\begin{itemize}
\item[(1)]
$P_2$ is a polynomial, homogeneous of degrees $3,6$ 
with respect to $x_1,x_2$ respectively and  
$P_2(gx) = t^3(\det g_1)^2(\det g_2)^4(\det g_3)^3P_2(x)$. 
\item[(2)]
\begin{math}
\Phi(w_2) = \sum_{\sig\in\gS_3}
\bbmq_{\sig(1)}\otimes \bbmq_{\sig(2)} \otimes \bbmq_{\sig(3)}.
\end{math}
\item[(3)]
\begin{math}
P_2(w) = 1. 
\end{math}
\end{itemize}
\end{prop}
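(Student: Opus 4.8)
The plan is to prove (2) first, then read off (1) and (3) from it, and finally invoke the universal genericity of $w$ (Corollary \ref{cor:S3-regular-sec6}) to make sense of the constant $\tfrac16$ over an arbitrary base. For (1), I would first note that each of $\Phi_2,\Phi_3,\Phi_4$ is well defined because its defining formula is multilinear in the individual tensor slots (the copies of $W_1$, $W_2$, $\aff^2$), hence extends uniquely to a linear map on the relevant tensor power; since $\Phi_1(x)=x^{\otimes 6}$ is polynomial of degree $6$ in $x$, the composite $\Phi$ is homogeneous of degree $6$ and $P_2(x_1,x_2)=\tfrac16(x_1^{\otimes 3})\wedge\Phi(x_2)$ is, over $\Q$, a polynomial of bidegree $(3,6)$. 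The equivariance is then a determinant count with $g=(g_1,g_2,g_3,t)$ acting diagonally on every slot: $\Phi_1$ is equivariant and $t$ acts trivially on $x_2$; $\Phi_2$ contributes $(\det g_1)^2$ from its two $\wedge^3 W_1\cong\aff^1$ factors; $\Phi_3$ contributes $(\det g_3)^3$ from its three $\wedge^2\aff^2\cong\aff^1$ factors; $\Phi_4$ contributes $\det g_2$ from its $\wedge^3 W_2\cong\aff^1$ factor; so $\Phi(g\cdot x_2)=(\det g_1)^2(\det g_2)(\det g_3)^3\,g_2^{\otimes 3}\Phi(x_2)$. Finally $x_1\in\wedge^2 W_2$ transforms by $t\,\wedge^2 g_2$, so $x_1^{\otimes 3}$ picks up $t^3$, and forming $(\wedge^2 W_2)\wedge W_2=\wedge^3 W_2\cong\aff^1$ in each of the three components combines $(\wedge^2 g_2)^{\otimes 3}$ with $g_2^{\otimes 3}$ into $(\det g_2)^3$, yielding $P_2(gx)=t^3(\det g_1)^2(\det g_2)^4(\det g_3)^3 P_2(x)$.

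For (2), expand $w_2^{\otimes 6}$ in the basis $\bbmp_i\otimes\bbmq_i\otimes\bbmr_k$; since $w_2=\sum_a c_a\,\bbmp_{i(a)}\otimes\bbmq_{i(a)}\otimes\bbmr_{k(a)}$ is ``diagonal'', the $W_1$- and $W_2$-labels of each slot coincide. Then $\Phi_2$ annihilates every summand whose first three (resp.\ last three) $W_1$-labels are not a permutation of $(1,2,3)$, and $\Phi_4$ additionally forces $\{i_1,i_2,i_4\}=\{1,2,3\}$, hence $i_3=i_4$; consequently the surviving output $\bbmq_{i_3}\otimes\bbmq_{i_5}\otimes\bbmq_{i_6}$ is always a permutation tensor, so $\Phi(w_2)=\sum_{\sigma\in\gS_3}c_\sigma\,\bbmq_{\sigma(1)}\otimes\bbmq_{\sigma(2)}\otimes\bbmq_{\sigma(3)}$ for some integers $c_\sigma$. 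Because $\tau_1,\tau_2$ of (\ref{eq:tau12-defn-sec6}) fix $w$ (and act trivially on the $x_1$-slot via $\gl_1$) while the character of $\Phi$ is trivial on each of them, part (1) gives $\Phi(w_2)=g_2^{\otimes 3}\Phi(w_2)$ with $g_2$ the permutation matrix on $W_2$; applying this for $\tau_1$ and $\tau_2$ shows $c_\sigma$ is invariant under left multiplication by the transpositions $(12)$ and $(23)$, so all $c_\sigma$ are equal. It then suffices to compute one coefficient, e.g.\ that of $\bbmq_1\otimes\bbmq_2\otimes\bbmq_3$: running the finitely many surviving terms through $\Phi_2,\Phi_3,\Phi_4$ while tracking the signs of the various wedges together with the signs $c_a$ gives the value $1$, so $\Phi(w_2)=\sum_{\sigma}\bbmq_{\sigma(1)}\otimes\bbmq_{\sigma(2)}\otimes\bbmq_{\sigma(3)}$.

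For (3), from (\ref{eq:set6-w1}) one checks directly that $w_1\wedge\bbmq_j=\bbmq_1\wedge\bbmq_2\wedge\bbmq_3=1$ for $j=1,2,3$, whence $(w_1^{\otimes 3})\wedge(\bbmq_{\sigma(1)}\otimes\bbmq_{\sigma(2)}\otimes\bbmq_{\sigma(3)})=1$ for every $\sigma$; summing over the six elements of $\gS_3$ and dividing by $6$ gives $P_2(w)=1$. This simultaneously justifies the constant $\tfrac16$ and extends everything to arbitrary $k$: parts (1) and (3) have been proved over $\Q$, where $P_2$ is a priori polynomial; applying Proposition \ref{prop:universally-generic} with $R=\Z$, $S=\emptyset$, and $W=\aff^1$ the one-dimensional representation with character $t^3(\det g_1)^2(\det g_2)^4(\det g_3)^3$, the equivariant morphism $P_2$ with $P_2(w)=1\in\Z$ is defined over $\Z$ — equivalently $(x_1^{\otimes 3})\wedge\Phi(x_2)$ has all coefficients divisible by $6$ and $P_2\in\Z[V]$ — and base change along $\Z\to k$ then yields a well-defined $P_2$ over any $k$ for which all of (1), (2), (3) hold by specialization.

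The only genuinely nontrivial step is the sign bookkeeping in (2): deciding which of the $4^6$ summands of $w_2^{\otimes 6}$ survive all four maps and with which sign. The $\gS_3$-symmetry supplied by $\tau_1,\tau_2$ cuts this down to evaluating a single coefficient, which is a short finite check; everything else is determinant counting and the cited results.
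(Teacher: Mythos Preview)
Your argument is correct. Parts (1) and (3) match the paper's proof essentially verbatim, including the appeal to Proposition~\ref{prop:universally-generic} via the universal genericity of $w$ to clear the denominator $6$.

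For part (2), however, your route is genuinely different from the paper's. The paper proceeds by direct computation: it first observes that $\Phi_2\circ\Phi_1(w_2)=a\otimes a$ for an explicit six-term element $a\in (W_2\otimes\aff^2)^{\otimes 3}$, then writes out all twelve nonzero terms of $\Phi_3(a\otimes a)$, and finally applies $\Phi_4$ term by term to read off the six permutation tensors. No symmetry is invoked; it is a straight expansion. Your approach instead exploits two structural features: the ``diagonal'' shape of $w_2$ (the $W_1$- and $W_2$-labels agree in each summand), which combined with the constraints imposed by $\Phi_2$ and $\Phi_4$ forces $\Phi(w_2)$ to lie in the span of permutation tensors; and the $\gS_3$-symmetry coming from $\tau_1,\tau_2$, which --- since the character $(\det g_1)^2(\det g_2)(\det g_3)^3$ equals $1$ on both --- gives $\Phi(w_2)=g_2^{\otimes 3}\Phi(w_2)$ with $g_2$ a transposition matrix, hence all six coefficients $c_\sigma$ coincide. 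This reduces the whole computation to a single coefficient, which is a two-term sign check. Your method is more conceptual and scales better; the paper's is more self-contained and leaves no step implicit. One small remark: your parenthetical that $\tau_1,\tau_2$ ``act trivially on the $x_1$-slot via $\gl_1$'' is not literally accurate (the action on $x_1$ is $t\,\wedge^2 g_2$, not the identity), but what you actually use --- that the first three components of $\tau_i$ fix $w_2$ and that the equivariance of $\Phi$ involves only those components --- is correct and suffices.
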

\begin{proof}
(1) The first part of the statement of (1) 
is obvious. It is easy to see that 
\begin{equation*}
\Phi(gx)=(\det g_1)^2 (\det g_2) (\det g_3)^3(g_2\otimes g_2\otimes g_2) \Phi(x).
\end{equation*}
Since $\Phi(x)$ has three components of $W_2$ and 
$(tx)\otimes (tx)\otimes (tx)=t^3 x\otimes x\otimes x$, 
we obtain the second statement of (1). 

(2) Let 
\begin{align*}
a = \; & -(\bbmq_1 \otimes \bbmr_1)\otimes \bbmq_2\otimes (-\bbmr_1 + \bbmr_2) \otimes
(\bbmq_3 \otimes \bbmr_2) \\
& + (\bbmq_1 \otimes \bbmr_1)\otimes 
(\bbmq_3 \otimes \bbmr_2) \otimes \bbmq_2\otimes (-\bbmr_1 + \bbmr_2) \\
& + \bbmq_2\otimes (-\bbmr_1 + \bbmr_2) \otimes 
(\bbmq_1 \otimes \bbmr_1)\otimes 
(\bbmq_3 \otimes \bbmr_2) \\
& -\bbmq_2\otimes (-\bbmr_1 + \bbmr_2) \otimes 
(\bbmq_3 \otimes \bbmr_2) \otimes 
(\bbmq_1 \otimes \bbmr_1) \\
& -(\bbmq_3 \otimes \bbmr_2) \otimes 
(\bbmq_1 \otimes \bbmr_1)\otimes 
\bbmq_2\otimes (-\bbmr_1 + \bbmr_2) \\
& + (\bbmq_3 \otimes \bbmr_2) \otimes 
\bbmq_2\otimes (-\bbmr_1 + \bbmr_2) \otimes 
(\bbmq_1 \otimes \bbmr_1). 
\end{align*}
Then $\Phi_2\circ \Phi_1(w_2)=a\otimes a$. 

Straightforward computations show that 
\begin{align*}
\Phi_3(a\otimes a) = \;
& \bbmq_1 \otimes \bbmq_2 \otimes \bbmq_3 
\otimes \bbmq_2 \otimes \bbmq_3  \otimes \bbmq_1 
- \bbmq_1 \otimes \bbmq_2 \otimes \bbmq_3 
\otimes \bbmq_3 \otimes  \bbmq_1 \otimes \bbmq_2 \\
& + \bbmq_1 \otimes \bbmq_3 \otimes \bbmq_2
\otimes \bbmq_2 \otimes \bbmq_1 \otimes \bbmq_3  
- \bbmq_1 \otimes \bbmq_3 \otimes \bbmq_2
\otimes \bbmq_3 \otimes \bbmq_2\otimes \bbmq_1 \\
& - \bbmq_2\otimes \bbmq_1 \otimes \bbmq_3 
\otimes \bbmq_1 \otimes \bbmq_3 \otimes \bbmq_2 
+ \bbmq_2\otimes \bbmq_1 \otimes \bbmq_3 
\otimes \bbmq_3 \otimes \bbmq_2\otimes \bbmq_1 \\
& - \bbmq_2\otimes \bbmq_3 \otimes \bbmq_1 
\otimes \bbmq_1 \otimes \bbmq_2\otimes \bbmq_3 
+ \bbmq_2\otimes \bbmq_3 \otimes \bbmq_1 
\otimes \bbmq_3 \otimes \bbmq_1 \otimes \bbmq_2 \\
& + \bbmq_3 \otimes \bbmq_1 \otimes \bbmq_2 
\otimes \bbmq_1 \otimes \bbmq_2\otimes \bbmq_3 
- \bbmq_3 \otimes \bbmq_1 \otimes \bbmq_2 
\otimes \bbmq_2\otimes \bbmq_3 \otimes \bbmq_1 \\
& + \bbmq_3 \otimes \bbmq_2\otimes \bbmq_1 
\otimes \bbmq_1 \otimes \bbmq_3 \otimes \bbmq_2 
- \bbmq_3 \otimes \bbmq_2\otimes \bbmq_1 
\otimes \bbmq_2\otimes \bbmq_1 \otimes \bbmq_3.  
\end{align*}
Then 
\begin{align*}
\Phi(w_2) & = \Phi_4\circ \Phi_3(a\otimes a) \\
& = 0 + \bbmq_3\otimes \bbmq_1 \otimes \bbmq_2 
+ \bbmq_2\otimes \bbmq_1 \otimes \bbmq_3 + 0 + 0 
+ \bbmq_3\otimes \bbmq_2 \otimes \bbmq_1 \\
& \quad + \bbmq_1\otimes \bbmq_2 \otimes \bbmq_3
+ 0 + 0 + \bbmq_2\otimes \bbmq_3 \otimes \bbmq_1
+ \bbmq_1\otimes \bbmq_3 \otimes \bbmq_2 + 0.    
\end{align*}

(3) Note that  
$w_1\wedge \bbmq_i= \bbmq_1\wedge \bbmq_2\wedge \bbmq_3$ 
for $i=1,2,3$.  Therefore, 
$6P_2(w)=6$. Since $w$ is universally generic, 
Proposition \ref{prop:universally-generic} implies
that $P_2$ is defined over $\Z$, $P_2(w)=1$ and that 
$6P_2(x_1,x_2) = (x_1\otimes x_1\otimes x_1) \wedge \Phi(x_2)$. 
\end{proof}

So we are in the situation of Section \ref{sec:regularity} 
where $m=N=2$. We put $U=\{x\in V\mid P_1(x),P_2(x)\not=0\}$. 
Then by Corollary \ref{cor:reducible-sep-orbit}, 
$U_{k^{\sep}}=G_{k^{\sep}}w$. So we can use the 
standard argument of Galois cohomology. 

\begin{prop}
\label{prop:S3-orbit-rational}
The map $U_k\ni (x_1,x_2) \mapsto x_2\in \aff^3\otimes \aff^3\otimes \aff^2$ 
induces a bijection
\begin{equation*}
G_k\backslash U_k\cong (\gl_3(k)^2\times \gl_2(k))
\backslash (\aff^3\otimes \aff^3\otimes \aff^2)^{\sst}_k
\cong \Ex_3(k). 
\end{equation*}
\end{prop}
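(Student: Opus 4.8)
The plan is to exploit the Castling-type structure of $V$ together with the already-established results for the ``$\aff^3\otimes\aff^3\otimes\aff^2$'' case (Case II of Section~\ref{sec:rational-orbits-general}, i.e. Propositions~\ref{prop:S3-stabilizer} and~\ref{prop:332-invariant}) and the interpretation of its generic rational orbits via $\Ex_3(k)$ coming from the splitting field of the cubic form $F_x(v)$. First I would observe that by Corollary~\ref{cor:reducible-sep-orbit} (applicable since $m=N=2$ here, as noted just before the statement) we have $U_{k^{\sep}}=G_{k^{\sep}}w$, so $G_k\backslash U_k$ is classified by the pointed set $\ker\bigl(\h^1(k,G_w)\to\h^1(k,G)\bigr)$; since $G$ is a product of general linear groups, $\h^1(k,G)=\{1\}$, so $G_k\backslash U_k\cong \h^1(k,G_w)$. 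By Proposition~\ref{prop:S3-stabilizer-sec6}, $G_w\cong G^{\circ}_w\times H$ with $G^{\circ}_w\cong\gl_1^2$ and $H\cong\gS_3$; since $\h^1(k,\gl_1)=\{1\}$, the K\"unneth-type formula for Galois cohomology of a direct product gives $\h^1(k,G_w)\cong\h^1(k,\gS_3)=\Ex_3(k)$ by Definition~\ref{defn:Ex-defn}(1).

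The second thread is to make the map $(x_1,x_2)\mapsto x_2$ into an honest bijection of orbit sets and to check it is the ``same'' parametrization. The key point is that the projection $V\to \aff^3\otimes\aff^3\otimes\aff^2$ is $G$-equivariant for the surjection $G\to \gl_3^2\times\gl_2$ (the last $\gl_1$ and the $\wedge^2 W_2$ component being killed), and it carries $U$ into $\{x_2: P_1(x_2)\neq 0\}=(\aff^3\otimes\aff^3\otimes\aff^2)^{\sst}$, since $P_1$ only involves $x_2$. For surjectivity of the induced map on $k$-rational orbits: given $x_2\in(\aff^3\otimes\aff^3\otimes\aff^2)^{\sst}_k$, by Proposition~\ref{prop:Rational-orbits-3-cases} (Case~II) there is $g'=(g_1,g_2,g_3)\in(\gl_3^2\times\gl_2)(k)$ with $g'\cdot x_2$ equal to the standard representative $w_2$ of~(\ref{eq:sec6-w-defn-sec4}); then I must produce $t\in\gl_1(k)$ so that $(g_1,g_2,g_3,t)$ carries the first component $g_1\cdot x_1$ (some fixed $w_1$-component, which after the group action becomes a nonzero element of the one-dimensional $\Gm$-direction since $P_2\neq 0$ forces it nonzero) to $w_1$; this is possible precisely because $t$ scales the $\wedge^2 W_2$-component and $P_2(x)\neq 0$ guarantees the relevant scalar is in $k^{\times}$. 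Hence every element of $U_k$ is $G_k$-conjugate to $w$, i.e. $G_k\backslash U_k$ is a single... no—rather, every $x_2$-fibre in $U_k$ is a single $G_k$-orbit, so the induced map $G_k\backslash U_k\to(\gl_3^2\times\gl_2)(k)\backslash(\aff^3\otimes\aff^3\otimes\aff^2)^{\sst}_k$ is well-defined, surjective, and injective (two elements of $U_k$ with the same image differ by the $\gl_1$-action composed with the $\gl_3^2\times\gl_2$-stabilizer, hence lie in one $G_k$-orbit by the scaling argument again). Composing with the bijection of Proposition~\ref{prop:Rational-orbits-3-cases} to $\Ex_3(k)$ (via the splitting field of $F_{x_2}(v)$) finishes it.

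I expect the main obstacle to be the bookkeeping at the point where I need to ``fix up'' the $\wedge^2 W_2$-component after normalizing $x_2$: I must verify that once $x_2$ has been moved to $w_2$, the residual stabilizer $(\gl_3^2\times\gl_2)(k)_{w_2}$ (described explicitly in~(\ref{eq:stab-332}) and Proposition~\ref{prop:S3-stabilizer}) still acts transitively on the set of admissible first components $x_1$ with $P_2(w_1\text{-part},x_2)\neq 0$, together with the extra $\gl_1$. Concretely, the stabilizer torus in~(\ref{eq:stab-332}) acts on the $\wedge^2\aff^3\cong\aff^3$ of first components, and I need that its image, together with $\Gm$, contains enough scaling to reach $w_1$; the argument that $w_1=\bbmq_1\wedge\bbmq_2-\bbmq_1\wedge\bbmq_3+\bbmq_2\wedge\bbmq_3$ is reachable reduces, after using Proposition~\ref{prop:332-invariant}'s transitivity on the $k^{\sep}$-level and $\h^1(k,\cdot)=1$ for the stabilizer (which is $\gl_1^2\times\gS_3$, whose $\h^1$ we already computed equals $\Ex_3(k)$, matching the target), to checking that the cocycle attached to $x$ via its $x_2$-component already determines the full cocycle in $\h^1(k,G_w)$. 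In other words the real content is that the natural map $\h^1(k,G_w)\to\h^1(k,(\gl_3^2\times\gl_2)_{w_2})$ induced by $G_w\hookrightarrow$ (stabilizer in the quotient group) is a bijection — which is immediate from $G_w\cong\gl_1^2\times\gS_3$, $(\gl_3^2\times\gl_2)_{w_2}\cong\gl_1^4\rtimes\gS_3$ (Proposition~\ref{prop:S3-stabilizer}), the surjectivity $G_w^{\circ}\to$(torus part) being irrelevant to $\h^1$ since all tori are split with trivial $\h^1$, and both sides having $\gS_3$-part inducing the identity on $\h^1(k,\gS_3)$. I would present this last identification cleanly rather than by explicit cocycle chasing.
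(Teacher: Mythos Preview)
Your approach is correct in outline and is genuinely different from the paper's. Both proofs begin identically: $\h^1(k,G)=\{1\}$ gives $G_k\backslash U_k\cong\h^1(k,G_w)$, and Proposition~\ref{prop:S3-stabilizer-sec6} plus $\h^1(k,\gl_1)=\{1\}$ give $\h^1(k,G_w)\cong\h^1(k,\gS_3)=\Ex_3(k)$. From there the paper proceeds \emph{constructively}: for each separable cubic $z^3+a_1z^2+a_2z+a_3$ with roots $\al$ it writes down an explicit $g_\al\in G_{k^{\sep}}$ (via Vandermonde-type matrices $P_\al,Q_\al$), sets $x_a=g_\al w$, checks $g_\al^{-1}g_\al^\sig\in H\cong\gS_3$ by hand, and then observes that projecting to the second component gives the same cocycle. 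You instead argue \emph{abstractly} that the projection corresponds to the map $\h^1(k,G_w)\to\h^1(k,(\gl_3^2\times\gl_2)_{w_2})$ induced by the inclusion of stabilizers, and that this map is a bijection because both sides are identified with $\h^1(k,\gS_3)$ via their $\gS_3$-quotients. Your route is cleaner and avoids the explicit matrix computations; the paper's route has the virtue of producing explicit orbit representatives.

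There is, however, a genuine imprecision in your justification that you must fix. You write that the torus contributions are ``irrelevant to $\h^1$ since all tori are split with trivial $\h^1$''. This is not the right reason. For $(\gl_3^2\times\gl_2)_{w_2}\cong\gl_1^4\rtimes\gS_3$, showing that $\h^1(k,\gl_1^4\rtimes\gS_3)\to\h^1(k,\gS_3)$ is bijective requires that for \emph{every} class $c\in\h^1(k,\gS_3)$ the twisted torus ${}_c(\gl_1^4)$ has trivial $\h^1$. These twisted tori are \emph{not} split in general: the $\gS_3$-action on $\gl_1^4$ (read off from (\ref{eq:stab-332}) and (\ref{eq:tau12-defn})) is the permutation action on three of the coordinates and trivial on the fourth, so the twist by the cocycle corresponding to an \'etale cubic algebra $L$ gives $R_{L/k}\gl_1\times\gl_1$. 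This does have trivial $\h^1$, but by Hilbert~90/Shapiro's lemma, not by being split. This is exactly the mechanism behind LEMMA~(1.8) of~\cite{yukiel} invoked in Proposition~\ref{prop:Rational-orbits-3-cases}, so you should cite that rather than the incorrect ``split'' claim. Also, your second paragraph (the direct surjectivity/injectivity argument on orbit sets) is muddled---in particular the sentence asserting every $x_2$ can be moved to $w_2$ over $k$ is false---and you rightly abandon it; drop that paragraph entirely and go straight to the cohomological comparison.
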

\begin{proof}
Since $\h^1(k,G)=\{1\}$, 
$G_k\backslash U_k\cong \h^1(k,G^{\circ}_w\times H)
\cong \h^1(k,\gS_3)\cong \Ex_3(k)$. 

Suppose that $z^3+a_1z^2+a_2z+a_3\in k[z]$ has distinct roots 
$\al=(\al_1,\al_2,\al_3)$. 
Let $a=(a_1,a_2,a_3),F=k(\al)=k(\al_1,\al_2,\al_3)$ and  
$D(\al)=(\al_1-\al_2)(\al_1-\al_3)(\al_2-\al_3)$. 
We put 
\begin{align*}
& P_{\al} = 
\begin{pmatrix}
1 & 1 & 1 \\
\al_1 & \al_2 & \al_3 \\
\al_1^2 & \al_2^2 & \al_3^2
\end{pmatrix},\;
Q_{\al} = D(\al)^{-1}
\begin{pmatrix}
-(\al_3-\al_2) & -(\al_1-\al_2) \\
\al_1(\al_3-\al_2) & \al_3(\al_1-\al_2)
\end{pmatrix}, \\
& g_{\al} = \left(P_{\al},P_{\al},Q_{\al},D(\al)\right), \;
x_a=g_{\al}w.
\end{align*}
Then $g_{(\al_2,\al_1,\al_3)}=g_{\al}\tau_1$, 
$g_{(\al_1,\al_3,\al_2)}=g_{\al}\tau_2$ (see (\ref{eq:tau12-defn})). 
Therefore, for any $\sig\in\gal(F/k)$, 
$g_{\al}^{-1}g_{\al}^{\sig}\in H$. 
This implies that $x_a\in U_k$. 

For $\sig\in \gal(F/k)$ let $h_{\sig}=g_{\al}^{-1}g_{\al}^{\sig}$. 
If $\sig_1,\sig_2\in \gal(F/k)$ then 
$h_{\sig_1\sig_2}=h_{\sig_2}h_{\sig_1}^{\sig_2}$. 
The action of $\gal(F/k)$ on $\al$ enables us to identify 
$\gal(F/k)$ with a subgroup of $\gS_3\cong H$. 
Since any element of $\gS_3$ is a  finite product of 
transpositions $(1\;2),(2\;3)$, 
$h_{\sig}\in G_k$ and so 
$h_{\sig_1\sig_2}=h_{\sig_2}h_{\sig_1}$. 
If $\sig\in H\cong \gS_3$ and $\sig=\sig_1\cdots \sig_n$ 
where $\sig_1\ccd \sig_n=(1\;2)$ or $(2\;3)$ then 
$h_{\sig}=h_{\sig_n}\cdots h_{\sig_1}$ and 
$h_{\sig_i}$ is either $\tau_1$ or $\tau_2$. 
This implies that $h_{\sig}$ can be identified with 
$\sig^{-1}$. Therefore, the field corresponding to 
$x_a$ is $F$. 

If we associate $x_2$ to $x=(x_1,x_2)$ then 
the element corresponding to $x_a$ is the projection 
to the second component. If $x_a=(x_{a,1},x_{a,2})$ 
then $x_{a,2}=g_{\al}w_2$. So the cohomology class 
corresponding to $x_{a,2}$ is the same as that of $x_a$. 
Therefore, $x=(x_1,x_2)\mapsto x_2$ induces a bijection 
of rational orbits. 
\end{proof}

\section{Rational orbits (4)} 
\label{sec:rational-orbits-42-wedge42}

In this section let 
$G=\gl_4\times \gl_2^2$, $W=\aff^4$ and   
$V=W\otimes \aff^2\oplus \wedge^2 W\otimes \aff^2$. 
We consider the natural action of $\gl_4$ on $W,\wedge^2 W$. 
The first and the second $\gl_2$-factor act on 
the first and the second $\aff^2$ respectively. 
We identify $W\otimes \aff^2\cong \m_{4,2}$  
and $\wedge^2 W\otimes \aff^2$ with the space of 
pairs of $4\times 4$ alternating matrices. 
We express elements of $G,V$ as 
$g=(g_1,g_2,g_3)$, $x=(A,B_1,B_2)$ ($A\in\m_{4,2}$, $B_1,B_2$ 
$4\times 4$ alternating matrices).
Then the action of $g$ on $x$ is 
\begin{math}
g(A,B_1,B_2) = (g_1A{}^tg_2,B_1',B_2')
\end{math}
where 
\begin{equation*}
\begin{pmatrix}
B_1' \\
B_2'
\end{pmatrix}
= g_3
\begin{pmatrix}
g_1 B_1{}^t g_1 \\
g_1 B_2{}^t g_1 
\end{pmatrix}
\end{equation*}
treating $[g_1 B_1{}^t g_1,g_1 B_2{}^t g_1]$ 
as a column vector. 
If $x=(A,B_1,B_2)$ then we may denote $A,B_1,B_2$ by $A(x),B_1(x),B_2(x)$, 
$B(x)=(B_1(x),B_2(x))$.

Let 
$\{\bbmp_{4,1}\ccd \bbmp_{4,4}\}$ be the standard basis of $W$ 
and $\{\bbmp_{4,1}^*\ccd \bbmp_{4,1}^*\}$ its dual basis.
We use the notation such as $p_{4,12}=\bbmp_{4,1}\wedge \bbmp_{4,2}$, etc. 
We identify $p_{4,234},p_{4,134},p_{4,124},p_{4,123}$ with 
$\bbmp_1^*,-\bbmp_2^*,\bbmp_3^*,-\bbmp_4^*$ respectively.   
To distinguish standard bases of the two $\aff^2$'s, 
we denote the standard \rep{} and its standard basis of 
the first (resp. second) $\gl_2$ by $L_1,\{\bbmp_{2,1},\bbmp_{2,2}\}$
(resp. $L_2,\{\bbmq_{2,1},\bbmq_{2,2}\}$).

Let $w=(w_1,w_2)\in V$ where $w_2$ is the element $w$ in 
(\ref{eq:sec7-w-defn-sec4}) and 
\begin{equation}
\label{eq:sec7-w-defn-sec7}
\begin{aligned}
w_1 & = (\bbmp_{4,2}+\bbmp_{4,4})\otimes \bbmp_{2,1}
+(\bbmp_{4,1}+\bbmp_{4,3})\otimes \bbmp_{2,2}
= \begin{pmatrix}
0 & 1 \\
1 & 0 \\
0 & 1 \\
1 & 0
\end{pmatrix}.
\end{aligned}
\end{equation}

We determine the Lie algebra of ${\mathrm T}_{e_G}(G_w)$. 
Let $k[\vep]/(\vep^2)$ be the ring of dual numbers as before
and $X=(x_{ij})\in\m_4,Y=(y_{ij}),Z=(z_{ij})\in\m_2$. 
We express elements of ${\mathrm T}_{e_G}(G)$ in the form
$e_G+\vep(X,Y,Z)$.  
\begin{lem}
\label{lem:section7-Lie-alg-sec7}
$\dim {\mathrm T}_{e_G}(G_w)=4$. 
\end{lem}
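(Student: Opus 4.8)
The plan is to reduce this to the computation already done in Lemma~\ref{lem:section7-Lie-alg}. Writing a generic element of ${\mathrm T}_{e_G}(G)$ as $e_G+\vep(X,Y,Z)$ with $X\in\m_4$ and $Y,Z\in\m_2$, and linearising the action recalled just above, the equation $(e_G+\vep(X,Y,Z))w=w$ decouples along $V=W\otimes\aff^2\oplus\wedge^2 W\otimes\aff^2$ into
\[
X w_1 + w_1\,{}^tY = 0
\]
on the first summand, and the system coming from the pair $w_2$ of alternating matrices on the second. Since here the second $\aff^2$ (the one carrying $\wedge^2 W$) is acted on by the \emph{second} $\gl_2$-factor, the $\wedge^2$-part of the equation is literally the system of Lemma~\ref{lem:section7-Lie-alg} with that lemma's "$Y$" now playing the role of $Z$. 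So I would first invoke that lemma to conclude that the $\wedge^2$-part is equivalent to $X$ having the $(2+2)$ block-diagonal shape of (\ref{eq:42-lie-alg-sec4}), say $X=\diag(X_1,X_2)$ with $X_1=\left(\begin{smallmatrix}x_{11}&x_{12}\\ x_{21}&x_{22}\end{smallmatrix}\right)$, $X_2=\left(\begin{smallmatrix}x_{33}&x_{34}\\ x_{43}&x_{44}\end{smallmatrix}\right)$, together with $Z=\diag(-\tr X_1,-\tr X_2)$.

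Next I would substitute this block-diagonal $X$ into the first equation. From (\ref{eq:sec7-w-defn-sec7}) one has $w_1=\left(\begin{smallmatrix}\tau_0\\ \tau_0\end{smallmatrix}\right)$ (the top two and bottom two rows each form the matrix $\tau_0$), so $X w_1=\left(\begin{smallmatrix}X_1\tau_0\\ X_2\tau_0\end{smallmatrix}\right)$ and $w_1\,{}^tY=\left(\begin{smallmatrix}\tau_0\,{}^tY\\ \tau_0\,{}^tY\end{smallmatrix}\right)$; equating the two $2\times 2$ blocks gives $X_1\tau_0=X_2\tau_0=-\tau_0\,{}^tY$, i.e.\ $X_1=X_2=-\tau_0\,{}^tY\,\tau_0$. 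Then $\tr X_1=\tr X_2=-\tr Y$, so $Z=(\tr Y)I_2$ is determined as well. Conversely, for every $Y\in\m_2$ the triple $(X,Y,Z)$ with $X_1=X_2=-\tau_0\,{}^tY\,\tau_0$ and $Z=(\tr Y)I_2$ solves both equations. Hence ${\mathrm T}_{e_G}(G_w)$ is in bijection with $\m_2$ via $Y$, so $\dim {\mathrm T}_{e_G}(G_w)=4$.

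There is no real obstacle here beyond bookkeeping. The only point needing care is to match the two $\gl_2$-factors correctly when citing Lemma~\ref{lem:section7-Lie-alg} — it is the Lie algebra of the factor acting on the $\aff^2$ attached to $\wedge^2 W$ that gets pinned down there — and then to track which $x_{ij}$ are forced to coincide once the block-diagonal $X$ is fed into $X w_1+w_1\,{}^tY=0$. As a consistency check, $\dim G-\dim V=(16+4+4)-(8+12)=4$, which is exactly the number needed so that Proposition~\ref{prop:open-orbit} can be applied afterwards.
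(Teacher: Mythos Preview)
Your proof is correct and follows essentially the same approach as the paper: first invoke Lemma~\ref{lem:section7-Lie-alg} on the $\wedge^2 W$-component to force $X$ into block-diagonal form and determine $Z$, then solve $Xw_1+w_1\,{}^tY=0$ to see that everything is parametrized by $Y\in\m_2$. The only cosmetic difference is that the paper writes out the entries of $Xw_1+w_1\,{}^tY$ explicitly, whereas you package the same computation via $X_1=X_2=-\tau_0\,{}^tY\,\tau_0$.
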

\begin{proof}
We consider the condition $(e_G+\vep(X,Y,Z))w_i=w_i$ for $i=1,2$. 
It is proved in the proof of Lemma \ref{lem:section7-Lie-alg} that 
$X,Z$ are in the form of $X,Y$ in (\ref{eq:42-lie-alg-sec4}).  
It is easy to see that 
$(e_G+\vep(X,Y,Z))w_1=w_1+\vep(X',0,0)$ where 
\begin{align*}
X' & = 
X \begin{pmatrix}
0 & 1 \\
1 & 0 \\
0 & 1 \\
1 & 0
\end{pmatrix}
+ \begin{pmatrix}
0 & 1 \\
1 & 0 \\
0 & 1 \\
1 & 0
\end{pmatrix}
{}^t Y
= \begin{pmatrix}
x_{12} + y_{12} & x_{11} + y_{22} \\  
x_{22} + y_{11} & x_{21} + y_{21} \\  
x_{34} + y_{12} & x_{33} + y_{22} \\      
x_{44} + y_{11} & x_{43} + y_{21}       
\end{pmatrix}.
\end{align*}
So $(e_G+\vep(X,Y,Z))w_i=w_i$ for $i=1,2$ if and only if 
$X,Y,Z$ are in the following form 
\begin{align*}
X & = 
\begin{pmatrix}
-y_{22} & -y_{12} & 0 & 0 \\
-y_{21} & -y_{11} & 0 & 0 \\
0 & 0 & -y_{22} & -y_{12} \\
0 & 0 & -y_{21} & -y_{11} 
\end{pmatrix},\;
Y = 
\begin{pmatrix}
y_{11} & y_{12} \\
y_{21} & y_{22} 
\end{pmatrix}, \\
Z & = 
\begin{pmatrix}
y_{11}+y_{22} & 0 \\
0 & y_{11}+y_{22} 
\end{pmatrix}. 
\end{align*}
\end{proof}

Let $\tau_0$ be as in (\ref{eq:J-defn}) and 
\begin{equation*}
\tau_1 = 
\begin{pmatrix}
0 & I_2 \\
I_2 & 0 
\end{pmatrix},\;
\tau=(\tau_1,I_2,\tau_0). 
\end{equation*}
Then $\tau$ fixes $w$. 

\begin{prop}
\label{prop:S5-regular-sec7}
\begin{itemize}
\item[(1)]
$Gw\sub V$ is Zariski open and $G_w$ is smooth
over $k$.
\item[(2)]
$G^{\circ}_w\cong \gl_2$. 
\item[(3)]
$G_w\cong G^{\circ}_w\times \{1,\tau\}\cong \gl_2\times \Z/2\Z$.
\end{itemize}
\end{prop}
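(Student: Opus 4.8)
The plan is to get (1) for free from what precedes: by Lemma~\ref{lem:section7-Lie-alg-sec7} we have $\dim{\mathrm T}_{e_G}(G_w)=4=24-20=\dim G-\dim V$, so Proposition~\ref{prop:open-orbit} gives at once that $Gw\subset V$ is Zariski open and $G_w$ is smooth over $k$. For (2) and (3) I would work with the explicit homomorphism $\phi:\gl_2\to G$, $\phi(h)=(\diag(h,h),\,\tau_0\,{}^th^{-1}\tau_0,\,(\det h)^{-1}I_2)$, show its image is contained in $G_w$, identify it with $G^\circ_w$, and then locate the one extra component coming from $\tau$.

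To carry out (2): the $(\gl_4,\gl_2)$-part of $\phi(h)$ is $(\diag(h,h),\diag((\det h)^{-1},(\det h)^{-1}))$, which fixes $w_2$ by Proposition~\ref{prop:S5-regular}(2) (Case~III); and $\phi(h)$ fixes $w_1$ by the one-line computation $\diag(h,h)\,w_1\,{}^t(\tau_0\,{}^th^{-1}\tau_0)=\diag(h,h)\left(\begin{smallmatrix}\tau_0\\ \tau_0\end{smallmatrix}\right)\tau_0 h^{-1}\tau_0=w_1$, using $\tau_0^2=I_2$ and the block form $w_1=\left(\begin{smallmatrix}\tau_0\\ \tau_0\end{smallmatrix}\right)$ of (\ref{eq:sec7-w-defn-sec7}). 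So $\im(\phi)\subset G_w$. The map $\phi$ is injective (if $\phi(h)=e_G$ then $\diag(h,h)=I_4$), and writing $h=I_2+\vep(a_{ij})$ and differentiating one recovers precisely the triples $(X,Y,Z)$ of Lemma~\ref{lem:section7-Lie-alg-sec7} under the reparametrisation $y_{11}=-a_{22}$, $y_{12}=-a_{12}$, $y_{21}=-a_{21}$, $y_{22}=-a_{11}$, so $d\phi$ is an isomorphism onto ${\mathrm T}_{e_G}(G_w)$. Hence $\phi$ is a closed immersion whose image is a smooth connected $4$-dimensional subgroup of the smooth $4$-dimensional group $G_w$; therefore $G^\circ_w=\im(\phi)\cong\gl_2$, exactly as in the proof of Proposition~\ref{prop:S1-Gw}.

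For (3): first check that $\tau=(\tau_1,I_2,\tau_0)$ fixes $w$ (its $(\gl_4,\gl_2)$-part $(\tau_1,\tau_0)$ fixes $w_2$ by the Case~III discussion, and $\tau_1w_1=w_1$ because the two $2\times2$ blocks of $w_1$ coincide), that $\tau^2=e_G$, that $\tau\notin G^\circ_w$ (its $\gl_4$-component $\tau_1$ is block-antidiagonal while those of $G^\circ_w$ are block-diagonal), and that $\tau$ centralizes $G^\circ_w$ (conjugation by $\tau_1$ fixes $\diag(h,h)$, the middle entry of $\tau$ is trivial, and conjugation by $\tau_0$ fixes the scalar $(\det h)^{-1}I_2$); hence $\langle G^\circ_w,\tau\rangle\cong G^\circ_w\times\{1,\tau\}$. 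To see this exhausts $G_w$, take $g=(g_1,g_2,g_3)\in G_w$; then $(g_1,g_3)$ fixes $w_2$, so by Proposition~\ref{prop:S5-regular}(3), $(g_1,g_3)\in G^\circ_{w_2}\rtimes\{1,(\tau_1,\tau_0)\}$, and after replacing $g$ by $g\tau$ if necessary we may assume $g_1=\diag(h_1,h_2)$, $g_3=\diag((\det h_1)^{-1},(\det h_2)^{-1})$. The remaining equation $g_1w_1\,{}^tg_2=w_1$ then reads $\left(\begin{smallmatrix}h_1\tau_0\,{}^tg_2\\ h_2\tau_0\,{}^tg_2\end{smallmatrix}\right)=\left(\begin{smallmatrix}\tau_0\\ \tau_0\end{smallmatrix}\right)$, which forces $h_1=h_2=:h$ and $g_2=\tau_0\,{}^th^{-1}\tau_0$, i.e. $g=\phi(h)\in G^\circ_w$. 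Thus $G_w=G^\circ_w\sqcup G^\circ_w\tau\cong\gl_2\times\Z/2\Z$. The only mildly delicate point is the scheme-theoretic bookkeeping in (2) that $\phi$ is a closed immersion rather than merely bijective on points, but this is dealt with exactly as in the earlier cases; everything else is a short linear-algebra reduction layered on top of the Case~III stabiliser computation, and no hypothesis on $\ch(k)$ is needed.
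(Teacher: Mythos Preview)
Your proof is correct and, for parts (1) and (2), essentially the same as the paper's: the paper also invokes Proposition~\ref{prop:open-orbit} for (1), and for (2) exhibits the same copy of $\gl_2$ inside $G_w$ (parametrised slightly differently as $f(h)=(\diag(\tau_0\,{}^th^{-1}\tau_0,\tau_0\,{}^th^{-1}\tau_0),\,h,\,(\det h)I_2)$, which is your $\phi$ precomposed with $h\mapsto\tau_0\,{}^th^{-1}\tau_0$) and concludes by a dimension count.

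For (3) your route differs from the paper's. The paper argues by first using that all automorphisms of $\spl_2$ are inner to reduce to $g$ centralising $\spl_2\subset G^\circ_w$, then combines this with Proposition~\ref{prop:S5-regular}(3) and a Schur-type step to force $g_1=\diag(\alpha I_2,\beta I_2)$, and finally uses the $w_1$-equation to get $\alpha=\beta$. You instead go straight from Proposition~\ref{prop:S5-regular}(3) to $g_1=\diag(h_1,h_2)$, $g_3=\diag((\det h_1)^{-1},(\det h_2)^{-1})$, and then solve the matrix equation $g_1w_1\,{}^tg_2=w_1$ directly to obtain $h_1=h_2$ and the value of $g_2$. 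Your argument is more elementary (no appeal to the automorphism group of $\spl_2$ or to Schur's lemma) and arguably cleaner; the paper's approach, on the other hand, is the same template used in several other cases (e.g.\ Propositions~\ref{prop:S1-Gw} and \ref{prop:Gw-section8}), so it has the virtue of uniformity.
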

\begin{proof}
(1) Since $\dim {\mathrm T}_{e_G}(G_w)=4=24-20=\dim G-\dim V$, 
Proposition \ref{prop:open-orbit} implies that 
$Gw\sub V$ is Zariski open and that $G_w$ is smooth
over $k$.

(2) If $h\in \gl_2$ we define
\begin{equation*}
f(h)=(\diag(\tau_0 {}^th^{-1}\tau_0 ,\tau_0 {}^th^{-1}\tau_0),
h,(\det h)I_2) \in G.
\end{equation*}
Then $f(\gl_2)$ fixes $w$.  
So $f(\gl_2)\sub G^{\circ}_w$. 
Since $\dim G^{\circ}_w=4$ and $f$ is 
an imbedding, $G^{\circ}_w=f(\gl_2)\cong \gl_2$. 

(3) We may assume that $k=\overline k$. 
Suppose that $g=(g_1,g_2,g_3)\in G_w$. 
Conjugation by $g$ induces an automorphism of 
$\spl_2\sub \gl_2\cong G^{\circ}_w$. 
Since all automorphisms of $\spl_2$ are inner, 
by multiplying an element of $G^{\circ}_w$, 
we may assume that $g$ commutes with elements of $\spl_2$.

By Proposition \ref{prop:S5-regular} (3), 
by multiplying $\tau$ is necessary, we may 
assume that $g_1=\diag(h_1,h_2)$ ($h_1,h_2\in\gl_2$). 
Since $\tau$ commutes with elements of $G^{\circ}_w$, 
we may still assume that $g$ commutes with 
elements of $\spl_2$. Elements of $\spl_2$ 
are imbedded into elements of $G$ of the form 
$(\diag(m,m),*,*)$ where $m\in\spl_2$. So 
$g_1$ must be in the form $\diag(\al I_2,\be I_2)$. 
Since $g$ fixes the first component of $w$, 
$g_2$ is a scalar matrix, say $\gam I_2$. Then  
$\al=\be$ and $\gam=\al^{-1}$. By multiplying an element 
of $G^{\circ}_w$, we may assume that $g_1=I_4,g_2=I_2$. 
Since the last two components of $w$ are linearly independent,
$g_3=I_2$.
\end{proof}

\begin{cor}
\label{cor:S5-regular-sec7}
$(G,V)$ is a regular \pv.  
\end{cor}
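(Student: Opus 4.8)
The plan is to derive regularity directly from Proposition \ref{prop:S5-regular-sec7} together with the general machinery of Section \ref{sec:regularity}. By Proposition \ref{prop:S5-regular-sec7}(1), $Gw \subset V$ is Zariski open, so the orbit $U = Gw$ is an open dense orbit; and by Proposition \ref{prop:S5-regular-sec7}(3) the stabilizer $G_w \cong \gl_2 \times \Z/2\Z$ is a (smooth) reductive group. Thus the hypotheses of Proposition \ref{prop:regularity} are satisfied for $(G,V)$ with the point $w$: there is a Zariski open orbit whose stabilizer is reductive. Proposition \ref{prop:regularity}(3) then immediately gives that there exists a relative invariant polynomial and hence $(G,V)$ is a prehomogeneous vector space; by Definition \ref{defn:regularity} it is a regular prehomogeneous vector space, since it is precisely a prehomogeneous vector space satisfying the condition of Proposition \ref{prop:regularity}. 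That is the entire argument.

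The only thing one should double-check is that $w \in V_k$ is a $k$-rational point — which it manifestly is from the explicit formulas \eqref{eq:sec7-w-defn-sec7} and \eqref{eq:sec7-w-defn-sec4} — so that Proposition \ref{prop:regularity} applies over the ground field $k$ and not merely over $\overline{k}$. One should also note that the reductivity of $G_w$ is genuine: $\gl_2 \times \Z/2\Z$ is reductive in any characteristic, so no hypothesis on $\ch(k)$ is needed here. Consequently the corollary holds for an arbitrary field $k$, consistent with the way Proposition \ref{prop:S5-regular-sec7} was stated.

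There is essentially no obstacle: all the substantive work — the Lie algebra computation in Lemma \ref{lem:section7-Lie-alg-sec7}, the openness of the orbit, the explicit identification of $G^{\circ}_w$ with $f(\gl_2)$, and the argument over $\overline{k}$ that $G_w/G^{\circ}_w = \{1,\tau\}$ — has already been carried out in Proposition \ref{prop:S5-regular-sec7}. The corollary is a one-line bookkeeping consequence, exactly parallel to Corollaries \ref{cor:S3-regular}, \ref{cor:S5-regular}, and \ref{cor:S1-regular} earlier in the paper. If one wanted to be slightly more self-contained, one could add that Proposition \ref{prop:regularity}(1),(2) also yield that $U = Gw$ is affine and that $U_{k^{\sep}} = G_{k^{\sep}}w$ is a single orbit, which is what will be used downstream when computing $G_k \backslash V^{\sst}_k$ via Galois cohomology; but that is not required for the statement of the corollary itself.
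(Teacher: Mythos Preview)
Your proposal is correct and matches the paper's approach exactly: the corollary is stated without proof in the paper, since it follows immediately from Proposition \ref{prop:S5-regular-sec7} (open orbit with reductive stabilizer) together with Definition \ref{defn:regularity}, precisely as you spell out. The additional remarks you include about $k$-rationality of $w$ and characteristic-independence are accurate but go beyond what the paper bothers to record.
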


We construct two relative invariant polynomials. 

We define a map 
$\Phi_1:V\to W^*\otimes L_1\otimes L_2$, 
linear with respect to each of 
$W\otimes L_1,\wedge^2 W\otimes L_2$ 
so that  
\begin{align*}
\Phi_1(b_1\otimes c_1,b_2 \otimes c_2) 
= (b_1\wedge b_2)\otimes c_1\otimes c_2
\in \wedge^3 W \otimes L_1\otimes L_2 
\cong W^* \otimes L_1\otimes L_2
\end{align*}
for 
\begin{math}
b_1\in W,b_2\in \wedge^2 W,c_1\in L_1,c_2\in L_2. 
\end{math}

We put $u_i=\bbmp_{4,i}^*$ for $i=1\ccd 4$ and 
$u=(u_1\ccd u_4)$. 
We identify $W^* \otimes L_1\otimes L_2$ 
with the space of $2\times 2$ matrices with entries  which are 
linear combinations of $u$. 
So we write $\Phi_1(x)=\Phi_1(x)(u)$. 
For $y\in W^* \otimes L_1\otimes L_2$, 
we put $\Phi_2(y)(u)=\det y(u)\in\sym^2 W^*$. 
It is easy to see that
\begin{equation*}
\Phi_1(w)(u) = 
\begin{pmatrix}
u_3 & u_1 \\
-u_4 & -u_2
\end{pmatrix},\;
\Phi_2\circ \Phi_1(w)(u) =u_1u_4-u_2u_3. 
\end{equation*}
Let $P_1(x)$ be the discriminant of $\Phi_2\circ \Phi_1(x)$ 
for $x\in V$. By the above computations, 
$P_1(w)=1$.  Note that $P_1(x)$ is homogeneous of degree $8$ 
with respect to each of $W\otimes L_1,\wedge^2 W\otimes L_2$.  

Let $P_2(x)$ be the degree $4$ polynomial 
of $B(x)\in \wedge^2 W\otimes L_2$
given by Proposition \ref{prop:222-invariant}. 
Then $P_2(w)=1$. 
It is easy to see that 
\begin{equation}
\label{eq:S5-P1P2equivariant}
\begin{aligned}
\Phi_1(gx)(u) & = (\det g_1)g_2 \Phi_1(x)(u {}^tg_1^{-1}) {}^t g_3,\; \\
\Phi_2\circ \Phi_1(gx) & = (\det g_1)^2 (\det g_2)(\det g_3)
\Phi_2\circ \Phi_1(x)(u{}^tg_1^{-1}),\; \\
P_1(gx) & = (\det g_1)^6(\det g_2)^4(\det g_3)^4P_1(gx), \\
P_2(gx) & = (\det g_1)^2(\det g_3)^2P_2(gx).
\end{aligned}
\end{equation}

So we are in the situation of Section \ref{sec:regularity} 
where $m=N=2$. We put $U=\{x\in V\mid P_1(x),P_2(x)\not=0\}$. 
Then by Corollary \ref{cor:reducible-sep-orbit}, 
$U_{k^{\sep}}=G_{k^{\sep}}w$. Therefore, we can use the 
standard argument of Galois cohomology. 

\begin{prop}
\label{prop:S5-orbit-rational}
The map $U_k\ni x \mapsto B(x)\in \wedge^2 W\otimes L_2$ 
induces a bijection
\begin{equation*}
G_k\backslash U_k\cong (\gl_4(k)\times \gl_2(k))
\backslash (\wedge^2 W\otimes L_2)^{\sst}_k
\cong \Ex_2(k). 
\end{equation*}
\end{prop}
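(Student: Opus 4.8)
The plan is to establish the bijection in two steps: first show that the map $x \mapsto B(x)$ sends $U_k$ to $(\wedge^2 W \otimes L_2)^{\sst}_k$ and is $G_k$-orbit-surjective onto it, and second show it is injective on orbits, all via the standard Galois cohomology machinery now available because $(G,V)$ is a regular \pv{} with $U_{k^{\sep}} = G_{k^{\sep}}w$ (Corollary \ref{cor:reducible-sep-orbit} and Proposition \ref{prop:S5-orbit-rational}'s setup). The target \pv{} $(\gl_4 \times \gl_2, \wedge^2 W \otimes L_2)$ is exactly Case III of Section \ref{sec:rational-orbits-general}, for which Proposition \ref{prop:Rational-orbits-3-cases} already gives $(\gl_4(k)\times \gl_2(k)) \backslash (\wedge^2 W \otimes L_2)^{\sst}_k \cong \Ex_2(k)$, with the quadratic form being the discriminant of the Pfaffian $F_{B(x)}(v)$. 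Note that $P_2(x)$ in our present setting is (by construction, via Proposition \ref{prop:222-invariant} applied to $B(x)$) precisely that discriminant, so $P_2(x) \neq 0$ is equivalent to $B(x) \in (\wedge^2 W \otimes L_2)^{\sst}$, and in particular $B(w) = w_2$ maps to the trivial extension.

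First I would record, using (\ref{eq:S5-P1P2equivariant}), that the projection $\pi : V \to \wedge^2 W \otimes L_2$, $x \mapsto B(x)$, is equivariant for the homomorphism $G = \gl_4 \times \gl_2^2 \to \gl_4 \times \gl_2$ that forgets the first $\gl_2$-factor (the one acting on $L_1$); this is immediate from the action formula. Since $U = \{x \mid P_1(x), P_2(x) \neq 0\}$ and $P_2(x) = \Disc F_{B(x)}$, the map $\pi$ carries $U_k$ into $(\wedge^2 W \otimes L_2)^{\sst}_k$. On the level of $k^{\sep}$-points, $U_{k^{\sep}} = G_{k^{\sep}}w$ is a single orbit and $(\wedge^2 W \otimes L_2)^{\sst}_{k^{\sep}} = (\gl_4 \times \gl_2)_{k^{\sep}} w_2$ is a single orbit (Proposition \ref{prop:Rational-orbits-3-cases} via Case III), and $\pi(w) = w_2$, so $\pi$ restricts to a surjection $U_{k^{\sep}} \to (\wedge^2 W \otimes L_2)^{\sst}_{k^{\sep}}$.

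Next I would identify the induced map on rational orbits with a map of pointed cohomology sets. Since $\h^1(k, G) = \{1\}$ (as $G$ is a product of $\gl_n$'s) and likewise $\h^1(k,\gl_4 \times \gl_2) = \{1\}$, Proposition \ref{prop:S5-regular-sec7} gives $G_k \backslash U_k \cong \h^1(k, G_w) \cong \h^1(k, \gl_2 \times \Z/2\Z) \cong \h^1(k, \Z/2\Z)$, which is $\Ex_2(k)$, and similarly $(\gl_4(k) \times \gl_2(k)) \backslash (\wedge^2 W \otimes L_2)^{\sst}_k \cong \h^1(k, (G_w')^{\circ} \rtimes \{1,\tau\})$ where $G_w' = (\gl_2 \times \gl_2) \rtimes \Z/2\Z$ is the stabilizer of $w_2$ from Proposition \ref{prop:S5-regular}(3); this again has $\h^1 = \h^1(k,\Z/2\Z) = \Ex_2(k)$ since the $\gl_2$-parts are cohomologically trivial. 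The map $\pi$ sends the stabilizer $G_w$ into $G_{w_2}'$ (it is $\tau \mapsto (\tau_1,\tau_0)$ up to the forgotten factor), inducing on $\h^1$ the identity on the $\Z/2\Z$-quotient. Concretely: given $x \in U_k$ with $x = g_x w$, $g_x \in G_{k^{\sep}}$, the cocycle $\sigma \mapsto g_x^{-1}g_x^{\sigma}$ lands in $G_w$, and its image under $G_w \to G_{w_2}'$ is the cocycle for $B(x) = \pi(g_x) w_2$; modulo the connected (cohomologically trivial) parts these define the same class in $\Ex_2(k)$, namely the field generated by roots of $F_{B(x)}(v)$. This shows the square commutes and the bottom-right identification is the Case III one.

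The main obstacle I anticipate is verifying cleanly that the two $\Ex_2$-parametrizations match up — i.e. that the connected stabilizer contributions genuinely drop out so that the class attached to $x$ via $G_w$ equals the Case III class attached to $B(x)$ — rather than producing some twist. The cleanest route is to trace a representative: for a separable quadratic $F/k$ one writes down an explicit $g_x \in G_{k^{\sep}}$ with $g_x w \in U_k$ whose first component lies over an explicit element realizing $F$ in Case III (using $\tau = (\tau_1, I_2, \tau_0)$ and $\tau_1 = \begin{pmatrix} 0 & I_2 \\ I_2 & 0 \end{pmatrix}$ as the nontrivial cocycle value), check directly that $\pi(g_x) w_2$ is the Case III representative for $F$, and invoke that both cohomology maps are bijections with matching base points $w \mapsto w_2 \mapsto$ trivial extension. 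Once surjectivity and the commuting square are in place, injectivity is automatic from bijectivity of the two side maps, and the proposition follows.
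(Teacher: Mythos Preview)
Your proposal is correct and follows essentially the same approach as the paper. Both arguments use $\h^1(k,G)=\{1\}$ to identify $G_k\backslash U_k$ with $\h^1(k,G_w)\cong \h^1(k,\Z/2\Z)\cong\Ex_2(k)$, then verify that the equivariant projection $\pi\colon (g_1,g_2,g_3)\mapsto(g_1,g_3)$ sends $\tau=(\tau_1,I_2,\tau_0)$ to $(\tau_1,\tau_0)$, so that the induced map on $\h^1(k,\Z/2\Z)$ is the identity; the paper carries this out by writing down an explicit $g_\alpha$ for each quadratic extension $F$ and computing the cocycle directly, which is precisely the ``trace a representative'' step you flag at the end.
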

\begin{proof}
Since $\h^1(k,G)=\h^1(k,\gl_2)=\{1\}$, 
$G_k\backslash U_k\cong \h^1(k,\Z/2\Z)\cong \Ex_2(k)$.

Suppose that $z^2+a_1z+a_2\in k[z]$ has distinct roots 
$\al=(\al_1,\al_2)$. We put $a=(a_1,a_2),F=k(\al_1)$,  
\begin{equation*}
g_{\al} = \left(
\begin{pmatrix}
I_2 & I_2 \\
-\al_1I_2 & -\al_2I_2
\end{pmatrix},
(\al_1-\al_2)^4I_2,
\begin{pmatrix}
1 & 1 \\
-\al_1 & -\al_2
\end{pmatrix}
\right),\;
x_a=g_{\al}w.
\end{equation*}
Let $\sig\in\gal(F/k)$ 
be the non-trivial element. 
Then $g_{\al}^{\sig}=g_{\al}\tau$ and so $x_a\in U_k$. 
Since $g_{\al}^{-1}g_{\al}^{\sig}=\tau$, 
the element of $\h^1(k,\Z/2\Z)$ which 
corresponds to $x_a$
is determined by a 1-cocycle which 
sends $\sig$ to $\tau$.  

Let $H=\gl_4\times \gl_2$ and 
$W_1=\wedge^2 W\otimes L_2$.
Then $B(w)\in W_1$ is the element $w$ in (\ref{eq:sec7-w-defn-sec4}) 
and $H_{B(w)}/H^{\circ}_{B(w)}$ is represented by 
$\{1,(\tau_1,\tau_0)\}$ (see Proposition \ref{prop:S5-regular}). 

Since $B((g_1,g_2,g_3)x)=(g_1,g_3)B(x)$, 
the map $V\ni x \mapsto B(x)\in W_1$ 
is equivariant with respect to the natural action 
of $H$ on $W_1$. Since the map 
$G\ni (g_1,g_2,g_3)\mapsto (g_1,g_3)\in H$ 
sends $\tau$ to $(\tau_1,\tau_0)$, 
the cohomology class associated with 
$B(x_a)$ is determined by the 1-cocycle 
which sends $\sig$ to the non-trivial element of 
$\Z/2\Z$. Therefore, the map $U_k\ni x\mapsto B(x)\in W_1$ 
induces a bijection of rational orbits.   
\end{proof}

\section{Rational orbits (5)}
\label{sec:rational-orbits-wedge43}

Let $G=\gl_4\times \gl_3$, $W=\aff^4$ and 
(a) $V=\wedge^2 W\otimes \aff^3$, 
(b) $V=\wedge^2 W\otimes \aff^3\oplus W$.
We consider the natural action of $G$ in both 
cases. 

We first consider the case (a). 

We put
\begin{equation}
\label{eq:w-defn-423}
w_1= \begin{pmatrix}
J & 0 \\
0 & 0
\end{pmatrix},\;
w_2= \begin{pmatrix}
0 & J \\
J & 0
\end{pmatrix},\;
w_3=\begin{pmatrix}
0 & 0 \\
0 & J
\end{pmatrix}
\end{equation}
and $w=(w_1,w_2,w_3)$. 

Long but straightforward computations show the following proposition.
We do not provide the proof.
\begin{prop}
\label{prop:section8-Lie-alg}
${\mathrm T}_{e_G}(G_w)$ consists of elements of the form $e_G+\vep(X,Y)$ 
where  
\begin{equation}
\label{eq:A-defn}
\begin{aligned}
X & = \begin{pmatrix}
x_{11} & x_{12} & x_{13} & 0\\
x_{21} & x_{22} & 0 & x_{13} \\
x_{31} & 0 & x_{33} & x_{12} \\
0 & x_{31} & x_{21} & -x_{11}+x_{22}+x_{33}
\end{pmatrix}, \\ 
Y & = \begin{pmatrix}
-x_{11}-x_{22} & -x_{31} & 0 \\
-2x_{13} & -x_{22}-x_{33} & -2x_{31} \\
0 & -x_{13} & x_{11}-x_{22}-2x_{33}
\end{pmatrix}.
\end{aligned}
\end{equation}
\end{prop}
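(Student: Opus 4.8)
The plan is to compute ${\mathrm T}_{e_G}(G_w)$ directly from the infinitesimal form of the action. Recall that $(g_1,g_2)\in\gl_4\times\gl_3$ acts on $x=(x_1,x_2,x_3)\in V$ by $g_2\,{}^t(\wedge^2 g_1 x_1,\wedge^2 g_1 x_2,\wedge^2 g_1 x_3)$, where $\wedge^2 g_1$ sends an alternating matrix $A$ to $g_1 A\,{}^t g_1$. Writing a tangent vector as $e_G+\vep(X,Y)$ with $X=(x_{ij})\in\m_4$ and $Y=(y_{ij})\in\m_3$, one computes $(e_G+\vep(X,Y))w=w+\vep(A_1,A_2,A_3)$, where
\[
A_i \;=\; X w_i + w_i\,{}^t X + \sum_{j=1}^{3} y_{ij}\, w_j \qquad (i=1,2,3).
\]
Hence $e_G+\vep(X,Y)\in{\mathrm T}_{e_G}(G_w)$ if and only if $A_1=A_2=A_3=0$, a homogeneous linear system of $18$ equations (each $A_i$ being a $4\times4$ alternating matrix) in the $25$ unknowns $x_{ij},y_{ij}$. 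Since $\dim G-\dim V=25-18=7$ equals the number of free parameters appearing in (\ref{eq:A-defn}), solving this system should recover exactly (\ref{eq:A-defn}).

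To carry out the solution I would pass to $2\times2$ block form, writing $X=\begin{pmatrix}X_{11}&X_{12}\\X_{21}&X_{22}\end{pmatrix}$ with $X_{ab}\in\m_2$, and exploit two elementary facts about $J$ from (\ref{eq:J-defn}): for any $M\in\m_2$ one has $MJ+J\,{}^tM=(\tr M)J$, and $J\,{}^tM\,J^{-1}$ equals the classical adjoint $\widetilde M$ of $M$. Since $w_1,w_2,w_3$ are assembled from $J$-blocks, each equation $A_i=0$ breaks into four $2\times2$ block equations, and these identities convert every block equation into a linear relation among the $X_{ab}$ and the $y_{ij}$. Concretely: the diagonal blocks of $A_1,A_2,A_3$ give $y_{13}=y_{31}=0$ together with $y_{11}=-\tr X_{11}$, $y_{21}=-\tr X_{12}$, $y_{23}=-\tr X_{21}$, $y_{33}=-\tr X_{22}$; the off-diagonal blocks of $A_1$ and $A_3$ force $X_{12}$ and $X_{21}$ to be scalar matrices (whence parameters $x_{13},x_{31}$ and $y_{12}=-x_{31}$, $y_{32}=-x_{13}$); and the $(1,2)$-block of $A_2$, after right multiplication by $J^{-1}$, yields $X_{11}=-\widetilde{X_{22}}-y_{22}I_2$, which forces $X_{11}$ and $X_{22}$ to share their off-diagonal entries and relates their diagonals. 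Renaming the seven surviving free parameters $x_{11},x_{12},x_{13},x_{21},x_{22},x_{31},x_{33}$ and substituting back gives precisely the matrices in (\ref{eq:A-defn}).

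There is no conceptual difficulty here; the only real work is keeping track of signs and of which block of which $A_i$ produces each relation, so I would lay the three equations out in block form and record the relations in the order above. To finish cleanly I would then verify by direct substitution that the $X,Y$ of (\ref{eq:A-defn}) satisfy $A_1=A_2=A_3=0$ (a short block computation, each block collapsing to $0$ after using $MJ+J\,{}^tM=(\tr M)J$); combined with the count $\dim G-\dim V=7$, which bounds the solution space, this shows the solution set is exactly (\ref{eq:A-defn}). A useful consistency check along the way is that the trace relations among the $y_{ij}$ must all be compatible with the stated formula for $Y$, and that $X_{11}$ and $X_{22}$ indeed share the entries $x_{12},x_{21}$ as forced by the $(1,2)$-block relation.
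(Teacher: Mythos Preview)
Your proposal is correct and carries out precisely the ``long but straightforward computations'' that the paper explicitly omits. The organizational device of passing to $2\times2$ blocks and using the identities $MJ+J\,{}^tM=(\tr M)J$ and $J\,{}^tM\,J^{-1}=\widetilde M$ is a clean way to structure the calculation; the block-by-block analysis you describe recovers exactly the form~(\ref{eq:A-defn}).

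One small point on your closing paragraph: the count $\dim G-\dim V=7$ gives a \emph{lower} bound on $\dim{\mathrm T}_{e_G}(G_w)$ (the differential of $g\mapsto gw$ has image of dimension at most $\dim V$), not an upper bound. So the combination ``back-substitution shows the $7$-parameter family works'' plus ``$\dim G-\dim V=7$'' does not by itself pin the solution space down to exactly~(\ref{eq:A-defn}). The upper bound comes from your middle paragraph, where you actually solve the system and see that only seven parameters survive; that is the real proof, and it is complete. The back-substitution is then a useful check against sign or block errors, but the dimension count adds nothing beyond a consistency check.
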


By Proposition \ref{prop:section8-Lie-alg}, 
$\dim {\mathrm T}_{e_G}(G_w)=7=25-18=\dim G-\dim V$. 
So Proposition \ref{prop:open-orbit} 
implies that the orbit $Gw\sub V$ is Zariski open and 
that $G_w$ is smooth over $k$. Therefore,  
$w$ is universally generic. 

The tensor product of two standard \rep s of $\gl_2$
induces a homomorphism 
$\rho:\gl_2\times \gl_2\to\gl(\aff^2\otimes \aff^2)\cong \gl_4$. 
We show that $G_w^{\circ}$ can be identified with 
the image of $\rho$. 

We identify $\aff^2\otimes \aff^2$ with $\m_2$. 
Then $\rho(g_1,g_2)M = g_1M{}^tg_2$. 
We identify $\m_2$ with $W$ using 
the basis $\{E_{11},E_{21},E_{12},E_{22}\}$ 
(see Section \ref{sec:notation}). 
The matrix \rep s of $\rho(g_1,I_2),\rho(I_2,g_2)$ for 
\begin{math}
g_2=\left(
\begin{smallmatrix}
\al & \be \\
\gam & \del  
\end{smallmatrix}
\right)
\end{math}
with respect to this basis are 
\begin{equation}
\label{eq:rho-defn}
\begin{pmatrix}
g_1 & 0 \\
0 & g_1
\end{pmatrix}, \quad
\begin{pmatrix}
\al I_2 & \be I_2 \\
\gam I_2 & \del I_2
\end{pmatrix}
\end{equation}
respectively. 

$\rho$ induces a \rep{} 
$\wedge^2 \rho:\gl_2\times \gl_2\to \gl(\wedge^2 \aff^4)$.
Since 
\begin{equation}
\label{eq:wedge2-rho}
\begin{aligned}
& \begin{pmatrix}
\al I_2 & \be I_2 \\
\gam I_2 & \del I_2
\end{pmatrix}
\begin{pmatrix}
a J & bJ \\
bJ & c J
\end{pmatrix}
\begin{pmatrix}
\al I_2 & \gam I_2 \\
\be I_2 & \del I_2
\end{pmatrix} \\
& = \begin{pmatrix}
(\al a + \be b)J & (\al b+ \be c)J \\
(\gam a+ \del b)J & (\gam b+ \del c) 
\end{pmatrix}
\begin{pmatrix}
\al I_2 & \gam I_2 \\
\be I_2 & \del I_2
\end{pmatrix} \\
& = \begin{pmatrix}
((\al a+\be b)\al + (\al b+\be c)\be)J
& ((\al a+\be b)\gam + (\al b+\be c)\del)J \\
((\gam a+\del b)\al+ (\gam b+\del c)\be)J 
& ((\gam a+\del b)\gam+ (\gam b+\del c)\del)J 
\end{pmatrix}, 
\end{aligned} 
\end{equation}
elements of the form $(I_2,g_2)$ 
leave the subspace $\lan w_1,w_2,w_3\ran$ 
invariant. Elements of the form $(g_1,I_2)$ 
act on the subspace $\lan w_1,w_2,w_3\ran$ 
by multiplication by $\det g_1$. 

For $g=(g_1,g_2)\in\gl_2\times \gl_2$, 
let $\rho_1(g)=(\rho_1(g)_{ij})\in\gl_3$ be the matrix 
such that 
\begin{equation}
\label{eq:gw-formula}
\begin{pmatrix}
\wedge^2 \rho(g)w_1 & 
\wedge^2 \rho(g)w_2 & 
\wedge^2 \rho(g)w_3 
\end{pmatrix}
= \begin{pmatrix}
w_1 & w_2 & w_3 
\end{pmatrix}
\rho_1(g) 
\end{equation}
(we are treating $\begin{pmatrix}w_1 & w_2 & w_3 \end{pmatrix}$ as a row vector).
It is easy to see that if $g,h\in \gl_2\times \gl_2$ then 
$\rho_1(gh)=\rho_1(g)\rho_1(h)$. 

The proof of the following lemma is similar to those of 
Lemmas \ref{lem:S14-w-fix}, 
\ref{lem:S1-rho-rho1-fix-w} and we do not provide the proof.
\begin{lem}
\label{lem:rho-rho1-fix-w}
Let $\phi=(\rho,{}^t\rho_1^{-1})$. 
Then the image of $\phi$ fixes $w$. 
\end{lem}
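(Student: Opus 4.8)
The plan is to verify directly that $\phi(g)w = w$ for all $g = (g_1,g_2) \in \gl_2\times\gl_2$, using the explicit formulas already set up in the excerpt. Recall that the action of $(h_1,h_2)\in\gl_4\times\gl_3$ on $x = [x_1,x_2,x_3]\in V = \wedge^2\aff^4\otimes\aff^3$ is $h_2(\wedge^2 h_1\, x_1, \wedge^2 h_1\, x_2, \wedge^2 h_1\, x_3)$, treating the triple as a column vector. So for $\phi(g) = (\rho(g), {}^t\rho_1(g)^{-1})$ we have
\begin{align*}
\phi(g)w
&= {}^t\rho_1(g)^{-1}\,{}^t\!\begin{pmatrix}\wedge^2\rho(g)w_1 & \wedge^2\rho(g)w_2 & \wedge^2\rho(g)w_3\end{pmatrix} \\
&= {}^t\rho_1(g)^{-1}\,{}^t\!\left(\begin{pmatrix}w_1 & w_2 & w_3\end{pmatrix}\rho_1(g)\right)
= {}^t\rho_1(g)^{-1}\,{}^t\rho_1(g)\begin{pmatrix}w_1\\ w_2\\ w_3\end{pmatrix}
= w,
\end{align*}
where the second equality is exactly the defining relation \eqref{eq:gw-formula} for $\rho_1(g)$. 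This is the same computational pattern as in the proofs of Lemma \ref{lem:S14-w-fix} and Lemma \ref{lem:S1-rho-rho1-fix-w}, so I would simply note the analogy rather than rewrite it.

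The one genuine point that needs checking is that $\rho_1(g)$ is well-defined and invertible, i.e. that $\wedge^2\rho(g)$ really does preserve the span $\langle w_1,w_2,w_3\rangle$, so that formula \eqref{eq:gw-formula} makes sense. This is where I would lean on the explicit matrix computation \eqref{eq:wedge2-rho}: for $g = (I_2,g_2)$ the displayed $2\times2$-block-of-$J$'s computation shows $\wedge^2\rho(I_2,g_2)$ maps $\langle w_1,w_2,w_3\rangle$ into itself (the resulting matrix is again of the form $\left(\begin{smallmatrix} a'J & b'J \\ b'J & c'J\end{smallmatrix}\right)$), and for $g = (g_1,I_2)$ the remark right after \eqref{eq:wedge2-rho} says $\wedge^2\rho(g_1,I_2)$ acts on that span by the scalar $\det g_1$. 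Since every element of $\gl_2\times\gl_2$ is a product of these two types, $\wedge^2\rho(g)$ preserves $\langle w_1,w_2,w_3\rangle$ for all $g$; invertibility of $\rho_1(g)$ follows because $\rho_1(g)\rho_1(g^{-1}) = \rho_1(e) = I_3$ from the cocycle-type identity $\rho_1(gh) = \rho_1(g)\rho_1(h)$ already recorded in the excerpt.

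The main (and really only) obstacle is purely bookkeeping: making sure the transpose conventions in the $G$-action on $V$ line up with the row-vector convention in \eqref{eq:gw-formula}, so that the cancellation ${}^t\rho_1(g)^{-1}\,{}^t\rho_1(g) = I_3$ is the correct one and not, say, $\rho_1(g)^{-1}\rho_1(g)$ applied on the wrong side. Once the conventions are pinned down, the proof is a three-line formal manipulation identical to the earlier lemmas, and I would present it as such, citing \eqref{eq:gw-formula}, \eqref{eq:wedge2-rho}, and the analogy with Lemmas \ref{lem:S14-w-fix} and \ref{lem:S1-rho-rho1-fix-w}.
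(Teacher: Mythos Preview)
Your proposal is correct and follows exactly the approach the paper intends: the paper does not write out a proof for this lemma but simply says it is similar to Lemmas \ref{lem:S14-w-fix} and \ref{lem:S1-rho-rho1-fix-w}, and your computation reproduces that same formal manipulation using the defining relation \eqref{eq:gw-formula}. Your additional remarks on well-definedness and invertibility of $\rho_1(g)$ via \eqref{eq:wedge2-rho} and the multiplicativity $\rho_1(gh)=\rho_1(g)\rho_1(h)$ are a welcome clarification, but not something the paper spells out here.
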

%

%
\begin{lem}
\label{lem:423-kernel}
$\kernel(\rho)=\{(t_1I_2,t_2I_2)\mid t_1t_2=1\}$.
\end{lem}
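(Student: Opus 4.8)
The statement to prove is Lemma~\ref{lem:423-kernel}: $\kernel(\rho)=\{(t_1I_2,t_2I_2)\mid t_1t_2=1\}$, where $\rho:\gl_2\times\gl_2\to\gl(\aff^2\otimes\aff^2)\cong\gl_4$ is the tensor product of the two standard representations, realized concretely by $\rho(g_1,g_2)M=g_1M\,{}^tg_2$ on $\m_2$.

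My plan is as follows. First I would verify the easy inclusion: if $g_1=t_1I_2$ and $g_2=t_2I_2$ with $t_1t_2=1$, then $\rho(g_1,g_2)M=t_1t_2\,M=M$ for all $M\in\m_2$, so $(g_1,g_2)\in\kernel(\rho)$. This gives ``$\supseteq$''.

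For the reverse inclusion, suppose $(g_1,g_2)\in\kernel(\rho)$, i.e. $g_1M\,{}^tg_2=M$ for every $M\in\m_2$. Taking $M=I_2$ gives $g_1\,{}^tg_2=I_2$, so ${}^tg_2=g_1^{-1}$, i.e. $g_2={}^tg_1^{-1}$. Substituting back, the condition becomes $g_1Mg_1^{-1}=M$ for all $M\in\m_2$, which says $g_1$ is central in the algebra $\m_2$; hence $g_1=t_1I_2$ for some $t_1\in\gl_1$. Then $g_2={}^tg_1^{-1}=t_1^{-1}I_2$, so setting $t_2=t_1^{-1}$ we have $(g_1,g_2)=(t_1I_2,t_2I_2)$ with $t_1t_2=1$, as desired. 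Alternatively one can argue via the matrix representations in \eqref{eq:rho-defn}: $\rho(g_1,I_2)$ is $\diag(g_1,g_1)$ and $\rho(I_2,g_2)$ has the block form with blocks $\al I_2,\be I_2,\gam I_2,\del I_2$; for a product of such matrices to equal $I_4$ one reads off directly that $g_1$ must be scalar, then that $g_2$ must be the reciprocal scalar.

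There is essentially no obstacle here: the lemma is a direct computation, and the only mild point to be careful about is that the argument does not depend on $\ch(k)$ (it does not — $I_2$ is always an invertible element and Schur-type centrality of $\m_2$ holds over any field). I would keep the proof to a few lines, using the substitution $M=I_2$ to pin down $g_2$ in terms of $g_1$ and then invoking centrality of scalar matrices in $\m_2$.
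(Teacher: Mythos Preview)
Your proof is correct and essentially matches the paper's approach: the paper reads off $\be=\gam=0$, $\al=\del$, and $g_1$ scalar directly from the block matrix form in \eqref{eq:rho-defn} (your ``alternative'' route), while your primary argument via $M=I_2$ and centrality of $\m_2$ is an equally short variant of the same elementary computation.
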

\begin{proof}
Clearly, 
$\kernel(\rho)\supset \{(t_1I_2,t_2I_2)\mid t_1t_2=1\}$.

Suppose that $(g_1,g_2)\in \kernel(\rho)$. 
If 
\begin{math}
g_2= \left(
\begin{smallmatrix}
\al & \be \\
\gam & \del 
\end{smallmatrix}
\right)
\end{math}
then 
\begin{equation*}
\rho(g_1,g_2)
= \begin{pmatrix}
\al g_1 & \be g_1 \\
\gam g_1 & \del g_1 
\end{pmatrix} = I_4.
\end{equation*}
So $\be=\gam=0$, $\al=\del$ and 
$g_1$ is a scalar matrix. Therefore, 
there exist $t_1,t_2\in\gl_1$ such that
$(g_1,g_2)=(t_1I_2,t_2I_2)$. 
Then it is easy to see that $t_1t_2=1$. 
\end{proof}

\begin{prop}
\label{prop:kernel-same}
\begin{itemize}
\item[(1)]
$\kernel(\phi)=\kernel(\rho)$.
\item[(2)]
$G^{\circ}_w = \im(\phi)\cong \im(\rho)\cong (\gl_2\times \gl_2)/\kernel(\rho)$. 
\end{itemize}
\end{prop}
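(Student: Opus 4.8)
The plan is to prove (1) first and then deduce (2) by a dimension count together with the general structure results already established. For (1), the inclusion $\kernel(\rho)\subset \kernel(\phi)$ is immediate: if $(g_1,g_2)\in\kernel(\rho)$ then $\rho(g_1,g_2)=I_4$, hence $\wedge^2\rho(g_1,g_2)=I_6$, so by (\ref{eq:gw-formula}) we get $\rho_1(g_1,g_2)=I_3$ and therefore ${}^t\rho_1(g_1,g_2)^{-1}=I_3$; thus $\phi(g_1,g_2)=(\rho(g_1,g_2),{}^t\rho_1(g_1,g_2)^{-1})=(I_4,I_3)=e_G$. For the reverse inclusion, suppose $\phi(g_1,g_2)=e_G$. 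Then the first component $\rho(g_1,g_2)=I_4$, so by definition $(g_1,g_2)\in\kernel(\rho)$. Hence $\kernel(\phi)=\kernel(\rho)$, and by Lemma \ref{lem:423-kernel} this common kernel is $\{(t_1I_2,t_2I_2)\mid t_1t_2=1\}$, a one-dimensional torus.

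For (2), first note that $\phi$ factors through $(\gl_2\times\gl_2)/\kernel(\rho)$, and since $\kernel(\phi)=\kernel(\rho)$ the induced map $(\gl_2\times\gl_2)/\kernel(\rho)\to G$ is injective on geometric points; I would check it is a closed immersion (or at least that its image is a smooth subgroup of the expected dimension, which suffices here). Because $\dim(\gl_2\times\gl_2)=8$ and $\dim\kernel(\rho)=1$, we get $\dim\im(\phi)=7$. By Lemma \ref{lem:rho-rho1-fix-w} the image of $\phi$ is contained in $G_w$, hence $\im(\phi)^\circ\subseteq G^\circ_w$. On the other hand, from Proposition \ref{prop:section8-Lie-alg} we have $\dim {\mathrm T}_{e_G}(G_w)=7$, and by the discussion following that proposition $G_w$ is smooth over $k$, so $\dim G^\circ_w=7$. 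Since $\im(\phi)$ is irreducible (being the image of the irreducible group $\gl_2\times\gl_2$) and has the same dimension $7$ as $G^\circ_w$, and is a closed subgroup of the smooth connected group $G^\circ_w$, we conclude $\im(\phi)=G^\circ_w$. The remaining isomorphisms $\im(\phi)\cong\im(\rho)\cong(\gl_2\times\gl_2)/\kernel(\rho)$ follow because $\rho$ and $\phi$ have the same kernel: $\im(\phi)$ is the image of $\phi$, which is isomorphic to the quotient of $\gl_2\times\gl_2$ by $\kernel(\phi)=\kernel(\rho)$, and likewise $\im(\rho)\cong(\gl_2\times\gl_2)/\kernel(\rho)$.

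The step I expect to require the most care is verifying that $\phi$ (equivalently the induced map on the quotient) is a genuine closed immersion of group schemes rather than merely injective on $\overline k$-points — in particular that no infinitesimal identification occurs. Concretely one wants $\ker(d\phi_{e})=\mathrm{Lie}(\kernel(\rho))$, which can be read off from the explicit formulas (\ref{eq:rho-defn}), (\ref{eq:wedge2-rho}) for the differential of $\rho$ and $\rho_1$; since $\mathrm{char}\,k\neq 2$ is in force in this section, the torus $\kernel(\rho)$ is smooth and the quotient is well-behaved, so this is bookkeeping rather than a real obstacle. Once that is in hand, the identification of $G^\circ_w$ with $\im(\phi)$ is forced by the matching dimension $7$ and the smoothness of $G_w$, and the stated isomorphisms are formal. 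Alternatively, if one prefers to avoid the closed-immersion discussion, one can argue entirely at the level of $\overline k$-points and Lie algebras: $\mathrm{Lie}(\im\phi)$ has dimension $8-1=7$ and lies inside $\mathrm{Lie}(G_w)$ which also has dimension $7$, so they coincide, and since both $\im(\phi)$ and $G^\circ_w$ are connected with the same Lie algebra and $G_w$ is smooth, $\im(\phi)=G^\circ_w$.
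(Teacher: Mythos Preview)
Your proof is correct and follows essentially the same approach as the paper: for (1) you use that $\rho$ is the first component of $\phi$ for one inclusion and $\rho(g)=I_4\Rightarrow\wedge^2\rho(g)w_i=w_i\Rightarrow\rho_1(g)=I_3$ for the other, and for (2) you match $\dim\im(\phi)=8-1=7$ with $\dim G^\circ_w=7$ together with the containment from Lemma \ref{lem:rho-rho1-fix-w}. The paper's proof is simply more terse and does not dwell on the scheme-theoretic points you flag; note that no $\mathrm{char}(k)\neq 2$ hypothesis is actually in force here, but since $\kernel(\rho)\cong\gl_1$ is smooth in every characteristic this does not affect your argument.
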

\begin{proof}
(1) Clearly, $\kernel(\phi)\sub \kernel(\rho)$.
Suppose that $g=(g_1,g_2)\in \kernel(\rho)$. 
Since $\rho(g)=I_4$, 
$\wedge^2 \rho(g) w_i=w_i$ for $i=1,2,3$. 
Therefore, $\rho_1(g)=I_3$.

(2) By Lemma \ref{lem:423-kernel}, 
$\dim \im(\phi)=7=\dim G^{\circ}_w$. 
So $G^{\circ}_w = \im(\phi)$. 
\end{proof}

We shall prove later that $G_w=G^{\circ}_w$ 
after we construct an equivariant map from $V$ to 
$\sym^2 W$.

\begin{cor}
\label{cor:S9a-regular}
$(G,V)$ is a regular \pv{}.
\end{cor}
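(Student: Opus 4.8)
The plan is to deduce regularity from the structure we have already established. By Proposition~\ref{prop:kernel-same}, $G^{\circ}_w$ is isomorphic to $(\gl_2\times\gl_2)/\kernel(\rho)$, and since $\kernel(\rho)=\{(t_1I_2,t_2I_2)\mid t_1t_2=1\}\cong\gl_1$ is a central torus, the quotient $(\gl_2\times\gl_2)/\kernel(\rho)$ is a connected reductive group. Thus $G^{\circ}_w$ is reductive. To conclude that $(G,V)$ is a regular prehomogeneous vector space via Definition~\ref{defn:regularity} and Proposition~\ref{prop:regularity}, I need to know that the full stabilizer $G_w$ — not merely its identity component — is reductive, i.e. that the group scheme $G_w$ is smooth and $G_w$ is reductive as a (possibly disconnected) algebraic group. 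Smoothness of $G_w$ over $k$ already follows from Proposition~\ref{prop:open-orbit}, which was invoked right after Proposition~\ref{prop:section8-Lie-alg} using $\dim{\mathrm T}_{e_G}(G_w)=7=\dim G-\dim V$. Since $Gw\subset V$ is Zariski open, Proposition~\ref{prop:regularity} applies once reductivity of $G_w$ is in hand.

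The one gap is that the excerpt only identifies $G^{\circ}_w$, postponing the computation of the component group $G_w/G^{\circ}_w$ to ``later'' (as the text following Proposition~\ref{prop:kernel-same} explicitly says). So the honest route is: first observe $G^{\circ}_w$ is reductive (as above); then note that a smooth group scheme over $k$ with reductive identity component and finite component group is reductive — reductivity is a property of the identity component together with the fact that the unipotent radical is connected, hence contained in $G^{\circ}_w$, hence trivial. Therefore $G_w$ is reductive regardless of what its component group turns out to be. This makes the corollary independent of the deferred computation of $G_w=G^{\circ}_w$.

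Concretely, the steps I would carry out are: \textbf{(1)} recall from Proposition~\ref{prop:open-orbit} (applied via Proposition~\ref{prop:section8-Lie-alg}) that $Gw$ is Zariski open in $V$ and $G_w$ is a smooth group scheme over $k$; \textbf{(2)} recall from Proposition~\ref{prop:kernel-same}(2) that $G^{\circ}_w\cong(\gl_2\times\gl_2)/\kernel(\rho)$, and from Lemma~\ref{lem:423-kernel} that $\kernel(\rho)$ is a one-dimensional central torus, so that $G^{\circ}_w$ is a connected reductive group; \textbf{(3)} observe that the unipotent radical of $G_w$, being connected, lies in $G^{\circ}_w$, where it must be trivial; hence $G_w$ is reductive; \textbf{(4)} invoke Proposition~\ref{prop:regularity} together with Definition~\ref{defn:regularity}: since $Gw$ is Zariski open and $G_w$ is reductive (and smooth), $(G,V)$ is a regular prehomogeneous vector space.

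I expect no real obstacle here — this is a short deduction from results already proved. The only point requiring a modicum of care is step (3): making sure that reductivity of $G_w$ is genuinely a consequence of reductivity of $G^{\circ}_w$ plus smoothness, which is standard since the unipotent radical $R_u(G_w)$ is connected and hence contained in $G^{\circ}_w$, where $R_u(G_w)\subseteq R_u(G^{\circ}_w)=\{e\}$. Alternatively, if one prefers to wait, one could cite the (later-proved) fact $G_w=G^{\circ}_w$ and conclude immediately, but the argument above shows the corollary does not logically depend on that.
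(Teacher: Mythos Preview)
Your proposal is correct and matches the paper's intent. The paper states the corollary immediately after Proposition~\ref{prop:kernel-same} without explicit proof, having only identified $G^{\circ}_w\cong(\gl_2\times\gl_2)/\kernel(\rho)$ and deferred the computation of $G_w/G^{\circ}_w$; your argument that reductivity of $G^{\circ}_w$ already forces the (connected) unipotent radical of $G_w$ to be trivial is exactly the step that makes the corollary independent of that deferred computation, and is the natural way to read the paper here.
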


We construct an equivariant map $V\to \sym^2 W$. 

Let $\{\bbmp_{4,1}\ccd \bbmp_{4,4}\}$ be 
the standard basis of $W$ 
(this basis corresponds to the basis $\{E_{11},E_{21},E_{12},E_{22}\}$ of $\m_2$). 
We define a linear map $\iota:\wedge^2 W\to W^{2\otimes}$ 
so that $\iota(a_1\wedge a_2)=a_1\otimes a_2-a_2\otimes a_1$. 
This map is clearly well-defined. 
We regard $\sym^2 W$ as the space of quadratic forms on $W^*$. 
We identify $\wedge^4 W\cong \aff^1$ so that 
$p_{4,1234}$ corresponds to $1$.  We define a linear map
$\Phi_1:W^{6\otimes}\to \sym^2 W$ so that 
\begin{equation*}
\Phi_1(a_1\otimes a_2\otimes \cdots \otimes a_6)
= (a_1\wedge a_2\wedge a_3\wedge a_5)a_4a_6.
\end{equation*}
We define a map  
$\Phi:V=\wedge^2 W\otimes \aff^3\to \sym^2 W$ 
so that 
\begin{equation*}
\Phi(x_1,x_2,x_3)
= \frac 1 {12}\Phi_1\left(
\sum_{\sig\in\gS_3} \sgn(\sig)\iota(x_{\sig(1)})\otimes \iota(x_{\sig(2)})
\otimes \iota(x_{\sig(3)})
\right).
\end{equation*}
Note that $12\Phi$ is defined over $\Z$. 
We shall show that $12\Phi(w)$ is divisible by $12$ over $\Z$.
Then, since $w$ is universally generic (see Definition \ref{defn:PV-alternative}), 
$\Phi$ itself is defined over $\Z$ by Proposition \ref{prop:universally-generic}
and so $\Phi$ is defined over any field.  

It is easy to see that 
\begin{equation}
\label{eq:Phi-equivariant-explicit}
\Phi((g_1,g_2)x) = (\det g_1)(\det g_2) (\sym^2 g_1)\Phi(x)
.\end{equation}
%

In the following, we need notations for elements of 
$W^{2\otimes},W^{6\otimes},\sym^2W$. 
We denote $\bbmp_{4,i_1}\otimes \cdots \otimes \bbmp_{4,i_m}$ 
by $q_{i_1\cdots i_m}$ for $m=2,6$, $i_1\ccd i_m=1\ccd 4$ 
and $\bbmp_{4,i}\bbmp_{4,j}$ by $v_iv_j$ in $\sym^2 W$ for $i,j=1\ccd 4$.   
\begin{prop}
\label{prop:wedge2-3-equivariant-sym24}
$\Phi(w) = v_1v_4-v_2v_3$. 
\end{prop}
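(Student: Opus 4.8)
The plan is to compute $\Phi(w)$ directly from its definition, exploiting the very special block structure of $w_1,w_2,w_3$ in (\ref{eq:w-defn-423}). First I would unwind the symmetrization: since $\Phi(x_1,x_2,x_3)=\tfrac{1}{12}\Phi_1\!\left(\sum_{\sigma\in\gS_3}\sgn(\sigma)\,\iota(x_{\sigma(1)})\otimes\iota(x_{\sigma(2)})\otimes\iota(x_{\sigma(3)})\right)$, evaluating at $w$ requires the six tensors $\iota(w_1),\iota(w_2),\iota(w_3)\in W^{2\otimes}$. Using the basis $\{\bbmp_{4,1},\ldots,\bbmp_{4,4}\}$ (identified with $\{E_{11},E_{21},E_{12},E_{22}\}$ of $\m_2$), I would read off from (\ref{eq:w-defn-423}) that $w_1$ corresponds to $\bbmp_{4,1}\wedge\bbmp_{4,2}$, $w_3$ to $\bbmp_{4,3}\wedge\bbmp_{4,4}$, and $w_2$ to $\bbmp_{4,1}\wedge\bbmp_{4,4}-\bbmp_{4,2}\wedge\bbmp_{4,3}$ (up to fixing signs against the stated $J$-blocks). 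Hence $\iota(w_1)=q_{12}-q_{21}$, $\iota(w_3)=q_{34}-q_{43}$, and $\iota(w_2)=q_{14}-q_{41}-q_{23}+q_{32}$.

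Next I would feed the resulting sum of $6$-fold tensors into $\Phi_1$, which sends $\bbmp_{4,i_1}\otimes\cdots\otimes\bbmp_{4,i_6}\mapsto (\bbmp_{4,i_1}\wedge\bbmp_{4,i_2}\wedge\bbmp_{4,i_3}\wedge\bbmp_{4,i_5})\,\bbmp_{4,i_4}\bbmp_{4,i_6}$, with $\wedge^4 W\cong\aff^1$ normalized by $p_{4,1234}\mapsto 1$. The key simplification is that a term survives only when $\{i_1,i_2,i_3,i_5\}=\{1,2,3,4\}$, so for each of the six permutations $\sigma$ and each choice of one basis monomial from each $\iota(w_{\sigma(j)})$, I must check which of the $1,2,3,5$-slots carry a full set of distinct indices. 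Because $\iota(w_1)$ lives in indices $\{1,2\}$, $\iota(w_3)$ in $\{3,4\}$, and $\iota(w_2)$ in $\{1,4\}$ or $\{2,3\}$, the combinatorics is quite constrained; I expect only a modest number of surviving terms, each contributing $\pm v_iv_j$ with $\{i,j\}=\{1,4\}$ or $\{2,3\}$ (the slots $4$ and $6$). Summing with the $\sgn(\sigma)$ weights and the prefactor $\tfrac{1}{12}$, the claim is that everything collapses to $v_1v_4-v_2v_3$; in particular the total before dividing is divisible by $12$, which is exactly the integrality statement needed to invoke Proposition \ref{prop:universally-generic}.

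The main obstacle will be sign bookkeeping: I must be careful about (i) the signs implicit in identifying $w_1,w_2,w_3$ with wedge products of the $\bbmp_{4,i}$ given the matrix forms in (\ref{eq:w-defn-423}) (the $J$-blocks encode antisymmetry, and the convention $p_{4,234},p_{4,134},p_{4,124},p_{4,123}\leftrightarrow\bbmp^*_1,-\bbmp^*_2,\bbmp^*_3,-\bbmp^*_4$ does not directly enter but a parallel normalization does), (ii) the $\sgn(\sigma)$ factors over $\gS_3$, (iii) the $q_{ij}-q_{ji}$ antisymmetrization inside each $\iota(w_\ell)$, and (iv) the reordering sign when passing $\bbmp_{4,i_1}\wedge\bbmp_{4,i_2}\wedge\bbmp_{4,i_3}\wedge\bbmp_{4,i_5}$ to $p_{4,1234}$. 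A clean way to control all of this is to observe that $\Phi$ is $G$-equivariant by (\ref{eq:Phi-equivariant-explicit}) and that $w$ is fixed (up to scalars) by the image of $\phi$ from Lemma \ref{lem:rho-rho1-fix-w}; thus $\Phi(w)$ must be a $\rho(\gl_2\times\gl_2)$-semi-invariant in $\sym^2 W\cong\sym^2(\m_2)$, and the only such quadratic form (up to scalar) is the determinant, i.e.\ $v_1v_4-v_2v_3$ in our coordinates. This reduces the computation to pinning down a single scalar, which I would then fix by evaluating one convenient coefficient — say the coefficient of $v_1v_4$ — of $12\Phi(w)$, verifying it equals $12$.
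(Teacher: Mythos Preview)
Your proposal is correct and the primary approach---directly expanding $\sum_{\sigma}\sgn(\sigma)\,\iota(w_{\sigma(1)})\otimes\iota(w_{\sigma(2)})\otimes\iota(w_{\sigma(3)})$ and applying $\Phi_1$ term by term---is exactly what the paper does; the paper simply writes out the six tensor lines (with the same identifications $\iota(w_1)=q_{12}-q_{21}$, $\iota(w_2)=q_{14}-q_{41}-q_{23}+q_{32}$, $\iota(w_3)=q_{34}-q_{43}$ you found) and then asserts $12\Phi(w)=12(v_1v_4-v_2v_3)$ without further detail.

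Your equivariance shortcut is an addition not present in the paper and is worth keeping: since $G_w^\circ=\im(\phi)$ fixes $w$ and (\ref{eq:Phi-equivariant-explicit}) gives $(\sym^2\rho(h_1,h_2))\Phi(w)=\chi(h_1,h_2)\Phi(w)$ for some character $\chi$, the vector $\Phi(w)$ spans a one-dimensional $\gl_2\times\gl_2$-subrepresentation of $\sym^2(\aff^2\otimes\aff^2)$, and over $\Q$ the only such line is $\wedge^2\aff^2\otimes\wedge^2\aff^2$, i.e.\ the determinant form $v_1v_4-v_2v_3$. This genuinely reduces the sign bookkeeping to computing a single coefficient (and, as you note, the integrality of the remaining scalar then feeds into Proposition \ref{prop:universally-generic}). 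The paper instead relies on brute force; your route trades most of the combinatorics for a representation-theoretic observation, at the cost of needing characteristic zero for the decomposition---which is harmless here since the computation is carried out over $\Z$ first.
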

\begin{proof}
By definition, 
\begin{align*}
& \sum_{\sig\in\gS_3} \sgn(\sig)\iota(w_{\sig(1)})\otimes \iota(w_{\sig(2)})
\otimes \iota(w_{\sig(3)}) \\
& = (q_{12}-q_{21})\otimes (q_{14}-q_{41}-q_{23}+q_{32})\otimes (q_{34}-q_{43}) \\
& \quad - (q_{12}-q_{21})\otimes (q_{34}-q_{43})\otimes (q_{14}-q_{41}-q_{23}+q_{32}) \\
& \quad - (q_{14}-q_{41}-q_{23}+q_{32})\otimes (q_{12}-q_{21})\otimes (q_{34}-q_{43}) \\
& \quad + (q_{14}-q_{41}-q_{23}+q_{32})\otimes (q_{34}-q_{43})\otimes (q_{12}-q_{21}) \\
& \quad + (q_{34}-q_{43})\otimes (q_{12}-q_{21})\otimes (q_{14}-q_{41}-q_{23}+q_{32}) \\
& \quad - (q_{34}-q_{43})\otimes (q_{14}-q_{41}-q_{23}+q_{32})\otimes (q_{12}-q_{21}).
\end{align*}
Expanding the tensor product and 
applying $\Phi_1$ to each term, 
we obtain $12\Phi(w)=12(v_1v_4-v_2v_3)$. 
We do not provide further details. 
\end{proof}

Let $P(x)$ be the discriminant of $\Phi(x)$. 
Then $P(w)=1$. The following lemma follows from 
(\ref{eq:Phi-equivariant-explicit}).
\begin{lem}
\label{lem:relative-invariant-423}
$P(x)$ is a non-zero homogeneous polynomial 
of degree $12$ and if $g=(g_1,g_2)\in G$ then 
$P(gx)=(\det g_1)^6(\det g_2)^4\Phi(x)$. 
\end{lem}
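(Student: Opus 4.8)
The final statement to prove is Lemma~\ref{lem:relative-invariant-423}: that $P(x)$, defined as the discriminant of the quadratic form $\Phi(x) \in \sym^2 W$, is a non-zero homogeneous polynomial of degree $12$ satisfying $P(gx) = (\det g_1)^6(\det g_2)^4 P(x)$.

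The plan is as follows. First I would establish the equivariance. By Proposition~\ref{prop:wedge2-3-equivariant-sym24} together with equation~(\ref{eq:Phi-equivariant-explicit}), we know $\Phi((g_1,g_2)x) = (\det g_1)(\det g_2)(\sym^2 g_1)\Phi(x)$. The discriminant of a quadratic form in $4$ variables is a polynomial of degree $4$ in the coefficients of the form, and it is a classical fact (a relative invariant of $\gl_4$ acting on $\sym^2 W$) that if $q \mapsto (\sym^2 g_1) q$ then $\Disc(q) \mapsto (\det g_1)^2 \Disc(q)$. Composing these two scalings: replacing $x$ by $gx$ multiplies $\Phi(x)$ by the scalar $(\det g_1)(\det g_2)$ and then applies $\sym^2 g_1$, so the discriminant picks up a factor $((\det g_1)(\det g_2))^4$ from the overall scalar and a factor $(\det g_1)^2$ from the $\sym^2 g_1$ action, giving $(\det g_1)^{4+2}(\det g_2)^4 = (\det g_1)^6(\det g_2)^4$. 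That handles the transformation law.

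Next, homogeneity and non-vanishing. Since $\Phi$ is linear (in fact the formula defining $\Phi(x_1,x_2,x_3)$ is visibly trilinear, hence homogeneous of degree $3$) as a map $V \to \sym^2 W$, and the discriminant is homogeneous of degree $4$ on $\sym^2 W$, the composite $P = \Disc \circ \Phi$ is homogeneous of degree $3 \cdot 4 = 12$ on $V$. For non-vanishing it suffices to exhibit one point where $P$ is nonzero: by Proposition~\ref{prop:wedge2-3-equivariant-sym24}, $\Phi(w) = v_1 v_4 - v_2 v_3$, which is a non-degenerate quadratic form in $4$ variables, so its discriminant is a nonzero constant; I would normalize so that $P(w) = 1$ (this is the assertion ``$P(w)=1$'' already stated just before the lemma, which follows from the standard normalization of the discriminant of $v_1v_4 - v_2v_3$). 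Hence $P \not\equiv 0$.

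Finally, a small bookkeeping point I would address for completeness: one should confirm that $P(x) \in k[V]$ genuinely, i.e. that $\Phi$ itself (not merely $12\Phi$) is defined over $k$, so that $P = \Disc\circ\Phi$ has coefficients in $k$. This was arranged in the discussion preceding Proposition~\ref{prop:wedge2-3-equivariant-sym24}: since $w$ is universally generic and $12\Phi(w) = 12(v_1v_4-v_2v_3)$ is divisible by $12$ over $\Z$, Proposition~\ref{prop:universally-generic} shows $\Phi$ is defined over $\Z$, hence over any field. I do not expect any real obstacle here; the only mildly delicate point is getting the exponents in the transformation law right, which comes down to correctly combining the degree-$4$ homogeneity of the discriminant with its own $\gl_4$-equivariance factor $(\det g_1)^2$ — and I would double-check that equivariance factor by testing on $g_1 = \diag(t,1,1,1)$ against the explicit form $v_1v_4 - v_2v_3$ if needed.
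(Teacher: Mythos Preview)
Your proposal is correct and takes essentially the same approach as the paper, which simply states that the lemma follows from equation~(\ref{eq:Phi-equivariant-explicit}). You have correctly spelled out the two ingredients left implicit there: the degree-$4$ homogeneity of the discriminant on $\sym^2 W$ and its $\gl_4$-equivariance factor $(\det g_1)^2$, which together with the scalar factor $(\det g_1)(\det g_2)$ from~(\ref{eq:Phi-equivariant-explicit}) give $(\det g_1)^6(\det g_2)^4$; the degree count $3\cdot 4=12$ and the non-vanishing via $P(w)=1$ are exactly as the paper intends.
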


We define an action of 
$(t,g_1)\in H=\gl_1\times \gl_4$ on $\sym^2 W$ by 
$\sym^2 W\ni Q\mapsto t(\sym^2 g_1)Q\in\sym^2 W$
where 
$t(\sym^2 g_1)Q$ is the scalar multiplication by $t$
to $(\sym^2 g_1)Q$.  
We define a homomorphism $f:G\to H$ 
by $f(g_1,g_2)=((\det g_1)(\det g_2),g_1)$. 
Then $\Phi((g_1,g_2)x)=f(g_1,g_2)\Phi(x)$.   
Therefore, 
\begin{math}
f(G^{\circ}_w)\sub H^{\circ}_{\Phi(w)}. 
\end{math}

Let 
\begin{equation}
\label{eq:tau-defn}
\tau = 
\begin{pmatrix}
1 & 0 & 0 & 0  \\
0 & 0 & 1 & 0 \\
0 & 1 & 0 & 0 \\
0 & 0 & 0 & 1 
\end{pmatrix}.
\end{equation}
\begin{prop}
\label{prop:Gw-section8}
$G_w=G^{\circ}_w$. Moreover, $f$ induces an isomorphism 
$G_w=G^{\circ}_w\cong H^{\circ}_{\Phi(w)}$. 
\end{prop}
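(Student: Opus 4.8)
The plan is to establish two things: first, that $G_w = G^{\circ}_w$, i.e.\ there are no extra connected components; and second, that the homomorphism $f$ restricts to an isomorphism $G^{\circ}_w \cong H^{\circ}_{\Phi(w)}$. I would handle these in the reverse order, since computing $H^{\circ}_{\Phi(w)}$ first gives the dimension count and the structure needed to pin down $G_w$.

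First I would identify $H^{\circ}_{\Phi(w)}$ explicitly. Here $H = \gl_1 \times \gl_4$ acts on $Q \in \sym^2 W$ by $(t,g_1)Q = t(\sym^2 g_1)Q$, and $\Phi(w) = v_1v_4 - v_2v_3 = Q_0$ is the nondegenerate quadratic form in four variables. The stabilizer of $Q_0$ consists of pairs $(t,g_1)$ with $t(\sym^2 g_1)Q_0 = Q_0$; since $\sym^2 g_1$ scales $Q_0$ by a unit precisely when $g_1 \in \gorth(Q_0)$ up to a similitude, one sees $H^{\circ}_{\Phi(w)} \cong \{(t,g_1) : g_1 \in \gorth(Q_0), \ t \cdot (\text{similitude factor of } g_1) = 1\}^{\circ} \cong \gorth(Q_0)^{\circ} = \text{GO}(Q_0)^{\circ}$, which has dimension $7$. (This is exactly why the present paper singles out $\mathrm{QF}_4$ and $\mathrm{IQF}_4$.) So $\dim H^{\circ}_{\Phi(w)} = 7 = \dim G^{\circ}_w$.

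Next I would show $f : G^{\circ}_w \to H^{\circ}_{\Phi(w)}$ is an isomorphism. We already have $f(G^{\circ}_w) \subset H^{\circ}_{\Phi(w)}$ and both sides are connected of dimension $7$, so it suffices to check that $\kernel(f) \cap G^{\circ}_w$ is trivial and that $f$ restricted to $G^{\circ}_w$ is smooth (or just compute the kernel scheme-theoretically as in the proof of Proposition~\ref{prop:S1-Gw}). An element $(g_1,g_2) \in G^{\circ}_w = \im(\phi)$ lies in $\kernel(f)$ iff $g_1 = I_4$ and $(\det g_1)(\det g_2) = 1$, forcing $g_1 = I_4$; but $G^{\circ}_w = \im(\rho, {}^t\rho_1^{-1})$ and $\rho(g_1',g_2') = I_4$ with $(g_1',g_2')$ a lift forces $(g_1',g_2') \in \kernel(\rho)$ by Lemma~\ref{lem:423-kernel}, hence the corresponding element of $G^{\circ}_w$ is the identity. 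So $f|_{G^{\circ}_w}$ is injective on points; combined with the dimension equality and smoothness of the fibers (Jacobian criterion, as in Proposition~\ref{prop:S1-Gw}), $f$ induces an isomorphism $G^{\circ}_w \xrightarrow{\ \sim\ } H^{\circ}_{\Phi(w)}$.

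Finally, to show $G_w = G^{\circ}_w$, I would take $k = \overline{k}$ and let $g = (g_1,g_2) \in G_w$. Then $f(g) \in H_{\Phi(w)}$, and one checks $H_{\Phi(w)} = H^{\circ}_{\Phi(w)}$ because $\text{GO}(Q_0)$ in four variables has its disconnectedness coming only from $\orth(Q_0)$ versus $\sorth(Q_0)$, and on the $\gl_1$-extended group the extra component is realized inside $H^{\circ}_{\Phi(w)}$ (here one uses the matrix $\tau$ of \eqref{eq:tau-defn}, whose image under $\rho^{-1}$-type considerations already lies in $G^{\circ}_w$ — equivalently $(\tau, \ast) \in \im(\phi)$ since $\tau$ corresponds to swapping the two $\aff^2$ factors, an inner phenomenon after adjusting by $\gl_2 \times \gl_2$). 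Thus $f(g) \in H^{\circ}_{\Phi(w)} = f(G^{\circ}_w)$, so after multiplying $g$ by a suitable element of $G^{\circ}_w$ we may assume $f(g) = e_H$, i.e.\ $g_1 = I_4$ and $\det g_2 = 1$, so $g_2 \in \spl_2$ acting on $\aff^3 = \sym^2 \aff^2$. Now $g$ fixes $w$ and acts trivially on the $\wedge^2 W = \aff^6$ factor; since $w_1,w_2,w_3$ span a $3$-dimensional subspace on which $\spl_2$ (via $\sym^2$) acts faithfully as the standard $\sorth_3$, fixing all of $w_1,w_2,w_3$ forces $g_2 = I_2$ (the only element of $\spl_2$ acting trivially on $\sym^2\aff^2$ is $\pm I_2$, and $-I_2$ acts as $+I$ on $\sym^2$ but then one checks against the full action on $V$, or alternatively notes $g_2=\pm I_2$ and $-I_2\in\spl_2$ already lies in $G^\circ_w$). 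Hence $g \in G^{\circ}_w$. The main obstacle I expect is the bookkeeping in the last paragraph — correctly ruling out the potential extra component by showing the relevant ``swap'' automorphism of $\gl_2\times\gl_2$ is realized by an element already in $\im(\phi)$ (so that, unlike the earlier cases with a genuine $\Z/2\Z$, here $G_w$ turns out to be connected), and making the $\overline{k}$-point argument into a scheme-theoretic one if needed.
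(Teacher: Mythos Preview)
Your first two parts are fine and close to the paper's argument: identifying $H^{\circ}_{\Phi(w)}\cong\gorth(Q_0)^{\circ}$ of dimension $7$, and checking $f|_{G^{\circ}_w}$ is an isomorphism by computing the kernel. The paper does this the same way, using Proposition~\ref{prop:kernel-same} to see that the projection $G^{\circ}_w\to\gl_4$ is already injective.

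The gap is in your third part, and it is exactly the point you flagged as the obstacle---but your proposed resolution is backwards. You claim $H_{\Phi(w)}=H^{\circ}_{\Phi(w)}$ because the swap of the two $\gl_2$ factors is ``an inner phenomenon.'' It is not: the paper computes (with $t=\diag(t_1,t_1^{-1},t_2,t_2^{-1})$) that $\tau\rho(t)\tau=\rho(\diag(t_2,t_2^{-1}),\diag(t_1,t_1^{-1}))$, so conjugation by $(1,\tau)$ swaps the two $\mathrm{PGL}_2$ factors of $H^{\circ}_{\Phi(w)}/f(Z)$, which is an outer automorphism. In fact $H_{\Phi(w)}/H^{\circ}_{\Phi(w)}$ is represented by $\{1,(1,\tau)\}$ (the paper cites \cite{hayasaka-yukie-tamagawaI}), so $H_{\Phi(w)}$ genuinely has two components. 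Consequently your reduction ``after multiplying by an element of $G^{\circ}_w$ we may assume $f(g)=e_H$'' is circular: it presupposes $f(g)\in H^{\circ}_{\Phi(w)}$, which is the content of what you are trying to prove.

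The actual reason $G_w$ is connected is different. For $g=(g_1,g_2)\in G_w$ one has $f(g)\in H_{\Phi(w)}$, so either $g_1\in\rho(\gl_2\times\gl_2)$ or $g_1\tau\in\rho(\gl_2\times\gl_2)$. But any $g\in G_w$ must have $\wedge^2 g_1$ preserving the subspace $\langle w_1,w_2,w_3\rangle\subset\wedge^2 W$, and a direct computation shows $\tau w_2{}^t\tau\notin\langle w_1,w_2,w_3\rangle$. This rules out the $\tau$-coset, forcing $g_1=\rho(h_1,h_2)$; then $g_2={}^t\rho_1(h_1,h_2)^{-1}$ is determined, so $g\in G^{\circ}_w$. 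The mechanism is not that the extra component of $H_{\Phi(w)}$ collapses, but that elements of $G_w$ carry an extra constraint (preserving a $3$-plane in $\wedge^2 W$) which the $\tau$-component fails.
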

\begin{proof}
We may assume that $k=\overline k$. 

By simple Lie algebra computations, $(H,\sym^2 W)$ 
is a regular \pv{} and $H\Phi(w)\sub \sym^2 W$ is Zariski 
open. Since $\dim H_{\Phi(w)}=7$ and $G^{\circ}_w\cong \im(\rho)$ 
by Proposition \ref{prop:kernel-same} (1), 
the projection from $G^{\circ}_w$ to $\gl_4$ is an isomorphism 
into its image and
$f(G^{\circ}_w)=H^{\circ}_{\Phi(w)}$. 
Let $Z\sub G^{\circ}_w$ be the center of $G^{\circ}_w$. 
Then $G^{\circ}_w/Z\cong H^{\circ}_{\Phi(w)}/f(Z)\cong \text{PGL}_2\times \text{PGL}_2$.  

Let $t=(\diag(t_1,t_1^{-1}),\diag(t_2,t_2^{-1}))\in\gl_2\times \gl_2$. 
Then 
\begin{align*}
\rho(t) & = \diag(t_1t_2,t_1^{-1}t_2,t_1t_2^{-1},t_1^{-1}t_2^{-1}), \\
\tau \rho \tau & = 
\diag(t_1t_2,t_1t_2^{-1},t_1^{-1}t_2,t_1^{-1}t_2^{-1}) 
= \rho((\diag(t_2,t_2^{-1}),\diag(t_1,t_1^{-1}))). 
\end{align*}
This implies that the conjugation by $(1,\tau)\in H$ induces 
an outer automorphism of $H^{\circ}_{\Phi(w)}/f(Z)$. 
By (3.12) \cite[p.556]{hayasaka-yukie-tamagawaI}, 
$H_{\Phi(w)}/H^{\circ}_{\Phi(w)}$ is represented by $\{1,(1,\tau)\}$.
  
If $g=(g_1,g_2)\in G_w$ then $f(g)\in H_{\Phi(w)}$. 
So $g_1\in \rho(\gl_2\times \gl_2)$ or 
$g_1\tau\in \rho(\gl_2\times \gl_2)$. 
Since (\ref{eq:gw-formula}) holds for $g$, 
$\wedge^2 g_1$ leaves the subspace $\lan w_1,w_2,w_3\ran$ 
invariant. If $g_1\tau\in \rho(\gl_2\times \gl_2)$ 
then $\wedge^2 \tau$ 
must leave the subspace $\lan w_1,w_2,w_3\ran$ invariant. 
However, $\tau w_2 \tau \notin \lan w_1,w_2,w_3\ran$. 
Therefore, $g_1\in \rho(\gl_2\times \gl_2)$. 
If $g_1=\rho(h_1,h_2)$ with $h_1,h_2\in\gl_2$ then $g_2$ has to be 
${}^t\rho_1(h_1,h_2)^{-1}$. Therefore, $g\in G^{\circ}_w$.
\end{proof}
\begin{prop}
\label{prop:stab-iso-wedge42-3}
The map $G_k\backslash V^{\sst}_k\ni G_k x\mapsto G_x\in \mathrm{IQF}_4(k)$ 
is bijective ($G_kx$ is the orbit and $G_x$ is the stabilizer).  
\end{prop}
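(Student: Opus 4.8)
The plan is to run the standard Galois-descent argument for a regular prehomogeneous vector space whose open orbit over the separable closure is a single orbit. Concretely, let $w\in V$ be the base point introduced in (\ref{eq:w-defn-423}). We have already established that $(G,V)$ is a regular \pv{} (Corollary \ref{cor:S9a-regular}), that $\dim G_w=7$, and that $G_w=G^{\circ}_w\cong\im(\rho)\cong(\gl_2\times\gl_2)/\kernel(\rho)$ (Proposition \ref{prop:Gw-section8}). Since $m=N=1$ here (there is a single relative invariant $P(x)=\Disc(\Phi(x))$ of Lemma \ref{lem:relative-invariant-423}, up to powers), Corollary \ref{cor:reducible-sep-orbit} gives $V^{\sst}_{k^{\sep}}=G_{k^{\sep}}w$. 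Therefore the pointed set $G_k\backslash V^{\sst}_k$ is in canonical bijection with $\ker\bigl(\h^1(k,G_w)\to\h^1(k,G)\bigr)$. Because $G=\gl_5\times\gl_4$ (or the relevant product of $\gl_n$'s here, $\gl_4\times\gl_3$) has $\h^1(k,G)=\{1\}$, this kernel is all of $\h^1(k,G_w)$, so $G_k\backslash V^{\sst}_k\cong\h^1(k,G_w)$, the map sending $x=g_xw$ ($g_x\in G_{k^{\sep}}$) to the class of the cocycle $\sig\mapsto g_x^{-1}g_x^{\sig}$.

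Next I would identify $\h^1(k,G_w)$ with $\mathrm{IQF}_4(k)$. Since $G_w^{\circ}$ is the image of $\rho:\gl_2\times\gl_2\to\gl_4$ with $\kernel(\rho)=\{(t_1I_2,t_2I_2)\mid t_1t_2=1\}\cong\gl_1$ (Lemma \ref{lem:423-kernel}), we have an exact sequence $1\to\gl_1\to\gl_2\times\gl_2\to G_w\to 1$, whence $\h^1(k,G_w)$ fits into $\h^1(k,\gl_2\times\gl_2)\to\h^1(k,G_w)\to\h^2(k,\gl_1)=\Br(k)[?]$; but a cleaner route is to observe that $G_w\cong H^{\circ}_{\Phi(w)}$ via $f$ (Proposition \ref{prop:Gw-section8}), and that $H^{\circ}_{\Phi(w)}$ is precisely the connected stabilizer of the split quadratic form $Q_0$ under the $\gl_1\times\gl_4$-action on $\sym^2 W$, i.e. $H^{\circ}_{\Phi(w)}=\gorth(Q_0)^{\circ}$. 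Thus $\h^1(k,G_w)=\h^1(k,\gorth(Q_0)^{\circ})$ classifies, up to $k$-isomorphism, the algebraic groups $\gorth(Q)^{\circ}$ that become isomorphic to $\gorth(Q_0)^{\circ}$ over $k^{\sep}$; by definition (Definition \ref{defn:Ex-defn}(3)) these are the objects of $\mathrm{IQF}_4(k)$, the inner forms of $\gorth(Q_0)^{\circ}$. The restriction on "inner" forms corresponds exactly to the fact that the cocycles arising from $G_w=G_w^{\circ}$ (connected!) take values in the connected group, hence land in $\h^1(k,G_w^{\circ})$ rather than in the full $\h^1$ involving the outer component $\{1,(1,\tau)\}$ of $H_{\Phi(w)}$ (cf. (\ref{eq:tau-defn}) and the discussion around $\text{PGL}_2\times\text{PGL}_2$).

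The concrete identification of the class of $x$ with the group $G_x$ then goes as in the proof of Proposition \ref{prop:S1-orbit-final}. For $x=g_xw\in V^{\sst}_k$ we have $G_{x\,k^{\sep}}=g_x\,G_{w\,k^{\sep}}\,g_x^{-1}$, and the condition that an element $g_x h g_x^{-1}$ ($h\in G_{w\,k^{\sep}}$) be $k$-rational rewrites, with $h_{\sig}=h^{-1}g_x^{-1}g_x^{\sig}$... more precisely setting $h_{\sig}=g_x^{-1}g_x^{\sig}$ and using that the class $(h_{\sig})$ is the cocycle attached to $x$, as the condition $h_{\sig}h^{\sig}h_{\sig}^{-1}=h$ for all $\sig$. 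Hence $G_{x\,k}\cong {}_{h}(G_w)_k$, the $h$-twisted form of $G_w$, which under the isomorphism $G_w\cong\gorth(Q_0)^{\circ}$ is exactly the object $\gorth(Q)^{\circ}\in\mathrm{IQF}_4(k)$ indexed by the class of $x$. This gives both the bijection $G_k\backslash V^{\sst}_k\cong\mathrm{IQF}_4(k)$ and the assertion that $x\mapsto G_x$ realises it. I expect the main obstacle to be the precise matching of $H^{\circ}_{\Phi(w)}$ with $\gorth(Q_0)^{\circ}$ and the verification that the automorphisms realised by the $G_w$-cocycles are all \emph{inner} (so that one lands in $\mathrm{IQF}_4$ and not the larger $\mathrm{QF}_4$): this is where the computation $\tau w_2\tau\notin\lan w_1,w_2,w_3\ran$ and the outer-automorphism remark after (\ref{eq:wedge2-rho}) are doing the real work, together with the citation to (3.12) of \cite{hayasaka-yukie-tamagawaI} for the component group of $H_{\Phi(w)}$. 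Everything else is the now-routine "regular \pv{} $\Rightarrow$ $\h^1$ of the stabiliser" bookkeeping, valid without hypothesis on $\ch(k)$ other than $\ch(k)\neq 2$ (needed already to make $(G,V)$ regular and to work with quadratic forms).
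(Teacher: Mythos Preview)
Your overall architecture is the same as the paper's up to the identification $G_k\backslash V^{\sst}_k\cong\h^1(k,G_w)\cong\h^1(k,H^{\circ}_{\Phi(w)})$, and your last paragraph correctly shows that $G_x$ is an inner twist of $G_w\cong\gorth(Q_0)^{\circ}$, so the map $x\mapsto G_x$ lands in $\mathrm{IQF}_4(k)$. The genuine gap is the step where you assert that ``$\h^1(k,\gorth(Q_0)^{\circ})$ classifies, up to $k$-isomorphism, the algebraic groups $\gorth(Q)^{\circ}$ that become isomorphic to $\gorth(Q_0)^{\circ}$ over $k^{\sep}$.'' This is not automatic: $\h^1(k,A)$ classifies $A$-torsors, not $k$-forms of $A$; the map to forms factors through $\h^1(k,\aut(A))$ and can collapse classes. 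Concretely, two cohomology classes $h,h'\in\h^1(k,H^{\circ}_{\Phi(w)})$ that differ by conjugation by the outer element $(1,\tau)\in H_{\Phi(w)}$ would produce isomorphic stabilisers $G_x\cong G_{x'}$ (the isomorphism being outer), yet there is no a~priori reason they should be equal in $\h^1(k,H^{\circ}_{\Phi(w)})$. So injectivity of $x\mapsto G_x$ is exactly the question of whether the $\Z/2\Z$-action on $\h^1(k,H^{\circ}_{\Phi(w)})$ induced by conjugation by $(1,\tau)$ is trivial.

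That triviality is the main technical content of the paper's proof, and it is not addressed in your proposal. The paper works with the exact sequence $1\to H^{\circ}_{\Phi(w)}\to H_{\Phi(w)}\to\Z/2\Z\to1$, uses that $\h^1(k,H_{\Phi(w)})\cong\mathrm{QF}_4(k)$ (so the image of $\h^1(k,H^{\circ}_{\Phi(w)})$ is $\mathrm{IQF}_4(k)$), and then shows the map $\h^1(k,H^{\circ}_{\Phi(w)})\to\h^1(k,H_{\Phi(w)})$ is injective by an explicit cocycle computation: lifting a cocycle to $(h_{1,\sig},h_{2,\sig})$ in $\gl_2\times\gl_2$, one reduces to showing that the $\mathrm{PGL}_2$-classes of $h_1$ and $h_2$ agree (via the injection $\h^1(k,\mathrm{PGL}_2)\hookrightarrow\mathrm{Br}(k)$ and the constraint $\del h_1\cdot\del h_2=1$), and then that swapping the two factors gives back the same class in $\h^1(k,H^{\circ}_{\Phi(w)})$. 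Your remark that the computation $\tau w_2\tau\notin\langle w_1,w_2,w_3\rangle$ ``does the real work'' is misplaced: that computation only shows $G_w=G_w^{\circ}$ (Proposition~\ref{prop:Gw-section8}), which you have already used; it says nothing about how the outer $\tau$ acts on $\h^1$. Without the $\Z/2\Z$-triviality argument, you have established surjectivity onto $\mathrm{IQF}_4(k)$ but not injectivity.
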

\begin{proof}
For the definition of $\mathrm{IQF}_4(k)$, 
see Definition \ref{defn:Ex-defn}. 
We would like to interpret $G_k\backslash V^{\sst}_k$ 
using the equivariant map $\Phi$. 

Since $\h^1(k,G)=\{1\}$, $G_k\backslash V^{\sst}_k$ 
is in bijective correspondence with 
$\h^1(k,G_w)=\h^1(k,G^{\circ}_w)\cong \h^1(k,H^{\circ}_{\Phi(w)})$. 
By LEMMA (1.2) \cite[p.118]{yukiel}, the following sequence 
\begin{equation*}
1\to (\Z/2\Z)\backslash \h^1(k,H^{\circ}_{\Phi(w)})\to \h^1(k,H_{\Phi(w)}) 
\to \h^1(k,\Z/2\Z)
\end{equation*}
is exact where the action of $\Z/2\Z$ on $\h^1(k,H^{\circ}_{\Phi(w)})$ 
is by the conjugation by $\tau$. 
It is known that $\h^1(k,H_{\Phi(w)})$ is 
in bijective correspondence with 
$\mathrm{QF}_4(k)$. For this, see 
Proposition 3.15 \cite[p.557]{hayasaka-yukie-tamagawaI}
for example. We shall show below that $\Z/2\Z$ acts on
$\h^1(k,H^{\circ}_{\Phi(w)})$ trivially.

Let $i:\gl_1\to \gl_2\times \gl_2$ be the homomorphism 
defined by $i(t)=(tI_2,t^{-1}I_2)$.  
We identify $H^{\circ}_{\Phi(w)}\cong (\gl_2\times \gl_2)/i(\gl_1)$ 
by $f\circ \phi$ (see Lemma \ref{lem:rho-rho1-fix-w}). 
Let $\psi_1:\gl_1 \to\gl_1$, 
$\psi_2:\gl_2\times \gl_2\to \gl_2\times \gl_2$
be the automorphisms such that 
$\psi_1(t)=t^{-1}$, $\psi_2(g_1,g_2)=(g_2,g_1)$. 
The conjugation by $\tau$ on $H^{\circ}_{\Phi(w)}$ 
is denoted by $\psi_3$. Then the following diagram 
is commutative. 
\begin{equation*}
\xymatrix{
1 \ar[r] & \gl_1 \ar[r]\ar[d]^{\psi_1} & \gl_2\times \gl_2 \ar[r]\ar[d]^{\psi_2}
& H^{\circ}_{\Phi(w)} \ar[r]\ar[d]^{\psi_3} & 1 \\
1 \ar[r] & \gl_1 \ar[r] & \gl_2\times \gl_2 \ar[r] 
& H^{\circ}_{\Phi(w)} \ar[r] & 1 
}
\end{equation*}

Let $T_1=\{tI_2\mid t\in\gl_1\}$, $T_2=i(\gl(1))\sub \gl_2\times \gl_2$.
For $g\in \gl_2$ or $h=(h_1,h_2)\in \gl_2\times \gl_2$, we use the notation 
$\overline g=g\mod T_1$, $\overline h= h \mod T_2$. 
We denote continuous maps from $\gal(k^{\sep}/k)$ to $\gl_2(k^{\sep})$, etc., 
as $(h_{\sig})_{\sig}$ ($h_{\sig}\in\gl_2(k^{\sep})$). 
Note that we consider the discrete topology on $\gl_2(k^{\sep})$. 
For $h=(h_{\sig})_{\sig}$ ($h_{\sig}\in \gl_2(k^{\sep})$, etc.),  
let $\del h_{\sig,\tau} = h_{\tau}h_{\sig}^{\tau}h_{\sig\tau}^{-1}$ 
and $\del h =(\del h_{\sig,\tau})_{\sig,\tau}$. This is a function on 
$\gal(k^{\sep}/k)\times \gal(k^{\sep}/k)$. 
If $h=(h_{\sig})_{\sig}$ ($h_{\sig}\in \gl_2(k^{\sep})$), 
$g\in \gl_2(k^{\sep})$ and $h'=(g^{-1}h_{\sig}g^{\sig})_{\sig}$ 
then $\del h_{\sig,\tau}=g^{-1}\del h'_{\sig,\tau}g$ for all $\sig,\tau$. 
So if $\del h_{\sig,\tau}\in T_{1\,k^{\sep}}$ for all $\sig,\tau$ 
then $\del h=\del h'$.

Suppose that $h_1=(h_{1,\sig})_{\sig}$, $h_2=(h_{2,\sig})_{\sig}$ 
($h_{1,\sig},h_{2,\sig}\in\gl_2(k^{\sep})$), 
$h=(h_1,h_2)$ and $\del h_{\sig,\tau}\in i(T(k^{\sep}))$ 
for all $\sig,\tau$. This implies that 
$(\del h_1)_{\sig,\tau}(\del h_2)_{\sig,\tau}=1$ for all $\sig,\tau$. 
Then if we put $\overline h_1=(\overline h_{1,\sig})_{\sig}$, etc., 
then $\del \overline h$ is trivial. So $\overline h$ is a 1-cocycle 
with coefficients in $H^{\circ}_{\Phi(w)}$. 
Any 1-cocycle with coefficients in $H^{\circ}_{\Phi(w)}$ 
can be expressed in this manner. 
We define $\psi_2(h),\psi_3(\overline h)$ in the obvious manner.

Let $h$ be as above such that $\del \overline h=1$.
We show that $\overline h,\psi_3(\overline h)$ 
are in the same cohomology class. 
Let $q_i=(q_{i,\sig,\tau})$ with 
$q_{i,\sig,\tau}I_2=(\del h_i)_{\sig,\tau}$ 
for each $i=1,2$ and $\sig,\tau\in\gal(k^{\sep}/k)$. 
Then 
\begin{equation*}
q_{i,\sig,\tau}^2=(\det h_{i,\tau})(\det h_{i,\sig})^{\tau}
(\det h_{i,\sig\tau})^{-1},\;
q_{1,\sig,\tau}q_{2,\sig,\tau}=1. 
\end{equation*}

We put $h^{(1)}_{1,\sig}=(\det h_{1,\sig})^{-1}h_{1,\sig}$, 
$h^{(1)}_{2,\sig}=(\det h_{1,\sig})h_{2,\sig}$, 
$h^{(1)}_i=(h^{(1)}_{i,\sig})$ for $i=1,2$ and
$h^{(1)} = (h^{(1)}_1,h^{(1)}_2)$.  Then 
$\overline h = \overline h^{(1)}$. 
If $\sig,\tau\in \gal(k^{\sep}/k)$ then 
\begin{equation*}
\del h^{(1)}_{\sig,\tau} = (\del h^{(1)}_{1,\sig,\tau},\del h^{(1)}_{2,\sig,\tau}),\;
\del h^{(1)}_{1,\sig,\tau} = q_{1,\sig,\tau}^{-2}\del h_{1,\sig,\tau}
= (\del h_{1,\sig,\tau})^{-1}.  
\end{equation*}
Therefore, $\del h^{(1)}=\del \psi_2(h)$. 

If $c_{\sig}\in\gl_2(k^{\sep})$, $c = (c_{\sig})_{\sig}$
and $\del c_{\sig,\tau}\in \{tI_2\mid t\in (k^{\sep})^{\times}\}$ 
then $\overline c$ is a 1-cocycle with coefficients in $\text{PGL}_2$. 
Since $\del c$ is a 2-cocycle with coefficients in $\gl_1$, 
by associating the class of $\del c$ to the class of $\overline c$, 
we obtain a map $\h^1(k,\text{PGL}_2)\to \h^2(k,\gl_1)$. 
Since $\h^1(k,\text{PGL}_2)$ can be identified with $k$-forms of 
$\m_2$ and $\h^2(k,\gl_1)$ is the Brauer group, 
the map $\h^1(k,\text{PGL}_2)\to \h^2(k,\gl_1)$ is injective.
Note that if a $k$-form of $\m_2$ is not split then 
it is isomorphic to $D^{\times}$ where $D$ is a division algebra.

Since $\del h^{(1)}=\del \psi_2(h)$ and 
$h^{(1)}_{1,\sig}$ is a scalar multiple of $h_{1,\sig}$, 
there exists $g\in\gl_2(k^{\sep})$ and $t_{\sig}\in (k^{\sep})^{\times}$ 
such that $g^{-1}h_{1,\sig}g^{\sig}=t_{\sig}h_{2,\sig}$ for all $\sig$.
Since we are considering the action of $\Z/2\Z$ on 
$\h^1(k,H^{\circ}_{\Phi(w)})$, we may assume that 
$h_{1,\sig}=t_{\sig}h_{2,\sig}$ for all $\sig$. 
Since 
\begin{equation*}
(t_{\sig}h_{2,\sig},h_{2,\sig})
= (t_{\sig},t_{\sig}^{-1})(h_{2,\sig},t_{\sig}h_{2,\sig}),
\end{equation*}
$\overline h=\overline \psi_2(h)$. This implies that  
$\psi_3$ induces the trivial action on 
$\h^1(k,H^{\circ}_{\Phi(w)})$. 
Therefore, $\h^1(k,H^{\circ}_{\Phi})\to \h^1(k,H_{\Phi})$
is injective and the image is the inverse image 
of the trivial class of $\h^1(k,\Z/2\Z)$. 
This implies that the map 
$G_k\backslash V^{\sst}_k\to H_k\backslash (\sym^2 W)^{\sst}_k$ 
induced by $\Phi$ is injective.  

Suppose that $x=g_x w\in V^{\sst}_k$. 
Then the class of $(g_x^{-1}g_x^{\sig})_{\sig}$ is the 
element in $\h^1(k,G_w)=\h^1(k,G^{\circ}_w)$ corresponding to 
the orbit of $x$. Let $h_{x,\sig}=g_x^{-1}g_x^{\sig}$ 
for $\sig\in\gal(k^{\sep}/k)$. 
As in the proof of Proposition \ref{prop:S1-orbit-final},
$G_{x\,k}$ consists of elements $g\in G_{w\, k^{\sep}}$ such that 
$h_{x,\sig}g^{\sig}h_{x,\sig}^{-1}=g$ for all $\sig$. 
Note that $G_w\cong H^{\circ}_{\Phi(w)}\cong \text{GO}(\Phi(w))^{\circ}$ 
(the identity component of the group of similitude of the quadratic form $\Phi(w)$). 
Since $G_{w\,k^{\sep}}\ni g\mapsto h_{x,\sig}gh_{x,\sig}^{-1}\in G_{w\,k^{\sep}}$  
is an inner automorphism, $G_x$ is an inner form of $\text{GO}(\Phi(w))^{\circ}$. 
Therefore, by associating $G_x$ to $x\in V^{\sst}_k$, 
we obtain a bijective correspondence between $G_k\backslash V^{\sst}_k$ 
and $\mathrm{IQF}_4(k)$. 
\end{proof}

We consider the case (b) next. 

Let $G=\gl_4\times \gl_3$ and $V=\wedge^2 W\otimes \aff^3\oplus W$. 
The action of $G$ on $V$ is the obvious one.
We express elements of $\wedge^2 \aff^4$ by alternating matrices as 
in the case (a). 

Let $w_1,w_2,w_3$ be as in (\ref{eq:w-defn-423}), 
$w_0=[1,0,0,1]\in\aff^4$ and 
\begin{equation}
\label{eq:w-defn-sec8b}
w=(w_1,w_2,w_3,w_0)\in V. 
\end{equation}
Elements of the form 
$e_G+\vep (X,Y)$ ($X=(x_{ij})\in \m_4,Y=(y_{ij})\in \m_3$) 
fix $(w_1,w_2,w_3)$ if and only if 
$X,Y$ are in the form (\ref{eq:A-defn}).  
Then $e_G+\vep (X,Y)$ fixes $w_0$ if and only if 
\begin{equation*}
x_{11}=x_{21}+x_{13}=x_{31}+x_{12}=-x_{11}+x_{22}+x_{33}=0. 
\end{equation*}
So $X,Y$ are in the following form:
\begin{equation}
\label{eq:Lie-alg-43-3}
\begin{aligned}
X & = \begin{pmatrix}
0 & x_{12} & x_{13} & 0\\
-x_{13} & x_{22} & 0 & x_{13} \\
-x_{12} & 0 & -x_{22} & x_{12} \\
0 & -x_{12} & -x_{13} & 0
\end{pmatrix}, \;
Y = \begin{pmatrix}
-x_{22} & x_{12} & 0 \\
-2x_{13} & 0 & 2x_{12} \\
0 & -x_{13} & x_{22}
\end{pmatrix}.  
\end{aligned}
\end{equation}
Therefore, $\dim {\mathrm T}_{e_G}(G_w)=3=25-22=\dim G-\dim V$. 
So Proposition \ref{prop:open-orbit} implies that $Gw\sub V$
is Zariski open and that $G_w$ is smooth over $k$. 
Since this computation does not depend on 
$\ch(k)$, $w$ is defined over $\Z$ and is universally generic. 

We define a map 
$\Phi_1:V\to \wedge^2 W\otimes \wedge^2 W\otimes \wedge^2 W$ 
by 
\begin{equation*}
\Phi_1(x) = \sum_{\sig\in\gS_3} \sgn(\sig) 
x_{\sig(1)}\otimes x_{\sig(2)}\otimes x_{\sig(3)}
\end{equation*}
for $x=(x_1,x_2,x_3,x_0)$ 
($x_1,x_2,x_3\in \wedge^2 W$, $x_0\in W$). 

For $x_0\in W$, we define a linear map 
$\Phi_{2,x_0}:\wedge^2 W\otimes \wedge^2 W\otimes \wedge^2 W\to 
\wedge^4 W\otimes \wedge^4 W$ so that 
\begin{align*}
\Phi_{2,x_0}(a_1\otimes a_2\otimes (b_1\wedge b_2))
& = (a_1\wedge b_1\wedge x_0)\otimes (a_2\wedge b_2\wedge x_0) \\
& \quad -(a_1\wedge b_2\wedge x_0)\otimes (a_2\wedge b_1\wedge x_0)
\end{align*}
for $a_1,a_2\in \wedge^2 W,b_1,b_2\in W$. 
 
We first consider over $\Q$. We put 
$P_1(x)=-\frac 16\Phi_{2,x_0}(\Phi_1(x))$
for $x=(x_1,x_2,x_3,x_0)$.
We identify two $\wedge^4 W$'s with $\aff^1$ so that 
$p_{4,1234}$ corresponds to $1$. Then $P_1$ can be regarded as 
a polynomial on $V$ over $\Q$. It is easy to see that 
$P_1(x)$ is homogeneous of degree $3,2$ with respect to 
$\wedge^2 W\otimes \aff^3,W$ respectively.

\begin{prop}
\label{prop:section8-b-P1-value}
\begin{itemize}
\item[(1)]
$P_1(x)$ is defined over $\Z$ and $P_1(w)=1$. 
\item[(2)]
\begin{math}
P_1((g_1,g_2)x)=(\det g_1)^2(\det g_2)P_1(x)
\end{math}
for $(g_1,g_2)\in\m_4\times \m_3$. 
\end{itemize}
\end{prop}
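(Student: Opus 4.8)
The plan is to prove (2) first over $\Q$ as a direct consequence of the transformation laws of $\Phi_1$ and $\Phi_{2,x_0}$, then to use that $w$ is universally generic (as already noted just after (\ref{eq:Lie-alg-43-3})) together with Proposition \ref{prop:universally-generic} to conclude that $P_1$ is in fact defined over $\Z$; this simultaneously gives the integrality in (1) and upgrades (2) to an identity over any field. The value $P_1(w)=1$ will come from an explicit, finite multilinear computation of $\Phi_{2,w_0}(\Phi_1(w))$.

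For the equivariance, first I would record how $\Phi_1$ transforms. Since the three $\aff^3$-coordinates $x_1,x_2,x_3$ enter $\Phi_1$ antisymmetrically and $\gl_4$ acts on each $\wedge^2W$-slot by $\wedge^2 g_1$, commuting with the $\gl_3$-action on $(x_1,x_2,x_3)$, the coefficient of $x_{\rho(1)}\otimes x_{\rho(2)}\otimes x_{\rho(3)}$ (for a permutation $\rho$) picks up a factor $\det g_2$ by the Leibniz expansion of the determinant — the same computation that shows an alternating trilinear expression in three vectors transforms by the determinant. Thus
\[
\Phi_1((g_1,g_2)x) = (\det g_2)\bigl((\wedge^2 g_1)\otimes(\wedge^2 g_1)\otimes(\wedge^2 g_1)\bigr)\Phi_1(x).
\]
Next, on a pure tensor $a_1\otimes a_2\otimes(b_1\wedge b_2)$ one has $(\wedge^2 g_1)a_i\wedge g_1 b_j\wedge g_1 x_0=(\wedge^4 g_1)(a_i\wedge b_j\wedge x_0)=(\det g_1)(a_i\wedge b_j\wedge x_0)$ under $\wedge^4 W\cong\aff^1$, so each of the two $\wedge^4 W$-slots of $\Phi_{2,x_0}$ contributes a factor $\det g_1$. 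Since acting by $(g_1,g_2)$ also replaces $x_0$ by $g_1x_0$, combining the two displays gives
\[
P_1((g_1,g_2)x)=-\tfrac16(\det g_2)(\det g_1)^2\,\Phi_{2,x_0}(\Phi_1(x))=(\det g_1)^2(\det g_2)P_1(x).
\]
Both sides are polynomials in the entries of $g_1,g_2,x$ with no inverse determinants, so this identity, established first over $\Q$, extends to all of $\m_4\times\m_3$.

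For (1), note that $-6P_1=\Phi_{2,x_0}\circ\Phi_1$ has integer coefficients by construction, so it suffices to evaluate it at $w$. Reading off (\ref{eq:w-defn-423}) one writes $w_1=p_{4,12}$, $w_2=p_{4,14}-p_{4,23}$, $w_3=p_{4,34}$, and $w_0=\bbmp_{4,1}+\bbmp_{4,4}$; one then expands $\Phi_1(w)$ as the alternating sum over the six elements of $\gS_3$ and applies $\Phi_{2,w_0}$ termwise. A summand of $\Phi_{2,w_0}(a_1\otimes a_2\otimes(b_1\wedge b_2))$ survives only when the four basis vectors in each factor $a_i\wedge b_j\wedge w_0$ are distinct, which kills most terms, and the surviving ones sum to $-6\,p_{4,1234}\otimes p_{4,1234}$, i.e. to $-6$ under $\wedge^4 W\otimes\wedge^4 W\cong\aff^1$. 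Hence $-6P_1(w)=-6$, so $P_1(w)=1\in\Z$. Since $P_1$ is then a $\Q$-defined $G_{\Q}$-equivariant polynomial map $V\to\aff^1$ (the target carrying the character $(g_1,g_2)\mapsto(\det g_1)^2\det g_2$, which is defined over $\Z$) whose value at the universally generic $w$ lies in $\Z$, Proposition \ref{prop:universally-generic} shows $P_1$ is defined over $\Z$, and hence over any field via $\Z\to k$; the formula of (2), being a polynomial identity over $\Z$, then holds over any field as well.

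The only genuinely laborious point is the evaluation $\Phi_{2,w_0}(\Phi_1(w))=-6$: one must track two sources of sign — the Levi–Civita sign $\sgn(\sig)$ in $\Phi_1$ and the internal sign in $\Phi_{2,x_0}$ — and exploit that $w_0=\bbmp_{4,1}+\bbmp_{4,4}$, so that wedging against $w_0$ selects the complementary basis vector precisely when it is $\bbmp_{4,1}$ or $\bbmp_{4,4}$ and gives $0$ otherwise. Everything else in the argument is formal.
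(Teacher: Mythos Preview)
Your proposal is correct and follows essentially the same approach as the paper's proof: the paper simply states that ``(2) is easy'' and that ``by long but straightforward computations, $6P_1(w)=6$'', then invokes universal genericity of $w$ to conclude $P_1$ is defined over $\Z$. You have filled in the details the paper omits—namely the transformation laws of $\Phi_1$ and $\Phi_{2,x_0}$ giving (2), and the outline of the termwise evaluation of $\Phi_{2,w_0}(\Phi_1(w))$—and your invocation of Proposition~\ref{prop:universally-generic} is exactly the mechanism the paper uses.
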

\begin{proof}
(1) By long but straightforward computations, 
$6P_1(w)=6$. Since $w$ is universally generic, 
$P_1(x)$ is defined over $\Z$ and $P_1(w)=1$. 
(2) is easy. 
\end{proof}

By the natural homomorphism $\Z\to k$, 
there is a polynomial $P_1(x)$ over $k$  
such that $P_1(w)=1$ and Proposition \ref{prop:section8-b-P1-value} (2)
holds for $(g_1,g_2)\in \m_4\times \m_3$.
In particular, this holds for 
$(g_1,g_2)\in\gl_4\times \gl_3$.

Let $u=(u_1,u_2,u_3)$ be variables. 
For $x$ as above, let $\Phi_3(x)$ be the Pfaffian of 
$u_1x_1+u_2x_2+u_3x_3$. Then $\Phi_3(x)=\Phi_3(x)(u)\in\sym^2 \aff^3$ 
and 
\begin{equation}
\label{eq:43-3-Phi3}
\Phi_3(gx)(u) = (\det g_1)\Phi_3(x)(u g_2) 
\end{equation}
regarding $u$ as a row vector. 
It is easy to see that $\Phi_3(w)=u_1u_3+u_2^2$.  
It is known that there is a homogeneous polynomial 
$\Phi_4$ of degree $3$ on the space of quadratic forms of $u$
such that $\Phi_4(u_1u_3+u_2^2)=1$ and 
$\Phi_4(hQ)=(\det h)^2\Phi_4(Q)$ for $h\in \gl_3$,  
$Q\in\sym^2 \aff^3$. We define 
$P_2(x) = \Phi_4(\Phi_3(x))$. Then 
\begin{equation}
\label{eq:43-4-P2-equivariant}
P_2((g_1,g_2)x)=(\det g_1)^3(\det g_2)^2P_2(x).
\end{equation}
It is easy to see that $P_2(x)$ is  a homogeneous polynomial of degree $6$. 
As we pointed out in Introduction, $P_1(x),P_2(x)$ are more or less 
$P_1(v),P_2(v)$ of \cite[p.465]{kimura-2simple-some-inv}.

We use the standard argument 
to determine the set of rational orbits.
For that purpose, we have to determine the 
stabilizer of $w$. 

Let $\rho:\gl_2\to\gl(\m_2)\cong \gl_4$ be the 
\rep{} defined by 
$\gl_2\times \m_2\ni (g,A)\mapsto gAg^{-1}\in \m_2$. 
We use the basis $\{E_{11},E_{21},E_{12},E_{22}\}$
as in the case (a) for $\m_2$. 
This $\rho$ coincides with $\rho(g,{}^tg^{-1})$ of 
the case (a). So $\wedge^2\rho$ leaves 
the subspace $\lan w_1,w_2,w_3\ran$ invariant. 
We define $\rho_1(g)\in \gl_3$ in the same manner 
as in Lemma \ref{lem:rho-rho1-fix-w}. 
Since the coordinate of $I_2\in\m_2$ is 
$[1,0,0,1]$ and $gI_2g^{-1}=I_2$, 
$\rho(g)$ fixes $w_0$. Therefore, we obtain the following 
lemma. 
\begin{lem}
\label{lem:rho-rho1-fix-w-caseb}
The image of $(\rho,{}^t\rho_1^{-1})$ 
fixes $w$. 
\end{lem}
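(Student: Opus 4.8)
The statement to prove is Lemma~\ref{lem:rho-rho1-fix-w-caseb}: the image of $(\rho,{}^t\rho_1^{-1})$ fixes $w=(w_1,w_2,w_3,w_0)$ in case (b). The plan is to reduce this to the corresponding computation already done in case (a) together with the one new observation about $w_0$. First I would recall that the action of $g=(g_1,g_2)\in\gl_4\times\gl_3$ on $x=(x_1,x_2,x_3,x_0)\in V=\wedge^2 W\otimes\aff^3\oplus W$ is $g\cdot(x_1,x_2,x_3,x_0)=\bigl(\text{(action on the }\wedge^2 W\otimes\aff^3\text{ part)},\,g_1 x_0\bigr)$, so the $W$-component is entirely decoupled from $\aff^3$ and we may treat the two components of $w$ separately.

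For the $\wedge^2 W\otimes\aff^3$ part, namely $(w_1,w_2,w_3)$, the relevant map is $(\wedge^2\rho,{}^t\rho_1^{-1})$, and by the definition of $\rho_1$ in \eqref{eq:gw-formula} one has $\bigl(\wedge^2\rho(g)w_1,\wedge^2\rho(g)w_2,\wedge^2\rho(g)w_3\bigr)=\bigl(w_1,w_2,w_3\bigr)\rho_1(g)$; the same chain of equalities as in the proof of Lemma~\ref{lem:rho-rho1-fix-w} (the $\rho_1(g)^{-1}\rho_1(g)$ cancellation displayed there) shows $(\wedge^2\rho(g),{}^t\rho_1(g)^{-1})(w_1,w_2,w_3)=(w_1,w_2,w_3)$. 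The only point needing checking here is that the $\rho$ used in case (b), namely $\rho(g)A=gAg^{-1}$, does leave $\lan w_1,w_2,w_3\ran$ invariant: but this $\rho$ equals $\rho(g,{}^tg^{-1})$ of case (a), and case (a) established (via \eqref{eq:wedge2-rho} and the remark that $(g_1,I_2)$ acts by $\det g_1$) that elements of the form $(g_1,g_2)$ with $g_2={}^tg_1^{-1}$ leave $\lan w_1,w_2,w_3\ran$ invariant, so $\rho_1(g)$ is well-defined and the argument goes through.

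For the $W$-component I would simply observe that $w_0=[1,0,0,1]$ is precisely the coordinate vector of the identity matrix $I_2\in\m_2$ under the basis $\{E_{11},E_{21},E_{12},E_{22}\}$, and $\rho(g)I_2=gI_2g^{-1}=I_2$ for every $g\in\gl_2$; hence $\rho(g)w_0=w_0$. Combining the two components, $(\rho(g),{}^t\rho_1(g)^{-1})\cdot w=w$ for all $g\in\gl_2$, which is the claim. There is essentially no obstacle here: the lemma is a bookkeeping assembly of facts proved in case (a) plus the trivial remark that conjugation fixes the identity matrix; the ``hard part,'' such as it is, is just making sure the identification of $w_0$ with $I_2$ under the chosen basis is stated correctly, which the paragraph preceding the lemma already arranges.
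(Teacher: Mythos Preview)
Your proof is correct and follows essentially the same approach as the paper: the paper's argument is the paragraph immediately preceding the lemma, which observes that $\rho$ in case (b) coincides with $\rho(g,{}^tg^{-1})$ of case (a) (so $\lan w_1,w_2,w_3\ran$ is preserved and the cancellation of Lemma~\ref{lem:rho-rho1-fix-w} applies), and that $w_0=[1,0,0,1]$ is the coordinate vector of $I_2$, fixed by conjugation. Your write-up spells out exactly these two points.
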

%


%
\begin{prop}
\label{prop:43-3-Gwcirc}
\begin{itemize}
\item[(1)]
$\kernel(\rho,{}^t\rho_1^{-1})=\kernel(\rho)$ 
is the center $\mathrm{Z}(\gl_2)$. 
\item[(2)]
$G_w=G^{\circ}_w =\im(\rho,{}^t\rho_1^{-1}) \cong \mathrm{PGL}_2$. 
\end{itemize}
\end{prop}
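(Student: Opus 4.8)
The plan is to follow the same template used for the analogous groups $G_w^{\circ}$ in the earlier cases (Propositions \ref{prop:HR-circ}, \ref{prop:S1-Gw}, \ref{prop:Gw-section8}), now adapted to case (b). First I would establish part (1): the inclusion $\kernel(\rho)\subset\kernel(\rho,{}^t\rho_1^{-1})$ is trivial, and conversely if $\rho(g)=\rho_1(g)=I$ then certainly $g\in\kernel(\rho)$, so the two kernels coincide. That $\kernel(\rho)=\mathrm{Z}(\gl_2)$ is the standard fact that the conjugation representation of $\gl_2$ on $\m_2$ has kernel exactly the scalars; concretely, $gAg^{-1}=A$ for all $A\in\m_2$ forces $g$ to commute with $E_{12},E_{21}$, hence $g$ is scalar. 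This gives $\im(\rho,{}^t\rho_1^{-1})\cong\gl_2/\mathrm{Z}(\gl_2)\cong\mathrm{PGL}_2$, which has dimension $3$.

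Next I would prove $G_w^{\circ}=\im(\rho,{}^t\rho_1^{-1})$. By Lemma \ref{lem:rho-rho1-fix-w-caseb} the image of $(\rho,{}^t\rho_1^{-1})$ fixes $w$, and by the kernel computation this image is a $3$-dimensional subgroup of $G_w$. On the other hand the Lie algebra computation leading to \eqref{eq:Lie-alg-43-3} shows $\dim{\mathrm T}_{e_G}(G_w)=3=\dim G-\dim V$, so by Proposition \ref{prop:open-orbit} $G_w$ is smooth of dimension $3$. Hence $\im(\rho,{}^t\rho_1^{-1})$, being a smooth connected $3$-dimensional subgroup of the smooth $3$-dimensional group $G_w$, equals $G_w^{\circ}$. (One should check $(\rho,{}^t\rho_1^{-1})$ is an immersion away from the scalar kernel, i.e. that the differential on $\gl_2/\mathrm{Z}(\gl_2)$ is injective; this can be read off by comparing the image of the differential with the explicit form \eqref{eq:Lie-alg-43-3}, exactly as in the proof of Proposition \ref{prop:S1-Gw}.)

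The remaining task, and the one I expect to be the main obstacle, is to show $G_w=G_w^{\circ}$, i.e. that the component group is trivial — here I can no longer rely on Galois cohomology tricks and must argue directly with the defining equations. As in the earlier proofs I would pass to $k=\overline k$ and take $g=(g_1,g_2)\in G_w$. Conjugation by $g$ induces an automorphism of $G_w^{\circ}\cong\mathrm{PGL}_2$; since all automorphisms of $\mathrm{PGL}_2$ are inner (in any characteristic, $\mathrm{PGL}_2\cong\mathrm{SO}_3$ has no outer automorphisms), after multiplying $g$ by an element of $G_w^{\circ}$ I may assume $g$ centralizes $G_w^{\circ}$, hence centralizes $\rho(\spl_2)$. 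Schur's lemma applied to the $\spl_2$-module structure of $W=\m_2$ (which is $\aff^1\oplus\lspl_2$, i.e. trivial $\oplus$ adjoint, and these are non-isomorphic irreducibles when $\ch(k)\neq 2$ — the case $\ch(k)=2$ needs the same careful separate analysis as in the proof of Proposition \ref{prop:S1-Gw}, using the invariant line $\lan I_2\ran$ and the action of $\tau_0$ and a diagonal torus) then forces $g_1$ to be block-diagonal in a controlled way; using that $g_1$ fixes $w_0=[1,0,0,1]$ and that $\wedge^2 g_1$ must preserve $\lan w_1,w_2,w_3\ran$, one pins $g_1$ down to lie in $\rho(\gl_2)$, and then the equivariance $\wedge^2\rho_1(g)$-condition \eqref{eq:gw-formula}-style forces $g_2={}^t\rho_1(h)^{-1}$ for the corresponding $h$, so $g\in G_w^{\circ}$. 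The characteristic-$2$ sub-case is where the real work lies, but it parallels the treatment already carried out in Proposition \ref{prop:S1-Gw} and should go through by the same method of tracking invariant subspaces under a maximal torus together with $\tau_0$ and the unipotent one-parameter subgroups.
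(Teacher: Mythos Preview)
Your argument for part (1) and for $G_w^{\circ}=\im(\rho,{}^t\rho_1^{-1})$ matches the paper's (which simply says ``(1) is easy'' and then invokes the dimension count). Your parenthetical about checking that $(\rho,{}^t\rho_1^{-1})$ is an immersion is superfluous: once you know $\kernel(\rho,{}^t\rho_1^{-1})=\mathrm{Z}(\gl_2)$ set-theoretically, the image is a $3$-dimensional subgroup of the smooth $3$-dimensional group $G_w$, and that already forces $G_w^{\circ}=\im(\rho,{}^t\rho_1^{-1})$.

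For the connectedness step $G_w=G_w^{\circ}$ your overall strategy (reduce to $g$ centralizing $G_w^{\circ}\cong\mathrm{PGL}_2$, then constrain) is the same as the paper's, but the execution differs in a way that matters. You invoke Schur's lemma on the $\spl_2$-module $\m_2\cong\aff^1\oplus\lspl_2$, which forces you into a case split since this decomposition fails in characteristic $2$ (there $I_2\in\lspl_2$ and $\m_2$ is not semisimple). Your suggested model for the bad characteristic, Proposition \ref{prop:S1-Gw}, is not quite apt: that proposition assumes $\ch(k)\neq 2$ throughout and its special sub-case is $\ch(k)=3$, so it does not provide a template for $\ch(k)=2$ here. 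The paper avoids Schur entirely and argues uniformly: it commutes $g$ only with the torus image $\rho(\diag(t_1,t_2))=\diag(1,t_1^{-1}t_2,t_1t_2^{-1},1)$, whose weight-space decomposition (valid in every characteristic) forces $g_2$ diagonal and $g_1$ into a constrained $2\times 2$-block form with $h_{11},h_{22}$ diagonal; then the explicit equations $g_1 w_i\,{}^tg_1=c_i w_i$ for $i=1,3$ kill the off-diagonal blocks $h_{12},h_{21}$, leaving $g_1$ diagonal, whence $g\in G_w^{\circ}$. This is both shorter and characteristic-free, which is relevant since the proposition is stated and used later (e.g.\ Corollary \ref{cor:433-3-regular}, Proposition \ref{prop:43-3-orbit}) without any hypothesis on $\ch(k)$.
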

\begin{proof}
(1) is easy 

(2) 
By (1), $\im(\rho,{}^t\rho_1^{-1})\cong\mathrm{PGL}_2$
and so $\dim \im(\rho,{}^t\rho_1^{-1})=3$. 
We have shown in (\ref{eq:Lie-alg-43-3}) 
that $G_w$ is smooth and $\dim G_w=3$. Therefore,  
$G^{\circ}_w=\im(\rho,{}^t\rho_1^{-1})$.

To show that $G_w=G^{\circ}_w$, 
we may assume that $k=\overline k$. 
Suppose that $g=(g_1,g_2)\in G_w$. 
Since $\mathrm{PGL}_2$ has no outer automorphisms, 
by multiplying an element of $G^{\circ}_w$, 
we may assume that $g$ commutes with all elements of $G^{\circ}_w$.

If $t=\diag(t_1,t_2)$ then 
\begin{equation*}
(\rho(t),{}^t\rho_1(t)^{-1})
=(\diag(1,t_1^{-1}t_2,t_1t_2^{-1},1),\diag(t_1^{-1}t_2,1,t_1t_2^{-1})).
\end{equation*}
Since $g$ commutes with this element, 
$g_2$ must be diagonal and $g_1$ is in the form 
\begin{equation*}
g_1 = 
\begin{pmatrix}
h_{11} & h_{12} \\
h_{21} & h_{22}
\end{pmatrix}
\end{equation*}
where $h_{11}\ccd h_{22}\in\m_2$ and $h_{11},h_{22}$ are 
diagonal. 

Since $g$ fixes $w$ and $g_2$ is diagonal, 
there exist scalars $c_1,c_2,c_3$ 
such that $g_1w_i{}^tg_1=c_iw_i$. 
For $i=1,3$, 
\begin{equation}
\label{eq:h12-etc}
\begin{aligned}
& \begin{pmatrix}
h_{11} & h_{12} \\
h_{21} & h_{22} 
\end{pmatrix}
\begin{pmatrix}
J & 0 \\
0 & 0 
\end{pmatrix}
= c_1
\begin{pmatrix}
J & 0 \\
0 & 0 
\end{pmatrix}
\begin{pmatrix}
h_{11} & h_{12} \\
h_{21} & h_{22} 
\end{pmatrix}, \\
& \begin{pmatrix}
h_{11} & h_{12} \\
h_{21} & h_{22} 
\end{pmatrix}
\begin{pmatrix}
0 & 0 \\
0 & J 
\end{pmatrix}
= c_3
\begin{pmatrix}
0 & 0 \\
0 & J 
\end{pmatrix}
\begin{pmatrix}
h_{11} & h_{12} \\
h_{21} & h_{22} 
\end{pmatrix}.
\end{aligned}
\end{equation}
So $h_{12}=h_{21}=0$. Therefore, $g_1$ is diagonal also. 
Now it is easy to  deduce that $g\in G^{\circ}_w$. 
\end{proof}

\begin{cor}
\label{cor:433-3-regular}
$(G,V)$ is a regular \pv. 
\end{cor}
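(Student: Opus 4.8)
The plan is to read the statement off the results already assembled for case (b) and repackage them in the language of Definition \ref{defn:regularity}. Recall that Definition \ref{defn:regularity} calls $(G,V)$ \emph{regular} when it is a prehomogeneous vector space admitting a point $w\in V_k$ whose orbit $Gw$ is Zariski open and whose stabilizer $G_w$ is reductive (hence smooth as a group scheme). So I only need to exhibit such a point, and the natural candidate is $w=(w_1,w_2,w_3,w_0)$ of (\ref{eq:w-defn-sec8b}).

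First I would invoke the Lie algebra computation that led to the explicit form (\ref{eq:Lie-alg-43-3}): it shows $\dim {\mathrm T}_{e_G}(G_w)=3=25-22=\dim G-\dim V$, so Proposition \ref{prop:open-orbit} gives at once that $Gw\subset V$ is Zariski open and that $G_w$ is smooth over $k$. Next, Proposition \ref{prop:43-3-Gwcirc} identifies $G_w=G^{\circ}_w\cong \mathrm{PGL}_2$; since $\mathrm{PGL}_2$ is reductive over any field, $G_w$ is reductive. These two facts are precisely the hypotheses of Proposition \ref{prop:regularity}, whose part (3) then yields that $(G,V)$ is a prehomogeneous vector space --- and in fact the explicit relative invariants $P_1(x)$ of Proposition \ref{prop:section8-b-P1-value} and $P_2(x)$ of (\ref{eq:43-4-P2-equivariant}) make this concrete, with $P_1(w)=1$ so that $w\in V^{\sst}_k$. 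With both conditions of Definition \ref{defn:regularity} in hand, $(G,V)$ is a regular prehomogeneous vector space.

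I expect no genuine obstacle here: all the substantive work --- the dimension count giving openness and smoothness, the construction of relative invariants over $\Z$, and the Schur-type argument identifying $G_w$ with $\mathrm{PGL}_2$ --- has already been carried out in the preceding propositions, and the corollary is a bookkeeping consequence. The only point worth a remark is that it is the reductivity of $G_w$, not merely its smoothness, that distinguishes regularity, and that $\mathrm{PGL}_2$ remains reductive in characteristics $2$ and $3$, so the corollary holds with no restriction on $\ch(k)$.
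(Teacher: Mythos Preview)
Your proposal is correct and follows exactly the intended argument: the corollary is immediate from Proposition \ref{prop:43-3-Gwcirc}, since $G_w\cong\mathrm{PGL}_2$ is reductive and the orbit $Gw$ was already shown to be Zariski open via the dimension count (\ref{eq:Lie-alg-43-3}) and Proposition \ref{prop:open-orbit}. The paper treats this as an unproved corollary for precisely this reason, and your remark that no restriction on $\ch(k)$ is needed in case (b) is also consistent with the paper's treatment.
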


Let $U=\{x\in V\mid P_1(x),P_2(x)\not=0\}$. 
We are in the situation of Section \ref{sec:regularity} 
where $m=N=2$. So by Corollary \ref{cor:reducible-sep-orbit}, 
$U_{k^{\sep}}=G_{k^{\sep}}w$. So we can use the 
standard argument of Galois cohomology. 
Since $\mathrm{PGL}_2$ is the automorphism group of 
$\mathrm{PGL}_2$ itself, 
$\h^1(k,\mathrm{PGL}_2)$ is in bijective correspondence
with $\mathrm{Prg}_2(k)$. 

The proof of the following proposition is similar to that 
of Proposition \ref{prop:S1-orbit-final} and we do not provide the 
proof. 
\begin{prop}
\label{prop:43-3-orbit}
$G_k\backslash U_k$ is in bijective correspondence with 
$\mathrm{Prg}_2(k)$. If $x\in U_k$ the corresponding 
$k$-form of $\mathrm{PGL}_2$ is the stabilizer $G_x$.
%
\end{prop}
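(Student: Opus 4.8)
The plan is to follow the same template as the proof of Proposition \ref{prop:S1-orbit-final}, now applied to the case (b) data. By Corollary \ref{cor:433-3-regular} the pair $(G,V)$ is a regular \pv, and by the discussion preceding the statement we are in the situation of Section \ref{sec:regularity} with $m=N=2$, so Corollary \ref{cor:reducible-sep-orbit} gives $U_{k^{\sep}}=G_{k^{\sep}}w$. Hence $G_k\backslash U_k$ is in bijective correspondence with $\h^1(k,G_w)$ by the usual twisting argument (using $\h^1(k,G)=\{1\}$ since $G=\gl_4\times\gl_3$). By Proposition \ref{prop:43-3-Gwcirc}, $G_w=G^{\circ}_w\cong \mathrm{PGL}_2$, and since $\mathrm{PGL}_2$ is its own automorphism group, $\h^1(k,\mathrm{PGL}_2)$ is in bijective correspondence with $\mathrm{Prg}_2(k)$ (the set of $k$-forms of $\mathrm{PGL}_2$, as in Definition \ref{defn:Ex-defn}(2)). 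Composing these bijections proves the first assertion.

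For the identification of the corresponding $k$-form with the stabilizer $G_x$, I would argue as in Proposition \ref{prop:S1-orbit-final}. Fix $x\in U_k$ and choose $g_x\in G_{k^{\sep}}$ with $x=g_x w$; then the class of the $1$-cocycle $h=(h_{\sig})_{\sig}$, $h_{\sig}=g_x^{-1}g_x^{\sig}$, is the element of $\h^1(k,G_w)$ attached to the orbit of $x$. One has $G_{x\,k^{\sep}}=g_x G_{w\,k^{\sep}}g_x^{-1}$, and an element $g_x g g_x^{-1}$ (with $g\in G_{w\,k^{\sep}}$) is $k$-rational if and only if $g_x^{\sig}g^{\sig}(g_x^{\sig})^{-1}=g_x g g_x^{-1}$, equivalently $h_{\sig}g^{\sig}h_{\sig}^{-1}=g$ for all $\sig\in\gal(k^{\sep}/k)$. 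The set of such $g$, with its $\gal$-action twisted by $h$, is exactly the $k$-form of $\mathrm{PGL}_2$ that the cohomology class $[h]$ classifies. Therefore $G_x$ is that $k$-form, which completes the proof.

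Since the whole argument is formally identical to the case already written out in Section \ref{sec:rational-orbits-53}, the authors declare it routine and omit it, which is why the statement reads ``we do not provide the proof.'' If one did write it out, the only point requiring a word of care is that $G_w\cong\mathrm{PGL}_2$ has trivial center and no outer automorphisms, so that the twisted form construction produces precisely an element of $\mathrm{Prg}_2(k)$ and the correspondence is a genuine bijection rather than merely a surjection; but this is exactly what was used in Proposition \ref{prop:S1-orbit-final} and presents no new difficulty here.
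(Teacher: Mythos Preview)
Your proposal is correct and matches the paper's intended approach exactly: the paper explicitly states that the proof is similar to that of Proposition \ref{prop:S1-orbit-final} and omits it, and your write-up supplies precisely that argument. If anything, the present case is slightly simpler than Proposition \ref{prop:S1-orbit-final} because here $G_w\cong\mathrm{PGL}_2$ on the nose (Proposition \ref{prop:43-3-Gwcirc}) rather than $\mathrm{PGL}_2\times\gl_1$, so there is no need to quotient by a central torus $\widetilde T$; the stabilizer $G_x$ itself is already the desired $k$-form.
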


\section{Rational orbits (6)}
\label{sec:rational-orbits-wedge3-32}

Let (a) $G=\gl_3\times \gl_2^2\times \gl_1$ and 
$V=\wedge^2 \aff^3\oplus \aff^3\otimes \m_2$, 
(b) $G=\gl_3\times \gl_2^3$ and 
$V=\wedge^2 \aff^3\otimes \aff^2 \oplus \aff^3\otimes \m_2$. 
The actions of $G$ for these cases will be defined 
later when they are needed.

We use the notation such as $p_{3,12}$, etc.

We first consider the case (a). 
We express elements of $V$ as $(A,B)$ where 
$A=a_1p_{3,12}+a_2p_{3,13}+a_3p_{3,23}$, 
\begin{equation*}
B_i = 
\begin{pmatrix}
b_{i,11} & b_{i,12} \\
b_{i,21} & b_{i,22} 
\end{pmatrix}
\end{equation*}
for $i=1,2,3$ and 
$B=\bbmp_{3,1}\otimes B_1 + \bbmp_{3,2}\otimes B_2 + \bbmp_{3,3}\otimes B_3$. 
We sometimes write $B=(B_1,B_2,B_3)$. 
If $x=(A,B)\in V$ then we write $a_1(x),a_2(x),a_3(x),A(x),B(x)$ 
for these $a_1,a_2,a_3,A,B$. 
We identify $\wedge^2 \aff^3$ with the space of $3\times 3$ 
alternating matrices. 

We consider the natural action of $\gl_3\times \gl_2^2$ on $V$.
The action of $t\in \gl_1$ on $V$ is defined by 
$V\ni (A,B)\mapsto (tA,B)\in V$. 
We shall show  the existence of
non-zero relative invariant polynomials first without 
any condition on $\ch(k)$ and then show that 
$(G,V)$ is a regular \pv{} assuming that $\ch(k)\not=2$.

Let 
\begin{equation*}
\Phi(x) = a_1(x)B_3(x)-a_2(x)B_2(x)+a_3(x)B_1(x) \in \m_2
\end{equation*}
be the element obtained by applying Lemma \ref{lem:natural-pairing} 
to $A(x),B(x)$. We put $P_1(x) = \det \Phi(x)$.  
Let $P_2(x)$ be the degree $6$ polynomial of $B$
obtained by Proposition \ref{prop:322-invariant}. 
Let $g=(g_1,g_2,g_3,t)\in G$. Then 
\begin{math}
A(gx) = t(\wedge^2 g_1)A(x),\; 
B(gx) = g_1\otimes (g_2,g_3)B(x)
\end{math}
and 
\begin{equation}
\label{eq:322,3-invariants}
\begin{aligned}
& \Phi(gx) = t(\det g_1) (g_2,g_3) \Phi(x), \\
& P_1(gx) = t^2(\det g_1)^2(\det g_2)(\det g_3)P_1(x), \\
& P_2(gx) = (\det g_1)^2(\det g_2)^3(\det g_3)^3P_2(x). 
\end{aligned}
\end{equation}
Note that the action of $(g_2,g_3)$ on $\m_2$ is 
$\m_2\ni M\mapsto g_2 M{}^t g_3\in \m_2$. 
So $P_1(x),P_2(x)$ are relative invariant polynomials. 

Let $R_{322}$ be the element $w$  in (\ref{eq:322-generator2}). 
We put 
\begin{equation}
\label{eq:R(322-3)-defn}
R_{322,3} = (p_{3,23},R_{322}).
\end{equation}
It is easy to see that $\Phi(R_{322,3})$ is $w_1$ 
in (\ref{eq:B123-defn}). 
So $P_1(R_{322,3})=-1$. By Proposition \ref{prop:322-invariant}, 
$P_2(R_{322,3})=1$. 
We identify $p_{3,23}$ with  
\begin{equation}
\label{eq:32-3A-defn}
A = 
\begin{pmatrix}
0 & 0 & 0 \\
0 & 0 & 1 \\
0 & -1 & 0 
\end{pmatrix}.
\end{equation}
By the above consideration, $P_1,P_2\not=0$. 

Let $T$ be the torus consisting of elements of the form 
\begin{equation}
\label{eq:T-t-322-3}
t = \left(
\begin{pmatrix}
(t_1t_3)^{-1} & 0 & 0 \\
0 & (t_1^2t_2^{-1}t_3)^{-1} & 0 \\
0 & 0 & (t_2t_3)^{-1}
\end{pmatrix},
\begin{pmatrix}
t_1 & 0 \\
0 & t_2 
\end{pmatrix},
\begin{pmatrix}
t_3 & 0 \\
0 & t_1t_2^{-1}t_3
\end{pmatrix},t_1^2t_3^2
\right). 
\end{equation}
Elements of $T$ fix $R_{322,3}$.

\begin{prop}
\label{prop:322-3-regular}
Suppose that $\ch(k)\not=2$.
\begin{itemize}
\item[(1)]
$(G,V)$ is a regular \pv. 
\item[(2)]
$\{x\in V_{k^{\sep}}\mid P_1(x),P_2(x)\not=0\}=G_{k^{\sep}}R_{322,3}$. 
\item[(3)]
$G^{\circ}_{R_{322,3}}=T$.
\end{itemize}
\end{prop}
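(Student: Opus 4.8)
The plan is to establish Proposition \ref{prop:322-3-regular} by the same three-stage pattern used throughout this section: first pin down the Lie algebra of the stabilizer of $R_{322,3}$ to count dimensions, then identify the full stabilizer group-theoretically, and finally deduce regularity and the separable-closure orbit statement from the already-developed machinery. Concretely, I would first write elements of ${\mathrm T}_{e_G}(G)$ as $e_G+\vep(X,Y,Z,a)$ with $X\in\m_3$, $Y,Z\in\m_2$, $a\in\aff^1$, and impose $(e_G+\vep(X,Y,Z,a))R_{322,3}=R_{322,3}$. Since $R_{322,3}=(p_{3,23},R_{322})$ and $R_{322}=w$ of Case IV, the second-component condition is governed by Lemma \ref{lem:S14-XYZ-form} (which forces $X,Y,Z$ into the explicit restricted forms there, with the one linear relation $y_{11}+z_{11}=y_{22}+z_{22}$), while the first-component condition $X\cdot A + A\cdot {}^tX + a A = 0$ with $A$ as in (\ref{eq:32-3A-defn}) cuts this down further. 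A direct computation of $X A + A\,{}^tX$ with the $X$ from Lemma \ref{lem:S14-XYZ-form} (so $X$ depends on $y_{11},y_{12},y_{21},y_{22},z_{11},z_{22}$) together with the scalar $a$ should leave exactly a $3$-parameter family, matching $\dim G-\dim V = (9+4+4+1)-(3+12)=18-15=3$. The expected parametrization is the $3$-torus $T$ of (\ref{eq:T-t-322-3}), whose Lie algebra I would check directly annihilates $R_{322,3}$.

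Once $\dim {\mathrm T}_{e_G}(G_{R_{322,3}})=3=\dim G-\dim V$ is confirmed, Proposition \ref{prop:open-orbit} immediately gives that $GR_{322,3}\subset V$ is Zariski open and $G_{R_{322,3}}$ is smooth over $k$; in particular $G^{\circ}_{R_{322,3}}$ is the connected $3$-dimensional group containing $T$, and since $\dim T=3$ and $T$ is connected we get $G^{\circ}_{R_{322,3}}=T$, proving (3). For the remaining part of (1), I need $G_{R_{322,3}}$ to be reductive; since $G^{\circ}_{R_{322,3}}=T$ is a torus, it suffices to show $G_{R_{322,3}}$ is smooth with reductive identity component, which we already have. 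Then Proposition \ref{prop:regularity} applies and $(G,V)$ is a regular \pv, giving (1). (Strictly one should also check $G_{R_{322,3}}/G^{\circ}_{R_{322,3}}$ is finite, which is automatic, and that the conditions of Proposition \ref{prop:regularity} hold, namely that the open orbit has reductive stabilizer — which is exactly reductivity of $T$.)

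For (2), I am in the setting of Section \ref{sec:regularity} with $V=V_1\oplus V_2$, $V_1=\wedge^2\aff^3$, $V_2=\aff^3\otimes\m_2$, and two relative invariant polynomials $P_1,P_2$ with the character data recorded in (\ref{eq:322,3-invariants}); since $N=2$ and we have exhibited $m=2$ distinct invariant hypersurfaces $\{P_1=0\}$, $\{P_2=0\}$ (distinct because their weights differ, as the exponents in (\ref{eq:322,3-invariants}) are not proportional), Corollary \ref{cor:reducible-sep-orbit} applies once I verify that $V\setminus GR_{322,3}$ has exactly these two codimension-one components. This last point follows from Proposition \ref{prop:number-of-components} ($m\le N=2$) together with the fact that both $P_1$ and $P_2$ vanish somewhere and are non-constant, so $m\ge 2$, hence $m=2$; then Corollary \ref{cor:reducible-sep-orbit} gives $\{x\in V_{k^{\sep}}\mid P_1(x),P_2(x)\neq 0\}=G_{k^{\sep}}R_{322,3}$, which is (2).

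The main obstacle I anticipate is the Lie algebra computation in the first stage: superimposing the constraint coming from $X A+A\,{}^tX+aA=0$ (which involves the first, second and third columns/rows of the $\m_3$-matrix $X$ interacting with the specific alternating matrix $A=p_{3,23}$) onto the already-constrained $(X,Y,Z)$ from Lemma \ref{lem:S14-XYZ-form} requires care to make sure the relations are independent and cut the dimension down to exactly $3$ rather than $2$ or $4$; the bookkeeping between the two different labelings of the $\gl_3$-block (Lemma \ref{lem:S14-XYZ-form} uses a basis adapted to $R_{322}$, and here we also act on $p_{3,23}$) is where an error could creep in. I would double-check the final count by the dimension formula $\dim G-\dim V=3$ and by confirming that the explicit torus $T$ of (\ref{eq:T-t-322-3}) already furnishes a $3$-dimensional family fixing $R_{322,3}$, so that the Lie algebra cannot be larger than $3$-dimensional; combined with the formula this forces equality and sidesteps the need to verify every vanishing by hand. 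The rest of the argument is a routine invocation of Propositions \ref{prop:open-orbit}, \ref{prop:regularity} and Corollary \ref{cor:reducible-sep-orbit}, exactly as in the earlier cases of this section.
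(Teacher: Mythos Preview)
Your proposal is correct and follows essentially the same approach as the paper: compute the Lie algebra of the stabilizer by combining Lemma~\ref{lem:S14-XYZ-form} (for the $R_{322}$ component) with the alternating-matrix condition $XA+A\,{}^tX+aA=0$ (for the $p_{3,23}$ component), match dimensions via Proposition~\ref{prop:open-orbit}, identify $G^{\circ}_{R_{322,3}}=T$, and then invoke Corollary~\ref{cor:reducible-sep-orbit} with $m=N=2$. The paper carries out the Lie algebra step explicitly (the first-component condition yields entries $2y_{21}$, $-2y_{12}$, $-2y_{11}-2z_{11}+a$, so $\ch(k)\neq 2$ is used precisely to force $y_{12}=y_{21}=0$), whereas you treat it as a routine check backed up by the dimension count and the explicit torus $T$; both arrive at the same conclusion.
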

\begin{proof}
We put $R=R_{322,3}$. 
We first determine the Lie algebra of the stabilizer $G_R$. 
Let $X=(x_{ij})\in\m_3,Y=(y_{ij}),Z=(z_{ij})\in\m_2,a\in \aff^1$.
Suppose that $(e_G+\vep(X,Y,Z,a))R=R$. 
We may assume that $X,Y,Z$ are in the form in Lemma \ref{lem:S14-XYZ-form}.  
Since 
\begin{equation*}
XA+A{}^tX + aA 
= \begin{pmatrix}
0 & 2y_{21} & -2y_{12} \\
* & 0 & -2y_{11}-2z_{11}+a \\
* & * & 0  
\end{pmatrix}=0, 
\end{equation*}
$y_{12}=y_{21}=0$, $a=2y_{11}+2z_{11}$. 
So 
\begin{equation*}
X = \diag(-y_{11}-z_{11},-y_{11}-z_{22},-y_{22}-z_{11}),\;
Y = \diag(y_{11},y_{22}),\;
Z = \diag(z_{11},z_{22})
\end{equation*}
and $y_{11}+z_{11}=y_{22}+z_{22},a=2y_{11}+2z_{11}$. 
This implies that 
$\dim {\mathrm T}_{e_G}(G_R)=3=18-15=\dim G-\dim V$. 
So, Proposition \ref{prop:open-orbit} implies that 
$GR\sub V$ is Zariski dense and that $G_R$ 
is smooth over $k$. 
Since $\dim T=3$, $G^{\circ}_R=T$. Therefore, $(G,V)$ is regular. 

The assumption of Corollary \ref{cor:reducible-sep-orbit} 
is satisfied and the second assertion of the proposition follows.  
\end{proof}

Let $\tau_0$ be as in (\ref{eq:J-defn}) and 
\begin{equation}
\label{eq:332-3-tau-defn}
\tau_1 = 
\begin{pmatrix}
-1 & 0 & 0 \\
0 & 0 & 1 \\
0 & 1 & 0 
\end{pmatrix},\;
\tau = (\tau_1,\tau_0,\tau_0,-1)\in G_k.
\end{equation}
Then $\tau R=R$. 

\begin{lem}
\label{lem:332-3-GRcirc}
$G_R/G^{\circ}_R$ is represented by 
$\{1,\tau\}$. 
\end{lem}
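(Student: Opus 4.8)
The plan is to show that $G_R/G^{\circ}_R$ has order $2$, and that $\tau\notin G^{\circ}_R$, so that the coset $\tau G^{\circ}_R$ together with $G^{\circ}_R$ exhausts $G_R$. The argument would run over $\overline k$, as the stabilizer computation is geometric; since $\ch(k)\neq 2$ we know from Proposition \ref{prop:322-3-regular} that $G^{\circ}_R=T$ is a torus of dimension $3$. First I would verify directly (a routine matrix computation, which I would not grind through) that $\tau R=R$, so $\tau\in G_{R\,k}$; since $\tau_1^2=I_3$, $\tau_0^2=I_2$ and $(-1)^2=1$, we have $\tau^2=e_G$, and $\tau\notin T$ because its $\gl_2$-components are anti-diagonal rather than diagonal. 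Hence $\langle T,\tau\rangle\cong T\rtimes\Z/2\Z\sub G_R$, and it remains to show there is nothing more.

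For the upper bound, suppose $g=(g_1,g_2,g_3,t)\in G_{R\,\overline k}$. The key reduction is to use the equivariant maps already constructed. Applying the map $A\mapsto A(x)$, $g$ must fix (up to scalar) the alternating matrix $A$ in \eqref{eq:32-3A-defn}, whose radical is the line $\lan \bbmp_{3,1}\ran$; so $g_1$ preserves this line and induces an element of $\gl_2$ on $\aff^3/\lan\bbmp_{3,1}\ran$ preserving a symplectic form up to scalar, i.e.\ the induced map lies in $\gl_2$ acting on a $2$-dimensional space. More usefully, I would push the analysis onto the second component $B(R)=R_{322}$ via Proposition \ref{prop:322-invariant} and Proposition \ref{prop:HR-circ}: the map $V\ni x\mapsto B(x)$ is equivariant for the projection $G\to \gl_3\times\gl_2^2$ (forgetting the last $\gl_1$), and $B(R)=R_{322}=w$ of \eqref{eq:322-generator2-alternative}. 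By Proposition \ref{prop:HR-circ}, the stabilizer of $R_{322}$ in $\gl_3\times\gl_2^2$ is connected, equal to $\im(\phi)$, a $5$-dimensional torus-by-unipotent... in fact $\cong\gl_2\times\gl_1$. So $(g_1,g_2,g_3)\in\im(\phi)$, which pins down $g_1$ in terms of $(g_2,g_3)$; combined with the constraint from fixing $A$, one finds $(g_2,g_3)$ must be diagonal or anti-diagonal (the "$\tau_0$" ambiguity), and then the scalar $t$ is forced by $t(\wedge^2g_1)A=A$. Tracking the two cases gives exactly $g\in T$ or $g\in\tau T$.

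Alternatively, and perhaps more cleanly, I would use the equivariant map $\Phi:V\to\m_2$, $\Phi(x)=a_1 B_3-a_2 B_2+a_3 B_1$, which by \eqref{eq:322,3-invariants} intertwines the $G$-action with the action of $(g_2,g_3,t,\det g_1)$ on $\m_2$ by $M\mapsto t(\det g_1)\,g_2 M\,{}^tg_3$, and $\Phi(R)=w_1=\diag(-1,1)$ of \eqref{eq:B123-defn}. So $(g_2,g_3)$ (up to the scalar $t\det g_1$) stabilizes $\diag(-1,1)$ under $M\mapsto g_2M\,{}^tg_3$; the stabilizer of a nondegenerate symmetric matrix under this twisted action is, up to scalars, $\mathrm O_2$, which has two components — the diagonal torus and its anti-diagonal coset represented by $(\tau_0,\tau_0)$. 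That is the source of the $\Z/2\Z$. One then checks that once $(g_2,g_3)$ is chosen in the anti-diagonal coset, multiplying $g$ by $\tau$ brings us back to the case $(g_2,g_3)$ diagonal, where the remaining constraints (from fixing $A$ and from $B$) force $g_1$ diagonal and everything back into $T=G^{\circ}_R$.

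The main obstacle will be the bookkeeping in the "diagonal case": after using $\Phi$ (or $B$) to reduce to $(g_2,g_3)$ diagonal, one must show that the full system $g_1 A\,{}^tg_1=\text{(scalar)}A$, $g_1\otimes(g_2,g_3)$ fixing $R_{322}$, and $t(\wedge^2g_1)A=A$ forces $g\in T$ and not merely into some larger subgroup. This is the kind of explicit but unenlightening linear algebra the paper typically dispatches with "it is easy to see"; I expect to invoke Proposition \ref{prop:HR-circ} (or a Schur's-lemma argument on $g_1$ acting on the span $\lan R_{322}\text{-data}\ran$) to conclude $g_1$ is forced to be diagonal, after which the torus relations in \eqref{eq:T-t-322-3} are matched off coordinate by coordinate. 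No characteristic hypothesis beyond $\ch(k)\neq 2$ (already in force) is needed, since the component count of $\mathrm O_2$ is characteristic-free away from $2$.
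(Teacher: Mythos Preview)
Your Approach~1 is essentially correct and is close to the paper's argument, though the order is inverted. The paper first exploits that $G_R$ normalizes $G^{\circ}_R=T$: any $g\in G_R$ must permute the $T$-weight spaces in $\wedge^2\aff^3$, which are $\lan p_{3,12}\ran,\lan p_{3,13}\ran,\lan p_{3,23}\ran$ with weights $t_1^{-1}t_2,\,t_1t_2^{-1},\,1$. The trivial weight space $\lan p_{3,23}\ran$ is therefore fixed and the other two are either fixed or swapped; after multiplying by $\tau$, an explicit cofactor computation shows $g_1$ is diagonal, and \emph{then} Proposition~\ref{prop:HR-circ} is invoked to conclude the parameter $h_0$ is diagonal, whence $g\in T$. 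Your route applies Proposition~\ref{prop:HR-circ} first to write $(g_1,g_2,g_3)=\phi(h,q)$, then imposes the $A$-constraint; via the explicit formula \eqref{eq:rho1(g)} for $\rho_1(h,1)$ this gives $h_{11}h_{12}=h_{21}h_{22}=0$, so $h$ is diagonal or anti-diagonal. The paper's normalizer-of-torus step is conceptually cleaner (no need to write out $\rho_1$), while your order keeps everything inside a single $\gl_2$ computation. One small slip: the $A$-constraint $t(\wedge^2 g_1)p_{3,23}=p_{3,23}$ says $g_1$ preserves the \emph{plane} $\lan\bbmp_{3,2},\bbmp_{3,3}\ran$ (equivalently ${}^tg_1$ preserves the radical line), not that $g_1$ preserves $\lan\bbmp_{3,1}\ran$ itself; this does not affect your argument.

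Your Approach~2 as written contains a genuine error: the stabilizer of $w_1=\diag(-1,1)$ under $(g_2,g_3)\cdot M=g_2M\,{}^tg_3$ up to scalar is \emph{not} $\mathrm O_2$. Since $g_2$ and $g_3$ are independent, for any $g_2\in\gl_2$ there is a $g_3$ with $g_2w_1{}^tg_3\in k^{\times}w_1$, so the $\Phi$-constraint alone cuts out a group of dimension $\geq 4$, not $1$. The $\Z/2\Z$ only emerges after you also impose $(g_1,g_2,g_3)\in\im(\phi)$ via Proposition~\ref{prop:HR-circ}: then $g_2=qh$, $g_3={}^th^{-1}$, so $g_2w_1{}^tg_3=qhw_1h^{-1}$, and the condition $hw_1h^{-1}\in k^{\times}w_1$ forces $h$ to normalize the diagonal torus (since $w_1$ has distinct eigenvalues), giving the two cosets. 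In other words, Approach~2 is only salvageable by collapsing it into Approach~1.
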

\begin{proof}
If $(h,h',h'',t)\in G_R$ ($h=(h_{ij}),h'=(h'_{ij}),h''=(h''_{ij})$) 
then this element sends weight spaces (resp. trivial weight spaces) 
of $T$ to weight spaces (resp. trivial weight spaces). 
The subspaces spanned by $p_{3,12},p_{3,13},p_{3,23}$ 
are weight spaces and the weights of $t\in T$ in (\ref{eq:T-t-322-3}) 
are $t_1^{-1}t_2,t_1t_2^{-1},1$ respectively. 
So $h$ leaves $\lan p_{1,23}\ran$ invariant and 
either fixes or exchanges the subspaces 
$\lan p_{3,12}\ran,\lan p_{3,13}\ran$.
Since $\tau$ exchanges $\lan p_{3,12}\ran,\lan p_{3,13}\ran$,  
by multiplying $\tau$ if necessary, we may assume that 
$h$ fixes the subspaces 
$\lan p_{3,12}\ran,\lan p_{3,13}\ran,\lan p_{3,23}\ran$.  

Since
\begin{align*}
hp_{3,12}{}^th & = 
\begin{pmatrix}
0 & h_{11}h_{22} - h_{12}h_{21} 
& h_{11}h_{32} - h_{12}h_{31} \\
-h_{11}h_{22} + h_{12}h_{21} 
& 0 &  h_{21}h_{32} - h_{22}h_{31} \\
-h_{11}h_{32} + h_{12}h_{31}  
& -h_{21}h_{32} + h_{22}h_{31} & 0 
\end{pmatrix}, \\
hp_{3,13}{}^th & =
\begin{pmatrix}
0 & h_{11}h_{23} - h_{13}h_{21} 
& h_{11}h_{33} - h_{13}h_{31} \\
-h_{11}h_{23} + h_{13}h_{21} 
& 0 &  h_{21}h_{33} - h_{23}h_{31} \\
-h_{11}h_{33} + h_{13}h_{31}  
& -h_{21}h_{33} + h_{23}h_{31} & 0 
\end{pmatrix}, \\
hp_{3,23}{}^th & = 
\begin{pmatrix}
0 & h_{12}h_{23} - h_{13}h_{22} 
& h_{12}h_{33} - h_{13}h_{32} \\
-h_{12}h_{23} + h_{13}h_{22} 
& 0 & h_{22}h_{33} - h_{23}h_{32} \\
-h_{12}h_{33} + h_{13}h_{32}  
& -h_{22}h_{33} + h_{23}h_{32} & 0 
\end{pmatrix}, 
\end{align*}
we have
\begin{align*}
& h_{11}h_{32} - h_{12}h_{31} 
= h_{21}h_{32} - h_{22}h_{31} =0, \\
& h_{11}h_{23} - h_{13}h_{21} 
= h_{21}h_{33} - h_{23}h_{31} =0, \\
& h_{12}h_{23} - h_{13}h_{22} 
= h_{12}h_{33} - h_{13}h_{32} =0. 
\end{align*}
This means that the matrix $(\det h)h^{-1}$ 
is diagonal.  Therefore, $h$ is a diagonal matrix.

%

By Proposition \ref{prop:HR-circ}, 
there exists 
$h_0\in\gl_2,q\in \gl_1$ such that 
$(h,h',h'')=({}^t\rho_1(h_0,q)^{-1},qh_0,{}^th_0{}^{-1})$ 
where $\rho_1(h_0,q)$ is as in (\ref{eq:S14-phi-defn}). 
Since $h$ is diagonal, $\rho_1(h_0,q)$ is diagonal. 
By (\ref{eq:rho1(g)}), $h_0$ must be diagonal. 
So $h'=qh_0,h''={}^th_0^{-1}$ are diagonal. 
Then it is easy to verify that $(h,h',h'',t)\in G^{\circ}_R$. 
\end{proof}

Let $V_1=\aff^3\otimes \m_2$. 
We construct an equivariant map 
$V_1\to \aff^3\otimes \sym^2 \aff^2$. 
We define a map 
$\Phi_1:V_1\to V_1^{4\otimes}$ 
so that 
\begin{equation*}
\Phi_1(x)=x\otimes x\otimes x\otimes x.
\end{equation*}
We define a linear map 
\begin{math}
\Phi_2:V_1^{4\otimes}\to 
(\aff^3)^{4\otimes} \otimes (\aff^2\otimes \aff^2)^{4\otimes}
\end{math}
so that 
\begin{equation*}
\Phi_2(v_1\otimes \cdots \otimes v_{12})
= v_1\otimes v_4\otimes v_7\otimes v_{10}
\otimes v_2\otimes v_3\otimes v_5\otimes v_6
\otimes v_8\otimes v_9\otimes v_{11}\otimes v_{12}.
\end{equation*}
We define a linear map 
\begin{math}
\Phi_3:(\aff^3)^{4\otimes} \otimes 
(\aff^2\otimes \aff^2)^{4\otimes}
\to \aff^3\otimes \sym^2 \aff^2
\end{math}
so that 
\begin{align*}
& \Phi_3(v_1\otimes v_2\otimes v_3\otimes v_4\otimes v_5\otimes v_6
\otimes v_7\otimes v_8\otimes v_9\otimes v_{10}\otimes v_{11}\otimes v_{12}) \\
& = (v_1\wedge v_2\wedge v_3)(v_5\wedge v_7)(v_9\wedge v_{11})
(v_6\wedge v_{10}) v_4\otimes v_8v_{12}. 
\end{align*}
Let $\Phi_4=\Phi_3\circ\Phi_2\circ\Phi_1$. 
Suppose that $g=(g_1,g_2,g_3,t)\in G$, 
$x=(x_1,x_2)\in V$ where $x_2\in \aff^3\otimes \aff^2\otimes \aff^2$. 
Then it is easy to see that 
\begin{equation}
\label{eq:322-equivariant}
\Phi_4((g_1,g_2,g_3)x_2) =(\det g_1)(\det g_2)^2(\det g_3)
(g_1,g_3)\Phi_4(x_2). 
\end{equation}

We write $u_i=\bbmp_{2,i}$ for $i=1,2$.  
Note that $u_iu_j\in \sym^2\aff^2$ for all $i,j$. 
Long but straightforward computations show the following proposition. 
We do not provide the details. 
\begin{lem}
\label{lem:equivariant-332-3}
\begin{math}
\Phi_4(R_{322})
= 6 \bbmp_{3,1}\otimes u_1u_2 
-3 \bbmp_{3,2} \otimes u_2^2 
+3 \bbmp_{3,3} \otimes u_1^2.  
\end{math}
\end{lem}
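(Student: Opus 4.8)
The plan is to verify the identity $\Phi_4(R_{322})= 6\,\bbmp_{3,1}\otimes u_1u_2 - 3\,\bbmp_{3,2}\otimes u_2^2 + 3\,\bbmp_{3,3}\otimes u_1^2$ by a direct computation, organized so that the bookkeeping stays manageable. First I would record the input: by (\ref{eq:322-generator2}) we have $R_{322}=w=\bbmp_{3,1}\otimes(-q_{11}+q_{22})+\bbmp_{3,2}\otimes q_{12}+\bbmp_{3,3}\otimes q_{21}$, which as an element of $\aff^3\otimes\aff^2\otimes\aff^2$ is a sum of four simple tensors, say $t_1=-\bbmp_{3,1}\otimes u_1\otimes u_1$, $t_2=\bbmp_{3,1}\otimes u_2\otimes u_2$, $t_3=\bbmp_{3,2}\otimes u_1\otimes u_2$, $t_4=\bbmp_{3,3}\otimes u_2\otimes u_1$ (using $u_i=\bbmp_{2,i}$ and the convention $q_{ij}=\bbmp_{2,i}\otimes\bbmp_{2,j}$).

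Next I would expand $\Phi_1(R_{322})=R_{322}^{\otimes 4}=\sum_{(i_1,i_2,i_3,i_4)}t_{i_1}\otimes t_{i_2}\otimes t_{i_3}\otimes t_{i_4}$, giving $4^4=256$ terms, but most will be killed. The key simplification is that $\Phi_3$ applies the wedge $v_1\wedge v_2\wedge v_3$ on the $\aff^3$-slots coming from the first, second, third tensor factors; after $\Phi_2$ reshuffles, these are the $\aff^3$-components of $t_{i_1},t_{i_2},t_{i_3}$. Since the $\aff^3$-components of $t_1,t_2$ are both $\bbmp_{3,1}$, of $t_3$ is $\bbmp_{3,2}$, of $t_4$ is $\bbmp_{3,3}$, the triple wedge $v_1\wedge v_2\wedge v_3$ is nonzero only when $\{$the $\aff^3$-labels of $t_{i_1},t_{i_2},t_{i_3}\}=\{1,2,3\}$ as a set, i.e.\ exactly one of $i_1,i_2,i_3$ lies in $\{1,2\}$, one equals $3$, one equals $4$. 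That is $2\cdot 3!=12$ choices for $(i_1,i_2,i_3)$ up to the $\pm$ sign from the wedge, and $i_4$ is then free ($4$ choices). So only $48$ of the $256$ terms survive the $\aff^3$-wedge, and for each I would track (i) the sign $\sgn$ from $v_1\wedge v_2\wedge v_3 = \pm\, \bbmp_{3,1}\wedge\bbmp_{3,2}\wedge\bbmp_{3,3}=\pm 1$, (ii) the sign $-1$ from $t_1$ whenever the $t_1$-term is used, (iii) the scalars $(v_5\wedge v_7)(v_9\wedge v_{11})(v_6\wedge v_{10})$ evaluated on the appropriate $\aff^2$-slots (each $v_i\wedge v_j$ is $u_a\wedge u_b = \pm 1$ or $0$), and (iv) the surviving output $v_4\otimes v_8 v_{12}\in\aff^3\otimes\sym^2\aff^2$, where $v_4$ is the $\aff^3$-label of $t_{i_4}$ and $v_8,v_{12}$ are two of the $\aff^2$-components. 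After substituting $u_1\wedge u_2=1$, $u_2\wedge u_1=-1$, $u_i\wedge u_i=0$, I expect each of the three target monomials $\bbmp_{3,1}\otimes u_1u_2$, $\bbmp_{3,2}\otimes u_2^2$, $\bbmp_{3,3}\otimes u_1^2$ to collect a net coefficient of $6$, $-3$, $3$ respectively, and all other possible monomials (e.g.\ $\bbmp_{3,1}\otimes u_1^2$, $\bbmp_{3,2}\otimes u_1u_2$, etc.) to cancel to zero.

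The main obstacle is purely the combinatorial bookkeeping of signs: three independent sources of $\pm 1$ (the $\aff^3$-wedge, the $t_1$ factor, the three $\aff^2$-wedges) must be combined correctly across $48$ terms, and it is easy to drop a sign or miscount a multiplicity such as the factor $3$ (which should arise because the distinguished $\aff^3$-slot among the first three can be any of the three positions). To keep this honest I would organize the sum by the value of $i_4\in\{1,2,3,4\}$, handling the $i_4\in\{1,2\}$ case (output label $\bbmp_{3,1}$) and the $i_4=3,4$ cases separately, and within each, enumerate the $3!$ orderings of the labels $\{*,3,4\}$ on positions $(i_1,i_2,i_3)$ (with $*\in\{1,2\}$). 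Since the statement explicitly says ``long but straightforward computations show the following proposition; we do not provide the details,'' my proof proposal is simply to say: unwind the definitions of $\Phi_1,\Phi_2,\Phi_3$ on the four-term expansion of $R_{322}$, observe that the $\aff^3$-wedge forces the stated sparsity pattern, and evaluate the residual $\aff^2$-wedges; the resulting finite sum of signed monomials simplifies to the claimed expression. No characteristic hypothesis or auxiliary lemma is needed since everything is defined over $\Z$.
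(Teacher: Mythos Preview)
Your proposal is correct and matches the paper's approach exactly: the paper states that ``long but straightforward computations show the following'' and omits the details, and what you outline is precisely that computation. Your reduction from $256$ to $48$ terms via the $\aff^3$-wedge constraint on the first three factors is accurate, and your plan to organize the remaining sign bookkeeping by the value of $i_4$ is a sound way to carry out the verification.
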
 

Let $\Phi_5:V\to\sym^2 \aff^2$ be the map
which is obtained by applying Lemma \ref{lem:natural-pairing} 
to the $\wedge^2\aff^3$ component of $V$ and the $\aff^3$-factor 
of $\Phi_4$.  The following proposition follows from 
(\ref{eq:322-equivariant}) and Lemma \ref{lem:equivariant-332-3}.

\begin{prop}
\label{prop:322,3-equivariant}
Suppose that $g=(g_1,g_2,g_3,t)\in G$ and $x\in V$.  
\begin{itemize}
\item[(1)]
$\Phi_5(gx) = t(\det g_1)^2(\det g_2)^2(\det g_3)g_3\Phi_5(x)$.
\item[(2)]
$\Phi_5(R)=6u_1u_2$
\end{itemize}
\end{prop}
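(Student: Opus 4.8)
The plan is to prove Proposition \ref{prop:322,3-equivariant} by reducing each of the two statements to computations that have already been set up. For part (1), the map $\Phi_5$ is defined as the composite obtained by applying Lemma \ref{lem:natural-pairing} to the $\wedge^2\aff^3$-component $A(x)$ of $x$ and the $\aff^3$-factor of $\Phi_4(B(x))$, where $B(x)\in\aff^3\otimes\m_2$ is the second component. So the equivariance of $\Phi_5$ is just the composition of two equivariances: first, the equivariance of $\Phi_4$ restricted to the $\aff^3\otimes\aff^2\otimes\aff^2$-component, which is equation (\ref{eq:322-equivariant}), giving the factor $(\det g_1)(\det g_2)^2(\det g_3)$ together with the action of $(g_1,g_3)$ on $\aff^3\otimes\sym^2\aff^2$; second, Lemma \ref{lem:natural-pairing} applied with the vector spaces being $\aff^1$ (from $\wedge^3\aff^3$) and $\sym^2\aff^2$, and $n=3$, which pairs $A(x)\in\wedge^2\aff^3$ against the $\aff^3=(\aff^3)^*\otimes\wedge^3\aff^3$-factor. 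One must also remember the $\gl_1$-factor: $t$ acts on the $\wedge^2\aff^3$-component by scalar multiplication by $t$, so $A(gx)=t(\wedge^2 g_1)A(x)$, which contributes the single factor of $t$. Keeping careful track of the determinant powers: the $\wedge^2\aff^3$-pairing against $\aff^3$ contributes one more $\det g_1$ (from identifying $\wedge^3\aff^3\cong\aff^1$), so together with the $(\det g_1)$ already present from $\Phi_4$ we get $(\det g_1)^2$; the $g_3$-action on $\Phi_5(x)\in\sym^2\aff^2$ is what survives since the $g_1$-action is absorbed into the pairing with $A$. This yields exactly the stated formula.

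For part (2), I would evaluate $\Phi_5(R)$ where $R=R_{322,3}=(p_{3,23},R_{322})$. By Lemma \ref{lem:equivariant-332-3}, $\Phi_4(R_{322})=6\bbmp_{3,1}\otimes u_1u_2-3\bbmp_{3,2}\otimes u_2^2+3\bbmp_{3,3}\otimes u_1^2$. Applying Lemma \ref{lem:natural-pairing} to pair this $\aff^3$-factor against $A=p_{3,23}$, identified with the alternating matrix in (\ref{eq:32-3A-defn}): the natural pairing of $p_{3,23}\in\wedge^2\aff^3$ with $\bbmp_{3,i}\in\aff^3$ picks out, via $\wedge^3\aff^3\cong\aff^1$, the sign-weighted coefficient — concretely $p_{3,23}\wedge\bbmp_{3,1}=p_{3,123}=1$ while $p_{3,23}\wedge\bbmp_{3,2}=p_{3,23}\wedge\bbmp_{3,3}=0$. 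Hence only the $\bbmp_{3,1}$-term survives and $\Phi_5(R)=6u_1u_2$, as claimed. I would present this as a short explicit check, citing the normalization conventions for identifying $\wedge^3\aff^n$ with $\aff^1$ (the coefficient of $p_{3,123}$) and the fact $R_{322,3}=(p_{3,23},R_{322})$ together with $A(R)$ being the matrix (\ref{eq:32-3A-defn}).

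The main obstacle, and the only place where care is genuinely required, is bookkeeping of the determinant characters in part (1): tracing which $\det g_i$ factors come from $\Phi_4$ versus from the two uses of Lemma \ref{lem:natural-pairing}, and confirming the group action that survives on the target $\sym^2\aff^2$ is precisely $g_3$ (not $g_2$, and with the $g_1$-action cancelled by the pairing against $A$). I would organize this by writing out $\Phi_5(gx)$ as $\Phi_5$ applied to $(t(\wedge^2 g_1)A(x), (g_1,g_2,g_3)B(x))$, substituting (\ref{eq:322-equivariant}) for the image of the second component under $\Phi_4$, and then invoking Lemma \ref{lem:natural-pairing} with $g_1\in\gl(\aff^1)$ trivial, $g_2$ replaced by $g_3$ acting on $\sym^2\aff^2$, $g_3$ (in the lemma's notation) replaced by $g_1\in\gl_3$, collecting the $\det g_1$ from the lemma's $(\det g_3)$-factor. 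Everything else is routine and I would simply state that the computation is straightforward and omit the remaining details, as the paper does elsewhere.
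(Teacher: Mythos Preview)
Your proposal is correct and follows exactly the approach the paper indicates: part (1) is obtained by combining the equivariance (\ref{eq:322-equivariant}) of $\Phi_4$ with Lemma \ref{lem:natural-pairing} (and the scalar action of $t$ on $A(x)$), and part (2) by pairing $A(R)=p_{3,23}$ with the explicit value of $\Phi_4(R_{322})$ from Lemma \ref{lem:equivariant-332-3}. The paper gives no further details beyond citing these two ingredients, so your more explicit bookkeeping of the determinant factors is a faithful elaboration rather than a different method.
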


Note that $G,V,\Phi$ and $R_{322,3}$ are defined over $\Z$. 
The element $R_{322,3}$ is universally generic outside $2$. 
So Proposition \ref{prop:universally-generic} implies
the following corollary.

\begin{cor}
\label{cor:S14-equiv}
If $\ch(k)\not=2$ then there is a map 
$\Phi:V\to \sym^2 \aff^2$ over $k$ such that 
$\Phi(R)=u_1u_2$ and that
$\Phi(gx) = t(\det g_1)^2(\det g_2)^2(\det g_3)g_3\Phi(x)$. 
\end{cor}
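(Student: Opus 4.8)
The plan is to deduce Corollary \ref{cor:S14-equiv} from Proposition \ref{prop:322,3-equivariant} by the standard ``specialization over $\Z$'' argument, exactly as was done for $P_1$ and $\Phi$ in the earlier cases of this paper. First I would note that the composite map $\Phi_5 = \Phi_5\circ\Phi_4$ constructed above (the iterated contraction $\Phi_1,\dots,\Phi_4$ followed by the pairing of Lemma \ref{lem:natural-pairing}) is a polynomial map defined over $\Z$, being built entirely out of tensor products, wedge products, and the identification $\wedge^3\aff^3\cong\aff^1$, $\wedge^2\aff^2\cong\aff^1$, all of which are $\Z$-morphisms. By Proposition \ref{prop:322,3-equivariant}(2), $\Phi_5(R_{322,3}) = 6u_1u_2$. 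Since $6$ is a unit away from $2$ and $3$, I would set $\Phi = \tfrac16\Phi_5$, which a priori is only defined over $\Z[\tfrac16]$; the point is to show it is in fact defined over $\Z[\tfrac12]$.

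The key step is then to invoke Proposition \ref{prop:universally-generic}. For this I need: (i) $R_{322,3}$ is universally generic outside $\{2\}$ — this is exactly the content of Proposition \ref{prop:322-3-regular}(1), which shows $GR_{322,3}\sub V$ is Zariski open whenever $\ch(k)\ne 2$, so the hypothesis of Definition \ref{defn:universally-generic} holds with $S=\{2\}$ once one checks (as the excerpt asserts) that $G,V,R_{322,3}$ are defined over $\Z$; and (ii) $\Phi_5:V\to\sym^2\aff^2$ (equivalently, its scaling) is a $G\times_{\Z}\Q$-equivariant morphism defined over $\Z[\tfrac16]$ with $\Phi_5(R_{322,3})\in(\sym^2\aff^2)_{\Z[\tfrac16]}$. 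Actually, to run Proposition \ref{prop:universally-generic} cleanly I would work over $R=\Z[\tfrac16]$ from the start: take $S=\{2,3\}$, let $w=R_{322,3}$, and apply the proposition to the equivariant morphism $\Phi=\tfrac16\Phi_5 : V\times_R\Q\to(\sym^2\aff^2)\times_R\Q$, observing that $\Phi(w)=u_1u_2\in(\sym^2\aff^2)_R$. The conclusion is that $\Phi$ is defined over $R=\Z[\tfrac16]$.

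To remove the prime $3$ I would argue that the polynomial identity $6\Phi(x)=\Phi_5(x)$, now known to hold over $\Z[\tfrac16]$, together with the fact that $\Phi_5$ is defined over $\Z$, forces the coefficients of $\Phi$ to lie in $\Z[\tfrac16]\cap\big(\tfrac16\Z[\text{coeffs of }\Phi_5]\big)$; combined with a direct check that $6$ divides $\Phi_5(x)$ as a polynomial with $\Z$-coefficients one gets $\Phi$ defined over $\Z$, or at least over $\Z[\tfrac12]$, which is all that is needed. In fact, the cleanest route mirrors the treatment of $\Phi$ in Section \ref{sec:rational-orbits-wedge43}: show $6$ divides $\Phi_5(R_{322,3})$ over $\Z$ (it equals $6u_1u_2$, visibly divisible) and then appeal to Proposition \ref{prop:universally-generic} directly over $R=\Z[\tfrac12]$ with $S=\{2\}$ to conclude that $\Phi=\tfrac16\Phi_5$ is defined over $\Z[\tfrac12]$. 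Once $\Phi$ is defined over $\Z[\tfrac12]$, base change along $\Z[\tfrac12]\to k$ (valid since $\ch(k)\ne 2$) gives a map $\Phi:V\to\sym^2\aff^2$ over $k$, and the equivariance $\Phi(gx)=t(\det g_1)^2(\det g_2)^2(\det g_3)g_3\Phi(x)$ and the value $\Phi(R)=u_1u_2$ are inherited from Proposition \ref{prop:322,3-equivariant} by specialization since both sides are polynomial identities with $\Z[\tfrac12]$-coefficients.

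The main obstacle I anticipate is the bookkeeping around which primes must actually be inverted: one must be careful that $\Phi_5$ (as defined through $\Phi_1,\dots,\Phi_4$ and the pairing) genuinely has $\Z$-coefficients with no hidden denominators, and that the normalization $\tfrac16$ is the only scaling introduced, so that the divisibility ``$6\mid\Phi_5$ over $\Z$'' is exactly what is required and no further prime beyond $2$ ends up excluded. This is a routine-but-delicate computation of the type the excerpt repeatedly defers (``long but straightforward computations show...''); the verification that $\Phi_5(R_{322,3})=6u_1u_2$ in Lemma \ref{lem:equivariant-332-3} and Proposition \ref{prop:322,3-equivariant}(2) is the crux, and everything else is a formal application of Proposition \ref{prop:universally-generic}.
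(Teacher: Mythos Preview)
Your proposal is correct and follows essentially the same route as the paper: apply Proposition \ref{prop:universally-generic} with $R=\Z[\tfrac12]$, using that $R_{322,3}$ is universally generic outside $\{2\}$ (Proposition \ref{prop:322-3-regular}) and that $\Phi(R_{322,3})=u_1u_2\in(\sym^2\aff^2)_R$, then base change to $k$. Your detour worrying about the prime $3$ is unnecessary: Proposition \ref{prop:universally-generic} only requires $\Phi$ to be a $G\times_R\Q$-equivariant morphism defined over $\Q$ (which $\tfrac16\Phi_5$ plainly is) together with $\Phi(w)\in W_R$, so you can go directly to $R=\Z[\tfrac12]$ without first establishing anything over $\Z[\tfrac16]$.
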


\begin{prop}
\label{prop:322+3-orbit}
If $\ch(k)\not=2$ then 
$G_k\backslash \{x\in V_k\mid P_1(x),P_2(x)\not=0\}$ 
is in bijective correspondence with $\Ex_2(k)$ by 
associating to $x\in V_k$ such that $P_1(x),P_2(x)\not=0$,  
the field generated over $k$ by a root of $\Phi(x)$. 
\end{prop}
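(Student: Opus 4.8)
The strategy is the by-now-standard Galois-cohomology argument, exactly as used for Proposition \ref{prop:S1-orbit-final} and the other ``Case'' propositions above. First I would invoke Proposition \ref{prop:322-3-regular}: since $\ch(k)\not=2$, $(G,V)$ is a regular \pv{} with $G^{\circ}_R=T$, and by part (2) of that proposition together with Lemma \ref{lem:332-3-GRcirc} we have $\{x\in V_{k^{\sep}}\mid P_1(x),P_2(x)\not=0\}=G_{k^{\sep}}R$, where $R=R_{322,3}$ and $G_R=G^{\circ}_R\rtimes\{1,\tau\}$ (or a similar semidirect/direct product determined by the action of $\tau$). Since $T$ is a split torus, $\h^1(k,T)=\{1\}$, so the long exact sequence for $1\to T\to G_R\to G_R/T\to 1$ gives a bijection $\h^1(k,G_R)\cong \h^1(k,\langle\tau\rangle)\cong\h^1(k,\Z/2\Z)\cong\Ex_2(k)$. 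Because $\h^1(k,G)=\{1\}$ (all factors of $G$ are $\gl_n$'s), the orbit set $G_k\backslash U_k$, where $U=\{x\mid P_1(x),P_2(x)\not=0\}$, is in bijection with $\h^1(k,G_R)$, hence with $\Ex_2(k)$.

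\textbf{Identifying the bijection explicitly via $\Phi$.} The remaining task is to check that the abstract bijection $G_k\backslash U_k\cong\Ex_2(k)$ is realized concretely by sending $x$ to the splitting field of the binary quadratic form $\Phi(x)\in\sym^2\aff^2$ furnished by Corollary \ref{cor:S14-equiv}. I would argue as follows. By Proposition \ref{prop:322,3-equivariant} (and Corollary \ref{cor:S14-equiv}) the map $\Phi$ is $G$-equivariant up to a character, with $\Phi(R)=u_1u_2$, which splits over $k$ and corresponds to the trivial element of $\Ex_2(k)$; and $\Phi(\tau x)$ differs from $\Phi(x)$ by the action of $\tau_0\in\gl_2$, which permutes the two roots. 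Given $x\in U_k$, write $x=g_x R$ with $g_x\in G_{k^{\sep}}$; the cocycle $h_\sigma=g_x^{-1}g_x^\sigma$ takes values in $G_R$ and its class in $\h^1(k,G_R)$ classifies the orbit. Projecting $h$ to $G_R/T\cong\Z/2\Z$ yields a cocycle $\overline h$, and $\Phi(x)=g_x\cdot\Phi(R)$ shows that $\Phi(x)$ is the twist of $u_1u_2$ by $\overline h$; but twisting $u_1u_2$ by the nontrivial cocycle of $\langle\tau_0\rangle$ produces precisely the quadratic form whose splitting field is the quadratic extension named by $\overline h\in\h^1(k,\Z/2\Z)=\Ex_2(k)$. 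Hence the field generated by a root of $\Phi(x)$ depends only on the orbit and recovers the cohomology class, so $x\mapsto (\text{splitting field of }\Phi(x))$ descends to the desired bijection $G_k\backslash U_k\cong\Ex_2(k)$.

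\textbf{Where the work is.} The genuinely new computational input has already been isolated in the lemmas above: that $\Phi(R)=u_1u_2$ (Corollary \ref{cor:S14-equiv}, via Lemma \ref{lem:equivariant-332-3} and Proposition \ref{prop:322,3-equivariant}) and that the nontrivial stabilizer coset is represented by $\tau=(\tau_1,\tau_0,\tau_0,-1)$ acting on $\Phi$ by the root-swap $\tau_0$. So the proof proper is short. The one point requiring a little care is the compatibility between the action of $\langle\tau\rangle\subset G_R$ on the cohomology and the action of $\langle\tau_0\rangle$ on $\sym^2\aff^2$: one must check that the equivariance in Proposition \ref{prop:322,3-equivariant}(1) genuinely carries the nontrivial element of $\h^1(k,\Z/2\Z)$ to the nontrivial class in $\h^1(k,\aut(k[t]/(t^2)))$, i.e. that $\Phi$ does not accidentally factor through a subextension or kill the class. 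This follows because $\tau_0$ acts on the two coordinate roots of a split form by a genuine transposition, so the induced map $\h^1(k,\Z/2\Z)\to\h^1(k,\Z/2\Z)$ is the identity; I would state this as a one-line remark rather than belabor it. Everything else is the same boilerplate Galois-descent argument that appears in Propositions \ref{prop:222-rat-orbits}, \ref{prop:S3-orbit-rational}, and \ref{prop:S5-orbit-rational}, so I would simply write ``the proof is parallel to that of Proposition \ref{prop:S3-orbit-rational}'' for the routine portions.
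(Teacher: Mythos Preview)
Your overall strategy matches the paper's: reduce to $\h^1(k,G_R)$ via $\h^1(k,G)=\{1\}$, then analyze $G_R=T\rtimes\langle\tau\rangle$. However, there is a genuine gap in your key step. You write that since $T$ is a split torus, $\h^1(k,T)=\{1\}$, and hence the long exact sequence for $1\to T\to G_R\to\Z/2\Z\to 1$ gives a \emph{bijection} $\h^1(k,G_R)\cong\h^1(k,\Z/2\Z)$. This is not what the exact sequence of pointed sets provides. Vanishing of $\h^1(k,T)$ only tells you the fiber over the \emph{trivial} class is a point; the fiber over a nontrivial class $[c]\in\h^1(k,\Z/2\Z)$ is controlled by $\h^1(k,{}_cT)$, the cohomology of the \emph{twist} of $T$ by the $\tau$-action, and that action is nontrivial here (conjugation by $\tau$ swaps $t_1\leftrightarrow t_2$ in the parametrization~(\ref{eq:T-t-322-3})). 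So you have not shown injectivity over the nontrivial fiber.

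The paper handles exactly this point, and it is where the real work lies: for each quadratic $F/k$ it builds an explicit element $g_F\in G_{F}$ with $x_F=g_FR\in U_k$, computes the Galois-twisted identity component $G_{x_F}^{\circ}$ directly, and finds $G_{x_F}^{\circ}\cong \gl_1\times\mathrm{R}_{F/k}\gl_1$, whose $\h^1$ vanishes by Hilbert~90 and Shapiro. Then LEMMA~(1.8) of \cite{yukiel} gives bijectivity of $\alpha_V$. Along the way the explicit $g_F$ also yields $\Phi(x_F)=(u_1-\alpha_1u_2)(u_1-\alpha_2u_2)$, which nails down the compatibility with $\Phi$ by direct computation rather than the abstract twisting argument you sketch. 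You can repair your argument without the explicit $x_F$ by observing that in the coordinates $(s_1,s_2,s_3)=(t_1,t_2,t_1t_3)$ the $\tau$-action becomes the transposition $s_1\leftrightarrow s_2$ fixing $s_3$; hence every twist ${}_cT$ is a product of $\gl_1$ and a Weil restriction of $\gl_1$, so $\h^1(k,{}_cT)=\{1\}$. But that computation---essentially what the paper does in different language---must be supplied; it does not follow from ``$T$ is split.''
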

\begin{proof}
Since $\{x\in V_{k^{\sep}}\mid P_1(x),P_2(x)\not=0\}=G_{k^{\sep}}R$ 
and $\h^1(k,G)=\{1\}$,  
by the standard argument of Galois cohomology, 
\begin{equation*}
G_k\backslash \{x\in V_k\mid P_1(x),P_2(x)\not=0\} \cong 
\h^1(k,G_R). 
\end{equation*}

By Lemma \ref{lem:332-3-GRcirc}, $G_R/G^{\circ}_R\cong \Z/2\Z$ 
and $\gal(k^{\sep}/k)$ acts on $\Z/2\Z$ trivially. So there is a natural 
map 
\begin{equation*}
\al_V:G_k\backslash \{x\in V_k\mid P_1(x),P_2(x)\not=0\}\cong 
\h^1(k,G_R)\to \h^1(k,\Z/2\Z)\cong \Ex_2(k).
\end{equation*}
To show that this map is bijective, for each $F\in \Ex_2(k)$ 
we choose a point $x_F\in\al_V^{-1}(F)$ and prove that 
$\h^1(k,G^{\circ}_{x_F})=\{1\}$. Then LEMMA (1.8) \cite[p.120]{yukiel} 
implies that $\al_V$ is bijective. Moreover, we show that 
$F$ coincides with the field generated over $k$ by a root of 
$\Phi(x_F)$. 

Let $F/k$ be a quadratic extension. Since $\ch(k)\not=2$, 
this is a Galois extension. Let $f(z) = z^2+a_1z+a_2\in k[z]$ 
be a polynomial whose roots $\al=(\al_1,\al_2)$ generate $F$ 
over $k$. Let $\sig\in\gal(F/k)$ be the non-trivial element. 
Let 
\begin{equation*}
g_F = \left(
\begin{pmatrix}
(\al_1-\al_2)^{-1} & 0 & 0 \\
0 & 1 & 1 \\
0 & -\al_1 & -\al_2
\end{pmatrix},
\begin{pmatrix}
1 & 1 \\
-\al_1 & -\al_2
\end{pmatrix},
\begin{pmatrix}
1 & 1 \\
-\al_1 & -\al_2
\end{pmatrix},
(\al_1-\al_2)^{-3} \right).
\end{equation*}
We put 
\begin{math}
x_F = g_F R. 
\end{math}
Then by Corollary \ref{cor:S14-equiv}, 
$\Phi(x_F) = (u_1-\al_1 u_2)(u_1-\al_2 u_2)=u_1^2+a_1u_1u_2+a_2u_2^2$ 
and so its roots generate $F$ over $k$. 

It is easy to see that $g_F^{\sig}=g_F \tau$. 
Since $\tau\in G_R$, 
\begin{equation*}
(g_F)^{\sig}R = g_F\tau R  = g_F R =x_F.
\end{equation*}
Therefore, $x_F\in V_k$ and $P_1(x_F),P_2(x_F)\not=0$. 
Since $g_F^{-1}g_F^{\sig}=\tau$, the cohomology class in 
$\h^1(k,\Z/2\Z)$ determined by $x_F$ corresponds to the
field $F$. 

We express $t$ in 
(\ref{eq:T-t-322-3}) as $a(t_1,t_2,t_3)$. 
Then the group of $F$-rational points of $G_{x_F}^{\circ}$ is 
\begin{equation*}
G_{x_F\,F}^{\circ} = \{g_Fa(t_1,t_2,t_3)g_F^{-1}\mid t_1,t_2,t_3\in F^{\times}\}.
\end{equation*}
Since 
\begin{math}
(g_Fa(t_1,t_2,t_3)g_F^{-1})^{\sig}=
g_F \tau a(t_1^{\sig},t_2^{\sig},t_3^{\sig})\tau g_F^{-1}, 
\end{math}
$g_Fa(t_1,t_2,t_3)g_F^{-1}\in G_{x_F\,k}$ 
if and only if $\tau a(t_1^{\sig},t_2^{\sig},t_3^{\sig})\tau=a(t_1,t_2,t_3)$. 
This is equivalent to the following conditions:
\begin{equation*}
t_1t_3\in k^{\times},\; 
t_2=t_1^{\sig},\; 
t_1^2t_2^{-1}t_3=t_2^{\sig}t_3^{\sig},\; 
t_3^{\sig}=t_1t_2^{-1}t_3. 
\end{equation*}
The last two conditions follow from the first two conditions. 
Note that the condition on the last component is $(t_1t_3)^2\in\mk$, 
which follows from the above conditions. So the above 
conditions follow from the consideration of the 
$\gl_3\times \gl_2^2$ part of the group.

By the above consideration, 
$G_{x_F\,k}^{\circ}\cong \gl_1(k)\times (\mathrm{R}_{F/k}\gl_1)(k)$. 
We only considered $k$-rational points but considering rational 
points over all $k$-algebras as in \cite{kayu}, one can show that 
$G_{x_F}^{\circ}\cong \gl_1\times (\mathrm{R}_{F/k}\gl_1)$ 
as algebraic groups. Also $g_{\al}\tau g_{\al}^{-1}\in G_{x_F\,k}$. 
Therefore, $\h^1(k,G^{\circ}_{x_F})=\{1\}$.  
Hence $\al_V$ is bijective. 
\end{proof} 

Finally, we consider the case (b). 

Let $G=\gl_3\times \gl_2^3$ and  
$V=\wedge^2 \aff^3\otimes \aff^2 \oplus \aff^3\otimes \m_2$. 
We define an action of $g=(g_1,g_2,g_3,g_4)\in G$ on $V$ 
so that the action on $\wedge^2 \aff^3\otimes \aff^2$ 
is by $(\wedge^2 g_1,g_4)$ and the action on 
$\aff^3\otimes \m_2$ is the natural action of 
$(g_1,g_2,g_3)$.  

If $x\in V$ then we write the 
$\wedge^2 \aff^3\otimes \aff^2$ component of $x$
as pairs $A(x)=(A_1(x),A_2(x))$ where $A_1(x),A_2(x)\in \wedge^2 \aff^3$. 
Let $B(x)$ be the $\aff^3\otimes \m_2$ component of $x$. 
Let $R_1=-p_{3,13}$, $R_2=p_{3,12}$. 
We put 
\begin{equation}
\label{eq:S14-b-R}
R = (R_1,R_2,R_{322}). 
\end{equation}

We define a map $\Phi:V\to \aff^2\otimes \m_2$, 
linear with respect to each of 
$\wedge^2 \aff^3\otimes \aff^2$, 
$\aff^3\otimes \m_2$ 
so that 
\begin{equation*}
\Phi(v_1\otimes v_2,v_3\otimes v_4)
= (v_1\wedge v_3)v_2\otimes v_4
\end{equation*}
for $v_1\in\wedge^2 \aff^3,v_3\in \aff^3,v_2,v_4\in\m_2$
($p_{3,123}$ corresponds to $1$). 

Then for $x\in V,g=(g_1,g_2,g_3,g_4)$, 
\begin{equation}
\label{eq:S14Phi-R-equivariant}
\Phi(gx) = (\det g_1) (g_4,g_2,g_3)\Phi(x)
\end{equation}
where $(g_4,g_2,g_3)\Phi$ is 
the natural action.  

Since $R_1\wedge \bbmp_{3,2}=R_2\wedge \bbmp_{3,3} = p_{3,123}$, 
\begin{equation}
\label{eq:S14-b-PhiR}
\Phi(R) = \left(
\begin{pmatrix}
0 & 1 \\
0 & 0 
\end{pmatrix},
\begin{pmatrix}
0 & 0 \\
1 & 0 
\end{pmatrix}
\right).
\end{equation}

Let $\tau_0,\tau_1$ be as in (\ref{eq:J-defn}), 
(\ref{eq:332-3-tau-defn}) respectively. 
We put $\tau=(\tau_1,\tau_0,\tau_0,\tau_0)$. 
It is easy to see that $\tau$ fixes $R$. 
\begin{prop}
\label{prop:S14-R-regular}
%
\begin{itemize}
\item[(1)]
$GR\sub V$ is Zariski open and $G_R$ is smooth over $k$.
\item[(2)]
$G^{\circ}_R$ consists of elements of the form 
\begin{equation*}
(\diag(t_1^{-1}t_3^{-1},t_1^{-2}t_2t_3^{-1},t_2^{-1}t_3^{-1}),
\diag(t_1,t_2),\diag(t_3,t_1t_2^{-1}t_3),\diag(t_1t_2t_3^2,t_1^3t_2^{-1}t_3^2))
\end{equation*}
where $t_1,t_2,t_3\in\gl_1$. 
\item[(3)]
$G_R/G^{\circ}_R$ is represented by $\{1,\tau\}$. 
\item[(4)]
$(G,V)$ is a regular \pv. 
\end{itemize}
\end{prop}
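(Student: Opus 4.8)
The plan is to mimic the structure already used for case (a) (Propositions~\ref{prop:322-3-regular} and~\ref{prop:322+3-orbit}), since the two cases are closely parallel: the same auxiliary element $R_{322}$ appears, and the stabilizer computations reduce to computations already done for $\gl_3\times\gl_2^2$ acting on $\aff^3\otimes\m_2$ via Proposition~\ref{prop:HR-circ}.

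For part~(1) I would express elements of $\mathrm{T}_{e_G}(G)$ as $e_G+\vep(X,Y,Z,W)$ with $X\in\m_3$, $Y,Z,W\in\m_2$ and impose $(e_G+\vep(X,Y,Z,W))R=R$. The condition coming from the $\aff^3\otimes\m_2$ component forces $X,Y,Z$ into the form of Lemma~\ref{lem:S14-XYZ-form} (with $R_{322}$ in place of the $w$ there); the condition coming from $\wedge^2\aff^3\otimes\aff^2$ then further constrains $X$ and ties $W$ to the other entries. A short computation should show $\dim\mathrm{T}_{e_G}(G_R)=3=\dim G-\dim V$, whereupon Proposition~\ref{prop:open-orbit} gives that $GR$ is Zariski open and $G_R$ is smooth. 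For part~(2), I would exhibit the three-dimensional torus explicitly (as written in the statement) and check directly that it fixes $R$; since its dimension matches $\dim G_R$ and $G_R$ is smooth, it must equal $G^{\circ}_R$. One should also record, as in case~(a), that $G,V,R$ are defined over $\Z$ and (since no characteristic restriction is needed here) $R$ is universally generic.

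For part~(3), the argument parallels Lemma~\ref{lem:332-3-GRcirc}. Given $(h,h',h'',h''')\in G_R$, first use the weight-space decomposition under $G^{\circ}_R$: the lines $\lan p_{3,12}\ran,\lan p_{3,13}\ran,\lan p_{3,23}\ran$ are permuted, and $\tau$ realizes the transposition of the first two, so after multiplying by $\tau$ we may assume $h$ preserves each of these lines. Expanding $h p_{3,ij}{}^th$ (exactly the computation already displayed in the proof of Lemma~\ref{lem:332-3-GRcirc}) forces the off-diagonal $2\times2$ minors of $h$ to vanish, so $(\det h)h^{-1}$ and hence $h$ is diagonal. Then Proposition~\ref{prop:HR-circ} (applied to the $\aff^3\otimes\m_2$ component, whose stabilizer is $\im(\phi)$) pins down $h',h''$ in terms of $h$ via $\rho_1$; since $h$ is diagonal, formula~(\ref{eq:rho1(g)}) forces the underlying $\gl_2$-element to be diagonal, hence $h',h''$ diagonal; finally the $\wedge^2\aff^3\otimes\aff^2$ relations force $h'''$ diagonal and everything lies in $G^{\circ}_R$. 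Part~(4) is then immediate: $G^{\circ}_R$ is a torus, hence reductive and smooth, so $(G,V)$ is a regular \pv{} by Definition~\ref{defn:regularity}.

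The main obstacle I anticipate is purely bookkeeping rather than conceptual: in part~(3), correctly carrying the constraints from \emph{both} summands of $V$ simultaneously through the $\rho_1$-formula~(\ref{eq:rho1(g)}) to conclude all four group components are diagonal, and making sure the $\tau$-multiplication step genuinely suffices to normalize the permutation action (i.e. that $\tau$ generates the full group of permutations of the three lines that can occur, which the weight structure—only $\lan p_{3,12}\ran$ and $\lan p_{3,13}\ran$ lie in nontrivial, mutually inverse weight spaces—guarantees). The Lie-algebra computation in part~(1), while "long but straightforward" in the paper's idiom, is the other place where a sign or index slip could occur, but it does not require any idea beyond Lemma~\ref{lem:S14-XYZ-form}.
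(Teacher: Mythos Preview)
Your outline for parts~(1), (2), and~(4) matches the paper essentially word for word: the Lie algebra computation via Lemma~\ref{lem:S14-XYZ-form} plus the two equations coming from $R_1,R_2$ gives $\dim\mathrm{T}_{e_G}(G_R)=3$, and the displayed torus fills out $G^{\circ}_R$.

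For part~(3) you take a genuinely different route from the paper. The paper does \emph{not} parallel Lemma~\ref{lem:332-3-GRcirc}; instead it uses the equivariant map $\Phi:V\to\aff^2\otimes\m_2$ introduced just before the proposition. Since $\Phi(R)$ is (up to a harmless twist) the standard element of $\aff^2\otimes\aff^2\otimes\aff^2$, Proposition~\ref{prop:222-regular-sect} immediately says that $(\det g_1)(g_4,g_2,g_3)$ lies in a group generated by diagonal elements and $(\tau_0,\tau_0,\tau_0)$. After multiplying by $\tau$ this forces $g_2,g_3$ diagonal, and \emph{then} Proposition~\ref{prop:HR-circ} gives $g_1$ diagonal. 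So the paper's order is the reverse of yours: it pins down $g_2,g_3$ first via $\Phi$, then $g_1$ via the $\aff^3\otimes\m_2$ component.

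Your route also works, but your weight-space justification is looser than you indicate and does not transfer verbatim from case~(a). In case~(a) the extra $\gl_1$ factor $t$ acts on $\wedge^2\aff^3$ by a scalar, which is exactly what makes the weight of $\lan p_{3,23}\ran$ trivial; here that factor is replaced by a full $\gl_2$ (the $g_4$ component), so there is no trivial weight on $\wedge^2\aff^3$ alone. What does survive is the multiplicative relation $\chi_{12}\chi_{13}=\chi_{23}^2$ among the three $T$-weight characters on $\wedge^2\aff^3$ (equivalently, the image of $T$ in $\gl_3$ is the $2$-torus $\{a^2=bc\}$); this relation is preserved only by the identity and the transposition swapping $p_{3,12}$ and $p_{3,13}$, which is the statement you need. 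Once you have that, your remaining steps (the minor computation from Lemma~\ref{lem:332-3-GRcirc}, then Proposition~\ref{prop:HR-circ} and formula~(\ref{eq:rho1(g)})) go through. The paper's route via $\Phi$ avoids this normalizer analysis entirely by appealing to the already-known $2\times2\times2$ stabilizer.
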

\begin{proof}
(1) Let $e_G+\vep(X,Y,Z,Q)\in {\mathrm T}_{e_G}(G)$ 
where $X=(x_{ij})\in\m_3,Y=(y_{ij}),Z=(z_{ij}),Q=(q_{ij})\in\m_2$. 
Suppose that $e_G+\vep(X,Y,Z,Q)$ fixes $R$. Since it fixes 
$R_{322}$, $X,Y,Z$ are in the form in Lemma \ref{lem:S14-XYZ-form}. 
Regarding $R_1,R_2$ as $3\times 3$ alternating matrices,
\begin{align*}
& X R_1 + R_1{}^t X + q_{11} R_1 + q_{12}R_2 =0, \\
& X R_2 + R_2{}^t X + q_{21} R_1 + q_{22}R_2 =0.
\end{align*}
By computations, $x_{21}=x_{31}=0$. 
By Lemma \ref{lem:S14-XYZ-form}, 
$Y$ is diagonal.  This implies that $X,Z$ 
are diagonal also.  By the above equalities, 
$Q$ is diagonal also and  
\begin{align*} 
X & = \begin{pmatrix}
-y_{11}-z_{11} & 0 & 0 \\
0 & -y_{11}-z_{22} & 0 \\
0 & 0 &  -y_{22}-z_{11}
\end{pmatrix}, \;
Y = \begin{pmatrix}
y_{11} & 0 \\
0 & y_{22} 
\end{pmatrix}, \;
Z = \begin{pmatrix}
z_{11} & 0 \\
0 & z_{22}
\end{pmatrix}, \\
Q & = \begin{pmatrix}
y_{11}+y_{22}+2z_{11} & 0 \\
0 & 2y_{11}+z_{11}+z_{22}  
\end{pmatrix},\; 
y_{11}+z_{11}=y_{22}+z_{22}.
\end{align*}
Therefore, $\dim {\mathrm T}_{e_G}(G_R)=3=21-18=\dim G-\dim V$
and (1) follows from Proposition \ref{prop:open-orbit}. 

(2) Let $T\sub G$ be the subgroup consisting of 
elements of the form in the statement. It is easy to see that 
$T$ fixes $R$. Since $\dim T=3$, $G^{\circ}_R=T$. 
So (4) follows. 

(3) We may assume that $k=\overline k$. 
Suppose that $g=(g_1,g_2,g_3,g_4)\in G_R$. 
By (\ref{eq:S14Phi-R-equivariant}), 
$(\det g_1)(g_4,g_2,g_3)$ fixes $\Phi(R)$. 
The stabilizer of $(1,1,\tau_0)\Phi(R)$
is known and is generated by $(\tau_0,\tau_0,\tau_0)$ 
and elements whose components are diagonal matrices
(see Proposition \ref{prop:222-regular-sect}). 
This implies that the stabilizer of $\Phi(R)$ 
is also generated by $(\tau_0,\tau_0,\tau_0)$ 
and elements whose components are diagonal matrices. 
So multiplying $\tau$ to $g$ if necessary, 
we may assume that $g_2,g_3$ are diagonal matrices.

Since $(g_1,g_2,g_3)$ fixes $R_{322}$, there exist 
$t\in\gl_1,h\in\gl_2$ such that $(g_1,g_2,g_3)$
is in the form (\ref{eq:S14-phi-defn}). 
So $h$ is diagonal and so $g_1$ is diagonal. 
Then it is easy to verify that $g\in T$. 
\end{proof}

Let $P_1(x)$ be the degree $4$ polynomial 
of $\Phi(x)$ obtained by Proposition \ref{prop:222-invariant}. 
Then $P_1(R)=1$. Let $P_2(x)$ be the degree $6$ polynomial 
of $B(x)$ obtained by Proposition \ref{prop:322-invariant}.
Then $P_2(R)=1$ and
\begin{equation}
\label{eq:S14-P1P2-equivariant}
\begin{aligned}
P_1(gx) & = (\det g_1)^4(\det g_2)^2(\det g_3)^2(\det g_4)^2 P_1(x), \\
P_2(gx) & = (\det g_1)^2(\det g_2)^3(\det g_3)^3P_2(x).
\end{aligned}
\end{equation}

Let $U=\{x\in V\mid P_1(x),P_2(x)\not=0\}$. 
\begin{prop}
\label{prop:S14-rational-orbits}
The map $\Phi$ induces a bijective correspondence
\begin{equation*}
G_k\backslash U_k\cong 
\gl_2(k)^3\backslash (k^2\otimes k^2\otimes k^2)^{\sst}
\cong \Ex_2(k).
\end{equation*}
\end{prop}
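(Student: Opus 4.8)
The plan is to mimic the structure of the proof of Proposition \ref{prop:322+3-orbit} (case (a)), now in the 3-simple setting. First I would record the setup: by Proposition \ref{prop:S14-R-regular}, $(G,V)$ is a regular \pv{} with $m=N=2$, $U_{k^{\sep}}=G_{k^{\sep}}R$ by Corollary \ref{cor:reducible-sep-orbit} (applied with the two relative invariants $P_1,P_2$ of (\ref{eq:S14-P1P2-equivariant})), and $\h^1(k,G)=\{1\}$ since $G$ is a product of general linear groups. Hence the standard Galois cohomology argument gives a bijection $G_k\backslash U_k\cong \h^1(k,G_R)$. By Proposition \ref{prop:S14-R-regular}(2),(3), $G^{\circ}_R$ is a split torus (isomorphic to $\gl_1^3$) and $G_R/G^{\circ}_R\cong \Z/2\Z$ with trivial Galois action (it is represented by the $k$-rational element $\tau$), so there is a natural map
\begin{equation*}
\al_U:G_k\backslash U_k\cong \h^1(k,G_R)\to \h^1(k,\Z/2\Z)\cong \Ex_2(k).
\end{equation*}

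Next I would use the equivariant map $\Phi:V\to \aff^2\otimes \m_2$ of (\ref{eq:S14Phi-R-equivariant}). Composing $\Phi$ with the projection $U_k\to (\aff^2\otimes\aff^2\otimes\aff^2)^{\sst}_k$ and invoking Case I (Propositions \ref{prop:222-invariant}, \ref{prop:222-rat-orbits}), one gets that the field attached to $\Phi(x)$ (the field generated by the roots of the binary cubic-type form $F_{\Phi(x)}$ of Case I) lies in $\Ex_2(k)$, and for $x=R$ this field is trivial since $\Phi(R)$ in (\ref{eq:S14-b-PhiR}) corresponds to $(w_1,w_2)$ of (\ref{eq:w-efn-222-matrix}) up to the $\gl_2^3$-action, which has $P(\Phi(R))\neq 0$ and split associated extension. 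To show $\al_U$ is bijective I would, for each $F\in\Ex_2(k)$, exhibit an explicit point $x_F\in\al_U^{-1}(F)$ and verify $\h^1(k,G^{\circ}_{x_F})=\{1\}$; then LEMMA (1.8) \cite[p.120]{yukiel} forces $\al_U$ to be a bijection. Concretely, for a quadratic extension $F=k(\al_1,\al_2)$ with minimal polynomial $z^2+a_1z+a_2$ and $\sig\in\gal(F/k)$ nontrivial, I would write down $g_F\in G_{k^{\sep}}$ (a product of Vandermonde-type matrices in the $\gl_3$ and $\gl_2$ factors, paralleling $g_F$ in the proof of Proposition \ref{prop:322+3-orbit} and $g_{\al}$ of (\ref{eq:S20-wa-defn})) such that $g_F^{\sig}=g_F\tau$; since $\tau\in G_R$ this gives $x_F:=g_FR\in U_k$ with $g_F^{-1}g_F^{\sig}=\tau$, so $\al_U$ sends the class of $x_F$ to $F$. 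Then $G^{\circ}_{x_F}=g_F G^{\circ}_R g_F^{-1}$, and computing the $\sig$-fixed points of this twisted torus as in the proof of Proposition \ref{prop:322+3-orbit} identifies $G^{\circ}_{x_F}\cong \gl_1\times (\mathrm{R}_{F/k}\gl_1)$ (one central $\gl_1$ plus a norm-one-type restriction of scalars), whose $\h^1$ vanishes by Hilbert 90.

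Finally, to upgrade the bijection $G_k\backslash U_k\cong\Ex_2(k)$ into the statement phrased via $\Phi$, I would note that $x\mapsto \Phi(x)$ intertwines the $G$-action with the $\gl_2(k)^3$-action on $\aff^2\otimes\aff^2\otimes\aff^2$ (via $g=(g_1,g_2,g_3,g_4)\mapsto (g_4,g_2,g_3)$, using $\det g_1$ as a harmless scalar), so the cohomology class attached to $x$ in $\h^1(k,G_R)$ maps to the class attached to $\Phi(x)$ in $\h^1(k,\gl_2^3$-stabilizer of $(w_1,w_2))\cong\h^1(k,\gl_1^4\rtimes\Z/2\Z)$ of Proposition \ref{prop:222-regular-sect}, and the induced map on $\h^1(-,\Z/2\Z)$ is the identity; combined with Proposition \ref{prop:222-rat-orbits} this gives the chain of bijections $G_k\backslash U_k\cong \gl_2(k)^3\backslash(k^2\otimes k^2\otimes k^2)^{\sst}\cong\Ex_2(k)$ in the statement, with the middle bijection realized by $\Phi$. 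The main obstacle I expect is the explicit construction of $g_F$ and the verification that $g_F^{\sig}=g_F\tau$ together with the identification of the twisted torus $g_FG^{\circ}_Rg_F^{-1}$ and its $k$-points: this is a concrete but somewhat delicate matrix computation in three $\gl_2$-factors plus a $\gl_3$-factor, and one must be careful that the determinant constraints defining $G^{\circ}_R$ in Proposition \ref{prop:S14-R-regular}(2) (including the condition on the $g_4$-component) are compatible with the Vandermonde choices; everything else is routine once the base case Propositions of Sections \ref{sec:rational-orbits-general} are invoked.
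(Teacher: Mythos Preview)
Your proposal is correct and follows essentially the same route as the paper: reduce to $\h^1(k,G_R)$ via regularity and Corollary \ref{cor:reducible-sep-orbit}, construct an explicit $g_F$ built from Vandermonde-type blocks with $g_F^{\sig}=g_F\tau$, identify the twisted torus $G^{\circ}_{x_F}\cong \gl_1\times \mathrm{R}_{F/k}\gl_1$, and finish by tracking the cocycle through $\Phi$. Two minor terminological slips: in Case~I the form $F_{\Phi(x)}$ is a binary \emph{quadratic} (not cubic) form, and the factor $\mathrm{R}_{F/k}\gl_1$ is the full Weil restriction rather than a norm-one torus --- neither affects the argument.
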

\begin{proof}
We proceed as in Proposition \ref{prop:322+3-orbit}. 
We first show that $G_k\backslash U_k$ 
is in bijective correspondence with $\h^1(k,\Z/2\Z)$ 
where the action of the Galois group on $\Z/2\Z$ is trivial. 

We are in the situation of Section \ref{sec:regularity}
where $m=N=2$. So by Corollary \ref{cor:reducible-sep-orbit}, 
$U_{k^{\sep}}=G_{k^{\sep}}R$. Since $\h^1(k,G)=\{1\}$, 
$G_k\backslash U_k\cong \h^1(k,G_R)$.

Let $F/k$ be a quadratic extension. Since $\ch(k)\not=2$, 
$F/k$ is a Galois extension. 
Suppose that $f(z) = z^2+a_1z+a_2\in k[z]$ 
is a polynomial whose roots $\al=(\al_1,\al_2)$ generate $F$ 
over $k$. Let $\sig\in\gal(F/k)$ be the non-trivial element. 
Let 
\begin{align*}
& g_{\al,1} = 
\begin{pmatrix}
(\al_1-\al_2)^{-5} & 0 & 0 \\
0 & 1 & 1 \\
0 & -\al_1 & -\al_2
\end{pmatrix},\;
g_{\al,2}= 
\begin{pmatrix}
1 & 1 \\
-\al_1 & -\al_2
\end{pmatrix}, \\
& g_{\al}=(g_{\al,1},g_{\al,2},g_{\al,2},g_{\al,2}),\;
x_F = g_{\al}R. 
\end{align*}
Since $g_{\al}^{\sig}=g_{\al}\tau$, 
$x_F\in U_k$.

We determine the identity component of the 
stabilizer of $x_F$. 
Let $a(t_1,t_2,t_3)$ be the element in 
the statement (2) of Proposition \ref{prop:S14-R-regular}.
Then 
\begin{equation*}
G^{\circ}_{x_F\,F}=
g_{\al}\{a(t_1,t_2,t_3)\mid t_1,t_2,t_3\in F\}g_{\al}^{-1}.
\end{equation*}
$g_{\al}a(t_1,t_2,t_3)g_{\al}^{-1}\in G^{\circ}_{x_F\,k}$
if and only if 
\begin{equation}
\label{eq:S9-b-stab-rational}
\tau a(t_1^{\sig},t_2^{\sig},t_3^{\sig})\tau
= a(t_1,t_2,t_3).
\end{equation}
Since the first three components of $a(t_1,t_2,t_3)$
are the same as those of (\ref{eq:T-t-322-3}), 
$t_1t_3\in\mk$, $t_1^{\sig}=t_2$. 
Then it is easy to show that this 
implies (\ref{eq:S9-b-stab-rational}). 
As in Proposition (2.10) \cite[p.323]{kayu}
using Theorem \cite[p.17]{mum}, 
\begin{math}
G^{\circ}_{x_F} \cong \gl_1\times \mathrm{R}_{F/k} \gl_1.
\end{math}
Also $g_{\al}\tau g_{\al}^{-1}\in G_{x_F\,k}$.

Since $g_{\al}^{-1}g_{\al}^{\sig}=\tau$, 
the cohomology class corresponding to $x_F$ 
corresponds to $F$. 
Since $\h^1(k,G^{\circ}_{x_F})=\{1\}$ for all such $F$, 
$G_k\backslash U_k\cong \h^1(k,\Z/2\Z)$. 
By (\ref{eq:S14Phi-R-equivariant}), 
\begin{equation*}
\Phi(x_F) = (\det g_{\al,1}) (g_{\al,2},g_{\al,2},g_{\al,2})\Phi(R)
= (\al_1-\al_2)^{-4} (g_{\al,2},g_{\al,2},g_{\al,2})\Phi(R)
\end{equation*}
So the cohomology class corresponding to $\Phi(x_F)$ 
is the class of the 1-cocycle 
\begin{equation*}
(g_{\al,2},g_{\al,2},g_{\al,2})^{-1}(g_{\al,2},g_{\al,2},g_{\al,2})^{\sig}
=(\tau_0,\tau_0,\tau_0).
\end{equation*}
Therefore, $\Phi(x_F)$ corresponds to the same cohomology class 
in $\h^1(k,\Z/2\Z)$. This proves the statement of this proposition. 
\end{proof}

\section{Rational orbits (7)}
\label{sec:rational-orbits-222-22}

Let $G=\gl_2^3$ and (a) 
$V=\aff^2\otimes \aff^2\otimes \aff^2\oplus \aff^2$
or (b) $V=\aff^2\otimes \aff^2\otimes \aff^2\oplus \aff^2\oplus \aff^2$. 

We identify $\aff^2\otimes \aff^2\otimes \aff^2$ with the space of 
pairs of $2\times 2$ matrices. We describe elements of $V$ as 
(a) $x=(A(x),B(x),v(x))$ where $A(x),B(x)\in\m_2$ 
and $v(x)\in\aff^2$ is a column vector or 
(b) $x=(A(x),B(x),v_1(x),v_2(x))$ where $A(x),B(x)\in\m_2$ 
and $v_1(x),v_2(x)\in\aff^2$ are column vectors. 
If $g_1,g_2\in\gl_2$ and 
\begin{math}
g_3= 
\left(
\begin{smallmatrix}
\al & \be \\
\gam & \del 
\end{smallmatrix}
\right)
\end{math}
then we define an action of 
$g=(g_1,g_2,g_3)\in G$ on $V$ by 
\begin{align*}
gx & = (\al g_1A(x){}^tg_2+ \be g_1B(x){}^tg_2,
\gam g_1A(x){}^tg_2+\del g_1B(x){}^tg_2,g_1v(x)), \tag a \\
gx & = (\al g_1A(x){}^tg_2+ \be g_1B(x){}^tg_2,
\gam g_1A(x){}^tg_2+\del g_1B(x){}^tg_2,g_1v_1(x),g_2v_2(x)). \tag b
\end{align*}

Let $P_1(x)$ be the polynomial $P$ in Proposition \ref{prop:222-invariant} 
evaluated at $A(x)$.
In the case (a), let $\Phi:V\to \aff^2\otimes \aff^2\cong \m_2$ be the 
map obtained by applying Lemma \ref{lem:natural-pairing}
to the first $\aff^2$-factor of $\aff^2\otimes \aff^2\otimes \aff^2$ 
and the last $\aff^2$. We put $P_2(x)=\det \Phi(x)$. 
In the case (b), let $\Phi_1,\Phi_2:V\to \aff^2\otimes \aff^2\cong \m_2$ be the 
map obtained by applying Lemma \ref{lem:natural-pairing}
to the first (resp. second) $\aff^2$-factor of 
$\aff^2\otimes \aff^2\otimes \aff^2$ 
and the next to the last (resp. last) $\aff^2$. 
We put $P_2(x)=\det \Phi_1(x)$, $P_3(x)=\det \Phi_2(x)$. 
Then 
\begin{equation}
\label{eq:P1P2P3}
\begin{aligned}
P_1(gx) & = (\det g_1)^2(\det g_2)^2(\det g_3)^2P_1(x), \\
P_2(gx) & = (\det g_1)^2(\det g_2)(\det g_3)P_2(x)
\end{aligned}
\end{equation}
in both cases and 
$P_3(gx) = (\det g_1)(\det g_2)^2(\det g_3)P_3(x)$ in the case (b). 
Note that $P_1(x)$ (resp. $P_2(x)$ in the case (b)) 
is homogeneous of degree $2$ with respect to each
of $\aff^2\otimes \aff^2\otimes \aff^2$ and 
the first $\aff^2$ (the second $\aff^2$ in the case (b)).

Let 
\begin{equation}
\label{eq:w-defn-sec10}
w_1 = 
\begin{pmatrix}
1 & 0 \\
0 & 0
\end{pmatrix},\;
w_2 = 
\begin{pmatrix}
0 & 0 \\
0 & 1 
\end{pmatrix},\;
v_1 = v_2 = 
\begin{pmatrix}
1 \\ 1 
\end{pmatrix}. 
\end{equation}
We put (a) $w=(w_1,w_2,v_1)$ (b) $w=(w_1,w_2,v_1,v_2)$. 
We determine the Lie algebra ${\mathrm T}_{e_G}(G_w)$. 
Let $X=(x_{ij}),Y=(y_{ij}),Z=(z_{ij})\in\m_2$.

Suppose that $(e_G+\vep(X,Y,Z))w=w$. 
By Proposition \ref{prop:ie-alg-222}, 
$X=\diag(x_{11},x_{22}),Y=\diag(y_{11},y_{22}),Z=\diag(-x_{11}-y_{11},-x_{22}-y_{22})$. 
In the case (a), $(e_G+\vep(X,Y,Z))v_1=v_1$ 
if and only if $Xv_1=0$.  This is equivalent to 
$X=0$. In the case (b), $(e_G+\vep(X,Y,Z))v_i=v_i$ for $i=1,2$ 
if and only if $Xv_1=0$, $Yv_2=0$.  This is equivalent to 
$X=Y=0$. So $X=0,Y=\diag(y_{11},y_{22}),Z=-Y$ in the case (a) 
and $X=Y=Z=0$ in the case (b). Therefore, 
\begin{equation*}
\dim {\mathrm T}_{e_G}(G_w)= 
\begin{cases}
2 = 12-10=\dim G-\dim V & \text{case (a)}, \\
0 = 12-12=\dim G-\dim V & \text{case (b)}.
\end{cases}
\end{equation*}
By Proposition \ref{prop:open-orbit}, 
$Gw\sub V$ is Zariski dense 
and $G_w$ is smooth over $k$ in both cases. 

\begin{prop}
\label{prop:S20-regular}
\begin{itemize}
\item[(1)]
$G^{\circ}_w = \{(I_2,\diag(t_1,t_2),\diag(t_1^{-1},t_2^{-1}))\}$
in the case (a) and 
$G^{\circ}_w = \{e_G\}$ in the case (b).  
\item[(2)]
$(G,V)$ is a regular \pv{} in both cases.  
\end{itemize}
\end{prop}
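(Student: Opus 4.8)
The plan is to deduce both parts from the Lie algebra computation that has just been carried out together with the general machinery of Section~\ref{sec:regularity}. For part~(1), the dimension count $\dim {\mathrm T}_{e_G}(G_w) = \dim G - \dim V$ in both cases, combined with Proposition~\ref{prop:open-orbit}, already gives that $Gw \sub V$ is Zariski dense (hence open) and that $G_w$ is smooth over $k$; so it remains only to identify $G^{\circ}_w$ explicitly. First I would exhibit the subgroup claimed in each case and check directly that its elements fix $w$: in case~(a), for $t_1,t_2 \in \gl_1$ one computes $(I_2,\diag(t_1,t_2),\diag(t_1^{-1},t_2^{-1}))(w_1,w_2,v_1)$ using the defining action, noting that $g_3 = \diag(t_1^{-1},t_2^{-1})$ scales the first matrix component by $t_1^{-1}$ and the second by $t_2^{-1}$, while $\diag(t_1,t_2)$ acting on $w_1,w_2$ from the right (via ${}^tg_2$) restores these factors, and $g_1 = I_2$ fixes $v_1$; in case~(b) there is nothing to exhibit beyond $\{e_G\}$. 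Since the indicated subgroup in case~(a) is two-dimensional and connected, and $\dim G_w = 2$, it must equal $G^{\circ}_w$; in case~(b) the stabilizer is zero-dimensional and smooth, so $G^{\circ}_w = \{e_G\}$.

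For part~(2) I would invoke Proposition~\ref{prop:regularity}: in both cases $G_w$ is smooth, and $G^{\circ}_w$ is a torus in case~(a) and trivial in case~(b), hence $G_w$ is reductive in both cases (a finite extension of a torus, or a finite group). Therefore $U = Gw$ is affine, $U_{k^{\sep}}$ is a single $G_{k^{\sep}}$-orbit, and there exists a relative invariant polynomial, so $(G,V)$ is a regular \pv{} by Definition~\ref{defn:regularity}. Concretely, one can also point to the explicit relative invariants already constructed above: $P_1(x), P_2(x)$ in case~(a) and $P_1(x), P_2(x), P_3(x)$ in case~(b), with the equivariance relations in (\ref{eq:P1P2P3}); these are non-zero since $P_1(w) = P_2(w) = 1$ (and $P_3(w) = 1$ in case~(b)) by the normalizations in Propositions~\ref{prop:222-invariant} and~\ref{prop:M2-2-2-invariant} applied to $\Phi$, $\Phi_1$, $\Phi_2$ evaluated at $w$, which one checks gives $\Phi(w) = I_2$ in case~(a) (so $\det \Phi(w) = 1$) and similarly $\Phi_1(w), \Phi_2(w)$ in case~(b).

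The only genuine computation is the verification that the claimed subgroups fix $w$ and the evaluation $\dim G^{\circ}_w$ matches, both of which are immediate from the displayed Lie algebra answer just above the statement; the main obstacle, if any, is bookkeeping the slightly unusual action of $g_3 = \bigl(\begin{smallmatrix} \al & \be \\ \gam & \del \end{smallmatrix}\bigr)$ that mixes the two matrix components $A(x), B(x)$, but since the fixing subgroup consists of \emph{diagonal} $g_3$ this mixing does not occur and the check is routine. I would close by remarking that, since $w$ and the polynomials $P_i$ are defined over $\Z$ and the dimension counts are characteristic-free, $w$ is universally generic (Definition~\ref{defn:universally-generic}) in both cases, which is what is needed for the subsequent rational-orbit analysis.
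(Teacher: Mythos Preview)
Your proof is correct and follows the same approach as the paper: exhibit the claimed subgroup, verify directly that it fixes $w$, match dimensions via the preceding Lie algebra computation to conclude (1), and then invoke reductivity of $G_w$ (a torus or finite) together with Proposition~\ref{prop:regularity} for (2). One inconsequential slip: $\Phi(w)=\diag(1,-1)$ rather than $I_2$, so $P_2(w)=-1$; this does not affect the argument since only nonvanishing is needed, and in any case the explicit invariants are not required for this proposition.
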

\begin{proof}
(1) It is easy to see that the right hand side is contained in $G^{\circ}_w$. 
Since the dimensions are the same, (1) follows. 

(2) follows from (1). 
\end{proof}

Let $\tau=(\tau_0,\tau_0,\tau_0)$ (see (\ref{eq:J-defn})). 
It is easy to see that $\tau$ fixes $w$. 

\begin{prop}
\label{prop:S20-stabilizer} 
$G_w/G^{\circ}_w$ is represented by $\{1,\tau\}$.
\end{prop}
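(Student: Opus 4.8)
\textbf{Proof proposal for Proposition \ref{prop:S20-stabilizer}.}

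The plan is to argue by the usual reduction-to-a-torus method, treating cases (a) and (b) together as far as possible. First I would take $g=(g_1,g_2,g_3)\in G_w$ and use the relative invariants and the known equivariant maps to force $g_3$ (and the parts of $g_1,g_2$ acting on the matrix factors) into a very restricted shape. Concretely, since $g$ fixes $w$ it fixes $A(w)=w_1,B(w)=w_2$ up to the linear action of $g_3=\begin{pmatrix}\al&\be\\\gam&\del\end{pmatrix}$ on the pair $(A,B)$; so $(\al g_1w_1{}^tg_2+\be g_1w_2{}^tg_2,\gam g_1w_1{}^tg_2+\del g_1w_2{}^tg_2)=(w_1,w_2)$. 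One can first invoke Proposition \ref{prop:222-regular-sect}: forgetting the last $\aff^2$-factor(s), the triple $(g_1,g_2,g_3)$ lies in the stabilizer in $\gl_2^3$ of $w_{\mathrm{cube}}=\bbmp_{2,1}^{\otimes 3}+\bbmp_{2,2}^{\otimes 3}$, which by that proposition equals $G^{\circ}_{w_{\mathrm{cube}}}\rtimes\{1,\tau\}$ with $G^{\circ}_{w_{\mathrm{cube}}}$ the diagonal torus described in (\ref{eq:fix-222}). Hence, multiplying $g$ by $\tau$ if necessary (note $\tau$ fixes $w$ in both cases, as already observed in the excerpt), we may assume $g_1=\diag(s_1,s_2)$, $g_2=\diag(t_1,t_2)$, $g_3=\diag((s_1t_1)^{-1},(s_2t_2)^{-1})$.

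Next I would impose the remaining conditions coming from the extra $\aff^2$-factor(s). In case (a), $g_1v_1=v_1$ with $v_1={}^t(1,1)$ and $g_1=\diag(s_1,s_2)$ forces $s_1=s_2=1$; then $g_1=I_2$, and $g=(I_2,\diag(t_1,t_2),\diag(t_1^{-1},t_2^{-1}))\in G^{\circ}_w$ by Proposition \ref{prop:S20-regular}(1). In case (b), $g_1v_1=v_1$ and $g_2v_2=v_2$ with $v_1=v_2={}^t(1,1)$ force $s_1=s_2=1$ and $t_1=t_2=1$, so $g_3=I_2$ and $g=e_G=G^{\circ}_w$. In either case, after the possible multiplication by $\tau$, we have shown $g\in G^{\circ}_w$; since $\tau\notin G^{\circ}_w$ (its image in $\gl_2$-components is not diagonal, e.g. its first component $\tau_0$ is not of the form $\diag(s_1,s_2)$), the two cosets $G^{\circ}_w$ and $\tau G^{\circ}_w$ are distinct, and they exhaust $G_w$. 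This gives $G_w/G^{\circ}_w=\{1,\tau\}$ as claimed.

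The only genuinely delicate point is justifying the first reduction cleanly: I am using Proposition \ref{prop:222-regular-sect}, which describes the stabilizer of $w_{\mathrm{cube}}$ for the \emph{natural} action $g_3\begin{pmatrix}g_1A_1{}^tg_2\\g_1A_2{}^tg_2\end{pmatrix}$ of $\gl_2^3$ on pairs of matrices, whereas the action here (see the displayed formulas for $gx$ in case (a)/(b)) mixes $A$ and $B$ via $g_3$ acting on the left rather than stacking them as a column. I would check that these two actions are intertwined by the obvious identification of $\aff^2\otimes\aff^2\otimes\aff^2$ with pairs of $2\times2$ matrices (i.e. the action on the cube factor is the same representation, just written with $g_3$ acting on the $\aff^2$-index in the two natural ways, which differ by transpose-inverse on $\gl_2$ — harmless over a field, cf.\ Remark \ref{rem:322-remark}), so that the stabilizer computation transfers verbatim. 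Once that bookkeeping is pinned down, the rest is the short torus argument above, and I do not expect any obstacle there.
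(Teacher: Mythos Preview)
Your proof is correct and follows essentially the same approach as the paper: reduce to the stabilizer of the cube $(w_1,w_2)$ via Proposition \ref{prop:222-regular-sect}, multiply by $\tau$ if necessary to get diagonal matrices, then use the condition on $v_1$ (and $v_2$ in case (b)) to land in $G^{\circ}_w$. Your concern in the last paragraph is unnecessary: the action on the cube factor in Section \ref{sec:rational-orbits-222-22} is literally identical to the one in Case I of Section \ref{sec:rational-orbits-general} (both are $g_3$ acting on the left on the column $[g_1A{}^tg_2,\,g_1B{}^tg_2]$), so Proposition \ref{prop:222-regular-sect} applies directly without any intertwining or transpose-inverse bookkeeping.
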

\begin{proof}
Suppose that $g=(g_1,g_2,g_3)\in G_w$. 
Since $g$ fixes $(w_1,w_2)$, by multiplying $\tau$ 
if necessary, we may assume that $g_1,g_2,g_3$ 
are diagonal matrices. Then it is easy to 
show that $g\in G^{\circ}_w$. 
\end{proof}

We put 
\begin{equation*}
U = 
\begin{cases}
\{x\in V_k\mid P_1(x),P_2(x)\not=0\} & \text{case (a)}, \\
\{x\in V_k\mid P_1(x),P_2(x),P_3(x)\not=0\} & \text{case (b)}. 
\end{cases}
\end{equation*}
\begin{prop}
\label{prop:222+2-orbit}
The map $V\ni x\mapsto (A(x),B(x))\in\aff^2\otimes \aff^2\otimes \aff^2$
induces a bijection 
$G_k\backslash U\to (\gl_2(k)^3)\backslash 
(k^2\otimes k^2\otimes k^2)^{\sst}\cong \Ex_2(k)$. 
\end{prop}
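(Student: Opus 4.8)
The plan is to run the same Galois-cohomology argument used for the earlier cases in this section, now with the stabilizer $G_w$ of Proposition \ref{prop:S20-stabilizer}. First I would recall that we are in the situation of Section \ref{sec:regularity} with $m=N$ (so $N=2$ in case (a), $N=3$ in case (b)): the polynomials $P_1,P_2$ (resp. $P_1,P_2,P_3$) are relative invariant polynomials corresponding to distinct invariant hypersurfaces, and a positive power of the relevant character is $\prod_i\psi_i^{a_i}$ with all $a_i>0$, so by Corollary \ref{cor:reducible-sep-orbit} and the remark following Definition \ref{defn:PV-alternative} we have $U_{k^{\sep}}=G_{k^{\sep}}w$. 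Hence $G_k\backslash U\cong \h^1(k,G_w)$. Since $G^{\circ}_w$ is a split torus in both cases ($\cong\gl_1^2$ in (a), trivial in (b)) and $G_w=G^{\circ}_w\rtimes\{1,\tau\}$ with $\gal(k^{\sep}/k)$ acting trivially on $\{1,\tau\}\cong\Z/2\Z$, the exact sequence of pointed sets (cf.\ LEMMA (1.2) \cite[p.118]{yukiel}) together with $\h^1(k,\gl_1)=\{1\}$ and $\h^1(k,\gl_1^2)=\{1\}$ gives $\h^1(k,G_w)\cong\h^1(k,\Z/2\Z)\cong\Ex_2(k)$. Here one needs that the twisting action of $\Z/2\Z$ on $\h^1(k,G^{\circ}_w)$ is trivial, which is immediate since $\h^1(k,G^{\circ}_w)$ is already a singleton.

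Next I would check that the map $x\mapsto(A(x),B(x))$ is $G$-equivariant for the natural action of $\gl_2^3$ on $\aff^2\otimes\aff^2\otimes\aff^2$ (it is, directly from the formulas (a), (b) for the action) and that $(A(w),B(w))=(w_1,w_2)$, which is the element $w$ of (\ref{eq:w-efn-222-matrix}) up to the obvious identification. Then the stabilizer computation: the image of $G_w$ under $(g_1,g_2,g_3)\mapsto(g_1,g_2,g_3)$ lands in the stabilizer of $(w_1,w_2)$ inside $\gl_2^3$, and under this map $\tau=(\tau_0,\tau_0,\tau_0)$ goes to the corresponding $\tau$ of Proposition \ref{prop:222-regular-sect}. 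So for $x\in U_k$ written as $x=g_x w$ with $g_x\in G_{k^{\sep}}$, the $1$-cocycle $(g_x^{-1}g_x^{\sig})_\sig$ representing the orbit of $x$ maps to the $1$-cocycle representing the orbit of $(A(x),B(x))$ under $\gl_2(k)^3\backslash(k^2\otimes k^2\otimes k^2)^{\sst}$; since both cohomology sets are identified with $\h^1(k,\Z/2\Z)$ compatibly, the induced map on orbit sets is a bijection. Finally I would invoke Proposition \ref{prop:222-rat-orbits}, which identifies $\gl_2(k)^3\backslash(k^2\otimes k^2\otimes k^2)^{\sst}$ with $\Ex_2(k)$ via the field generated by the roots of $F_{(A,B)}(v)$; chasing through, the composite sends $x\in U_k$ to the field generated by the roots of $\det(v_1 A(x)+v_2 B(x))$.

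To make the correspondence explicit (matching the style of Propositions \ref{prop:322+3-orbit} and \ref{prop:S14-rational-orbits}) I would, for a separable quadratic $F=k(\al_1)$ with $f_a(t)=t^2+a_1t+a_2$, produce an explicit $g_{\al}\in G_{k^{\sep}}$ with $g_{\al}^{\sig}=g_{\al}\tau$ and set $x_F=g_{\al}w$; one can take $g_{\al}$ to be $(\,\cdot\,,h_{\al},h_{\al})$ times a diagonal scaling on the first factor and a suitable element acting on the last $\aff^2$-factor(s), so that $A(x_F)=g_{\al}w_1{}^tg_{\al}$, $B(x_F)=g_{\al}w_2{}^tg_{\al}$ reproduce the element $x_F$ of (\ref{eq:S20-wa-defn}) up to a scalar, whence $\det(v_1 A(x_F)+v_2 B(x_F))$ has discriminant generating $F$. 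Checking $P_1(x_F),P_2(x_F)\not=0$ (and $P_3(x_F)\not=0$ in case (b)) is then a short computation. I expect the only real bookkeeping to be the explicit construction of $g_\al$ and the verification that the two appearances of $\Z/2\Z$ (from $G_w$ and from the stabilizer of $(w_1,w_2)$) are matched by the projection $G\to\gl_2^3$ on $\tau$; the cohomological skeleton is entirely parallel to the earlier cases and presents no obstacle.
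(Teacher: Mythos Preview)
Your cohomological approach differs from the paper's proof, which is entirely elementary: the paper fixes $(A(x),B(x))$ to be one of the representatives $(w_1,w_2)$ or $x_F$ from (\ref{eq:S20-wa-defn}) and then uses the explicit stabilizer $G^{\circ}_{(w_1,w_2)}$ (resp.\ its twisted form $G^{\circ}_{x_F}$) from Proposition~\ref{prop:222-regular-sect} to normalize the remaining vector(s) $v_1(x),v_2(x)$ to fixed values by a direct computation. No Galois cohomology is invoked; the argument is a two-line orbit reduction in each case.

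Your route is workable but has a genuine gap in case~(a). The exact sequence from LEMMA~(1.2) together with $\h^1(k,G^{\circ}_w)=\{1\}$ and the semidirect-product section only shows that $\h^1(k,G_w)\to\h^1(k,\Z/2\Z)$ is surjective with trivial fiber over the \emph{trivial} class; for bijectivity you must verify that the fiber over each nontrivial class $c\in\h^1(k,\Z/2\Z)$ is also a singleton, i.e.\ that $\h^1(k,{}_cG^{\circ}_w)=\{1\}$ for the inner twist ${}_cG^{\circ}_w$. Your stated justification (``the twisting action of $\Z/2\Z$ on $\h^1(k,G^{\circ}_w)$ is trivial since the latter is a singleton'') addresses the wrong point---a split torus can twist to one with nontrivial $\h^1$ (norm-one tori, for instance). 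Here $\tau$ acts on $G^{\circ}_w\cong\gl_1^2$ by swapping the factors, so the twist by a quadratic extension $F/k$ is ${}_cG^{\circ}_w\cong R_{F/k}\gl_1$, and Hilbert~90 gives $\h^1(k,R_{F/k}\gl_1)=\{1\}$; but this must be said. Equivalently, you can carry out the $x_F$ construction you sketch at the end and compute $G^{\circ}_{x_F}\cong R_{F/k}\gl_1$ directly, which is exactly what LEMMA~(1.8) of \cite{yukiel} requires and what the paper does in the analogous Propositions~\ref{prop:322+3-orbit} and~\ref{prop:S14-rational-orbits}. In case~(b) there is no issue since $G^{\circ}_w$ is trivial. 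Once this is filled in, your compatibility check (that the projection sends $\tau\mapsto\tau$) correctly identifies the induced map with the identity on $\h^1(k,\Z/2\Z)$, and the argument goes through.
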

\begin{proof}
Note that the set $U$ is $G_k$-invariant. 
We show that if $x\in U$ then $G_kx$ is determined 
by the orbit of $(A(x),B(x))$. So we assume that
$(A(x),B(x))$ is $(w_1,w_2)$ or $x_F$. 

Since the consideration is similar, 
we only consider the case (b). The case (a) is easier. 
Suppose that $x\in U$. 
We first consider the case $(A(x),B(x))=(w_1,w_2)$. 
Let $v_1(x)=[x_1,x_2]$, $v_2(x)=[x_3,x_4]$. 
Since 
\begin{math}
\Phi_1(w_1,w_2,v_1(x),v_2(x)) = \left(
\begin{smallmatrix}
x_2 & 0 \\ 
0 & -x_1 
\end{smallmatrix}
\right), 
\end{math}
$x_1,x_2\not=0$. Similarly, $x_3,x_4\not=0$.
If 
\begin{math}
t=(\diag(x_1^{-1},x_2^{-1}),\diag(x_3^{-1},x_4^{-1}),
\diag(x_1x_3,x_2x_4)) 
\end{math}
then $tx=(w_1,w_2,[1,1],[1,1])$. So $x$ is in the orbit of 
$(w_1,w_2,[1,1],[1,1])$.

We next consider the case $(A(x),B(x))=x_F$. 
Let $a_1,a_2,\al$ be as in (\ref{eq:S20-wa-defn}) 
and $F=k(\al_1)$. 
Let $\sig\in\gal(F/k)$ be the non-trivial element.
We express $v_1(x),v_2(x)$ in the form 
$-(\al_1-\al_2)^{-1}h_{\al}[x_1,x_2],h_{\al}[x_3,x_4]$ respectively 
where $x_1,x_2,x_3,x_4\in F$. 
Then $v_1(x),v_2(x)\in k^2$ if and only 
if $x_2=x_1^{\sig},x_4=-x_3^{\sig}$. 
Since $P_2(x),P_3(x)\not=0$, $x_1,x_3\not=0$. 
If 
\begin{math}
t=g_{\al}(\diag(x_1^{-1},(x_1^{-1})^{\sig}),
\diag(x_3^{-1},(x_3^{-1})^{\sig}),
\diag(x_1x_3,x_1^{\sig}x_3^{\sig}))g_{\al}^{-1}
\end{math}
then $t\in G_k$ and 
\begin{equation*}
tx=(x_F,-(\al_1-\al_2)^{-1}h_{\al}[1,1],h_{\al}[1,-1])
= (x_F,[0,1],[2,a_1]).
\end{equation*}
So $x$ is in the orbit of $(x_F,[0,1],[2,a_1])$. 
\end{proof} 

\section{Non-empty strata}
\label{sec:non-empty}

In this section and the next two sections, 
we assume that $k$ is a fixed perfect field. 

Let $(G,V)$ be the \pv{} (\ref{eq:PV}) 
and $T,G_{\text{st}},T_{\text{st}},T_0$ as in (\ref{eq:T0-defn}).

The set $\gB$ consists 
of $292$ $\be_i$'s. We use the table in 
Section 8 \cite{tajima-yukie-GIT1}. 
In this section we shall prove that $S_{\be_i}\not=\emptyset$ for 
\begin{equation}
\label{eq:list-non-empty-54}
\begin{aligned}
i & = 1,3,5,9,14,15,16,20,21,33,35,36,37,40,42,49,50,64,70,71,75,95, \\
& \quad 101,105,106,107,108,110,113,121,131,149,150,151,152,164,178,202, \\
& \quad 216,217,223,224,226,227,232,254,256,258,259,270,271,272,273,280, \\
& \quad 281,285,286,287,289,291,292 
\end{aligned}
\end{equation}
for the \pv{} (\ref{eq:PV}).  
We shall prove that $S_{\be_i}=\emptyset$ for other $\be_i$'s 
in the next section.

The following table describes $M_{\be_i}$, $Z_{\be_i}$ as a 
\rep{} of $M^s_{\be_i}$, the coordinates of $Z_{\be_i},W_{\be_i}$ 
and $G_k\backslash S_{\be_i\, k}\cong P_{\be_i\,k}\backslash Y^{\sst}_{\be_i\,k}$.

\vskip 5pt

\begin{center} 

\; 
none \\

\hline
\end{tabular}

\end{center}

\vskip 10pt

We now verify that $S_{\be_i}\not=\emptyset$ and 
determine $G_k\backslash S_{\be_i\, k}$
for the above $i$'s.  

In the following, we write $S_i,Z_i,W_i,Y_i,\chi_i$ instead of 
$S_{\be_i},Z_{\be_i},W_{\be_i},Y_i,\chi_{\be_i}$ 
(but we keep writing $M_{\be_i}$, etc.). 

\begin{prop}
If $\be\in \gB$ and $M_{\be}=T$ then 
$Z^{\sst}_{\be}\not=\emptyset$. 
\end{prop}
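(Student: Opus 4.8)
The plan is to exhibit an explicit semi-stable point of $Z_{\be}$ directly, via the Hilbert--Mumford criterion for torus actions (which is exactly the form of GIT in \cite{mufoki}). The key simplification is that when $M_{\be}=T$ all the relevant groups are tori: $M^{\text{st}}_{\be}=(M_{\be}\cap G_{\text{st}})^{\circ}=T_{\text{st}}$, and $G_{\text{st},\be}=\{g\in T_{\text{st}}\mid \chi_{\be}(g)=1\}^{\circ}$ is a subtorus of $T_{\text{st}}$. Moreover $G_{\text{st}}=\spl_5\times\spl_4$ is semi-simple, so the natural linearization of the $G_{\text{st},\be}$-action on $\co_{\p(Z_{\be})}(1)$ carries no character twist; hence, for $[z]\in\p(Z_{\be})$, being semi-stable \wrt{} $G_{\text{st},\be}$ amounts to the combinatorial condition that $0$ lie in the convex hull of the $G_{\text{st},\be}$-weights occurring on the coordinate lines indexed by $\supp z$.

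Next I would produce the point. Since $\be\in\gB\sub\gC$, there is a subset $\gI\sub\Gam$ with $0\notin\Conv\gI$ such that $\be$ is the point of $\Conv\gI$ closest to the origin; write $\be=\sum_{i\in\gI}c_i\gam_i$ with $c_i\ge 0$, $\sum_i c_i=1$, and set $\gJ=\{i\mid c_i>0\}$. For every $i\in\gI$ one has $(\gam_i,\be)_{*}\ge(\be,\be)_{*}$, because $\gam_i\in\Conv\gI$ and $\be$ is the nearest point; on the other hand $\sum_i c_i(\gam_i,\be)_{*}=(\be,\be)_{*}$, so equality must hold for each $i\in\gJ$, i.e.\ $(\gam_i,\be)_{*}=(\be,\be)_{*}$. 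Thus $\coorda_i\in Z_{\be}$ for all $i\in\gJ$, and $\gJ\ne\emptyset$ since $\be\ne 0$. I then take $z=\sum_{i\in\gJ}\coorda_i\in Z_{\be}$, a non-zero vector with $\supp z=\gJ$.

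It remains to verify that $[z]$ is semi-stable. The weight of $G_{\text{st},\be}$ on the $i$-th coordinate line is the restriction of $\gam_i$. Because $\chi_{\be}$ is trivial on $G_{\text{st},\be}$ while $\chi_{\be}^a$ restricts on $T_{\text{st}}$ to $b\be$ for some positive integers $a,b$, the restriction of $\be$ to $G_{\text{st},\be}$ vanishes in $X^{*}(G_{\text{st},\be})\otimes\Q$; hence $\gam_i$ and $\gam_i-\be$ have the same restriction there. Now $\sum_{i\in\gJ}c_i(\gam_i-\be)=\be-\be=0$, so restricting to $G_{\text{st},\be}$ exhibits $0$ as a convex combination of the relevant weights. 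By the criterion recalled above, $[z]\in\p(Z_{\be})^{\sst}$, hence $z\in Z^{\sst}_{\be}$ and $Z^{\sst}_{\be}\ne\emptyset$.

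I do not expect a serious obstacle; the only points needing a little care are (i) identifying the $G_{\text{st},\be}$-weights on $Z_{\be}$ with the restrictions of the $\gam_i$ and noticing that $\be$ restricts trivially, which makes the semi-stability condition reduce to ``$0$ is a convex combination of the $\gam_i$, $i\in\gJ$'' --- automatic from the description of $\be$ as a point of $\Conv\gI$; and (ii) recording that $G_{\text{st}}$ semi-simple removes any ambiguity in the linearization, so membership of $0$ in the convex hull of the support weights suffices regardless of sign conventions. (If $G_{\text{st},\be}$ is trivial the statement is immediate, since then every non-zero point of $Z_{\be}$ is semi-stable.)
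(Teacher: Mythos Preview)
Your argument is correct and rests on the same underlying observation as the paper: writing $\be=\sum_{i\in\gJ}c_i\gam_i$ with $c_i>0$ and $\sum c_i=1$, the weights $\gam_i$ satisfy $(\gam_i,\be)_*=(\be,\be)_*$, so the relevant coordinate vectors lie in $Z_{\be}$, and the combination $\sum c_i(\gam_i-\be)=0$ witnesses semi-stability. The only difference is presentational: the paper clears denominators to produce an explicit $G_{\text{st},\be}$-invariant monomial $P(x)=\prod_j x_{i_j}^{d_1b_j}$ (non-vanishing at your point $z$), whereas you invoke the Hilbert--Mumford criterion for torus actions directly. These are dual formulations of the same fact; the paper's version has the minor advantage of naming the invariant polynomial explicitly, while yours makes the geometric picture (the convex hull containing the origin after projecting out $\be$) more transparent.
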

\begin{proof}
Note that $M_{\be}=T$ implies that $M^{\text{st}}_{\be}=T_{\text{st}}$. 
By the definition of $\gB$, 
there exist $1\leq i_1<\cdots< i_N\leq 40$ 
such that $\be$ is the closest point to the  
origin of the convex hull of their weights 
$\gam_{i_1}\ccd \gam_{i_N}\in X^*(T_{\text{st}})$. 
We may assume that this convex hull is
contained in the hyperplane 
$\{t\in \mathfrak t^*\mid (t,\be)_*=(\be,\be)_*\}$
($(\;,\;)_*$ is the inner product on $\mathfrak t^*$ 
which is used to define $\gB$, etc.). 
So $\bbma_{i_1}\ccd \bbma_{i_N}\in Z_{\be}$ 
and there exist rational numbers $0\leq c_1\ccd c_N<1$ such that 
$\be=c_1\gam_{i_1}+\cdots+c_N\gam_{i_N},c_1+\cdots+c_N=1$.

There exists an integer $d_1>0$ such that 
$d_1\be$ is actually a character $\chi$
(i.e. an element of $X^*(T_{\text{st}})$). 
We take a common denominator $d_2$ of $c_1\ccd c_N$ 
and express $c_i$ as $b_i/d_2$ for $i=1\ccd N$. 
Then 
\begin{math}
d_1b_1\gam_{i_1}+\cdots d_1b_N\gam_{i_N} 
= d_2\chi. 
\end{math}
Writing multiplicatively, 
\begin{math}
\prod_{j=1}^N\gam_{i_j}(t)^{d_1b_j}= \chi(t)^{d_2}
\end{math}
for $t\in T_{\text{st}}$. Since $\chi$ is proportional 
to $\be$, $\chi(t)=1$ for $t\in G_{\text{st},\be}$. 
Therefore, if we put 
$P(x) = \prod_{j=1}^Nx_{i_j}^{d_1b_j}$ then  
$P(x)$ is a non-constant polynomial on $Z_{\be}$ which is 
invariant under the action of $G_{\text{st},\be}$. 
\end{proof}

We consider the situation of the above proposition. 
We express elements of $T$ (not $T_{\text{st}}$) as 
\begin{equation}
\label{eq:t-defn}
t=(\diag(t_1\ccd t_5),\diag(t_6\ccd t_9)). 
\end{equation}
Let $\del_1\ccd \del_9\in X^*(T)$ be the characters
such that $\del_i(t)=t_i$. Then 
$\{\del_1\ccd \del_9\}$ is a $\Z$-basis of $X^*(T)$. 
Suppose that $Z_{\be}=\lan \bbma_{i_1}\ccd \bbma_{i_N}\ran$ 
and that $\chi_1\ccd \chi_N\in X^*(T)$ are the weights of 
$\bbma_{i_1}\ccd \bbma_{i_N}$ with respect to $T$ (not $T_{\text{st}}$). 
There exists $m_{ij}\in \Z$ for $i=1\ccd N,j=1\ccd 9$ such that 
$\chi_i = \prod_{j=1}^9 \del_j^{m_{ij}}$. 

The following proposition is obvious and so we do not provide 
the proof. 
\begin{prop}
\label{prop:M=Tsuejective} 
In the above situation, if there exists an 
($N\times N$)-minor $C$ of the matrix $(m_{ij})$ 
such that $\det C=\pm 1$, then 
$Z^{\sst}_{\be\,k}$ is a single $T_k$-orbit. 
\end{prop}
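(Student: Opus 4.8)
The plan is to read the hypothesis as a statement about the homomorphism of split tori
\[
\phi\colon T\longrightarrow \Gm^N,\qquad \phi(t)=(\chi_1(t)\ccd \chi_N(t)),
\]
and then to identify $Z^{\sst}_{\be}$ with the open $T$-orbit that $\phi$ produces on $Z_{\be}$. In the coordinates $x_{i_1}\ccd x_{i_N}$ on $Z_{\be}\cong\aff^N$ the action of $t\in T$ is diagonal, $t\cdot(y_1\ccd y_N)=(\chi_1(t)y_1\ccd \chi_N(t)y_N)$, so $T$ acts on $Z_{\be}$ through $\phi$ followed by the standard diagonal action of $\Gm^N$. With respect to the standard basis of $X^*(\Gm^N)$ and the basis $\{\del_1\ccd \del_9\}$ of $X^*(T)$, the map $\phi^*$ is given by the matrix $(m_{ij})$, so the existence of an $N\times N$ minor $C$ with $\det C=\pm1$ says precisely that $\phi^*$ is a split injection of free $\Z$-modules. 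Dually, $\phi$ is then surjective with connected kernel $T'$ (a subtorus), and $\phi$ admits a section $\sigma\colon\Gm^N\to T$ defined over $\Z$, hence over $k$; in particular $T_k\to(k^\times)^N$ is surjective. (The minor condition also makes $\chi_1\ccd \chi_N$ linearly independent, so every $T$-orbit in $Z_{\be}$ consisting of points with exactly $s$ nonvanishing coordinates has dimension $s$; the maximal orbits are precisely those with all coordinates nonzero.)

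Next I would set $U=\{z\in Z_{\be}\mid y_1\ccd y_N\text{ all nonzero}\}$ and $z_0=\bbma_{i_1}+\cdots+\bbma_{i_N}$. Since $T$ acts through $\phi$, one has $Tz_0=\phi(T)\cdot z_0=\Gm^N\cdot z_0=U$, so $U$ is a single $T$-orbit, open and dense in the irreducible variety $Z_{\be}$, with stabiliser $T'$; and surjectivity of $T_k\to(k^\times)^N$ gives that $U_k:=U\cap Z_{\be}(k)$ is a single $T_k$-orbit. It then remains to prove $Z^{\sst}_{\be}=U$. The inclusion $U\subseteq Z^{\sst}_{\be}$ is formal: by the preceding proposition $Z^{\sst}_{\be}$ is nonempty, and it is the preimage under $\pi_V$ of a $G_{\text{st},\be}$-invariant open subset of $\mathbb P(Z_{\be})$, so (as $T$ is abelian and normalises $G_{\text{st},\be}$) it is open and $T$-stable in $Z_{\be}$; being a nonempty open subset of the irreducible $Z_{\be}$ it is dense, hence contains the dense orbit $U$.

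For the reverse inclusion $Z^{\sst}_{\be}\subseteq U$ I would apply the Hilbert--Mumford criterion to the torus $G_{\text{st},\be}\subset T_{\text{st}}$: a point $z=\sum y_j\bbma_{i_j}$ is $G_{\text{st},\be}$-semistable if and only if $0$ lies in the convex hull of $\{\,\gam_{i_j}|_{G_{\text{st},\be}}\mid y_j\ne 0\,\}$. Because every $\gam_{i_j}$ satisfies $(\gam_{i_j},\be)_*=(\be,\be)_*$, restriction to $G_{\text{st},\be}=(\kernel\chi_{\be})^{\circ}$ annihilates exactly the $\be$-component, so this condition is equivalent to $\be\in\Conv\{\gam_{i_j}\mid y_j\ne 0\}$. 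Since $\be$ is the point of $\Conv\{\gam_{i_1}\ccd \gam_{i_N}\}$ nearest the origin and lies in the relative interior of that polytope in the cases to which the proposition is applied (equivalently, the monomial relative invariant produced in the preceding proposition involves all of $x_{i_1}\ccd x_{i_N}$), a vanishing coordinate $y_{j_0}=0$ would force $\be\in\Conv\{\gam_{i_j}\mid j\ne j_0\}$, which is impossible; a separating functional then gives a one-parameter subgroup of $G_{\text{st},\be}$ destabilising $z$. Hence $Z^{\sst}_{\be}=U$, and $Z^{\sst}_{\be\,k}=U_k$ is a single $T_k$-orbit.

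The formal part—the lattice bookkeeping of the first step and the transitivity of the $\Gm^N$-action—is routine; the one real point is the last inclusion, namely excluding $G_{\text{st},\be}$-semistable points with a vanishing coordinate. This is where the geometry of the particular vector $\be$ enters, and in practice it is read off from the explicit weight tables: one checks, $\be$ by $\be$, that $\be$ is an interior point of the convex hull of the weights spanning $Z_{\be}$ (equivalently that the relative invariant of the preceding proposition has full support), which is the content hiding behind the word ``obvious''.
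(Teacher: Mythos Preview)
The paper gives no proof beyond the word ``obvious'', so there is nothing to compare against directly.  Your argument correctly isolates the two independent pieces: (i) the minor condition makes the homomorphism $T\to\Gm^N$ split surjective, so $T_k$ already acts transitively on $U_k=\{y_1\cdots y_N\neq 0\}$; and (ii) one must know $Z^{\sst}_{\be}=U$.  Part (i) is unproblematic and is presumably what the authors have in mind.  A small wording issue in your argument for $U\subseteq Z^{\sst}_{\be}$: ``dense, hence contains the dense orbit $U$'' is not a valid inference (two dense opens need not contain one another); the step you want is that $Z^{\sst}_{\be}$ is $T$-stable and meets the single $T$-orbit $U$, hence contains it.

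For part (ii) your instinct is right: the reverse inclusion $Z^{\sst}_{\be}\subseteq U$ is equivalent, via Hilbert--Mumford for the torus $G_{\text{st},\be}$, to the statement that $0\notin\Conv\{\gamma'_{i_j}\mid j\in J\}$ for every proper subset $J$, i.e.\ every nonconstant $G_{\text{st},\be}$-invariant monomial on $Z_{\be}$ involves all $N$ variables.  This is \emph{not} a formal consequence of the unimodular-minor hypothesis: one can manufacture diagonal torus representations satisfying the minor condition yet admitting an invariant supported on a proper subset of the coordinates.  So the proposition as literally stated is a touch optimistic, and you have located exactly what is swept under ``obvious''.  Two refinements to your phrasing of the needed condition: ``relative interior'' is the right notion only when the $\gamma_{i_j}$ are affinely independent (a simplex); and the invariant from the preceding proposition having full support is necessary but not by itself sufficient when the relation lattice has rank $>1$.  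The clean statement is that the semigroup $\{a\in\Z_{\ge 0}^N\mid \sum a_j\gamma'_{i_j}=0\}$ has no nonzero element with a zero entry; in the six cases where the paper applies the proposition ($\be_{49},\be_{50},\be_{149},\be_{150},\be_{151},\be_{152}$) this relation lattice has rank one with a strictly positive generator, so the check is immediate.
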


When we consider individual cases we would like to 
show that  
\begin{math}
P_{\be_i\,k}\backslash Y^{\sst}_{i\,k}
\cong M_{\be_i\,k}\backslash Z^{\sst}_{i\,k}
\end{math}
for all $i$ such that $Z^{\sst}_i\not=\emptyset$. 
What we have to do is to show  that  
if $x\in Z^{\sst}_{i\,k}$ and 
$y\in W_{i\,k}$ then there exists 
$u\in U_{\be_{i\,k}}$ such that $ux=(x,y)\in Z_{i\,k}\oplus W_{i\,k}$. 
We show that under a certain condition, 
if this is possible for a particular element 
$R\in Z^{\sst}_{i\,k}$ then it is possible for all 
$x\in Z^{\sst}_{i\,k}$. 
Since we mention the above property very often, 
we give it a name. 

\begin{property}
\label{property:W-eliminate}
Let $x\in Z^{\sst}_{i\,k}$. If 
$y\in W_{i\,k}$ then there exists 
$u\in U_{\be_{i\,k}}$ such that $ux=(x,y)\in Z_{i\,k}\oplus W_{i\,k}$. 
\end{property}

\begin{cond}
\label{cond:unipotent-eliminate-condition} 
\begin{itemize}
\item[(1)]
$Z^{\sst}_{i\,k^{\sep}}$ is a single $M_{\be_i\,k^{\sep}}$-orbit. 
\item[(2)]
There exists $R\in Z^{\sst}_{i\,k}$ with the property that 
if $y\in W_{i\,k^{\sep}}$ 
then there exists $u\in U_{\be_i\,k^{\sep}}$ such that 
$uR=(R,y)$.  
\item[(3)]
$G_R\cap U_{\be_i}$ is connected.  
\end{itemize}
\end{cond}

\begin{prop}
\label{prop:W-eliminate}
If Condition \ref{cond:unipotent-eliminate-condition} 
is satisfied then Property \ref{property:W-eliminate} 
holds for any $x\in Z^{\sst}_{i\,k}$.
\end{prop}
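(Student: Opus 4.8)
The plan is to deduce Property \ref{property:W-eliminate} for an arbitrary $x\in Z^{\sst}_{i\,k}$ from the distinguished element $R$ by a descent argument, using that $Z^{\sst}_{i\,k^{\sep}}$ is a single $M_{\be_i\,k^{\sep}}$-orbit. First I would fix $x\in Z^{\sst}_{i\,k}$ and, by Condition \ref{cond:unipotent-eliminate-condition}(1), choose $g\in M_{\be_i\,k^{\sep}}$ with $gR=x$. Given $y\in W_{i\,k}\subset W_{i\,k^{\sep}}$, I want to produce $u\in U_{\be_i\,k}$ with $ux=(x,y)$. The natural candidate is $u=g u_0 g^{-1}$ where $u_0\in U_{\be_i\,k^{\sep}}$ is supplied by Condition \ref{cond:unipotent-eliminate-condition}(2) applied to the vector $g^{-1}\cdot(x,y)$; here one must be careful that $g^{-1}\cdot(x,y)$ again has $R$ as its $Z_{\be_i}$-component (which holds since $g^{-1}x=R$ and $g\in M_{\be_i}=Z_G(\lam_{\be_i})$ preserves the decomposition $Y_{\be_i}=Z_{\be_i}\oplus W_{\be_i}$, as $U_{\be_i}$ does too), so that (2) indeed applies and yields $u_0$ with $u_0 R=(R,\,g^{-1}y')$ for the appropriate $W$-component. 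Then $u=gu_0g^{-1}\in U_{\be_i\,k^{\sep}}$ satisfies $ux=(x,y)$, but a priori $u$ is only defined over $k^{\sep}$.

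The core of the argument is then a Galois-cohomology / uniqueness step to replace this $u$ by a $k$-rational one. For $\sig\in\gal(k^{\sep}/k)$, applying $\sig$ to $ux=(x,y)$ and using that $x,y$ are $k$-rational gives $u^{\sig}x=(x,y)=ux$, hence $u^{-1}u^{\sig}\in G_x\cap U_{\be_i}$. Next I would observe that $G_x\cap U_{\be_i}$ is $k^{\sep}$-isomorphic to $G_R\cap U_{\be_i}$ via conjugation by $g$ (or more intrinsically, the two are $k^{\sep}$-forms of one another), so by Condition \ref{cond:unipotent-eliminate-condition}(3) the group $G_x\cap U_{\be_i}$ is connected; being a closed subgroup of the unipotent group $U_{\be_i}$ it is a connected unipotent group, hence (over the perfect field $k$) split, so $\h^1(k,G_x\cap U_{\be_i})=\{1\}$. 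Now $\sig\mapsto u^{-1}u^{\sig}$ is a continuous $1$-cocycle of $\gal(k^{\sep}/k)$ valued in $(G_x\cap U_{\be_i})_{k^{\sep}}$ (the cocycle identity $(u^{-1}u^{\sig\tau})=(u^{-1}u^{\sig})(u^{-1}u^{\tau})^{\sig}$ is routine), so it is a coboundary: there is $c\in (G_x\cap U_{\be_i})_{k^{\sep}}$ with $u^{-1}u^{\sig}=c^{-1}c^{\sig}$ for all $\sig$. Then $uc^{-1}$ is Galois-fixed, i.e.\ $uc^{-1}\in U_{\be_i\,k}$, and since $c\in G_x$ we still have $(uc^{-1})x=ux=(x,y)$. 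Taking this $uc^{-1}$ as the required element completes the proof.

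I also need to confirm two structural points that are used implicitly. One is that $U_{\be_i}$ and $M_{\be_i}$ preserve the splitting $Y_{\be_i}=Z_{\be_i}\oplus W_{\be_i}$: this is immediate from the weight description, since $\lim_{t\to0}\lam_{\be_i}(t)\,g\,\lam_{\be_i}(t)^{-1}=1$ for $g\in U_{\be_i}$ forces $g$ to act on $Y_{\be_i}$ by a block-unipotent map raising the $\lam_{\be_i}$-weight, hence sending $Z_{\be_i}$ into $Y_{\be_i}$ with $Z_{\be_i}$-component the identity, and $M_{\be_i}=Z_G(\lam_{\be_i})$ acts weight-by-weight. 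The other is the identification of $G_x\cap U_{\be_i}$ as a conjugate (hence $k^{\sep}$-form) of $G_R\cap U_{\be_i}$: if $gR=x$ with $g\in M_{\be_i\,k^{\sep}}$ then $g(G_R\cap U_{\be_i})g^{-1}=G_x\cap (gU_{\be_i}g^{-1})=G_x\cap U_{\be_i}$ since $g$ normalizes $U_{\be_i}$.

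\textbf{Main obstacle.} The delicate point is the vanishing of $\h^1(k,G_x\cap U_{\be_i})$, which rests on (a) connectedness, imported from Condition \ref{cond:unipotent-eliminate-condition}(3) together with the conjugacy $G_x\cap U_{\be_i}\cong_{k^{\sep}}G_R\cap U_{\be_i}$, and (b) the fact that a connected unipotent group over a perfect field is split, so that its $\h^1$ is trivial (this is where perfectness of $k$, assumed in this section, is genuinely used). Everything else is formal manipulation with the $\lam_{\be_i}$-weight decomposition and a standard twisting argument of the same flavour as in the proofs of Propositions \ref{prop:S1-orbit-final} and \ref{prop:322+3-orbit}.
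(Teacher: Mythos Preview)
Your proposal is correct and follows essentially the same approach as the paper: transport the property from $R$ to $x$ via an element of $M_{\be_i\,k^{\sep}}$, observe that $u^{-1}u^{\sig}$ lands in $G_x\cap U_{\be_i}$, use that this group is connected (as a conjugate of $G_R\cap U_{\be_i}$) and hence split unipotent over the perfect field $k$ so that $\h^1(k,G_x\cap U_{\be_i})=\{1\}$, and untwist to get a $k$-rational $u$. The paper's proof is terser but the logical skeleton is identical; your additional remarks on the weight decomposition and on $M_{\be_i}$ normalizing $U_{\be_i}$ are correct supporting details that the paper leaves implicit.
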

\begin{proof}
Since $G_R\cap U_{\be_i}$ is connected and $x\in M_{\be_i\,k^{\sep}}R$, 
$G_x\cap U_{\be_i}$ is connected also. Since $k$ is a perfect field and
$G_x\cap U_{\be_i}$ is unipotent, it splits over $k$ 
(see 15.5 Corollary (ii) \cite[p.205]{borelb}). This means that 
there is a composition series 
$G_x\cap U_{\be_i}=G_0\supset G_1\supset \cdots \supset G_s=\{e_G\}$ 
such that $G_i/G_{i+1}\cong \mathbbm G_a$ (the additive group over the 
ground field).   
Therefore, $\h^1(k,G_x\cap U_{\be_i})=\{1\}$.  

Since $x\in M_{\be_i\,k^{\sep}}R$, 
if $y\in W_{i\,k}$ then there exists $u\in U_{\be_i\,k^{\sep}}$ such that 
$ux=(x,y)$. Then $u^{\sig}x=(x,y)$ also for all $\sig\in\gal(k^{\sep}/k)$. 
Therefore, $u^{-1}u^{\sig}\in G_x\cap U_{\be_i}$. 
Since $\h^1(k,G_x\cap U_{\be_i})=\{1\}$, 
there exists $u_1\in G_{x\,k^{\sep}}\cap U_{\be_i\,k^{\sep}}$ 
such that $u^{-1}u^{\sig}=u_1^{-1}u_1^{\sig}$ for all $\sig\in\gal(k^{\sep}/k)$. 
This implies that $uu_1^{-1}\in U_{\be_i\,k}$ and that 
$uu_1^{-1}x=ux=(x,y)$.  
\end{proof} 

Also the following proposition is obvious. 
\begin{prop}
\label{prop:W-eliminate-single-orbit}
If $R\in Z^{\sst}_{i\,k}$, $Z^{\sst}_{i\,k}=M_{\be_i\,k}R$ 
and Property \ref{property:W-eliminate} holds for $R$ 
then Property \ref{property:W-eliminate} holds for 
any $x\in Z^{\sst}_{i\,k}$. 
\end{prop}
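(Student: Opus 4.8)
The statement to prove is Proposition~\ref{prop:W-eliminate-single-orbit}: if $R\in Z^{\sst}_{i\,k}$, if $Z^{\sst}_{i\,k}=M_{\be_i\,k}R$, and if Property~\ref{property:W-eliminate} holds for $R$, then Property~\ref{property:W-eliminate} holds for every $x\in Z^{\sst}_{i\,k}$. The plan is to simply transport the property along the $M_{\be_i\,k}$-action, using the fact that $M_{\be_i}$ normalizes $U_{\be_i}$. This is a ``one-line'' argument compared to Proposition~\ref{prop:W-eliminate}: there is no Galois cohomology to invoke, because the hypothesis $Z^{\sst}_{i\,k}=M_{\be_i\,k}R$ already gives us a \emph{rational} group element moving $R$ to $x$.

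First I would fix $x\in Z^{\sst}_{i\,k}$ and, using the hypothesis, choose $m\in M_{\be_i\,k}$ with $x=mR$. Then let $y\in W_{i\,k}$ be arbitrary; I want $u\in U_{\be_i\,k}$ with $ux=(x,y)$. The key structural facts are that $P_{\be_i}=M_{\be_i}\ltimes U_{\be_i}$ with $U_{\be_i}$ normal, and that $M_{\be_i}$ preserves the decomposition $Y_{\be_i}=Z_{\be_i}\oplus W_{\be_i}$ (indeed it preserves each summand, since $Z_{\be_i},W_{\be_i}$ are spans of weight vectors for $\lam_{\be_i}$ and $M_{\be_i}=Z_G(\lam_{\be_i})$ commutes with $\lam_{\be_i}$). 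In particular $m^{-1}y\in W_{i\,k}$, so by Property~\ref{property:W-eliminate} applied to $R$ there exists $u_0\in U_{\be_i\,k}$ with $u_0R=(R,m^{-1}y)$. Now set $u=mu_0m^{-1}$, which lies in $U_{\be_i\,k}$ because $M_{\be_i}$ normalizes $U_{\be_i}$ and everything is $k$-rational. Then $ux=mu_0m^{-1}(mR)=m(u_0R)=m(R,m^{-1}y)$, and since $m$ acts on $Z_{\be_i}$ and on $W_{\be_i}$ separately this equals $(mR,\,m(m^{-1}y))=(x,y)$, as desired.

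I do not anticipate a genuine obstacle here; the only point requiring a moment's care is the observation that $M_{\be_i}$ stabilizes each of $Z_{\be_i}$ and $W_{\be_i}$ individually (not merely their direct sum), which is what lets me split $m(R,m^{-1}y)$ as $(mR,y)$. This is immediate from the weight-space descriptions in the definitions of $Z_{\be}$ and $W_{\be}$ together with $M_{\be}=Z_G(\lam_{\be})$, so in the write-up I would state it in one sentence and conclude. The proposition is ``obvious'' exactly in the sense the authors indicate, and a two- or three-sentence proof of the above form should suffice.
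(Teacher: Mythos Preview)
Your argument is correct and is precisely the natural transport-along-the-orbit argument the authors have in mind; the paper itself declares the proposition ``obvious'' and omits the proof entirely. Your identification of the two ingredients---that $M_{\be_i}$ normalizes $U_{\be_i}$ and that $M_{\be_i}=Z_G(\lam_{\be_i})$ preserves each of $Z_{\be_i}$ and $W_{\be_i}$ separately---is exactly what makes the statement immediate.
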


It is convenient to point out 
certain situations where we can apply 
Proposition \ref{prop:W-eliminate}. 

Suppose that  $n\geq 0,m>0$ and that 
$\phi:\aff^{n+m}\to \aff^m$
is a map in the form 
\begin{equation*}
\phi(u)=\phi(u_1\ccd u_{n+m})
=(u_{n+1}+P_1(u)\ccd u_{n+m}+P_m(u))
\end{equation*}
where $P_i(u)$ is a polynomial of 
$u_1\ccd u_{n+i-1}$.  For convenience, 
we call $u_1\ccd u_n$ ``extra variables''.

\begin{lem}
\label{lem:connected-criterion}
In the above situation we have to following. 
\begin{itemize}
\item[(1)]
$\phi$ induces a surjective map $k^{n+m}\to k^m$. 
\item[(2)]
$\phi^{-1}(0\ccd 0)\sub \aff^{n+m}$ is connected.  
\end{itemize}
\end{lem}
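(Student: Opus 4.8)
\textbf{Proof proposal for Lemma \ref{lem:connected-criterion}.}
The plan is to prove both parts simultaneously by a triangular change of variables. Observe that the map $\phi$ has a special ``triangular'' shape: writing $\phi(u)=(\phi_1(u),\ldots,\phi_m(u))$ with $\phi_i(u)=u_{n+i}+P_i(u_1,\ldots,u_{n+i-1})$, the $i$-th component involves the new variable $u_{n+i}$ linearly (with coefficient $1$) plus a polynomial in the strictly earlier variables only. First I would set up the polynomial automorphism $\Psi\colon\aff^{n+m}\to\aff^{n+m}$ given by $\Psi(u_1,\ldots,u_{n+m})=(u_1,\ldots,u_n,\phi_1(u),\ldots,\phi_m(u))$. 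The key point is that $\Psi$ is an isomorphism of affine space: it fixes the first $n$ coordinates and, for the remaining ones, its inverse is computed recursively — $u_{n+1}=v_{n+1}-P_1(v_1,\ldots,v_n)$, then $u_{n+2}=v_{n+2}-P_2(v_1,\ldots,v_n,u_{n+1})$ with $u_{n+1}$ already expressed in the $v$'s, and so on. Each step substitutes previously solved variables, so $\Psi^{-1}$ is again a polynomial map, defined over the ground field (indeed over $\Z$ if the $P_i$ are). Thus $\Psi$ is an automorphism of $\aff^{n+m}$ as a scheme over $k$.

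Granting this, part (1) is immediate: $\phi$ is the composition of $\Psi$ with the projection $\aff^{n+m}\to\aff^m$ onto the last $m$ coordinates. Since $\Psi$ induces a bijection $k^{n+m}\to k^{n+m}$ and the coordinate projection $k^{n+m}\to k^m$ is surjective, the composite $k^{n+m}\to k^m$ is surjective. For part (2), note that $\phi^{-1}(0,\ldots,0)=\Psi^{-1}\bigl(\aff^n\times\{(0,\ldots,0)\}\bigr)$, where $\aff^n\times\{0\}\subset\aff^{n+m}$ is the coordinate subspace cut out by $v_{n+1}=\cdots=v_{n+m}=0$. That subspace is isomorphic to $\aff^n$, hence irreducible, in particular connected; and $\Psi^{-1}$ is an isomorphism of varieties, so it carries a connected set to a connected set. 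Therefore $\phi^{-1}(0,\ldots,0)\cong\aff^n$ is connected. (If $n=0$ this says $\phi^{-1}(0)$ is a single point, which is consistent.)

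The only thing requiring any care is verifying that $\Psi^{-1}$ is genuinely a morphism, i.e. that the recursive back-substitution terminates and produces polynomials — but this is exactly the triangular structure: at stage $i$ the expression for $u_{n+i}$ is $v_{n+i}-P_i$ evaluated at variables all of index $<n+i$, each of which has already been rewritten as a polynomial in $v_1,\ldots,v_{n+m}$. So there is no real obstacle here; the lemma is essentially a packaging statement, and the main work is simply to state the change of variables cleanly and invoke that an automorphism of affine space preserves connectedness of subvarieties. I would also remark, for the application, that when this lemma is used the ``extra variables'' $u_1,\ldots,u_n$ are precisely the directions of $U_{\be_i}$ that survive into $G_R\cap U_{\be_i}$, so that connectedness of $\phi^{-1}(0)$ gives connectedness of $G_R\cap U_{\be_i}$ as needed for Condition \ref{cond:unipotent-eliminate-condition}(3) and hence for Proposition \ref{prop:W-eliminate}.
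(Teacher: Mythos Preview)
Your proof is correct. The paper does not actually provide a proof of this lemma (it says only ``We do not provide the proof of the above lemma since it is very easy''), so there is nothing to compare; your triangular change-of-variables argument is the natural way to see both statements and is presumably what the authors had in mind.
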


We do not provide the proof of the above lemma since 
it is very easy. We shall use the above lemma very often 
when we consider individual cases. 

\begin{rem}
\label{remark:beta-invariance}
Suppose that $P(x)$ is a polynomial on $Z_i$ 
and that $\psi(g)$ is a character of $M^{\text{st}}_{\be_i}$. 
If $P(gx)=\psi(g)P(x)$ for all $x\in Z_i$, $g\in M^{\text{st}}_{\be_i}$ 
and $\psi$ is proportional to $\chi_i$. Then 
$P(gx)=P(x)$ for $g\in G_{\text{st},\be_i}$. 

For, there exists an indivisible character $\om$ on $M^{\text{st}}_{\be_i}$ 
and integers $a,b>0$ such that $\psi=\om^a,\chi_i=\om^b$ on $M^{\text{st}}_{\be_i}$. 
Since $G_{\text{st},\be_i}$ is the identity component of 
$\{g\in M^{\text{st}}_{\be_i}\mid \chi_i(g)=1\}$, it is generated by 
the center $Z$ and $[G_{\text{st},\be_i},G_{\text{st},\be_i}]$,  
which is connected semi-simple and is a normal subgroup of 
$G_{\text{st},\be_i}$. So a character of 
$G_{\text{st},\be_i}$ is trivial if and only if 
it is trivial on $Z$. Since $X^*(Z)$ is torsion 
free and $\om^b=1$ on $G_{\text{st},\be_i}$, 
$\om=1$ on $G_{\text{st},\be_i}$. Therefore, 
$\psi=1$ on $G_{\text{st},\be_i}$ also. 
\end{rem}

We now consider individual strata.
We remind the reader that $\{\bbmp_{n,i}\mid i=1\ccd n\}$ 
is the standard basis of $\aff^n$. We use the notation 
such as $p_{3,12}\in \wedge^2 \aff^3$, etc. 
  
For each $i$, 

\begin{itemize}
\item[(1)]
we show that $Z^{\sst}_i\not=\emptyset$ and interpret 
$M_{\be_i\,k}\backslash Z^{\sst}_{i\,k}$, 
\item[(2)]
choose a point $R(i)\in Z^{\sst}_i$ and 
apply Lemma \ref{lem:connected-criterion}.
\end{itemize}

We shall verify the following theorem for the rest of this section. 

\begin{thm}
\label{thm:non-empty}
\begin{itemize}
\item[(1)] $S_{\be_i}\not=\emptyset$ for $i$ in (\ref{eq:list-non-empty-54}).
\item[(2)]
The information on the above table is correct. 
\end{itemize}
\end{thm}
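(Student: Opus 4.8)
The statement to be verified is Theorem~\ref{thm:non-empty}: for each of the $61$ indices $i$ in the list (\ref{eq:list-non-empty-54}) we must show $S_{\be_i}\neq\emptyset$ and confirm all entries of the large table (the description of $M_{\be_i}$, of $Z_{\be_i}$ as a representation of $M^s_{\be_i}$, the coordinate lists for $Z_{\be_i},W_{\be_i}$, and the identification of $G_k\backslash S_{\be_i\,k}$). The plan is to dispose of the bookkeeping data first, then reduce everything to two uniform lemmas, and finally go through the cases by type. First I would check the purely combinatorial columns of the table (the group $M_{\be_i}$, the grading of the weights $\gamma_j$ against $\be_i$, hence the coordinate partition into $Z_{\be_i}$, $W_{\be_i}$, $Y_{\be_i}$, and the $M^s_{\be_i}$-module structure of $Z_{\be_i}$): these follow mechanically from the explicit $\be_i$ in the table of Section~8 of \cite{tajima-yukie-GIT1} together with the weight list for $e_{i_1i_2i_3}$ recorded in Section~\ref{sec:notation-related-git}, via the formulas $Y_\be=\Span\{\bbma_j:(\gamma_j,\be)_*\geq(\be,\be)_*\}$, etc. This is a finite verification, in part computer-assisted.

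Second, the two analytic ingredients: by Theorem~\ref{KKN}, $S_{\be_i}\neq\emptyset$ iff $Z_i^{\sst}\neq\emptyset$, and $G_k\backslash S_{\be_i\,k}\cong P_{\be_i\,k}\backslash Y_{i\,k}^{\sst}$. So for each $i$ I would (a) exhibit a point $R(i)\in Z_{i}^{\sst}$, using the relative invariant polynomials constructed in Sections~\ref{sec:rational-orbits-general}--\ref{sec:rational-orbits-222-22} for whichever small prehomogeneous space $(M_{\be_i},Z_{\be_i})$ arises — note that $Z_i$ as an $M^s_{\be_i}$-module in the table is visibly (a Castling transform of, or a direct instance of) one of Cases I--VII already handled; (b) read off $M_{\be_i\,k}\backslash Z_{i\,k}^{\sst}$ from the corresponding rational-orbit proposition (SP, $\Ex_2(k)$, $\Ex_3(k)$, $\Prg_2(k)$, or $\mathrm{IQF}_4(k)$); and (c) eliminate the $W_i$-coordinates, i.e. prove Property~\ref{property:W-eliminate}, via Proposition~\ref{prop:W-eliminate} or Proposition~\ref{prop:W-eliminate-single-orbit}. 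For (c) I would, for the chosen $R(i)$, compute the action of a general $u\in U_{\be_i}$ on $R(i)$, observe that the resulting map on the $W_i$-coordinates has the triangular ``extra variables'' shape of Lemma~\ref{lem:connected-criterion}, conclude both surjectivity onto $W_{i\,k^{\sep}}$ and connectedness of $G_{R(i)}\cap U_{\be_i}$, and then invoke Condition~\ref{cond:unipotent-eliminate-condition} together with Proposition~\ref{prop:W-eliminate}. When $M_{\be_i}=T$ one instead uses Proposition~\ref{prop:M=Tsuejective} to get $Z_i^{\sst}$ a single $T_k$-orbit, hence the ``SP'' entry, and then Proposition~\ref{prop:W-eliminate-single-orbit}. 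I would also use Remark~\ref{remark:beta-invariance} to see that the relative invariants restricted from the ambient situation are genuinely $G_{\text{st},\be_i}$-invariant, so that $Z_i^{\sst}$ in the GIT sense coincides with the non-vanishing locus of those polynomials.

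Third, the organization of the case analysis: I would group the $61$ indices by the isomorphism type of $(M_{\be_i},Z_{\be_i})$. The indices whose table entry is ``SP'' split into those with $M_{\be_i}=T$ (handled uniformly by Propositions~\ref{prop:M=Tsuejective}, \ref{prop:W-eliminate-single-orbit}, \ref{prop:W-eliminate}) and those where $Z_i^{\sst}$ is a single $M_{\be_i\,k}$-orbit for a non-toral reason traceable to one of the ``trivial-type'' small PV's; in each the only real work is the $W_i$-elimination computation. The indices with entry $\Ex_2(k)$, $\Ex_3(k)$, $\Prg_2(k)$, $\mathrm{IQF}_4(k)$ are matched to Propositions~\ref{prop:222-rat-orbits}, \ref{prop:Rational-orbits-3-cases}, \ref{prop:S1-orbit-final}, \ref{prop:S3-orbit-rational}, \ref{prop:S5-orbit-rational}, \ref{prop:43-3-orbit}, \ref{prop:322+3-orbit}, \ref{prop:S14-rational-orbits}, \ref{prop:222+2-orbit}, \ref{prop:stab-iso-wedge42-3} respectively (and their Castling counterparts via the discussion at the end of Section~\ref{sec:regularity}); here one must additionally identify the equivariant map from the ambient rational-orbit proposition with the natural projection on $Z_i$, which is routine once the module structure in the table is known.

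\textbf{Main obstacle.} The conceptual content is light — everything reduces to the prepared machinery — but the bottleneck is the sheer volume: $61$ strata, each requiring an explicit choice of $R(i)$, an explicit $U_{\be_i}$-action computation to verify Lemma~\ref{lem:connected-criterion}'s hypotheses, and a match-up with the correct small-PV proposition, plus the large table's combinatorial columns. The genuinely delicate sub-case is $i=131$, where $G_k\backslash S_{131\,k}\cong\mathrm{IQF}_4(k)$: there $Z_{131}=\wedge^2\aff^4\otimes\aff^3$ and one must use Proposition~\ref{prop:stab-iso-wedge42-3}, whose proof already required the Galois-cohomology argument showing the outer $\Z/2\Z$ acts trivially on $\h^1(k,H^\circ_{\Phi(w)})$; re-checking that this applies to the stratum $M_{\be_{131}}\cong\gl_4\times\gl_3\times\gl_1^2$ acting as stated, and that $W_{131}=\emptyset$ so no elimination is needed, is where I would be most careful. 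The cases requiring $\ch(k)\neq2$ (namely $i=1,15,42,70$ in the displayed table, and any others so marked) inherit that hypothesis from the corresponding small-PV proposition (Proposition~\ref{prop:S1-orbit-final}, Proposition~\ref{prop:322+3-orbit}, etc.), so for those I would simply note that $Z_i^{\sst}\neq\emptyset$ still holds over any $k$ by the explicit non-vanishing of a relative invariant at $R(i)$, while the orbit identification is asserted only under $\ch(k)\neq2$.
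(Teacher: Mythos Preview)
Your proposal is correct and follows essentially the same approach as the paper: the paper's proof of Theorem~\ref{thm:non-empty} is precisely the case-by-case verification you describe, going through each of the $61$ indices, exhibiting an explicit $R(i)\in Z_i^{\sst}$ via the relative invariants from Sections~\ref{sec:rational-orbits-general}--\ref{sec:rational-orbits-222-22}, identifying $M_{\be_i\,k}\backslash Z^{\sst}_{i\,k}$ with the appropriate rational-orbit proposition, and then carrying out the $U_{\be_i}$-action computation on $R(i)$ to verify the triangular form required by Lemma~\ref{lem:connected-criterion} so that Proposition~\ref{prop:W-eliminate} (or Proposition~\ref{prop:W-eliminate-single-orbit} in the single-orbit cases) applies. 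One small slip: in your list of cases requiring $\ch(k)\neq 2$ you include $i=42$, but the table marks only $i=1,15,70$ with that restriction (for $i=42$ the orbit space is $\Ex_3(k)$ with no characteristic hypothesis).
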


For the rest of this section, 
$u_1=(u_{1ij})$ where $5\geq i>j\geq 1$ and 
$u_2=(u_{2ij})$ where $4\geq i>j\geq 1$.  
We put $u=(u_1,u_2)$, $n(u)=(n_5(u_1),n_4(u_2))$. 
For each $i$, we may consider additional 
conditions for $u_1,u_2$.

\vskip 5pt

(1) $S_1$,  
$\be_1=\tfrac {1} {12} (0,0,0,0,0,-3,1,1,1)$ 

In this case $G_{\text{st},\be_1}\cong \spl_5\times \spl_3$ 
is semi-simple. So any relative invariant polynomial is 
invariant by the action of $G_{\text{st},\be_1}$. 
By Corollary \ref{cor:S1-regular}, 
$Z^{\sst}_1\not=\emptyset$.  
Proposition \ref{prop:S1-orbit-final} implies that 
if $\ch(k)\not=2$ then $M_{\be_1\,k}\backslash Z^{\sst}_{1\,k}$ 
is in bijective correspondence with $\mathrm{Prg}_2(k)$. 

In this case $W_1=\{0\}$ and so 
$P_{\be_1\,k}\backslash Y^{\sst}_{1\,k} \cong M_{\be_1\,k}\backslash Z^{\sst}_{1\,k}$ 
if $\ch(k)\not=2$.

\vskip 5pt

(2) $S_3$, $\be_3 = \tfrac {1} {780} (-12,-12,8,8,8,-15,5,5,5)$ 

We identify the element 
$(\diag(g_{11},g_{12}),\diag(t_2,g_2))\in M_{[2],[1]}=M_{\be_3}$
with the element 
$g=(g_2,g_{12},g_{11},t_2)\in \gl_3^2\times \gl_2\times \gl_1$. 
On $M^{\text{st}}_{\be_3}$, 
\begin{equation*}
\chi_3(g)= (\det g_{11})^{-12}(\det g_{12})^8t_2^{-15}(\det g_2)^5
= (\det g_{12})^{20}(\det g_2)^{20}. 
\end{equation*}

For $x\in Z_3$, let 
\begin{align*}
& A(x) = 
\begin{pmatrix}
0 & x_{341} & x_{351} \\
-x_{341} & 0 & x_{451} \\
-x_{351} & -x_{451} & 0 
\end{pmatrix}, \\
& B_1(x) = \begin{pmatrix}
x_{132} & x_{142} & x_{152} \\
x_{133} & x_{143} & x_{153} \\
x_{134} & x_{144} & x_{154}
\end{pmatrix},\; 
B_2(x) = \begin{pmatrix}
x_{232} & x_{242} & x_{252} \\
x_{233} & x_{243} & x_{253} \\
x_{234} & x_{244} & x_{254}
\end{pmatrix}
\end{align*}
and $B(x)=(B_1(x),B_2(x))$. 
We identify $Z_3\cong \wedge^2 \aff^3 \oplus \m_3\otimes \aff^2$
by the map $Z_3\ni x\mapsto (A(x),B(x))$. 
This is the same vector space as the one considered in 
Section \ref{sec:rational-orbits-332.3} and 
$M_{\be_3}$ is the same as the group considered in 
Section \ref{sec:rational-orbits-332.3}. 
Since 
\begin{equation*}
A(gx) = t_2 (\wedge^2 g_{12})A(x),\;
\begin{pmatrix}
B_1(gx) \\
B_2(gx)
\end{pmatrix}
= g_{11}\begin{pmatrix}
g_2B_1(x){}^tg_{12} \\
g_2B_2(x){}^tg_{12}
\end{pmatrix}, 
\end{equation*}
the action of $(g_2,g_{12},g_{11},t_2)$
on $V$ is the same as the one in 
Section \ref{sec:rational-orbits-332.3}. 

Let $P_1(x),P_2(x)$ be the relative invariant polynomials 
defined in Section \ref{sec:rational-orbits-332.3}. 
Then by (\ref{eq:sec6-P1}) and Proposition \ref{prop:S5-second-equivariant} (1), 
\begin{align*}
P_1(gx) & = (\det g_{11})^6(\det g_{12})^4(\det g_2)^4P_1(x), \\
P_2(gx) & = (\det g_{11})^3(\det g_{12})^4t_2^3(\det g_2)^2P_2(x). 
\end{align*}
Let $P(x)=P_1(x)P_2(x)^3$. Then on $M^{\text{st}}_{\be_3}$, 
\begin{align*}
P(gx) & = ((\det g_{11})^6(\det g_{12})^4(\det g_2)^4)
((\det g_{11})^3(\det g_{12})^4t_2^3(\det g_2)^2)^3P(x) \\
& = (\det g_{11})^{15}(\det g_{12})^{16}t_2^9(\det g_2)^{10}P(x)
= (\det g_{12})(\det g_2)P(x). 
\end{align*}

Therefore, $P(x)$ is 
invariant under the action of $G_{\text{st},\be_3}$.

By Proposition \ref{prop:S3-orbit-rational}, 
$M_{\be_3\,k}\backslash Z^{\sst}_{3\,k}$ is in bijective 
correspondence with $\Ex_3(k)$. 

Let $R(3)\in Z_3$ be the element 
such that $A(x)=w_1,B_1(x)=w_{21},B_2(x)=w_{22}$ 
where $w_1$ is the element (\ref{eq:set6-w1}) and 
$w_{21},w_{22}$ are the elements 
$w_1,w_2$ in (\ref{eq:sec6-w-defn-sec4}).
Explicitly, 
$R(3)=e_{341}-e_{351}+e_{451}+e_{132}-e_{143}+e_{243}-e_{254}$. 
Then $R(3)\in Z^{\sst}_3$. 

We assume that $u_{1ij}=0$ unless 
$i=3,4,5,j=1,2$ and $u_{2ij}=0$ unless $j=1$.  
Then the four components of 
$n(u)R(3)$ are as follows: 
\begin{align*}
& \begin{pmatrix}
0 & 0 & 0 & 0 & 0 \\
0 & 0 & 0 & 0 & 0 \\
0 & 0 & 0 & 1 & -1 \\
0 & 0 & -1 & 0 & 1\\
0 & 0 & 1 & -1 & 0 
\end{pmatrix},\; 
\begin{pmatrix}
0 & 0 & 1 & 0 & 0 \\
0 & 0 & 0 & 0 & 0 \\
-1 & 0 & 0 & -u_{141} + u_{221} & -u_{151} - u_{221} \\
0 & 0 & * & 0 & u_{221} \\
0 & 0 & * & * & 0
\end{pmatrix}, \\
& \begin{pmatrix}
0 & 0 & 0 & -1 & 0 \\
0 & 0 & 0 & 1 & 0 \\
0 & 0 & 0 & -u_{131} + u_{132} + u_{231} & -u_{231} \\
1 & -1 & * & 0 & u_{151} - u_{152} + u_{231} \\
0 & 0 & * & * & 0 
\end{pmatrix}, \\
& \begin{pmatrix}
0 & 0 & 0 & 0 & 0 \\
0 & 0 & 0 & 0 & -1 \\
0 & 0 & 0 & u_{241} & -u_{132} - u_{241} \\
0 & 0 & * & 0 & -u_{142} + u_{241} \\
0 & 1 & * & * & 0 \\
\end{pmatrix}.
\end{align*}
Note that we obtain $R(3)$ by substituting $u_{1ij}=u_{2ij}=0$ 
for all $i,j$. 

We can apply Lemma \ref{lem:connected-criterion} 
to the map $\aff^9\to \aff^9$ defined by the sequence
\begin{align*}
& u_{221},u_{141}-u_{221},u_{151}+u_{221},
u_{241},u_{132}+u_{241}, \\
& u_{142}-u_{241},u_{231},
u_{152}-u_{151}-u_{231},u_{131}-u_{132}-u_{231}
\end{align*}
(there are no extra variables). 
Note that the above $9$ entries exhaust coordinates 
of $W_3$ (we shall not point this out in the remaining cases). 
So by Proposition \ref{prop:W-eliminate}, 
Property \ref{property:W-eliminate} holds for 
any $x\in Z^{\sst}_{3\,k}$. Therefore, 
$P_{\be_3\,k}\backslash Y^{\sst}_{3\,k}$ is in bijective 
correspondence with $\Ex_3(k)$ also. 

\vskip 5pt 

(3) $S_5$, 
$\be_5 = \tfrac 1 {180} (-2,-2,-2,-2,8,-5,-5,5,5)$

We identify the element 
$(\diag(g_1,t_1),\diag(g_{21},g_{22}))\in M_{[4],[2]}=M_{\be_5}$
with the element 
$g=(g_1,g_{21},g_{22},t_1)\in \gl_4\times \gl_2^2\times \gl_1$. 
On $M^{\text{st}}_{\be_5}$, 
\begin{equation*}
\chi_5(g)= (\det g_1)^{-2}t_1^8(\det g_{21})^{-5}(\det g_{22})^5
= t_1^{10}(\det g_{22})^{10}. 
\end{equation*}

For $x\in Z_5$, let
\begin{align*}
& A(x) = 
\begin{pmatrix}
x_{151} & x_{152} \\
x_{251} & x_{252} \\
x_{351} & x_{352} \\
x_{451} & x_{452} 
\end{pmatrix},\;
B_1(x) =
\begin{pmatrix}
0 & x_{123} & x_{133} & x_{143} \\
-x_{123} & 0 & x_{233} & x_{243} \\
-x_{133} & -x_{233} & 0 & x_{343} \\
-x_{143} & -x_{243} & -x_{343} & 0 
\end{pmatrix}, \\
& B_2(x) =
\begin{pmatrix}
0 & x_{124} & x_{134} & x_{144} \\
-x_{124} & 0 & x_{234} & x_{244} \\
-x_{134} & -x_{234} & 0 & x_{344} \\
-x_{144} & -x_{244} & -x_{344} & 0 
\end{pmatrix},\; B(x) = (B_1(x),B_2(x)).
\end{align*}
We identify $Z_5\cong \m_{4,2}\oplus \wedge^2 \aff^4\otimes \aff^2$ 
by the map $Z_5\ni x\mapsto (A(x),B(x))$. 
Since 
\begin{equation*}
A(gx) = t_1g_1 A(x) {}^t g_{21},\; 
\begin{pmatrix}
B_1(gx) \\
B_2(gx)  
\end{pmatrix}
= g_{22}\begin{pmatrix}
g_1B_1(x){}^tg_1 \\
g_1B_2(x){}^tg_1 
\end{pmatrix},
\end{equation*}
$Z_5$ is the same vector space as the one considered in 
Section \ref{sec:rational-orbits-42-wedge42} and 
$M_{\be_5}$ is almost the same as the group considered in 
Section \ref{sec:rational-orbits-42-wedge42} except for the extra 
$\gl_1$-factor. 
The action of $(g_1,g_{21},g_{22},1)$ 
is the same as the one in Section \ref{sec:rational-orbits-42-wedge42}. 
If $t=(I_4,I_2,I_2,t_1)$ then 
$A(tx)=t_1A(x)$,  $B(tx)=B(x)$.

Let $P_1(x),P_2(x)$ be the polynomials in (\ref{eq:S5-P1P2equivariant}).
Since $P_1(x)$ is homogeneous of degree $8$ with respect to 
$A(x)$, $P_1(tx)=t^8P_1(x)$. Since $P_2(x)$ is a polynomial of $B(x)$, 
$P_2(tx)=P_2(x)$. We put $P(x)=P_1(x)^2P_2(x)$. Then on $M^{\text{st}}_{\be_5}$,  
by (\ref{eq:S5-P1P2equivariant}), 
\begin{align*}
P(gx) & = ((\det g_1)^6 t_1^8(\det g_{21})^4(\det g_{22})^4)^2
((\det g_1)^2(\det g_{22})^2)P(x) \\
& = (\det g_1)^{14}t_1^{16} (\det g_{21})^8(\det g_{22})^{10}P(x)
= t_1^2 (\det g_{22})^2 P(x). 
\end{align*}
Therefore, $P(x)$ is 
invariant under the action of $G_{\text{st},\be_5}$.

Let $R(5)\in Z_5$ be the element which corresponds to $w$ 
in Section \ref{sec:rational-orbits-42-wedge42} (see 
(\ref{eq:sec7-w-defn-sec4}) and (\ref{eq:sec7-w-defn-sec7})).
Explicitly, $R(5)=e_{251}+e_{451}+e_{152}+e_{352}+e_{123}+e_{344}$. 
Then $R(5)\in Z^{\sst}_5$. 

Let $g'=(g_1,t_1g_{21},g_{22},1)$. 
Then $A(g'x)=A(gx)$ and $B(g'x)=B(gx)$. 
So the action of $\gl_1$ can be absorbed into the action 
of $\gl_4\times \gl_2^2$. Therefore, 
by Proposition \ref{prop:S5-orbit-rational}, 
$M_{\be_5\,k}\backslash Z^{\sst}_{5\,k}$ is in bijective 
correspondence with $\Ex_2(k)$. 

We assume that $u_{1ij}=0$ unless
$i=5$ and $u_{2ij}=0$ unless $i=3,4,j=1,2$. 
Then the first two components of 
$n(u)R(5)$ are the same as those of $R(5)$ 
and the remaining components are as follows:
\begin{align*}
& \begin{pmatrix}
0 & 1 & 0 & 0 & u_{152}+u_{232} \\
-1 & 0 & 0 & 0 & -u_{151}+u_{231}\\
0 & 0 & 0 & 0 & u_{232} \\
0 & 0 & 0 & 0 & u_{231} \\
* & * & * & * & 0 
\end{pmatrix}, \;
\begin{pmatrix}
0 & 0 & 0 & 0 & u_{242} \\
0 & 0 & 0 & 0 & u_{241} \\
0 & 0 & 0 & 1 & u_{154}+u_{242} \\
0 & 0 & -1 & 0 & -u_{153}+u_{241} \\
* & * & * & * & 0 
\end{pmatrix}.
\end{align*}

We can apply Lemma \ref{lem:connected-criterion} 
to the map $\aff^8\to \aff^8$ defined by the sequence
\begin{align*}
& u_{231},u_{232},u_{151}-u_{231},u_{152}+u_{232}, 
u_{241},u_{242},u_{153}-u_{241},u_{154}+u_{242}
\end{align*}
(there are no extra variables). 
So by Proposition \ref{prop:W-eliminate}, 
Property \ref{property:W-eliminate} holds for 
any $x\in Z^{\sst}_{5\,k}$. Therefore, 
$P_{\be_5\,k}\backslash Y^{\sst}_{5\,k}$ is in bijective 
correspondence with $\Ex_2(k)$ also.

\vskip 5pt

(4) $S_9$, $\be_9=\tfrac {3} {620} (-16,4,4,4,4,-5,-5,-5,15)$ 

We identify the element 
$(\diag(t_1,g_1),\diag(g_2,t_2))\in M_{[1],[3]}=M_{\be_9}$
with the element 
$g=(g_1,g_2,t_1,t_2)\in \gl_4\times \gl_3\times \gl_1^2$. 
On $M^{\text{st}}_{\be_9}$, 
\begin{equation*}
\chi_9(g)= t_1^{-16}(\det g_1)^4(\det g_2)^{-5}t_2^{15}
= (\det g_1)^{20}t_2^{20}. 
\end{equation*}

For $x\in Z_9$, let 
\begin{equation*}
A_i(x) = 
\begin{pmatrix}
0 & x_{23i} & x_{24i} & x_{25i} \\
-x_{23i} & 0 & x_{34i} & x_{35i} \\
-x_{24i} & -x_{34i} & 0 & x_{45i} \\
-x_{25i} & -x_{35i} & -x_{45i} & 0
\end{pmatrix}\; (i=1,2,3),\;
v(x) = 
\begin{pmatrix}
x_{124} \\
x_{134} \\
x_{144} \\
x_{154}
\end{pmatrix}
\end{equation*}
and $A(x)=(A_1(x),A_2(x),A_3(x))$. 
We identify $Z_9\cong \wedge^2 \aff^4\otimes \aff^3\oplus \aff^4$ 
by the map $Z_9\ni x \mapsto (A(x),v(x))$.
This is the same vector space considered in the case (b) of 
Section \ref{sec:rational-orbits-wedge43} and 
$M_{\be_9}$ is almost the same as the group considered in 
Section \ref{sec:rational-orbits-wedge43} except for the extra 
$\gl_1$-factors. Since 
\begin{equation*}
A(gx) = 
g_2 \begin{pmatrix}
g_1 A_1(x) {}^tg_1 \\
g_1 A_1(x) {}^tg_1 \\
g_1 A_1(x) {}^tg_1 
\end{pmatrix},\;
v(gx) = t_1t_2 g_1 v(x),  
\end{equation*}
the action of $(g_1,g_2,1,1)$ is the same as the 
one in Section \ref{sec:rational-orbits-wedge43}, i.e., 
the natural action of $\gl_4\times \gl_3$.

Let $P_1(x),P_2(x)$ be the polynomials on $Z_9$ 
obtained in the consideration of the case (b) 
of Section \ref{sec:rational-orbits-wedge43} 
(see Proposition \ref{prop:section8-b-P1-value} and (\ref{eq:43-4-P2-equivariant})).
$P_1(x)$ is homogeneous of degrees $3,2$ with 
respect to $A(x),v(x)$ respectively and 
$P_2(x)$ is a homogeneous degree $6$ 
polynomial of $A(x)$.   So by 
Proposition \ref{prop:section8-b-P1-value} (2)
and (\ref{eq:43-4-P2-equivariant}), 
\begin{equation*}
P_1(gx) = t_1^2(\det g_1)^2(\det g_2)t_2^2P_1(x),\;
P_2(gx) = (\det g_1)^3(\det g_2)^2P_2(x).
\end{equation*}
Let $P(x)=P_1(x)^5P_2(x)$. Then on $M^{\text{st}}_{\be_9}$, 
\begin{align*}
P(gx)
& = (t_1^2(\det g_1)^2(\det g_2)t_2^2)^5
((\det g_1)^3(\det g_2)^2) \\
& =t_1^{10}(\det g_1)^{13}(\det g_2)^7t_2^{10}P(x)
= (\det g_1)^3t_2^3P(x). 
\end{align*}
Therefore, $P(x)$ is invariant under the action 
of $G_{\text{st},\be_9}$.

Let $g'=(t_1t_2g_1,(t_1t_2)^{-2}g_2,1,1)$. 
Then $A(g'x)=A(gx)$ and $v(g'x)=v(gx)$. 
So the action of $\gl_1^2$ can be absorbed into the action 
of $\gl_4\times \gl_3$. Therefore, 
By Proposition \ref{prop:43-3-orbit}, 
$M_{\be_9\,k}\backslash Z^{\sst}_{9\,k}$ 
is in bijective correspondence with $\mathrm{Prg}_2(k)$. 

Let $R(9)\in Z_9$ be the element which corresponds to 
$(w_1,w_2,w_3,w_0)$ in (\ref{eq:w-defn-sec8b}).
Explicitly, 
$R(9)=e_{231}+e_{252}-e_{342}+e_{453}+e_{124}+e_{154}$. 
We assume that $u_{1ij}=0$ unless $j=1$ and $u_{2ij}=0$ unless $i=4$. 
Then the first three components of 
$n(u)R(9)$ are the same as those of $R(9)$
and the last component is as follows: 
\begin{align*}
& \begin{pmatrix}
0 & 1 & 0 & 0 & 1 \\
-1 & 0 & -u_{131} + u_{241} & -u_{141} &  -u_{151} + u_{121} + u_{242} \\
0 & * & 0 & -u_{242} & u_{131} \\
0 & * & * & 0 & u_{141} + u_{243} \\
-1 & * & * & * & 0
\end{pmatrix}.
\end{align*}

We can apply Lemma \ref{lem:connected-criterion} 
to the map $\aff^7\to \aff^6$ defined by the sequence
\begin{align*}
u_{131},u_{141},u_{242},u_{241}-u_{131},u_{121}-u_{151}+u_{242},u_{243}+u_{141}
\end{align*}
where $u_{151}$ is an extra variable.
So by Proposition \ref{prop:W-eliminate}, 
Property \ref{property:W-eliminate} holds for 
any $x\in Z^{\sst}_{9\,k}$. Therefore, 
$P_{\be_9\,k}\backslash Y^{\sst}_{9\,k}$ 
is in bijective correspondence with $\mathrm{Prg}_2(k)$ also.

\vskip 5pt

(5) $S_{14}$, $\be_{14}=\tfrac {3} {220} (-6,-6,4,4,4,-5,-5,5,5)$

We identify the element 
$(\diag(g_{11},g_{12}),\diag(g_{21},g_{22}))\in M_{[2],[2]}=M_{\be_{14}}$
with the element 
$g=(g_{12},g_{11},g_{22},g_{21})\in \gl_3\times \gl_2^3$.  
On $M^{\text{st}}_{\be_{14}}$, 
\begin{equation*}
\chi_{{14}}(g) = 
(\det g_{11})^{-6}(\det g_{12})^4 (\det g_{21})^{-5} (\det g_{22})^5
= (\det g_{12})^{10}(\det g_{22})^{10}. 
\end{equation*}

For $x\in Z_{14}$, let 
\begin{align*}
& A_1(x) = 
\begin{pmatrix}
0 & x_{341} & x_{351} \\
-x_{341} & 0 & x_{451} \\
-x_{351} & -x_{451} & 0 
\end{pmatrix},\;
A_2(x) = 
\begin{pmatrix}
0 & x_{342} & x_{352} \\
-x_{342} & 0 & x_{452} \\
-x_{352} & -x_{452} & 0 
\end{pmatrix}, \\
& B_1(x) = 
\begin{pmatrix}
x_{133} & x_{134} \\
x_{233} & x_{234}
\end{pmatrix},\;
B_2(x) = 
\begin{pmatrix}
x_{143} & x_{144} \\
x_{243} & x_{244}
\end{pmatrix},\;
B_3(x) = 
\begin{pmatrix}
x_{153} & x_{144} \\
x_{253} & x_{254}
\end{pmatrix}, \\
& A(x) =(A_1(x),A_2(x)),\; B(x) = (B_1(x),B_2(x),B_3(x)).
\end{align*}
We identify $Z_{14}\cong \wedge^2 \aff^3\otimes \aff^2\oplus \aff^3\otimes \m_2$
by the map $Z_{14}\ni x\mapsto (A(x),B(x))$. 
Since 
\begin{equation*}
\begin{pmatrix}
A_1(gx) \\
A_2(gx) 
\end{pmatrix}
= g_{21}
\begin{pmatrix}
g_{12} A_1(x) {}^tg_{12} \\
g_{12} A_2(x) {}^tg_{12}
\end{pmatrix},\;
\begin{pmatrix}
B_1(gx) \\
B_2(gx) \\
B_3(gx)
\end{pmatrix}
= g_{12}
\begin{pmatrix}
g_{11} B_1(x) {}^tg_{22} \\
g_{11} B_2(x) {}^tg_{22} \\
g_{11} B_3(x) {}^tg_{22}
\end{pmatrix},\;
\end{equation*}
$(M_{\be_{14}},Z_{14})$ can be identified with 
the case (b) of Section \ref{sec:rational-orbits-wedge3-32}.

Let $P_1(x),P_2(x)$ be polynomials which
correspond to $P_1(x),P_2(x)$ in the case (b) of 
Section \ref{sec:rational-orbits-wedge3-32}. 
Then by (\ref{eq:S14-P1P2-equivariant}), 
\begin{align*}
P_1(gx) & =  (\det g_{11})^2 (\det g_{12})^4
(\det g_{21})^2(\det g_{22})^2P_1(x), \\
P_2(gx) & = (\det g_{11})^3 (\det g_{12})^2 (\det g_{22})^3P_2(x). 
\end{align*}
Let $P(x)=P_1(x)^2P_2(x)$. Then on $M^{\text{st}}_{\be_{14}}$, 
\begin{align*}
P(gx) 
& = ((\det g_{11})^2(\det g_{12})^4(\det g_{21})^2(\det g_{22})^2)^2
((\det g_{11})^3(\det g_{12})^2(\det g_{22})^3) P(x) \\
& = (\det g_{11})^7(\det g_{12})^{10}(\det g_{21})^4(\det g_{22})^7P(x)
= (\det g_{12})^3(\det g_{22})^3P(x).
\end{align*}
Therefore, $P(x)$ is 
invariant under the action of $G_{\text{st},\be_{14}}$. 

Proposition \ref{prop:S14-rational-orbits} implies that 
\begin{math}
M_{\be_{14}\,k}\backslash Z^{\sst}_{14\,k}\cong \Ex_2(k).
\end{math}

Let $R(14)\in Z_{14}$ be the element such that 
\begin{equation*}
A_1(R(14))=-p_{3,13},\; A_2(R(14))=p_{3,12},\;
B(R(14))=R_{322}
\end{equation*}
($R_{322}$ is the element $w$ in (\ref{eq:322-generator2-alternative})). 
Explicitly, 
$R(14)=-e_{351}+e_{342}-e_{133}+e_{253}+e_{144}+e_{234}$. 
This element corresponds to $R$ in (\ref{eq:S14-b-R}).

We assume that $u_{1ij}=0$ unless 
$i=3,4,5,j=1,2$ and $u_{2ij}=0$ unless 
$i=3,4,j=1,2$. Then the first two components
of $n(u)R(14)$ are the same as those of $R(14)$
and the remaining components are as follows: 
\begin{align*}
& \begin{pmatrix}
0 & 0 & -1 & 0 & 0 \\
0 & 0 & 0 & 0 & 1 \\
1 & 0 & 0 & u_{141}+u_{232} & u_{151}+u_{132}-u_{231} \\
0 & 0 & * & 0 & u_{142} \\
0 & -1 & * & * & 0 
\end{pmatrix}, \\
& \begin{pmatrix}
0 & 0 & 0 & 1 & 0 \\
0 & 0 & 1 & 0 & 0 \\ 
0 & -1 & 0 & -u_{142} + u_{131} + u_{242} & -u_{152}-u_{241} \\
-1 & 0 & * & 0 & -u_{151}  \\
0 & 0 & * & * & 0
\end{pmatrix}.
\end{align*}

We can apply Lemma \ref{lem:connected-criterion} 
to the map $\aff^{10}\to \aff^6$ defined by the sequence
\begin{align*}
u_{141}+u_{232},u_{142},u_{151},u_{152}+u_{241},
u_{132}+u_{151}-u_{231},u_{131}-u_{142}+u_{242}
\end{align*}
where $u_{231},u_{232},u_{241},u_{242}$ are extra variables.
So by Proposition \ref{prop:W-eliminate}, 
Property \ref{property:W-eliminate} holds for 
any $x\in Z^{\sst}_{14\,k}$. Therefore, 
$P_{\be_{14}\,k}\backslash Y^{\sst}_{14\,k}$ 
is in bijective correspondence with $\Ex_2(k)$ also.

\vskip 5pt

(6) $S_{15}$, $\be_{15}=\tfrac {1} {80} (-2,-2,-2,3,3,-5,0,0,5)$

We identify the element 
$(\diag(g_{11},g_{12}),\diag(t_{21},g_2,t_{22}))
\in M_{[3],[1,3]}=M_{\be_{15}}$ with the element 
$g=(g_{11},g_{12},g_2,t_{21},t_{22})\in\gl_3\times \gl_2^2\times \gl_1^2$. 
On $M^{\text{st}}_{\be_{15}}$, 
\begin{equation*}
\chi_{{15}}(g) = 
(\det g_{11})^{-2}(\det g_{12})^3 t_{21}^{-5}t_{22}^5
= (\det g_{12})^5(\det g_2)^5t_{22}^{10}. 
\end{equation*}

For $x\in Z_{15}$, let 
$A(x) = x_{124}p_{3,12}+x_{134}p_{3,13}+x_{234}p_{3,23}$, 
\begin{equation*}
B_1(x) = 
\begin{pmatrix}
x_{142} & x_{143} \\
x_{152} & x_{153} 
\end{pmatrix},\;
B_2(x) = 
\begin{pmatrix}
x_{242} & x_{243} \\
x_{252} & x_{253} 
\end{pmatrix},\;
B_3(x) = 
\begin{pmatrix}
x_{342} & x_{343} \\
x_{352} & x_{353} 
\end{pmatrix}
\end{equation*}
and 
\begin{math}
B(x)=\bbmp_{3,1} \otimes B_1(x)+\bbmp_{3,2}\otimes B_2(x)
+\bbmp_{3,3}\otimes B_3(x).
\end{math}
We identify $Z_{15}$ with 
$1\oplus \wedge^2\aff^3\oplus \aff^3\otimes \m_2$ 
by the map $Z_{15}\ni x \mapsto (x_{451},A(x),B(x))$. 
Since 
\begin{equation*}
A(gx) = t_{22} (\wedge^2 g_{11}) A(x),\;
\begin{pmatrix}
B_1(gx) \\
B_2(gx) \\
B_3(gx)
\end{pmatrix}
= g_{11}
\begin{pmatrix}
g_{12} B_1(x) {}^tg_2 \\
g_{12} B_2(x) {}^tg_2 \\
g_{12} B_3(x) {}^tg_2
\end{pmatrix},
\end{equation*}
$Z_{15}$ is the same vector space considered in   
the case (a) of Section \ref{sec:rational-orbits-wedge3-32}
except for an extra $\gl_1$-factor and 
a component of the trivial \rep{} of $M^s_{\be_{15}}$ 
and the action of $(g_{11},g_{12},g_2,1,t_{22})$ is the same 
as that of the case (a) of Section \ref{sec:rational-orbits-wedge3-32}.
If $t=(I_3,I_2,I_2,t_{21},t_{22})$ then the 
$x_{451}$-coordinate of $tx$ is $t_{21}x_{451}$ and 
\begin{math}
A(tx) = t_{22}A(x),\; B(tx)=B(x). 
\end{math}
Let $P_1(x),P_2(x)$ be the polynomials considered in 
(\ref{eq:322,3-invariants}). Note that 
$P_2(x)$ is the homogeneous degree $6$ polynomial of $B(x)$ obtained 
by Proposition \ref{prop:322-invariant} and that 
$P_1(x)$ is homogeneous of degree $2$ 
for each of $A(x),B(x)$.  So by (\ref{eq:322,3-invariants})
\begin{align*}
P_1(gx) & = (\det g_{11})^2 (\det g_{12})(\det g_2) t_{22}^2 P_1(x), \\
P_2(gx) & = (\det g_{11})^2(\det g_{12})^3(\det g_2)^3P_2(x).
\end{align*}
We put $P(x)=P_1(x)^5P_2(x)x_{451}^6$. 
Then on $M^{\text{st}}_{\be_{15}}$, 
\begin{align*}
P(gx) & = ((\det g_{11})^2 (\det g_{12})(\det g_2)t_{22}^2)^5
((\det g_{11})^2(\det g_{12})^3(\det g_2)^3) \\
& \quad \times ((\det g_{12})t_{21})^6 P(x) \\
& = (\det g_{11})^{12} (\det g_{12})^{14} 
t_{21}^6(\det g_2)^8t_{22}^{10} P(x) 
= (\det g_{12})^2(\det g_2)^2t_{22}^4 P(x). 
\end{align*}
Therefore, $P(x)$ is 
invariant under the action of $G_{\text{st},\be_{15}}$.
Let $R(15)\in Z_{15}$ be the element such that 
$(A(R(15)),B(R(15)))=R_{322,3}$ (see (\ref{eq:R(322-3)-defn}))
and that the $x_{451}$-coordinate is $1$. 
Explicitly, 
$R(15)=e_{451}-e_{142}+e_{352}+e_{153}+e_{243}+e_{234}$. 
Then $P(R(15))=1$ and so $R(15)\in Z^{\sst}_{15}$ 
(there is no restriction on $\ch(k)$). 

Suppose that $x\in Z^{\sst}_{15\,k}$. 
Then $x_{451}\not=0$. 
If $t=(I_3,I_2,I_2,x_{451}^{-1},1)$ then the 
$x_{451}$-coordinate of $tx$ is $1$.  
If $x_{451}=1$ and 
$g=(g_{11},g_{12},g_2,t_{21},t_{22})$
then $g$ does not change the $x_{451}$-coordinate
if and only if $t_{21}=(\deg g_{12})^{-1}$. 
If so, the action of $g$ on $(A(x),B(x))$ is the 
same as that of the case (a) of Section \ref{sec:rational-orbits-wedge3-32}. 
Therefore, by Proposition \ref{prop:322+3-orbit}, 
$M_{\be_{15}\,k}\backslash Z^{\sst}_{15\,k}$ 
is in bijective correspondence with $\Ex_2(k)$ 
if $\ch(k)\not=2$. 

We assume that $u_{1ij}=0$ unless $i=4,5,j=1,2,3$ and 
$u_{232}=0$. Then the four components of 
$n(u)R(15)$ are as follows: 
\begin{align*}
& \begin{pmatrix}
0 & 0 & 0 & 0 & 0 \\
0 & 0 & 0 & 0 & 0 \\
0 & 0 & 0 & 0 & 0 \\
0 & 0 & 0 & 0 & 1 \\
0 & 0 & 0 & -1 & 0 
\end{pmatrix}, \;
\begin{pmatrix}
0 & 0 & 0 & -1 & 0 \\
0 & 0 & 0 & 0 & 0 \\
0 & 0 & 0 & 0 & 1 \\
1 & 0 & 0 & 0 & u_{143}+u_{151}+u_{221} \\
0 & 0 & -1 & * & 0 
\end{pmatrix}, \\
& \begin{pmatrix}
0 & 0 & 0 & 0 & 1 \\
0 & 0 & 0 & 1 & 0 \\
0 & 0 & 0 & 0 & 0 \\
0 & -1 & 0 & 0 & u_{141}-u_{152}+u_{231} \\
-1 & 0 & 0 & * & 0 
\end{pmatrix},\;
\begin{pmatrix}
0 & 0 & 0 & -u_{242} & u_{243} \\
0 & 0 & 1 & u_{143}+u_{243} & u_{153} \\
0 & -1 & 0 & -u_{142} & -u_{152}+u_{242}  \\
* & * & * & 0 & Q(u) + u_{241}\\
* & * & * & * & 0 
\end{pmatrix}
\end{align*}
where $Q(u)$ is a polynomial which does not depend on $u_{241}$. 

We can apply Lemma \ref{lem:connected-criterion}  
to the map $\aff^{11}\to \aff^9$ defined by the sequence
\begin{align*}
& u_{142},u_{242},u_{243},u_{143}+u_{243},u_{153},u_{152}-u_{242}, \\
& u_{231}+u_{141}-u_{152},u_{221}+u_{143}+u_{151},u_{241}+Q(u)
\end{align*}
where $u_{141},u_{151}$ are extra variables.
So by Proposition \ref{prop:W-eliminate}, 
Property \ref{property:W-eliminate} holds for 
any $x\in Z^{\sst}_{15\,k}$. Therefore, 
$P_{\be_15\,k}\backslash Y^{\sst}_{15\,k}$ 
is in bijective correspondence with $\Ex_2(k)$ also 
if $\ch(k)\not=2$.

\vskip 5pt

(7) $S_{16}$, $\be_{16}=\tfrac {1} {30} (-2,-2,-2,3,3,0,0,0,0)$ 

Since $G_{\text{st},\be_{16}}$ is semi-simple, 
any relative invariant polynomial is invariant 
by $G_{\text{st},\be_{16}}$. 
We identify the element 
$(\diag(g_{11},g_{12}),g_2)
\in M_{[3],\emptyset}=M_{\be_{16}}$ with the element 
$g=(g_2,g_{11},g_{22})\in\gl_4\times \gl_3\times \gl_2$.

For $x\in Z_{16}$, let 
\begin{equation*}
A_i(x) = \begin{pmatrix}
x_{14i} & x_{15i} \\
x_{24i} & x_{25i} \\
x_{34i} & x_{35i}
\end{pmatrix}
\end{equation*}
for $i=1,2,3,4$ and $A(x)=(A_1(x)\ccd A_4(x))$. 
We identify $Z_{16}\cong \aff^4\otimes \aff^3\otimes \aff^2$ 
by the map $Z_{16}\ni x\mapsto A(x)$. 
Since 
\begin{equation*}
\begin{pmatrix}
A_1(gx) \\
\vdots \\
A_4(gx)
\end{pmatrix}
= g_2
\begin{pmatrix}
g_{11}A_1(x){}^tg_{12} \\
\vdots \\
g_{11}A_4(x){}^tg_{12} 
\end{pmatrix}, 
\end{equation*}
$Z_{16}$ is the tensor product
of standard \rep s. 

Let $W=\aff^3\otimes \aff^2$. Then $\dim W=6$.  
We denote $q_{ij}=\bbmp_{3,i}\otimes \bbmp_{2,j}$ for $i=1,2,3,j=1,2$.
Let $\{q_{ij}^*\}\sub W^*$ be the dual basis of 
$\{q_{ij}\}$. 
If $H\sub W$ is a subspace of dimension $4$ then  
$H'=\{f\in W^*\mid {}^{\forall} h\in H, f(h)=0\}$
is a subspace of $W^*$ of dimension $2$. 
By taking bases of $H,H'$, we obtain a bijective 
correspondence between 
$(\gl_3(k)\times \gl_2(k)\times \gl_4(k))\bk 
(W\otimes \aff^4)^{\sst}_k$ and 
\begin{math}
(\gl_3(k)\times \gl_2(k)\times \gl_2(k))
\bk (W^*\otimes \aff^2)^{\sst}_k  
\end{math}
(see the end of Section \ref{sec:regularity}). 

Let $R(16)\in Z_{16}$ be the element such that 
\begin{equation*}
A_1(R(16)) = q_{11}+q_{32},\;
A_2(R(16)) = q_{12}-q_{21},\;
A_3(R(16)) = q_{31},\;
A_4(R(16)) = q_{22}. 
\end{equation*}
Explicitly, 
$R(16)=e_{141}+e_{351}+e_{152}-e_{242}+e_{343}+e_{254}$. 
By the Castling transform, $R(16)$ 
corresponds to the element 
\begin{equation*}
(-q_{11}^*+q_{32}^*,q_{12}^*+q_{21}^*)
\in W^*\otimes \aff^2.
\end{equation*}
Let $\{\bbmp^*_{3,1},\bbmp^*_{3,2},\bbmp^*_{3,3}\}$ 
be the dual basis of 
$\{\bbmp_{3,1},\bbmp_{3,2},\bbmp_{3,3}\}$.
Since $(\aff^3\otimes \aff^2)^*\cong (\aff^3)^*\otimes (\aff^2)^*$, 
the above element is 
\begin{equation*}
-\bbmp^*_{3,1}\otimes \bbmp^*_{2,1}\otimes \bbmp_{2,1}
+\bbmp^*_{3,1}\otimes \bbmp^*_{2,2}\otimes \bbmp_{2,2}
+\bbmp^*_{3,2}\otimes \bbmp^*_{2,1}\otimes \bbmp_{2,2}
+\bbmp^*_{3,3}\otimes \bbmp^*_{2,2}\otimes \bbmp_{2,1}.
\end{equation*}
This element of  
$(\aff^3)^*\otimes (\aff^2)^*\otimes \aff^2$ 
corresponds to 
$R_{322}$ in Proposition \ref{prop:322-invariant} 
(see Remark \ref{rem:322-remark}). 
Therefore, $Z^{\sst}_{{16}\,k} = M_{\be_{16}\,k}R(16)\not=\emptyset$.

We assume that $u_{1ij}=0$ unless 
$i=4,5,j=1,2,3$ and $u_2=0$. 
Then the four components of 
$n(u)R(16)$ are as follows: 
\begin{align*}
& \begin{pmatrix}
0 & 0 & 0 & 1 & 0 \\
0 & 0 & 0 & 0 & 0 \\
0 & 0 & 0 & 0 & 1 \\
-1 & 0 & 0 & 0 & -u_{151}+u_{143} \\
0 & 0 & -1 & * & 0 
\end{pmatrix}, \;
\begin{pmatrix}
0 & 0 & 0 & 0 & 1 \\
0 & 0 & 0 & -1 & 0 \\
0 & 0 & 0 & 0 & 0 \\
0 & 1 & 0 & 0 & u_{152}+u_{141} \\
-1 & 0 & 0 & * & 0 
\end{pmatrix}, \\
& \begin{pmatrix}
0 & 0 & 0 & 0 & 0 \\
0 & 0 & 0 & 0 & 0 \\
0 & 0 & 0 & 1 & 0 \\
0 & 0 & -1 & 0 & -u_{153} \\
0 & 0 & 0 & * & 0 
\end{pmatrix}, \;
\begin{pmatrix}
0 & 0 & 0 & 0 & 0 \\
0 & 0 & 0 & 0 & 1 \\
0 & 0 & 0 & 0 & 0 \\
0 & 0 & 0 & 0 & u_{142} \\
0 & -1 & 0 & * & 0 
\end{pmatrix}.
\end{align*}

We can apply Lemma \ref{lem:connected-criterion} 
to the map $\aff^6\to \aff^4$ defined by the sequence
\begin{align*}
u_{142},u_{153},u_{152}+u_{141},u_{151}-u_{143}
\end{align*}
where $u_{141},u_{143}$ are extra variables.
So by Proposition \ref{prop:W-eliminate}, 
Property \ref{property:W-eliminate} holds for 
any $x\in Z^{\sst}_{16\,k}$. Therefore, 
$Y^{\sst}_{16\,k}=P_{\be_{16}\,k}R(16)$.

\vskip 5pt

(8)  $S_{20}$, $\be_{20}=\tfrac {1} {740} (-16,-16,4,4,24,-25,-5,15,15)$ 

We identify the element 
$(\diag(g_{11},g_{12},t_1),\diag(t_{21},t_{22},g_2))
\in M_{[2,4],[1,2]}=M_{\be_{20}}$ 
with the element 
$g=(g_{12},g_{11},g_2,t_1,t_{21},t_{22})\in \gl_2^3\times \gl_1^3$. 
On $M^{\text{st}}_{\be_{20}}$, 
\begin{equation*}
\chi_{{20}}(g) = 
(\det g_{11})^{-16}(\det g_{12})^4t_1^{24}
t_{21}^{-25}t_{22}^{-5}(\det g_2)^{15}
= (\det g_{12})^{20}t_1^{40}t_{22}^{20}(\det g_2)^{40}.
\end{equation*}

For $x\in Z_{20}$, let 
\begin{align*}
& A_1(x) = 
\begin{pmatrix}
x_{133} & x_{233} \\
x_{143} & x_{243} 
\end{pmatrix}, \;
A_2(x) = 
\begin{pmatrix}
x_{134} & x_{234} \\
x_{144} & x_{244} 
\end{pmatrix}, \\
& A(x) = (A_1(x),A_2(x))\in 
\Lam^{2,1}_{1,[3,4]}\otimes \Lam^{2,1}_{1,[1,2]}
\otimes \Lam^{2,1}_{2,[3,4]}, \\
& v_1(x) = 
\begin{pmatrix}
x_{351} \\ x_{451}
\end{pmatrix} 
\in \Lam^{2,1}_{1,[3,4]},\;
v_2(x) = 
\begin{pmatrix}
x_{152} \\ x_{252}
\end{pmatrix}
\in \Lam^{2,1}_{1,[1,2]}.  
\end{align*}
We identify $Z_{20}$ with 
$\aff^2\otimes \aff^2\otimes \aff^2\oplus \aff^2\oplus \aff^2\oplus 1$ 
by the map $Z_{20}\ni \mapsto (A(x),v_1(x),v_2(x),x_{342})$. 
Since 
\begin{equation*}
v_1(gx) = t_1t_{21}g_{12}v_1(x),\;
v_2(gx) = t_1t_{22}g_{11}v_2(x),\;
\begin{pmatrix}
A_1(gx) \\
A_2(gx)
\end{pmatrix}
= g_2\begin{pmatrix}
g_{12}A_1(x){}^tg_{11} \\
g_{12}A_2(x){}^tg_{11}
\end{pmatrix}, 
\end{equation*}
the action of $(g_{12},g_{11},g_2,1,1,1)$
on $(A(x),v_1(x),v_2(x))$ is the same 
as that of the case (b) of Section \ref{sec:rational-orbits-222-22}. 

Let $P_1(x)$ be the degree $4$ polynomial of $A(x)$ obtained by 
Proposition \ref{prop:222-invariant} and 
$P_2(x),P_3(x)$ the polynomials of $(A(x),v_1(x))$, $(A(x),v_2(x))$ 
respectively for the case (b) of (\ref{eq:P1P2P3}). 
$P_1(x)$ is homogeneous of degree $2$ 
for each of $A_1(x),A_2(x)$ and 
$P_2(x)$ (resp. $P_3(x)$) is homogeneous of degree $2$ 
for each of $A(x),v_1(x)$ (resp. $A(x),v_2(x)$). 
So by (\ref{eq:P1P2P3}), 
\begin{align*}
P_1(gx) & = (\det g_{11})^2(\det g_{12})^2(\det g_2)^2P_1(x), \\
P_2(gx) & = (t_1t_{21})^2(\det g_{11})(\det g_{12})^2(\det g_2)P_2(x), \\
P_3(gx) & = (t_1t_{22})^2(\det g_{11})^2(\det g_{12})(\det g_2)P_3(x).
\end{align*}
Also the $x_{342}$-coordinate of $gx$ is 
$t_{22}(\det g_{12}) x_{342}$. 

Let 
\begin{math}
P(x) = P_1(x)P_2(x)^4P_3(x)^4x_{342}. 
\end{math}
Then on $M^{\text{st}}_{\be_{20}}$, 
\begin{align*}
P(gx) 
& = ((\det g_{11})^2(\det g_{12})^2(\det g_2)^2)
((t_1t_{21})^2(\det g_{11})(\det g_{12})^2(\det g_2))^4 \\
& \quad \times ((t_1t_{22})^2(\det g_{11})^2(\det g_{12})(\det g_2))^4
(t_{22}(\det g_{12}))P(x) \\
& = (\det g_{11})^{14}(\det g_{12})^{15}t_1^{16}
t_{21}^8t_{22}^9 (\det g_2)^{10} P(x)
= (\det g_{12})t_1^2 t_{22}(\det g_2)^2P(x). 
\end{align*}
Therefore, $P(x)$ is 
invariant under the action of $G_{\text{st},\be_{20}}$. 

Suppose that $x\in Z^{\sst}_{20\,k}$.
Then $x_{342}\not=0$.  
If $t=(I_2,I_2,I_3,x_{342},x_{342}^{-1},x_{342}^{-1})$ then 
$A(tx)=A(x),v_1(tx)=v_1(x),v_2(tx)=v_2(x)$ and 
the $x_{342}$-coordinate of $tx$ is $1$.
If $g=(g_{12},g_{11},g_2,t_1,t_{21},t_{22})$ then 
$g$ does not change the $x_{342}$-coordinate if and only if
$t_{22}=(\det g_{12})^{-1}$. 

Let 
\begin{align*}
h_1 & = (g_{12},g_{11},g_2,t_1,t_{21},(\det g_{12})^{-1}), \\
h_2 & = (t_1t_{21}g_{12},t_1(\det g_{12})^{-1}g_{11},
t_1^{-2}t_{21}^{-1}(\det g_{12})g_2). 
\end{align*}
Then the action of $h_1$ on 
$(A(x),v_1(x),v_2(x))$ 
coincides with the action of $h_2$ in 
the case (b) of Section \ref{sec:rational-orbits-222-22} 
on $(A(x),v_1(x),v_2(x))$.  
Therefore, the action of $\gl_1$'s can be absorbed 
into the action of $\gl_2^3$. 
Since $Z^{\sst}_{20\,k}$ corresponds to 
$U$ in Proposition \ref{prop:222+2-orbit}, 
$M_{\be_{20}\,k}\backslash Z^{\sst}_{20\,k}$ 
is in bijective correspondence with $\Ex_2(k)$. 

Let $R(20)\in Z_{20}$ be the element such that 
\begin{equation*}
A_1(R(20)) = 
\begin{pmatrix}
1 & 0 \\
0 & 0 
\end{pmatrix},\;
A_2(R(20)) = 
\begin{pmatrix}
0 & 0 \\
0 & 1 
\end{pmatrix},\;
v_1(R(20)) = v_2(R(20)) = 
\begin{pmatrix}
1 \\ 1
\end{pmatrix}
\end{equation*}
and that the $x_{342}$-coordinate is $1$. 
Explicitly, 
$R(20)=e_{351}+e_{451}+e_{152}+e_{252}+e_{133}+e_{244}$. 
Then $P_1(R(20))=1,P_i(R_{20})=-1$ for $i=2,3$. 
So $R(20)\in Z^{\sst}_{20}$. 

We assume that $u_{1ij}=0$ for $(i,j)=(2,1),(4,3)$ 
and $u_{243}=0$.  Then the four components of 
$n(u)R(20)$ are as follows: 
\begin{align*}
& \begin{pmatrix}
0 & 0 & 0 & 0 & 0 \\
0 & 0 & 0 & 0 & 0 \\
0 & 0 & 0 & 0 & 1 \\
0 & 0 & 0 & 0 & 1 \\
0 & 0 & -1 & -1 & 0 
\end{pmatrix},\;
\begin{pmatrix}
0 & 0 & 0 & 0 & 1 \\
0 & 0 & 0 & 0 & 1 \\
0 & 0 & 0 & 1 & u_{131} + u_{132} + u_{154} + u_{221}  \\
0 & 0 & -1 & 0 & u_{141} + u_{142} -u_{153} + u_{221} \\
-1 & -1 & * & * & 0 
\end{pmatrix}, \\
& \begin{pmatrix}
0 & 0 & 1 & 0 & u_{153}+u_{232} \\
0 & 0 & 0 & 0 & u_{232} \\
-1 & 0 & 0 & -u_{141}+u_{232} & Q_1(u) -u_{151}+u_{231} \\
0 & 0 & * & 0 & Q_2(u) + u_{231} \\
* & * & * & * & 0 
\end{pmatrix}, \\
& \begin{pmatrix}
0 & 0 & 0 & 0 & u_{242} \\
0 & 0 & 0 & 1 & u_{154} + u_{242} \\
0 & 0 & 0 & u_{132}+u_{242} & Q_3(u)+u_{241} \\
0 & -1 & * & 0 & Q_4(u) -u_{152} +u_{241} \\
* & * & * & * & 0 
\end{pmatrix}
\end{align*}
where $Q_1(u),Q_2(u),Q_3(u),Q_4(u)$ are polynomials 
which do not depend on $u_{151},u_{152}$, 
$u_{231},u_{241}$.

We can apply Lemma \ref{lem:connected-criterion} 
to the map $\aff^{13}\to \aff^{12}$ defined by the sequence
\begin{align*}
& u_{232},u_{242},u_{153}+u_{232},u_{154}+u_{242}, 
u_{132}+u_{242},u_{141}-u_{232}, \\
& u_{131}+u_{132}+u_{154}+u_{221}, 
u_{142}+u_{141}-u_{153}+u_{221}, \\
& u_{231}+Q_2(u),u_{151}-u_{231}-Q_1(u),  
u_{241}+Q_3(u),u_{152}-u_{241}-Q_4(u)
\end{align*}
where $u_{221}$ is an extra variable.
So by Proposition \ref{prop:W-eliminate}, 
Property \ref{property:W-eliminate} holds for 
any $x\in Z^{\sst}_{20\,k}$. Therefore, 
$P_{\be_{20}\,k}\backslash Y^{\sst}_{20\,k}$ 
is in bijective correspondence with $\Ex_2(k)$ also.

\vskip 5pt

(9) $S_{21}$, $\be_{21}=\tfrac {1} {60} (-4,-4,1,1,6,-5,0,0,5)$

We identify the element 
$(\diag(g_{11},g_{12},t_1),\diag(t_{21},g_2,t_{22}))
\in M_{[2,4],[1,3]}=M_{\be_{21}}$ 
with the element 
$g=(g_{11},g_{12},g_2,t_1,t_{21},t_{22})\in \gl_2^3\times \gl_1^3$. 
On $M^{\text{st}}_{\be_{21}}$, 
\begin{equation*}
\chi_{{21}}(g) = 
(\det g_{11})^{-4}(\det g_{12})
t_1^6t_{21}^{-5}t_{22}^5
= (\det g_{12})^5t_1^{10}(\det g_2)^5t_{22}^{10}. 
\end{equation*}

For $x\in Z_{21}$, let 
\begin{align*}
& A(x) = \begin{pmatrix}
x_{152} & x_{153} \\
x_{252} & x_{253}
\end{pmatrix}
\in \Lam^{2,1}_{1,[1,2]}\otimes \Lam^{2,1}_{2,[2,3]},\;
v_1(x) = 
\begin{pmatrix}
x_{342} \\ x_{343}
\end{pmatrix}
\in \Lam^{2,1}_{2,[2,3]}, \\
& B(x) = \begin{pmatrix}
x_{134} & x_{144} \\
x_{234} & x_{244}
\end{pmatrix}
\in \Lam^{2,1}_{1,[1,2]}\otimes \Lam^{2,1}_{1,[3,4]},\;
v_2(x) = 
\begin{pmatrix}
x_{351} \\ x_{451} 
\end{pmatrix}
\in \Lam^{2,1}_{1,[3,4]}.
\end{align*}
We identify $Z_{21}$ with 
$\m_2\oplus \aff^2\oplus \m_2\oplus \aff^2$ 
by the map $Z_{21}\ni x\mapsto (A(x),v_1(x),B(x),v_2(x))$.

It is easy to see that 
\begin{align*}
A(gx) & =t_1g_{11}A(x){}^tg_2,\;
B(gx)=t_{22}g_{11}B(x){}^tg_{12}, \\
v_1(gx) & = (\det g_{12})g_2v_1(x),\;
v_2(gx) = t_1t_{21}g_{12}v_2(x).
\end{align*}
By applying Lemma \ref{lem:natural-pairing} 
to $(A(x),v_1(x))$, $(B(x),v_2(x))$, 
we have maps 
\begin{align*}
& \Phi_1: Z_{21}\to \Lam^{2,1}_{1,[1,2]}\otimes \Lam^{2,1}_{2,[2,3]}
\oplus \Lam^{2,1}_{2,[2,3]}
\to \Lam^{2,1}_{1,[1,2]}\cong \aff^2, \\
& \Phi_2:
Z_{21}\to \Lam^{2,1}_{1,[1,2]}\otimes \Lam^{2,1}_{1,[3,4]}
\oplus \Lam^{2,1}_{1,[3,4]}
\to \Lam^{2,1}_{1,[1,2]}\cong \aff^2.
\end{align*}

Let $P_1(x)$ be the determinant of 
$(\Phi_1(x)\; \Phi_2(x))\in \m_2$. Then 
\begin{align*}
& \Phi_1(gx) = (\det g_{12}) t_1 (\det g_2) g_{11}\Phi_1(x),\;
\Phi_2(gx) = (\det g_{12})t_1t_{21}t_{22} g_{11}\Phi_2(x), \\
& P_1(gx) = (\det g_{11})(\det g_{12})^2t_1^2
t_{21}(\det g_2)t_{22}P_1(x). 
\end{align*}
Let $P_2(x) = \det A(x),P_3(x)=\det B(x)$.
Then 
\begin{equation*}
P_2(gx) = (\det g_{11})t_1^2(\det g_2)P_2(x), \;
P_3(gx) = (\det g_{11})(\det g_{12})t_{22}^2P_3(x).
\end{equation*}

We put 
$P(x) = P_1(x)^2P_2(x)P_3(x)$. 
Then on $M^{\text{st}}_{\be_{21}}$, 
\begin{align*}
P(gx) 
& = ((\det g_{11})(\det g_{12})^2t_1^2t_{21}(\det g_2)t_{22})^2 \\
& \quad \times ((\det g_{11})t_1^2(\det g_2))
((\det g_{11})(\det g_{12})t_{22}^2)P(x) \\
& = (\det g_{11})^4(\det g_{12})^5t_1^6t_{21}^2(\det g_2)^3t_{22}^4 P(x)
= (\det g_{12})t_1^2(\det g_2)t_{22}^2 P(x). 
\end{align*}
Therefore, $P(x)$ is 
invariant under the action of $G_{\text{st},\be_{21}}$. 

Let $R(21)\in Z_{21}$ be the element such that 
\begin{equation*}
A(R(21)) = B(R(21)) = I_2,\;
v_1(R(21))=[1,0],\; v_2(R(21))=[0,1]. 
\end{equation*}
Explicitly, $R(21)=e_{451}+e_{152}+e_{342}+e_{253}+e_{134}+e_{244}$. 
Then $\Phi_1(R(21))=[0,-1]$, $\Phi_2(R(21))=[1,0]$. 
So $P_i(R(21))=1$ for $i=1,2,3$ 
and so $R(21)\in Z^{\sst}_{21\,k}$. 

We show that  $Z^{\sst}_{21\,k}=M_{\be_{21}\,k}R(21)$. 
Suppose that $x\in Z^{\sst}_{21\,k}$. 
Since $\det A(x)$, $\det B(x)\not=0$, 
there exists $g\in M_{\be_{21}\,k}$ 
such that $A(gx)=B(gx)=I_2$. 
So we may assume that $A(x)=B(x)=I_2$. 
Then $\Phi_1(x)=[x_{343},-x_{342}]$, 
$\Phi_2(x)=[x_{451},-x_{351}]$. 
Since $P_1(x)\not=0$, either $x_{342}\not=0$ or $x_{343}\not=0$.  
Let $\tau_0$ be the element in (\ref{eq:J-defn}). 
By applying $(\tau_0,\tau_0,\tau_0,1,1,1)$ if necessary, 
we may assume that $x_{342}\not=0$. 

Let $g=(I_2,I_2,x_{342}^{-1}I_2,x_{342},1,1)$. 
Then $A(gx)=B(gx)=I_2$ and the $x_{342}$-coordinate 
of $gx$ is $1$. So we may assume that $x_{342}=1$. 
Let $h=n_2(-x_{343})$ (see Section \ref{sec:notation})
and $g=({}^th^{-1},h,h,1,1,1)$. 
Then $A(gx)=B(gx)=I_2,v_1(x)=[1,0]$. Since $P_1(x)\not=0$, 
$x_{451}\not=0$. 
By applying the element $(I_2,I_2,I_2,1,x_{451}^{-1},1)$ to $x$, 
we may assume that $x_{451}=1$. Let 
$h={}^tn_2(-x_{351})$ and $g=({}^th^{-1},h,h,1,1,1)$.
Then $gx =R(21)$. 
Therefore, $Z^{\sst}_{21\,k}=M_{\be_{21}\,k}R(21)$. 

We assume that $u_{1ij}=0$ for $(i,j)=(2,1),(4,3)$ 
and $u_{232}=0$.  
Then the fist component of 
$n(u)R(21)$ is the same as that of $R(21)$ 
and the remaining components are as follows: 
\begin{align*}
\begin{pmatrix}
0 & 0 & 0 & 0 & 1 \\
0 & 0 & 0 & 0 & 0 \\
0 & 0 & 0 & 1 & u_{131}+u_{154}  \\
0 & 0 & -1 & 0 & u_{141} -u_{153} +u_{221} \\
-1 & 0 & * & * & 0 
\end{pmatrix}, \;
\begin{pmatrix}
0 & 0 & 0 & 0 & 0 \\
0 & 0 & 0 & 0 & 1 \\
0 & 0 & 0 & 0 & u_{132} \\
0 & 0 & 0 & 0 & u_{142}+u_{231} \\
0 & -1 & * & * & 0 
\end{pmatrix}, 
\end{align*}

\begin{align*}
& \begin{pmatrix}
0 & 0 & 1 & 0 & u_{153}+u_{242} \\
0 & 0 & 0 & 1 & u_{154}+u_{243} \\
-1 & 0 & 0 & u_{132}-u_{141}+u_{242} & -u_{151} + Q_1(u) \\
0 & -1 & * & 0 & -u_{152} + u_{241} + Q_2(u) \\
* & * & * & * & 0 
\end{pmatrix}
\end{align*}
where $Q_1(u),Q_2(u)$ are polynomials 
which do not depend on $u_{151},u_{152},u_{241}$. 

We can apply Lemma \ref{lem:connected-criterion} 
to the map $\aff^{13}\to \aff^9$ defined by the sequence
\begin{align*}
& u_{132},u_{153}+u_{242},u_{154}+u_{243}, 
u_{131}+u_{154},u_{141}-u_{132}-u_{242}, \\
& u_{221}+u_{141}-u_{153},u_{142}+u_{231},
u_{151}-Q_1(u),u_{241}-u_{152}+Q_2(u) 
\end{align*}
where $u_{152},u_{231},u_{242},u_{243}$ are extra variables.
So by Proposition \ref{prop:W-eliminate}, 
Property \ref{property:W-eliminate} holds for 
any $x\in Z^{\sst}_{21\,k}$. Therefore, 
$Y^{\sst}_{21\,k} = P_{\be_{21}\,k} R(21)$ also.  

\vskip 5pt

(10) $S_{33}$, $\be_{33}=\tfrac {1} {340} (-16,-6,4,4,14,-15,-5,5,15)$. 

We identify the element 
$(\diag(t_{11},t_{12},g_1,t_{13}),\diag(t_{21},t_{22},t_{23},t_{24}))
\in M_{[1,2,4],[1,2,3]}=M_{\be_{33}}$ 
with the element 
$g=(g_1,t_{11}\ccd t_{24})\in \gl_2\times \gl_1^7$. 
On $M^{\text{st}}_{\be_{35}}$, 
\begin{equation*}
\chi_{{33}}(g) = 
t_{11}^{-16}t_{12}^{-6}(\det g_1)^4t_{13}^{14}
t_{21}^{-15}t_{22}^{-5}t_{23}^5t_{24}^{15}
= t_{12}^{10}(\det g_1)^{20}t_{13}^{30}t_{22}^{10}t_{23}^{20}t_{24}^{30}. 
\end{equation*}

For $x\in Z_{33}$, let 
\begin{equation*}
v_1(x)=[x_{351},x_{451}],
v_2(x)=[x_{233},x_{243}],
v_3(x)=[x_{134},x_{144}].
\end{equation*}
We identify $Z_{33}\cong (\aff^2)^{3\oplus}\oplus 1^{3\oplus}$
by the map $Z_{33}\ni x\mapsto (v_1(x),v_2(x),v_3(x),x_{252},x_{342}$, $x_{153})$.

We put 
\begin{equation*}
P_1(x)=\det (v_1(x) \; v_2(x)),\;
P_2(x)=\det(v_1(x)\; v_3(x)),\;
P_3(x)=\det(v_2(x)\; v_3(x))
\end{equation*}
and 
\begin{math}
P(x)=P_1(x)^2P_2(x)^5P_3(x)^5x_{252}^6x_{342}^2x_{153}^2.
\end{math}
Then on $M^{\text{st}}_{\be_{33}}$, 
\begin{align*}
v_1(gx) & = t_{13}t_{21}g_1v_1(x),\;
v_2(gx) = t_{12}t_{23}g_1v_2(x),\;
v_3(gx) = t_{11}t_{24}g_1v_3(x), \\
P_1(gx) & = t_{12}(\det g_1)t_{13}t_{21}t_{23} P_1(x), \;
P_2(gx)  = t_{11}(\det g_1)t_{13}t_{21}t_{24} P_2(x), \\
P_3(gx) & = t_{11}t_{12}(\det g_1)t_{23}t_{24} P_3(x), \\
P(gx) 
& = (t_{12}(\det g_1)t_{13}t_{21}t_{23})^2
(t_{11}(\det g_1)t_{13}t_{21}t_{24})^5
(t_{11}t_{12}(\det g_1)t_{23}t_{24})^5 \\
& \quad \times (t_{12}t_{13}t_{22})^6
((\det g_1)t_{22})^2
(t_{11}t_{13}t_{23})^2 P(x) \\
& = t_{11}^{12}t_{12}^{13}(\det g_1)^{14}t_{13}^{15}
t_{21}^7t_{22}^8t_{23}^9t_{24}^{10} P(x) 
= t_{12}(\det g_1)^2t_{13}^3t_{22}t_{23}^2t_{24}^3 P(x). 
\end{align*}
Therefore, $P(x)$ is 
invariant under the action of $G_{\text{st},\be_{33}}$. 

Let $R(33)\in Z_{33}$ be the element such that 
\begin{equation*}
v_1(R(33))=[1,0],v_2(R(33))=[0,1],v_3(R(33))=[1,1]
\end{equation*}
and that the $x_{252},x_{342},x_{153}$-coordinates are $1$. 
Explicitly, $R(33)=e_{351}+e_{252}+e_{342}+e_{153}+e_{243}+e_{134}+e_{144}$. 
Then $P_1(R(33))=P_2(R(33))=1$,  $P_3(R(33))=-1$   
and so $R(33)\in Z^{\sst}_{33\,k}$.

We show that  $Z^{\sst}_{33\,k}=M_{\be_{33}\,k}R(33)$. 
Suppose that $x\in Z^{\sst}_{33\,k}$. 
It is easy to see that 
there exists $g\in M_{\be_{33}\,k}$ 
such that $v_1(gx)=[1,0],v_2(gx)=[0,1]$. 
So we may assume that $v_1(x)=[1,0],v_2(x)=[0,1]$. 
By assumption, $x_{134},x_{144},x_{252},x_{342},x_{153}\not=0$. 
If 
\begin{equation*}
t=(t_{24}^{-1},x_{144},
\diag(x_{134}^{-1},x_{144}^{-1}),t_{21}^{-1}x_{134},
t_{21},t_{22},1,t_{24})
\end{equation*}
then 
\begin{math}
v_1(tx) = [1,0],\;
v_2(tx) = [0,1],\;
v_3(tx) = [1,1]  
\end{math}
and the $x_{252},x_{342},x_{153}$-coordinates of $tx$ 
are 
\begin{equation*}
t_{21}^{-1}t_{22}x_{134}x_{144}x_{252},\; 
t_{22}x_{134}^{-1}x_{144}^{-1}x_{342},\;
t_{21}^{-1}t_{24}^{-1}x_{134}x_{153}.
\end{equation*}
We can choose $t_{22},t_{21},t_{24}$ 
in this order so that $x_{342},x_{252},x_{153}$-coordinates 
of $tx$ are $1$. So, there exists such $t$ 
such that $tx=R(33)$. 
Therefore, $Z^{\sst}_{33\,k}=M_{\be_{33}\,k}R(33)$. 

We assume that $u_{143}=0$. 
Then the four components of 
$n(u)R(33)$ are as follows: 
\begin{align*}
& \begin{pmatrix}
0 & 0 & 0 & 0 & 0 \\
0 & 0 & 0 & 0 & 0 \\
0 & 0 & 0 & 0 & 1 \\
0 & 0 & 0 & 0 & 0 \\
0 & 0 & -1 & 0 & 0 
\end{pmatrix},\;
\begin{pmatrix}
0 & 0 & 0 & 0 & 0 \\
0 & 0 & 0 & 0 & 1 \\
0 & 0 & 0 & 1 & u_{132} + u_{154} + u_{221}  \\
0 & 0 & -1 & 0 & u_{142} -u_{153} \\
0 & -1 & * & * & 0 
\end{pmatrix}, \\
& \begin{pmatrix}
0 & 0 & 0 & 0 & 1 \\
0 & 0 & 0 & 1 & u_{121}+u_{154}+u_{232} \\
0 & 0 & 0 & u_{132}+u_{232} & Q_1(u) + u_{131}+u_{231} \\
0 & -1 & * & 0 & Q_2(u) + u_{141}-u_{152} \\
-1 & * & * & * & 0 
\end{pmatrix}, \\
& \begin{pmatrix}
0 & 0 & 1 & 1 & u_{153}+u_{154}+u_{243} \\
0 & 0 & u_{121} & u_{121}+u_{243} & Q_3(u)+u_{242} \\
-1 & * & 0 & Q_4(u)+u_{131}-u_{141}+u_{242} & Q_5(u)-u_{151}+u_{241} \\
-1 & * & * & 0 & Q_6(u)-u_{151} \\
* & * & * & * & 0 
\end{pmatrix}
\end{align*}
where $Q_1(u)\ccd Q_4(u)$ are polynomials  
which do not depend on 
$u_{131},u_{141},u_{151},u_{152}$, $u_{231},u_{241},u_{242}$
and $Q_5(u),Q_6(u)$ are polynomials which do not 
depend on $u_{151},u_{241}$. 

We can apply Lemma \ref{lem:connected-criterion} 
to the map $\aff^{15}\to \aff^{13}$ defined by the sequence
\begin{align*}
& u_{121},u_{243}+u_{121},
u_{154}+u_{153}+u_{243},u_{142}-u_{153}, 
u_{232}+u_{121}+u_{154}, \\
& u_{132}+u_{232},u_{221}+u_{132}+u_{154},
u_{242}+Q_3(u),u_{131}-u_{141}+u_{242}+Q_4(u), \\
& u_{231}+u_{131}+Q_1(u),u_{152}-u_{141}-Q_2(u),
u_{151}-Q_6(u),u_{241}-u_{151}+Q_5(u)
\end{align*}
where $u_{141},u_{153}$ are extra variables.
So by Proposition \ref{prop:W-eliminate}, 
Property \ref{property:W-eliminate} holds for 
any $x\in Z^{\sst}_{33\,k}$. Therefore, 
$Y^{\sst}_{33\,k} = P_{\be_{33}\,k} R(33)$ also.  

\vskip 5pt

(11)  $S_{35}$, $\be_{35}=\tfrac {1} {10} (-4,1,1,1,1,0,0,0,0)$ 

Since $G_{\text{st},\be_{35}}$ is semi-simple, 
any relative invariant polynomial is invariant 
with respect to $G_{\text{st},\be_{35}}$.

Let $W=\aff^4$. As in the case $S_{16}$, by the Castling transform, 
there is a bijective correspondence between 
$(\gl_4(k)\times \gl_4(k))\bk (\wedge^2 W\otimes \aff^4)^{\sst}_k$
and $(\gl_4(k)\times \gl_2(k))\bk (\wedge^2 W^*\otimes \aff^2)^{\sst}_k$.
Since  the latter is in bijective
correspondence with $\Ex_2(k)$ by Proposition \ref{prop:Rational-orbits-3-cases}, 
so is $M_{\be_{35}\,k}\bk Z^{\sst}_{{35}\, k}$. 
Since $W_{35}=\{0\}$, 
$P_{\be_{35}\,k}\bk Y^{\sst}_{{35}\, k}$ 
is in bijective correspondence with $\Ex_2(k)$ also.  

\vskip 5pt

(12) $S_{36}$,  $\be_{36}=\tfrac {1} {40} (-6,-1,-1,4,4,-5,0,0,5)$ 

We identify the element 
$(\diag(t_1,g_{11},g_{12}),\diag(t_{21},g_2,t_{22}))
\in M_{[1,3],[1,3]}=M_{\be_{36}}$ 
with the element 
$g=(g_{12},g_{11},g_2,t_1,t_{21},t_{22})\in \gl_2^3\times \gl_1^3$. 
On $M^{\text{st}}_{\be_{36}}$, 
\begin{equation*}
\chi_{{36}}(g) = 
t_1^{-6}(\det g_{11})^{-1}(\det g_{12})^4t_{21}^{-5}t_{22}^5
= (\det g_{11})^5(\det g_{12})^{10} (\det g_2)^5t_{22}^{10}. 
\end{equation*}

For $x\in Z_{36}$, let 
\begin{equation*}
A_1(x) = 
\begin{pmatrix}
x_{242} & x_{252} \\ 
x_{342} & x_{352}
\end{pmatrix},\;
A_2(x) = 
\begin{pmatrix}
x_{243} & x_{253} \\ 
x_{343} & x_{353}
\end{pmatrix},\;
v(x)=[x_{144},x_{154}]\in \Lam^{2,1}_{1,[4,5]},\; 
\end{equation*}
and 
\begin{math}
A(x)= (A_1(x),A_2(x)) \in \Lam^{2,1}_{1,[2,3]}
\otimes \Lam^{2,1}_{1,[4,5]}\otimes \Lam^{2,1}_{2,[2,3]}.
\end{math}
We identify $Z_{36}$ with 
$\m_2\otimes \aff^2\oplus \aff^2\oplus 1^{2\oplus}$
by the map $Z_{36}\ni x\mapsto (A_1(x),A_2(x),v(x),x_{451},x_{234})$. 

It is easy to see that 
\begin{equation*}
\begin{pmatrix}
A_1(gx) \\
A_2(gx)
\end{pmatrix}
= g_2
\begin{pmatrix}
g_{11}A_1(x){}^tg_{12} \\
g_{11}A_2(x){}^tg_{12}  
\end{pmatrix},\; 
v(gx) = t_1t_{22}g_{12}v(x).
\end{equation*}

Let $P_1(x),P_2(x)$ be the polynomials of $(A(x),v(x))$ 
which correspond to those in (\ref{eq:P1P2P3}). 
Then by (\ref{eq:P1P2P3}), 
\begin{align*}
P_1(gx) & = (\det g_{11})^2(\det g_{12})^2(\det g_2)^2 P_2(x),  \\
P_2(gx) & = (t_1 \det(g_{12})t_{22})^2 (\det g_{11})(\det g_2)P_1(x).
\end{align*}
We put $P(x) = P_1(x) P_2(x)^2 x_{451}^2 x_{234}^2$. 
Then on $M^{\text{st}}_{\be_{36}}$, 
\begin{align*}
P(gx) & = 
((\det g_{11})^2(\det g_{12})^2(\det g_2)^2)
((t_1 \det(g_{12})t_{22})^2 (\det g_{11})(\det g_2))^2  \\
& \quad \times ((\det g_{12})t_{21})^2 ((\det g_{11})t_{22})^2 P(x) \\
& = t_1^4(\det g_{11})^6 (\det g_{12})^8 t_{21}^2
(\det g_2)^4 t_{22}^6 P(x) \\
& = (\det g_{11})^2 (\det g_{12})^4 (\det g_2)^2t_{22}^4 P(x). 
\end{align*}
Therefore, $P(x)$ is 
invariant under the action of $G_{\text{st},\be_{36}}$.

Suppose that $x\in Z^{\sst}_{36\,k}$. 
Then $x_{451},x_{234}\not=0$. 
If $t=(I_2,I_2,I_2,1,x_{451}^{-1},x_{234}^{-1})$ 
then the $x_{451},x_{234}$-coordinates of 
$tx$ are $1$.   
Then 
\begin{math}
g=(g_{12},g_{11},g_2,t_1,t_{21},t_{22})
\end{math}
does not change the $x_{451},x_{234}$-coordinates of $x$
if and only if 
\begin{math}
t_1=(\det g_{11}),
t_{21}=(\det g_{12})^{-1},t_{22}=(\det g_{11})^{-1}
\end{math}
and its action $g$ on $(A(x),v(x))$ is the same as 
the case (a) of Section \ref{sec:rational-orbits-222-22}. 
Proposition \ref{prop:222+2-orbit} 
implies that $M_{\be_{36}\,k}\backslash Z^{\sst}_{36\,k}$ 
is in bijective correspondence with $\Ex_2(k)$. 

Let $R(36)\in Z_{36}$ be the element such that 
\begin{equation*}
A_1(R(36)) = 
\begin{pmatrix}
1 & 0 \\
0 & 0 
\end{pmatrix},\;
A_2(R(36)) = 
\begin{pmatrix}
0 & 0 \\
0 & 1 
\end{pmatrix},\;
v(R(36)) = 
\begin{pmatrix}
-1 \\ 1 
\end{pmatrix}
\end{equation*}
and that the $x_{451},x_{234}$-coordinates are $1$. 
Explicitly, 
$R(36)=e_{451}+e_{242}+e_{353}-e_{144}+e_{154}+e_{234}$. 
Then $P_1(R(36))=P_2(R(36))=1$ and so $R(36)\in Z^{\sst}_{36}$. 

We assume that $u_{132}=u_{154}=0$ 
and $u_{232}=0$. 
Then the first component of 
$n(u)R(36)$ is the same as that of $R(36)$ 
and the remaining components are as follows: 
\begin{align*}
& 
\begin{pmatrix}
0 & 0 & 0 & 0 & 0 \\
0 & 0 & 0 & 1 & 0 \\
0 & 0 & 0 & 0 & 0  \\
0 & -1 & 0 & 0 & -u_{152} + u_{221}  \\
0 & 0 & 0 & * & 0 
\end{pmatrix}, \;
\begin{pmatrix}
0 & 0 & 0 & 0 & 0 \\
0 & 0 & 0 & 0 & 0 \\
0 & 0 & 0 & 0 & 1 \\
0 & 0 & 0 & 0 & u_{143} + u_{231} \\
0 & 0 & -1 & * & 0 
\end{pmatrix}, \\
& \begin{pmatrix}
0 & 0 & 0 & -1 & 1 \\
0 & 0 & 1 & -u_{121} + u_{143} +  u_{242} & u_{121}+u_{153} \\
0 & -1 & 0 & -u_{131} -u_{142} & u_{131} -u_{152}  + u_{243} \\
1 & * & * & 0 & Q(u) + u_{141} + u_{151} + u_{241} \\
-1 & * & * & * & 0 
\end{pmatrix}
\end{align*}
where $Q(u)$ is a polynomial  
which does not depend on 
$u_{141},u_{151},u_{241}$.

We can apply Lemma \ref{lem:connected-criterion} 
to the map $\aff^{13}\to \aff^7$ defined by the sequence
\begin{align*}
& u_{221}-u_{152},u_{231}+u_{143},u_{242}-u_{121}+u_{143},
u_{153}+u_{121}, \\
& u_{142}+u_{131},u_{243}+u_{131}-u_{152},
u_{151}+u_{141}+u_{241}+Q(u)
\end{align*}
where $u_{121},u_{131},u_{141},u_{143},u_{152},u_{241}$ 
are extra variables. 
So by Proposition \ref{prop:W-eliminate}, 
Property \ref{property:W-eliminate} holds for 
any $x\in Z^{\sst}_{36\,k}$. Therefore, 
$P_{\be_{36}\,k}\backslash Y^{\sst}_{36\,k}$ 
is in bijective correspondence with $\Ex_2(k)$ 
also.  

\vskip 5pt

(13)  $S_{37}$, $\be_{37}=\tfrac {1} {380} (-12,-2,-2,8,8,-5,-5,5,5)$ 

We identify the element 
$(\diag(t_1,g_{11},g_{12}),\diag(g_{21},g_{22}))
\in M_{[1,3],[2]}=M_{\be_{37}}$ 
with the element 
$g=(g_{11}\ccd g_{22},t_1)\in \gl_2^4\times \gl_1$. 
On $M^{\text{st}}_{\be_{37}}$, 
\begin{align*}
\chi_{{37}}(g) 
& = t_1^{-12}(\det g_{11})^{-2}(\det g_{12})^8
(\det g_{21})^{-5}(\det g_{22})^5 \\
& = (\det g_{11})^{10}(\det g_{12})^{20}(\det g_{22})^{10}. 
\end{align*}

For $x\in Z_{37}$, let 
\begin{align*}
A(x) & = (x_{241}\ccd x_{352})\in 
\Lam^{2,1}_{1,[2,3]}\otimes \Lam^{2,1}_{1,[4,5]}
\otimes \Lam^{2,1}_{2,[1,2]}, \\
B(x) & = 
\begin{pmatrix}
x_{143} & x_{144} \\
x_{153} & x_{154}
\end{pmatrix}
\in \Lam^{2,1}_{1,[4,5]}\otimes \Lam^{2,1}_{2,[3,4]},\;
v(x) = 
\begin{pmatrix}
x_{233} \\ x_{234}
\end{pmatrix}
\in \Lam^{2,1}_{2,[3,4]}.
\end{align*}
We identify $Z_{37}$ with 
\begin{math}
\aff^2\otimes \aff^2\otimes \aff^2\oplus 
\m_2 \oplus\aff^2
\end{math}
by the map $Z_{37}\ni x\mapsto (A(x),B(x),v(x))$. 

It is easy to see that 
\begin{equation*}
A(gx) = (g_{11},g_{12},g_{21})A(x),\;
B(gx) = t_1g_{12}B(x){}^tg_{22},\;
v(gx) = (\det g_{11})g_{22}v(x)
\end{equation*}
where the right hand sides are the natural actions. 
We apply Lemma \ref{lem:natural-pairing} twice to 
$(A(x),B(x),v(x))$ and obtain a map 
\begin{equation*}
\Phi:Z_{37}\to 
\Lam^{2,1}_{1,[2,3]}\otimes \Lam^{2,1}_{1,[4,5]}
\otimes \Lam^{2,1}_{2,[1,2]}
\oplus \Lam^{2,1}_{1,[4,5]}
\to \Lam^{2,1}_{1,[2,3]}\otimes \Lam^{2,1}_{2,[1,2]}.
\end{equation*}
Then 
\begin{math}
\Phi(gx)
= t_1(\det g_{11})(\det g_{12})
(\det g_{22})(g_{11},g_{21})\Phi(x).
\end{math}
Identifying $\Lam^{2,1}_{1,[2,3]}\otimes \Lam^{2,1}_{2,[1,2]}$ 
with $\m_2$, we put $P_1(x) = \det \Phi(x)$. 
Then 
\begin{equation*}
P_1(gx) = t_1^2(\det g_{11})^3(\det g_{12})^2
(\det g_{21})(\det g_{22})^2P_1(x). 
\end{equation*}
Let $P_2(x)$ be the degree $4$ polynomial of 
\begin{math}
A(x)
\end{math}
obtained by Proposition \ref{prop:222-invariant}. 
Then 
\begin{equation*}
P_2(gx) = (\det g_{11})^2(\det g_{12})^2(\det g_{21})^2P_2(x). 
\end{equation*}
Let $P_3(x)=\det B(x)$. Then 
\begin{math}
P_3(gx) = t_1^2(\det g_{12})(\det g_{22})P_3(x). 
\end{math}

We put $P(x) = P_1(x)^3P_2(x)^3P_3(x)^4$. Then on $M^{\text{st}}_{\be_{37}}$,  
\begin{align*}
P(gx) & = (t_1^2(\det g_{11})^3(\det g_{12})^2
(\det g_{21})(\det g_{22})^2)^3
((\det g_{11})^2(\det g_{12})^2(\det g_{21})^2)^3 \\
& \quad \times (t_1^2(\det g_{12})(\det g_{22}))^4P(x) \\
& = t_1^{14}(\det g_{11})^{15}(\det g_{12})^{16}
(\det g_{21})^9(\det g_{22})^{10} P(x) \\
& = (\det g_{11})(\det g_{12})^2(\det g_{22})P(x). 
\end{align*}
Therefore, $P(x)$ is 
invariant under the action of $G_{\text{st},\be_{37}}$. 

Let $R(37)\in Z_{\be_{37}}$ be the element such that 
\begin{equation*}
A(R(37)) = (1,0\ccd 0,1),\;
B(R(37)) = I_2,\; 
v(R(37)) = [-1,1]. 
\end{equation*}
Explicitly, 
$R(37)=e_{241}+e_{352}+e_{143}-e_{233}+e_{154}+e_{234}$. 
Then 
\begin{equation*}
\Phi_1(R(37))= \diag(1,-1),P_1(R(37))=-1,P_2(R(37))=P_3(R(37))=1.
\end{equation*}
So $P(R(37))=1$ and $R(37)\in Z^{\sst}_{\be_{37}}$. 

Suppose that $x\in Z^{\sst}_{\be_{37}\,k}$. 
Then there exists $g_{22}\in\gl_2(k)$ such that 
$B(x){}^tg_{22}=I_2$. So we may assume that $B(x)=I_2$. 
Then $g= (g_{11},g_{12},g_{21},g_{22},t_1)$ 
does not change this condition if and only if 
$g_{22}=t_1^{-1}{}^tg_{12}^{-1}$. 
If $g=(g_{11},g_{12},g_{21},t_1^{-1}{}^tg_{12}^{-1},t_1)$ 
then $A(gx)=(g_{11},g_{12},g_{21})A(x)$ and 
$v(gx)=(\det g_{11})^{-1}t_1^{-1}{}^tg_{12}^{-1}v(x)$. 
Substituting $g_{12},g_{21}$ by 
$(\det g_{11})t_1g_{12},(\det g_{11})^{-1}t_1^{-1}g_{21}$ 
respectively, 
the action of $g$ on $A(x),v(x)$ are the natural action and 
the action by ${}^tg_{12}^{-1}$. Since this action by 
${}^tg_{12}^{-1}$ is still the standard \rep, if we exchange the 
order of $(g_{11},g_{12},g_{21})$ then 
Proposition \ref{prop:222+2-orbit} implies that 
$M_{\be_{37}\,k}\backslash Z^{\sst}_{37\,k}$ 
is in bijective correspondence with $\Ex_2(k)$.

We assume that  $u_{132}=u_{154}=0$ 
and $u_{221}=u_{243}=0$. 
Then the four components of 
$n(u)R(37)$ are as follows: 
\begin{align*}
& \begin{pmatrix}
0 & 0 & 0 & 0 & 0 \\
0 & 0 & 0 & 1 & 0 \\
0 & 0 & 0 & 0 & 0 \\
0 & -1 & 0 & 0 & -u_{152} \\
0 & 0 & 0 & * & 0 
\end{pmatrix},\;
\begin{pmatrix}
0 & 0 & 0 & 0 & 0 \\
0 & 0 & 0 & 0 & 0 \\
0 & 0 & 0 & 0 & 1 \\
0 & 0 & 0 & 0 & u_{143} \\
0 & 0 & -1 & * & 0 
\end{pmatrix}, \\
& \begin{pmatrix}
0 & 0 & 0 & 1 & 0 \\
0 & 0 & -1 & u_{121} -u_{143} + u_{231} & -u_{153} \\
0 & 1 & 0 & u_{131}+u_{142} & u_{152} +u_{232}  \\
-1 & * & * & 0 & Q_1(u)-u_{151}  \\
0 & * & * & * & 0 
\end{pmatrix}, \\
& \begin{pmatrix}
0 & 0 & 0 & 0 & 1 \\
0 & 0 & 1 & u_{143} + u_{241} & u_{121} + u_{153} \\
0 & -1 & 0 & -u_{142} & u_{131}-u_{152}+ u_{242} \\
0 & * & * & 0 & Q_2(u) + u_{141}  \\
-1 & * & * & * & 0 
\end{pmatrix}
\end{align*}
where $Q_1(u),Q_2(u)$ are polynomials  
which do not depend on $u_{141},u_{151}$.

We can apply Lemma \ref{lem:connected-criterion} 
to the map $\aff^{12}\to \aff^{12}$ defined by the sequence
\begin{align*}
& u_{142},u_{143},u_{152},u_{153},u_{232}+u_{152},u_{121}+u_{153}, 
u_{231}+u_{121}-u_{143},u_{131}+u_{142}, \\
& u_{241}+u_{143}, 
u_{242}+u_{131}-u_{152},u_{151}-Q_1(u),u_{141}+Q_2(u)
\end{align*}
with no extra variables. 
So by Proposition \ref{prop:W-eliminate}, 
Property \ref{property:W-eliminate} holds for 
any $x\in Z^{\sst}_{37\,k}$. Therefore, 
$P_{\be_{37}\,k}\backslash Y^{\sst}_{37\,k}$ 
is in  bijective correspondence with $\Ex_2(k)$ also.

\vskip 5pt

(14) $S_{40}$, $\be_{40}=\tfrac {1} {180} (-7,-2,-2,3,8,-5,0,0,5)$ 

We identify the element 
$(\diag(t_{11},g_1,t_{12},t_{13}),\diag(t_{21},g_2,t_{22}))
\in M_{[1,3,4],[1,3]}=M_{\be_{40}}$ 
with the element 
$g=(g_1,g_2,t_{11}\ccd t_{22})\in \gl_2^2\times \gl_1^5$. 
On $M^{\text{st}}_{\be_{40}}$, 
\begin{equation*}
\chi_{40}(g) = 
(t_{11})^{-7}(\det g_1)^{-2}t_{12}^3t_{13}^8
t_{21}^{-5}t_{22}^5
= (\det g_1)^5t_{12}^{10}t_{13}^{15}
(\det g_2)^5t_{22}^{10}. 
\end{equation*}

For $x\in Z_{40}$, let 
\begin{align*}
& A(x) =
\begin{pmatrix}
x_{242} & x_{243} \\
x_{342} & x_{343}
\end{pmatrix}
\in \Lam^{2,1}_{1,[2,3]}\otimes \Lam^{2,1}_{2,[2,3]}, \\
& v_1(x) =[x_{251},x_{351}]\in \Lam^{2,1}_{1,[2,3]}, \; 
v_2(x) =[x_{152},x_{153}]\in \Lam^{2,1}_{2,[2,3]}. 
\end{align*}
We identify $Z_{40}$ with 
\begin{math}
\aff^2\otimes \aff^2\oplus \aff^2
\oplus \aff^2\oplus 1^{2\oplus}
\end{math}
by the map $Z_{40}\ni x\mapsto (A(x),v_1(x),v_2(x),x_{144},x_{234})$. 

It is easy to see that 
\begin{equation*}
A(gx) = t_{12}g_1A(x){}^tg_2,\;
v_1(gx) = t_{13}t_{21}g_1v_1(x),\;
v_2(gx) = t_{11}t_{13}g_2v_2(x). 
\end{equation*}
The \rep{} of $M_{\be_{40}}$ on 
$(A(x),v_1(x),v_2(x))$ can be identified 
with the \rep{} considered in Proposition \ref{prop:M2-2-2-invariant} 
except for the extra $\gl_1$-factors. 

Let $P_1(x)$ be the degree $3$
polynomial of $(A(x),v_1(x),v_2(x))$ 
obtained by Proposition \ref{prop:M2-2-2-invariant}. 
Since $P_1(x)$ is linear with respect to each of 
$A(x),v_1(x),v_2(x)$, 
\begin{equation*}
P_1(gx) = t_{11}(\det g_1)t_{12}t_{13}^2t_{21}(\det g_2)P_1(x).  
\end{equation*}
Let $P_2(x)=\det A(x)$. Then 
\begin{math}
P_2(gx) = (\det g_1)t_{12}^2(\det g_2) P_2(x). 
\end{math}

We put $P(x) = P_1(x)^8 P_2(x) x_{144}^5x_{234}^5$.
Then on $M^{\text{st}}_{\be_{40}}$, 
\begin{align*}
P(gx) & = 
(t_{11}(\det g_1)t_{12}t_{13}^2t_{21}(\det g_2))^8
((\det g_1)t_{12}^2(\det g_2))
(t_{11}t_{12}t_{22})^5 ((\det g_1)t_{22})^5 P(x) \\
& = t_{11}^{13}(\det g_1)^{14}t_{12}^{15}t_{13}^{16}
t_{21}^8(\det g_2)^9t_{22}^{10}P(x) 
= (\det g_1)t_{12}^2t_{13}^3(\det g_2)t_{22}^2P(x). 
\end{align*}
Therefore, $P(x)$ is 
invariant under the action of $G_{\text{st},\be_{40}}$. 

Let $R(40)\in Z_{{40}}$ be the element 
such that 
\begin{equation*}
A(R(40)) = I_2 ,\;
v_1(R(40)) = v_2(R(40)) = [1,0]
\end{equation*}
and that the $x_{144},x_{234}$-coordinates are $1$. 
Explicitly, 
$R(40)=e_{251}+e_{152}+e_{242}+e_{343}+e_{144}+e_{234}$. 
Then $P(R(40))=1$ and so $R(40)\in Z^{\sst}_{40}$. 

We show that $Z^{\sst}_{{40}\,k} = M_{\be_{40}\,} R(40)$.  
Suppose that $x\in Z^{\sst}_{40\,k}$. 
Then $x_{144},x_{234}\not=0$. 
By Proposition \ref{prop:M2-2-2-invariant}, 
there exists $g\in M_{\be_{40}\,k}$ such that 
$A(gx)=I_2,v_1(gx)=v_2(gx) = [1,0]$. 
So we may assume that $A(x)=I_2,v_1(x)=v_2(x) = [1,0]$. 
If $t=(I_2,I_2,t_{11},1,t_{11}^{-1},t_{11},t_{22})$ then 
$A(tx)=I_2,v_1(tx)=v_2(tx)=[1,0]$ and 
the $x_{144},x_{234}$-coordinates of $tx$ are 
$t_{11}t_{22}x_{144},t_{22}x_{234}$. 
Therefore, there exists such $t$ such that $tx=R(40)$. 
Hence, $Z^{\sst}_{{40}\,k} = M_{\be_{40}\,} R(40)$.  

We assume that $u_{132}=0$ and $u_{232}=0$. 
Then the four components of 
$n(u)R(40)$ are as follows: 
\begin{align*}
& \begin{pmatrix}
0 & 0 & 0 & 0 & 0 \\
0 & 0 & 0 & 0 & 1 \\
0 & 0 & 0 & 0 & 0 \\
0 & 0 & 0 & 0 & u_{142} \\
0 & -1 & 0 & * & 0 
\end{pmatrix},\;
\begin{pmatrix}
0 & 0 & 0 & 0 & 1 \\
0 & 0 & 0 & 1 & u_{121} + u_{154} + u_{221} \\
0 & 0 & 0 & 0 & u_{131} \\
0 & -1 & 0 & 0 & Q_1(u) + u_{141}-u_{152}  \\
-1 & * & * & * & 0 
\end{pmatrix}, \\
& \begin{pmatrix}
0 & 0 & 0 & 0 & 0 \\
0 & 0 & 0 & 0 & u_{231} \\
0 & 0 & 0 & 1 & u_{154} \\
0 & 0 & -1 & 0 & Q_2(u) -u_{153} \\
* & * & * & * & 0 
\end{pmatrix}, \\
& \begin{pmatrix}
0 & 0 & 0 & 1 & u_{154}+u_{242} \\
0 & 0 & 1 & u_{121} +u_{143} + u_{242} 
& Q_3(u)+u_{153}+u_{241} \\
0 & -1 & 0 & u_{131}-u_{142} + u_{243} 
& Q_4(u) -u_{152}  \\
-1 & * & * & 0 & Q_5(u)-u_{151}  \\
* & * & * & * & 0 
\end{pmatrix}
\end{align*}
where $Q_1(u)\ccd Q_4(u)$ are polynomials  
which do not depend on $u_{141},u_{151},u_{152},u_{153},u_{241}$ 
and $Q_5(u)$ is a polynomial which does not depend on $u_{151}$. 

We can apply Lemma \ref{lem:connected-criterion} 
to the map $\aff^{14}\to \aff^{13}$ defined by the sequence
\begin{align*}
& u_{131},u_{142},u_{154},u_{231},u_{242}+u_{154},
u_{243}+u_{131}-u_{142},u_{221}+u_{121}+u_{154}, \\
& u_{143}+u_{121}+u_{242}, 
u_{152}-Q_4(u),u_{141}-u_{152}+Q_1(u),
u_{153}-Q_2(u), \\
& u_{241}+u_{153}+Q_3(u),u_{151}-Q_5(u) 
\end{align*}
where $u_{121}$ is an extra variable.
So by Proposition \ref{prop:W-eliminate}, 
Property \ref{property:W-eliminate} holds for 
any $x\in Z^{\sst}_{40\,k}$. Therefore, 
$Y^{\sst}_{40\,k}=P_{\be_{40}\,k}R(40)$ also.  

\vskip 5pt

(15) $S_{42}$, $\be_{42}=\tfrac {3} {260} (-8,-8,-8,12,12,-15,5,5,5)$ 

We identify the element 
$(\diag(g_{11},g_{12}),\diag(t_2,g_2))
\in M_{[3],[1]}=M_{\be_{42}}$ 
with the element 
$g=(g_{11},g_2,g_{12},t_2)\in \gl_3^2\times \gl_2\times \gl_1$. 
On $M^{\text{st}}_{\be_{42}}$, 
\begin{equation*}
\chi_{{42}}(g) = 
(\det g_{11})^{-8}(\det g_{12})^{12}
t_2^{-15}(\det g_2)^5
= (\det g_{12})^{20}(\det g_2)^{20}.
\end{equation*}

For $x\in Z_{42}$, let 
\begin{equation*}
A_1(x) = 
\begin{pmatrix}
x_{142} & x_{143} & x_{144} \\
x_{242} & x_{243} & x_{244} \\
x_{342} & x_{343} & x_{344} 
\end{pmatrix},\;
A_2(x) = 
\begin{pmatrix}
x_{152} & x_{153} & x_{154} \\
x_{252} & x_{253} & x_{254} \\
x_{352} & x_{353} & x_{354} 
\end{pmatrix}
\end{equation*}
and $A(x)=(A_1(x),A_2(x))$. 
We identify $Z_{42}\cong \m_3\otimes \aff^2\oplus 1$
by the map $Z_{42}\ni x\mapsto (A(x),x_{451})$. 

It is easy to see that 
\begin{equation*}
\begin{pmatrix}
A_1(gx) \\
A_2(gx)
\end{pmatrix}
= g_{12}
\begin{pmatrix}
g_{11}A_1(x){}^tg_2 \\
g_{11}A_2(x){}^tg_2 
\end{pmatrix}. 
\end{equation*}
Let $P_1(x)$ be the degree $12$ polynomial of 
$A(x)$ obtained by Proposition \ref{prop:332-invariant}. 
Then 
\begin{equation*}
P_1(gx) = (\det g_{11})^4(\det g_{12})^6(\det g_2)^4P_1(x)
\end{equation*}

We put $P(x) = P_1(x)x_{451}$. Then on $M^{\text{st}}_{\be_{42}}$, 
\begin{align*}
P(gx) & = ((\det g_{11})^4(\det g_{12})^6(\det g_2)^4) 
(t_2(\det g_{12}))P(x) \\
& = (\det g_{11})^4(\det g_{12})^7t_2(\det g_2)^4P(x)
= (\det g_{12})^3(\det g_2)^3P(x). 
\end{align*}
Therefore, $P(x)$ is 
invariant under the action of $G_{\text{st},\be_{42}}$. 

Suppose that $x\in Z^{\sst}_{42\,k}$. 
Then $x_{451}\not=0$. 
If $g = (I_3,I_3,I_2,x_{451}^{-1})$ then 
the $x_{451}$-coordinate of $gx$ is $1$. 
If $x_{451}=1$ then 
$g = (g_{11},g_2,g_{12},(\det g_{12})^{-1})$ 
does not change $x_{451}$ and acts on 
$A(x)$ by the tensor product of standard \rep s. 
Therefore, we can apply Proposition 
\ref{prop:Rational-orbits-3-cases} to 
$M_{\be_{42}\,k}\backslash Z^{\sst}_{42\,k}$. 
Hence, $M_{\be_{42}\,k}\backslash Z^{\sst}_{42\,k}$ is 
in bijective correspondence with $\Ex_3(k)$. 

Let $R(42)$ be the element such that 
$A(R(42))=(\diag(1,-1,0),\diag(0,1,-1))$
and the $x_{451}$-coordinate is $1$. 
Explicitly, 
\begin{math}
R(42)=e_{451}+e_{142}-e_{243} + e_{253}-e_{354}.
\end{math}
Then $P(R(42))=1$ by Proposition 
\ref{prop:332-invariant}. 

We assume that 
$u_{1ij}=0$ unless $i=4,5,j=1,2,3$ 
and $u_{2ij}=0$ unless $j=1$. 
Then the four components of 
$n(u)R(42)$ are as follows: 
\begin{align*}
& \begin{pmatrix}
0 & 0 & 0 & 0 & 0 \\
0 & 0 & 0 & 0 & 0 \\
0 & 0 & 0 & 0 & 0 \\
0 & 0 & 0 & 0 & 1 \\
0 & 0 & 0 & -1 & 0 
\end{pmatrix},\;
\begin{pmatrix}
0 & 0 & 0 & 1 & 0 \\
0 & 0 & 0 & 0 & 0 \\
0 & 0 & 0 & 0 & 0 \\
-1 & 0 & 0 & 0 & -u_{151}+u_{221}  \\
0 & 0 & 0 & * & 0 
\end{pmatrix}, 
\end{align*}

\begin{align*}
& \begin{pmatrix}
0 & 0 & 0 & 0 & 0 \\
0 & 0 & 0 & -1 & 1 \\
0 & 0 & 0 & 0 & 0 \\
0 & 1 & 0 & 0 & u_{142}+u_{152}+u_{231} \\
0 & -1 & 0 & * & 0 
\end{pmatrix}, \;
\begin{pmatrix}
0 & 0 & 0 & 0 & 0 \\
0 & 0 & 0 & 0 & 0 \\
0 & 0 & 0 & 0 & -1 \\
0 & 0 & 0 & 0 & -u_{143}+u_{241}  \\
0 & 0 & 1 & * & 0 
\end{pmatrix}.
\end{align*}

We can apply Lemma \ref{lem:connected-criterion}
to the map $\aff^9\to \aff^3$ defined by the sequence
\begin{align*}
& u_{221}-u_{151},u_{231}+u_{142}+u_{152},
u_{241}-u_{143}
\end{align*}
where $u_{1ij}$ ($i=4,5,j=1,2,3$) are extra variables.
So by Proposition \ref{prop:W-eliminate}, 
Property \ref{property:W-eliminate} holds for 
any $x\in Z^{\sst}_{42\,k}$. Therefore, 
$P_{\be_{42}\,k}\backslash Y^{\sst}_{42\,k}$ 
is in bijective correspondence with $\Ex_3(k)$ also.

\vskip 5pt

(16) $S_{49}$,  $\be_{49}=\tfrac {1} {100} (-8,-4,0,4,8,-9,-1,3,7)$ 

(17) $S_{50}$, $\be_{50}=\tfrac {1} {60} (-4,-2,0,2,4,-3,-1,1,3)$ 

For $l=49,50$, $M_{\be_l}=T$ and so $Z^{\sst}_l\not=\emptyset$ 
by Proposition \ref{prop:M=Tsuejective}.  
Let $R(l)\in Z_l$ be element whose coordinates in $Z_l$ are all $1$. 
 
We express elements of $T$ as (\ref{eq:t-defn}). 
Then the matrix $(m_{ij})$ of Proposition \ref{prop:M=Tsuejective} is 
\begin{equation*}
\begin{pmatrix}
0 & 0 & 0 & 1 & 1 & 1 & 0 & 0 & 0 \\
0 & 1 & 0 & 0 & 1 & 0 & 1 & 0 & 0 \\
0 & 0 & 1 & 1 & 0 & 0 & 1 & 0 & 0 \\
1 & 0 & 0 & 0 & 1 & 0 & 0 & 1 & 0 \\
0 & 1 & 0 & 1 & 0 & 0 & 0 & 1 & 0 \\
1 & 0 & 0 & 1 & 0 & 0 & 0 & 0 & 1 \\
0 & 1 & 1 & 0 & 0 & 0 & 0 & 0 & 1 
\end{pmatrix}, \;
\begin{pmatrix}
0 & 0 & 1 & 0 & 1 & 1 & 0 & 0 & 0 \\
0 & 1 & 0 & 0 & 1 & 0 & 1 & 0 & 0 \\
0 & 0 & 1 & 1 & 0 & 0 & 1 & 0 & 0 \\
1 & 0 & 0 & 0 & 1 & 0 & 0 & 1 & 0 \\
0 & 1 & 0 & 1 & 0 & 0 & 0 & 1 & 0 \\
1 & 0 & 0 & 1 & 0 & 0 & 0 & 0 & 1 \\
0 & 1 & 1 & 0 & 0 & 0 & 0 & 0 & 1 
\end{pmatrix}
\end{equation*}
for $S_{49},S_{50}$ respectively.

The determinant of the $7\times 7$ minor 
of columns $1,3,4,5,6,7,8$ (resp. columns $1,2,3,5,6,8,9$) 
is $-1$ (resp. $1$).   
Therefore, $Z^{\sst}_{l\,k} = T_k R(l)$ for $l=49,50$.   

We consider the case $S_{49}$. 
Explicitly, $R(49)=e_{451}+e_{252}+e_{342}+e_{153}+e_{243}+e_{144}+e_{234}$.
The four components of $n(u)R(49)$ are as follows: 
\begin{align*}
& \begin{pmatrix}
0 & 0 & 0 & 0 & 0 \\
0 & 0 & 0 & 0 & 0 \\
0 & 0 & 0 & 0 & 0 \\
0 & 0 & 0 & 0 & 1 \\
0 & 0 & 0 & -1 & 0 
\end{pmatrix},\;
\begin{pmatrix}
0 & 0 & 0 & 0 & 0 \\
0 & 0 & 0 & 0 & 1 \\
0 & 0 & 0 & 1 & u_{154}+u_{132} \\
0 & 0 & -1 & 0 & Q_1(u)+u_{142}-u_{153}+u_{221}  \\
0 & -1 & * & * & 0 
\end{pmatrix}, \\
& \begin{pmatrix}
0 & 0 & 0 & 0 & 1 \\
0 & 0 & 0 & 1 & u_{121}+u_{154}+u_{232} \\
0 & 0 & 0 & u_{132}+u_{232} & Q_2(u)+u_{131} \\
0 & -1 & * & 0 & Q_3(u)+u_{141}-u_{152}+u_{231} \\
-1 & * & * & * & 0 
\end{pmatrix}, \\
& \begin{pmatrix}
0 & 0 & 0 & 1 & u_{154}+u_{243} \\
0 & 0 & 1 & u_{121}+u_{143}+u_{243} & Q_4(u)+u_{153}+u_{242} \\
0 & -1 & 0 & Q_5(u)+u_{131}-u_{142}+u_{242} & Q_6(u)-u_{152} \\
-1 & * & * & 0 & Q_7(u) -u_{151}+u_{241} \\
* & * & * & * & 0 
\end{pmatrix}
\end{align*}
where $Q_1(u),Q_2(u),Q_4(u),Q_5(u)$
do not depend on 
$u_{131},u_{141},u_{142},u_{151},u_{152},u_{153},u_{221}$, 
$u_{241},u_{242}$, 
$Q_3(u),Q_6(u)$ do not depend on 
$u_{141},u_{151},u_{152},u_{241}$ 
and $Q_7(u)$ does not depend on 
$u_{151},u_{241}$.

We can apply Lemma \ref{lem:connected-criterion} 
to the map $\aff^{16}\to \aff^{12}$ defined by the sequence
\begin{align*}
& u_{154}+u_{132},u_{232}+u_{132},u_{121}+u_{154}+u_{232}, 
u_{243}+u_{154},u_{143}+u_{121}+u_{243}, \\ 
& u_{131}+Q_2(u),u_{153}+u_{242}+Q_4(u),
u_{142}-u_{131}-u_{242}-Q_5(u), \\
& u_{221}+u_{142}-u_{153}+Q_1(u), 
u_{152}-Q_6(u),u_{141}-u_{152}+Q_3(u),
u_{241}-u_{151}+Q_7(u)   
\end{align*}
where $u_{132},u_{151},u_{231},u_{242}$  are extra variables.
So by Proposition \ref{prop:W-eliminate}, 
Property \ref{property:W-eliminate} holds for 
any $x\in Z^{\sst}_{49\,k}$. Therefore, 
$Y^{\sst}_{49\,k} = P_{\be_{49}\,k}R(49)$ also. 

\vskip 5pt
We consider the case $S_{50}$. 
$R(50)=e_{351}+e_{252}+e_{342}+e_{153}+e_{243}+e_{144}+e_{234}$. 
The four components of $n(u)R(50)$ are as follows: 
\begin{align*}
& \begin{pmatrix}
0 & 0 & 0 & 0 & 0 \\
0 & 0 & 0 & 0 & 0 \\
0 & 0 & 0 & 0 & 1 \\
0 & 0 & 0 & 0 & u_{143} \\
0 & 0 & -1 & * & 0 
\end{pmatrix},\;
\begin{pmatrix}
0 & 0 & 0 & 0 & 0 \\
0 & 0 & 0 & 0 & 1 \\
0 & 0 & 0 & 1 & u_{132} +u_{154}+u_{221} \\
0 & 0 & -1 & 0 & Q_1(u)+u_{142}-u_{153} \\
0 & -1 & * & * & 0 
\end{pmatrix}, \\
& \begin{pmatrix}
0 & 0 & 0 & 0 & 1 \\
0 & 0 & 0 & 1 & u_{121}+u_{154}+u_{232} \\
0 & 0 & 0 & u_{132}+u_{232} & Q_2(u)+u_{131}+u_{231} \\
0 & -1 & * & 0 & Q_3(u)+u_{141}-u_{152} \\
-1 & * & * & * & 0 
\end{pmatrix}, 
\end{align*}

\begin{align*}
& \begin{pmatrix}
0 & 0 & 0 & 1 & u_{154}+u_{243} \\
0 & 0 & 1 & u_{121}+u_{143}+u_{243} & Q_4(u)+u_{153}+u_{242} \\
0 & -1 & 0 & Q_5(u)+u_{131}-u_{142}+u_{242} & Q_6(u)-u_{152}+u_{241} \\
-1 & * & * & 0 & Q_7(u) -u_{151} \\
* & * & * & * & 0 
\end{pmatrix}
\end{align*}
where $Q_1(u),Q_2(u),Q_4(u),Q_5(u)$ do not depend on 
$u_{131},u_{141},u_{151},u_{153},u_{231},u_{241},u_{242}$, 
$Q_3(u)$ does not depend on $u_{141},u_{151},u_{152},u_{241}$,
$Q_6(u)$ does not depend on $u_{151},u_{152},u_{241}$ 
and $Q_7(u)$ does not depend on $u_{151}$.

We can apply Lemma \ref{lem:connected-criterion} 
to the map $\aff^{16}\to \aff^{13}$ defined by the sequence
\begin{align*}
& u_{143},u_{154}+u_{243},u_{121}+u_{143}+u_{243}, 
u_{232}+u_{121}+u_{154}, u_{132}+u_{232}, 
u_{221}+u_{132}+u_{154}, \\
& u_{153}-u_{142}-Q_1(u),
u_{242}+u_{153}+Q_4(u),u_{131}-u_{142}+u_{242}+Q_5(u), \\
& u_{231}+u_{131}+Q_2(u),u_{141}-u_{152}+Q_3(u),u_{241}-u_{152}+Q_6(u),
u_{151}-Q_7(u)   
\end{align*}
where $u_{142},u_{152},u_{243}$  are extra variables.
So by Proposition \ref{prop:W-eliminate}, 
Property \ref{property:W-eliminate} holds for 
any $x\in Z^{\sst}_{50\,k}$. Therefore, 
$Y^{\sst}_{50\,k} = P_{\be_{50}\,k}R(50)$ also.

\vskip 5pt

(18) $S_{64}$, 
$\be_{64}=\tfrac {1} {220} (-8,-8,-8,-8,32,-55,5,5,45)$

We identify the element 
$(\diag(g_1,t_1),\diag(t_{21},g_2,t_{22}))
\in M_{[4],[1,3]}=M_{\be_{64}}$ 
with the element 
$g=(g_1,g_2,t_1,t_{21},t_{22})\in \gl_4\times \gl_2\times \gl_1^3$. 
On $M^{\text{st}}_{\be_{64}}$, 
\begin{equation*}
\chi_{\be_{64}}(g) = 
(\det g_1)^{-8}t_1^{32}t_{21}^{-55}(\det g_{12})^5t_{22}^{45}
= t_1^{40}(\det g_{12})^{60}t_{22}^{100}. 
\end{equation*}

Let $\bbmp_{4,i},p_{4,ij}$, etc.,  be as before. 
For $x\in Z_{64}$, let 
\begin{align*}
& v_1(x) = 
[x_{152},x_{252},x_{352},x_{452}],\; 
v_2(x) = 
[x_{153},x_{253},x_{353},x_{453}], \\
& \;A(x) = (v_1(x)\; v_2(x))\in \m_{4,2},\; 
B(x) = 
\begin{pmatrix}
0 & x_{124} & x_{134} & x_{144} \\
-x_{124} & 0 & x_{234} & x_{244} \\
-x_{134} & -x_{234} & 0 & x_{344} \\
-x_{144} & -x_{244} & -x_{344} & 0  
\end{pmatrix}.
\end{align*}
We identify $Z_{64}\cong \m_{4,2}\oplus \wedge^2 \aff^4$ 
by the map $Z_{64}\ni x\mapsto (A(x),B(x))$. 
We also identify $B(x)$ with 
$\sum_{1\leq i<j\leq 4} x_{ij4} p_{4,ij}\in\wedge^2 \aff^4$. 

It is easy to see that 
\begin{equation}
\label{eq:equivariant-64}
A(gx) = t_1 g_1 A(x) {}^tg_2,\;
B(gx) = t_{22} (\wedge^2 g_1) B(x).
\end{equation}
Let $P_1(x)$ be the Pfaffian of $B(x)$
and $P_2(x)$ the polynomial such that  
\begin{equation*}
v_1(x)\wedge v_2(x)\wedge B(x) = P_2(x) p_{4,1234}.
\end{equation*}
We choose the sign of $P_1(x)$ so that 
$P(x)=1$ if $B(x)=p_{2,12}+p_{2,34}$. 
By (\ref{eq:equivariant-64}), 
\begin{equation*}
P_1(gx) =  (\det g_1)t_{22}^2P_1(x),\;
P_2(gx) = (\det g_1)t_1^2(\det g_2)t_{22}P_2(x).
\end{equation*}

We put $P(x)=P_1(x)P_2(x)^3$. Then on $M^{\text{st}}_{\be_{64}}$, 
\begin{align*}
P(gx) 
& = ((\det g_1)t_{22}^2)((\det g_1)t_1^2(\det g_2)t_{22})^3P(x) \\
& = (\det g_1)^4t_1^6(\det g_2)^3t_{22}^5P(x)
= t_1^2(\det g_2)^3t_{22}^5P(x). 
\end{align*}
Therefore, $P(x)$ is 
invariant under the action of $G_{\text{st},\be_{64}}$. 

Let $R(64)\in Z_{64}$ be the element such that 
$v_1(R(64))=[1,0,0,0],v_1(R(64))=[0,0,1,0]$
and $B(R(64))=p_{4,13}+p_{4,24}$, 
Explicitly, 
$R(64)=e_{152}+e_{353}+e_{134}+e_{244}$. 
Then $P_1(R(64))=P_2(R(64))=-1$. 
So $P(R(64))=1$ and $R(64)\in Z^{\sst}_{64}$.

Suppose that $x\in Z^{\sst}_{64\,k}$. 
We show that $x\in M_{\be_{64}\,k}R(64)$. 
By Lemma II--4.6 and applying  
the permutation matrix corresponding to the transposition 
$(2\,3)$, there exists $g\in M_{\be_{64}\,k}$ 
such that $B(gx)=B(R(64))$. So we may assume that 
$B(x)=B(R(64))$. By assumption, 
\begin{equation}
\label{eq:64-condition}
\det 
\begin{pmatrix}
x_{152} & x_{153} \\
x_{352} & x_{353}
\end{pmatrix}
+ \det 
\begin{pmatrix}
x_{252} & x_{253} \\
x_{452} & x_{453}
\end{pmatrix} \not=0.
\end{equation}
Let $\tau_0$ be as in (\ref{eq:J-defn}), $u,a,b,c,d\in k$ and 
\begin{align*}
& \tau_1 = \diag(\tau_0,\tau_0),\;
m_1(u) = \diag(n_2(u),{}^tn_2(-u)), \\
& m_2(a,b,c) = 
\begin{pmatrix}
1 & 0 & 0 & 0 \\
0 & 1 & 0 & 0 \\
a & b & 1 & 0 \\
b & c & 0 & 1 
\end{pmatrix},\;
s(a,b,c,d) = 
\begin{pmatrix}
a & 0 & b & 0 \\
0 & 1 & 0 & 0 \\
c & 0 & d & 0 \\
0 & 0 & 0 & 1 
\end{pmatrix}\; (ad-bc=1).
\end{align*}
Regarding that 
$\tau_1,m_1(u),{}^tm_1(u),m_2(a,b,c),{}^tm_2(a,b,c),
s(a,b,c,d)\in\gl_4\sub M_{\be_{64}}$, 
these elements fix $B(R(64))=p_{4,13}+p_{4,24}$.

By applying $\tau_1$ if necessary, 
we may assume that 
\begin{equation}
\label{eq:S64-R64-cond}
\det 
\begin{pmatrix}
x_{152} & x_{153} \\
x_{352} & x_{353}
\end{pmatrix} \not=0.
\end{equation}
By applying an element of the form 
$s(a,b,c,d)$ if necessary, we may assume that 
$x_{152}=1$. By applying an element of the form 
$m_1(u)$, we may assume that 
$x_{252}=0$. By applying an 
element of the form $m_2(a,b,c)$, 
we may assume that $x_{352}=x_{452}=0$, 
i.e., $v_1(x)=[1,0,0,0]$.

Then (\ref{eq:S64-R64-cond}) is satisfied and so
$x_{353}\not=0$. Let $h=\diag(1,x_{353}^{-1})$. 
By replacing $x$ by $(I_4,h,1,1,1)x$, 
we may assume that $x_{353}=1$. 
By applying an element of the form 
${}^tm_1(u)$, we may assume that $x_{453}=0$. 
Note that $v_1(x)$ does not change. 
Then by applying an element of the form 
${}^t m_2(a,b,c)$, $x$ becomes $R(64)$. 
Therefore, $Z^{\sst}_{64\,k}=M_{\be_{64}\,k}R(64)$. 

We assume that $u_{1ij}=0$ unless $i=5$ 
and $u_{232}=0$. 
Then the first three components of 
$n(u)R(64)$ are the same as those of 
$R(64)$ and the last component 
is as follows: 
\begin{align*}
\begin{pmatrix}
0 & 0 & 1 & 0 & u_{153}+u_{242} \\
0 & 0 & 0 & 1 & u_{154} \\
-1 & 0 & 0 & 0 &  -u_{151}+u_{243} \\
0 & -1 & 0 & 0 & -u_{152} \\
* & * & * & * & 0 
\end{pmatrix}.
\end{align*}

We can apply Lemma \ref{lem:connected-criterion} 
to the map $\aff^9\to \aff^4$ defined by the sequence
\begin{align*}
& u_{153}+u_{242},u_{154}, 
u_{151}-u_{243},u_{152}
\end{align*}
where $u_{221},u_{231},u_{241},u_{242},u_{243}$  are extra variables.
So by Proposition \ref{prop:W-eliminate}, 
Property \ref{property:W-eliminate} holds for 
any $x\in Z^{\sst}_{64\,k}$. Therefore, 
$Y^{\sst}_{64\,k} = P_{\be_{64}\,k}R(64)$ also. 

\vskip 5pt

(19) $S_{70}$, $\be_{70}=\tfrac {1} {140} (-6,-6,4,4,4,-35,5,15,15)$ 

We identify the element 
$(\diag(g_{11},g_{12}),\diag(t_{21},t_{22},g_2))
\in M_{[2],[1,2]}=M_{\be_{70}}$ 
with the element 
$g=(g_{12},g_{11},g_2,t_{21},t_{22})
\in \gl_3\times \gl_2^2\times \gl_1^2$. 
On $M^{\text{st}}_{\be_{40}}$, 
\begin{equation*}
\chi_{{70}}(g) = 
(\det g_{11})^{-6}(\det g_{12})^4 
t_{21}^{-35}t_{22}^5(\det g_2)^{15}
= (\det g_{12})^{10} t_{22}^{40}(\det g_2)^{50}.
\end{equation*}

For $x\in Z_{70}$, 
let $A(x) = x_{342}p_{3,12}+x_{352}p_{3,13}+x_{452}p_{3,23}$,  
\begin{equation*}
B_1(x) = 
\begin{pmatrix}
x_{133} & x_{134} \\
x_{233} & x_{234} 
\end{pmatrix},\;
B_2(x) = 
\begin{pmatrix}
x_{143} & x_{144} \\
x_{243} & x_{244} 
\end{pmatrix},\;
B_3(x) = 
\begin{pmatrix}
x_{153} & x_{154} \\
x_{253} & x_{254} 
\end{pmatrix}
\end{equation*}
and 
\begin{math}
B(x)=\bbmp_{3,1}\otimes B_1(x)+ \bbmp_{3,2}\otimes B_2(x)
+ \bbmp_{3,3}\otimes B_3(x).
\end{math}
We identify 
\begin{math}
Z_{70}$ with $\wedge^2 \aff^3 \oplus 
\aff^3\otimes \m_2
\end{math}
by the map $Z_{70}\ni x\mapsto (A(x),B(x))$. 
It is easy to see that 
\begin{equation*}
A(gx) = t_{22}(\wedge^2 g_{12}) A(x),\;
\begin{pmatrix}
B_1(gx) \\
B_2(gx) \\
B_3(gx) 
\end{pmatrix}
= g_{12} \begin{pmatrix}
g_{11}B_1(x){}^tg_2 \\
g_{11}B_2(x){}^tg_2 \\
g_{11}B_3(x){}^tg_2 
\end{pmatrix}. 
\end{equation*}
Since $A(gx),B(gx)$ do not depend on $t_{21}$, if
we ignore $t_{21}\in\gl_1$, then  
$(M_{\be_{70}},Z_{70})$ can be identified with 
the case (a) of Section \ref{sec:rational-orbits-wedge3-32}. 

Let $P_1(x),P_2(x)$ be the relative invariant polynomials 
in (\ref{eq:322,3-invariants}). Then 
\begin{align*}
& P_1(gx) = (\det g_{11})(\det g_{12})^2t_{22}^2(\det g_2)P_1(x), \\
& P_2(gx) = (\det g_{11})^3(\det g_{12})^2(\det g_2)^3P_2(x).
\end{align*}

We put $P(x)=P_1(x)^2P_2(x)$. Then on $M^{\text{st}}_{\be_{70}}$, 
\begin{align*}
P(gx) 
& = ((\det g_{11})(\det g_{12})^2t_{22}^2(\det g_2))^2
((\det g_{11})^3(\det g_{12})^2(\det g_2)^3)P(x) \\
& = (\det g_{11})^5(\det g_{12})^6
t_{22}^4(\det g_2)^5 P(x) 
= (\det g_{12})t_{22}^4(\det g_2)^5 P(x). 
\end{align*}
Therefore, $P(x)$ is 
invariant under the action of $G_{\text{st},\be_{70}}$. 

Let $R(70)\in Z_{70}$ be the element 
such that $(A(R(70)),B(R(70)))=R_{322,3}$
(see (\ref{eq:R(322-3)-defn})). 
Explicitly, 
$R(70)=e_{452}-e_{133}+e_{253}+e_{144}+e_{234}$. 
Then $R(70)\in Z_{70}^{\sst}$, 
which implies that $S_{70}\not=\emptyset$. 
By Proposition \ref{prop:322+3-orbit}, if $\ch(k)\not=2$ 
then $M_{\be_{70}\,k}\backslash Z^{\sst}_{70\,k}$ 
is in bijective correspondence with $\Ex_2(k)$. 

We assume that and $u_{1ij}=0$ unless $i=3,4,5,j=1,2$ 
and $u_{243}=0$. 
Then the first two components of 
$n(u)R(70)$ are the same as those of $R(70)$ 
and the remaining components are as follows: 
\begin{align*}
\begin{pmatrix}
0 & 0 & -1 & 0 & 0 \\
0 & 0 & 0 & 0 & 1 \\
1 & 0 & 0 & u_{141} & u_{151}+u_{132} \\
0 & 0 & * & 0 & u_{142}+u_{232} \\
0 & -1 & * & * & 0 \\
\end{pmatrix}, \;
\begin{pmatrix}
0 & 0 & 0 & 1 & 0 \\
0 & 0 & 1 & 0 & 0 \\
0 & -1 & 0 & u_{131}-u_{142} &  -u_{152} \\
-1 & 0 & * & 0 & -u_{151}+u_{242} \\
0 & 0 & * & * & 0 
\end{pmatrix}.
\end{align*}

We can apply Lemma \ref{lem:connected-criterion} 
to the map $\aff^{11}\to \aff^6$ defined by the sequence
\begin{math}
u_{141}, u_{151}+u_{132}, u_{142}+u_{232}, 
u_{131}-u_{142}, u_{152}, u_{242}-u_{151}
\end{math}
where $u_{132},u_{221},u_{231},u_{232}$, $u_{241}$  are extra variables.
So by Proposition \ref{prop:W-eliminate}, 
Property \ref{property:W-eliminate} holds for 
any $x\in Z^{\sst}_{70\,k}$. Therefore, if $\ch(k)\not=2$ then 
$P_{\be_{70}\,k}\backslash Y^{\sst}_{70\,k}$ 
is in bijective correspondence with $\Ex_2(k)$ also.

\vskip 5pt

(20)  $S_{71}$, $\be_{71}=\tfrac {1} {140} (-16,-16,-16,24,24,-35,-15,25,25)$ 

We identify the element 
$(\diag(g_{11},g_{12}),\diag(t_{21},t_{22},g_2))
\in M_{[3],[1,2]}=M_{\be_{71}}$ 
with the element 
$g=(g_{11},g_2,g_{12},t_{21},t_{22})\in \gl_3\times \gl_2^2\times \gl_1^2$. 
On $M^{\text{st}}_{\be_{71}}$, 
\begin{equation*}
\chi_{{71}}(g) 
= (\det g_{11})^{-16}(\det g_{12})^{24}
t_{21}^{-35}t_{22}^{-15}(\det g_2)^{25}
= (\det g_{12})^{40}t_{22}^{20}(\det g_2)^{60}. 
\end{equation*}

For $x\in Z_{71}$, let 
\begin{equation*}
A_1(x) = \begin{pmatrix}
x_{143} & x_{153} \\
x_{144} & x_{154} 
\end{pmatrix},\; 
A_2(x) = \begin{pmatrix}
x_{243} & x_{253} \\
x_{244} & x_{254} 
\end{pmatrix},\;
A_3(x) = \begin{pmatrix}
x_{343} & x_{353} \\
x_{344} & x_{354} 
\end{pmatrix}
\end{equation*}
and $A(x)=(A_1(x),A_2(x),A_3(x))$. 
We identify $Z_{71}$ with $\aff^3\otimes \m_2\oplus 1$ 
by the map $Z_{71}\ni x\mapsto (A(x),x_{452})$. 

It is easy to see that the action of $(g_{11},g_2,g_{12},1,1)$ 
on $A(x)$ is the tensor product of standard \rep s. 
So we are in the situation of Proposition \ref{prop:322-invariant}. 
Let $P_1(x)$ be the degree $6$ polynomial of $A(x)$ 
obtained by Proposition \ref{prop:322-invariant}. 
Since $A(gx)$ does not depend on $t_{21},t_{22}$, 
\begin{math}
P_1(gx) = (\det g_{11})^2 (\det g_{12})^3 (\det g_2)^3 P_1(x). 
\end{math}

We put $P(x) = P_1(x)x_{452}$. Then on $M^{\text{st}}_{\be_{71}}$, 
\begin{align*}
P(gx) & = ((\det g_{11})^2 (\det g_{12})^3 (\det g_2)^3)
((\det g_{12})t_{22})P(x) \\
& = (\det g_{11})^2 (\det g_{12})^4 t_{22}(\det g_2)^3 P(x)
= (\det g_{12})^2 t_{22} (\det g_2)^3 P(x).
\end{align*}
Therefore, $P(x)$ is 
invariant under the action of $G_{\text{st},\be_{71}}$. 

Let $R(71)\in Z_{71}$ be the element 
such that $A(R(71))=R_{322}$ ($R_{322}$ is the element $w$ in (\ref{eq:322-generator2-alternative}))
and the $x_{452}$-coordinate is $1$.
Explicitly, $R(71)=e_{452}-e_{143}+e_{253}+e_{154}+e_{344}$.
Then $P(R(71))=1$ and so $R(71)\in Z^{\sst}_{71}$. 

Suppose that $x\in Z^{\sst}_{71\,k}$.  
By Proposition \ref{prop:322-invariant}, 
there exists $g\in M_{\be_{71}\,k}$
such that $A(x)=A(R(71))$. 
Let $t=(I_3,I_2,I_2,1,x_{452}^{-1})$. 
Then $A(tx)=A(R(71))$ and the 
$x_{452}$-coordinate of $tx$ is $1$. 
Therefore, $Z^{\sst}_{71\,k}=M_{\be_{71}\,k}R(71)$. 

We assume that $u_{1ij}=0$ unless $i=4,5,j=1,2,3$ 
and $u_{243}=0$. 
Then the first two components of 
$n(u)R(71)$ are the same as those of $R(71)$ 
and the remaining components are as follows: 
\begin{align*}
\begin{pmatrix}
0 & 0 & 0 & -1 & 0 \\
0 & 0 & 0 & 0 & 1 \\
0 & 0 & 0 & 0 & 0 \\
1 & 0 & 0 & 0 & u_{142}+u_{151}+u_{232} \\
0 & -1 & 0 & * & 0 \\
\end{pmatrix}, \;
\begin{pmatrix}
0 & 0 & 0 & 0 & 1 \\
0 & 0 & 0 & 0 & 0 \\
0 & 0 & 0 & 1 & 0 \\
0 & 0 & -1 & 0 & u_{141}-u_{153}+u_{242} \\
-1 & 0 &  0 & * & 0 
\end{pmatrix}.
\end{align*}

We can apply Lemma \ref{lem:connected-criterion} 
to the map $\aff^{11}\to \aff^2$ defined by the sequence
\begin{math}
u_{142}+u_{151}+u_{232},u_{141}-u_{153}+u_{242} 
\end{math}
where $u_{131},u_{132},u_{151},u_{152},u_{221}\ccd u_{242}$  
are extra variables.
So by Proposition \ref{prop:W-eliminate}, 
Property \ref{property:W-eliminate} holds for 
any $x\in Z^{\sst}_{71\,k}$. Therefore, 
$Y^{\sst}_{71\,k} = P_{\be_{71}\,k}R(71)$ also. 

\vskip 5pt

(21) $S_{75}$, $\beta_{75}=\tfrac {1} {60} (-4,-4,1,1,6,-15,0,5,10)$ 

We identify the element 
$(\diag(g_{11},g_{12},t_1),\diag(t_{21}\ccd t_{24}))
\in M_{[2,4],[1,2,3]}=M_{\be_{75}}$ 
with the element 
$g=(g_{12},g_{11},t_1,t_{21}\ccd t_{24})\in \gl_2^2\times \gl_1^5$. 
On $M^{\text{st}}_{\be_{75}}$, 
\begin{equation*}
\chi_{{75}}(g) = 
(\det g_{11})^{-4}(\det g_{12})t_1^6
t_{21}^{-15}t_{23}^5t_{24}^{10}
= (\det g_{12})^5t_1^{10}t_{22}^{15}t_{23}^{20}t_{24}^{25}.
\end{equation*}

For $x\in Z_{75}$, let 
\begin{align*}
& A(x) = 
\begin{pmatrix}
x_{134} & x_{234} \\
x_{144} & x_{244}
\end{pmatrix},\;
v_1(x) = 
\begin{pmatrix}
x_{352} \\ x_{452}
\end{pmatrix},\;
v_2(x) = 
\begin{pmatrix}
x_{153} \\ x_{253}
\end{pmatrix}.
\end{align*}
We regard that $A(x)\in \Lam^{2,1}_{1,[3,4]}\otimes \Lam^{2,1}_{1,[1,2]}$, 
$v_1(x)\in \Lam^{2,1}_{1,[3,4]}$, $v_2(x)\in \Lam^{2,1}_{1,[1,2]}$. 
We identify $Z_{75}$ with $\m_2\oplus \aff^2\oplus \aff^2\oplus 1$ 
by the map $Z_{75}\ni x\mapsto (A(x),v_1(x),v_2(x),x_{343})$. 
It is easy to see that 
\begin{equation*}
A(gx) = t_{24} g_{12}A(x){}^tg_{11},\;
v_1(gx) = t_1t_{22}g_{12} v_1(x),\; 
v_2(gx) = t_1t_{23}g_{11} v_2(x).
\end{equation*}
The \rep{} of $M_{\be_{75}}$ on $(A(x),v_1(x),v_2(x))$ 
can be identified with the \rep{} considered in 
Proposition \ref{prop:M2-2-2-invariant} except for the 
extra $\gl_1$-factors. 

Let $P_1(x)$ be the degree $3$ polynomial
obtained by Proposition \ref{prop:M2-2-2-invariant}.
We put $P_2(x)=\det A(x)$. 
Then 
\begin{equation*}
P_1(gx) = (\det g_{11})(\det g_{12})t_1^2t_{22}t_{23}t_{24}P_1(x),\;
P_2(gx) = (\det g_{11})(\det g_{12})t_{24}^2P_2(x). 
\end{equation*}

We put $P(x) = P_1(x)^3P_2(x)x_{343}$. 
Then on $M^{\text{st}}_{\be_{75}}$. 
\begin{align*}
P(gx) 
& = ((\det g_{11})(\det g_{12})t_1^2t_{22}t_{23}t_{24})^3
((\det g_{11})(\det g_{12})t_{24}^2)((\det g_{12})t_{23})P(x) \\
& = (\det g_{11})^4(\det g_{12})^5t_1^6 t_{22}^3t_{23}^4t_{24}^5P(x) \
= (\det g_{12})t_1^2t_{22}^3t_{23}^4t_{24}^5 P(x). 
\end{align*}
Therefore, $P(x)$ is 
invariant under the action of $G_{\text{st},\be_{75}}$. 

Let $R(75)\in Z_{75}$ be the element such that
$v_1(R(75))=v_2(R(75))=[1,0],A(R(75))$ $=I_2$ 
and the $x_{343}$-coordinate is $1$. 
Explicitly, $R(75)=e_{352}+e_{153}+e_{343}+e_{134}+e_{244}$. 
Then $P_1(R(75))=P_2(R(75))=1$.  
So $P(R(75))=1$ and $R(75)\in Z^{\sst}_{75}$. 

We show that  $Z^{\sst}_{75\,k}=M_{\be_{75}\,k}R(75)$. 
Suppose that $x\in Z^{\sst}_{75\,k}$. 
Then $x_{343}\not=0$. 
By Proposition \ref{prop:M2-2-2-invariant}, 
there exists $g\in M_{\be_{75}\,k}$ such that 
$A(gx)=I_2,v_1(gx)=v_2(gx)=[1,0]$. 
So we may assume that $A(x)=I_2,v_1(x)=v_2(x)=[1,0]$. 
If $t=(I_2,I_2,1,1,1,x_{343}^{-1},1)$
then $tx=R(75)$. Therefore, 
$Z^{\sst}_{75\,k}=M_{\be_{75}\,k}R(75)$. 

We assume that $u_{121}=u_{143}=0$. 
Then the first component of 
$n(u)R(75)$ is $0$ and the remaining components are 
are as follows: 
\begin{align*}
& \begin{pmatrix}
0 & 0 & 0 & 0 & 0 \\
0 & 0 & 0 & 0 & 0 \\
0 & 0 & 0 & 0 & 1 \\
0 & 0 & 0 & 0 & 0 \\
0 & 0 & -1 & 0 & 0 
\end{pmatrix},\;
\begin{pmatrix}
0 & 0 & 0 & 0 & 1 \\
0 & 0 & 0 & 0 & 0 \\
0 & 0 & 0 & 1 & u_{131}+u_{154}+u_{232} \\
0 & 0 & -1 & 0 & u_{141}-u_{153} \\
-1 & 0 & 0 & * & 0 \\
\end{pmatrix}, 
\end{align*}

\begin{align*}
& \begin{pmatrix}
0 & 0 & 1 & 0 & u_{153}+u_{243} \\
0 & 0 & 0 & 1 & u_{154} \\
-1 & 0 & 0 & u_{132}-u_{141}+u_{243} & Q_1(u)-u_{151}+u_{242} \\
0 & -1 & * & 0 & Q_2(u)-u_{152} \\
* & * &  * & * & 0 
\end{pmatrix}
\end{align*}
where $Q_1(u),Q_2(u)$ do not depend on $u_{151},u_{152},u_{242}$. 

We can apply Lemma \ref{lem:connected-criterion} 
to the map $\aff^{14}\to \aff^7$ defined by the sequence
\begin{align*}
& u_{154}, u_{153}+u_{243}, u_{141}-u_{153}, 
u_{131}+u_{154}+u_{232}, \\
& u_{132}-u_{141}+u_{243},
u_{151}-u_{242}-Q_1(u),u_{152}-Q_2(u)
\end{align*}
where $u_{142},u_{221}\ccd u_{243}$  are extra variables.
So by Proposition \ref{prop:W-eliminate}, 
Property \ref{property:W-eliminate} holds for 
any $x\in Z^{\sst}_{75\,k}$. Therefore, 
$Y^{\sst}_{75\,k}=P_{\be_{75}\,k}R(75)$ also.

\vskip 5pt

(22)  $S_{95}$, $\be_{95}=\tfrac {1} {620} (-28,-8,-8,12,32,-35,-15,5,45)$ 

We identify the element 
$(\diag(t_{11},g_1,t_{12},t_{13}),\diag(t_{21}\ccd t_{24}))
\in M_{[1,3,4],[1,2,3]}=M_{\be_{95}}$ 
with the element 
$g=(g_1,t_{11}\ccd t_{24})\in \gl_2\times \gl_1^7$. 
On $M^{\text{st}}_{\be_{95}}$, 
\begin{equation*}
\chi_{{95}}(g) = 
t_{11}^{-28}(\det g_1)^{-8}t_{12}^{12}t_{13}^{32}
t_{21}^{-35}t_{22}^{-15}t_{23}^5t_{24}^{45}
= (\det g_1)^{20}t_{12}^{40}t_{13}^{60}
t_{22}^{20}t_{23}^{40}t_{24}^{80}. 
\end{equation*}

For $x\in Z_{95}$, let 
\begin{equation*}
v_1(x) = [x_{252},x_{352}],\;
v_2(x) = [x_{243},x_{343}],\;
v_3(x) = [x_{124},x_{134}].
\end{equation*}
We identify $Z_{95}\cong (\aff^2)^{3\oplus}\oplus 1^{2\oplus}$ 
by the map $Z_{95}\ni x\mapsto (v_1(x),v_2(x),v_3(x),x_{451},x_{153})$. 

It is easy to see that 
\begin{equation*}
v_1(gx) = t_{13}t_{22}g_1v_1(x),\;
v_2(gx) = t_{12}t_{23}g_1v_2(x),\;
v_3(gx) = t_{11}t_{24}g_1v_3(x).
\end{equation*}
Let 
\begin{equation*}
A(x) = (v_1(x)\; v_2(x)),\;
B(x) = (v_1(x)\; v_3(x)),\;
C(x) = (v_2(x)\; v_3(x)).
\end{equation*}
We put $P_1(x)=\det A(x),\; P_2(x)=\det B(x),\; P_3(x)=\det C(x)$.
Then 
\begin{align*}
P_1(gx) & = t_{12}t_{13}t_{22}t_{23}(\det g_1)P_1(x), \\
P_2(gx) & = t_{11}t_{13}t_{22}t_{24}(\det g_1)P_2(x), \\
P_3(gx) & = t_{11}t_{12}t_{23}t_{24}(\det g_1)P_3(x).
\end{align*}

We put $P(x) = P_1(x)^2P_2(x)^5P_3(x)^5x_{451}^6x_{153}$. 
Then on $M^{\text{st}}_{\be_{95}}$, 
\begin{align*}
P(gx) 
& = (t_{12}t_{13}t_{22}t_{23}(\det g_1))^2
(t_{11}t_{13}t_{22}t_{24}(\det g_1))^5
(t_{11}t_{12}t_{23}t_{24}(\det g_1))^5 \\
& \quad \times (t_{12}t_{13}t_{21})^6(t_{11}t_{13}t_{23})P(x) \\
& = t_{11}^{11}(\det g_1)^{12}t_{12}^{13}t_{13}^{14}
t_{21}^6t_{22}^7t_{23}^8t_{24}^{10} P(x) 
= (\det g_1)t_{12}^2t_{13}^3 t_{22}t_{23}^2t_{24}^4P(x).  
\end{align*}
Therefore, $P(x)$ is 
invariant under the action of $G_{\text{st},\be_{95}}$. 

Let $R(95)$ be the element such that 
$v_1(R(95))=[1,0],v_2(R(95))=[0,1],v_3(R(95))$ 
$=[1,1]$
and that the $x_{451},x_{153},$-coordinates are $1$. 
Then $P_1(R(95))=P_2(R(95))=1$, $P_3(R(95))=-1$.
So $P(R(95))=-1$ and $R(95)\in Z^{\sst}_{95\,k}$. 
Explicitly, $R(95)=e_{451}+e_{252}+e_{153}+e_{343}+e_{124}+e_{134}$.

We show that  $Z^{\sst}_{95\,k}=M_{\be_{95}\,k}R(95)$. 
Suppose that $x\in Z^{\sst}_{95\,k}$. 
Since $P_1(x)\not=0$, 
there exists $g\in M_{\be_{95}\,k}$ 
such that $v_1(gx)=[1,0],v_2(gx)=[0,1]$. 
So we may assume that $v_1(x)=[1,0],v_2(x)=[0,1]$. 
Then elements of the form 
\begin{equation*}
t=(\diag(q_1,q_2),1,q_2^{-1},q_1^{-1},t_{21},1,1,t_{24})
\end{equation*}
do not change this condition. 
Since $P_2(x),P_3(x)\not=0$, $x_{124},x_{134}\not=0$. 
Since $v_3(tx)=t_{24}[q_1x_{124},q_2x_{134}]$ 
and the $x_{451},x_{153}$-coordinates of $tx$ 
are $q_1^{-1}q_2^{-1}t_{21}x_{451},q_1^{-1}x_{153}$, 
there exists such $t$ such that $tx=R(95)$. 
Therefore, $Z^{\sst}_{95\,k}=M_{\be_{95}\,k}R(95)$. 

We assume that $u_{132}=0$. 
Then the first component of $n(u)R(95)$ 
is the same as that of $R(95)$ and the remaining components 
are as follows: 
\begin{align*}
& \begin{pmatrix}
0 & 0 & 0 & 0 & 0 \\
0 & 0 & 0 & 0 & 1 \\
0 & 0 & 0 & 0 & 0 \\
0 & 0 & 0 & 0 & u_{142}+u_{221} \\
0 & -1 & 0 & * & 0 \\
\end{pmatrix}, \;
\begin{pmatrix}
0 & 0 & 0 & 0 & 1 \\
0 & 0 & 0 & 0 & u_{121}+u_{232} \\
0 & 0 & 0 & 1 & u_{131}+u_{154} \\
0 & 0 & -1 & 0 & Q_1(u)+u_{141}-u_{153}+u_{231} \\
-1 & * &  * & * & 0 
\end{pmatrix}, \\
& \begin{pmatrix}
0 & 1 & 1 & u_{142}+u_{143} & u_{152}+u_{153}+u_{243} \\
-1 & 0 & u_{121}-u_{131} & Q_2(u)-u_{141} & Q_3(u)-u_{151}+u_{242} \\
-1 & * & 0 & Q_4(u)-u_{141}+u_{243} & Q_5(u)-u_{151} \\
* & * & * & 0 & Q_6(u)+u_{241} \\
* & * & * & * & 0 
\end{pmatrix}
\end{align*}
where 
$Q_1(u)=Q_1(u_{142},u_{143},u_{154},u_{232})$, 
$Q_2(u)=Q_2(u_{121},u_{142},u_{143})$, 
$Q_4(u)=Q_4(u_{131}$, $u_{142},u_{143})$, 
$Q_3(u),Q_5(u)$ do not depend on $u_{151},u_{241},u_{242}$
and $Q_6(u)$ does not depend on $u_{241}$. 

We can apply Lemma \ref{lem:connected-criterion} 
to the map $\aff^{15}\to \aff^{12}$ defined by the sequence
\begin{align*}
& u_{221}+u_{142}, u_{232}+u_{121}, u_{131}-u_{121}, 
u_{154}+u_{131}, u_{143}+u_{142},u_{141}-Q_2(u), \\
&  u_{231}+u_{141}-u_{153}+Q_1(u), u_{243}-u_{141}+Q_4(u),
u_{152}+u_{153}+u_{243}, \\ 
& u_{151}-Q_5(u),u_{242}-u_{151}+Q_3(u),u_{241}+Q_6(u)
\end{align*}
where $u_{121},u_{142},u_{153}$  are extra variables.
So by Proposition \ref{prop:W-eliminate}, 
Property \ref{property:W-eliminate} holds for 
any $x\in Z^{\sst}_{95\,k}$. Therefore, 
$Y^{\sst}_{95\,k}=P_{\be_{95}\,k}R(95)$ also.

\vskip 5pt

(23) $S_{101}$, 
$\be_{101}=\tfrac {1} {420} (-48,-28,12,32,32,-25,-25,15,35)$ 

We identify the element 
$(\diag(t_{11},t_{12},t_{13},g_1),\diag(g_2,t_{21},t_{22}))
\in M_{[1,2,3],[2,3]}=M_{\be_{101}}$ 
with the element 
$g=(g_1,g_2,t_{11}\ccd t_{22})\in \gl_2^2\times \gl_1^5$. 
On $M^{\text{st}}_{\be_{101}}$, 
\begin{equation*}
\chi_{{101}}(g) = 
t_{11}^{-48}t_{12}^{-28}t_{13}^{12}(\det g_1)^{32}
(\det g_1)^{-25}t_{21}^{15}t_{22}^{35}
= t_{12}^{20}t_{13}^{60}(\det g_1)^{80}
t_{21}^{40}t_{22}^{60}.
\end{equation*}

For $x\in Z_{101}$, let 
\begin{align*}
A(x) = \begin{pmatrix}
x_{341} & x_{342} \\
x_{351} & x_{352}
\end{pmatrix},\; 
B(x)= \begin{pmatrix}
x_{243} & x_{144} \\ 
x_{253} & x_{154}
\end{pmatrix}.
\end{align*}
We identify $Z_{101}\cong \m_2\oplus \m_2\oplus 1$
by the map $Z_{101}\ni x\mapsto (A(x),B(x),x_{234})$. 
Note that the second $\m_2$ is not an irreducible \rep{} 
of $\m_{\be_{101}}$. We sometimes consider $\m_2$ 
which is a reducible \rep{} of $\spl_2$, 
but shall not point this out in the following.

Let $P_1(x) = \det A(x),\; P_2(x) = \det B(x)$. 
It is easy to see that 
\begin{align*}
& A(gx) = t_{13} g_1 A(x) {}^t g_2,\;
B(gx) = g_1 B(x) 
\begin{pmatrix}
t_{12}t_{21} & 0 \\
0 & t_{11}t_{22}
\end{pmatrix}, \\
& P_1(gx) = t_{13}^2 (\det g_1)(\det g_2) P_1(x),\;
P_2(gx) = t_{11}t_{12}(\det g_1)t_{21}t_{22} P_2(x). 
\end{align*}

We put $P(x) = P_1(x)^4P_2(x)^6 x_{234}$. 
Then on $M^{\text{st}}_{\be_{101}}$, 
\begin{align*}
P(gx) & = (t_{13}^2 (\det g_1)(\det g_2))^4
(t_{11}t_{12}(\det g_1)t_{21}t_{22})^6
(t_{12}t_{13}t_{22}) P(x) \\
& = t_{11}^6t_{12}^7t_{13}^9(\det g_1)^{10}
(\det g_2)^4 t_{21}^6t_{22}^7 P(x) 
= t_{12}t_{13}^3(\det g_1)^4
t_{21}^2t_{22}^3 P(x).  
\end{align*}
Therefore, $P(x)$ is 
invariant under the action of $G_{\text{st},\be_{101}}$. 

Let $R(101)$ be the element such that 
$A(R(101))=B(R(101))=I_2$
and that the $x_{234}$-coordinate is $1$. 
Explicitly, $R(101)=e_{341}+e_{352}+e_{243}+e_{154}+e_{234}$. 
Then $P_1(R(101))=P_2(R(101))=1$. So $P(R(101))=1$ and 
$R(101)\in Z^{\sst}_{101}$. 

We show that  $Z^{\sst}_{101\,k}=M_{\be_{101}\,k}R(101)$. 
Suppose that $x\in Z^{\sst}_{101\,k}$. 
Since $P_1(x),P_2(x)\not=0$, 
there exists $g\in M_{\be_{101}\,k}$ 
such that $A(gx)=B(gx)=I_2$. 
So we may assume that $A(x)=B(x)=I_2$. 
Then elements of the form 
$t=(I_2,I_2,1,t_{12},1,t_{12}^{-1},1)$
do not change this condition. 
Since the $x_{234}$-coordinate of $tx$ 
is $t_{12}x_{234}$, there exists $t$ such that 
$tx=R(101)$. 
Therefore, $Z^{\sst}_{101\,k}=M_{\be_{101}\,k}R(101)$. 

We assume that $u_{154}=0$ and $u_{221}=0$. 
Then the four components of 
$n(u)R(101)$ are as follows: 
\begin{align*}
& \begin{pmatrix}
0 & 0 & 0 & 0 & 0 \\
0 & 0 & 0 & 0 & 0 \\
0 & 0 & 0 & 1 & 0 \\
0 & 0 & -1 & 0 & -u_{153} \\
0 & 0 & 0 & * & 0 
\end{pmatrix},\;
\begin{pmatrix}
0 & 0 & 0 & 0 & 0 \\
0 & 0 & 0 & 0 & 0 \\
0 & 0 & 0 & 0 & 1 \\
0 & 0 & 0 & 0 & u_{143} \\
0 & 0 & -1 & * & 0 \\
\end{pmatrix}, \\
& \begin{pmatrix}
0 & 0 & 0 & 0 & 0 \\
0 & 0 & 0 & 1 & 0 \\
0 & 0 & 0 & u_{132}+u_{231} & u_{232} \\
0 & -1 & * & 0 & Q_1(u)-u_{152} \\
0 & 0 &  * & * & 0 
\end{pmatrix}, \\
& \begin{pmatrix}
0 & 0 & 0 & 0 & 1 \\
0 & 0 & 1 & u_{143}+u_{243} & u_{121}+u_{153} \\
0 & -1 & 0 & Q_2(u)-u_{142}+u_{241} & Q_3(u)+u_{131}-u_{152}+u_{242} \\
0 & * & * & 0 & Q_4(u)+u_{141} \\
-1 & * & * & * & 0 
\end{pmatrix} 
\end{align*}
where $Q_1(u),Q_2(u),Q_3(u)$ do not depend on 
$u_{131},u_{141},u_{142},u_{152}$ 
and $Q_4(u)$ does not depend on $u_{141}$. 

We can apply Lemma \ref{lem:connected-criterion} 
to the map $\aff^{14}\to \aff^{10}$ defined by the sequence
\begin{align*}
& u_{143},u_{153},u_{232},u_{132}+u_{231},u_{243}+u_{143},
u_{121}+u_{153}, u_{152}-Q_1(u), \\
& u_{142}-u_{241}-Q_2(u),
u_{131}-u_{152}+u_{242}+Q_3(u),u_{141}+Q_4(u) 
\end{align*}
where $u_{231},u_{241},u_{242},u_{243}$  are extra variables.
So by Proposition \ref{prop:W-eliminate}, 
Property \ref{property:W-eliminate} holds for 
any $x\in Z^{\sst}_{101\,k}$. Therefore, 
$Y^{\sst}_{101\,k}=P_{\be_{101}\,k}R(101)$ also. 

\vskip 5pt

(24) $S_{105}$, $\be_{105}=\tfrac {1} {140} (-56,4,4,24,24,-15,5,5,5)$

We identify the element 
$(\diag(t_1,g_{11},g_{12}),\diag(t_2,g_2))
\in M_{[1,3],[1]}=M_{\be_{105}}$ 
with the element 
$g=(g_2,g_{11},g_{12},t_1,t_2)\in \gl_3\times \gl_2^2\times \gl_1^2$. 
On $M^{\text{st}}_{\be_{105}}$, 
\begin{equation*}
\chi_{{105}}(g) = 
t_1^{-56}(\det g_{11})^4(\det g_{12})^{24}
t_2^{-15}(\det g_2)^5
= (\det g_{11})^{60}(\det g_{12})^{80}(\det g_2)^{20}.
\end{equation*}

For $x\in Z_{105}$, let 
\begin{equation*}
A_1(x) = 
\begin{pmatrix}
x_{242} & x_{252} \\
x_{342} & x_{352} 
\end{pmatrix},\;
A_2(x) = 
\begin{pmatrix}
x_{243} & x_{253} \\
x_{343} & x_{353} 
\end{pmatrix},\;
A_3(x) = 
\begin{pmatrix}
x_{244} & x_{254} \\
x_{344} & x_{354} 
\end{pmatrix}
\end{equation*}
and $A(x)=(A_1(x),A_2(x),A_3(x))$. 
We identify $Z_{105}\cong \aff^3\otimes \m_2\oplus 1$ 
by the map $Z_{105}\ni x \mapsto (A(x),x_{451})$. 
The action of $(g_2,g_{11},g_{12},1,1)$ on $A(x)$ 
is the same as that of Proposition \ref{prop:322-invariant}. 
Also if $t=(I_3,I_2,I_2,t_1,t_2)$ then 
$A(tx)=A(x)$. 

Let $P_1(x)$ be the degree $6$ polynomial of $A(x)$ 
obtained by Proposition \ref{prop:322-invariant}. 
Then  
\begin{equation*}
P_1(gx) = (\det g_{11})^3(\det g_{12})^3(\det g_2)^2 P_1(x). 
\end{equation*}

We put $P(x)=P_1(x)x_{451}$. Then on $M^{\text{st}}_{\be_{105}}$, 
\begin{align*}
P(gx) & = ((\det g_{11})^3(\det g_{12})^3(\det g_2)^2)
((\det g_{12})t_2)P(x) \\
& = (\det g_{11})^3(\det g_{12})^4t_2(\det g_2)^2 P(x)
= (\det g_{11})^3(\det g_{12})^4(\det g_2) P(x). 
\end{align*}
Therefore, $P(x)$ is 
invariant under the action of $G_{\text{st},\be_{105}}$. 

Let $R(105)$ be the element such that 
$A(R(105))=R_{322}$ and the $x_{451}$-coordinate is $1$. 
Explicitly, 
$R(105)=e_{451}-e_{242}+e_{352}+e_{253}+e_{344}$. 
Then $P(R(105))=1$ and so $R(105)\in Z^{\sst}_{105}$.

We show that  $Z^{\sst}_{105\,k}=M_{\be_{105}\,k}R(105)$.  
Suppose that $x\in Z^{\sst}_{105\,k}$.
Then $x_{451}\not=0$. 
By Proposition \ref{prop:322-invariant}, 
there exists $g\in M_{\be_{105}\,k}$ such that 
$A(gx)=A(R(105))$. So we may assume that 
$A(x)=A(R(105))$. If 
$t=(I_3,I_2,I_2,1,x_{451}^{-1})$. Then 
$gx=R(105)$. 
Therefore, $Z^{\sst}_{105\,k}=M_{\be_{105}\,k}R(105)$.

We assume that $u_{132}=u_{154}=0$ 
and $u_{2ij}=0$ unless $j=1$. 
Then the four components of 
$n(u)R(105)$ are as follows: 
\begin{align*}
& \begin{pmatrix}
0 & 0 & 0 & 0 & 0 \\
0 & 0 & 0 & 0 & 0 \\
0 & 0 & 0 & 0 & 0 \\
0 & 0 & 0 & 0 & 1 \\
0 & 0 & 0 & -1 & 0 
\end{pmatrix},\;
\begin{pmatrix}
0 & 0 & 0 & 0 & 0 \\
0 & 0 & 0 & -1 & 0 \\
0 & 0 & 0 & 0 & 1 \\
0 & 1 & 0 & 0 & u_{143}+u_{152}+u_{221} \\
0 & 0 & -1 & * & 0 \\
\end{pmatrix}, \\
& \begin{pmatrix}
0 & 0 & 0 & 0 & 0 \\
0 & 0 & 0 & 0 & 1 \\
0 & 0 & 0 & 0 & 0 \\
0 & 0 & 0 & 0 & u_{142}+u_{231} \\
0 & -1 &  0 & * & 0 
\end{pmatrix}, \;
\begin{pmatrix}
0 & 0 & 0 & 0 & 0 \\
0 & 0 & 0 & 0 & 0 \\
0 & 0 & 0 & 1 & 0 \\
0 & 0 & -1 & 0 & -u_{153}+u_{241} \\
0 & 0 & 0 & * & 0 
\end{pmatrix}. 
\end{align*}

We can apply Lemma \ref{lem:connected-criterion} 
to the map $\aff^{11}\to \aff^3$ defined by the sequence
\begin{align*}
& u_{143}+u_{152}+u_{221},u_{142}+u_{231},u_{153}-u_{241}
\end{align*}
where $u_{121},u_{131},u_{141},u_{151},u_{152},u_{221},u_{231},u_{241}$  
are extra variables.
So by Proposition \ref{prop:W-eliminate}, 
Property \ref{property:W-eliminate} holds for 
any $x\in Z^{\sst}_{105\,k}$. Therefore, 
$Y^{\sst}_{105\,k}=P_{\be_{105}\,k}R(105)$ also. 

\vskip 5pt

(25)  $S_{106}$, $\be_{106}=\tfrac {1} {260} (-24,-4,-4,16,16,-65,15,15,35)$

We identify the element 
$(\diag(t_1,g_{11},g_{12}),\diag(t_{21},g_2,t_{22}))
\in M_{[1,3],[1,3]}=M_{\be_{106}}$ 
with the element 
$g=(g_{12},g_{11},g_2,t_1,t_{21},t_{22})\in \gl_2^3\times \gl_1^3$. 
On $M^{\text{st}}_{\be_{106}}$, 
\begin{align*}
\chi_{{106}}(g) 
& = t_1^{-24}(\det g_{11})^{-4}(\det g_{12})^{16}
t_{21}^{-65}(\det g_2)^{15}t_{22}^{35} \\
& = (\det g_{11})^{20}(\det g_{12})^{40}
(\det g_2)^{80}t_{22}^{100}. 
\end{align*}

For $x\in Z_{106}$, let 
\begin{equation*}
A(x) = (x_{242},x_{252},x_{342},x_{352},x_{243}\ccd x_{353}),\;
v(x) = [x_{144},x_{154}].  
\end{equation*}
We regard $A(x)\in \Lam^{2,1}_{1,[4,5]}\otimes  
\Lam^{2,1}_{1,[2,3]}\otimes \Lam^{2,1}_{2,[2,3]}$, 
$v(x) \in \Lam^{2,1}_{1,[4,5]}$. 
We identify $Z_{106}$ with 
$\aff^2\otimes \aff^2\otimes \aff^2\oplus \aff^2\oplus 1$
by the map $Z_{106}\ni x\mapsto (A(x),v(x),x_{234})$. 

Let $P_1(x),P_2(x)$ be the polynomials of $(A(x),v(x))$
defined in the case (a) of Section \ref{sec:rational-orbits-222-22}. 
By (\ref{eq:P1P2P3}), 
\begin{align*}
& A(gx) = (g_{12},g_{11},g_2)A(x),\;
v(gx) = t_1t_{22}g_{12}v(x), \\
& P_1(gx) = (\det g_{11})^2 (\det g_{12})^2 (\det g_2)^2 P_2(x), \\
& P_2(gx) = t_1^2 (\det g_{11}) (\det g_{12})^2 (\det g_2) t_{22}^2P_1(x).
\end{align*}

We put $P(x)=P_1(x)P_2(x)^2x_{234}$. 
Then on $M^{\text{st}}_{\be_{106}}$,  
\begin{align*}
P(gx) 
& = ((\det g_{11})^2 (\det g_{12})^2 (\det g_2)^2) 
(t_1^2 (\det g_{11}) (\det g_{12})^2 (\det g_2)t_{22}^2)^2 \\
& \quad \times ((\det g_{11})t_{22})P(x) \\
& = t_1^4(\det g_{11})^5 (\det g_{12})^6 (\det g_2)^4 t_{22}^5 P(x) 
=  (\det g_{11}) (\det g_{12})^2 (\det g_2)^4 t_{22}^5 P(x). 
\end{align*}
Note that $P_1(x)$ depends only on $A(x)$ and 
$P_2(x)$ is homogeneous of degree $2$ with respect to 
each of $A(x),v(x)$. Therefore, $P(x)$ is 
invariant under the action of $G_{\text{st},\be_{106}}$. 

If $x\in Z^{\sst}_{106\,k}$ then 
applying $t=(I_2,I_2,I_2,1,1,x_{234}^{-1})$, 
we may assume that $x_{234}=1$. 
Then elements of the form 
$g=(g_{12},g_{11},g_2,(\det g_{11}),1,(\det g_{11})^{-1})$ 
do not change this condition 
and $A(gx)=(g_{12},g_{11},g_2)A(x),v(gx)=g_{12}v(x)$. 
So by Proposition \ref{prop:222+2-orbit}, 
$M_{\be_{106}\,k}\backslash Z^{\sst}_{106\,k}$ 
is in bijective correspondence with $\Ex_2(k)$. 

Let $R(106)\in Z_{106}$ be the element such that 
\begin{equation*}
A(R(106))=(1,0\ccd 0,1),\; v(R(106))=[1,1]
\end{equation*}
and that the $x_{234}$-coordinate is $1$. 
Explicitly, 
$R(106)=e_{242}+e_{353}+e_{144}+e_{154}+e_{234}$. 
Then $R(106)\in Z^{\sst}_{106}$ 
(see (\ref{eq:w-defn-sec10})).

We assume that $u_{132}=u_{154}=0$ and $u_{232}=0$. 
Then the first component of 
$n(u)R(106)$ is $0$ and the remaining components are 
are as follows: 
\begin{align*}
& \begin{pmatrix}
0 & 0 & 0 & 0 & 0 \\
0 & 0 & 0 & 1 & 0 \\
0 & 0 & 0 & 0 & 0 \\
0 & -1 & 0 & 0 & -u_{152} \\
0 & 0 & 0 & * & 0 
\end{pmatrix}, \; 
\begin{pmatrix}
0 & 0 & 0 & 0 & 0 \\
0 & 0 & 0 & 0 & 0 \\
0 & 0 & 0 & 0 & 1 \\
0 & 0 & 0 & 0 & u_{143} \\
0 & 0 & -1 & * & 0 \\
\end{pmatrix}, \\
& \begin{pmatrix}
0 & 0 & 0 & 1 & 1 \\
0 & 0 & 1 & u_{121}+u_{143}+u_{242} & u_{121}+u_{153} \\
0 & -1 & 0 & u_{131}-u_{142} & u_{131}-u_{152}+u_{243} \\
-1 & * & * & 0 & Q(u)+u_{141}-u_{151} \\
-1 & * &  * & * & 0 
\end{pmatrix}
\end{align*}
where $Q(u)$ does not depend on $u_{141},u_{151}$. 

We can apply Lemma \ref{lem:connected-criterion} 
to the map $\aff^{13}\to \aff^7$ defined by the sequence
\begin{align*}
& u_{143},u_{152},u_{121}+u_{143}+u_{242},
u_{153}+u_{121},u_{131}-u_{142}, \\
& u_{243}+u_{131}-u_{152},u_{141}-u_{151}+Q(u)
\end{align*}
where $u_{142},u_{151},u_{221},u_{231},u_{241},u_{242}$  
are extra variables.
So by Proposition \ref{prop:W-eliminate}, 
Property \ref{property:W-eliminate} holds for 
any $x\in Z^{\sst}_{106\,k}$. Therefore, 
$P_{\be_{106}\,k}\backslash Y^{\sst}_{106\,k}$ 
is in bijective correspondence with $\Ex_2(k)$ also.  

\vskip 5pt

(26)  $S_{107}$, 
$\be_{107}= \tfrac {1} {60} (-4,0,0,2,2,-3,-1,-1,5)$ 

We identify the element 
$(\diag(t_1,g_{11},g_{12}),\diag(t_{21},g_2,t_{22}))
\in M_{[1,3],[1,3]}=M_{\be_{107}}$ 
with the element 
$g=(g_{11},g_{12},g_2,t_1,t_{21},t_{22})\in \gl_2^3\times \gl_1^3$. 
On $M^{\text{st}}_{\be_{107}}$, 
\begin{equation*}
\chi_{{107}}(g) = 
t_1^{-4}(\det g_{12})^2
t_{21}^{-3}(\det g_2)^{-1}t_{22}^5
= (\det g_{11})^4(\det g_{12})^6
(\det g_2)^2t_{22}^8.
\end{equation*}

For $x\in Z_{107}$, let 
\begin{equation*}
A(x) = (x_{242},x_{252},x_{342},x_{352},x_{243}\ccd x_{353}),\; 
v(x) = [x_{124},x_{134}].
\end{equation*}
We regard $A(x) \in \Lam^{2,1}_{1,[2,3]}\otimes 
\Lam^{2,1}_{1,[4,5]}\otimes \Lam^{2,1}_{2,[2,3]}$, 
$v(x)\in \Lam^{2,1}_{1,[2,3]}$. 
We identify $Z_{107}\cong \aff^2\otimes \aff^2\otimes \aff^2\oplus \aff^2\oplus 1$
by the map $Z_{107}\ni x\mapsto (A(x),v(x),x_{451})$. 

Let $P_1(x),P_2(x)$ be the polynomials of $(A(x),v(x))$
defined in the case (a) of Section \ref{sec:rational-orbits-222-22}. 
By (\ref{eq:P1P2P3}), 
\begin{align*}
& A(gx) = (g_{11},g_{12},g_2)A(x),\; v(gx) =  t_1t_{22}g_{11}v(x), \\
& P_1(gx) = (\det g_{11})^2 (\det g_{12})^2 (\det g_2)^2 P_2(x), \\
& P_2(gx) = t_1^2 (\det g_{11})^2 (\det g_{12}) (\det g_2)t_{22}^2P_1(x).
\end{align*}

We put $P(x)=P_1(x)P_2(x)^5x_{451}^6$. 
Then on $M^{\text{st}}_{\be_{107}}$,  
\begin{align*}
P(gx) 
& = ((\det g_{11})^2 (\det g_{12})^2 (\det g_2)^2) 
(t_1^2 (\det g_{11})^2 (\det g_{12})(\det g_2)t_{22}^2)^5 \\
& \quad \times ((\det g_{12})t_{21})^6 P(x) \\
& = t_1^{10}(\det g_{11})^{12} (\det g_{12})^{13} t_{21}^6 (\det g_2)^7 t_{22}^{10} 
=  (\det g_{11})^2 (\det g_{12})^3 (\det g_2) t_{22}^4 P(x). 
\end{align*}
Therefore, $P(x)$ is 
invariant under the action of $G_{\text{st},\be_{107}}$. 

Similarly as in the previous case, 
$M_{\be_{107}\,k}\backslash Z^{\sst}_{107\,k}$ is in bijective 
correspondence with $\Ex_2(k)$ by Proposition \ref{prop:222+2-orbit}. 
Note that the action of $\gl_1^3$ can be absorbed into the action of
$\gl_2^3$. 

Let $R(107)\in Z_{107}$ be the element such that 
$A(R(107))=(1,0\ccd 0,1)$, $v(R(107))$ $=[1,1]$
and that the $x_{451}$-coordinate is $1$ (see (\ref{eq:w-defn-sec10})). 
Explicitly, $R(107)=e_{451}+e_{242}+e_{353}+e_{124}+e_{134}$. 
Then $R(107)\in Z^{\sst}_{107}$. 

We assume that $u_{132}=u_{154}=0$ and $u_{232}=0$. 
Then the four components of 
$n(u)R(107)$ are as follows: 
\begin{align*}
& \begin{pmatrix}
0 & 0 & 0 & 0 & 0 \\
0 & 0 & 0 & 0 & 0 \\
0 & 0 & 0 & 0 & 0 \\
0 & 0 & 0 & 0 & 1 \\
0 & 0 & 0 & -1 & 0 
\end{pmatrix},\;
\begin{pmatrix}
0 & 0 & 0 & 0 & 0 \\
0 & 0 & 0 & 1 & 0 \\
0 & 0 & 0 & 0 & 0 \\
0 & -1 & 0 & 0 & -u_{152}+u_{221} \\
0 & 0 & 0 & * & 0 
\end{pmatrix}, \;
\begin{pmatrix}
0 & 0 & 0 & 0 & 0 \\
0 & 0 & 0 & 0 & 0 \\
0 & 0 & 0 & 0 & 1 \\
0 & 0 & 0 & 0 & u_{143}+u_{231} \\
0 & 0 & -1 & * & 0 \\
\end{pmatrix}, \\
& \begin{pmatrix}
0 & 1 & 1 & u_{142}+u_{143} & u_{152}+u_{153} \\
-1 & 0 & u_{121}-u_{131} & Q_1(u)-u_{141}+u_{242} & Q_2(u)-u_{151} \\
-1 & * & 0 & Q_3(u)-u_{141} & Q_4(u)-u_{151}+u_{243} \\
* & * & * & 0 & Q_5(u)+u_{241} \\
* & * &  * & * & 0 
\end{pmatrix}
\end{align*}
where $Q_1(u),Q_2(u),Q_3(u),Q_4(u)$
do not depend on $u_{141},u_{151},u_{241},u_{242},u_{243}$ 
and $Q_5(u)$ does not depend on $u_{241}$. 

We can apply Lemma \ref{lem:connected-criterion} 
to the map $\aff^{13}\to \aff^{10}$ defined by the sequence
\begin{align*}
& u_{152}-u_{221},u_{143}+u_{231},u_{121}-u_{131},
u_{142}+u_{143},u_{153}+u_{152}, u_{141}-Q_3(u), \\
& u_{242}-u_{141}+Q_1(u),u_{151}-Q_2(u),
u_{243}-u_{151}+Q_4(u),u_{241}+Q_5(u)
\end{align*}
where $u_{131},u_{221},u_{231}$  
are extra variables.
So by Proposition \ref{prop:W-eliminate}, 
Property \ref{property:W-eliminate} holds for 
any $x\in Z^{\sst}_{107\,k}$. Therefore, 
$P_{\be_{107}\,k}\backslash Y^{\sst}_{107\,k}$ 
is in bijective correspondence with $\Ex_2(k)$ also.  

\vskip 5pt

(27)  $S_{108}$, $\be_{108}=\tfrac {1} {20} (-2,-2,0,2,2,-1,-1,1,1)$ 

We identify the element 
$(\diag(g_{11},t_1,g_{12}),\diag(g_{21},g_{22}))
\in M_{[2,3],[2]}=M_{\be_{108}}$ 
with the element 
$g=(g_{11},g_{12},g_{21},g_{22},t_1)\in \gl_2^4\times \gl_1$. 
On $M^{\text{st}}_{\be_{108}}$, 
\begin{equation*}
\chi_{{108}}(g) = 
(\det g_{11})^{-2}(\det g_{12})^2
(\det g_{21})^{-1}(\det g_{22})
= t_1^2(\det g_{12})^4(\det g_{22})^2. 
\end{equation*}

For $x\in Z_{108}$, let 
\begin{equation*}
A(x) = (x_{143},x_{153},x_{243},x_{253},x_{144}\ccd x_{254}),\;
B(x) = 
\begin{pmatrix}
x_{341} & x_{342} \\
x_{351} & x_{352}
\end{pmatrix}.
\end{equation*}
We regard 
$A(x)\in \Lam^{2,1}_{1,[1,2]}\otimes 
\Lam^{2,1}_{1,[4,5]}\otimes \Lam^{2,1}_{2,[3,4]}$, 
$B(x)\in \Lam^{2,1}_{1,[4,5]}\otimes \Lam^{2,1}_{2,[1,2]}$. 
We identify $Z_{108}\cong \aff^2\otimes \aff^2\otimes \aff^2\oplus \m_2$
by the map $Z_{108}\ni x\mapsto (A(x),B(x))$.

It is easy to see that 
\begin{equation*}
A(gx) = (g_{11},g_{12},g_{22})A(x),\;
B(gx) = t_1g_{12}B(x){}^tg_{21}.
\end{equation*}
Let $P_1(x)$ be the degree $4$ polynomial of $A(x)$ 
obtained by Proposition \ref{prop:322-invariant}. 
we put $P_2(x)=\det B(x)$. Then 
\begin{equation*}
P_1(gx) = (\det g_{11})^2(\det g_{12})^2(\det g_{22})^2 P_1(x),\; 
P_2(gx) = t_1^2 (\det g_{12}) (\det g_{21}) P_2(x).  
\end{equation*}

We put $P(x)=P_1(x)^3P_2(x)^4$. Then on $M^{\text{st}}_{\be_{108}}$, 
\begin{align*}
P(gx) 
& = ((\det g_{11})^2(\det g_{12})^2(\det g_{22})^2)^3
(t_1^2 (\det g_{12}) (\det g_{21}))^4 P(x) \\
& = (\det g_{11})^6 t_1^8 
(\det g_{12})^{10}(\det g_{21})^4(\det g_{22})^6 P(x) \\
& = t_1^2 (\det g_{12})^4 (\det g_{22})^2 P(x).
\end{align*}
Therefore, $P(x)$ is 
invariant under the action of $G_{\text{st},\be_{108}}$. 

Suppose that $x\in Z^{\sst}_{108\,k}$. 
It is easy to see that 
there exists $g\in M_{\be_{108}\,k}$ 
such that $B(gx)=I_2$. 
If $B(x)=I_2$ then $B(gx)=B(x)$ if and only if 
$t_1g_{12}{}^t g_{21}=I_2$. 
If $g=(g_{11},g_{12},t_1^{-1}{}^tg_{12}^{-1},g_{22},t_1)$
then $A(gx)=(g_{11},g_{12},g_{22})A(x)$. 
Therefore, $M_{\be_{108}\,k}\backslash Z^{\sst}_{108\,k}$ 
is in bijective correspondence with the set of orbits 
for the \pv{} in Proposition \ref{prop:222-rat-orbits}.
Therefore, $M_{\be_{108}\,k}\backslash Z^{\sst}_{108\,k}$ 
is in bijective correspondence with $\Ex_2(k)$. 

Let $R(108)\in Z_{108}$ be element such that 
\begin{math}
A(R(108)) = (1,0\ccd 0,1),\;
B(R(108)) = I_2
\end{math}
(see (\ref{eq:w-efn-222-matrix})).
Explicitly, 
$R(108)=e_{341}+e_{352}+e_{143}+e_{254}$. 
Then $P(R(108))=1$ and so $R(108)\in Z^{\sst}_{108}$.

We assume that $u_{121}=u_{154}=0$ 
and $u_{2ij}=0$ unless $i=3,4,j=1,2$. 
Then the four components of 
$n(u)R(108)$ are as follows: 
\begin{align*}
& \begin{pmatrix}
0 & 0 & 0 & 0 & 0 \\
0 & 0 & 0 & 0 & 0 \\
0 & 0 & 0 & 1 & 0 \\
0 & 0 & -1 & 0 & -u_{153} \\
0 & 0 & 0 & * & 0 
\end{pmatrix},\;
\begin{pmatrix}
0 & 0 & 0 & 0 & 0 \\
0 & 0 & 0 & 0 & 0 \\
0 & 0 & 0 & 0 & 1 \\
0 & 0 & 0 & 0 & u_{143} \\
0 & 0 & -1 & * & 0 
\end{pmatrix}, \\
& \begin{pmatrix}
0 & 0 & 0 & 1 & 0 \\
0 & 0 & 0 & 0 & 0 \\
0 & 0 & 0 & u_{131}+u_{231} & u_{232} \\
-1 & 0 & * & 0 & Q_1(u)-u_{151} \\
0 & 0 & * & * & 0 \\
\end{pmatrix}, \;
\begin{pmatrix}
0 & 0 & 0 & 0 & 0 \\
0 & 0 & 0 & 0 & 1 \\
0 & 0 & 0 & u_{241} & u_{132}+u_{242} \\
0 & 0 & * & 0 & Q_2(u)+u_{142} \\
0 & -1 & * & * & 0 
\end{pmatrix}
\end{align*}
where $Q_1(u),Q_2(u)$ do not depend on $u_{142},u_{151}$.

We can apply Lemma \ref{lem:connected-criterion} 
to the map $\aff^{12}\to \aff^8$ defined by the sequence
\begin{align*}
& u_{143},u_{153},u_{232},u_{241},
u_{131}+u_{231},u_{132}+u_{242},
u_{151}-Q_1(u),u_{142}+Q_2(u)
\end{align*}
where $u_{141},u_{152},u_{231},u_{242}$  
are extra variables.
So by Proposition \ref{prop:W-eliminate}, 
Property \ref{property:W-eliminate} holds for 
any $x\in Z^{\sst}_{107\,k}$. Therefore, 
$P_{\be_{107}\,k}\backslash Y^{\sst}_{107\,k}$ 
is in bijective correspondence with $\Ex_2(k)$ also.  

\vskip 5pt

(28)  $S_{110}$, 
$\be_{110}=\tfrac {1} {60} (-4,-4,-4,6,6,-15,5,5,5)$ 

We identify the element 
$(\diag(g_{11},g_{12}),\diag(t_2,g_2))
\in M_{[3],[1]}=M_{\be_{110}}$ 
with the element 
$g=(g_{11},g_2,g_{12},t_2)\in \gl_3^2\times \gl_2\times \gl_1$. 
On $M^{\text{st}}_{\be_{110}}$, 
\begin{equation*}
\chi_{{110}}(g) = 
(\det g_{11})^{-4}(\det g_{12})^6
t_2^{-15}(\det g_2)^5
= (\det g_{12})^{10}(\det g_2)^{20}. 
\end{equation*}

For $x\in Z_{110}$, let 
\begin{equation*}
A_1(x) = 
\begin{pmatrix}
x_{142} & x_{143} & x_{144} \\
x_{242} & x_{243} & x_{244} \\
x_{342} & x_{343} & x_{344}
\end{pmatrix},\; 
A_2(x) = 
\begin{pmatrix}
x_{152} & x_{153} & x_{154} \\
x_{252} & x_{253} & x_{254} \\
x_{352} & x_{353} & x_{354}
\end{pmatrix}
\end{equation*}
and $A(x)=(A_1(x),A_2(x))$. 
We identify $Z_{110}\cong \m_3\otimes \aff^2$ 
by the map $Z_{110}\ni x \mapsto A(x)$. 

It is easy to see that 
\begin{equation*}
\begin{pmatrix}
A_1(gx) \\
A_2(gx)
\end{pmatrix}
= g_{12}\begin{pmatrix}
g_{11}A_1(x){}^tg_2 \\
g_{11}A_2(x){}^tg_2.
\end{pmatrix}. 
\end{equation*}
So $gx$ does not depend on $t_2$ and 
this is the tensor product
of standard \rep s of $g_{11},g_{12},g_2$. 

Let $P(x)$ be the degree $12$ polynomial on 
$Z_{\be_{110}}\cong\aff^3\otimes \aff^3\otimes \aff^2$ 
obtained by Proposition \ref{prop:332-invariant}. 
Then on $M^{\text{st}}_{\be_{110}}$, 
\begin{equation*}
P(gx) = (\det g_{11})^4(\det g_{12})^6(\det g_2)^4 P(x)
= (\det g_{12})^2(\det g_2)^4 P(x).
\end{equation*}
Therefore, $P(x)$ is 
invariant under the action of $G_{\text{st},\be_{110}}$. 

By Proposition \ref{prop:Rational-orbits-3-cases}, 
$M_{\be_{110}\,k}\backslash Z^{\sst}_{110\,k}$
is in bijective correspondence with $\Ex_3(k)$. 

Let $R(110)\in Z_{110}$ be the element such that 
\begin{equation*}
A_1(R(110))=\diag(1,-1,0),\; A_2(R(110))=\diag(0,1,-1)
\end{equation*}
(see (\ref{eq:sec6-w-defn-sec4})). 
Explicitly, $R(110)=e_{142}-e_{243}+e_{253}-e_{354}$. 
Then $R(110)\in Z^{\sst}_{110}$ by Proposition \ref{prop:332-invariant}. 

We assume that  $u_{1ij}=0$ unless $i=4,5,j=1,2,3$  
and $u_{2ij}=0$ unless $j=1$. 
Then the first component of 
$n(u)R(110)$ is $0$ and the remaining components are 
are as follows: 
\begin{align*}
& \begin{pmatrix}
0 & 0 & 0 & 1 & 0 \\
0 & 0 & 0 & 0 & 0 \\
0 & 0 & 0 & 0 & 0 \\
-1 & 0 & 0 & 0 & -u_{151} \\
0 & 0 & 0 & * & 0 
\end{pmatrix},\;
\begin{pmatrix}
0 & 0 & 0 & 0 & 0 \\
0 & 0 & 0 & -1 & 1 \\
0 & 0 & 0 & 0 & 0 \\
0 & 1 & 0 & 0 & u_{142}+u_{152} \\
0 & -1 & 0 & * & 0 
\end{pmatrix}, \;
\begin{pmatrix}
0 & 0 & 0 & 0 & 0 \\
0 & 0 & 0 & 0 & 0 \\
0 & 0 & 0 & 0 & -1 \\
0 & 0 & 0 & 0 & -u_{143} \\
0 & 0 & 1 & * & 0 \\
\end{pmatrix}.
\end{align*}

We can apply Lemma \ref{lem:connected-criterion} 
to the map $\aff^9\to \aff^3$ defined by the sequence
\begin{math}
u_{143},u_{151},u_{142}+u_{152}
\end{math}
where $u_{141},u_{152},u_{153},u_{221}\ccd u_{241}$  
are extra variables.
So by Proposition \ref{prop:W-eliminate}, 
Property \ref{property:W-eliminate} holds for 
any $x\in Z^{\sst}_{110\,k}$. Therefore, 
$P_{\be_{110}\,k}\backslash Y^{\sst}_{110\,k}$ 
is in bijective correspondence with $\Ex_3(k)$ also.

\vskip 5pt

(29)  $S_{113}$, $\be_{113}=\tfrac {1} {60} (-9,-4,-4,6,11,-10,0,5,5)$ 

We identify the element 
$(\diag(t_{11},g_1,t_{12},t_{13}),\diag(t_{21},t_{22},g_2))
\in M_{[1,3,4],[1,2]}=M_{\be_{113}}$ 
with the element 
$g=(g_1,g_2,t_{11}\ccd t_{22})\in \gl_2^2\times \gl_1^5$. 
On $M^{\text{st}}_{\be_{113}}$, 
\begin{equation*}
\chi_{{113}}(g) = 
t_{11}^{-9}(\det g_1)^{-4}t_{12}^6t_{13}^{11}
t_{21}^{-10}(\det g_2)^5
= (\det g_1)^5t_{12}^{15}t_{13}^{20}
t_{22}^{10}(\det g_2)^{15}. 
\end{equation*}

For $x\in Z_{113}$, let 
\begin{equation*}
A(x) = 
\begin{pmatrix}
x_{243} & x_{244} \\
x_{343} & x_{344}
\end{pmatrix},\;
v_1(x) = 
\begin{pmatrix}
x_{252} \\ x_{352}
\end{pmatrix},\;
v_2(x) = 
\begin{pmatrix}
x_{153} \\x_{154}
\end{pmatrix}.
\end{equation*}
We regard that 
$A(x)\in \Lam^{2,1}_{1,[2,3]}\otimes \Lam^{2,1}_{2,[3,4]}$, 
$v_1(x)\in \Lam^{2,1}_{1,[2,3]}$, 
$v_2(x)\in \Lam^{2,1}_{2,[3,4]}$. 
We identify 
$Z_{113}\cong \m_2\oplus \aff^2\oplus \aff^2\oplus 1$ 
by the map $Z_{113}\ni x\mapsto (A(x),v_1(x),v_2(x),x_{451})$.

It is easy to see that 
\begin{equation*}
A(gx) = t_{12}g_1A(x){}^tg_2,\;
v_1(gx) = t_{13}t_{22}g_1v_1(x),\;
v_2(gx) = t_{11}t_{13}g_2v_2(x). 
\end{equation*}
Let $P_1(x)$ be the degree $3$ polynomial of 
$(A(x),v_1(x),v_2(x))$ obtained by 
Proposition \ref{prop:M2-2-2-invariant} 
and $P_2(x)=\det A(x)$. Then 
\begin{equation*}
P_1(gx) = t_{11}(\det g_1)t_{12}t_{13}^2t_{22}(\det g_2) P_1(x),\;
P_2(gx) = (\det g_1)t_{12}^2(\det g_2) P_2(x). 
\end{equation*}

We put $P(x) = P_1(x)^3P_2(x)x_{451}$. 
Then on $M^{\text{st}}_{\be_{113}}$, 
\begin{align*}
P(gx) 
& = (t_{11}(\det g_1)t_{12}t_{13}^2t_{22}(\det g_2))^3
((\det g_1)t_{12}^2(\det g_2))(t_{12}t_{13}t_{21}) P(x) \\
& = t_{11}^3(\det g_1)^4t_{12}^6t_{13}^7t_{21}t_{22}^3(\det g_2)^4 P(x) 
= (\det g_1)t_{12}^3t_{13}^4t_{22}^2(\det g_2)^3 P(x). 
\end{align*}
Therefore, $P(x)$ is 
invariant under the action of $G_{\text{st},\be_{113}}$. 

Let $R(113)\in Z_{113}$ be element such that 
\begin{equation*}
A(R(113)) = I_2,\;
v_1(R(113)) = v_2(R(113)) = [1,0]
\end{equation*}
and that the $x_{451}$-coordinate is $1$ 
(see Proposition~\ref{prop:M2-2-2-invariant}). 
Explicitly, 
$R(113)=e_{451}+e_{252}+e_{153}+e_{243}+e_{344}$. 
Then $P_1(R(113))=P_2(R(113))=1$.  So $P(R(113))=1$ and 
$R(113)\in Z^{\sst}_{113}$. 

We show that  $Z^{\sst}_{113\,k}=M_{\be_{113}\,k}R(113)$. 
Suppose that $x\in Z^{\sst}_{113\,k}$. 
By Proposition \ref{prop:M2-2-2-invariant}, 
there exists $g\in M_{\be_{113}\,k}$ 
such that $A(gx)=I_2,v_1(gx) = v_1(gx) = [1,0]$. 
So we may assume that $A(x)=I_2,v_1(x)=v_2(x)=[1,0]$. 
Let $t=(I_2,I_2,1,1,1,t_{21},1)$. 
Then $A(tx)=I_2,v_1(tx)=v_2(tx)=[1,0]$ and the 
$x_{451}$-coordinate of $tx$ is $t_{21}x_{451}$.
Therefore, $Z^{\sst}_{113\,k}=M_{\be_{113}\,k}R(113)$. 

We assume that  $u_{132}=0$ and $u_{243}=0$. 
Then the four components of 
$n(u)R(113)$ are as follows: 
\begin{align*}
& \begin{pmatrix}
0 & 0 & 0 & 0 & 0 \\
0 & 0 & 0 & 0 & 0 \\
0 & 0 & 0 & 0 & 0 \\
0 & 0 & 0 & 0 & 1 \\
0 & 0 & 0 & -1 & 0 
\end{pmatrix},\;
\begin{pmatrix}
0 & 0 & 0 & 0 & 0 \\
0 & 0 & 0 & 0 & 1 \\
0 & 0 & 0 & 0 & 0 \\
0 & 0 & 0 & 0 & u_{142}+u_{221} \\
0 & -1 & 0 & * & 0 
\end{pmatrix}, \\
& \begin{pmatrix}
0 & 0 & 0 & 0 & 1 \\
0 & 0 & 0 & 1 & u_{121}+u_{154}+u_{232} \\
0 & 0 & 0 & 0 & u_{131} \\
0 & -1 & 0 & 0 & Q_1(u) + u_{141}-u_{152}+u_{231} \\
-1 & * & * & * & 0 
\end{pmatrix}, \;
\begin{pmatrix}
0 & 0 & 0 & 0 & 0 \\
0 & 0 & 0 & 0 & u_{242} \\
0 & 0 & 0 & 1 & u_{154} \\
0 & 0 & -1 & 0 & Q_2(u)-u_{153}+u_{241} \\
0 & * & * & * & 0 \\
\end{pmatrix}
\end{align*}
where $Q_1(u),Q_2(u)$ do not depend on 
$u_{141},u_{152},u_{153},u_{231},u_{241}$. 

We can apply Lemma \ref{lem:connected-criterion} 
to the map $\aff^{14}\to \aff^7$ defined by the sequence
\begin{align*}
& u_{131},u_{154},u_{242},
u_{142}+u_{221},u_{121}+u_{154}+u_{232}, \\
& u_{141}-u_{152}+u_{231}+Q_1(u),
u_{153}-u_{241}-Q_2(u)
\end{align*}
where $u_{143},u_{151},u_{152},u_{221},u_{231},u_{232},u_{241}$  
are extra variables.
So by Proposition \ref{prop:W-eliminate}, 
Property \ref{property:W-eliminate} holds for 
any $x\in Z^{\sst}_{113\,k}$. Therefore, 
$Y^{\sst}_{113\,k}=P_{\be_{113}\,k}R(113)$ also.

\vskip 5pt

(30) $S_{121}$, 
$\be_{121}=\tfrac {1} {140} (-4,-4,0,0,8,-3,-3,1,5)$ 

We identify the element 
$(\diag(g_{11},g_{12},t_1),\diag(g_2,t_{21},t_{22}))
\in M_{[2,4],[2,3]}=M_{\be_{121}}$ 
with the element 
$g=(g_{11},g_{12},g_2,t_1,t_{21},t_{22})\in \gl_2^3\times \gl_1^3$. 
On $M^{\text{st}}_{\be_{121}}$, 
\begin{equation*}
\chi_{{121}}(g) = 
(\det g_{11})^{-4}t_1^8
(\det g_2)^{-3}t_{21}t_{22}^5
= (\det g_{12})^4t_1^{12}
t_{21}^4t_{22}^8.
\end{equation*}

Let 
\begin{equation*}
A(x) = 
\begin{pmatrix}
x_{151} & x_{152} \\
x_{251} & x_{252}
\end{pmatrix},\; 
B(x) = 
\begin{pmatrix}
x_{134} & x_{144} \\
x_{234} & x_{244} 
\end{pmatrix}.
\end{equation*}
We identify $Z_{121}\cong \m_2\oplus \m_2\oplus 1$ 
by the map $Z_{121}\ni x\mapsto (A(x),B(x),x_{343})$.

It is easy to see that
\begin{align*}
A(gx) & = t_1g_{11}A(x){}^tg_2,\;
B(gx) = t_{22}g_{11}B(x){}^tg_{12}
\end{align*}
Let
\begin{math}
P_1(x) = \det A(x),\; 
P_2(x) = \det B(x).
\end{math}
Then 
\begin{equation*}
P_1(gx) = (\det g_{11})t_1^2(\det g_2) P_1(x),\;
P_2(gx) = (\det g_{11})(\det g_{12})t_{22}^2 P_2(x).
\end{equation*}

We put $P(x) = P_1(x)^8 P_2(x)^5 x_{343}^9$. Then 
on $M^{\text{st}}_{\be_{121}}$, 
\begin{align*}
P(gx) & = 
((\det g_{11})t_1^2(\det g_2))^8
((\det g_{11})(\det g_{12})t_{22}^2)^5
((\det g_{12})t_{21})^9 P(x) \\
& = (\det g_{11})^{13}(\det g_{12})^{14}t_1^{16}
(\det g_2)^8t_{21}^9 t_{22}^{10}P(x) 
= (\det g_{12}) t_1^3t_{21}t_{22}^2 P(x). 
\end{align*}
Therefore, $P(x)$ is 
invariant under the action of $G_{\text{st},\be_{121}}$. 

Let $R(121)\in Z_{121}$ be element such that 
\begin{math}
A(R(121)) = B(R(121)) = I_2
\end{math}
and the $x_{343}$-coordinate is $1$. 
Explicitly, 
$R(121)=e_{151}+e_{252}+e_{343}+e_{134}+e_{244}$. 
Then $P_1(R(121))=P_2(R(121))=1$. 
So $P(R(121))=1$ and $R(121)\in Z^{\sst}_{121}$. 

We show that  $Z^{\sst}_{121\,k}=M_{\be_{121}\,k}R(121)$. 
Suppose that $x\in Z^{\sst}_{121\,k}$.
Since $P_1(x),P_2(x)\not=0$,  
there exists $g\in M_{\be_{121}\,k}$ 
such that $A(gx)=B(gx)=I_2$. So we may assume that $A(x)=B(x)=I_2$. 
Let $t=(I_2,I_2,I_2,1,t_{21},1)$. 
Then $A(tx)=B(tx)=I_2$ and the 
$x_{343}$-coordinate of $tx$ is $t_{21}x_{343}$.
Therefore, $Z^{\sst}_{121\,k}=M_{\be_{121}\,k}R(121)$. 

We assume that  $u_{121}=u_{143}=0$ 
and $u_{221}=0$. 
Then the four components of 
$n(u)R(121)$ are as follows: 
\begin{align*}
& \begin{pmatrix}
0 & 0 & 0 & 0 & 1 \\
0 & 0 & 0 & 0 & 0 \\
0 & 0 & 0 & 0 & u_{131} \\
0 & 0 & 0 & 0 & u_{141} \\
-1 & 0 & * & * & 0 
\end{pmatrix},\;
\begin{pmatrix}
0 & 0 & 0 & 0 & 0 \\
0 & 0 & 0 & 0 & 1 \\
0 & 0 & 0 & 0 & u_{132} \\
0 & 0 & 0 & 0 & u_{142} \\
0 & -1 & * & * & 0 
\end{pmatrix}, \\
& \begin{pmatrix}
0 & 0 & 0 & 0 & u_{231} \\
0 & 0 & 0 & 0 & u_{232} \\
0 & 0 & 0 & 1 & Q_1(u)+u_{154} \\
0 & 0 & -1 & 0 & Q_2(u)-u_{153} \\
* & * & * & * & 0 
\end{pmatrix}, \;
\begin{pmatrix}
0 & 0 & 1 & 0 & u_{153}+u_{241} \\
0 & 0 & 0 & 1 & u_{154}+u_{242} \\
-1 & 0 & 0 & u_{132}-u_{141}+u_{243} & Q_3(u)-u_{151} \\
0 & -1 & * & 0 & Q_4(u)-u_{152} \\
* & * & * & * & 0 \\
\end{pmatrix}
\end{align*}
where $Q_1(u),Q_2(u)$ do not depend on
$u_{151}\ccd u_{154},u_{241},u_{242}$ 
and $Q_3(u),Q_4(u)$ do not depend on
$u_{151},u_{152}$.  
 

We can apply Lemma \ref{lem:connected-criterion} 
to the map $\aff^{13}\to \aff^{13}$ defined by the sequence
\begin{align*}
& u_{131},u_{132},u_{141},u_{142},u_{231},u_{232},
u_{243}+u_{132}-u_{141}, u_{154}+Q_1(u), \\
& u_{153}-Q_2(u),u_{241}+u_{153},u_{242}+u_{154},
u_{151}-Q_3(u),u_{152}-Q_4(u)
\end{align*}
with no extra variables. 
So by Proposition \ref{prop:W-eliminate}, 
Property \ref{property:W-eliminate} holds for 
any $x\in Z^{\sst}_{121\,k}$. Therefore, 
$Y^{\sst}_{121\,k}=P_{\be_{121}\,k}R(121)$ also. 

\vskip 5pt

(31) $S_{131}$, $\be_{131}=\tfrac {1} {60} (-24,6,6,6,6,-15,5,5,5)$ 

We identify the element 
$(\diag(t_1,g_1),\diag(t_2,g_2))
\in M_{[1],[1]}=M_{\be_{131}}$ 
with the element 
$g=(g_1,g_2,t_1,t_2)\in \gl_4\times \gl_3\times \gl_1^2$. 
On $M^{\text{st}}_{\be_{131}}$, 
\begin{equation*}
\chi_{{131}}(g) = 
t_1^{-24}(\det g_1)^6t_2^{-15}(\det g_2)^5
= (\det g_1)^{30}(\det g_2)^{20}. 
\end{equation*}

We identify of $Z_{{131}}\cong \wedge^2 \aff^4\otimes \aff^3$. 
Let $P(x)$ be the degree $12$ polynomial on $Z_{131}$ 
obtained by Lemma \ref{lem:relative-invariant-423}.
Since elements of the form 
$g=(I_4,I_3,t_1,t_2)$ act on $Z_{131}$ trivially, 
$P(gx)=(\det g_1)^6(\det g_2)^4 P(x)$. 
Therefore, $P(x)$ is 
invariant under the action of $G_{\text{st},\be_{131}}$. 

$M_{\be_{131}\,}\backslash Z^{\sst}_{131\,k}$ 
can be identified with the set of rational orbits 
considered in the case (a) of Section \ref{sec:rational-orbits-wedge43}.
So by Proposition \ref{prop:stab-iso-wedge42-3}, 
$M_{\be_{131}\,}\backslash Z^{\sst}_{131\,k}$ 
is in bijective correspondence with $\mathrm{IQF}_4(k)$. 
Since $W_{131}=\{0\}$, 
$P_{\be_{131}\,}\backslash Y^{\sst}_{131\,k}$ 
is in bijective correspondence with $\mathrm{IQF}_4(k)$
also. 

\vskip 5pt

(32) $S_{149}$, $\be_{149}=\tfrac {1} {220} (-28,-8,2,12,22,-15,-5,5,15)$ 

(33) $S_{150}$, 
$\be_{150}=\tfrac {1} {60} (-8,-4,0,4,8,-7,-3,1,9)$ 

(34) $S_{151}$, 
$\be_{151} = \tfrac{1} {20} (-2,-1,0,1,2,-2,-1,1,2)$ 

(35) $S_{152}$, 
$\be_{152} = \tfrac {1} {20} (-4,-2,0,2,4,-3,-1,1,3)$ 

\vskip 5pt

For $l=149$--$152$, 
$M_{\be_l}=T$ and so $Z^{\sst}_l\not=\emptyset$ 
by Proposition \ref{prop:M=Tsuejective}. 
Let $R(l)\in Z_l$ be the element whose coordinates 
in $Z_l$ are all $1$. 

We express elements of $T$ as (\ref{eq:t-defn}). 
Then the matrix $(m_{ij})$ of Proposition \ref{prop:M=Tsuejective} is 
\begin{align*}
& \begin{pmatrix}
0 & 0 & 1 & 0 & 1 & 1 & 0 & 0 & 0 \\
0 & 1 & 0 & 0 & 1 & 0 & 1 & 0 & 0 \\
0 & 0 & 1 & 1 & 0 & 0 & 1 & 0 & 0 \\
0 & 1 & 0 & 1 & 0 & 0 & 0 & 1 & 0 \\
1 & 0 & 0 & 1 & 0 & 0 & 0 & 0 & 1 \\
0 & 1 & 1 & 0 & 0 & 0 & 0 & 0 & 1 
\end{pmatrix},\; 
\begin{pmatrix}
0 & 0 & 0 & 1 & 1 & 1 & 0 & 0 & 0 \\
0 & 0 & 1 & 0 & 1 & 0 & 1 & 0 & 0 \\
0 & 1 & 0 & 0 & 1 & 0 & 0 & 1 & 0 \\
0 & 0 & 1 & 1 & 0 & 0 & 0 & 1 & 0 \\
1 & 0 & 0 & 1 & 0 & 0 & 0 & 0 & 1 \\
0 & 1 & 1 & 0 & 0 & 0 & 0 & 0 & 1 
\end{pmatrix}, \\
& \begin{pmatrix}
0 & 0 & 0 & 1 & 1 & 1 & 0 & 0 & 0 \\
0 & 0 & 1 & 0 & 1 & 0 & 1 & 0 & 0 \\
1 & 0 & 0 & 0 & 1 & 0 & 0 & 1 & 0 \\
0 & 1 & 0 & 1 & 0 & 0 & 0 & 1 & 0 \\
1 & 0 & 0 & 1 & 0 & 0 & 0 & 0 & 1 \\
0 & 1 & 1 & 0 & 0 & 0 & 0 & 0 & 1 
\end{pmatrix},\;
\begin{pmatrix}
0 & 0 & 0 & 1 & 1 & 1 & 0 & 0 & 0 \\
0 & 0 & 1 & 0 & 1 & 0 & 1 & 0 & 0 \\
0 & 1 & 0 & 0 & 1 & 0 & 0 & 1 & 0 \\
0 & 0 & 1 & 1 & 0 & 0 & 0 & 1 & 0 \\
1 & 0 & 0 & 0 & 1 & 0 & 0 & 0 & 1 \\
0 & 1 & 0 & 1 & 0 & 0 & 0 & 0 & 1 
\end{pmatrix}
\end{align*}
for $S_{149}$--$S_{152}$ respectively. 

The determinant of the $6\times 6$ minor of the columns 
$1,5,6,7,8,9$ is (a) $-1$ for $S_{149}$, 
(b) $1$ for $S_{150}$, (c) $1$ for $S_{151}$ and 
(d) $1$ for $S_{152}$. 
Therefore, $Z^{\sst}_{l\,k} = T_k R(l)$ 
for $l=149$--$152$ by Proposition \ref{prop:M=Tsuejective}.

We consider the case $S_{149}$. 
Explicitly, 
$R(149)=e_{351}+e_{252}+e_{342}+e_{243}+e_{154}+e_{234}$.
The four components of 
$n(u)R(149)$ are as follows: 
\begin{align*}
& \begin{pmatrix}
0 & 0 & 0 & 0 & 0 \\
0 & 0 & 0 & 0 & 0 \\
0 & 0 & 0 & 0 & 1 \\
0 & 0 & 0 & 0 & u_{143} \\
0 & 0 & -1 & * & 0 
\end{pmatrix},\;
\begin{pmatrix}
0 & 0 & 0 & 0 & 0 \\
0 & 0 & 0 & 0 & 1 \\
0 & 0 & 0 & 1 & u_{221}+u_{132}+u_{154} \\
0 & 0 & -1 & 0 & Q_1(u) + u_{142}-u_{153} \\
0 & -1 & * & * & 0 
\end{pmatrix}, \\
& \begin{pmatrix}
0 & 0 & 0 & 0 & 0 \\
0 & 0 & 0 & 1 & u_{154}+u_{232} \\
0 & 0 & 0 & u_{132}+u_{232} & Q_2(u)+u_{231} \\
0 & -1 & * & 0 & Q_3(u)-u_{152} \\
0 & * & * & * & 0 \\
\end{pmatrix}, \\
& \begin{pmatrix}
0 & 0 & 0 & 0 & 1 \\
0 & 0 & 1 & u_{143}+u_{243} & Q_4(u)+u_{121}+u_{153}+u_{242} \\
0 & -1 & 0 & Q_5(u)-u_{142}+u_{242} & Q_6(u)+u_{131}-u_{152}+u_{241} \\
0 & * & * & 0 & Q_7(u)+u_{141} \\
-1 & * & * & * & 0 
\end{pmatrix}
\end{align*}
where $Q_1(u)$ is a polynomial of $u_{143},u_{221},u_{154}$, 
$Q_2(u),Q_4(u),Q_5(u)$ are polynomials of 
$u_{132},u_{143},u_{154},u_{232},u_{243}$, 
$Q_3(u),Q_6(u)$ do not depend on 
$u_{131},u_{141},u_{152},u_{241}$ 
and $Q_7(u)$ does not depend on $u_{141}$.

We can apply Lemma \ref{lem:connected-criterion} 
to the map $\aff^{16}\to \aff^{12}$ defined by the sequence
\begin{align*}
 & u_{143},u_{243}+u_{143},u_{154}+u_{232},u_{132}+u_{232},
u_{221}+u_{132}+u_{154},
u_{142}-u_{242}-Q_5(u), \\
& u_{153}-u_{142}-Q_1(u),
u_{121}+u_{153}+u_{242}+Q_4(u),
u_{231}+Q_2(u),u_{152}-Q_3(u), \\
& u_{131}-u_{152}+u_{241}+Q_6(u),
u_{141}+Q_7(u)  
\end{align*}
where $u_{151},u_{232},u_{241},u_{242}$  
are extra variables.
So by Proposition \ref{prop:W-eliminate}, 
Property \ref{property:W-eliminate} holds for 
any $x\in Z^{\sst}_{149\,k}$. Therefore, 
$Y^{\sst}_{149\,k}=P_{\be_{149}\,k}R(149)$ also. 

\vskip 5pt

We consider the case $S_{150}$. 
Explicitly, $R(150)=e_{451}+e_{352}+e_{253}+e_{343}+e_{144}+e_{234}$.
The first component of 
$n(u)R(150)$ is the same as that of $R(150)$ 
and the remaining components are as follows: 
\begin{align*}
& \begin{pmatrix}
0 & 0 & 0 & 0 & 0 \\
0 & 0 & 0 & 0 & 0 \\
0 & 0 & 0 & 0 & 1 \\
0 & 0 & 0 & 0 & u_{143}+u_{221} \\
0 & 0 & -1 & * & 0 
\end{pmatrix}, \;
\begin{pmatrix}
0 & 0 & 0 & 0 & 0 \\
0 & 0 & 0 & 0 & 1 \\
0 & 0 & 0 & 1 & u_{132}+u_{154}+u_{232} \\
0 & 0 & -1 & 0 & Q_1(u)+u_{142}-u_{153}+u_{231} \\
0 & -1 & * & * & 0 \\
\end{pmatrix}, \\
& \begin{pmatrix}
0 & 0 & 0 & 1 & u_{154} \\
0 & 0 & 1 & u_{121}+u_{143} & Q_2(u)+u_{153}+u_{243} \\
0 & -1 & 0 & Q_3(u)+u_{131}-u_{142}+u_{243} 
& Q_4(u)-u_{152}+u_{242} \\
-1 & * & * & 0 & Q_5(u)-u_{151}+u_{241} \\
* & * & * & * & 0 
\end{pmatrix}
\end{align*}
where $Q_1(u),Q_2(u),Q_3(u)$ 
are polynomial of $u_{121},u_{132},u_{143},u_{154},u_{232}$,
$Q_4(u)$ is a polynomial of $u_{131},u_{132},u_{153},u_{154},u_{243}$
and $Q_5(u)$ does not depend on $u_{151},u_{241}$. 

We can apply Lemma \ref{lem:connected-criterion} 
to the map $\aff^{16}\to \aff^9$ defined by the sequence
\begin{align*}
& u_{154},u_{121}+u_{143},u_{221}+u_{143},
u_{132}+u_{154}+u_{232},
u_{142}-u_{153}+u_{231}+Q_1(u), \\
& u_{243}+u_{153}+Q_2(u),
u_{131}-u_{142}+u_{243}+Q_3(u),
u_{152}-u_{242}-Q_4(u), \\
& u_{151}-u_{241}-Q_5(u)
\end{align*}
where $u_{141},u_{143},u_{153},u_{231},u_{232},u_{241},u_{242}$  
are extra variables.
So by Proposition \ref{prop:W-eliminate}, 
Property \ref{property:W-eliminate} holds for 
any $x\in Z^{\sst}_{150\,k}$. Therefore, 
$Y^{\sst}_{150\,k}=P_{\be_{150}\,k}R(150)$ also. 

\vskip 5pt

We consider the case $S_{151}$. 
Explicitly, $R(151)=e_{451}+e_{352}+e_{153}+e_{243}+e_{144}+e_{234}$.
The first component of 
$n(u)R(151)$ is the same as that of $R(151)$ 
and the remaining components are as follows: 
\begin{align*}
& \begin{pmatrix}
0 & 0 & 0 & 0 & 0 \\
0 & 0 & 0 & 0 & 0 \\
0 & 0 & 0 & 0 & 1 \\
0 & 0 & 0 & 0 & u_{143}+u_{221} \\
0 & 0 & -1 & * & 0 
\end{pmatrix}, \;
\begin{pmatrix}
0 & 0 & 0 & 0 & 1 \\
0 & 0 & 0 & 1 & u_{121}+u_{154} \\
0 & 0 & 0 & u_{132} & Q_1(u) + u_{131}+u_{232} \\
0 & -1 & * & 0 & Q_2(u)+u_{141}-u_{152}+u_{231} \\
-1 & * & * & * & 0 \\
\end{pmatrix}, \\
& \begin{pmatrix}
0 & 0 & 0 & 1 & u_{243}+u_{154} \\
0 & 0 & 1 & u_{121}+u_{143}+u_{243} & Q_3(u)+u_{153} \\
0 & -1 & 0 & Q_4(u)+u_{131}-u_{142} 
& Q_5(u)-u_{152}+u_{242} \\
-1 & * & * & 0 & Q_6(u)-u_{151}+u_{241} \\
* & * & * & * & 0 
\end{pmatrix}
\end{align*}
where $Q_1(u),Q_3(u),Q_4(u)$ 
are polynomials of 
$u_{121},u_{132},u_{143},u_{154},u_{243}$, 
$Q_2(u)$ is a polynomial of 
$u_{142},u_{143},u_{154},u_{232}$, 
$Q_5(u)$ does not depend on 
$u_{151},u_{152},u_{241},u_{242}$
and $Q_6(u)$ does not depend on 
$u_{151},u_{241}$.

We can apply Lemma \ref{lem:connected-criterion} 
to the map $\aff^{16}\to \aff^{11}$ defined by the sequence
\begin{align*}
& u_{132},u_{121}+u_{154},u_{243}+u_{154},
u_{143}+u_{121}+u_{243},u_{221}+u_{143},
u_{131}+u_{232}+Q_1(u), \\
& u_{153}+Q_3(u),u_{142}-u_{131}-Q_4(u), 
u_{141}-u_{152}+u_{231}+Q_2(u), \\
& u_{ 242}-u_{152}+Q_5(u),
u_{241}-u_{151}+Q_6(u)
\end{align*}
where $u_{151},u_{152},u_{154},u_{231},u_{232}$  
are extra variables.
So by Proposition \ref{prop:W-eliminate}, 
Property \ref{property:W-eliminate} holds for 
any $x\in Z^{\sst}_{151\,k}$. Therefore, 
$Y^{\sst}_{151\,k}=P_{\be_{151}\,k}R(151)$ also. 

\vskip 5pt

We consider the case $S_{152}$. 
Explicitly, $R(152)=e_{451}+e_{352}+e_{253}+e_{343}+e_{154}+e_{244}$. 
The first component of 
$n(u)R(152)$ is the same as that of 
$R(152)$ and the remaining components are as follows: 
\begin{align*}
& \begin{pmatrix}
0 & 0 & 0 & 0 & 0 \\
0 & 0 & 0 & 0 & 0 \\
0 & 0 & 0 & 0 & 1 \\
0 & 0 & 0 & 0 & u_{143}+u_{221} \\
0 & 0 & -1 & * & 0 
\end{pmatrix}, \;
\begin{pmatrix}
0 & 0 & 0 & 0 & 0 \\
0 & 0 & 0 & 0 & 1 \\
0 & 0 & 0 & 1 & u_{132}+u_{154}+u_{232} \\
0 & 0 & -1 & 0 & Q_1(u)+u_{142}-u_{153}+u_{231} \\
0 & -1 & * & * & 0 \\
\end{pmatrix}, \\
& \begin{pmatrix}
0 & 0 & 0 & 0 & 1 \\
0 & 0 & 0 & 1 & u_{121}+u_{154}+u_{243} \\
0 & 0 & 0 & u_{132}+u_{243} & Q_2(u)+u_{131}+u_{242} \\
0 & -1 & * & 0 & Q_3(u)+u_{141}-u_{152}+u_{241} \\
-1 & * & * & * & 0 
\end{pmatrix}
\end{align*}
where $Q_1(u),Q_2(u)$ are polynomials of 
$u_{132},u_{143},u_{154},u_{232},u_{243}$
and $Q_3(u)$ does not depend on 
$u_{141},u_{152},u_{241}$. 

We can apply Lemma \ref{lem:connected-criterion} 
to the map $\aff^{16}\to \aff^7$ defined by the sequence
\begin{align*}
& u_{143}+u_{221},u_{132}+u_{243},
u_{154}+u_{132}+u_{232},
u_{121}+u_{154}+u_{243}, \\
& u_{142}-u_{153}+u_{231}+Q_1(u), 
u_{131}+u_{242}+Q_2(u), 
u_{141}-u_{152}+u_{241}+Q_3(u)
\end{align*}
where $u_{151},u_{152},u_{153},u_{221},u_{231},u_{232},u_{241},u_{242},u_{243}$  
are extra variables.
So by Proposition \ref{prop:W-eliminate}, 
Property \ref{property:W-eliminate} holds for 
any $x\in Z^{\sst}_{152\,k}$. Therefore, 
$Y^{\sst}_{152\,k}=P_{\be_{152}\,k}R(152)$ also. 

\vskip 5pt

(36) $S_{164}$, 
$\be_{164}=\tfrac {1} {60} (-4,-4,-4,6,6,-15,-15,15,15)$ 

We identify the element 
$(\diag(g_{11},g_{12}),\diag(g_{21},g_{22}))
\in M_{[3],[2]}=M_{\be_{164}}$ 
with the element 
$g=(g_{11},g_{12},g_{21},g_{22})\in \gl_3\times \gl_2^3$. 
On $M^{\text{st}}_{\be_{164}}$, 
\begin{equation*}
\chi_{{164}}(g) = 
(\det g_{11})^{-4}(\det g_{12})^6
(\det g_{21})^{-15}(\det g_{22})^{15}
= (\det g_{12})^{10}(\det g_{22})^{30}. 
\end{equation*}

We identify $Z_{164}\cong \aff^3\otimes \aff^2\otimes \aff^2$. 
If $g=(g_{11},g_{12},g_{21},g_{22})$ and $x\in Z_{164}$ 
then $gx$ does not depend on $g_{21}$ and 
the action of $(g_{11},g_{12},I_2,g_{22})$ on $Z_{164}$ 
is the same as that of 
Proposition \ref{prop:322-invariant}. 
Therefore, if $R(164)\in Z_{164}$ is the element 
which corresponds to $R_{322}$ in Proposition \ref{prop:322-invariant}, 
then $R(164)\in Z^{\sst}_{164}$ and $Z^{\sst}_{164\,k}=M_{\be_{164}\,k}R(164)$.
Explicitly, 
\begin{math}
R(164) = -e_{143}+e_{154}+e_{244}+e_{353}.
\end{math}

We assume that $u_{1ij}=0$ unless $i=4,5,j=1,2,3$ 
and $u_{221}=u_{243}=0$. Then the first two 
components of $n(u)R(164)$ are $0$ 
and the remaining components are as follows: 
\begin{align*}
& \begin{pmatrix}
0 & 0 & 0 & -1 & 0 \\
0 & 0 & 0 & 0 & 0 \\
0 & 0 & 0 & 0 & 1 \\
1 & 0 & 0 & 0 & u_{143}+u_{151} \\
0 & 0 & -1 & * & 0 
\end{pmatrix},\;
\begin{pmatrix}
0 & 0 & 0 & 0 & 1 \\
0 & 0 & 0 & 1 & 0 \\
0 & 0 & 0 & 0 & 0 \\
0 & -1 & 0 & 0 & u_{141}-u_{152} \\
-1 & 0 & 0 & * & 0 \\
\end{pmatrix}.
\end{align*}

We can apply Lemma \ref{lem:connected-criterion} 
to the map $\aff^{10}\to \aff^2$ defined by the sequence
\begin{math}
u_{143}+u_{151},u_{141}-u_{152},
\end{math}
where $u_{142},u_{151},u_{152},u_{153},u_{231},u_{232},u_{241},u_{242}$  
are extra variables.
So by Proposition \ref{prop:W-eliminate}, 
Property \ref{property:W-eliminate} holds for 
any $x\in Z^{\sst}_{164\,k}$. Therefore, 
$Y^{\sst}_{164\,k}=P_{\be_{164}\,k}R(164)$ also. 

\vskip 5pt

(37) $S_{178}$, 
$\be_{178} = \tfrac {1} {220} (-28,-8,-8,12,32,-55,5,25,25)$ 

We identify the element 
$(\diag(t_{11},g_1,t_{12},t_{13}),\diag(t_{21},t_{22},g_2))
\in M_{[1,3,4],[1,2]}=M_{\be_{178}}$ 
with the element 
$g=(g_1,g_2,t_{11}\ccd t_{22})\in \gl_2^2\times \gl_1^5$. 
On $M^{\text{st}}_{\be_{178}}$, 
\begin{equation*}
\chi_{{178}}(g) = 
t_{11}^{-28}(\det g_1)^{-8}t_{12}^{12}t_{13}^{32}
t_{21}^{-55}t_{22}^5(\det g_2)^{25}
= (\det g_1)^{20}t_{12}^{40}t_{13}^{60}
t_{22}^{60}(\det g_2)^{80}.
\end{equation*}

For $x\in Z_{164}$, let 
\begin{equation*}
A(x) = \begin{pmatrix}
x_{243} & x_{244} \\
x_{343} & x_{344}
\end{pmatrix},\; 
v_1(x) = \begin{pmatrix}
x_{252} \\ x_{352}
\end{pmatrix},\; 
v_2(x) = \begin{pmatrix}
x_{153} \\ x_{154}
\end{pmatrix}.  
\end{equation*}
We regard that $A(x)\in \Lam^{2,1}_{1,[2,3]}\otimes \Lam^{2,1}_{2,[3,4]}$, 
$v_1(x)\in \Lam^{2,1}_{1,[2,3]}$, 
$v_2(x)\in \Lam^{2,1}_{2,[3,4]}$. 
We identify $Z_{178}\cong \m_2\oplus \aff^2\oplus \aff^2$
by the map $Z_{178}\ni x\mapsto (A(x),v_1(x),v_2(x))$. 

It is easy to see that 
\begin{equation*}
A(gx) = t_{12} g_1A(x){}^tg_2,\;
v_1(gx) = t_{13}t_{22} g_1v_1(x),\;
v_2(gx) = t_{11}t_{13} g_2v_2(x).
\end{equation*}
Let $P_1(x)$ be the homogeneous degree $3$ polynomial of 
$A(x),v_1(x),v_2(x)$ obtained by 
Proposition \ref{prop:M2-2-2-invariant} and  
$P_2(x) = \det A(x)$. Since $P_1(x)$ is linear with respect to 
each of $A(x),v_1(x),v_2(x)$,  
\begin{equation*}
P_1(gx) = t_{11}(\det g_1)t_{12}t_{13}^2t_{22}(\det g_2)P_1(x),\;
P_2(gx) = (\det g_1)t_{12}^2(\det g_2)P_2(x).
\end{equation*}

We put $P(x) = P_1(x)^3P_2(x)$. Then on $M^{\text{st}}_{\be_{178}}$, 
\begin{align*}
P(gx) & = (t_{11}(\det g_1)t_{12}t_{13}^2t_{22}(\det g_2))^3
((\det g_1)t_{12}^2(\det g_2))P(x) \\
& = t_{11}^3(\det g_1)^4t_{12}^5t_{13}^6t_{22}^3(\det g_2)^4P(x)
= (\det g_1)t_{12}^2t_{13}^3t_{22}^3(\det g_2)^4P(x). 
\end{align*}
Therefore, $P(x)$ is 
invariant under the action of $G_{\text{st},\be_{178}}$. 

Let $R(178)\in Z_{178}$ be the element such that 
\begin{math}
A(R(178))=I_2,\; v_1(R(178))=[1,0],\; v_2(R(178))=[1,0]
\end{math}
(see Proposition \ref{prop:M2-2-2-invariant}). 
Explicitly, $R(178)=e_{252}+e_{153}+e_{243}+e_{344}$. 
By Proposition \ref{prop:M2-2-2-invariant}, 
$P(R(178))\not=0$ and $Z^{\sst}_{178\,k}$ is the orbit of $R(178)$ 
by the action of $(g_1,g_2,1,1,1,1,1,t_{22})$. 
Adding the action of $t_{11},t_{12},t_{13},t_{21}$ 
does not make the orbit space bigger. 
Therefore, 
$Z^{\sst}_{178\,k}=M_{\be_{178}\,k}R(178)$.

We assume that $u_{132}=0$ and $u_{243}=0$. Then the first  
component of $n(u)R(178)$ is $0$ 
and the remaining components are as follows: 
\begin{align*}
& \begin{pmatrix}
0 & 0 & 0 & 0 & 0 \\
0 & 0 & 0 & 0 & 1 \\
0 & 0 & 0 & 0 & 0 \\
0 & 0 & 0 & 0 & u_{142} \\
0 & -1 & 0 & * & 0 
\end{pmatrix}, \;
\begin{pmatrix}
0 & 0 & 0 & 0 & 1 \\
0 & 0 & 0 & 1 & u_{121}+u_{154}+u_{232} \\
0 & 0 & 0 & 0 & u_{131} \\
0 & -1 & 0 & 0 & Q_1(u) + u_{141}-u_{152} \\
-1 & * & 0 & * & 0 
\end{pmatrix}, \\
& \begin{pmatrix}
0 & 0 & 0 & 0 & 0 \\
0 & 0 & 0 & 0 & u_{242} \\
0 & 0 & 0 & 1 & u_{154} \\
0 & 0 & -1 & 0 & Q_2(u)-u_{153} \\
0 & * & * & * & 0 \\
\end{pmatrix}
\end{align*}
where $Q_1(u)=Q_1(u_{142},u_{154},u_{232})$ 
and $Q_2(u)$ does not depend on $u_{153}$. 

We can apply Lemma \ref{lem:connected-criterion} 
to the map $\aff^{14}\to \aff^7$ defined by the sequence
\begin{align*}
& u_{131},u_{142},u_{154},u_{242},
u_{121}+u_{154}+u_{232},
u_{141}-u_{152}+Q_1(u),
u_{153}-Q_2(u)
\end{align*}
where $u_{143},u_{151},u_{152},u_{221},u_{231},u_{232},u_{241}$  
are extra variables.
So by Proposition \ref{prop:W-eliminate}, 
Property \ref{property:W-eliminate} holds for 
any $x\in Z^{\sst}_{178\,k}$. Therefore, 
$Y^{\sst}_{178\,k}=P_{\be_{178}\,k}R(178)$ also. 

\vskip 5pt

(38) $S_{202}$, 
$\be_{202}=\tfrac {1} {180} (-32,-32,8,28,28,-25,-5,-5,35)$ 

We identify the element 
$(\diag(g_{11},t_1,g_{12}),\diag(t_{21},g_2,t_{22}))
\in M_{[2,3],[1,3]}=M_{\be_{202}}$ 
with the element 
$g=(g_{11},g_{12},g_2,t_1,t_{21},t_{22})\in \gl_2^3\times \gl_1^3$. 
On $M^{\text{st}}_{\be_{202}}$, 
\begin{equation*}
\chi_{{202}}(g) 
= (\det g_{11})^{-32}t_1^8(\det g_{12})^{28}
t_{21}^{-25}(\det g_2)^{-5}t_{22}^{35}
= t_1^{40}(\det g_{12})^{60}
(\det g_2)^{20}t_{22}^{60}. 
\end{equation*}

For $x\in Z_{202}$, let 
\begin{equation*}
A(x) = 
\begin{pmatrix}
x_{342} & x_{343} \\
x_{352} & x_{353}
\end{pmatrix},\;
B(x) = 
\begin{pmatrix}
x_{144} & x_{154} \\
x_{244} & x_{254}
\end{pmatrix}.
\end{equation*}
We identify $Z_{202}\cong \m_2\oplus \m_2\oplus 1$ 
by the map $Z_{202}\ni x\mapsto (A(x),B(x),x_{451})$. 

Let 
\begin{math}
P_1(x) = \det A(x),\; 
P_2(x) = \det B(x)
.\end{math}
We put $P(x) = P_1(x)^2P_2(x)^2 x_{451}$. 
Then on $M^{\text{st}}_{\be_{202}}$, 
\begin{align*}
A(gx) & = t_1g_{12}A(x){}^tg_2,\; 
B(gx) = t_{22}g_{11}A(x){}^tg_{12}, \\
P_1(gx) & = t_1^2(\det g_{12})(\det g_2)P_1(x),\;
P_2(gx) = (\det g_{11})(\det g_{12})P_2(x)t_{22}^2, \\
P(gx) & = (t_1^2(\det g_{12})(\det g_2))^2
((\det g_{11})(\det g_{12})t_{22}^2)^2 ((\det g_{12})t_{21})P(x) \\
& = (\det g_{11})^2t_1^4(\det g_{12})^5 t_{21}(\det g_2)^2t_{22}^4P(x) 
= t_1^2(\det g_{12})^3 (\det g_2)t_{22}^3P(x).
\end{align*}
Therefore, $P(x)$ is 
invariant under the action of $G_{\text{st},\be_{202}}$. 

Let $R(202)\in Z_{202}$ be element such that 
\begin{math}
A(R(202)) = B(R(202)) = I_2
\end{math}
and the $x_{451}$-coordinate is $1$. 
Then $P_1(R(202))=P_2(R(202))=1$. So 
$P(R(202))=1$ and $R(202)\in Z^{\sst}_{202}$. 
Explicitly, $R(202)=e_{451}+e_{342}+e_{353}+e_{144}+e_{254}$. 

We show that  $Z^{\sst}_{202\,k}=M_{\be_{202}\,k}R(202)$. 
Suppose that $x\in Z^{\sst}_{202\,k}$. 
It is easy to see that there exists $g\in M_{\be_{202}\,k}$ 
such that $A(gx)=B(gx)=I_2$. So we may assume that $A(x)=B(x)=I_2$. 
Let $t=(I_2,I_2,I_2,1,t_{21},1)$. 
Then $A(tx)=B(tx)=I_2$ and the 
$x_{451}$-coordinate of $tx$ is $t_{21}x_{451}$.
Therefore, $Z^{\sst}_{202\,k}=M_{\be_{202}\,k}R(202)$. 

We assume that $u_{121}=u_{154}=0$ and $u_{232}=0$.
Then the first component of $n(u)R(202)$ is the same as 
that of $R(202)$ and the remaining components 
are as follows: 
\begin{align*}
& \begin{pmatrix}
0 & 0 & 0 & 0 & 0 \\
0 & 0 & 0 & 0 & 0 \\
0 & 0 & 0 & 1 & 0 \\
0 & 0 & -1 & 0 & -u_{153}+u_{221} \\
0 & 0 & 0 & * & 0 
\end{pmatrix}, \;
\begin{pmatrix}
0 & 0 & 0 & 0 & 0 \\
0 & 0 & 0 & 0 & 0 \\
0 & 0 & 0 & 0 & 1 \\
0 & 0 & 0 & 0 & u_{143}+u_{231} \\
0 & 0 & 1 & * & 0 
\end{pmatrix}, \\
& \begin{pmatrix}
0 & 0 & 0 & 1 & 0 \\
0 & 0 & 0 & 0 & 1 \\
0 & 0 & 0 & u_{131}+u_{242} & u_{132}+u_{243} \\
-1 & 0 & * & 0 & Q(u)+u_{142}-u_{151}+u_{241} \\
0 & -1 & * & * & 0 \\
\end{pmatrix}
\end{align*}
where $Q(u)=Q(u_{143},u_{153},u_{242},u_{243})$.

We can apply Lemma \ref{lem:connected-criterion} 
to the map $\aff^{13}\to \aff^5$ defined by the sequence
\begin{align*}
& u_{143}+u_{231},u_{153}-u_{221},u_{131}+u_{242},
u_{132}+u_{243},u_{142}-u_{151}+u_{241}+Q(u)
\end{align*}
where $u_{141},u_{151},u_{152},u_{221},u_{231},u_{241},u_{242},u_{243}$  
are extra variables.
So by Proposition \ref{prop:W-eliminate}, 
Property \ref{property:W-eliminate} holds for 
any $x\in Z^{\sst}_{202\,k}$. Therefore, 
$Y^{\sst}_{202\,k}=P_{\be_{202}\,k}R(202)$ also.

\vskip 5pt

(39) $S_{216}$, 
$\be_{216} = \tfrac {1} {20} (-8,0,0,4,4,-5,-1,3,3)$ 

We identify the element 
$(\diag(t_1,g_{11},g_{12}),\diag(t_{21},t_{22},g_2))
\in M_{[1,3],[1,2]}=M_{\be_{216}}$ 
with the element 
$g=(g_{11},g_{12},g_2,t_1,t_{21},t_{22})\in \gl_2^3\times \gl_1^3$. 
On $M^{\text{st}}_{\be_{216}}$, 
\begin{equation*}
\chi_{{216}}(g) = t_1^{-8}(\det g_{12})^4
t_{21}^{-5}t_{22}^{-1}(\det g_2)^3
= (\det g_{11})^8(\det g_{12})^{12}t_{22}^4(\det g_2)^8. 
\end{equation*}

For $x\in Z_{216}$, let 
\begin{equation*}
A(x) = (x_{243},x_{253},x_{343},x_{353},x_{244}\ccd x_{354}). 
\end{equation*}
%
We identify $Z_{216}\cong \aff^2\otimes \aff^2\otimes \aff^2\oplus 1$ 
by the map $Z_{216}\ni x\mapsto (A(x),x_{452})$.

It is easy to see that $A(gx)=(g_{11},g_{12},g_2)A(x)$ 
(the natural action). Note that $gx$ does not depend on 
$t_1,t_{21}$. 
Let $P_1(x)$ be the degree $4$ polynomial of $A(x)$ 
obtained by Proposition \ref{prop:222-invariant}. 
We put $P(x)= P_1(x)x_{452}$. Then on $M^{\text{st}}_{\be_{216}}$, 
\begin{align*}
P_1(gx) & = (\det g_{11})^2(\det g_{12})^2(\det g_2)^2 P_1(x), \\
P(gx) & = ((\det g_{11})^2(\det g_{12})^2(\det g_2)^2) ((\det g_{12})t_{22})P(x) \\
& = (\det g_{11})^2(\det g_{12})^3t_{22}(\det g_2)^2 P(x). 
\end{align*}
Therefore, $P(x)$ is 
invariant under the action of $G_{\text{st},\be_{216}}$. 

By applying an element of the form  
$t=(I_2,I_2,I_2,1,1,t_{22})$ to $x$, 
the $x_{452}$-coordinate becomes $1$. 
Elements of the form 
$(g_{11},g_{12},g_2,1,1,(\det g_{12})^{-1})$
do not change this condition. 
Therefore, $M_{\be_{216}\,k}\backslash Z^{\sst}_{216\,k}$
is in bijective correspondence with the set of rational orbits 
of the \rep{} of Proposition \ref{prop:222-invariant}. 
Therefore, by Proposition \ref{prop:222-rat-orbits},  
$M_{\be_{216}\,k}\backslash Z^{\sst}_{216\,k}$ 
is in bijective correspondence with $\Ex_2(k)$. 

Let $R(216)\in Z_{216}$ be the element such that 
$A(R(216))=(1,0\ccd 0,1)$ and the $x_{452}$-coordinate is $1$
(see (\ref{eq:w-efn-222-matrix})).  
Explicitly, $R(216)=e_{452}+e_{243}+e_{354}$. 
Then $P(R(216))=1$ and so $R(216)\in Z^{\sst}_{216}$. 

We assume that $u_{132}=u_{154}=0$ and $u_{243}=0$.
Then the first component of $n(u)R(216)$ 
is $0$ and the remaining components are as follows: 
\begin{align*}
& \begin{pmatrix}
0 & 0 & 0 & 0 & 0 \\
0 & 0 & 0 & 0 & 0 \\
0 & 0 & 0 & 0 & 0 \\
0 & 0 & 0 & 0 & 1 \\
0 & 0 & 0 & -1 & 0 
\end{pmatrix},\;
\begin{pmatrix}
0 & 0 & 0 & 0 & 0 \\
0 & 0 & 0 & 1 & 0 \\
0 & 0 & 0 & 0 & 0 \\
0 & -1 & 0 & 0 & -u_{152}+u_{232} \\
0 & 0 & 0 & * & 0 
\end{pmatrix}, \;
\begin{pmatrix}
0 & 0 & 0 & 0 & 0 \\
0 & 0 & 0 & 0 & 0 \\
0 & 0 & 0 & 0 & 1 \\
0 & 0 & 0 & 0 & u_{143}+u_{242} \\
0 & 0 & -1 & * & 0 \\
\end{pmatrix}.
\end{align*}

We can apply Lemma \ref{lem:connected-criterion} 
to the map $\aff^{13}\to \aff^2$ defined by the sequence
\begin{math}
u_{143}+u_{242},u_{152}-u_{232}
\end{math}
where 
\begin{math}
u_{121},u_{131},u_{141},
u_{142},u_{151},u_{153},
u_{221},u_{231},u_{232},
u_{241},u_{242}
\end{math}
are extra variables.
So by Proposition \ref{prop:W-eliminate}, 
Property \ref{property:W-eliminate} holds for 
any $x\in Z^{\sst}_{216\,k}$. Therefore, 
$P_{\be_{216}\,k}\backslash Y^{\sst}_{216\,k}$ 
is in bijective correspondence with $\Ex_2(k)$ also.

\vskip 5pt

(40) $S_{217}$, 
$\be_{217} = \tfrac {1} {180} (-32,-12,8,8,28,-45,-5,15,35)$ 

We identify the element 
$(\diag(t_{11},t_{12},g_1,t_{13}),\diag(t_{21}\ccd t_{24}))
\in M_{[1,2,4],[1,2,3]}=M_{\be_{217}}$ 
with the element 
$g=(g_1,t_{11}\ccd t_{24})\in \gl_2\times \gl_1^7$. 
On $M^{\text{st}}_{\be_{217}}$, 
\begin{equation*}
\chi_{{217}}(g) = 
(t_{11})^{-32}t_{12}^{-12}(\det g_1)^8t_{13}^{28}
t_{21}^{-45}t_{22}^{-5}t_{23}^{15}t_{22}^{35}
= t_{12}^{20}(\det g_1)^{40}t_{13}^{60}
t_{22}^{40}t_{23}^{60}t_{24}^{80}. 
\end{equation*}

For $x\in Z_{217}$, let 
\begin{math}
A(x) = 
\left(
\begin{smallmatrix}
x_{352} & x_{234} \\
x_{452} & x_{244} 
\end{smallmatrix}
\right).
\end{math}
We identify $Z_{217}\cong \m_2\oplus 1^{3\oplus}$ 
by the map $Z_{217}\ni x\mapsto (A(x),x_{253},x_{343},x_{154})$. 

It is easy to see that 
\begin{math}
A(gx) = g_1 A(x) 
\diag(t_{13}t_{22},t_{12}t_{24}).
\end{math}
We put 
$P_1(x)=\det A(x)$ and 
$P(x)= P_1(x)^2x_{253}x_{343}^2x_{154}^2$. 
Then on $M^{\text{st}}_{\be_{217}}$, 
\begin{align*}
P_1(gx) & = t_{12}(\det g_1)t_{13}t_{22}t_{24}P_1(x), \\
P(gx) & = (t_{12}(\det g_1)t_{13}t_{22}t_{24})^2
(t_{12}t_{13}t_{23}) ((\det g_1)t_{23})^2 (t_{11}t_{13}t_{24})^2P(x) \\
& = t_{11}^2t_{12}^3(\det g_1)^4t_{13}^5 
t_{22}^2t_{23}^3t_{24}^4 P(x) 
= t_{12}(\det g_1)^2t_{13}^3 t_{22}^2t_{23}^3t_{24}^4 P(x).  
\end{align*}
Therefore, $P(x)$ is 
invariant under the action of $G_{\text{st},\be_{217}}$. 

Let $R(217)\in Z_{217}$ be element such that 
\begin{math}
A(R(217)) = I_2
\end{math}
and the $x_{253},x_{154},x_{344}$-coordinates are $1$. 
Explicitly, 
$R(217)=e_{352}+e_{253}+e_{343}+e_{154}+e_{244}$. 
Then $P(R(217))=1$ and so $R(217)\in Z^{\sst}_{217}$. 

We show that  $Z^{\sst}_{217\,k}=M_{\be_{217}\,k}R(217)$. 
Suppose that $x\in Z^{\sst}_{217\,k}$. 
It is easy to see that there exists $g\in M_{\be_{217}\,k}$ 
such that $A(gx)=I_2$. So we may assume that $A(x)=I_2$. 
Let $t=(\diag(t_{22}^{-1},t_{12}^{-1}),t_{11},t_{12},1,1,t_{22},$ $1,1)$.
Then $A(tx)=I_2$ and the 
$x_{253},x_{343},x_{154}$-coordinates 
of $tx$ are $t_{12}x_{253}$,
$t_{12}^{-1}t_{22}^{-1}x_{343}$, $t_{11}x_{154}$ respectively. 
Therefore, $Z^{\sst}_{217\,k}=M_{\be_{217}\,k}R(217)$. 

We assume that $u_{143}=0$.
Then the first component of $n(u)R(217)$ 
is $0$ and the remaining components are as follows: 
\begin{align*}
& \begin{pmatrix}
0 & 0 & 0 & 0 & 0 \\
0 & 0 & 0 & 0 & 0 \\
0 & 0 & 0 & 0 & 1 \\
0 & 0 & 0 & 0 & 0 \\
0 & 0 & -1 & 0 & 0 
\end{pmatrix},\;
\begin{pmatrix}
0 & 0 & 0 & 0 & 0 \\
0 & 0 & 0 & 0 & 1 \\
0 & 0 & 0 & 1 & u_{132}+u_{154}+u_{232} \\
0 & 0 & -1 & 0 & u_{142}-u_{153} \\
0 & -1 & * & * & 0 
\end{pmatrix}, 
\end{align*}

\begin{align*}
& \begin{pmatrix}
0 & 0 & 0 & 0 & 1 \\
0 & 0 & 0 & 1 & u_{121}+u_{154}+u_{243} \\
0 & 0 & 0 & u_{132}+u_{243} & Q_1(u)+u_{131}+u_{242} \\
0 & -1 & * & 0 & Q_2(u)+u_{141}-u_{152} \\
-1 & * & * & * & 0 \\
\end{pmatrix}
\end{align*}
where $Q_1(u) = Q_1(u_{132},u_{154},u_{243})$ and 
$Q_2(u)=Q_2(u_{142},u_{153},u_{154},u_{243})$. 

We can apply Lemma \ref{lem:connected-criterion} 
to the map $\aff^{15}\to \aff^6$ defined by the sequence
\begin{align*}
& u_{121}+u_{154}+u_{243},u_{132}+u_{243},u_{232}+u_{132}+u_{154},
u_{142}-u_{153},\\
&  u_{131}+u_{242}+Q_1(u),u_{141}-u_{152}+Q_2(u)
\end{align*}
where $u_{151},u_{152},u_{153},u_{154},u_{221},u_{231},u_{241},u_{242},u_{243}$
are extra variables.
So by Proposition \ref{prop:W-eliminate}, 
Property \ref{property:W-eliminate} holds for 
any $x\in Z^{\sst}_{217\,k}$. Therefore, 
$Y^{\sst}_{217\,k}=P_{\be_{217}\,k}R(217)$ also. 

\vskip 5pt

(41) $S_{223}$, 
$\be_{223} = \tfrac {1} {140} (-36,-16,4,4,44,-15,-15,5,25)$ 

We identify the element 
$(\diag(t_{11},t_{12},g_1,t_{13}),\diag(g_2,t_{21},t_{22}))
\in M_{[1,2,4],[2,3]}=M_{\be_{223}}$ 
with the element 
$g=(g_1,g_2,t_{11}\ccd t_{22})\in \gl_2^2\times \gl_1^5$. 
On $M^{\text{st}}_{\be_{223}}$, 
\begin{equation*}
\chi_{{223}}(g) = 
t_{11}^{-36}t_{12}^{-16}(\det g_1)^4t_{13}^{44}
(\det g_2)^{-15}t_{21}^5t_{22}^{25}
= t_{12}^{20}(\det g_1)^{40}t_{13}^{80}
t_{21}^{20}t_{22}^{40}. 
\end{equation*}

For $x\in Z_{223}$, let 
\begin{math}
A(x) = 
\left(
\begin{smallmatrix}
x_{351} & x_{352} \\
x_{451} & x_{452} 
\end{smallmatrix}
\right).
\end{math}
We identify $Z_{223}\cong \m_2\oplus 1^{3\oplus}$
by the map $Z_{223}\ni x\mapsto (A(x),x_{253},x_{154},x_{344})$. 

It is easy to see that 
\begin{math}
A(gx) = t_{13}g_1 A(x) {}^t g_2. 
\end{math}
We put 
$P_1(x)=\det A(x)$ and 
$P(x) = (\det A(x))x_{253}^2x_{154}x_{344}^2$. 
Then on $M^{\text{st}}_{\be_{223}}$, 
\begin{align*}
P_1(gx) & = (\det g_1)t_{13}^2(\det g_2)P_1(x), \\
P(gx) & = ((\det g_1)t_{13}^2(\det g_2))
(t_{12}t_{13}t_{21})^2 (t_{11}t_{13}t_{22}) 
((\det g_1)t_{22})^2 P(x) \\
& = t_{11}t_{12}^2(\det g_1)^3t_{13}^5 
(\det g_2)t_{21}^2t_{22}^3 P(x) 
= t_{12}(\det g_1)^2t_{13}^4 t_{21}t_{22}^2 P(x).  
\end{align*}
Therefore, $P(x)$ is 
invariant under the action of $G_{\text{st},\be_{223}}$.

Let $R(223)\in Z_{223}$ be element such that 
\begin{math}
A(R(223)) = I_2
\end{math}
and the $x_{253},x_{154},x_{344}$-coordinates are $1$. 
Explicitly, $R(223)=e_{351}+e_{452}+e_{253}+e_{154}+e_{344}$. 
Then $P(R(223))=1$ and so $R(223)\in Z^{\sst}_{223}$. 

We show that  $Z^{\sst}_{223\,k}=M_{\be_{223}\,k}R(223)$. 
Suppose that $x\in Z^{\sst}_{223\,k}$. 
It is easy to see that there exists $g\in M_{\be_{223}\,k}$ 
such that $A(gx)=I_2$. So we may assume that $A(x)=I_2$. 
Let $t=(I_2,I_2,t_{11},t_{12},1,1,t_{22})$.
Then $A(tx)=I_2$ and the $x_{253},x_{154},x_{344}$-coordinates 
of $tx$ are $t_{12}x_{253},t_{11}t_{22}x_{154},t_{22}x_{344}$. 
Therefore, $Z^{\sst}_{223\,k}=M_{\be_{223}\,k}R(223)$. 

We assume that $u_{143}=0$ and $u_{221}=0$. 
Then the first two components of $n(u)R(223)$ 
is the same as those of $R(223)$ and the remaining 
components are as follows: 
\begin{align*}
& \begin{pmatrix}
0 & 0 & 0 & 0 & 0 \\
0 & 0 & 0 & 0 & 1 \\
0 & 0 & 0 & 0 & u_{132}+u_{231} \\
0 & 0 & 0 & 0 & u_{142}+u_{232} \\
0 & -1 & * & * & 0 \\
\end{pmatrix}, \;
\begin{pmatrix}
0 & 0 & 0 & 0 & 1 \\
0 & 0 & 0 & 0 & u_{121}+u_{243} \\
0 & 0 & 0 & 1 & Q_1(u)+u_{131}+u_{154}+u_{241} \\
0 & 0 & -1 & 0 & Q_2(u)+u_{141}-u_{153}+u_{242} \\
-1 & * & * & * & 0 
\end{pmatrix}
\end{align*}
where $Q_1(u),Q_2(u)$ are polynomials of
$u_{132},u_{142},u_{243}$. 

We can apply Lemma \ref{lem:connected-criterion} 
to the map $\aff^{14}\to \aff^5$ defined by the sequence
\begin{align*}
& u_{121}+u_{243},u_{231}+u_{132},u_{232}+u_{142}, \\
& u_{131}+u_{154}+u_{241}+Q_1(u),u_{141}-u_{153}+u_{242}+Q_2(u)
\end{align*}
where $u_{132},u_{142},u_{151},u_{152},u_{153},u_{154},u_{241},u_{242},u_{243}$
are extra variables.
So by Proposition \ref{prop:W-eliminate}, 
Property \ref{property:W-eliminate} holds for 
any $x\in Z^{\sst}_{223\,k}$. Therefore, 
$Y^{\sst}_{223\,k}=P_{\be_{223}\,k}R(223)$ also. 

\vskip 5pt

(42) $S_{224}$, 
$\be_{224} = \tfrac {1} {580} (-32,-12,8,8,28,-25,-25,-5,55)$ 

We identify the element 
$(\diag(t_{11},t_{12},g_1,t_{13}),\diag(g_2,t_{21},t_{22}))
\in M_{[1,2,4],[2,3]}=M_{\be_{224}}$ 
with the element 
$g=(g_1,g_2,t_{11}\ccd t_{22})\in \gl_2^2\times \gl_1^5$. 
On $M^{\text{st}}_{\be_{224}}$, 
\begin{equation*}
\chi_{{224}}(g)  
= t_{11}^{-32}t_{12}^{-12}(\det g_1)^8t_{13}^{28}(\det g_2)^{-25}t_{21}^{-5}t_{22}^{55} 
= t_{12}^{20}(\det g_1)^{40}t_{13}^{60}t_{21}^{20}t_{22}^{80}. 
\end{equation*}

For $x\in Z_{224}$, let 
\begin{math}
A(x) = 
\left(
\begin{smallmatrix}
x_{351} & x_{352} \\
x_{451} & x_{452} 
\end{smallmatrix}
\right).
\end{math}
We identify $Z_{224}\cong \m_2\oplus 1^{3\oplus}$ 
by the map $Z_{224}\ni x\mapsto (A(x),x_{253},x_{343},x_{124})$. 

It is easy to see that 
\begin{math}
A(gx) = t_{13}g_1 A(x) {}^t g_2.
\end{math}
We put $P_1(x)=\det A(x)$ and 
$P(x) = P_1(x)^6x_{253}x_{343}^6x_{124}^{10}$. 
Then on $M^{\text{st}}_{\be_{224}}$, 
\begin{align*}
P_1(gx) & = (\det g_1)t_{13}^2(\det g_2)P_1(x), \\
P(gx) & = ((\det g_1)t_{13}^2(\det g_2))^6
(t_{12}t_{13}t_{21}) ((\det g_1)t_{21})^6  
(t_{11}t_{12}t_{22})^{10} P(x) \\
& = t_{11}^{10}t_{12}^{11}(\det g_1)^{12}t_{13}^{13} 
(\det g_2)^6t_{21}^7t_{22}^{10} P(x) 
= t_{12}(\det g_1)^2t_{13}^3 t_{21}t_{22}^4 P(x).  
\end{align*}
Therefore, $P(x)$ is 
invariant under the action of $G_{\text{st},\be_{224}}$.

Let $R(224)\in Z_{224}$ be the element such that 
\begin{math}
A(R(224)) = I_2
\end{math}
and the $x_{253},x_{343},x_{124}$-coordinates are $1$. 
Explicitly, 
$R(224)=e_{351}+e_{452}+e_{253}+e_{343}+e_{124}$. 
Then $P(R(224))=1$ and so $R(224)\in Z^{\sst}_{224}$. 

We show that  $Z^{\sst}_{224\,k}=M_{\be_{224}\,k}R(224)$. 
Suppose that $x\in Z^{\sst}_{224\,k}$. 
It is easy to see that there exists $g\in M_{\be_{224}\,k}$ 
such that $A(gx)=I_2$. So we may assume that $A(x)=I_2$. 
Let $t=(I_2,I_2,t_{11},t_{12},1,t_{21},1)$.
Then $A(tx)=I_2$ and the $x_{253},x_{343},x_{124}$-coordinates 
of $tx$ are $t_{12}t_{21}x_{253},t_{21}x_{343},t_{11}t_{12}x_{124}$. 
Therefore, $Z^{\sst}_{224\,k}=M_{\be_{224}\,k}R(224)$. 

We assume that $u_{143}=u_{221}=0$. 
Then the four components of $n(u)R(224)$ 
are as follows: 
\begin{align*}
& \begin{pmatrix}
0 & 0 & 0 & 0 & 0 \\
0 & 0 & 0 & 0 & 0 \\
0 & 0 & 0 & 0 & 1 \\
0 & 0 & 0 & 0 & 0 \\
0 & 0 & -1 & 0 & 0 
\end{pmatrix},\;
\begin{pmatrix}
0 & 0 & 0 & 0 & 0 \\
0 & 0 & 0 & 0 & 0 \\
0 & 0 & 0 & 0 & 0 \\
0 & 0 & 0 & 0 & 1 \\
0 & 0 & 0 & -1 & 0 
\end{pmatrix}, \;
\begin{pmatrix}
0 & 0 & 0 & 0 & 0 \\
0 & 0 & 0 & 0 & 1 \\
0 & 0 & 0 & 1 & u_{132}+u_{154}+u_{231} \\
0 & 0 & -1 & 0 & u_{142}-u_{153}+u_{232} \\
0 & -1 & * & * & 0 \\
\end{pmatrix}, \\
& \begin{pmatrix}
0 & 1 & u_{132} & u_{142} & u_{152} \\
-1 & 0 & Q_1(u)-u_{131} & Q_2(u)-u_{141} & Q_3(u)-u_{151}+u_{243} \\
* & * & 0 & Q_4(u)+u_{243} & Q_5(u)+u_{241} \\
* & * & * & 0 & Q_6(u)+u_{242} \\
* & * & * & * & 0 
\end{pmatrix}
\end{align*}
where $Q_1(u),Q_2(u),Q_3(u)$ are polynomials of 
$u_{121},u_{132},u_{142},u_{152}$, 
$Q_4(u)$ is a polynomial of 
$u_{131},u_{132},u_{141},u_{142}$ 
and $Q_5(u),Q_6(u)$ do not depend on $u_{241},u_{242}$. 

We can apply Lemma \ref{lem:connected-criterion} 
to the map $\aff^{14}\to \aff^{11}$ defined by the sequence
\begin{align*}
& u_{132},u_{142},u_{152},u_{154}+u_{132}+u_{231},
u_{153}-u_{142}-u_{232}, 
u_{131}-Q_1(u),\\
&  u_{141}-Q_2(u),u_{243}+Q_4(u),
u_{151}-u_{243}-Q_3(u), 
u_{241}+Q_5(u),u_{242}+Q_6(u)
\end{align*}
where $u_{121},u_{231},u_{232}$ are extra variables.
So by Proposition \ref{prop:W-eliminate}, 
Property \ref{property:W-eliminate} holds for 
any $x\in Z^{\sst}_{224\,k}$. Therefore, 
$Y^{\sst}_{224\,k}=P_{\be_{224}\,k}R(224)$ also.

\vskip 5pt

(43) $S_{226}$, 
$\be_{226} = \tfrac {1} {420} (-48,-28,-8,32,52,-25,-5,15,15)$ 

We identify the element 
$(\diag(t_{11}\ccd t_{15}),\diag(t_{21},t_{22},g_2))
\in M_{[1,2,3,4],[1,2]}=M_{\be_{226}}$ 
with the element 
$g=(g_2,t_{11}\ccd t_{22})\in \gl_2\times \gl_1^7$. 
On $M^{\text{st}}_{\be_{226}}$, 
\begin{equation*}
\chi_{{226}}(g) 
= t_{11}^{-48}t_{12}^{-28}t_{13}^{-8}t_{14}^{32}t_{15}^{52}
t_{21}^{-25}t_{22}^{-5}(\det g_2)^{15}
= t_{12}^{20}t_{13}^{40}t_{14}^{80}t_{15}^{100}
t_{22}^{20}(\det g_2)^{40}. 
\end{equation*}

For $x\in Z_{226}$, let 
\begin{math}
A(x) = 
\left(
\begin{smallmatrix}
x_{153} & x_{154} \\
x_{243} & x_{244}
\end{smallmatrix}
\right).
\end{math}
We identify $Z_{226}\cong \m_2\oplus 1^{3\oplus}$ 
by the map $Z_{226}\ni x \mapsto (A(x),x_{351},x_{252},x_{342})$. 

It is easy to see that 
\begin{math}
A(gx) = \left(
\begin{smallmatrix}
t_{11}t_{15} & 0 \\
0 & t_{12}t_{14} 
\end{smallmatrix}
\right)
A(x) {}^t g_2.
\end{math}
We put $P_1(x)=\det A(x)$ and 
\begin{math}
P(x) = P_1(x)^6x_{351}^4x_{252}x_{342}^4. 
\end{math}
Then on $M^{\text{st}}_{\be_{226}}$, 
\begin{align*}
P_1(gx) & = t_{11}t_{12}t_{14}t_{15}(\det g_2)P_1(x), \\
P(gx) & = (t_{11}t_{12}t_{14}t_{15}(\det g_2))^6
(t_{13}t_{15}t_{21})^4 (t_{12}t_{15}t_{22})  
(t_{13}t_{14}t_{22})^4 P(x) \\
& = t_{11}^6t_{12}^7t_{13}^8t_{14}^{10}t_{15}^{11}
t_{21}^4t_{22}^5(\det g_2)^6 P(x)
= t_{12}t_{13}^2t_{14}^4t_{15}^5
t_{22}(\det g_2)^2 P(x). 
\end{align*}
Therefore, $P(x)$ is 
invariant under the action of $G_{\text{st},\be_{226}}$.

Let $R(226)\in Z_{226}$ be element such that 
\begin{math}
A(R(226)) = I_2
\end{math}
and the $x_{351},x_{252},x_{342}$-coordinates are $1$. 
Explicitly, 
$R(226)=e_{351}+e_{252}+e_{342}+e_{153}+e_{244}$. 
Then $P(R(226))=1$ and so $R(226)\in Z^{\sst}_{226}$. 

We show that  $Z^{\sst}_{226\,k}=M_{\be_{224}\,k}R(226)$. 
Suppose that $x\in Z^{\sst}_{226\,k}$. 
It is easy to see that there exists $g\in M_{\be_{226}\,k}$ 
such that $A(gx)=I_2$. So we may assume that $A(x)=I_2$. 
Let $t=(t_{11}^{-1}I_2,t_{11},t_{11},t_{13},1,1,t_{21},1)$.
Then $A(tx)=I_2$ and the $x_{351},x_{252},x_{342}$-coordinates 
of $tx$ are $t_{13}t_{21}x_{351},t_{11}x_{252},t_{13}x_{342}$. 
Therefore, $Z^{\sst}_{226\,k}=M_{\be_{226}\,k}R(226)$.

We assume that $u_{243}=0$. 
Then the four components of $n(u)R(226)$ 
are as follows: 
\begin{align*}
& \begin{pmatrix}
0 & 0 & 0 & 0 & 0 \\
0 & 0 & 0 & 0 & 0 \\
0 & 0 & 0 & 0 & 1 \\
0 & 0 & 0 & 0 & u_{143} \\
0 & 0 & -1 & * & 0 
\end{pmatrix},\;
\begin{pmatrix}
0 & 0 & 0 & 0 & 0 \\
0 & 0 & 0 & 0 & 1 \\
0 & 0 & 0 & 1 & u_{132}+u_{154}+u_{221} \\
0 & 0 & -1 & 0 & Q_1(u)+u_{142}-u_{153} \\
0 & -1 & * & * & 0 
\end{pmatrix}, \\
& \begin{pmatrix}
0 & 0 & 0 & 0 & 1 \\
0 & 0 & 0 & 0 & u_{121}+u_{232} \\
0 & 0 & 0 & u_{232} & Q_2(u)+u_{131}+u_{231} \\
0 & 0 & * & 0 & Q_3(u)+u_{141} \\
-1 & * & * & * & 0 \\
\end{pmatrix}, \;
\begin{pmatrix}
0 & 0 & 0 & 0 & 0 \\
0 & 0 & 0 & 1 & u_{154}+u_{242} \\
0 & 0 & 0 & u_{132}+u_{242} & Q_4(u)+u_{241} \\
0 & -1 & * & 0 & Q_5(u)-u_{152} \\
0 & * & * & * & 0 
\end{pmatrix}
\end{align*}
where $Q_1(u),Q_2(u),Q_4(u)$ are polynomials of 
$u_{132},u_{143},u_{154},u_{221},u_{232},u_{242}$, 
$Q_3(u)$ does not depend on $u_{141},u_{152}$ 
and $Q_5(u)$ does not depend on $u_{152}$.  

We can apply Lemma \ref{lem:connected-criterion} 
to the map $\aff^{15}\to \aff^{11}$ defined by the sequence
\begin{align*}
& u_{143},u_{232},u_{121}+u_{232},u_{242}+u_{132},
u_{154}+u_{242},u_{221}+u_{132}+u_{154}, \\
& u_{142}-u_{153}+Q_1(u), 
u_{131}+u_{231}+Q_2(u), 
u_{241}+Q_4(u), 
u_{141}+Q_3(u),
u_{152}-Q_5(u)
\end{align*}
where $u_{132},u_{151},u_{153},u_{231}$ are extra variables.
So by Proposition \ref{prop:W-eliminate}, 
Property \ref{property:W-eliminate} holds for 
any $x\in Z^{\sst}_{226\,k}$. Therefore, 
$Y^{\sst}_{226\,k}=P_{\be_{226}\,k}R(226)$ also. 

\vskip 5pt

(44) $S_{227}$, 
$\be_{227}=\tfrac {1} {140} (-26,-6,4,4,24,-15,-5,5,15)$ 

We identify the element 
$(\diag(t_{11},t_{12},g_1,t_{13}),\diag(t_{21}\ccd t_{24}))
\in M_{[1,2,4],[1,2,3]}=M_{\be_{227}}$ 
with the element  
$g=(g_1,t_{11}\ccd t_{24})\in \gl_2\times \gl_1^7$. 
On $M^{\text{st}}_{\be_{227}}$, 
\begin{equation*}
\chi_{{227}}(g) 
= t_{11}^{-26}t_{12}^{-6}(\det g_1)^4t_{13}^{24}
t_{21}^{-15}t_{22}^{-5}t_{23}^5t_{24}^{15}
= t_{11}^{20}(\det g_1)^{30}t_{13}^{50}
t_{22}^{10}t_{23}^{20}t_{24}^{30}.
\end{equation*}

For $x\in Z_{227}$, let 
\begin{math}
A(x) = 
\left(
\begin{smallmatrix}
x_{351} & x_{234} \\
x_{451} & x_{244}
\end{smallmatrix}
\right).
\end{math}
We identify $Z_{227}\cong \m_2\oplus 1^{3\oplus}$ 
by the map $Z_{227}\ni x\mapsto (A(x),x_{252},x_{343},x_{154})$. 

It is easy to see that 
\begin{math}
A(gx) = 
g_1 A(x) 
\diag(t_{13}t_{21},t_{12}t_{24}). 
\end{math}
We put $P_1(x)=\det A(x)$ and 
\begin{math}
P(x) = P_1(x)^2x_{252}^3x_{343}^4x_{154}^3. 
\end{math}
Then on $M^{\text{st}}_{\be_{227}}$, 
\begin{align*}
P_1(gx) & = t_{12}(\det g_1)t_{13}t_{21}t_{24}P_1(x), \\
P(gx) & = (t_{12}(\det g_1)t_{13}t_{21}t_{24})^2
(t_{12}t_{13}t_{22})^3 ((\det g_1)t_{23})^4  
(t_{11}t_{13}t_{24})^3 P(x) \\
& = t_{11}^3t_{12}^5(\det g_1)^6t_{13}^8
t_{21}^2t_{22}^3t_{23}^4t_{24}^5 P(x)
= t_{12}^2(\det g_1)^3t_{13}^5
t_{22}t_{23}^2t_{24}^3P(x).
\end{align*}
Therefore, $P(x)$ is 
invariant under the action of $G_{\text{st},\be_{227}}$.

Let $R(227)\in Z_{227}$ be element such that 
\begin{math}
A(R(227)) = I_2
\end{math}
and the $x_{252},x_{343},x_{154}$-coordinates are $1$. 
Explicitly, 
$R(227)=e_{351}+e_{252}+e_{343}+e_{154}+e_{244}$. 
Then $P(R(227))=1$ and so $R(227)\in Z^{\sst}_{227}$. 

We show that  $Z^{\sst}_{227\,k}=M_{\be_{227}\,k}R(227)$. 
Suppose that $x\in Z^{\sst}_{227\,k}$. 
It is easy to see that there exists $g\in M_{\be_{227}\,k}$ 
such that $A(gx)=I_2$. So we may assume that $A(x)=I_2$. 
Let $t=(t_{12}^{-1}I_2,t_{11},t_{12},1,t_{12},1,t_{23},1)$.
Then $A(tx)=I_2$ and the $x_{252},x_{343},x_{154}$-coordinates 
of $tx$ are $t_{12}x_{252},t_{12}^{-2}t_{23}x_{343},t_{11}x_{154}$. 
Therefore, $Z^{\sst}_{227\,k}=M_{\be_{227}\,k}R(227)$. 

We assume that $u_{143}=0$. 
Then the four components of $n(u)R(227)$ 
are as follows: 
\begin{align*}
& \begin{pmatrix}
0 & 0 & 0 & 0 & 0 \\
0 & 0 & 0 & 0 & 1 \\
0 & 0 & 0 & 0 & u_{132}+u_{221} \\
0 & 0 & 0 & 0 & u_{142} \\
0 & -1 & * & * & 0 
\end{pmatrix}, \;
\begin{pmatrix}
0 & 0 & 0 & 0 & 0 \\
0 & 0 & 0 & 0 & u_{232} \\
0 & 0 & 0 & 1 & Q_1(u)+u_{154}+u_{231} \\
0 & 0 & -1 & 0 & Q_2(u)-u_{153} \\
0 & * & * & * & 0 \\
\end{pmatrix}, \\
& \begin{pmatrix}
0 & 0 & 0 & 0 & 1 \\
0 & 0 & 0 & 1 & u_{121}+u_{154}+u_{242} \\
0 & 0 & 0 & u_{132}+u_{243} & Q_3(u)+u_{131}+u_{241} \\
0 & -1 & * & 0 & Q_4(u)+u_{141}-u_{152} \\
-1 & * & * & * & 0 
\end{pmatrix}
\end{align*}
where $Q_1(u),Q_2(u)$ are polynomials of $u_{132},u_{142},u_{232}$
and $Q_3(u),Q_4(u)$ do not depend on $u_{131},u_{141},u_{152},u_{241}$.

We can apply Lemma \ref{lem:connected-criterion} 
to the map $\aff^{15}\to \aff^9$ defined by the sequence
\begin{align*}
& u_{142},u_{232},u_{132}+u_{221},u_{243}+u_{132},
u_{121}+u_{154}+u_{242}, \\
& u_{231}+u_{154}+Q_1(u),
u_{153}-Q_2(u), 
u_{131}+u_{241}+Q_3(u),
u_{141}-u_{152}+Q_4(u), 
\end{align*}
where $u_{151},u_{152},u_{154},u_{221},u_{241},u_{242}$ are extra variables.
So by Proposition \ref{prop:W-eliminate}, 
Property \ref{property:W-eliminate} holds for 
any $x\in Z^{\sst}_{227\,k}$. Therefore, 
$Y^{\sst}_{227\,k}=P_{\be_{227}\,k}R(227)$. 

\vskip 5pt

(45) $S_{232}$, 
$\be_{232} = \tfrac {1} {140} (-16,-6,-6,4,24,-15,-5,-5,25)$ 

We identify the element 
$(\diag(t_{11},g_1,t_{12},t_{13}),\diag(t_{21},g_2,t_{22}))
\in M_{[1,3,4],[1,3]}=M_{\be_{232}}$ 
with the element 
$g=(g_1,g_2,t_{11}\ccd t_{22})\in \gl_2^2\times \gl_1^5$. 
Then on $M^{\text{st}}_{\be_{232}}$, 
\begin{equation*}
\chi_{{232}}(g)  
= t_{11}^{-16}(\det g_1)^{-6}t_{12}^4t_{13}^{24}
t_{21}^{-15}(\det g_2)^{-5} t_{22}^{25}
= (\det g_1)^{10}t_{12}^{20}t_{13}^{40}
(\det g_2)^{10} t_{22}^{40}. 
\end{equation*}

For $x\in Z_{232}$, let 
\begin{math}
A(x) = 
\left(
\begin{smallmatrix}
x_{252} & x_{253} \\
x_{352} & x_{353}
\end{smallmatrix}
\right).
\end{math}
We identify $Z_{232}\cong \m_2\oplus 1^{3\oplus}$ 
by the map $Z_{232}\ni x\mapsto (A(x),x_{451},x_{144},x_{234})$. 

It is easy to see that 
\begin{math}
A(gx)  = t_{13} g_1 A(x) {}^tg_2.
\end{math}
We put $P_1(x)=\det A(x)$ and 
\begin{math}
P(x) = P_1(x)^3x_{451}^2x_{144}^4x_{234}^2. 
\end{math}
On $M^{\text{st}}_{\be_{232}}$, 
\begin{align*}
P_1(gx) & = (\det g_1)t_{13}^2(\det g_2)P_1(x), \\
P(gx) & = ((\det g_1)t_{13}^2(\det g_2))^3
(t_{12}t_{13}t_{21})^2 
(t_{11}t_{12}t_{22})^4 
((\det g_1)t_{22})^2 P(x) \\
& = t_{11}^4(\det g_1)^5t_{12}^6t_{13}^8
t_{21}^2(\det g_2)^3t_{22}^6 P(x)
= (\det g_1)t_{12}^2t_{13}^4(\det g_2)t_{22}^4P(x).
\end{align*}
Therefore, $P(x)$ is 
invariant under the action of $G_{\text{st},\be_{232}}$.

Let $R(232)\in Z_{232}$ be element such that 
\begin{math}
A(R(232)) = I_2
\end{math}
and the $x_{451},x_{144},x_{234}$-entries are $1$. 
Explicitly, 
$R(232)=e_{451}+e_{252}+e_{353}+e_{144}+e_{234}$. 
Then $P(R(232))=1$ and so $R(232)\in Z^{\sst}_{232}$. 

We show that  $Z^{\sst}_{232\,k}=M_{\be_{232}\,k}R(232)$. 
Suppose that $x\in Z^{\sst}_{232\,k}$. 
It is easy to see that there exists $g\in M_{\be_{232}\,k}$ 
such that $A(gx)=I_2$. So we may assume that $A(x)=I_2$. 
Let $t=(I_2,I_2,t_{11},1,1,t_{21},t_{22})$.
Then $A(tx)=I_2$ and the $x_{451},x_{144},x_{234}$-coordinates 
of $tx$ are $t_{21}x_{451},t_{11}t_{22}x_{144},t_{22}x_{234}$. 
Therefore, $Z^{\sst}_{232\,k}=M_{\be_{232}\,k}R(232)$. 

We assume that $u_{132}=0$ and $u_{232}=0$. 
Then the four components of $n(u)R(232)$ 
are as follows: 
\begin{align*}
& \begin{pmatrix}
0 & 0 & 0 & 0 & 0 \\
0 & 0 & 0 & 0 & 0 \\
0 & 0 & 0 & 0 & 0 \\
0 & 0 & 0 & 0 & 1 \\
0 & 0 & 0 & -1 & 0 
\end{pmatrix},\;
\begin{pmatrix}
0 & 0 & 0 & 0 & 0 \\
0 & 0 & 0 & 0 & 1 \\
0 & 0 & 0 & 0 & 0 \\
0 & 0 & 0 & 0 & u_{142}+u_{221} \\
0 & -1 & 0 & * & 0 
\end{pmatrix}, \;
\begin{pmatrix}
0 & 0 & 0 & 0 & 0 \\
0 & 0 & 0 & 0 & 0 \\
0 & 0 & 0 & 0 & 1 \\
0 & 0 & 0 & 0 & u_{143}+u_{231} \\
0 & 0 & -1 & * & 0 \\
\end{pmatrix}, \\
& \begin{pmatrix}
0 & 0 & 0 & 1 & u_{154} \\
0 & 0 & 1 & u_{121}+u_{143} & Q_1(u)+u_{153}+u_{242} \\
0 & -1 & 0 & u_{131}-u_{142} & Q_2(u)-u_{152}+u_{243} \\
-1 & * & * & 0 & Q_3(u)-u_{151}+u_{241} \\
* & * & * & * & 0 
\end{pmatrix}
\end{align*}
where $Q_1(u),Q_2(u)$ are polynomials of 
$u_{121},u_{131},u_{154}$ and 
$Q_3(u)$ does not depend on $u_{151},u_{241}$. 

We can apply Lemma \ref{lem:connected-criterion} 
to the map $\aff^{14}\to \aff^8$ defined by the sequence
\begin{align*}
& u_{154},u_{142}+u_{221},u_{143}+u_{231},
u_{121}+u_{143},u_{131}-u_{142}, \\
& u_{153}+u_{242}+Q_1(u),
u_{152}-u_{243}-Q_2(u), 
u_{151}-u_{241}-Q_3(u), 
\end{align*}
where $u_{141},u_{221},u_{231},u_{241},u_{242},u_{243}$ are extra variables.
So by Proposition \ref{prop:W-eliminate}, 
Property \ref{property:W-eliminate} holds for 
any $x\in Z^{\sst}_{232\,k}$. Therefore, 
$Y^{\sst}_{232\,k}=P_{\be_{232}\,k}R(232)$.

\vskip 5pt

(46) $S_{254}$, 
$\be_{254} = \tfrac {1} {20} (-3,-3,2,2,2,-5,0,0,5)$

We identify the element 
$(\diag(g_{11},g_{12}),\diag(t_{21},g_2,t_{22}))
\in M_{[2],[1,3]}=M_{\be_{254}}$ 
with the element 
$g=(g_{12},g_{11},g_2,t_{21},t_{22})\in 
\gl_3\times \gl_2^2\times \gl_1^2$. 
Then on $M^{\text{st}}_{\be_{254}}$, 
\begin{equation*}
\chi_{{254}}(g) = 
(\det g_{11})^{-3}(\det g_{12})^2
t_{21}^{-5}t_{22}^5
= (\det g_{12})^5(\det g_2)^5t_{22}^{10}
\end{equation*}

Let $\bbmp_{3,i},p_{3,ij}$ be as before. 
For $x\in Z_{254}$, let 
\begin{align*}
A_1(x) & = x_{342}p_{3,12}+x_{352} p_{3,13}+x_{452} p_{3,23}, \;
A_2(x) = x_{343}p_{3,12}+x_{353}p_{3,13}+x_{453}p_{3,23}, \\
B_1(x) & = x_{134}\bbmp_{3,1}+x_{144}\bbmp_{3,2}+x_{154}\bbmp_{3,3},\;
B_2(x) = x_{234}\bbmp_{3,1}+x_{244}\bbmp_{3,2}+x_{254}\bbmp_{3,3}
\end{align*}
and $A(x)=(A_1(x),A_2(x))$, $B(x)=(B_1(x),B_2(x))$. 
We regard that $A(x),B(x)$ are elements of 
$\Lam^{3,2}_{1,[3,5]}\otimes \Lam^{2,1}_{2,[2,3]} 
\cong \wedge^2 \aff^3\otimes \aff^2$, 
$\Lam^{3,1}_{1,[3,5]} \otimes \Lam^{2,1}_{1,[1,2]}
\cong \aff^3 \otimes \aff^2$ respectively. 
We identify $Z_{254}\cong (\wedge^2\aff^3)\otimes \aff^2\oplus \aff^3\otimes \aff^2$
by the map $Z_{254}\ni x\mapsto (A(x),B(x))$. 

It is easy to see that 
\begin{equation*}
\begin{pmatrix}
A_1(gx) \\
A_2(gx)
\end{pmatrix}
= g_2\begin{pmatrix}
(\wedge^2 g_{12})A_1(x) \\
(\wedge^2 g_{12})A_2(x)
\end{pmatrix},\; 
\begin{pmatrix}
B_1(gx) \\
B_2(gx)
\end{pmatrix}
= t_{22}g_{11}\begin{pmatrix}
g_{12}B_1(x) \\
g_{12}B_2(x)
\end{pmatrix}.
\end{equation*}

We define $\Phi(x) = A(x)\wedge B(x)\in \m_2$ 
regarding $A(x),B(x)$ as column, row vectors respectively 
and taking $\wedge$ of entries. Then 
$\Phi(gx)=(\det g_{12})t_{22}g_2\Phi(x){}^tg_{11}$.
Let $P(x)=\det \Phi(x)$.  Then on $M^{\text{st}}_{\be_{254}}$, 
\begin{align*}
P(gx) & = (\det g_{11})(\det g_{12})^2 (\det g_2)t_{22}^2 P(x)
= (\det g_{12}) (\det g_2)t_{22}^2 P(x). 
\end{align*}
Therefore, $P(x)$ is 
invariant under the action of $G_{\text{st},\be_{254}}$.

Let $R(254)\in Z_{254}$ be the element such that 
\begin{equation*}
A(R(254)) = 
[p_{3,23},-p_{3,13}],\;
B(R(254)) = 
[\bbmp_{3,1},\bbmp_{3,2}].
\end{equation*}
Explicitly, 
$R(254)=e_{452}-e_{353}+e_{134}+e_{244}$. 
Then $\Phi(R(254))=I_2$, 
$P(R(254))=1$ and so $R(254)\in Z^{\sst}_{254}$. 

We show that $Z^{\sst}_{254\,k}=M_{\be_{254}\,k}R(254)$. 
Suppose that $x\in Z^{\sst}_{254\,k}$. 
Since $A(x)\wedge B(x)$ is a non-singular matrix, the entries of 
$B(x)$ are linearly independent.  
So there exists $g\in M_{\be_{254}\,k}$ 
such that $B(gx)=B(R(254))$. 
If $g=(I_3,I_2,g_2,1,1)$ then 
$B(gx)=B(x)$ and $\Phi(gx)=g_2\Phi(x)$. 
Therefore, we may assume that $\Phi(x)=I_2$. 
This implies that $x_{452}=1,x_{453}=0,x_{352}=0,x_{353}=-1$ 
Let $g=(g_{12},I_2,I_2,1,1)$ where 
\begin{equation*}
g_{12} = 
\begin{pmatrix}
1 & 0 & x_{342} \\
0 & 1 & x_{343} \\
0 & 0 & 1
\end{pmatrix}.
\end{equation*}
Then $B(gR(254))=B(R(254))$ and 
$A(gx)=A(R(254))$. 
This implies that
$Z^{\sst}_{254\,k} = M_{\be_{254}\,k}R(254)$.  

We assume that $u_{1ij}=0$ unless 
$i=3,4,5,j=1,2$ and $u_{232}=0$. 
Then the first component of $n(u)R(254)$ 
is $0$ and the remaining components
are as follows: 
\begin{align*}
& \begin{pmatrix}
0 & 0 & 0 & 0 & 0 \\
0 & 0 & 0 & 0 & 0 \\
0 & 0 & 0 & 0 & 0 \\
0 & 0 & 0 & 0 & 1 \\
0 & 0 & 0 & -1 & 0 
\end{pmatrix},\;
\begin{pmatrix}
0 & 0 & 0 & 0 & 0 \\
0 & 0 & 0 & 0 & 0 \\
0 & 0 & 0 & 0 & -1 \\
0 & 0 & 0 & 0 & 0 \\
0 & 0 & 1 & 0 & 0 
\end{pmatrix}, \;
\begin{pmatrix}
0 & 0 & 1 & 0 & 0 \\
0 & 0 & 0 & 1 & 0 \\
-1 & 0 & 0 & u_{132}-u_{141} & -u_{151}-u_{243} \\
0 & -1 & * & 0 & -u_{152}+u_{242} \\
0 & 0 & * & * & 0 \\
\end{pmatrix}.
\end{align*}

We can apply Lemma \ref{lem:connected-criterion} 
to the map $\aff^{11}\to \aff^3$ defined by the sequence
\begin{math}
u_{132}-u_{141},u_{151}+u_{243},
u_{152}-u_{242}
\end{math}
where $u_{131},u_{141},u_{142},u_{221},u_{231},u_{241},u_{242},u_{243}$ 
are extra variables.
So by Proposition \ref{prop:W-eliminate}, 
Property \ref{property:W-eliminate} holds for 
any $x\in Z^{\sst}_{254\,k}$. Therefore, 
$Y^{\sst}_{254\,k}=P_{\be_{254}\,k}R(254)$ also. 

\vskip 5pt

(47) $S_{256}$, 
$\be_{256} = \tfrac {1} {20} (-8,2,2,2,2,-5,-5,5,5)$ 

We identify the element 
$(\diag(t_1,g_1),\diag(g_{21},g_{22}))
\in M_{[1],[2]}=M_{\be_{256}}$ 
with the element 
$g=(g_1,g_{21},g_{22},t_1)\in \gl_4\times \gl_2^2\times \gl_1$. 
On $M^{\text{st}}_{\be_{256}}$, 
\begin{equation*}
\chi_{{256}}(g) = 
t_1^{-8}(\det g_1)^2(\det g_{21})^{-5}(\det g_{22})^5
= (\det g_1)^{10}(\det g_{22})^{10}. 
\end{equation*}

For $x\in Z_{256}$, let 
\begin{equation*}
A_1(x) = 
\begin{pmatrix}
0 & x_{233} & x_{243} & x_{253} \\
- x_{233} & 0 & x_{343} & x_{353} \\
-x_{243} & -x_{343} & 0 & x_{453} \\
-x_{253} & -x_{353} & -x_{453} & 0  
\end{pmatrix},\;
A_2(x) = 
\begin{pmatrix}
0 & x_{234} & x_{244} & x_{254} \\
- x_{234} & 0 & x_{344} & x_{354} \\
-x_{244} & -x_{344} & 0 & x_{454} \\
-x_{254} & -x_{354} & -x_{454} & 0  
\end{pmatrix}
\end{equation*}
and $A(x)=(A_1(x),A_2(x))$. 
We identify $Z_{256}\cong \wedge^2 \aff^4\otimes \aff^2$
by the map $x\mapsto A(x)$. 
Note that $t_1,g_{21}$ acts trivially on $Z_{256}$. 
Let $P(x)$ be the degree $4$ polynomial on 
$Z_{\be_{256}}$ 
obtained by Proposition \ref{prop:42-invariant}. 
Then on $M^{\text{st}}_{\be_{256}}$, 
\begin{equation*}
P(gx) = (\det g_1)^2(\det g_{22})^2 P(x).
\end{equation*}
Therefore, $P(x)$ is 
invariant under the action of $G_{\text{st},\be_{256}}$.


By Proposition \ref{prop:Rational-orbits-3-cases}, 
$M_{\be_{256}\,k}\backslash Z^{\sst}_{256\,k}$ 
is in bijective correspondence with $\Ex_2(k)$. 
Since $W_{256}=\{0\}$, 
$P_{\be_{256}\,k}\backslash Y^{\sst}_{256\,k}$
is in bijective correspondence with $\Ex_2(k)$ also.

\vskip 5pt

(48) $S_{258}$, 
$\be_{258} = \frac {1} {20} (-8,-3,2,2,7,-5,0,0,5)$. 

We identify the element 
$(\diag(t_{11},t_{12},g_1,t_{13}),\diag(t_{21},g_2,t_{22}))
\in M_{[1,2,4],[1,3]}=M_{\be_{258}}$ 
with the element 
$g=(g_1,g_2,t_{11}\ccd t_{22})\in \gl_2^2\times \gl_1^5$. 
On $M^{\text{st}}_{\be_{258}}$, 
\begin{equation*}
\chi_{{258}}(g) = 
t_{11}^{-8}t_{12}^{-3}(\det g_1)^2t_{13}^7
t_{21}^{-5}t_{22}^5
= t_{12}^5(\det g_1)^{10}t_{13}^{15}
(\det g_2)^5t_{22}^{10}. 
\end{equation*}

For $x\in Z_{258}$, let 
\begin{math}
A(x) = 
\left(
\begin{smallmatrix}
x_{352} & x_{353} \\
x_{452} & x_{453} 
\end{smallmatrix}
\right).
\end{math}
We identify $Z_{258}\cong \m_2\oplus 1^{2\oplus}$ 
by the map $Z_{258}\ni x\mapsto (A(x),x_{254},x_{344})$. 

It is easy to see that 
$A(gx)= t_{13} g_1 A(x) {}^tg_2$. 
We put $P_1(x)=\det A(x)$ and 
$P(x) = P_1(x)x_{254}x_{344}$.
Then on $M^{\text{st}}_{\be_{258}}$, 
\begin{align*}
P_1(gx) & = (\det g_1)t_{13}^2(\det g_2)P_1(x), \\
P(gx) & = ((\det g_1)t_{13}^2(\det g_2))
(t_{12}t_{13}t_{22})
((\det g_1)t_{22}) P(x) \\
& = t_{12}(\det g_1)^2t_{13}^3
(\det g_2)t_{22}^2P(x).
\end{align*}
Therefore, $P(x)$ is 
invariant under the action of $G_{\text{st},\be_{258}}$.

Let $R(258)\in Z_{258}$ be the element such that 
$A(R(258))=I_2$ and the 
$x_{254},x_{344}$-coordinates are $1$.
Explicitly, 
$R(258)=e_{352}+e_{453}+e_{254}+e_{344}$. 
Then $P(R(258))=1$ and so $R(258)\in Z^{\sst}_{258}$. 

We show that $Z^{\sst}_{258\,k}=M_{\be_{258}\,k}R(258)$. 
Suppose that $x\in Z^{\sst}_{258\,k}$. 
It is easy to see that there exists $g\in M_{\be_{258}\,k}$ 
such that $A(gx)=I_2$. So we may assume that $A(x)=I_2$. 
Let $t=(I_2,I_2,1,t_{12},1,1,t_{22})$.
Then $A(tx)=I_2$ and the $x_{254},x_{344}$-coordinates 
of $tx$ are $t_{12}t_{22}x_{254},t_{22}x_{344}$. 
Therefore, $Z^{\sst}_{258\,k}=M_{\be_{258}\,k}R(258)$. 

We assume that $u_{143}=0$ and $u_{232}=0$. 
Then the first three components of $n(u)R(258)$ 
are the same as those of $R(258)$ and 
the last component is as follows: 
\begin{align*}
& 
\begin{pmatrix}
0 & 0 & 0 & 0 & 0 \\
0 & 0 & 0 & 0 & 1 \\
0 & 0 & 0 & 1 & u_{132}+u_{154}+u_{242} \\
0 & 0 & -1 & 0 & u_{142}-u_{153}+u_{243} \\
0 & -1 & * & * & 0 \\
\end{pmatrix}.
\end{align*}

We can apply Lemma \ref{lem:connected-criterion} 
to the map $\aff^{14}\to \aff^2$ defined by the sequence
\begin{math}
u_{132}+u_{154}+u_{242},u_{142}-u_{153}+u_{243}
\end{math}
where variables other than $u_{132},u_{142}$ 
are extra variables.
So by Proposition \ref{prop:W-eliminate}, 
Property \ref{property:W-eliminate} holds for 
any $x\in Z^{\sst}_{258\,k}$. Therefore, 
$Y^{\sst}_{258\,k}=P_{\be_{258}\,k}R(258)$ also. 

\vskip 5pt

(49) $S_{259}$, 
$\be_{259} = \tfrac {1} {60}(-14,-4,-4,6,16,-15,-5,5,15)$ 

We identify the element 
\begin{equation*}
(\diag(t_{11},g_1,t_{12},t_{13}),\diag(t_{21}\ccd t_{24}))
\in M_{[1,3,4],[1,2,3]}=M_{\be_{259}}
\end{equation*}
with the element 
$g=(g_1,t_{11}\ccd t_{24})\in \gl_2\times \gl_1^7$. 
On $M^{\text{st}}_{\be_{259}}$, 
\begin{equation*}
\chi_{{259}}(g) = 
t_{11}^{-14}(\det g_1)^{-4}t_{12}^6t_{13}^{16}
t_{21}^{-15}t_{22}^{-5}t_{23}^5t_{24}^{15} \\
= (\det g_1)^{10}t_{12}^{20}t_{13}^{30}
t_{22}^{10}t_{23}^{20}t_{24}^{30}. 
\end{equation*}

For $x\in Z_{259}$, let 
\begin{math}
A(x) = 
\left(
\begin{smallmatrix}
x_{253} & x_{244} \\
x_{353} & x_{344}
\end{smallmatrix}
\right).
\end{math}
We identify $Z_{259}\cong \m_2\oplus 1^{2\oplus}$ 
by the map $Z_{259}\ni x\mapsto (A(x),x_{452},x_{154})$. 

It is easy to see that 
\begin{math}
A(gx) = g_1 A(x) \diag(t_{13}t_{23},t_{12}t_{24}),
\end{math}
We put $P_1(x)=\det A(x)$ and $P(x) = P_1(x)^2x_{452}x_{154}$.  
Then on $M^{\text{st}}_{\be_{259}}$, 
\begin{align*}
P_1(gx) & = (\det g_1)t_{12}t_{13}t_{23}t_{24} P_1(x), \\
P(gx) & = ((\det g_1)t_{12}t_{13}t_{23}t_{24})^2
(t_{12}t_{13}t_{22})
(t_{11}t_{13}t_{24}) P(x) \\
& = t_{11}(\det g_1)^2t_{12}^3t_{13}^4t_{22}t_{23}^2t_{24}^3
= (\det g_1)t_{12}^2t_{13}^3t_{22}t_{23}^2t_{24}^3.
\end{align*}
Therefore, $P(x)$ is 
invariant under the action of $G_{\text{st},\be_{259}}$.

Let $R(259)\in Z_{259}$ be the element such that 
$A(R(259))=I_2$ and the $x_{452},x_{154}$-coordinates are $1$.
Explicitly, 
$R(259)=e_{452}+e_{253}+e_{154}+e_{344}$. 
Then $P(R(259))=1$ and so $R(259)\in Z^{\sst}_{259}$. 

We show that $Z^{\sst}_{259\,k}=M_{\be_{259}\,k}R(259)$. 
Suppose that $x\in Z^{\sst}_{259\,k}$. 
It is easy to see that there exists $g\in M_{\be_{259}\,k}$ 
such that $A(gx)=I_2$. So we may assume that $A(x)=I_2$. 
Let $t=(I_2,t_{11},1,1,t_{22},1,1)$.
Then $A(tx)=I_2$ and the $x_{452},x_{154}$-coordinates 
of $tx$ are $t_{22}x_{452},t_{11}x_{154}$. 
Therefore, $Z^{\sst}_{259\,k}=M_{\be_{259}\,k}R(259)$. 

We assume that $u_{132}=0$. 
Then the first two components of $n(u)R(232)$ 
are the same as those of $R(259)$ and 
the remaining components are as follows: 
\begin{align*}
& \begin{pmatrix}
0 & 0 & 0 & 0 & 0 \\
0 & 0 & 0 & 0 & 1 \\
0 & 0 & 0 & 0 & 0 \\
0 & 0 & 0 & 0 & u_{142}+u_{232} \\
0 & -1 & 0 & * & 0 
\end{pmatrix}, \;
\begin{pmatrix}
0 & 0 & 0 & 0 & 1 \\
0 & 0 & 0 & 0 & u_{121}+u_{243} \\
0 & 0 & 0 & 1 & u_{131}+u_{154} \\
0 & 0 & -1 & 0 & Q(u)+u_{141}-u_{153}+u_{242} \\
-1 & * & * & * & 0 \\
\end{pmatrix}
\end{align*}
where $Q(u)$ does not depend on $u_{141},u_{153},u_{242}$. 

We can apply Lemma \ref{lem:connected-criterion} 
to the map $\aff^{15}\to \aff^4$ defined by the sequence
\begin{align*}
& u_{142}+u_{232},u_{121}+u_{243},u_{131}+u_{154},
u_{141}-u_{153}+u_{242}+Q(u)
\end{align*}
where variables other than $u_{121},u_{131},u_{141},u_{142}$ 
are extra variables.
So by Proposition \ref{prop:W-eliminate}, 
Property \ref{property:W-eliminate} holds for 
any $x\in Z^{\sst}_{259\,k}$. Therefore, 
$Y^{\sst}_{259\,k}=P_{\be_{259}\,k}R(259)$ also. 

\vskip 5pt

(50) $S_{270}$, 
$\be_{270}=\tfrac {3} {20}(-1,-1,-1,-1,4,0,0,0,0)$ 

Since $G_{\text{st},\be_{270}}$ is semi-simple, 
any relative invariant polynomial is invariant. 
We identify
\begin{math} 
Z_{{270}}\cong \Lam^{4,1}_{1,[1,4]}
\otimes \Lam^{4,1}_{2,[1,4]}
\cong \m_4
\end{math}
and  put $P(x) = \det x$. 
Since $P(x)$ is a relative invariant 
polynomial, it is invariant under the action 
of $G_{\text{st},\be_{270}}$.

Let $R(270)\in Z_{270}$ be the element which corresponds
to $I_4$.  Explicitly, 
$R(270)=e_{151}+e_{252}+e_{353}+e_{454}$. 
Then $P(R(270))=1$ and so $R(270)\in Z^{\sst}_{270}$. 
It is easy to see that 
$Z^{\sst}_{270\,k}=M_{\be_{270}\,k}R(270)$. 
Since $W_{270}=\{0\}$, 
$Y^{\sst}_{270\,k}=P_{\be_{270}\,k}R(270)$. 

\vskip 5pt

(51) $S_{271}$, 
$\be_{271} = \tfrac {1} {60}(-24,-4,6,6,16,-5,-5,5,5)$ 

We identify the element 
$(\diag(t_{11},t_{12},g_1,t_{13}),\diag(g_{21},g_{22}))
\in M_{[1,2,4],[2]}=M_{\be_{271}}$ 
with the element 
$g=(g_1,g_{21},g_{22},t_{11},t_{12},t_{13})\in \gl_2^3\times \gl_1^3$. 
On $M^{\text{st}}_{\be_{271}}$, 
\begin{equation*}
\chi_{{271}}(g) 
= t_{11}^{-24}t_{12}^{-4}(\det g_1)^6t_{13}^{16}
(\det g_{11})^{-5}(\det g_{12})^5
= t_{12}^{20}(\det g_1)^{30}t_{13}^{40}
(\det g_{12})^{10}. 
\end{equation*}

For $x\in Z_{271}$, let 
\begin{equation*}
A(x) = \begin{pmatrix}
x_{351} & x_{352} \\
x_{451} & x_{452}
\end{pmatrix},\; 
B(x) = \begin{pmatrix}
x_{253} & x_{254} \\
x_{343} & x_{344} 
\end{pmatrix}.
\end{equation*}
We identify $Z_{271}\cong \m_2\oplus \m_2$ 
by the map $Z_{271}\ni x\mapsto (A(x),B(x))$. 
It is easy to see that 
\begin{align*}
& A(gx) = t_{13}g_1 A(x) {}^tg_{21},\;
B(gx) = 
\begin{pmatrix}
t_{12}t_{13} & 0 \\
0 & \det g_1
\end{pmatrix}
B(x) {}^t g_{22}.
\end{align*}

We put $P_1(x)=\det A(x),\; P_2(x)=\det B(x)$
and $P(x)=P_1(x)P_2(x)^2$. Then on $M^{\text{st}}_{\be_{271}}$, 
\begin{align*}
P_1(gx) & = (\det g_1)t_{13}^2(\det g_{21})P_1(x),\;
P_2(gx) = t_{12}(\det g_1)t_{13} (\det g_{22}) P_2(x), \\
P(gx) & = ((\det g_1)t_{13}^2(\det g_{21}))
(t_{12}(\det g_1)t_{13} (\det g_{22}))^2 P(x) \\
& = t_{12}^2(\det g_1)^3t_{13}^4 
(\det g_{21})(\det g_{22})^2P(x)
= t_{12}^2(\det g_1)^3t_{13}^4 (\det g_{22})P(x).
\end{align*}
Therefore, $P(x)$ is 
invariant under the action of $G_{\text{st},\be_{271}}$.

Let $R(271)\in Z_{271}$ be the element such that 
$A(R(271))=B(R(271))=I_2$.
Explicitly, 
$R(271)=e_{351}+e_{452}+e_{253}+e_{344}$. 
Then $P_1(R(271))=P_2(R(271))=1$. 
So $P(R(271))=1$ 
and $R(271)\in Z^{\sst}_{271}$. 
It is easy to see that 
$Z^{\sst}_{271\,k}=M_{\be_{271}\,k}R(271)$. 

We assume that $u_{143}=0$ 
and $u_{221}=u_{243}=0$.  
Then the first two components of $n(u)R(271)$ 
are the same as those of $R(271)$ 
and the remaining components are as follows: 
\begin{align*}
& \begin{pmatrix}
0 & 0 & 0 & 0 & 0 \\
0 & 0 & 0 & 0 & 1 \\
0 & 0 & 0 & 0 & u_{132}+u_{231} \\
0 & 0 & 0 & 0 & u_{142}+u_{232} \\
0 & -1 & * & * & 0 
\end{pmatrix},
\begin{pmatrix}
0 & 0 & 0 & 0 & 0 \\
0 & 0 & 0 & 0 & 0 \\
0 & 0 & 0 & 1 & u_{154}+u_{241} \\
0 & 0 & -1 & 0 & -u_{153}+u_{242} \\
0 & 0 & * & * & 0 
\end{pmatrix}.
\end{align*}

We can apply Lemma \ref{lem:connected-criterion} 
to the map $\aff^{13}\to \aff^4$ defined by the sequence
\begin{equation*}
u_{132}+u_{231},u_{142}+u_{232},u_{154}+u_{241},
u_{153}-u_{242}
\end{equation*}
where variables other than $u_{132},u_{142},u_{153},u_{154}$ 
are are extra variables.
So by Proposition \ref{prop:W-eliminate}, 
Property \ref{property:W-eliminate} holds for 
any $x\in Z^{\sst}_{271\,k}$. Therefore, 
$Y^{\sst}_{271\,k}=P_{\be_{271}\,k}R(271)$ also. 

\vskip 5pt

(52) $S_{272}$, 
$\be_{272} = \tfrac {1} {220}(-28,-28,-8,12,52,-35,5,5,25)$ 

We identify the element 
$(\diag(g_1,t_{11},t_{12},t_{13}),\diag(t_{21},g_2,t_{22}))
\in M_{[2,3,4],[1,3]}=M_{\be_{272}}$ 
with the element 
$g=(g_1,g_2,t_{11}\ccd t_{22})\in \gl_2^2\times \gl_1^5$. 
On $M^{\text{st}}_{\be_{272}}$, 
\begin{equation*}
\chi_{{272}}(g) 
= (\det g_1)^{-28}t_{11}^{-8}t_{12}^{12}t_{13}^{52}
t_{21}^{-35}(\det g_2)^5t_{22}^{25}
= t_{11}^{20}t_{12}^{40}t_{13}^{80}
(\det g_2)^{40}t_{22}^{60}. 
\end{equation*}

For $x\in Z_{272}$, let 
\begin{math}
A(x) = 
\left(
\begin{smallmatrix}
x_{152} & x_{153} \\
x_{252} & x_{253} 
\end{smallmatrix}
\right). 
\end{math}
We identify $Z_{272}\cong \m_2\oplus 1^{2\oplus}$ 
by the map $Z_{272}\ni x\mapsto (A(x),x_{451},x_{344})$. 
It is easy to see that $A(gx) = t_{13}g_1 A(x) {}^t g_2$. 

We put $P_1(x)=\det A(x)$ and $P(x) = P_1(x)^3x_{451}x_{344}^4$.
Then on $M^{\text{st}}_{\be_{272}}$, 
\begin{align*}
P_1(gx) & = (\det g_1)t_{13}^2(\det g_2) P_1(x), \\
P(gx) & = ((\det g_1)t_{13}^2(\det g_2))^3
(t_{12}t_{13}t_{21})
(t_{11}t_{12}t_{22})^4 P(x) \\
& = (\det g_1)^3t_{11}^4t_{12}^5t_{13}^7
t_{21}(\det g_2)^3t_{22}^4
= t_{11}t_{12}^2t_{13}^4(\det g_1)^2t_{22}^3.
\end{align*}
Therefore, $P(x)$ is 
invariant under the action of $G_{\text{st},\be_{272}}$.

Let $R(272)\in Z_{272}$ be the element such that 
$A(R(272))=I_2$ and the 
$x_{451},x_{344}$-coordinates are $1$. 
Explicitly, 
$R(272)=e_{451}+e_{152}+e_{253}+e_{344}$. 
Then $P(R(272))=1$ and so $R(272)\in Z^{\sst}_{272}$. 

We show that $Z^{\sst}_{272\,k}=M_{\be_{272}\,k}R(272)$. 
Suppose that $x\in Z^{\sst}_{272\,k}$. 
It is easy to see that there exists $g\in M_{\be_{272}\,k}$ 
such that $A(gx)=I_2$. So we may assume that $A(x)=I_2$. 
We put $t=(I_2,I_2,1,t_{12},1,1,t_{22})$.
Then $A(tx)=I_2$ and the $x_{451},x_{344}$-coordinates 
of $tx$ are $t_{12}x_{451},t_{12}t_{22}x_{344}$. 
Therefore, $Z^{\sst}_{272\,k}=M_{\be_{272}\,k}R(272)$. 

We assume that $u_{121}=0$ and $u_{232}=0$.  
Then the four components of $n(u)R(272)$ 
are as follows: 
\begin{align*}
& \begin{pmatrix}
0 & 0 & 0 & 0 & 0 \\
0 & 0 & 0 & 0 & 0 \\
0 & 0 & 0 & 0 & 0 \\
0 & 0 & 0 & 0 & 1 \\
0 & 0 & 0 & -1 & 0 
\end{pmatrix},\;
\begin{pmatrix}
0 & 0 & 0 & 0 & 1 \\
0 & 0 & 0 & 0 & 0 \\
0 & 0 & 0 & 0 & u_{131} \\
0 & 0 & 0 & 0 & u_{141}+u_{221} \\
-1 & 0 & * & * & 0 
\end{pmatrix}, \\
& \begin{pmatrix}
0 & 0 & 0 & 0 & 0 \\
0 & 0 & 0 & 0 & 1 \\
0 & 0 & 0 & 0 & u_{132} \\
0 & 0 & 0 & 0 & u_{142}+u_{231} \\
0 & -1 & * & * & 0 
\end{pmatrix},
\begin{pmatrix}
0 & 0 & 0 & 0 & u_{242} \\
0 & 0 & 0 & 0 & u_{243} \\
0 & 0 & 0 & 1 & Q_1(u)+u_{154} \\
0 & 0 & -1 & 0 & Q_2(u)-u_{153}+u_{241} \\
* & * & * & * & 0 
\end{pmatrix}
\end{align*}
where $Q_1(u)$ does not depend on 
$u_{153},u_{154},u_{241}$ 
and $Q_2(u)$ does not depend on 
$u_{153},u_{241}$.  

We can apply Lemma \ref{lem:connected-criterion} 
to the map $\aff^{14}\to \aff^8$ defined by the sequence
\begin{align*}
& u_{131},u_{132},u_{242},u_{243},
u_{141}+u_{221},u_{142}+u_{231}, 
u_{154}+Q_1(u),u_{153}-u_{241}-Q_2(u)
\end{align*}
where $u_{143},u_{151},u_{152},u_{221},u_{231},u_{241}$
are are extra variables.
So by Proposition \ref{prop:W-eliminate}, 
Property \ref{property:W-eliminate} holds for 
any $x\in Z^{\sst}_{272\,k}$. Therefore, 
$Y^{\sst}_{272\,k}=P_{\be_{272}\,k}R(272)$ also.

\vskip 5pt

(53) $S_{273}$, 
$\be_{273} = \tfrac {1} {140} (-6,-6,4,4,4,-5,-5,-5,15)$ 

We identify the element 
$(\diag(g_{11},g_{12}),\diag(g_2,t_2))
\in M_{[2],[3]}=M_{\be_{273}}$ 
with the element 
$g=(g_{12},g_2,g_{11},t_2)\in \gl_3^2\times \gl_2\times \gl_1$. 
On $M^{\text{st}}_{\be_{273}}$, 
\begin{equation*}
\chi_{{273}}(g) = 
(\det g_{11})^{-6}(\det g_{12})^4(\det g_2)^{-5}t_2^{15}
= (\det g_{12})^{10}t_2^{20}.
\end{equation*}

For $x\in Z_{273}$, let 
\begin{equation*}
A(x) = 
\begin{pmatrix}
x_{341} & x_{342} & x_{343} \\
x_{351} & x_{352} & x_{353} \\
x_{451} & x_{452} & x_{453} 
\end{pmatrix}.
\end{equation*}
We identify $Z_{273}\cong \m_3\oplus 1$ 
by the map $Z_{273}\ni x\mapsto (A(x),x_{124})$. 
It is easy to see that 
$A(gx) = (\wedge^2 g_{12}) A(x) {}^t g_2$.  

We put $P_1(x) = \det A(x)$ and 
$P(x) = P_1(x)^3x_{124}^5$. 
Then on $M^{\text{st}}_{\be_{273}}$, 
\begin{align*}
P_1(gx) & = (\det g_{12})^2(\det g_2)P_1(x), \\
P(gx) & = ((\det g_{12})^2(\det g_2))^3
((\det g_{11})t_2)^5 P(x) \\
& = (\det g_{11})^5(\det g_{12})^6(\det g_2)^3t_2^5P(x)
= (\det g_{12})t_2^2P(x). 
\end{align*}
Therefore, $P(x)$ is 
invariant under the action of $G_{\text{st},\be_{273}}$.

Let $R(273)\in Z_{273}$ be the element such that 
$A(R(273))=I_3$ and the $x_{124}$-coordinate is $1$. 
Explicitly, 
$R(273)=e_{341}+e_{352}+e_{453}+e_{124}$. 
Then $P(R(273))=1$ and so $R(273)\in Z^{\sst}_{273}$. 

We show that $Z^{\sst}_{273\,k}=M_{\be_{273}\,k}R(273)$. 
Suppose that $x\in Z^{\sst}_{273\,k}$. 
It is easy to see that there exists $g\in M_{\be_{273}\,k}$ 
such that $A(gx)=I_3$. So we may assume that 
$A(x)=I_3$. Let $t=(I_3,I_3,I_2,t_2)$. Then 
$A(tx)=I_3$ and the $x_{124}$-coordinate of $tx$ 
is $t_2x_{124}$.  Therefore, 
$Z^{\sst}_{273\,k}=M_{\be_{273}\,k}R(273)$. 

We assume that $u_{1ij}=0$ unless 
$i=3,4,5,j=1,2$ and $u_{2ij}=0$ unless $i=4$.   
Then the first three components of $n(u)R(273)$ 
are the same as those of $R(273)$ and 
the last component is as follows: 
\begin{align*}
& \begin{pmatrix}
0 & 1 & u_{132} & u_{142} & u_{152} \\
-1 & 0 & -u_{131} & -u_{141} & -u_{151} \\
* & * & 0 & Q_1(u)+u_{241} & Q_2(u)+u_{242} \\
* & * & * & 0 & Q_3(u)+u_{243} \\
* & * & * & * & 0 
\end{pmatrix}
\end{align*}
where $Q_1(u),Q_2(u),Q_3(u)$ do not depend on 
$u_{241},u_{242},u_{243}$. 

We can apply Lemma \ref{lem:connected-criterion} 
to the map $\aff^9\to \aff^9$ defined by the sequence
\begin{align*}
& u_{131},u_{132},u_{141},u_{142},
u_{151},u_{152},
u_{241}+Q_1(u),
u_{242}+Q_2(u),
u_{243}+Q_3(u)
\end{align*}
with no extra variables.
So by Proposition \ref{prop:W-eliminate}, 
Property \ref{property:W-eliminate} holds for 
any $x\in Z^{\sst}_{273\,k}$. Therefore, 
$Y^{\sst}_{273\,k}=P_{\be_{273}\,k}R(273)$ also. 

\vskip 5pt

(54) $S_{280}$, 
$\be_{280}=\tfrac {1} {60} (-24,-4,-4,16,16,-15,-15,5,25)$ 

We identify the element 
$(\diag(t_1,g_{11},g_{12}),\diag(g_2,t_{21},t_{22}))
\in M_{[1,3],[2,3]}=M_{\be_{280}}$ 
with the element 
$g=(g_{11},g_{12},g_2,t_1,t_{21},t_{22})\in \gl_2^3\times \gl_1^3$. 
On $M^{\text{st}}_{\be_{280}}$, 
\begin{equation*}
\chi_{{280}}(g) 
= t_1^{-24}(\det g_{11})^{-4}(\det g_{12})^{16}
(\det g_2)^{-15}t_{21}^5t_{22}^{25}
= (\det g_{11})^{20}(\det g_{12})^{40}
t_{21}^{20}t_{22}^{40}.
\end{equation*}

For $x\in Z_{280}$, let
\begin{math}
A(x) = 
\left(
\begin{smallmatrix}
x_{244} & x_{254} \\
x_{344} & x_{354} 
\end{smallmatrix}
\right).
\end{math}
We identify $Z_{280}\cong \m_2\oplus 1$ 
by the map $Z_{280}\ni x\mapsto (A(x),x_{453})$. 
It is easy to see that 
\begin{math}
A(gx) = t_{22}g_{11} A(x) {}^t g_{12}.
\end{math}

We put  $P_1(x) = \det A(x)$ and $P(x) = P_1(x)x_{453}$. 
Then on $M^{\text{st}}_{\be_{280}}$, 
\begin{align*}
P_1(gx) & = (\det g_{11})(\det g_{12})t_{22}^2 P_1(x), \\
P(gx) & = ((\det g_{11})(\det g_{12})t_{22}^2)
((\det g_{12})t_{21}) P(x) 
= (\det g_{11})(\det g_{12})^2t_{21}t_{22}^2. 
\end{align*}
Therefore, $P(x)$ is 
invariant under the action of $G_{\text{st},\be_{280}}$.

Let $R(280)\in Z_{280}$ be the element such that 
$A(R(280))=I_2$ and the $x_{453}$-coordinate is $1$. 
Explicitly, 
$R(280)=e_{453}+e_{244}+e_{354}$. 
Then $P(R(280))=1$ and so $R(280)\in Z^{\sst}_{280}$. 

We show that $Z^{\sst}_{280\,k}=M_{\be_{280}\,k}R(280)$. 
Suppose that $x\in Z^{\sst}_{280\,k}$. 
It is easy to see that there exists $g\in M_{\be_{280}\,k}$
such that $A(gx)=I_2$. So we may assume that $A(x)=I_2$. 
Let $t=(I_2,I_2,I_2,1,t_{21},1)$. 
Then the $x_{453}$-coordinate of $tx$ is 
$t_{21}x_{453}$. Therefore,  
$Z^{\sst}_{280\,k}=M_{\be_{280}\,k}R(280)$. 

We assume that $u_{132}=0$ and $u_{221}=0$. 
Then the first three components of $n(u)R(280)$ 
are the same as those of $R(280)$ 
and the last component is as follows: 
\begin{align*}
& 
\begin{pmatrix}
0 & 0 & 0 & 0 & 0 \\
0 & 0 & 0 & 1 & 0 \\
0 & 0 & 0 & 0 & 1 \\
0 & -1 & 0 & 0 & u_{143}-u_{152}+u_{243} \\
0 & 0 & -1 & * & 0 
\end{pmatrix}.
\end{align*}

Obviously we can apply 
Lemma \ref{lem:connected-criterion} 
and by Proposition \ref{prop:W-eliminate}, 
Property \ref{property:W-eliminate} holds for 
any $x\in Z^{\sst}_{280\,k}$. Therefore, 
$Y^{\sst}_{280\,k}=P_{\be_{280}\,k}R(280)$ also.

\vskip 5pt

(55) $S_{281}$, 
$\be_{281} = \tfrac {1} {20} (-4,0,0,0,4,-5,-5,3,7)$ 

We identify the element 
$(\diag(t_{11},g_1,t_{12}),\diag(g_2,t_{21},t_{22}))
\in M_{[1,4],[2,3]}=M_{\be_{281}}$ 
with the element 
$g=(g_1,g_2,t_{11}\ccd t_{22})\in \gl_3 \times \gl_2\times \gl_1^4$. 
On $M^{\text{st}}_{\be_{281}}$, 
\begin{equation*}
\chi_{{281}}(g) 
= t_{11}^{-4}t_{12}^4
(\det g_2)^{-5}t_{21}^3t_{22}^7
= (\det g_1)^4t_{12}^8
t_{21}^8t_{22}^{12}.
\end{equation*}

For $x\in Z_{281}$, let 
\begin{align*}
A(x) & = x_{234}p_{3,12}+x_{244}p_{3,13}+x_{344}p_{3,23}, \;
B(x) = x_{253}\bbmp_{3,1}+x_{353}\bbmp_{3,2}+x_{453}\bbmp_{3,3}. 
\end{align*}
We identify $Z_{281}\cong \wedge^2 \aff^3 \oplus \aff^3\oplus 1$ 
by the map $Z_{281}\ni x\mapsto (A(x),B(x),x_{154})$. 
It is easy to see that 
\begin{math}
A(gx) = t_{22}(\wedge^2 g_1)A(x), \;
B(gx) = t_{12}t_{21}g_1B(x).
\end{math}
%

We put $P_1(x)=A(x)\wedge B(x)$  
and $P(x)=P_1(x)^2 x_{154}$. Then on $M^{\text{st}}_{\be_{281}}$, 
\begin{align*}
P_1(gx) & = (\det g_1)t_{12}t_{21}t_{22}P_1(x), \\
P(gx) & = ((\det g_1)t_{12}t_{21}t_{22})^2 
(t_{11}t_{12}t_{22})P(x) \\
& = t_{11}(\det g_1)^2t_{12}^3 t_{21}^2t_{22}^3P(x) 
= (\det g_1)t_{12}^2 t_{21}^2t_{22}^3P(x).  
\end{align*}
Therefore, $P(x)$ is 
invariant under the action of $G_{\text{st},\be_{281}}$.

Let $R(281)\in Z_{281}$ be the element such that 
$A(R(281))=p_{3,12}$, $B(R(281))=\bbmp_{3,3}$ 
and that the $x_{154}$-coordinate is $1$. 
Explicitly, $R(281)=e_{453}+e_{154}+e_{234}$. 
Then $P(R(281))=1$ and so $R(281)\in Z^{\sst}_{281}$. 

We show that $Z^{\sst}_{281\,k}= M_{\be_{281}\,k}R(281)$. 
Suppose that $x\in Z^{\sst}_{281\,k}$.
By assumption, $A(x)\not=0$. So 
by Lemma II--4.6, there exists $g\in M_{\be_{281}\,k}$ 
such that $A(gx)=p_{3,12}$. 
So we may assume that $A(x)=p_{3,12}$. Then 
$x_{453}\not=0$ by assumption. Let 
\begin{equation*}
g_1 = \begin{pmatrix}
1 & 0 & -x_{453}^{-1}x_{253} \\
0 & 1 & -x_{453}^{-1}x_{353} \\
0 & 0 & x_{453}^{-1} 
\end{pmatrix}. 
\end{equation*}
Then $gx = R(281)$. Therefore, 
$Z^{\sst}_{281\,k}=M_{\be_{281}\,k}R(281)$.

We assume that $u_{1ij}=0$ unless $i=5$ 
or $j=1$ and $u_{221}=0$. 
Then the first three components of $n(u)R(281)$ 
are the same as those of $R(281)$ and the 
last component is as follows: 
\begin{align*}
\begin{pmatrix}
0 & 0 & 0 & 0 & 1 \\
0 & 0 & 1 & 0 & u_{121}+u_{153} \\
0 & -1 & 0 & 0 & u_{131}-u_{152} \\
0 & 0 & 0 & 0 & u_{141}+u_{243} \\
-1 & * & * & * & 0 
\end{pmatrix}.
\end{align*}

Obviously we can apply 
Lemma \ref{lem:connected-criterion} 
and by Proposition \ref{prop:W-eliminate}, 
Property \ref{property:W-eliminate} holds for 
any $x\in Z^{\sst}_{281\,k}$. Therefore, 
$Y^{\sst}_{281\,k}=P_{\be_{281}\,k}R(281)$ also.

\vskip 5pt

(56) $S_{285}$, 
$\be_{285} = \tfrac {1} {60} (-24,-4,-4,-4,36,-15,5,5,5)$ 

We identify the element 
$(\diag(t_{11},g_1,t_{12}),\diag(t_2,g_2))
\in M_{[1,4],[1]}=M_{\be_{285}}$ 
with the element 
$g=(g_1,g_2,t_{11},t_{12},t_2)\in \gl_3^2\times \gl_1^3$. 
On $M^{\text{st}}_{\be_{285}}$, 
\begin{equation*}
\chi_{{285}}(g) 
= t_{11}^{-24}(\det g_1)^{-4}t_{12}^{36}
t_2^{-15} (\det g_2)^5
= (\det g_1)^{20}t_{12}^{60}
(\det g_2)^{20}. 
\end{equation*}

Let 
\begin{equation*}
A(x) = \begin{pmatrix}
x_{252} & x_{253} & x_{254} \\
x_{352} & x_{353} & x_{354} \\
x_{452} & x_{453} & x_{454}
\end{pmatrix}.
\end{equation*}
We identify $Z_{285}\cong \m_3$ by the map 
$Z_{285}\ni x\mapsto A(x)$.  
Let $P(x) = \det A(x)$. Then 
\begin{equation*}
A(gx) = t_{12}g_1 A(x) {}^t g_2,\;
P(gx) = (\det g_1)t_{12}^3(\det g_2)P(x).
\end{equation*}
Therefore, $P(x)$ is 
invariant under the action of $G_{\text{st},\be_{285}}$.

Let $R(285)\in Z_{285}$ be the element such that 
$A(R(285))=I_3$. 
Explicitly, 
$R(285)=e_{252}+e_{353}+e_{454}$. 
Then $P(R(285))=1$ and so $R(285)\in Z^{\sst}_{285}$. 
It is easy to see that 
$Z^{\sst}_{285\,k}=M_{\be_{285}\,k}R(285)$. 
Since $W_{285}=\{0\}$, 
$Y^{\sst}_{285\,k}=P_{\be_{285}\,k}R(285)$. 

\vskip 5pt

(57) $S_{286}$, 	
$\be_{286} = \tfrac {1} {60} (-24,-24,16,16,16,-15,5,5,5)$ 

We identify the element 
$(\diag(g_{11},g_{12}),\diag(t_2,g_2))
\in M_{[2],[1]}=M_{\be_{286}}$ 
with the element 
$g=(g_{12},g_2,g_{11},t_2)\in \gl_3^2\times \gl_2\times \gl_1$. 
On $M^{\text{st}}_{\be_{286}}$, 
\begin{equation*}
\chi_{{286}}(g)  
= (\det g_{11})^{-24}(\det g_{12})^{16}
t_2^{-15}(\det g_2)^5
= (\det g_{12})^{40}(\det g_2)^{20}. 
\end{equation*}

For $x\in Z_{286}$, let 
\begin{equation*}
A(x) = \begin{pmatrix}
x_{342} & x_{343} & x_{344} \\
x_{352} & x_{353} & x_{354} \\
x_{452} & x_{453} & x_{454}
\end{pmatrix}.
\end{equation*}
We identify $Z_{286}\cong \m_3$ by the map 
$Z_{286}\ni x\mapsto A(x)$. Let $P(x) = \det A(x)$.
Then 
\begin{equation*}
A(gx) = (\wedge^2 g_{12}) A(x) {}^t g_2,\;
P(gx)= (\det g_{12})^2(\det g_2)P(x).
\end{equation*}
Therefore, $P(x)$ is 
invariant under the action of $G_{\text{st},\be_{286}}$.

Let $R(286)\in Z_{286}$ be the element such that 
$A(R(286))=I_3$. 
Explicitly, 
$R(285)=e_{342}+e_{353}+e_{454}$. 
Then $P(R(286))=1$ and so $R(286)\in Z^{\sst}_{286}$. 
It is easy to see that 
$Z^{\sst}_{286\,k}=M_{\be_{286}\,k}R(286)$. 
Since $W_{286}=\{0\}$, 
$Y^{\sst}_{286\,k}=P_{\be_{286}\,k}R(286)$. 

\vskip 5pt

(58) $S_{287}$, 
$\be_{287} = \tfrac {1} {20} (-2,-2,0,0,4,-5,1,1,3)$ 

We identify the element 
$(\diag(g_{11},g_{12},t_1),\diag(t_{21},g_2,t_{22}))
\in M_{[2,4],[1,3]}=M_{\be_{287}}$ 
with the element 
$g=(g_{11},g_{12},g_2,t_1,t_{21},t_{22})\in \gl_2^3\times \gl_1^3$. 
On $M^{\text{st}}_{\be_{287}}$, 
\begin{equation*}
\chi_{{287}}(g) 
= (\det g_{11})^{-2}t_1^4
t_{21}^{-5}(\det g_2)t_{22}^3
= (\det g_{12})^2t_1^6
(\det g_2)^6t_{22}^8. 
\end{equation*}

For $x\in Z_{287}$, let
\begin{math}
A(x) = 
\left(
\begin{smallmatrix}
x_{152} & x_{153} \\
x_{252} & x_{253} 
\end{smallmatrix}
\right).
\end{math}
We identify $Z_{287}\cong \m_2\oplus 1$ 
by the map $Z_{287}\ni x\mapsto (A(x),x_{344})$. 
It is easy to see that
\begin{math}
A(gx) = t_1g_{11} A(x) {}^t g_2.
\end{math}

We put $P_1(x)=\det A(x)$ and $P(x) = P_1(x)^3x_{344}^4$.
Then on $M^{\text{st}}_{\be_{287}}$, 
\begin{align*}
P_1(gx) & = (\det g_{11})t_1^2(\det g_2) P_1(x), \\
P(gx) & = ((\det g_{11})t_1^2(\det g_2))^3
((\det g_{12})t_{22})^4 P(x) \\
& = (\det g_{11})^3(\det g_{12})^4t_1^6
(\det g_2)^3t_{22}^4. 
= (\det g_{12})t_1^3 (\det g_2)^3t_{22}^4. 
\end{align*}
Therefore, $P(x)$ is 
invariant under the action of $G_{\text{st},\be_{287}}$.

Let $R(287)\in Z_{287}$ be the element such that 
$A(R(287))=I_2$ and the $x_{344}$-coordinate is $1$. 
Explicitly, 
$R(287)=e_{152}+e_{253}+e_{344}$. 
Then $P(R(287))=1$ and so $R(287)\in Z^{\sst}_{287}$. 

We show that $Z^{\sst}_{287\,k}=M_{\be_{287}\,k}R(287)$. 
Suppose that $x\in Z^{\sst}_{287\,k}$. 
It is easy to see that there exists $g\in M_{\be_{287}\,k}$
such that $A(gx)=I_2$. So we may assume that $A(x)=I_2$. 
Let $t=(I_2,I_2,I_2,1,1,t_{22})$. 
Then $A(tx)=I_2$ and the $x_{344}$-coordinate of $tx$ 
is $t_{22}x_{344}$. Therefore, 
$Z^{\sst}_{287\,k}=M_{\be_{287}\,k}R(287)$. 

We assume that $u_{121}=u_{143}=0$ and 
$u_{232}=0$. 
Then the first  component of $n(u)R(287)$ 
is $0$ and the remaining components are as follows: 
\begin{align*}
& \begin{pmatrix}
0 & 0 & 0 & 0 & 1 \\
0 & 0 & 0 & 0 & 0 \\
0 & 0 & 0 & 0 & u_{131} \\
0 & 0 & 0 & 0 & u_{141} \\
-1 & 0 & * & * & 0 
\end{pmatrix},\;
\begin{pmatrix}
0 & 0 & 0 & 0 & 0 \\
0 & 0 & 0 & 0 & 1 \\
0 & 0 & 0 & 0 & u_{132} \\
0 & 0 & 0 & 0 & u_{142} \\
0 & -1 & * & * & 0 
\end{pmatrix},\;
\begin{pmatrix}
0 & 0 & 0 & 0 & u_{242} \\
0 & 0 & 0 & 0 & u_{243} \\
0 & 0 & 0 & 1 & Q_1(u)+u_{154} \\
0 & 0 & -1 & 0 & Q_2(u)-u_{153} \\
* & * & * & * & 0 
\end{pmatrix}
\end{align*}
where $Q_1(u),Q_2(u)$ do not depend on 
$u_{153},u_{154}$. 

We can apply Lemma \ref{lem:connected-criterion} 
to the map $\aff^{13}\to \aff^8$ defined by the sequence
\begin{align*}
& u_{131},u_{132},u_{141},u_{142},
u_{242},u_{243},
u_{154}+Q_1(u),
u_{153}-Q_2(u)
\end{align*}
where $u_{151},u_{152},u_{221},u_{231},u_{241}$
are extra variables.
So by Proposition \ref{prop:W-eliminate}, 
Property \ref{property:W-eliminate} holds for 
any $x\in Z^{\sst}_{287\,k}$. Therefore, 
$Y^{\sst}_{287\,k}=P_{\be_{287}\,k}R(287)$ also.

\vskip 5pt

(59) $S_{289}$, 
$\be_{289} = \tfrac {1} {20} (-8,2,2,2,2,-5,-5,-5,15)$ 

We identify the element 
$(\diag(t_1,g_1),\diag(g_2,t_2))
\in M_{[1],[3]}=M_{\be_{289}}$ 
with the element 
$g=(g_1,g_2,t_1,t_2)\in \gl_4 \times \gl_3\times \gl_1^2$. 
On $M^{\text{st}}_{\be_{289}}$, 
\begin{equation*}
\chi_{{289}}(g) 
= t_1^{-8}(\det g_1)^2(\det g_2)^{-5}t_2^{15}
= (\det g_1)^{10}t_2^{20}.
\end{equation*}

For $x\in Z_{289}$, let 
\begin{equation*}
A(x) = \begin{pmatrix}
0 & x_{234} & x_{244} & x_{254} \\
-x_{234} & 0 & x_{344} & x_{354} \\
-x_{244} & -x_{344} & 0 & x_{454} \\
-x_{254} & -x_{354} & -x_{454} & 0
\end{pmatrix}.
\end{equation*}
We identify $Z_{289}\cong \wedge^2 \aff^4$
by the map $Z_{289}\ni x\mapsto A(x)\in\wedge^2 \aff^4$. 
It is easy to see that $A(gx)=t_2g_1A(x){}^tg_1$. 

Let $P(x)$ be the Pfaffian of $A(x)$. Then 
$P(gx) = (\det g_1)t_2^2P(x)$. 
Therefore, $P(x)$ is 
invariant under the action of $G_{\text{st},\be_{289}}$.

Let $R(289)\in Z_{289}$ be the element such that 
\begin{math}
A(R(289)) = \left(
\begin{smallmatrix}
J & 0 \\ 0 & J  
\end{smallmatrix}
\right)
\end{math}
(see (\ref{eq:J-defn})). 
Then $P(R(289))=1$ and so $R(289)\in Z^{\sst}_{289}$. 
By Lemma II--4.6, $Z^{\sst}_{{289}\, k} = M_{\be_{289}\, k}R(289)$. 
Since $W_{289}=\{0\}$, 
$Y^{\sst}_{{289}\, k} = P_{\be_{289}\, k}R(289)$. 

\vskip 5pt

(60) $S_{291}$, 
$\be_{291} = \tfrac {1} {20} (-8,-8,2,2,12,-5,-5,5,5)$ 

We identify the element 
$(\diag(g_{11},g_{12},t_1),\diag(g_{21},g_{22}))
\in M_{[2,4],[2]}=M_{\be_{291}}$ 
with the element 
$g=(g_{11},g_{12},g_{21},g_{22},t_1)\in \gl_2^4\times \gl_1$. 
On $M^{\text{st}}_{\be_{291}}$, 
\begin{equation*}
\chi_{291}(g) 
= (\det g_{11})^{-8}(\det g_{12})^2t_1^{12}
(\det g_{21})^{-5}(\det g_{22})^5
= (\det g_{12})^{10}t_1^{20}
(\det g_{22})^{10}. 
\end{equation*}

For $x\in Z_{291}$, let 
\begin{math}
A(x) = 
\left(
\begin{smallmatrix}
x_{353} & x_{354} \\
x_{453} & x_{454}  
\end{smallmatrix}
\right).
\end{math}
We identify $Z_{291}\cong \m_2$ 
by the map $Z_{291}\ni x\mapsto A(x)\in\m_2$. 
It is easy to see that 
$A(gx) = t_1g_{12}A(x){}^t g_{22}$. 
Let $P(x) = \det A(x)$. Then 
$P(gx) = (\det g_{12})t_1^2 (\det g_{22})P(x)$. 
Therefore, $P(x)$ is 
invariant under the action of $G_{\text{st},\be_{291}}$.

Let $R(291)\in Z_{291}$ be the element such
that $A(R(291))=I_2$. Then $P(R(291))=1$ and 
$R(291)\in Z^{\sst}_{291}$. 
It is easy to see that 
$Z^{\sst}_{291} = M_{\be_{291}\, k} R(291)$. 
Since $W_{291}=\{0\}$, 
$Y^{\sst}_{{291}\, k} = P_{\be_{291}\, k}R(291)$. 

\vskip 5pt 

(61) $S_{292}$, 
$\be_{292} = \tfrac {1} {20} (-8,-8,-8,12,12,-5,-5,-5,15)$ 

Let $R(292)\in Z_{292}$ be the element such that 
$x_{451}=1$. It is easy to see that $P(x) = x_{451}$
is invariant under the action of $G_{\text{st},\be_{292}}$, 
$R(292)\in Z^{\sst}_{292}$ and that 
$Z^{\sst}_{292\, k} =M_{\be_{292}\, k} R(292)$. 
Since $W_{292}=\{0\}$, 
$Y^{\sst}_{{292}\, k} = P_{\be_{292}\, k}R(292)$. 

\vskip 5pt
This completes the proof of Theorem \ref{thm:non-empty}.

\section{Empty strata}
\label{sec:empty-strata}

In this section we prove that $S_{\be_i}=\emptyset$ 
for $i$ not in the list (\ref{eq:list-non-empty-54}) for 
the \pv{} (\ref{eq:PV}). 
We use the notation $S_i$, etc., in this section also.  
We assume that elements of $Z_i$ are in the form 
$x=\sum_j y_j \mathbbm a_j\in Z_i$ (see the end of Section \ref{sec:notation})
unless otherwise stated.  The reason why  
we use this coordinate system instead of $x_{ijk}$ is that 
it is convenient this way to check the computation by 
MAPLE.

We proceed as in Sections 4,6 of \cite{tajima-yukie-GIT2}. 
Since stability does not change by replacing $k$ 
by $\overline k$, we assume that $k=\overline k$ 
throughout this section.

We prove three easy lemmas 
before considering individual cases.
We shall also use Lemmas II--4.1\ccd II--4,6 very often. 
\begin{lem}
\label{lem:empty-criterion}
Let $\be\in\gB$. Suppose that $x\in Z_{\be}$, 
$M_{\be}x\sub Z_{\be}$ is Zariski open and that $x$ is 
unstable with respect to the action of {\upshape $G_{\text{st},\be}$}. 
Then $Z^{\sst}_{\be}=\emptyset$. 
\end{lem}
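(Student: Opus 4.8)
\textbf{Proof plan for Lemma \ref{lem:empty-criterion}.}

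The plan is to combine the openness of the orbit $M_{\be}x\subset Z_{\be}$ with the fact that $G_{\text{st},\be}$ is (up to finite index and the one-parameter subgroup $\im(\lambda_{\be})$) the relevant subgroup for measuring semistability, and then quote the basic dictionary between semistable points and nonvanishing of a $G_{\text{st},\be}$-invariant polynomial. First I would recall from Section \ref{sec:notation-related-git} that $M_{\be}=G_{\be}\im(\lambda_{\be})$ and that $\im(\lambda_{\be})$ acts on $Z_{\be}$ by the scalar $\chi_{\be}$-weight, hence on $\p(Z_{\be})$ trivially; so the image of $M_{\be}x$ in $\p(Z_{\be})$ is the same as the image of $G_{\be}x$, and since $M_{\be}x$ is Zariski open in $Z_{\be}$, its image is Zariski open (hence dense) in $\p(Z_{\be})$. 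The set $\p(Z_{\be})^{\semi}$ of semistable points for the action of $G_{\text{st},\be}$ is Zariski open in $\p(Z_{\be})$; thus if it were nonempty it would meet the dense orbit $G_{\be}x$, and since semistability is preserved along $G_{\text{st},\be}$-orbits (and $G_{\be}\supset G_{\text{st},\be}$ acts through $G_{\text{st},\be}$ up to the central directions already removed) this would force $[x]\in\p(Z_{\be})^{\semi}$, i.e. $x$ semistable — contradicting the hypothesis. Hence $\p(Z_{\be})^{\semi}=\emptyset$, and therefore $Z^{\sst}_{\be}=\pi_{Z_{\be}}^{-1}(\p(Z_{\be})^{\semi})=\emptyset$.

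The one point requiring a little care is the passage between "$x$ unstable for $G_{\text{st},\be}$" and "$\p(Z_{\be})^{\semi}=\emptyset$": I want to argue that the locus of $G_{\text{st},\be}$-semistable points is $G_{\be}$-invariant, not merely $G_{\text{st},\be}$-invariant, so that density of the single orbit $G_{\be}x$ lets me conclude. This is exactly the situation built into the formulation of Corollary 1.4 of \cite{tajima-yukie} (our Theorem \ref{KKN}): $M_{\be}=G_{\be}\im(\lambda_{\be})$ and $G_{\text{st},\be}=(G_{\be}\cap G_{\text{st}})^{\circ}$, with $\chi_{\be}$ trivial on $G_{\text{st},\be}$, so a $G_{\text{st},\be}$-invariant section of a suitable power of $\mathcal O(1)$ on $\p(Z_{\be})$ is automatically $G_{\be}$-semiinvariant; equivalently $\p(Z_{\be})^{\semi}$ is $G_{\be}$-stable. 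Granting that, the dense-orbit argument is immediate. The main (and essentially only) obstacle is phrasing this invariance cleanly without re-deriving the GIT setup — I expect to simply cite the construction in Section \ref{sec:notation-related-git} together with Theorem \ref{KKN}, since the needed statement ("$\p(Z_{\be})^{\semi}$ is open and $G_{\be}$-invariant") is part of that package.

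Finally I would assemble the pieces: by Proposition \ref{prop:open-orbit}-type reasoning the hypothesis gives that $M_{\be}x$ is open; its image in $\p(Z_{\be})$ is the dense $G_{\be}$-orbit of $[x]$; $\p(Z_{\be})^{\semi}$ is open and $G_{\be}$-invariant; if it were nonempty it would contain $[x]$, contradicting instability of $x$ for $G_{\text{st},\be}$; hence $\p(Z_{\be})^{\semi}=\emptyset$ and so $Z^{\sst}_{\be}=\emptyset$. No delicate computation is involved; the lemma is a formal consequence of the openness hypothesis plus the standard openness/invariance of the semistable locus, which is why it serves as the workhorse criterion for the empty-strata arguments in Section \ref{sec:empty-strata}.
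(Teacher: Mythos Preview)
Your proposal is correct and follows essentially the same idea as the paper's proof: both exploit that the open orbit $M_{\be}x$ must meet any nonempty open subset of $Z_{\be}$ (in particular $Z^{\sst}_{\be}$), and then use that the part of $M_{\be}$ beyond $G_{\text{st},\be}$ acts by scalars, so semistability of a point in the orbit forces semistability of $x$ itself.

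The paper's version is slightly more streamlined. Rather than first passing through $M_{\be}=G_{\be}\im(\lambda_{\be})$ and then separately arguing that $\p(Z_{\be})^{\semi}$ is $G_{\be}$-invariant (your ``one point requiring a little care''), the paper writes the single decomposition $M_{\be}=G_{\text{st},\be}\cdot T_{\be,0}$, where $T_{\be,0}$ is the torus generated by $T_0$ and $\im(\lambda_{\be})$. Since $T_{\be,0}$ acts on $Z_{\be}$ by scalar multiplication, one argues directly in $Z_{\be}$: if $gtx\in Z^{\sst}_{\be}$ with $g\in G_{\text{st},\be}$ and $t\in T_{\be,0}$, then $tx\in Z^{\sst}_{\be}$ by $G_{\text{st},\be}$-invariance, and since $tx$ is a scalar multiple of $x$ and $Z^{\sst}_{\be}$ is a cone, $x\in Z^{\sst}_{\be}$. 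This avoids the detour through $G_{\be}$ and the need to invoke that a $G_{\text{st},\be}$-invariant section is $G_{\be}$-semiinvariant. Your projective-space formulation is equivalent but splits the scalar directions into two pieces ($\im(\lambda_{\be})$ and then the rest of $T_0$ inside $G_{\be}$) where the paper handles them together.
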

\begin{proof}
Let $T_0$ be as in (\ref{eq:T0-defn}).
Then $M_{\be}=M^{\text{st}}_{\be}\cdot T_0$. 
Let $T_{\be,0}\sub G$ be the subgroup  generated by 
$T_0$ and $\{\lam_{\be}(t)\mid t\in \gl_1\}$ 
(see Section 2 \cite{tajima-yukie-GIT1}). 
Then $M_{\be} = G_{\text{st},\be}\cdot T_{\be,0}$ 
and $T_{\be,0}$ acts on $Z_{\be}$ by scalar multiplication.

Suppose that $Z^{\sst}_{\be}\not=\emptyset$. 
Since $Z^{\sst}_{\be},M_{\be}x\sub Z_{\be}$ are Zariski open, 
there exists $g\in G_{\text{st},\be}$, $t\in T_{\be,0}$ 
such that $gtx \in Z^{\sst}_{\be}$. Since 
$Z^{\sst}_{\be}$ is $G_{\text{st},\be}$-invariant, 
$tx\in Z^{\sst}_{\be}$. Since $tx$ is a scalar 
multiple of $x$, $x\in Z^{\sst}_{\be}$, 
which is a contradiction. 
\end{proof}

Let $G=\gl_2^3$, 
$V_1=\aff^2\otimes \aff^2 \oplus \aff^2\otimes \aff^2\oplus \aff^2$, 
$V_2=\aff^2\otimes \aff^2 \oplus \aff^2\oplus \aff^2$.
We define an action of $(g_1,g_2,g_3)\in G$ on $V_1,V_2$ so that 
\begin{align*}
& V_1\ni (v_1\otimes v_2,v_3\otimes v_4,v_5) \mapsto 
(g_1v_1\otimes g_2v_2,g_2v_3\otimes g_3v_4,g_3v_5)\in V_1, \\
& V_2\ni (v_1\otimes v_2,v_3,v_4) \mapsto 
(g_1v_1\otimes g_2v_2,g_2v_3,g_3v_4)\in V_2.
\end{align*}

Let $\{\bbmp_{2,1},\bbmp_{2,2}\}$, 
be the standard basis of $\aff^2$ as before.  
Let $q_{12}=\bbmp_{2,1}\otimes \bbmp_{2,2}$, etc.  

\begin{lem}
\label{lem:222-eliminate}
In the above situation, let $x\in V_1$ or $x\in V_2$. 
Then there exists $g\in G$ 
such that if $y=(y_1,y_2,y_3)=gx$ then the coefficients of 
$q_{11}$ in $y_1$, 
$q_{11}$ or $\bbmp_{2,1}$ in $y_2$ 
and $\bbmp_{2,1}$ in $y_3$ are $0$. 
\end{lem}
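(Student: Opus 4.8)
\textbf{Proof proposal for Lemma \ref{lem:222-eliminate}.}
The plan is to handle the two factors separately, using the freedom in $g_1$, then $g_3$, and finally $g_2$. First I would observe that the coefficient of $q_{11}$ in $y_1$ is a linear form in the entries of $g_1$ (for fixed $g_2$ and the first component of $x$), so I can try to kill it by a suitable choice of one row of $g_1$; similarly the coefficient of $\bbmp_{2,1}$ (or $q_{11}$) in the $V$-third component $y_3$ is a linear form in the first row of $g_3$. The subtlety is that $g_1,g_3$ must be invertible, so I need to argue that the linear condition cuts out a hyperplane that is not contained in the locus of singular matrices, which is automatic here because the hyperplane always contains invertible points (a generic point of any linear subspace of $\m_2$ of dimension $\geq 3$ is invertible, and if the relevant linear form is identically zero there is nothing to do).

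The cleanest route, which I would take, is to reduce to normal forms for the first tensor component. Write the first component of $x$ as a $2\times 2$ matrix $A$ (identifying $\aff^2\otimes\aff^2\cong\m_2$), on which $(g_1,g_2)$ acts by $A\mapsto g_1 A\,{}^tg_2$. By Gaussian elimination (row and column operations, which is exactly the $\gl_2\times\gl_2$-action) we may bring $A$ to one of the forms $0$, $E_{22}$, or a matrix with $(1,1)$-entry $0$ — in all cases the coefficient of $q_{11}$ in the first component becomes $0$. Crucially, if $\operatorname{rank} A\le 1$ we can arrange the $(1,1)$-entry to be $0$ using only $g_2$ and a lower-triangular $g_1$; if $\operatorname{rank} A=2$ we can bring $A$ to $\operatorname{diag}(0,1)$ using $g_1=\bigl(\begin{smallmatrix}0&1\\1&0\end{smallmatrix}\bigr)$ times elementary matrices. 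Either way I retain enough of the stabilizer of the resulting first component — namely, lower-triangular $g_1$ together with $g_2$ acting on the second slot — to continue. Next, using $g_3$ (which is still entirely free) I make the coefficient of $\bbmp_{2,1}$ in $y_3$ equal to $0$: the third component is a vector in $\aff^2$ acted on by $g_3$ alone, so unless it is already $0$ I apply $\bigl(\begin{smallmatrix}0&1\\1&0\end{smallmatrix}\bigr)$ or a shear to send its first coordinate to $0$; this does not touch $y_1$. Finally I use the residual freedom in $g_2$ — a lower-triangular $g_2$ preserves the normal form of the first component (its $(1,1)$-entry stays $0$) — to kill the coefficient of $\bbmp_{2,1}$ (case $V_2$) or of $q_{11}$ (case $V_1$) in the middle component $y_2$; again this is a linear condition on one row of $g_2$, and the only issue is invertibility, handled as above. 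In the $V_1$ case the middle component is a matrix acted on by $(g_2,g_3)$, and since $g_3$ is by now fixed I must use $g_2$; the point is that the lower-triangular $g_2$'s still act nontrivially enough on the first column to zero out the $q_{11}$-coefficient.

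The main obstacle I anticipate is bookkeeping the residual stabilizer: after normalizing the first component I must be sure that the remaining allowed $g_1,g_2$ are enough to perform the later steps without reintroducing a nonzero $q_{11}$-coefficient in $y_1$. I expect this to work because each normalization step only fixes \emph{part} of $g_1$ (e.g. its first row, or the Borel it lies in), leaving a positive-dimensional group that acts transitively enough on the relevant coordinate of the next component. A degenerate-case check (some component of $x$ equal to $0$, or the relevant linear form vanishing identically) is needed throughout, but in every such case the desired coefficient is already $0$, so no action is required. This is precisely the style of argument used for the analogous ``$W$-elimination'' normalizations in Section \ref{sec:non-empty}, and I would present it as a short sequence of explicit substitutions rather than invoking any general position theorem.
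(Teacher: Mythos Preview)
Your approach has a concrete gap in the final step. You normalize $y_1$ first, then $y_3$, and then try to fix $y_2$ using only ``residual'' $g_2$, claiming that a lower-triangular $g_2$ both (i) preserves the condition $(y_1)_{1,1}=0$ and (ii) can kill the $q_{11}$-coefficient (resp.\ $\bbmp_{2,1}$-coefficient) of $y_2$. Claim (i) is correct, but (ii) is false: with $g_3=I_2$, the element $(I_2,g_2,I_2)$ sends $y_2$ to $g_2 y_2$, and if $g_2$ is lower-triangular then the first row of $g_2y_2$ is a nonzero scalar multiple of the first row of $y_2$. Hence $(g_2 y_2)_{1,1}$ can never be made zero unless it already was. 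The same failure occurs in the $V_2$ case, where $(g_2 y_2)_1$ is a nonzero scalar times $(y_2)_1$. (Your ``$\diag(0,1)$'' for the rank-$2$ case is also a slip: that matrix has rank $1$.)

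The paper avoids all of this by reversing your order. The structural point you are not exploiting is that $g_3$ acts only on $y_2,y_3$; $g_2$ acts only on $y_1,y_2$; and $g_1$ acts only on $y_1$. So: first use $g_3$ (completely free) to kill the $\bbmp_{2,1}$-coefficient of $y_3$; then elements $(g_1,g_2,I_2)$ leave $y_3$ untouched, and with $g_2$ still completely free one kills the $(1,1)$-entry of $y_2$ via $y_2\mapsto g_2 y_2$; finally elements $(g_1,I_2,I_2)$ leave both $y_2$ and $y_3$ untouched, and $g_1$ (still completely free) kills the $(1,1)$-entry of $y_1$ via $y_1\mapsto g_1 y_1$. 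Each step uses the full $\gl_2$ in one factor, so no residual-stabilizer bookkeeping is needed at all. Working from the end of the chain toward the start is what makes this a three-line proof; your forward direction is precisely what forces the stabilizer analysis that then fails.
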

\begin{proof}
Since the consideration is similar, 
we only consider $V_1$. 
By Lemma II--4.1, we may assume that 
the coefficient of $\bbmp_{2,1}$ in $y_3$ is $0$. 
Then elements of the form $(g_1,g_2,I_2)\in G$ do not change 
this condition. By the action of $\gl_2$ on $\m_2$, 
we can always make the $(1,1)$-entry $0$. 
So by replacing $y$ by $(I_2,g_2,I_2)y$ ($g_2\in\gl_2$) 
if necessary, we can make the coefficient of $q_{11}$ in $y_2$ $0$. 
Then  elements of the form $(g_1,I_2,I_2)\in G$ do not 
change these conditions. By the same argument, 
we can make the coefficient of $q_{11}$ in $y_1$ $0$. 
\end{proof}

Let $G=\spl_3\times \spl_2$ and $V=\m_{3,2} \oplus \aff^2$.
We express elements of $V$ as $x=(A(x),v(x))$ where 
$A(x)\in \m_{3,2},v(x)\in\aff^2$. If $g=(g_1,g_2)\in G$ 
and $x=(A(x),v(x))\in V$ then we define 
$gx=(g_1A(x){}^tg_2,g_2v(x))$.

\begin{lem}
\label{lem:case85}
We fix $i=2$ or $3$, $j=1$ or $2$ and $l=1$ or $2$. 
In the above situation, if $x=(A(x),v(x))\in V$, 
then there exists $g\in G$ such that $gx=(B,w)$ where 
the first row of $B$ is $0$ and the $(i,j)$-entry of 
$B$ and the $l$-th entry of $w$ are $0$. 
\end{lem}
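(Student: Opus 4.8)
\textbf{Proof plan for Lemma \ref{lem:case85}.}
The plan is to use the action of $\spl_3$ first, to clear the first row of $A(x)$, and then to use the residual symmetry (the parabolic subgroup of $\spl_3$ fixing the subspace spanned by the last two standard basis vectors, together with all of $\spl_2$) to kill the remaining two specified entries. First I would invoke Lemma II--4.1 (or the elementary row-reduction it encodes) to find $g_1\in\spl_3$ so that the first row of $g_1A(x)$ is zero: indeed $A(x)\in\m_{3,2}$ has rank at most $2$, so its three rows are linearly dependent, and one can left-multiply by an element of $\spl_3$ moving the column space off the first coordinate. After this step we may assume $A(x)$ already has zero first row, and then only elements $g=(g_1,g_2)$ with $g_1$ in the parabolic $P=M_{1,[1]}\cap\spl_3$ preserving $\{0\}\oplus\aff^2\subset\aff^3$ (written as rows, the block-lower-triangular subgroup) preserve this normalization; all of $g_2\in\spl_2$ still acts.

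Next I would use this residual group to clear the $(i,j)$-entry of $B$ and the $l$-th entry of $w$. Write $B$ as having zero first row, so its nonzero part is a $2\times 2$ block $B'$ acted on by $(\gl\text{-part of }P,\;g_2)$ essentially as $\m_2$ under left-right multiplication, up to the determinant-one constraint; and $w$ is acted on by $g_2$. The key observation is that the two conditions ``$(i,j)$-entry of $B$ is $0$'' and ``$l$-th entry of $w$ is $0$'' involve essentially independent pieces of the group once the first row is fixed. One approach: first pick $g_2\in\spl_2$ (a rotation/shear) making the $l$-th entry of $w$ zero — this is automatic unless $w=0$, in which case there is nothing to do for that entry. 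Then use the unipotent radical of $P$ (which adds a multiple of the second or third row to the first — but the first row is already zero, so this does nothing) — so instead I would use the Levi part of $P$, i.e. $\diag(t^{-1},h)$ with $h\in\gl_2$, $t=\det h$, acting on $B'$ on the left by $h$, together with the $\spl_2$-action already partially used up. The cleaner route is: fix whichever of $g_2$ or the left $\gl_2$-action on $B'$ is not needed for the $w$-normalization, and use column operations on $B'$ (via $g_2$, if $g_2$ was not spent) or row operations (via the Levi $\gl_2$) to zero out the $(i,j)$-entry — a single entry of a $2\times2$ matrix can always be cleared by a one-parameter subgroup of $\gl_2$ acting on one side, provided that side is still free.

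The main obstacle will be bookkeeping the degenerate cases and making sure the two normalizations do not conflict: when $w=0$ the $\spl_2$-factor is entirely free and clearing the $(i,j)$-entry of $B$ is straightforward; when $w\neq0$ one uses part of $\spl_2$ (a single shear stabilizing a line) for $w$ and must check that the complementary one-parameter family in $\spl_2\times(\text{Levi of }P)$ still suffices to kill the chosen entry of $B'$ — here the fact that $j$ and $l$ (and $i$) range only over $\{1,2\}$ (resp.\ $\{2,3\}$) means there is always a free column or row operation available, since a $2\times2$ matrix has its four entries controlled by left multiplication (row operations), right multiplication (column operations), and scaling, which between them can zero any single prescribed entry. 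I would organize the argument by cases on whether $w=0$, and within each case exhibit the explicit one-parameter subgroups. The computations are routine $2\times2$ linear algebra and I would not grind through them, but the case split is the only subtle point.
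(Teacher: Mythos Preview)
Your approach is correct and is exactly the paper's: first use $\spl_3$ on $\m_{3,2}$ to clear the first row (the relevant lemma is II--4.2, for $\m_{n,m}$ with $n>m$, not II--4.1), then use the residual $\spl_2\times\spl_2$ (the $2$--$3$ block of $\spl_3$ together with the second factor of $G$) on the remaining $\m_2\oplus\aff^2$; the paper wraps this second step into a single citation of Lemma II--4.3. Your worry about conflicting normalizations and a case split on $w$ is unnecessary: after step one, first apply $g_2\in\spl_2$ to kill the $l$-th entry of $w$ (this also acts on columns of $B'$, but no matter), and then apply the row-acting $\spl_2$ in the $2$--$3$ block of $\spl_3$ to kill any single entry of the resulting $2\times2$ block --- this row action does not touch $w$, so there is no interference and no degenerate case to track.
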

\begin{proof}
By applying Lemma II--4.2 to the action of $\spl_3$ on 
$\m_{3,2}$, we may assume that the first row of $A(x)$ is $0$. 
Then $\spl_2$ in the $2$--$3$ block of $\spl_3$ and the second component 
of $G$ do not change this condition. By applying Lemma II--4.3 
to the action of $\spl_2\times \spl_2$ on the subspace consisting
of elements $(A(x),v(x))$ such that the first row of $A(x)$ is $0$, 
we obtain the statement of the lemma. Note that because of the 
symmetry, $i,j,l$ do not have to be $1$. 
\end{proof}

In the following, we shall show either 
(i) some coordinates of $x\in Z_{\be}$ 
can be eliminated by the action of the semi-simple part 
$M^s_{\be}$ or (ii) there exists $x\in Z_{\be}$ such that 
$M_{\be}x\sub Z_{\be}$ is Zariski open. 
We then list a 1PS for each case 
with the property that weights of non-zero
coordinates of $x$ are all positive. 
Note that this proves that $Z^{\sst}_{\be}=\emptyset$
(using Lemma \ref{lem:empty-criterion} for the case (ii)). 

For example, for the case $i=2$, the top row in the following table 
shows coordinates in $Z_2$. The first small table starts with  
$4,\ldots, x_{151},\ldots$. This means that 
$\bbma_4=e_{151}\in Z_2$.  
These numbers are grouped according as \rep s of $M^s_{\be_i}$. 
The second row shows that $M_{\be_2}=M_{[4],[3]}$, 
we can make the coefficients of $\bbma_4,\bbma_9\ccd \bbma_{36}$ 
zero, and that the 1PS we have to consider is 
$\gl_1\ni t\mapsto (\diag(t^{-39}\ccd t^{-12}),\diag(t^{-32}\ccd t^{12}))$. 
We found these 1PS's in the same way as in Section 4 
\cite[pp.21,22]{tajima-yukie-GIT2}.  
The last row shows that 
\begin{math}
Z_2\cong \Lam^{4,1}_{1,[1,4]}\otimes \Lam^{3,1}_{2,[1,3]}
\oplus \Lam^{4,2}_{1,[1,4]}
\end{math}
and weights with respect to the above 1PS of $\bbma_7\ccd \bbma_{38}$ 
are $1\ccd 18$ respectively. 
Note that the image of the above 1PS is in $G_{\mathrm{st},\be_7}$ since 
$-39+45-22+28-12=-32+35-15+12=0$ and the inner product of 
$(-39,45,-22,28,-12,-32,35,-15,12)$ and 
$(-4,-4,-4,-4,16,-5,-5,-5,15)=(620/7)\be_2$ is $0$. 
As was the case in Part II, this can be verified by MAPLE commands
similarly as in the comments before the table in Section 4 
\cite[p.22]{tajima-yukie-GIT2}.  

For later $S_i$'s, less information is required
and we may change the format of the table to save space. 

\vskip 10pt

\begin{center}

\\
\hline 
\end{tabular}

\end{center}

\normalsize
\vskip 10pt

We verify the information (which coordinates we can eliminate) 
in the above table in the following, which proves the following
theorem. 

\begin{thm}
\label{thm:empty-verify}
For $i$ not in the list (\ref{eq:list-non-empty-54}), 
$S_{\be_i}=\emptyset$.
\end{thm}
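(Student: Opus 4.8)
The plan is to verify, stratum by stratum, the data recorded in the table above, and then to deduce emptiness from Lemma \ref{lem:empty-criterion}. Since whether $S_{\be_i}$ is empty does not change upon extending the base field, I would assume $k=\overline k$ throughout, as in the rest of this section. For each $\be_i$ with $i$ not in (\ref{eq:list-non-empty-54}) the argument has the same three-part shape: reduce a general point of $Z_{\be_i}$, using the action of $M^s_{\be_i}$ (or observe that a single point has a dense $M_{\be_i}$-orbit), to a point supported on a small set of coordinates; then exhibit a one parameter subgroup of $G_{\text{st},\be_i}$ with strictly positive weights on those coordinates, so that this reduced point is unstable; then invoke Lemma \ref{lem:empty-criterion} (or the $G_{\text{st},\be_i}$-invariance of semistability) to conclude $Z^{\sst}_{\be_i}=\emptyset$, whence $S_{\be_i}=\emptyset$.

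More precisely, I would first read off the structure of $Z_{\be_i}$ as a module over $M_{\be_i}$ from the numbering table of Section 8 of \cite{tajima-yukie-GIT1}; this is exactly the ``$Z_{\be_i}$ as a representation of $M_{\be_i}$'' column. Using the elementary normal-form lemmas Lemma II--4.1 through Lemma II--4.6 for the action of $\spl_n$ on vectors and on (alternating) matrices, together with the auxiliary Lemmas \ref{lem:222-eliminate} and \ref{lem:case85} for the $\gl_2^3$- and $\spl_3\times\spl_2$-type factors, one shows that a general $x\in Z_{\be_i}$ lies in the $M^s_{\be_i}$-orbit of a point whose coordinates outside the listed ``non-zero coordinates'' set all vanish; for the strata where this is awkward it is instead quicker to exhibit a single $x\in Z_{\be_i}$ with $M_{\be_i}x\subset Z_{\be_i}$ Zariski open, which can be seen directly from the representation type by a dimension count. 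Either way one is reduced to a point supported on the indicated coordinates.

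Next, for each $\be_i$ I would take the one parameter subgroup $\lambda_i$ printed in the last column, written as $\diag(t^{a_1},\dots,t^{a_5})\times\diag(t^{a_6},\dots,t^{a_9})$, and check two things. First, $\lambda_i$ lies in $X_*(G_{\text{st},\be_i})$: this amounts to the linear conditions $\sum_{j=1}^5 a_j=\sum_{j=6}^9 a_j=0$, placing $\lambda_i$ in $\spl_5\times\spl_4=G_{\text{st}}$, together with $\langle\be_i,\lambda_i\rangle=0$ in the $\weyl$-invariant inner product fixed in Section \ref{sec:notation-related-git} (equivalently, $\lambda_i$ is in $\kernel(\chi_{\be_i})$, so it lies in $G_{\text{st},\be_i}$); these are routine arithmetic checks, done by computer exactly as in Section 4 of \cite{tajima-yukie-GIT2}. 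Second, each surviving coordinate vector $\mathbbm a_j$ has strictly positive weight $\langle\gamma_j,\lambda_i\rangle$, i.e.\ the entries of the accompanying weight row are all positive. Together these say that the reduced point $x$ satisfies $\lim_{t\to 0}\lambda_i(t)x=0$ with $\lambda_i(t)\in G_{\text{st},\be_i}$, so $0$ lies in the closure of $G_{\text{st},\be_i}x$ and $x$ is unstable with respect to $G_{\text{st},\be_i}$.

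Finally, in the Zariski-open strata Lemma \ref{lem:empty-criterion} applies verbatim and yields $Z^{\sst}_{\be_i}=\emptyset$; in the remaining strata the same conclusion follows because $Z^{\sst}_{\be_i}$ is $M_{\be_i}$-invariant, hence $M^s_{\be_i}$-invariant, so if a general point becomes unstable after being moved by $M^s_{\be_i}$ into the subspace spanned by the listed coordinates, then no point of $Z_{\be_i}$ can be semistable. Once $Z^{\sst}_{\be_i}=\emptyset$ we get $Y^{\sst}_{\be_i}=\{(z,w)\mid z\in Z^{\sst}_{\be_i},\ w\in W_{\be_i}\}=\emptyset$ and therefore $S_{\be_i}=GY^{\sst}_{\be_i}=\emptyset$, which is the assertion of Theorem \ref{thm:empty-verify}. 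The main obstacle is the first step: there is no uniform recipe for which coordinates can be cleared by $M^s_{\be_i}$, and the correct sequence of applications of the normal-form lemmas is genuinely case-dependent and must be worked out separately for each of the roughly $230$ strata; by contrast the one parameter subgroup verification in the last step is mechanical. A secondary point needing care is that each $\lambda_i$ must be checked to lie in $G_{\text{st},\be_i}$ and not merely in $G_{\text{st}}$, i.e.\ the orthogonality to $\be_i$ must not be dropped.
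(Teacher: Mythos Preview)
Your proposal is correct and follows essentially the same approach as the paper: for each $\be_i$ not in (\ref{eq:list-non-empty-54}), either eliminate coordinates by the $M^s_{\be_i}$-action via the normal-form Lemmas II--4.1 through II--4.6 and Lemmas \ref{lem:222-eliminate}, \ref{lem:case85}, or exhibit a point with Zariski-open $M_{\be_i}$-orbit and invoke Lemma \ref{lem:empty-criterion}; then verify the listed 1PS lies in $G_{\text{st},\be_i}$ and has strictly positive weights on the surviving coordinates. Your identification of the main obstacle (no uniform recipe for the coordinate-elimination step, requiring genuinely case-dependent work across the roughly $230$ strata) and the care point about orthogonality to $\be_i$ are exactly right.
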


We either use Lemmas II--4.1--II--4.6, \ref{lem:empty-criterion}, 
\ref{lem:222-eliminate} with possible extra arguments to 
show that we can assume some coordinates are $0$. 
The easiest cases are the following. Each factor of $M_{\be_i}$ 
except for $\gl_1$ have at most one standard \rep{} in 
$Z_i$ and it is enough to apply Lemma II--4.1 to all such 
standard \rep s to show that $S_{\be_i}=\emptyset$.  
The applicable cases are the following. 
The numbering is according to the order in the table. 

\vskip 10pt

\small

(4) $S_7$, $\be_7 =\tfrac {1} {44} (-4,0,0,0,4,-3,1,1,1)$  

(10) $S_{17}$, $\be_{17}=\tfrac {7} {220}(-4,-4,-4,6,6,-5,-5,5,5)$

(12) $S_{19}$, $\be_{19}=\tfrac {3} {1420} (-16,-16,4,4,24,-5,-5,-5,15)$ 

(15) $S_{24}$, $\be_{24}=\tfrac {23} {1420}(-16,-16,4,4,24,-5,-5,-5,15)$ 

(16) $S_{25}$, $\be_{25}=\tfrac {7} {3820}(-24,-4,-4,16,16,-25,-25,15,35)$
 
(19) $S_{28}$, $\be_{28}=\tfrac {19} {4780}(-28,-8,-8,12,32,-25,-25,15,35)$ 

(39) $S_{56}$, $\be_{56}=\tfrac {1} {60} (-14,-14,-14,6,36,-5,-5,-5,15)$ 

(62) $S_{83}$, $\be_{83}=\tfrac {1} {460} (-44,-24,-24,36,56,-15,5,5,5)$ 

(63) $S_{84}$, $\be_{84}=\tfrac {1} {560} (-4,-4,1,1,6,-140,40,45,55)$  

(64) $S_{85}$, $\be_{85}=\tfrac {1} {740} (-56,-16,-16,24,64,-185,-5,75,115)$ 

(67) $S_{88}$, $\be_{88}=\tfrac {1} {460} (-104,-104,-64,116,156,-55,-55,-55,165)$  

(68) $S_{89}$, $\be_{89}=\tfrac {1} {140} (-36,-36,4,14,54,-15,-15,-5,35)$ 

(69) $S_{90}$, $\be_{90}=\tfrac {1} {560} (-89,-4,-4,6,91,-50,-50,45,55)$  

(71) $S_{92}$, $\be_{92}=\tfrac {1} {580} (-82,-42,28,28,68,-25,-25,5,45)$ 

(75) $S_{97}$, $\be_{97}=\tfrac {11} {580} (-12,-2,-2,8,8,-5,-5,-5,15)$  

(92) $S_{122}$, $\be_{122}=\tfrac {1} {140} (-26,-26,14,14,24,-15,-5,-5,25)$  

(94) $S_{124}$, $\be_{124}=\tfrac {1} {940} (-256,-16,4,4,264,-75,-55,-55,185)$  

(96) $S_{126}$, $\be_{126}=\tfrac {1} {1660} (-124,-24,-24,76,96,-135,-15,65,85)$  

(100) $S_{130}$, $\be_{130}=\tfrac {1} {1260} (-64,-4,-4,16,56,-55,-55,25,85)$  

(107) $S_{138}$, $\be_{138}=\tfrac {1} {220} (-38,-8,-8,22,32,-25,-25,5,45)$  

(116) $S_{147}$, $\be_{147}=\tfrac {1} {40} (-16,-1,-1,9,9,-5,-5,5,5)$, 

(117) $S_{148}$, $\be_{148}=\tfrac {1} {780} (-82,-52,-2,28,108,-75,-75,35,115)$ 

(118) $S_{153}$, $\be_{153}=\tfrac {1} {420} (-88,-28,12,32,72,-45,-45,15,75)$  

(121) $S_{156}$, $\be_{156}=\tfrac {1} {340} (-76,-76,-76,24,204,-85,-5,-5,95)$ 

(124) $S_{159}$, $\be_{159}=\tfrac {1} {260} (-4,-4,-4,-4,16,-65,-65,55,75)$ 

(131) $S_{167}$, $\be_{167}=\tfrac {1} {220} (-48,-48,12,12,72,-55,-55,25,85)$

(132) $S_{168}$, $\be_{168}=\tfrac {1} {380} (-32,-12,8,8,28,-95,-95,85,105)$ 

(134) $S_{170}$, $\be_{170}=\tfrac {1} {260} (-64,-64,16,16,96,-65,-5,-5,75)$

(135) $S_{171}$, $\be_{171}=\tfrac {1} {340} (-46,-6,4,4,44,-85,5,35,45)$ 

(136) $S_{172}$, $\be_{172}=\tfrac {1} {820} (-168,-28,-28,112,112,-205,-25,-25,255)$ 

(150) $S_{187}$, $\be_{187}=\tfrac {1} {820} (-148,-148,52,52,192,-205,-45,95,155)$  

(151) $S_{188}$, $\be_{188}=\tfrac {1} {380} (-92,-32,8,8,108,-95,-15,25,85)$ 

(152) $S_{189}$, $\be_{189}=\tfrac {1} {1460} (-64,-4,16,16,36,-365,95,115,155)$ 

(155) $S_{192}$, $\be_{192}=\tfrac {1} {340} (-136,-56,-56,44,204,-25,-25,-25,75)$ 

(156) $S_{193}$, $\be_{193}=\tfrac {1} {220} (-48,-48,-28,-8,132,-15,-15,5,25)$  

(157) $S_{194}$, $\be_{194} = \tfrac {1} {820} (-328,32,52,52,192,-45,-25,-25,95)$ 

(158) $S_{195}$, $\be_{195} = \tfrac {1} {820} (-148,-148,-28,132,192,-125,-125,95,155)$ 

(161) $S_{198}$, $\be_{198}=\tfrac {1} {380} (-52,-12,-12,28,48,-95,-15,25,85)$ 

(162) $S_{199}$, $\be_{199}=\tfrac {1} {820} (-328,-28,112,112,132,-45,-25,-25,95)$ 

(163) $S_{200}$, $\be_{200}=\tfrac {1} {40} (-4,0,0,1,3,-3,0,1,2)$ 

(166) $S_{204}$, $\be_{204}=\tfrac {1} {340} (-136,-136,24,124,124,-25,-25,-25,75)$ 

(167) $S_{205}$, $\be_{205}=\tfrac {1} {380} (-32,-12,-12,28,28,-35,-35,25,45)$ 

(169) $S_{207}$, $\be_{207}=\tfrac {1} {1220} (-88,12,12,32,32,-45,-45,-5,95)$ 

(172) $S_{210}$, $\be_{210}=\tfrac {1} {580} (-92,-92,48,68,68,-45,-45,-45,135)$ 

(178) $S_{218}$, $\be_{218}=\tfrac {1} {140} (-56,-16,-16,44,44,-15,-15,-15,45)$  

(179) $S_{219}$, $\be_{219}=\tfrac {1} {260} (-104,-44,16,36,96,-25,-25,-5,55)$ 

(181) $S_{221}$, $\be_{221}=\tfrac {1} {60} (-24,4,4,8,8,-3,-3,1,5)$  

(182) $S_{222}$, $\be_{222}=\tfrac {1} {1460} (-224,-64,-4,36,256,-125,-125,95,155)$ 

(183) $S_{225}$, $\be_{225}=\tfrac {1} {60} (-16,-4,0,4,16,-7,-3,-3,13)$ 

(184) $S_{228}$, $\be_{228}=\tfrac {1} {380} (-52,-32,8,28,48,-15,-15,5,25)$ 

(185) $S_{229}$, $\be_{229}=\tfrac {1} {740} (-196,-16,-16,64,164,-65,-65,-65,195)$ 

(186) $S_{230}$, $\be_{230}=\tfrac {1} {1460} (-124,-64,-64,-4,256,-125,-125,-5,255)$

(188) $S_{233}$, $\be_{233}=\tfrac {1} {20} (-3,-3,-3,-3,12,-5,-5,-5,15)$ 

(189) $S_{234}$, $\be_{234}=\tfrac {1} {20} (-4,-4,-4,0,12,-5,-5,3,7)$ 

(193) $S_{238}$, $\be_{238}=\tfrac {1} {20} (-4,-4,-2,4,6,-5,-5,1,9)$ 

(194) $S_{239}$, $\be_{239}=\tfrac {1} {40} (-6,-1,-1,4,4,-10,-10,5,15)$  

(201) $S_{246}$, $\be_{246}=\tfrac {1} {20} (-8,-2,2,2,6,-5,-5,3,7)$ 

(202) $S_{247}$, $\be_{247}=\tfrac {1} {20} (-8,-3,-3,2,12,-5,0,0,5)$ 

(203) $S_{248}$, $\be_{248}=\tfrac {1} {20} (-4,-4,-2,-2,12,-5,1,1,3)$ 

(205) $S_{250}$, $\be_{250}=\tfrac {1} {260} (-44,-44,-4,36,56,-65,-25,35,55)$ 

(206) $S_{251}$, $\be_{251}=\tfrac {1} {20} (-8,-8,2,7,7,-5,0,0,5)$ 

(207) $S_{252}$, $\be_{252}=\tfrac {1} {340} (-16,-6,-6,14,14,-85,5,35,45)$ 

(211) $S_{260}$, $\be_{260}=\tfrac {1} {260} (-64,-4,-4,16,56,-65,-5,-5,75)$ 

(212) $S_{261}$, $\be_{261}=\tfrac {1} {5} (-2,-2,-2,3,3,0,0,0,0)$

(213) $S_{262}$, $\be_{262}=\tfrac {1} {20} (-8,-8,0,4,12,-1,-1,-1,3)$ 

(214) $S_{263}$, $\be_{263}=\tfrac {1} {80} (-32,3,3,13,13,-5,-5,-5,15)$ 

(216) $S_{265}$, $\be_{265}=\tfrac {1} {20} (-8,-2,-2,0,12,-1,-1,1,1)$ 

(217) $S_{266}$, $\be_{266}=\tfrac {1} {20} (-8,-8,4,6,6,-1,-1,1,1)$ 

(218) $S_{267}$, $\be_{267}=\tfrac {1} {260} (-104,-4,16,36,56,-25,-5,-5,35)$ 

(219) $S_{268}$, $\be_{268}=\tfrac {1} {340} (-46,-16,-6,-6,74,-25,-25,5,45)$ 

(221) $S_{274}$, $\be_{274}=\tfrac {1} {60} (-24,-4,-4,-4,36,-15,-15,-15,45)$ 

(223) $S_{276}$, $\be_{276}=\tfrac {1} {140} (-16,-16,4,4,24,-35,-35,-35,105)$ 

(224) $S_{277}$, $\be_{277}=\tfrac {1} {140} (-56,-16,-16,4,84,-35,-35,25,45)$ 

(225) $S_{278}$, $\be_{278}=\tfrac {1} {140} (-56,-56,24,44,44,-35,-35,25,45)$ 

(226) $S_{279}$, $\be_{279}=\tfrac {1} {220} (-28,-28,12,12,32,-55,-55,45,65)$  

(227) $S_{282}$, $\be_{282}=\tfrac {1} {60} (-24,-24,-24,36,36,-15,5,5,5)$ 

(228) $S_{283}$, $\be_{283}=\tfrac {1} {140} (-56,-56,4,24,84,-35,5,5,25)$ 

(229) $S_{284}$, $\be_{284}=\tfrac {1} {220} (-88,12,12,32,32,-55,5,5,45)$ 

(230) $S_{288}$, $\be_{288}=\tfrac {1} {20} (-8,-8,2,2,12,-5,-5,-5,15)$ 

(231) $S_{290}$, $\be_{290}=\tfrac {1} {20} (-8,-8,-8,12,12,-5,-5,5,5)$ 

\normalsize

\vskip 10pt

The next easy cases are those where we can apply 
Lemma II--4.1, but either it is enough to consider 
a part of standard \rep s or we have to choose 
a standard \rep{} for a particular 
component of $M_{\be_i}$. Note that there are cases 
where a $\gl_2$-component of $M_{\be_i}$ 
has two standard \rep s in $Z_i$ and 
eliminating a coordinate for only one \rep{} 
works.  In the following, we list such cases 
and the subspaces to which we apply 
Lemma II--4.1 to show that $S_{\be_i}=\emptyset$. 

\vskip 10pt

\small
(18) $S_{27}$, $\be_{27}=\tfrac {11} {2380} (-32,-12,8,8,28,-5,-5,-5,15)$,  
$\lan \mathbbm a_7,\mathbbm a_{17},\mathbbm a_{27}\ran$, 
$\lan \mathbbm a_{35},\mathbbm a_{36}\ran$

(20) $S_{29}$, $\be_{29}=\tfrac {1} {280} (-7,-2,-2,3,8,-10,0,5,5)$, 
$\lan \mathbbm a_{16},\mathbbm a_{18}\ran$ 

(25) $S_{38}$, $\be_{38}=\tfrac {13} {1420} (-16,-16,4,4,24,-15,5,5,5)$ , 
$\lan \mathbbm a_9,\mathbbm a_{10}\ran$ 

(29) $S_{44}$, $\be_{44}=\tfrac {1} {5580} (-32,-12,8,8,28,-35,-15,5,45)$,   
$\lan \mathbbm a_9,\mathbbm a_{10}\ran$

(31) $S_{46}$, $\be_{46}=\tfrac {7} {1020} (-14,-4,-4,6,16,-15,-5,5,15)$,  
$\lan \mathbbm a_{17},\mathbbm a_{19}\ran$ 

(32) $S_{47}$, $\be_{47}=\tfrac {1} {3180} (-32,-12,8,8,28,-15,-15,5,25)$, 
$\lan \mathbbm a_7,\mathbbm a_{17}\ran$

(33) $S_{48}$, $\be_{48}=\tfrac {1} {204} (-8,-4,0,4,8,-3,-3,1,5)$, 
$\lan \mathbbm a_8,\mathbbm a_{18}\ran$ 

(49) $S_{67}$, $\be_{67}=\tfrac {1} {540} (-26,-26,-26,-16,94,-35,-35,-25,95)$, 
$\lan \mathbbm a_{10},\mathbbm a_{20}\ran$ 

(55) $S_{76}$, $\be_{76}=\tfrac {1} {260} (-44,-44,-14,51,51,-40,-40,25,55)$, 
$\lan \mathbbm a_{10},\mathbbm a_{20}\ran$ 

(65) $S_{86}$, $\be_{86}=\tfrac {1} {440} (-56,-21,14,14,49,-110,25,25,60)$, 
$\lan \mathbbm a_{35},\mathbbm a_{36}\ran$

(82) $S_{109}$, $\be_{109}=\tfrac {1} {260} (-44,-44,11,11,66,-65,-15,40,40)$, 
$\lan \mathbbm a_{19},\mathbbm a_{20}\ran$ 

(98) $S_{128}$, $\be_{128}=\tfrac {1} {340} (-36,-16,9,9,34,-30,-5,15,20)$, 
$\lan \mathbbm a_{35},\mathbbm a_{36}\ran$

(99) $S_{129}$, $\be_{129}=\tfrac {1} {310} (-9,-4,-4,1,16,-10,-5,5,10)$, 
$\lan \mathbbm a_{26},\mathbbm a_{28}\ran$

(101) $S_{132}$, $\be_{132}=\tfrac {1} {620} (-68,-68,-38,72,102,-105,5,35,65)$, 
$\lan \mathbbm a_{33},\mathbbm a_{36}\ran$ 

(102) $S_{133}$, $\be_{133}=\tfrac {1} {180} (-7,-2,-2,3,8,-45,10,15,20)$, 
$\lan \mathbbm a_{17},\mathbbm a_{19}\ran$ 

(103) $S_{134}$, $\be_{134}=\tfrac {1} {680} (-12,-7,3,8,8,-15,0,5,10)$, 
$\lan \mathbbm a_{16},\mathbbm a_{17}\ran$

(104) $S_{135}$, $\be_{135}=\tfrac {1} {940} (-96,-96,4,44,144,-75,-75,25,125)$, 
$\lan \mathbbm a_9,\mathbbm a_{19}\ran$, 
$\lan \mathbbm a_{24},\mathbbm a_{27}\ran$

(106) $S_{137}$, $\be_{137}=\tfrac {3} {340} (-7,-7,-2,8,8,-10,-10,5,15)$, 
$\lan \mathbbm a_{10},\mathbbm a_{20}\ran$ 

(108) $S_{139}$, $\be_{139}=\tfrac {1} {90} (-16,-1,-1,4,14,-10,-5,5,10)$, 
$\lan \mathbbm a_{26},\mathbbm a_{28}\ran$

(109) $S_{140}$, $\be_{140}=\tfrac {1} {940} (-86,-6,-6,24,74,-75,5,35,35)$, 
$\lan \mathbbm a_{16},\mathbbm a_{18}\ran$ 

(112) $S_{143}$, $\be_{143}=\tfrac {1} {620} (-23,-18,2,7,32,-10,-5,-5,20)$, 
$\lan \mathbbm a_{18},\mathbbm a_{28}\ran$

(113) $S_{144}$, $\be_{144}=\tfrac {1} {620} (-88,-58,-28,72,102,-105,25,25,55)$, 
$\lan \mathbbm a_{18},\mathbbm a_{28}\ran$

(114) $S_{145}$, $\be_{145}=\tfrac {1} {340} (-36,-16,-1,19,34,-20,-5,-5,30)$, 
$\lan \mathbbm a_{17},\mathbbm a_{27}\ran$ 

(115) $S_{146}$, $\be_{146}=\tfrac {1} {45} (-8,-3,2,2,7,-5,0,0,5)$, 
$\lan \mathbbm a_{18},\mathbbm a_{28}\ran$

(133) $S_{169}$, $\be_{169}=\tfrac {1} {220} (-48,-48,-28,52,72,-55,-15,-15,85)$, 
$\lan \mathbbm a_{34},\mathbbm a_{37}\ran$ 

(153) $S_{190}$, $\be_{190}=\tfrac {1} {1140} (-96,-96,-36,84,144,-285,35,95,155)$, 
$\lan \mathbbm a_{33},\mathbbm a_{36}\ran$

(164) $S_{201}$, $\be_{201}=\tfrac {1} {1140} (-136,-96,-96,84,244,-185,-5,35,155)$, 
$\lan \mathbbm a_{36},\mathbbm a_{38}\ran$

(165) $S_{203}$, $\be_{203}=\tfrac {1} {1140} (-136,-16,4,4,144,-285,-5,135,155)$, 
$\lan \mathbbm a_{35},\mathbbm a_{36}\ran$ 

(168) $S_{206}$, $\be_{206}=\tfrac {1} {260} (-24,-20,12,12,20,-21,-13,15,19)$, 
$\lan \mathbbm a_{35},\mathbbm a_{36}\ran$

(171) $S_{209}$, $\be_{209}=\tfrac {1} {220} (-18,-18,-8,2,42,-15,-15,-5,35)$, 
$\lan \mathbbm a_9,\mathbbm a_{19}\ran$

(173) $S_{211}$, $\be_{211}=\tfrac {1} {1140} (-136,-76,-16,84,144,-285,75,75,135)$, 
$\lan \mathbbm a_{18},\mathbbm a_{28}\ran$ 

(174) $S_{212}$, $\be_{212}=\tfrac {1} {1140} (-176,-136,-16,84,244,-185,35,75,75)$, 
$\lan \mathbbm a_{28},\mathbbm a_{38}\ran$

(175) $S_{213}$, $\be_{213}=\tfrac {1} {260} (-24,-20,-8,20,32,-13,-1,-1,15)$, 
$\lan \mathbbm a_{17},\mathbbm a_{27}\ran$

(177) $S_{215}$, $\be_{215}=\tfrac {1} {220} (-38,-8,2,2,42,-15,-15,15,15)$, 
$\lan \mathbbm a_{28},\mathbbm a_{38}\ran$ 

(180) $S_{220}$, $\be_{220}=\tfrac {1} {380} (-92,-32,-32,48,108,-55,-55,25,85)$, 
$\lan \mathbbm a_{10},\mathbbm a_{20}\ran$

(187) $S_{231}$, $\be_{231}=\tfrac {1} {380} (-72,-52,28,48,48,-35,-35,-15,85)$, 
$\lan \mathbbm a_{10},\mathbbm a_{20}\ran$

(204) $S_{249}$, $\be_{249}=\tfrac {1} {40} (-16,-1,4,4,9,-10,0,5,5)$, 
$\lan \mathbbm a_{19},\mathbbm a_{20}\ran$ 

(208) $S_{253}$, $\be_{253}=\tfrac {1} {260} (-24,-14,-14,6,46,-65,5,15,45)$, 
$\lan \mathbbm a_{36},\mathbbm a_{38}\ran$ 

(209) $S_{255}$, $\be_{255}=\tfrac {1} {260} (-34,-24,6,6,46,-65,15,25,25)$, 
$\lan \mathbbm a_{28},\mathbbm a_{38}\ran$ 

\normalsize
\vskip10pt

There are cases where it is enough to apply 
Lemma II--4.3 once. 
For that purpose there has to be a factor 
$\gl_2\times \gl_2$ in $M_{\be_i}$ 
and a component $\m_2\oplus \aff^2$ in $Z_i$. 
If this pair is unique then we do not have to 
worry about to which component we apply Lemma II--4.3.  
We list such cases with the coordinates we can eliminate 
to show that $S_{\be_i}=\emptyset$.

\begin{rem}
\label{remark:II43-remark}
By symmetry, we can eliminate any 
one entry of $\m_2$ and either entry of $\aff^2$.  
\end{rem}

\small 

(21) $S_{30}$, $\be_{30}=\tfrac {9} {3980} (-28,-8,-8,12,32,-35,5,5,25)$, 
$y_{14},y_{16}=0$. 

(23) $S_{32}$, $\be_{32}=\tfrac {7} {1060} (-32,-12,8,8,28,-15,-15,5,25)$, 
$y_9,y_{35}=0$. 

(24) $S_{34}$, $\be_{34}=\tfrac {3} {1060} (-28,-8,-8,12,32,-15,-15,5,25)$, 
$y_7,y_{26}=0$. 

(54) $S_{74}$, $\be_{74}=\tfrac {1} {120} (-18,-3,-3,12,12,-30,-5,10,25)$,
$y_{26},y_{33}=0$. 

(66) $S_{87}$, $\be_{87}=\tfrac {1} {220} (-8,-3,2,2,7,-55,15,20,20)$, 
$y_{24},y_{25}=0$. 

(70) $S_{91}$, $\be_{91}=\tfrac {1} {110} (-4,-4,1,1,6,-5,-5,0,10)$, 
$y_9,y_{24}=0$.

(72) $S_{93}$, $\be_{93}=\tfrac {1} {480} (-42,-2,-2,3,43,-35,10,10,15)$, 
$y_{14},y_{16}=0$.

(73) $S_{94}$, $\be_{94}=\tfrac {3} {380} (-34,-4,-4,6,36,-15,-5,-5,25)$, 
$y_{17},y_{36}=0$.

(74) $S_{96}$, $\be_{96}=\tfrac {1} {940} (-76,-56,-56,84,104,-15,-15,5,25)$, 
$y_7,y_{26}=0$.

(77) $S_{99}$, $\be_{99}=\tfrac {1} {120} (-23,-18,12,12,17,-10,-10,-5,25)$, 
$y_9,y_{35}=0$.  

(90) $S_{119}$, $\be_{119}=\tfrac {1} {380} (-152,18,38,38,58,-15,-15,5,25)$, 
$y_9,y_{35}=0$. 

(91) $S_{120}$, $\be_{120}=\tfrac {1} {940} (-116,-16,-16,64,84,-35,-15,-15,65)$, 
$y_7,y_{16}=0$. 

(105) $S_{136}$, $\be_{136}=\tfrac {1} {480} (-42,-2,-2,23,23,-35,-10,15,30)$, 
$y_{16},y_{33}=0$.

(110) $S_{141}$, $\be_{141}=\tfrac {1} {220} (-33,-8,2,2,37,-20,-20,15,25)$, 
$y_9,y_{35}=0$.

(129) $S_{165}$, $\be_{165}=\tfrac {1} {660} (-24,-4,-4,16,16,-165,-165,155,175)$, 
$y_{26},y_{33}=0$. 

(130) $S_{166}$, $\be_{166}=\tfrac {1} {660} (-104,-104,-24,116,116,-165,-65,75,155)$, 
$y_{28},y_{33}=0$. 

(137) $S_{173}$, $\be_{173}=\tfrac {1} {820} (-48,-8,-8,32,32,-205,15,95,95)$, 
$y_{23},y_{25}=0$. 

(138) $S_{174}$, $\be_{174}=\tfrac {1} {660} (-164,-24,16,16,156,-165,-5,-5,175)$, 
$y_{19},y_{35}=0$. 

(141) $S_{177}$, $\be_{177}=\tfrac {1} {660} (-104,-104,-24,76,156,-165,-65,115,115)$, 
$y_{24},y_{28}=0$. 

(147) $S_{184}$, $\be_{184}=\tfrac {1} {660} (-264,16,16,76,156,-65,-5,-5,75)$, 
$y_{17},y_{36}=0$. 

(148) $S_{185}$, $\be_{185}=\tfrac {1} {660} (-264,-24,56,116,116,-65,-5,-5,75)$, 
$y_{18},y_{36}=0$. 

(149) $S_{186}$, $\be_{186}=\tfrac {1} {180} (-20,-8,-8,16,20,-5,-1,-1,7)$, 
$y_7,y_{16}=0$. 

(154) $S_{191}$, $\be_{191}=\tfrac {1} {820} (-28,-28,-8,32,32,-205,15,75,115)$, 
$y_{23},y_{32}=0$. 

(170) $S_{208}$, $\be_{208}=\tfrac {1} {820} (-108,-28,-28,-8,172,-65,-65,15,115)$, 
$y_7,y_{36}=0$. 

(192) $S_{237}$, $\be_{237}=\tfrac {1} {140} (-16,-16,4,14,14,-35,-35,25,45)$,  
$y_{28},y_{33}=0$

(197) $S_{242}$, 
$\be_{242} = \tfrac {1} {140} (-16,-16,4,4,24,-35,-35,35,35)$
$y_{24},y_{28}=0$. 

(199) $S_{244}$, 
$\be_{244} = \tfrac {1} {140} (-56,4,14,14,24,-35,5,5,25)$ 
$y_{19},y_{35}=0$. 

(210) $S_{257}$, $\be_{257}=\tfrac {1} {20} (-8,-2,-2,6,6,-5,-1,-1,7)$, 
$y_{20},y_{36}=0$.

\vskip 10pt

There are cases where it is enough to apply 
Lemma II--4.2 once. 
For that purpose there has to be a factor 
$\gl_n$ in $M_{\be_i}$
and a component $\m_{n,m}$ with $n>m$ in $Z_i$. 
In each of the following cases, there is a 
unique such component in $Z_i$ and applying 
Lemma II--4.2 once is enough 
to show that $S_{\be_i}=\emptyset$. 

\vskip 10pt

(37) $S_{54}$, 
$\be_{54} = \tfrac {1} {15} (-6,-1,-1,-1,9,0,0,0,0)$

(44) $S_{61}$, 
$\be_{61} = \tfrac {2} {15} (-3,-3,2,2,2,0,0,0,0)$ 

(58) $S_{79}$, 
$\be_{79} = \tfrac {1} {340} (-56,-56,34,34,44,-25,-25,-25,75)$

(79) $S_{102}$, 
$\be_{102} = \tfrac {1} {60} (-9,-9,-9,-9,36,-15,5,5,5)$

(89) $S_{118}$,
$\be_{118} = \tfrac {1} {220} (-88,-38,22,22,82,-15,-15,-15,45)$  

(111) $S_{142}$, 
$\be_{142} = \tfrac {1} {780} (-112,-52,8,48,108,-15,-15,-15,45)$ 

(123) $S_{158}$, 
$\be_{158} = \tfrac {1} {340} (-136,4,44,44,44,-85,15,15,55)$

(128) $S_{163}$, 
$\be_{163} = \tfrac {1} {340} (-56,-56,-56,84,84,-85,-85,15,155)$ 

(139) $S_{175}$, 
$\be_{175} = \tfrac {1} {340} (-56,-56,-56,-36,204,-85,15,35,35)$

(140) $S_{176}$, 
$\be_{176} = \tfrac {1} {580} (-12,-12,8,8,8,-145,35,35,75)$

(144) $S_{181}$, 
$\be_{181} = \tfrac {1} {340} (-136,-36,-16,-16,204,-5,-5,-5,15)$  

(145) $S_{182}$, 
$\be_{182} = \tfrac {1} {340} (-136,-136,84,84,104,-5,-5,-5,15)$ 

(146) $S_{183}$, 
$\be_{183} = \tfrac {1} {580} (-52,-52,-12,-12,128,-25,-25,-25,75)$ 

(191) $S_{236}$, 
$\be_{236} = \tfrac {1} {60} (-4,-4,-4,6,6,-15,-15,-15,45)$ 

(195) $S_{240}$, 
$S_{240} = \tfrac {1} {60} (-24,-4,-4,-4,36,-15,-15,15,15)$ 

(196) $S_{241}$, 
$S_{241} = \tfrac {1} {60} (-24,-24,16,16,16,-15,-15,15,15)$ 

(198) $S_{243}$, 
$\be_{243} = \tfrac {1} {60} (-24,-24,6,6,36,-15,5,5,5)$ 

(200) $S_{245}$, 
$\be_{245} = \tfrac {1} {20} (-3,-3,-3,-3,12,-5,-5,5,5)$ 

(215) $S_{264}$, 
$\be_{264} = \tfrac {1} {10}(-4,-4,1,1,6,0,0,0,0)$ 

\normalsize

\vskip 10pt

We consider the remaining cases one by one. 
We assume that $x=\sum_j y_j\bbma_j\in Z_i$. 

\vskip 10pt

(1) $S_2$,  
$\be_2= \tfrac {7} {620} (-4,-4,-4,-4,16,-5,-5,-5,15)$ 

We show that $S_2=\emptyset$ using 
Lemma \ref{lem:empty-criterion}. 

Let $G=\gl_4\times \gl_3$ and 
$V=\aff^4\otimes \aff^3\oplus \wedge^2 \aff^4$.  
we consider the natural action of $G$ on $V$. 
For $x\in Z_2$, let 
\begin{equation*}
M_1(x) = 
\begin{pmatrix}
x_{151} & x_{152} & x_{153} \\
x_{251} & x_{252} & x_{253} \\
x_{351} & x_{352} & x_{353} \\
x_{451} & x_{452} & x_{453} 
\end{pmatrix},\; 
M_2(x) = 
\begin{pmatrix}
0 & x_{124} & x_{134} & x_{144} \\
-x_{124} & 0 & x_{234} & x_{244} \\
-x_{134} & -x_{234} & 0 & x_{344} \\
-x_{144} & -x_{244} & -x_{344} & 0  
\end{pmatrix}.
\end{equation*}
We can identify $Z_2$ with $V$ by 
the map $Z_2\ni x \mapsto (M_1(x),M_2(x))\in V$.
It is easy to 

\noindent
see that $M_{\be_2}$ contains the above $G$ and 
the action on $Z_2$ corresponds to the natural action 
of $G$ on $V$. 

Let 
\begin{equation*}
w_1 = 
\begin{pmatrix}
0 & 0 & 0 \\
1 & 0 & 0 \\
0 & 1 & 0 \\
0 & 0 & 1 
\end{pmatrix},\;
w_2 = 
\begin{pmatrix}
0 & 1 & 0 & 1 \\
-1 & 0 & 0 & 0 \\
0 & 0 & 0 & 1 \\
-1 & 0 & -1 & 0 
\end{pmatrix}
\end{equation*}
and $w=(w_1,w_2)$. 

Let $k[\vep]/(\vep^2)$ be the ring of dual numbers, 
$e_M=(I_4,I_3)$ and $X=(x_{ij})\in \m_4,Y=(y_{ij})\in \m_3$.
Then elements of the form $e_M+\vep(X,Y)$   
belong to ${\mathrm T}_{e_G}(G_w)$ 
(the tangent space of the stabilizer $G_w$)
if and only if 
\begin{align*}
Xw_1+w_1{}^tY=0,\; 
Xw_2+w_2{}^tX=0.
\end{align*}
By long but straightforward computations, 
$X,Y$ are in the following form:
\begin{align*}
X & = 
\begin{pmatrix}
x_{11} & 0 & 0 & 0 \\
x_{21} & -x_{11}-x_{24} & x_{23} & x_{24} \\
-x_{24} & x_{32} & x_{33} & -x_{32} \\
x_{21}+x_{23} & -x_{11}-x_{24}+x_{33} & x_{43} & x_{24}-x_{33} 
\end{pmatrix}, \\
Y & = 
\begin{pmatrix}
x_{11}+x_{24} & -x_{32} &  x_{11}+x_{24}-x_{33} \\
-x_{23} & -x_{33} & -x_{43} \\
-x_{24} & x_{32} & -x_{24}+x_{33}
\end{pmatrix}.
\end{align*}
Therefore, $\dim {\mathrm T}_{e_G}(G_w)=7=25-18=\dim G-\dim V$. 
By Proposition \ref{prop:open-orbit}, 
$Gw\sub V$ is Zariski open and $G_w$ is smooth over $k$ 
(we do not need this latter part). 
This implies that $M_{\be_2}w\sub Z_2$ is Zariski open. 
For the point in $Z_2$ corresponding to $w$, 
the $y_i$ coordinate (the coefficient of 
$\bbma_i$) is $0$ unless  $i=7,19,30,31,33,38$. 
The 1PS with the required property is listed in the above table. 
Therefore, $Z^{\sst}_2=\emptyset$. 
We shall not repeat this comment in the following. 


\vskip 5pt

(2) $S_4$,  
$\be_4=\tfrac {3} {44} (-4,0,0,0,4,-1,-1,-1,3)$ 

Note that $\gl_3$ is contained in the $2$--$4$ block 
(resp. $1$--$3$ block) of the first (resp. second) 
component of $M_{\be_4}$. 
Let 
\begin{equation*}
A(x) = 
\begin{pmatrix}
x_{251} & x_{252} & x_{253} \\
x_{351} & x_{352} & x_{353} \\
x_{451} & x_{452} & x_{453} 
\end{pmatrix},\; 
B(x) = 
\begin{pmatrix}
0 & x_{234} & x_{244} \\
-x_{234} & 0 & x_{344} \\
-x_{244} & -x_{344} & 0 
\end{pmatrix}. 
\end{equation*}
Then $Z_4$ can be identified with $\m_3\oplus \wedge^2 \aff^3\oplus 1$
by the map $Z_4\ni x\mapsto (A(x),B(x),x_{154})$. 
The element $(g_1,g_2)\in \gl_3\times \gl_3\sub M_{\be_4}$ acts on 
$Z_4$ by 
\begin{equation*}
(A,B,c)\mapsto (g_1 A {}^t g_2,g_1 B {}^t g_1,c). 
\end{equation*}
By Lemma II--4.6, we can eliminate the $(1,2),(1,3)$-entries of $B$. 
Then we can eliminate the $(1,1),(1,2),(2,1)$-entries of $A$
by using the following lemma for the action of elements
of the form $(I_3,g_2)$. 

\begin{lem}
\label{lem:33-eliminate}
If $A\in\m_3(k)$ then there exists 
$g\in\gl_3(k)$ such that the 
$(1,1),(1,2)$, $(2,1)$-coordinates of $gA$ are $0$. 
We obtain the same statement replacing the action 
to $A\mapsto A{}^tg$. 
\end{lem}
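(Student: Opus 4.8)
The plan is to prove this by elementary linear algebra; the only substantive point is that $n=3$ is large enough relative to the number of prescribed zeros. First I would reformulate the statement in terms of the rows of $g$. Writing $g$ as the matrix with rows $v_1,v_2,v_3\in k^3$, the conditions that the $(1,1)$- and $(1,2)$-entries of $gA$ vanish say precisely that $v_1$ annihilates (under the standard pairing $k^3\times k^3\to k$) the first two columns of $A$, and the condition that the $(2,1)$-entry of $gA$ vanishes says that $v_2$ annihilates the first column of $A$. So it suffices to produce a basis $v_1,v_2,v_3$ of $k^3$ having these two annihilation properties.

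Next, let $U\sub k^3$ be the span of the first two columns of $A$ and $U'\sub U$ the span of the first column, so $\dim U\leq 2$ and $\dim U'\leq 1$; then the annihilator spaces satisfy $U^{\perp}\sub U'^{\perp}$ with $\dim U^{\perp}=3-\dim U\geq 1$ and $\dim U'^{\perp}=3-\dim U'\geq 2$. Choose any nonzero $v_1\in U^{\perp}$. Since $\dim U'^{\perp}\geq 2>\dim\langle v_1\rangle$, we may choose $v_2\in U'^{\perp}\setminus\langle v_1\rangle$; then $v_1,v_2$ are linearly independent, hence extend to a basis $v_1,v_2,v_3$ of $k^3$. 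Taking $g$ to be the matrix with rows $v_1,v_2,v_3$ gives $g\in\gl_3(k)$ with the $(1,1),(1,2),(2,1)$-entries of $gA$ equal to $0$, which proves the first assertion.

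Finally, for the version with $A\mapsto A{}^tg$, note that for any $g$ one has $(A{}^tg)_{ij}=(g\,{}^tA)_{ji}$, so the $(1,1),(1,2),(2,1)$-entries of $A{}^tg$ are the $(1,1),(2,1),(1,2)$-entries of $g\,{}^tA$; as $\{(1,1),(1,2),(2,1)\}$ is the same set of positions, applying the first assertion with $A$ replaced by ${}^tA$ yields the required $g$. The argument is entirely formal and I do not foresee a real obstacle; the only thing to be careful about is that two linearly independent vectors in $k^3$ always complete to a basis, which makes the invertibility of $g$ automatic and works over an arbitrary field.
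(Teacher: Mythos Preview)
Your proof is correct. The paper takes a slightly different but equally elementary route: it invokes the standard fact that one can row-reduce $A$ to have zeros in positions $(2,1),(3,1),(3,2)$ (upper triangular form), and then left-multiplies by the permutation matrix for the transposition $(1\;3)$, which moves those zeros to positions $(1,1),(1,2),(2,1)$. Your approach instead builds the rows of $g$ directly via a dimension count on annihilator spaces. Both arguments are routine linear algebra over an arbitrary field; the paper's is perhaps marginally slicker in that it quotes a single known normal form and a permutation, while yours is more self-contained and makes the reason for the bound $n=3$ (versus the three prescribed zeros) explicit. For the transposed action the paper simply says the argument is similar, whereas you spell out the identity $(A\,{}^tg)_{ij}=(g\,{}^tA)_{ji}$, which is a clean way to deduce it from the first statement.
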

\begin{proof} 
Since the argument is similar, we only consider the 
first statement. 
It is a standard fact in linear algebra that 
there exists $g\in\gl_3$ such that 
the $(2,1),(3,1),(3,2)$-coordinates of $gA$ are $0$. 
Then it is enough to apply the permutation matrix
corresponding to the transposition $(1\, 3)$. 
\end{proof}

By the above consideration, 
we can make the $y_i$-coordinate $0$ 
for $i=7,9,17,35,36$.

\vskip 5pt

(3) $S_6$,  
$\be_6= \tfrac {1} {12} (-2,0,0,0,2,-1,-1,1,1)$ 

We apply the argument of Lemma II--4.4 
to the subspace 
$\lan \mathbbm a_{24},\mathbbm a_{34}\ran 
\oplus \lan \mathbbm a_{25}\ccd \mathbbm a_{38}\ran$. 
The latter subspace is $(\wedge^2 \aff^3)\otimes \aff^2$ 
instead of $\aff^3\otimes \aff^2$ as a \rep{} of 
$\gl_3\times \gl_2\sub M_{\be_6}$, but the 
consideration is similar and we  
may assume that $y_{24},y_{25},y_{35}=0$. 

%

(5) $S_8$, 
$\be_8 = \tfrac {1} {76} (-4,0,0,0,4,-3,-3,1,5)$ 

We first apply Lemma II--4.2 to the subspace 
$\lan \mathbbm a_7,\mathbbm a_9,\mathbbm a_{10}, 
\mathbbm a_{17},\mathbbm a_{19},\mathbbm a_{20}\ran$  
and may assume that $y_7,y_{17}=0$.  Then 
$\gl_2^2\sub  M_{[1,2,4],[2,3]}\sub M_{\be_8}$
does not change this condition. 
We apply Lemma II--4.3 to the action of this $\gl_2^2$ on the subspace 
$\lan \bbma_9,\bbma_{10},\bbma_{19},\bbma_{20}\ran 
\oplus \lan \bbma_{25},\bbma_{26}\ran$ 
and eliminate $y_{20},y_{26}$.

(6) $S_{10}$, 
$\be_{10} = \tfrac {11} {780} (-12,-12,8,8,8,-5,-5,-5,15)$

By applying Lemma II--4.2 to the subspace 
$\lan \mathbbm a_{32}\ccd \mathbbm a_{37}\ran$, 
we may assume that $y_{32},y_{35}=0$. 
Then $\gl_2\sub M_{1,[1,2,3]}$ does not 
change this condition. So by applying Lemma II--4.1 
to the subspace $\lan \mathbbm a_{36},\mathbbm a_{37}\ran$,  
we may assume that $y_{36}=0$. 
Note that $\gl_3\sub M_{2,[3,4]}$ does not change these
conditions. So by applying Lemma II--4.6 to the subspace 
$\lan \mathbbm a_8,\mathbbm a_9,\mathbbm a_{10}\ran$, 
we may assume that $y_8,y_9=0$.

(7) $S_{11}$, 
$\be_{11} = \tfrac {1} {1580} (-12,-12,8,8,8,-15,-15,5,25)$

We apply the consideration of Lemma II--4.2 to the subspace
$\lan \bbma_8,\bbma_9,\bbma_{10}\ccd \bbma_{20}\ran$. 
Even though this subspace is $(\wedge^2 \aff^3)\otimes \aff^2$ 
instead of $\aff^3\otimes \aff^2$ as a \rep{} of 
$\gl_3\sub M_{1,[1,2]}$, the consideration is similar 
and we may assume that $y_8,y_{18}=0$.

(8) $S_{12}$, 
$\be_{12} = \tfrac {9} {1580} (-8,-8,-8,12,12,-15,-15,5,25)$ 

By applying Lemma II--4.4 to the subspace 
$\lan \mathbbm a_{10},\mathbbm a_{20}\ran 
\oplus \lan \mathbbm a_{23}\ccd \mathbbm a_{29}\ran$, 
we may assume that $y_{10},y_{23},y_{24}=0$.

(9) $S_{13}$, 
$\be_{13} = \tfrac {19} {780} (-8,-8,-8,12,12,-5,-5,-5,15)$ 

By applying Lemmas II--4.1, II--4.2 to the subspaces 
$\lan \bbma_{10},\bbma_{20},\bbma_{30}\ran$, 
$\lan \bbma_{33}\ccd \bbma_{39}\ran$ respectively, 
we may assume that $y_{10},y_{20}=0$, $y_{33},y_{34}=0$.

%

(11) $S_{18}$, 
$\be_{18} = \tfrac {7} {1420} (-24,-4,-4,16,16,-5,-5,-5,15)$

Let $G=\gl_3\times \gl_2^2$, $W=\aff^3\otimes \m_2$, $V=W\oplus \aff^2$. 
We define an action of $(g_2,g_3)\in \gl_2^2$ on $\m_2$ by 
$\m_2\ni A\mapsto g_2A{}^tg_3\in \m_2$. With the standard \rep{} of
$\gl_3$, it induces a linear action of $G$ on $W$.  We define 
a linear action of $g=(g_1,g_2,g_3)\in M$ on $V$ by  
$V\ni (w,v)\mapsto (gw,g_3v)\in V$. 
Let $B=(B_1,B_2,B_3)\in W$ where $B_1,B_2,B_3$ are as in (\ref{eq:B123-defn}) 
and $v_0=[0,1]$. 

\begin{lem}
\label{lem:S18-Zopen}
In the above situation, $G(B,v_0)\sub V$ is Zariski open. 
\end{lem}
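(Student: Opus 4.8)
The plan is to verify that $\dim {\mathrm T}_{e_G}(G_{(B,v_0)}) = \dim G - \dim V$ and invoke Proposition \ref{prop:open-orbit}. Since $\dim G = 9 + 4 + 4 = 17$ and $\dim V = 12 + 2 = 14$, I need to show the stabilizer tangent space has dimension $3$. Working with dual numbers, I would write a general element of ${\mathrm T}_{e_G}(G)$ as $e_G + \vep(X,Y,Z)$ with $X = (x_{ij}) \in \m_3$, $Y = (y_{ij}), Z = (z_{ij}) \in \m_2$, and expand the condition $(e_G+\vep(X,Y,Z))(B,v_0) = (B,v_0)$. The condition $(e_G+\vep(X,Y,Z))B = B$ is precisely the system appearing just before Lemma \ref{lem:S14-XYZ-form}, whose solution is given in that lemma: $X,Y,Z$ take the explicit parametrized form there with $y_{11}+z_{11}=y_{22}+z_{22}$, cutting the naive $17$-dimensional space down to a $5$-dimensional space (matching $\dim {\mathrm T}_{e_G}(G_B) = 5$). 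The extra condition $(e_G + \vep Z)v_0 = v_0$, i.e. $Zv_0 = 0$, forces the second column of $Z$ to vanish; combined with the shape of $Z$ from Lemma \ref{lem:S14-XYZ-form} (where $Z = \left(\begin{smallmatrix} z_{11} & -y_{21} \\ -y_{12} & z_{22}\end{smallmatrix}\right)$), this gives $y_{21} = 0$ and $z_{22} = 0$, hence (using $y_{11}+z_{11}=y_{22}+z_{22}$ and the formulas for $X$) two more linear relations. The remaining free parameters should then be $3$-dimensional.

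First I would record the explicit shape of $X,Y,Z$ from Lemma \ref{lem:S14-XYZ-form}, then impose $Zv_0=0$. Reading off: $z_{22}=0$ and $-y_{21}=0$. From $y_{11}+z_{11}=y_{22}+z_{22}=y_{22}$ we get $z_{11} = y_{22}-y_{11}$. So the surviving parameters are $y_{11}, y_{12}, y_{22}$ (three of them), with $y_{21}=0$, $z_{11}=y_{22}-y_{11}$, $z_{22}=0$, $z_{12}=-y_{21}=0$, $z_{21}=-y_{12}$, and $X$ determined by these via the formula in Lemma \ref{lem:S14-XYZ-form}. That is a $3$-dimensional space. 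Since $3 = 17 - 14 = \dim G - \dim V$, Proposition \ref{prop:open-orbit} applies directly and gives that $G(B,v_0) \sub V$ is Zariski open (and that the stabilizer is smooth, though only openness is needed here).

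The main obstacle — such as it is — is purely bookkeeping: making sure the parametrization from Lemma \ref{lem:S14-XYZ-form} is correctly substituted and that the coordinates of $v_0 = [0,1]$ pick out the correct column/entry of $Z$ under the chosen action convention (the action on the $\aff^2$ factor is through $g_3$, so it is $Z$ and not $Y$ that must annihilate $v_0$). One should double-check the action convention matches how $v_0$ is embedded, and confirm the count by also exhibiting a $3$-dimensional torus or subgroup of $G$ fixing $(B,v_0)$ as a sanity check; the element $\phi(h,t)$ of (\ref{eq:S14-phi-defn}) restricted appropriately, intersected with the stabilizer of $v_0$, should give exactly this. I would state the lemma's proof as: "The condition $(e_G+\vep(X,Y,Z))B = B$ is solved in Lemma \ref{lem:S14-XYZ-form}; adding $Zv_0 = 0$ imposes $y_{21} = z_{22} = 0$, leaving a $3$-dimensional space. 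Since $3 = \dim G - \dim V$, Proposition \ref{prop:open-orbit} gives the claim." and omit the routine matrix expansions, consistently with the paper's style.
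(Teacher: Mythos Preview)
Your proof is correct, but the paper takes a different route. The paper does not compute the stabilizer tangent space directly; instead it argues as follows: by Proposition \ref{prop:322-invariant} the orbit $GB\sub W$ is Zariski open, so the set $GS$ with $S=\{(B,v)\mid v\neq 0\}$ is open in $V$; then it uses the explicit stabilizer $\phi(h,1)=({}^t\rho_1(h,1)^{-1},h,{}^th^{-1})$ from Lemma \ref{lem:S14-w-fix}, whose third component ${}^th^{-1}$ runs over all of $\gl_2$ and hence acts transitively on $\aff^2\setminus\{0\}$, to conclude $G(B,v_0)=GS$.

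Your approach---imposing the extra linear condition $Zv_0=0$ on top of the parametrization of Lemma \ref{lem:S14-XYZ-form} and invoking Proposition \ref{prop:open-orbit}---is equally valid and arguably closer in spirit to the many other dimension counts in the paper. The paper's argument has the mild advantage that it makes the transitivity of $G_B$ on $\aff^2\setminus\{0\}$ explicit (which is the conceptual reason the extra $\aff^2$ factor costs exactly two tangent dimensions), while yours is a cleaner one-step application of Proposition \ref{prop:open-orbit} that avoids naming $S$ or $GS$. Your bookkeeping ($z_{12}=-y_{21}=0$, $z_{22}=0$, leaving $y_{11},y_{12},y_{22}$ free) is correct.
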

\begin{proof}
By Proposition \ref{prop:322-invariant}, $GB\sub W$ is Zariski dense. 
Let $S=\{(B,v)\mid v\in \aff^2\setminus \{[0,0]\}\}$. 
Then $GS\sub V$ is Zariski open. We show that $G(B,v_0)=GS$, which 
proves that $G(B,v_0)\sub V$ is Zariski open. 

For $h\in\gl_2,t\in\gl_1$, 
let $\rho(h,t),\rho_1(h,t)$ be as in Lemmas \ref{lem:S14-L-invariant}, 
\ref{lem:S14-w-fix}. 
By Lemma \ref{lem:S14-w-fix}, if $g\in\gl_2$ then 
$({}^t\rho_1(g,1)^{-1},g,{}^tg^{-1})$ fixes $B$. 
For any $v\in \aff^2\setminus \{[0,0]\}$, 
there exists $g\in \gl_2$ such that ${}^tg^{-1}v=v_0$. 
Then $({}^t\rho_1(g)^{-1},g,{}^tg^{-1})(B,v)=(B,v_0)$. 
This proves that $G(B,v_0)=GS$. 
\end{proof}

For $x\in Z_{18}$, let 
\begin{equation*}
A_i(x) = 
\begin{pmatrix}
x_{24i} & x_{25i} \\
x_{34i} & x_{35i} 
\end{pmatrix} \; (i=1,2,3),\; 
v(x) = 
\begin{pmatrix}
x_{144} \\ x_{154}
\end{pmatrix}.
\end{equation*}
Then by the map 
$x\mapsto ((A_1(x),A_2(x),A_3(x)),v(x),x_{234})\in V\oplus 1$,  
we can identify $Z_{18}$ with $V\oplus 1$.  
We use Lemma \ref{lem:empty-criterion} to show that 
$S_{18}=\emptyset$. By the above consideration, we only have 
to consider $x=\sum_i y_i\mathbbm a_i$ such that $A(x)=B,v(x)=v_0$.  
So we may assume that $y_i=0$ for $i=7,8,16,18,19,26,27,29,33$. 

%

(13) $S_{22}$, 
$\be_{22} = \tfrac {13} {2220} (-16,-16,4,4,24,-15,-15,5,25)$ 

By Lemma \ref{lem:222-eliminate}, 
we may assume that $y_9,y_{24},y_{32}=0$. 

(14) $S_{23}$, 5
$\be_{23} = \tfrac {17} {2220} (-24,-4,-4,16,16,-15,-15,5,25)$ 

By Lemma \ref{lem:222-eliminate}, 
we may assume that $y_{10},y_{26},y_{33}=0$. 

%

%

(17) $S_{26}$, 
$\be_{26}=\tfrac {3} {3020} (-16,-16,4,4,24,-25,-5,-5,35)$ 

By applying Lemmas II--4.1, II--4.3 to the subspaces 
$\lan \mathbbm a_9,\mathbbm a_{10}\ran$, 
$\lan \mathbbm a_{14}\ccd \mathbbm a_{27}\ran \oplus
\lan \mathbbm a_{18},\mathbbm a_{28}\ran$ respectively, 
we may assume that $y_9,y_{14},y_{18}=0$.  

%

%

%

%

(22) $S_{31}$, 
$\be_{31} = \tfrac {3} {620} (-16,-6,4,4,14,-5,-5,5,5)$ 

By applying Lemmas II--4.1, II--4.3 to the subspaces 
$\lan \mathbbm a_7,\mathbbm a_{17}\ran$, 
$\lan \mathbbm a_{24},\mathbbm a_{34}\ran \oplus 
\lan\mathbbm a_{25}\ccd \mathbbm a_{36}\ran$ 
respectively, we may assume that $y_7,y_{24},y_{25}=0$.

%

%

%

(26) $S_{39}$, 
$\be_{39} = \tfrac {11} {3180} (-32,-12,8,8,28,-25,-5,15,15)$ 

By applying Lemma II--4.3 to the subspace 
$\lan \mathbbm a_{24},\mathbbm a_{34}\ran \oplus 
\lan \mathbbm a_{25}\ccd \mathbbm a_{36}\ran$, 
we may assume that $y_{24},y_{25}=0$.

(27) $S_{41}$, 
$\be_{41} = \tfrac {1} {2380} (-32,-12,8,8,28,-15,5,5,5)$ 

By applying Lemma II--4.5 to the subspace  
$\lan \mathbbm e_{14},\mathbbm e_{24},\mathbbm e_{34}\ran \oplus 
\lan \mathbbm e_{15}\ccd \mathbbm e_{36}\ran$, 
we may assume that $y_{14}=y_{24}=y_{15}=0$.

(28) $S_{43}$, 
$\be_{43} = \tfrac {7} {2220} (-24,-4,-4,16,16,-25,-5,15,15)$ 

By applying II--4.3 to the subspace 
$\lan \mathbbm a_{23}\ccd \mathbbm a_{34}\ran \oplus 
\lan \mathbbm a_{25},\mathbbm a_{35}\ran$
and then Lemmas II--4.1 to the subspace 
$\lan \mathbbm a_{16},\mathbbm a_{18}\ran$,  
we may assume that $y_{16},y_{23},y_{25}=0$. 

%

(30) $S_{45}$, 
$\be_{45} = \tfrac {1} {780} (-12,-2,-2,8,8,-15,-5,5,15)$

By applying Lemma II--4.3 to the subspace 
$\lan \mathbbm a_{16}\ccd \mathbbm a_{19}\ran 
\oplus \lan \mathbbm a_{31},\mathbbm a_{32}\ran$, 
we may assume that $y_{16},y_{31}=0$. 

%

%
%
%
%

(34) $S_{51}$, 
$\be_{51} = \tfrac {1} {220} (-8,2,2,2,2,-55,15,15,25)$ 

We identify $\lan \mathbbm a_{15}\ccd \mathbbm a_{30}\ran$ with 
with the space of $4\times 4$ alternating matrices 
with coefficients in the space 
of linear forms in two variables $u=(u_1,u_2)$. 
Taking the Pfaffian, we obtain a quadratic form $Q_x$ 
of $u$ for each $x\in Z_{51}$. 
Since $k=\overline k$, by the action of 
$\gl(2)\sub M_{2,[1,3]}\sub M_{\be_{51}}$,  
we may assume that $Q_x(1,0)=0$, which implies that 
the first  $4\times 4$ alternating matrix is degenerate 
(rank $<4$).  By Lemma II--4.6, we may assume that 
$y_{15}\ccd y_{19}=0$.  
Then $\gl_2\sub M_{1,[1,3,4]}\sub M_{\be_{51}}$ 
does not change this condition. 
By applying Lemma II--4.1 to the action of this $\gl_2$ 
on the subspace $\lan \bbma_{31},\bbma_{32}\ran$,  
we may assume that $y_{31}=0$.

(35) $S_{52}$, 
$S_{52} = \tfrac {1} {4} (0,0,0,0,0,-1,-1,1,1)$ 
 
By applying Lemma II--4.6 to the subspace 
$\lan \bbma_{21}\ccd \bbma_{30}\ran$, 
we may assume that 
$y_{21}\ccd y_{24},y_{26}\ccd y_{29}=0$. 
Then $\gl_2^2\sub M_{1,[1,3]}\sub M_{\be_{52}}$  
does not change this condition. So by 
applying Lemma II--4.1 to the subspaces 
$\lan \bbma_{31},\bbma_{32}\ran$, $\lan \bbma_{33},\bbma_{34}\ran$, 
we may assume that $y_{31},y_{33}=0$.

(36) $S_{53}$, 
$\be_{53} = \tfrac {1} {120} (-48,7,7,7,27,-5,-5,-5,15)$ 

By applying Lemma II--4.6 to the subspace 
$\lan \mathbbm a_{35},\mathbbm a_{36},\mathbbm a_{38}\ran$, 
we may assume that $y_{35},y_{36}=0$. 
Then $\gl_3\sub M_{2,[3]}\sub M_{\be_{53}}$ 
does not change this condition. 
By applying Lemma \ref{lem:33-eliminate} to 
the action of this $\gl_3$ on 
the subspace 
$\lan \mathbbm a_7\ccd \mathbbm a_{30}\ran$, 
we may assume that $y_7,y_9,y_{17}=0$.

%

(38) $S_{55}$, 
$\be_{55} = \tfrac {1} {4} (-1,0,0,0,1,-1,0,0,1)$ 

By applying Lemma II--4.2 to the subspace 
$\lan \bbma_{17}\ccd \bbma_{30}\ran$, 
we may assume that $y_{17},y_{27}=0$. 
Then $\gl_2\sub M_{1,[1,2,4]}\sub M_{\be_{55}}$ 
does not change this condition. 
By applying Lemma II--4.1 to the subspace 
$\lan \mathbbm a_{35},\mathbbm a_{36}\ran$, 
we may assume that $y_{35}=0$. 

%

(40) $S_{57}$, 
$\be_{57} = \tfrac {1} {460} (-44,-44,-44,36,96,-75,5,5,65)$ 

By Lemma II--4.5, 
we may assume that $y_{14},y_{33},y_{36}=0$.

(41) $S_{58}$, 
$\be_{58} = \tfrac {1} {120} (-48,-3,17,17,17,-5,-5,-5,15)$ 

By applying Lemma II--4.1 to the subspace
$\lan \mathbbm a_{35},\mathbbm a_{36},\mathbbm a_{37}\ran$,
we may assume that $y_{35},y_{36}=0$. 
Then $\gl_3\sub M_{2,[3]}\sub M_{\be_{58}}$ 
does not change this condition. 
By applying Lemma \ref{lem:33-eliminate} 
to the action of this $\gl_3$ on the subspace 
$\lan \mathbbm a_8\ccd \mathbbm a_{30}\ran$, 
we may assume that $y_8,y_9,y_{18}=0$.

(42) $S_{59}$, 
$\be_{59} = \tfrac {1} {60} (-4,-4,-4,0,12,-3,-3,-3,9)$ 

Similarly as in the above case, 
we may assume that $y_4,y_7,y_{14},y_{33},y_{36}=0$. 

(43) $S_{60}$, 
$\be_{60} = \tfrac {1} {8} (-1,0,0,0,1,-2,0,1,1)$ 

By applying Lemma II--4.4 to the subspace 
$\lan \mathbbm a_{24},\mathbbm a_{34} \ran \oplus 
\lan \mathbbm a_{25}\ccd \mathbbm a_{38}\ran$, 
we may assume that  $y_{24},y_{25},y_{35}=0$. 
Note that this subspace is $\aff^2\oplus (\wedge^2 \aff^3)\otimes \aff^2$
instead of $\aff^2\oplus \aff^3\otimes \aff^2$ as a \rep{} of 
$\gl_3\times \gl_2\sub M_{\be_{60}}$, but the 
consideration is similar. 

%

(45) $S_{62}$, 
$\be_{62} = \tfrac {1} {540} (-56,4,4,4,44,-35,5,5,25)$

By the consideration of Lemma II--4.5, 
we may assume that $y_7,y_9,y_{15}=0$.

(46) $S_{63}$, 
$\be_{63} = \tfrac {1} {240} (-6,-6,-6,9,9,-60,5,20,35)$ 

By the consideration of Lemma II--4.5, 
we may assume that $y_{23},y_{31},y_{32}=0$.

(47) $S_{65}$, 
$\be_{65} = \tfrac {1} {240} (-31,-6,-6,-6,49,-20,-20,5,35)$

By Lemma II--4.6, we may assume that $y_{35},y_{36}=0$.

(48) $S_{66}$, 
$\be_{66} = \tfrac {1} {460} (-34,6,6,6,16,-15,-15,-5,35)$ 

By applying Lemma II--4.2 to the subspace 
$\lan \mathbbm a_7\ccd \mathbbm a_{20}\ran$, 
we may assume that $y_7,y_{17}=0$. Then 
$\gl_2$'s in $M_{1,[1,2,4]},M_{2,[2,3]}\sub M_{\be_{66}}$ 
do not change this condition. 
By applying Lemma II--4.3 to the subspace 
$\lan \bbma_9,\bbma_{10},\bbma_{19},\bbma_{20} \ran \oplus 
\lan \bbma_{25},\bbma_{26}\ran$, 
we may assume that $y_{20},y_{26}=0$ (see Remark \ref{remark:II43-remark}).

%

(50) $S_{68}$, 
$\be_{68} = \tfrac {1} {60} (-4,-4,-4,6,6,-5,-5,-5,15)$ 

By applying Lemmas II--4.1, II--4.6 to the subspaces 
$\lan \bbma_{10},\bbma_{20},\bbma_{30}\ran$, 
$\lan \bbma_{31},\bbma_{32},\bbma_{35}\ran$ respectively,  
we may assume that $y_{10},y_{20}=0,y_{31},y_{32}=0$.

(51) $S_{69}$, 
$\be_{69} = \tfrac {1} {140} (-26,-26,-26,39,39,-35,-10,-10,55)$ 

By applying Lemmas II--4.1, II--4.2 to the subspaces 
$\lan \bbma_{10},\bbma_{20}\ran$, 
$\lan \bbma_{33}\ccd \bbma_{39}\ran$ respectively,  
we may assume that $y_{20}=0,y_{33},y_{34}=0$. 

(52) $S_{72}$, 
$\be_{72} = \tfrac {1} {340} (-36,-16,-16,34,34,-5,-5,-5,15)$ 

Let $G=\gl_3\times \gl_2^2$, 
$W=\aff^3\otimes \m_2$ and 
$V=W\oplus \aff^2$. 

We define an action of $G$ on $V$ in the same way as the case 
(11) $S_{18}$. Let $B=(B_1,B_2,B_3)$ be as in (\ref{eq:B123-defn}) 
and $v_0=[0,1]$. By Lemma \ref{lem:S18-Zopen}, 
$G(B,v_0)\sub V$ is Zariski open. 
In the same way as in the case (11) $S_{18}$, 
we can identify $Z_{72}$ with $V$. 
The element $-\bbma_6+\bbma_9+\bbma_{17}+\bbma_{28}+\bbma_{34}$
corresponds to the element $(B,v_0)$. 
We use Lemma \ref{lem:empty-criterion} to show that 
$S_{72}=\emptyset$. By the above consideration, we may 
assume that $y_i=0$ for $i=7,8,16,18,19,26,27,29,33$. 

(53) $S_{73}$, 
$\be_{73} = \tfrac {1} {260} (-4,-4,1,1,6,-65,20,20,25)$ 

By Lemma \ref{lem:222-eliminate}, 
we may assume that $y_{14},y_{18},y_{32}=0$. 

%

%

(56) $S_{77}$, 
$\be_{77} = \tfrac {1} {190} (-16,-16,9,9,14,-15,-10,10,15)$ 

By applying Lemma II--4.3, to the subspace 
$\lan \mathbbm a_{24},\mathbbm a_{27}\ran \oplus 
\lan \mathbbm a_{32}\ccd \mathbbm a_{36}\ran$, 
we may assume that $y_{24},y_{32}=0$. 

(57) $S_{78}$, 
$\be_{78} = \tfrac {1} {260} (-19,-19,-4,-4,46,-20,-20,-5,45)$ 

By Lemma \ref{lem:222-eliminate}, 
we may assume that $y_9,y_{24},y_{32}=0$. 

%

(59) $S_{80}$, 
$\be_{80} = \tfrac {1} {35} (-4,-4,1,1,6,0,0,0,0)$ 

By applying Lemma II--4.2 to the subspace 
$\lan \mathbbm a_4\ccd \mathbbm a_{37}\ran$, 
we may assume that $y_4,y_7,y_{14},y_{17}=0$. 
Then $\gl_2\sub M_{2,[2,3]}\sub M_{\be_{80}}$ 
does not change this condition. 
By applying Lemma II--4.1 to the subspace 
$\lan \mathbbm a_8,\mathbbm a_{18}\ran$, 
we may assume that $y_8=0$. 

(60) $S_{81}$, 
$\be_{81} = \tfrac {1} {95} (-8,-8,-3,7,12,-5,0,0,5)$ 

By applying Lemma II--4.3 to the subspace 
$\lan \mathbbm a_{14}\ccd \mathbbm a_{27} \ran \oplus 
\lan \mathbbm a_{33},\mathbbm a_{36}\ran$, 
we may assume that $y_{14},y_{33}=0$. 

(61) $S_{82}$, 
$\be_{82} = \tfrac {1} {130} (-22,-2,-2,3,23,-10,-10,10,10)$ 

By Lemma \ref{lem:222-eliminate}
we may assume that $y_7,y_{24},y_{26}=0$.

(76) $S_{98}$, 
$\be_{98} = \tfrac {3} {170} (-6,-1,-1,4,4,-5,-5,5,5)$ 

We apply Lemmas II--4.1, II--4.3 to the subspaces 
$\lan \mathbbm a_{10},\mathbbm a_{20}\ran$, 
$\lan \mathbbm a_{23}\ccd \mathbbm a_{34} \ran 
\oplus \lan \mathbbm a_{25},\mathbbm a_{35}\ran$ respectively  
and may assume that $y_{10}=0,y_{23},y_{25}=0$.

%

(78) $S_{100}$, 
$\be_{100} = \tfrac {1} {460} (-124,-24,16,16,116,-35,-35,-35,105)$ 

By Lemma II--4.4, we may assume that $y_9,y_{10},y_{35}=0$.

%

(80) $S_{103}$, 
$\be_{103} = \tfrac {1} {70} (-13,-13,-13,-3,42,-5,-5,5,5)$

We apply Lemmas II--4.1, II--4.2 to the subspaces 
$\lan \mathbbm a_{10},\mathbbm a_{20}\ran$, 
$\lan \mathbbm a_{24},\mathbbm a_{27},
\mathbbm a_{29},\mathbbm a_{34},\mathbbm a_{37},\mathbbm a_{39}\ran$ respectively  
and may assume that $y_{10},y_{24},y_{34}=0$.

(81) $S_{104}$, 
$\be_{104} = \tfrac {1} {130} (-2,-2,-2,3,3,-5,-5,5,5)$ 

We apply Lemmas II--4.1, II--4.2 to the subspaces 
$\lan \mathbbm a_{10},\mathbbm a_{20}\ran$, 
$\lan \mathbbm a_{21},\mathbbm a_{22},
\mathbbm a_{25},\mathbbm a_{31},\mathbbm a_{32},\mathbbm a_{35}\ran$ respectively  
and may assume that $y_{10},y_{21},y_{31}=0$.
The second \rep{} is $\wedge^2 \aff^3 \otimes \aff^2$, 
but the consideration is similar to that of 
Lemma II--4.2.

%

(83) $S_{111}$, 
$\be_{111} = \tfrac {1} {130} (-22,-22,-7,18,33,-20,-20,20,20)$ 

By applying Lemma II--4.3 to the subspace
$\lan \mathbbm a_{24}\ccd \mathbbm a_{37} \ran 
\oplus \lan \mathbbm a_{28},\mathbbm a_{38}\ran$, 
we may assume that $y_{24},y_{28}=0$. 
 
(84) $S_{112}$, 
$\be_{112} = \tfrac {9} {380} (-8,-8,2,2,12,-5,-5,5,5)$ 

By applying Lemma II--4.3 to the subspace
$\lan \mathbbm a_{24}\ccd \mathbbm a_{37} \ran 
\oplus \lan \mathbbm a_{28},\mathbbm a_{38}\ran$, 
we may assume that $y_{24},y_{28}=0$.

(85) $S_{114}$, 
$\be_{114} = \tfrac {1} {460} (-64,-64,-4,36,96,-75,25,25,25)$ 

We first apply Lemma II--4.2 to the subspace 
$\lan \mathbbm a_{14},\mathbbm a_{17},
\mathbbm a_{24},\mathbbm a_{27},
\mathbbm a_{34},\mathbbm a_{37}\ran$ 
and may assume that $y_{14},y_{17} =0$. 
Then $\gl_2^2\sub \m_{[2,3,4],[1,2]}$ 
does not change this condition. 
By applying Lemma II--4.3 to the action of this 
$\gl_2^2$ on the subspace 
$\lan \mathbbm a_{24},\mathbbm a_{27},
\mathbbm a_{34},\mathbbm a_{37}\ran \oplus
\lan \mathbbm a_{28},\mathbbm a_{38}\ran$,  
we may assume that $y_{24},y_{28}=0$.

(86) $S_{115}$, 
$\be_{115} = \tfrac {1} {380} (-37,-27,18,18,28,-30,-20,25,25)$ 

By applying Lemma II--4.3 to the subspace  
$\lan \mathbbm a_{24},\mathbbm a_{34}\ran \oplus
\lan \mathbbm a_{25}\ccd \mathbbm a_{36}\ran$, 
we may assume that $y_{24}=y_{25}=0$. 

(87) $S_{116}$, 
$\be_{116} = \tfrac {1} {580} (-12,-2,-2,8,8,-15,5,5,5)$ 

We first apply Lemma II--4.2 to the subspace 
$\lan \mathbbm a_{13},\mathbbm a_{14},
\mathbbm a_{23},\mathbbm a_{24},
\mathbbm a_{33},\mathbbm a_{34}\ran$ 
and may assume that $y_{13},y_{14} =0$.   
Then $\gl_2^2\sub \m_{[1,2,3],[1,2]}$ 
does not change this condition. 
By applying Lemma II--4.3 to the action of this 
$\gl_2^2$ on the subspace 
$\lan \mathbbm a_{23},\mathbbm a_{24},
\mathbbm a_{33},\mathbbm a_{34}\ran \oplus
\lan \mathbbm a_{25},\mathbbm a_{35}\ran$,  
we may assume that $y_{23},y_{25}=0$.

(88) $S_{117}$, 
$\be_{117} = \tfrac {1} {140} (-31,-31,-11,-11,84,-5,-5,-5,15)$ 

We apply Lemmas II--4.2, II--4.1 to the subspaces 
$\lan \mathbbm a_{9},\mathbbm a_{10},
\mathbbm a_{19},\mathbbm a_{20},\mathbbm a_{29},\mathbbm a_{30}\ran$,
$\lan \mathbbm a_{34},\mathbbm a_{37}\ran$ respectively  
and may assume that $y_{9},y_{10},y_{34}=0$.

%

%

%

%

(93) $S_{123}$, 
$\be_{123} = \tfrac {1} {220} (-88,2,22,22,42,-15,5,5,5)$ 

We identify $\lan \mathbbm a_{17}\ccd \mathbbm a_{38}\ran$
with $\m_{3,2}$. By applying Lemmas II--4.1, II--4.2 
to the subspaces $\lan \bbma_9,\bbma_{10}\ran$, 
$\lan \bbma_{17}\ccd \bbma_{38}\ran$ respectively, we may assume that 
$y_9=0,y_{17},y_{18}=0$.

%

(95) $S_{125}$, 
$\be_{125} = \tfrac {1} {440} (-71,-56,14,14,99,-60,10,25,25)$ 

By applying Lemma II--4.1 to the subspaces
$\lan \mathbbm a_9,\mathbbm a_{10}\ran$
$\lan \mathbbm a_{24},\mathbbm a_{34}\ran$, 
we may assume that $y_9=0,y_{24}=0.$

%

(97) $S_{127}$, 
$\be_{127} = \tfrac {3} {520} (-16,-1,4,4,9,-5,-5,0,10)$ 

By applying Lemma II--4.1 to the subspaces 
$\lan \mathbbm a_8,\mathbbm a_{18}\ran$
$\lan \mathbbm a_{25},\mathbbm a_{26}\ran$, 
we may assume that $y_8=0,y_{25}=0$.

(119) $S_{154}$,  
$\be_{154} = \tfrac {1} {340} (-36,-36,24,24,24,-85,-85,55,115)$ 

By applying Lemma II--4.2 to the subspace 
$\lan \mathbbm a_{32}\ccd \mathbbm a_{37}\ran$, 
we may assume that $y_{32},y_{35}=0$. 
Then $\gl_2^2\sub M_{[2,3],[1,2,3]}$ does not change this condition. 
By applying Lemma II--4.3 to the action of this 
$\gl_2^2$ on the subspace 
$\lan \mathbbm a_{28},\mathbbm a_{29}\ran \oplus
\lan \mathbbm a_{33},\mathbbm a_{34},\mathbbm a_{36},\mathbbm a_{37}\ran$,  
we may assume that $y_{28},y_{33}=0$.

(120) $S_{155}$, 
$\be_{155} = \tfrac {1} {340} (-136,24,24,24,64,-85,15,15,55)$ 

Similarly as in the above case, we may assume that 
$y_{17},y_{27}=0,y_{19},y_{35}=0$.

%

(122) $S_{157}$, 
$\be_{157} = \tfrac {1} {420} (-28,-28,-28,12,72,-105,15,15,75)$ 

We first apply Lemma II--4.2 to the subspace 
$\lan \mathbbm a_{14},\mathbbm a_{17},
\mathbbm a_{19},\mathbbm a_{24},
\mathbbm a_{27},\mathbbm a_{29}\ran$ 
and may assume that $y_{14},y_{24} =0$.   
Then $\gl_2^2 \sub \m_{[1,3,4],[1,3]}$ 
does not change this condition. 
By applying Lemma II--4.3 to the action of this 
$\gl_2^2$ on the subspace 
$\lan \mathbbm a_{17},\mathbbm a_{19},
\mathbbm a_{27},\mathbbm a_{29}\ran \oplus
\lan \mathbbm a_{36},\mathbbm a_{38}\ran$,  
we may assume that $y_{17},y_{36}=0$.

%

%

(125) $S_{160}$, 
$\be_{160} = \tfrac {1} {80} (-7,-2,-2,-2,13,-20,0,5,15)$ 

By applying Lemma II--4.1 to the subspace 
$\lan \bbma_{17},\bbma_{19},\bbma_{20}\ran$, 
we may assume that $y_{17},y_{19}=0$. 
Then $\gl_2\sub \m_{1,[1,3,4]}\sub \m_{\be_{160}}$ 
does not change this condition. 
By applying Lemma II--4.1 to the subspace 
$\lan \bbma_{36},\bbma_{38}\ran$, 
we may assume that $y_{36}=0$. 

(126) $S_{161}$, 
$\be_{161} = \tfrac {1} {580} (-32,-32,-32,48,48,-145,-5,-5,155)$ 

By applying Lemmas II--4.1, II--4.6 to the subspaces 
$\lan \bbma_{20},\bbma_{30}\ran$, 
$\lan \bbma_{31},\bbma_{32},\bbma_{35}\ran$ 
respectively, we may assume that 
$y_{20}=0,y_{31},y_{32}=0$.

(127) $S_{162}$, 
$\be_{162} = \tfrac {1} {4} (0,0,0,0,0,-1,-1,-1,3)$

By Lemma II--4.6, we may assume that $y_{31},y_{32},y_{33},y_{34}=0$.

(142) $S_{179}$, 
$\be_{179} = \tfrac {1} {420} (-48,-48,12,12,72,-105,35,35,35)$ 

We first apply Lemma II--4.2 to the subspace 
$\lan \mathbbm a_{14},\mathbbm a_{17},
\mathbbm a_{24},\mathbbm a_{27},
\mathbbm a_{34},\mathbbm a_{37}\ran$ 
and may assume that $y_{14},y_{17} =0$.   
Then $\gl_2^2\sub \m_{[2,3,4],[1,2]}\sub M_{\be_{179}}$ 
does not change this condition. 
By applying Lemma II--4.3 to the action of this 
$\gl_2^2$ on the subspace 
$\lan \mathbbm a_{24},\mathbbm a_{27},
\mathbbm a_{34},\mathbbm a_{37}\ran \oplus
\lan \mathbbm a_{28},\mathbbm a_{38}\ran$,  
we may assume that $y_{24},y_{28}=0$.

(143) $S_{180}$, 
$\be_{180} = \tfrac {1} {20} (-8,0,2,2,4,-1,-1,-1,3)$ 

By Lemma II--4.4, we may assume that $y_9,y_{10},y_{35}=0$.

(159) $S_{196}$, 
$\be_{196} = \tfrac {1} {340} (-46,-46,4,4,84,-45,5,5,35)$ 

By applying Lemma II--4.1 to the action of 
$\gl_2^2\sub M_{[2,4],[1,2,3]}\sub M_{\be_{196}}$
on the subspaces
$\lan \mathbbm a_9,\mathbbm a_{10}\ran$, 
$\lan \mathbbm a_{14},\mathbbm a_{17}\ran$, 
we may assume that $y_9=0,y_{14}=0$.

(160) $S_{197}$, $\be_{197}=\tfrac {1} {60} (-12,-12,4,4,16,-7,-7,5,9)$, 

By applying Lemma II--4.1 to the action of 
$\gl_2^2\sub M_{[2,4],[1,2,3]}\sub M_{\be_{197}}$
on the subspaces $\lan \mathbbm a_9,\mathbbm a_{10}\ran$, 
$\lan \mathbbm a_{34},\mathbbm a_{37}\ran$
we may assume that $y_9=0,y_{34}=0$.

(176) $S_{214}$, 
$\be_{214} = \tfrac {1} {740} (-96,-76,24,24,124,-5,-5,-5,15)$ 

We identify $\lan \bbma_7\ccd \bbma_{28}\ran$ with 
$\m_{3,2}$. By Lemma II--4.2, we may assume that $y_7,y_8=0$.  

(190) $S_{235}$, 
$\be_{235} = \tfrac {1} {80} (-2,-2,-2,3,3,-20,-20,15,25)$ 

By Lemma II--4.6, we may assume that $y_{31},y_{32}=0$.

(220) $S_{269}$, $\be_{269}=\tfrac {1} {180} (-22,-22,8,8,28,-5,-5,5,5)$, 

By applying Lemma II--4.1 to the action of 
$\gl_2^2\sub M_{[2,3,4],[2,3]}\sub M_{\be_{269}}$
on the subspaces 
$\lan \mathbbm a_8,\mathbbm a_{18}\ran$, 
$\lan \mathbbm a_{24},\mathbbm a_{27}\ran$, 
we may assume that $y_8=0,y_{24}=0$.

(222) $S_{275}$, 
$\be_{275} = \tfrac {1} {60}(-24,-24,16,16,16,-15,-15,-15,45)$ 

By Lemma II--4.6, we may assume that $y_{38},y_{39}=0$. 

This completes the proof of Theorem \ref{thm:empty-verify}.

%
%
%
%
%
%
%
%
%
%
%
%
%
%
%
%
%

\section{Smaller \pv s}
\label{sec:smaller-pv-s}

In this section we show that if 
the GIT stratification of $(G,V)$ 
is determined and $S_{\be}$ is a non-empty stratum 
of the GIT stratification of $(G,V)$,  
then it requires much less work 
to determine the GIT stratification of 
$(M_{\be},Z_{\be})$. We shall prove two 
propositions and explain what has to be done
to determine the GIT stratification of 
$(M_{\be},Z_{\be})$.

Let $G,G_{\mathrm{st}},V,G_{\mathrm{st},\be},T_0,T,T_{\text{st}},\gB$, etc.,  
be as in Section \ref{sec:notation-related-git}. 
We still assume that $G$ is split. 
Note that $T_0$ for (\ref{eq:PV}) is defined in (\ref{eq:T0-defn}).

Suppose that $\be\in\gB$, $S_{\be}\not=\emptyset$. 
We consider the \rep{} of $M_{\be}$ on $Z_{\be}$. 
Let $T_{\be,0}\sub T$ be the subgroup generated by 
$T_0$ and $\im(\lam_{\be})$. Then elements of 
$T_{\be,0}$ act on $Z_{\be}$ by scalar multiplication 
and $M_{\be}=T_{\be,0}G_{\mathrm{st},\be}$ as algebraic groups
(i.e., $M_{\be\, \overline k} = T_{\be,0\,\overline k}G_{\mathrm{st},\be\,\overline k}$). 
For the \rep{} $(M_{\be},Z_{\be})$, we let $T_{\be,0}$ and 
$G_{\mathrm{st},\be}$ play the roles 
of $T_0$ and $G_{\mathrm{st}}$ in the general situation respectively. 
So $Z^{\sst}_{\be}$ is the pull back 
of $\p(Z_{\be})^{\sst}$ where the stability is with respect to the 
action of $G_{\mathrm{st},\be}$.

We put 
\begin{equation*}
T_{\text{st},\be} \stackrel{\rm{def}}=(T\cap G_{\mathrm{st},\be})^{\circ},\;
\gt^*_{\be} \stackrel{\rm{def}}=X^*(T_{\mathrm{st},\be})\otimes \R.
\end{equation*}
Then $T_{\text{st},\be}$ is a maximal split torus of $G_{\mathrm{st},\be}$ and 
we can identify $\gt^*_{\be}$
with the orthogonal complement in $\gt^*$ of $\R\be$ 
with respect to the inner product $(\;,\;)_*$ on $\gt^*$.
Let $\gt^*_+$ be a Weyl chamber. 
We use the restriction of $(\;,\;)_*$ to $\gt^*_{\be}$. 
Since the Weyl group of $M_{\be}$ is a subgroup of $\weyl$, 
we can choose a Weyl chamber 
$\gt^*_{\be+}$ so that it includes $\gt^*_+\cap \gt^*_{\be}$. 
Let $\gB_{\be}\sub \gt^*_{\be+}$ be the set which parametrizes 
the GIT stratification of $(M_{\be},Z_{\be})$ for the
above situation.

For $\be'\in \gB_{\be}$, let 
\begin{math}
\overline M_{\be'},\overline G_{\text{st},\be'}\sub M_{\be},
\overline Z_{\be'},\overline W_{\be'},\overline Y_{\be'},
\overline S_{\be'}\sub Z_{\be} 
\end{math}
be the subsets $G_{\be'}$, $G_{\text{st},\be'}$
$Z_{\be'}$, $W_{\be'}$, $Y_{\be'}$, $S_{\be'}$ 
defined in Section \ref{sec:notation-related-git} 
for $G$ and $V$ replaced by $M_{\be}$ and $Z_{\be}$ respectively.  
We denote the semi-simple part of $\overline M_{\be'}$ by 
$\overline M^s_{\be'}$. 
Also the torus $T_{\text{st},\be'}$ in this situation 
is denoted by $\overline T_{\text{st},\be'}$. 
We use this slightly different notations
to distinguish with  those for $(G,V)$. 
If $\be\in \gC$ (see Definition \ref{defn:gB-defn} (1)) 
then the 1PS $\lam_{\be}\in X_*(T_{\text{st}})$ 
and the character $\chi_{\be}$ of $M_{\be}$ were defined 
in Section \ref{sec:notation-related-git}.
If $\be\in\gB$ is fixed and $(G,V)$ is replaced by 
$(M_{\be},Z_{\be})$ and $\be'\in\gB_{\be}$, 
then $\lam_{\be'}\in X_*(T_{\text{st},\be})$, 
$\chi_{\be'}\in X^*(\overline M_{\be'})$ defined in this situation 
are denoted by $\overline \lam_{\be'},\overline \chi_{\be'}$ 
to distinguish with those of $(G,V)$. 

We summarize corresponding notations as below. 
Note that $\be'$ in the first (resp. second) row
belongs to $\gB\sub \gt^*$ 
(resp. $\gB_{\be}\sub \gt^*_{\be}$).  

\vskip 10pt

\begin{center}
\begin{tabular}{|c|c|c|c|c|c|c|c|c|c|c|c|c|c|}
\hline
\rule[-8pt]{-0.2cm}{24pt}
$G$ & $V$ & $T_0$ & $G_{\text{st}}$ & $T_{\text{st}}$ 
& $\gt^*$ & $G_{\be'}$ &  $G_{\text{st},\be'}$ 
& $Z_{\be'}$ & $W_{\be'}$ & $Y_{\be'}$ & $S_{\be'}$ 
& $\lam_{\be'}$ & $\chi_{\be'}$ \\
\hline
\rule[-8pt]{-0.2cm}{24pt}
$M_{\be}$ & $Z_{\be}$ & $T_{\be,0}$ & $G_{\text{st},\be}$ 
& $T_{\text{st},\be}$ & $\gt^*_{\be}$ 
& $\overline M_{\be'}$ & $\overline G_{\text{st},\be'}$  
& $\overline Z_{\be'}$ & $\overline W_{\be'}$ 
& $\overline Y_{\be'}$ & $\overline S_{\be'}$ 
& $\overline \lam_{\be'}$ & $\overline \chi_{\be'}$ \\
\hline
\end{tabular}
\end{center}

\vskip 10pt

We still assume that the action of $T$ on $V$ is diagonalized, 
$\coorda_i$ ($i=1\ccd N$) is the coordinate vector corresponding 
to the $i$-th coordinate and $\gam_i\in \gt^*$ is its weight. 
Let $I_{\be}=\{i\mid \coorda_i\in Z_{\be}\}$, 
$J_{\be}=\{i\mid \coorda_i\in W_{\be}\}$.
If $i\in I_{\be}$ then $(\gam_i,\be)_*=(\be,\be)_*$. 
So $\gam'_i\stackrel{\rm{def}}=\gam_i-\be$ is orthogonal to $\be$ and 
$\gam'_i$ can be regarded as the weight of $\coorda_i$ 
in $\gt^*_{\be}$.

Since $S_{\be}\not=\emptyset$, $Z^{\sst}_{\be\, k}\not=\emptyset$. 
Let $x\in Z^{\sst}_{\be\, k}\not=\emptyset$. If 
$\gI=\{\gam_i'\mid x_i\not=0\}$ then $\Conv \gI$ 
contains the origin. So, 
$\operatorname{Conv} \{\gam'_i\mid i\in I_{\be}\}$
contains the origin.  
Therefore, there exists a rational number $0\leq a_i\leq 1$ 
for each $i\in I_{\be}$
such that $\sum_{i\in I_{\be}} a_i=1$ and that 
$0=\sum_{i\in I_{\be}} a_i\gam'_i$. 
This implies that $\be=\sum_{i\in I_{\be}} a_i \gam_i$.

Suppose that $\be'\in\gB_{\be}$. 
We assume that $\overline S_{\be'}\not=\emptyset$. 
Then there exist a finite subset $X\sub I_{\be}$ and 
a rational number $0\leq a_i\leq 1$ 
for each $i\in X$ such that $\sum_{i\in X}a_i=1$, 
$\be'=\sum_{i\in X}a_i \gam'_i = \sum_{i\in X} a_i\gam_i-\be$. 
We may assume that $(\gam_i',\be')_*=(\be',\be')_*$ for all $i\in X$.
Then $\be''\stackrel{\rm{def}}=\be'+\be$ is in the convex hull of 
$\{\gam_i\mid i\in X\}$. If $i\in X$ then $\gam_i'$ is orthogonal
to $\be$. So $\be'$ is the closest point of 
the convex hull of $\{\gam_i'\mid i\in X\}$ to 
the origin if and only if $\be''$ is the closest 
point of the convex hull of 
$\{\gam_i\mid i\in X\}$ to the origin. 
Therefore, $\be''\in \gC$ (see Definition \ref{defn:gB-defn} (1)) 
and we can consider $Z_{\be''}$, etc. 

\begin{lem}
\label{lem:Zbe''}
$Z_{\be''}\cap Z_{\be}=\overline Z_{\be'}$. 
\end{lem}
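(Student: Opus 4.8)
The plan is to reduce the statement to a purely combinatorial identity about weights, exploiting that both sides of the claimed equality are coordinate subspaces with respect to the fixed basis $\{\coorda_1,\dots,\coorda_N\}$ that diagonalizes the action of $T$ on $V$.

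First I would record the two descriptions as coordinate subspaces. By definition $Z_{\be''}=\Span\{\coorda_i\mid(\gam_i,\be'')_*=(\be'',\be'')_*\}$ and $Z_{\be}=\Span\{\coorda_i\mid i\in I_{\be}\}$. Since the intersection of two coordinate subspaces is the span of the common coordinate vectors, this gives
\[
Z_{\be''}\cap Z_{\be}=\Span\{\coorda_i\mid i\in I_{\be},\ (\gam_i,\be'')_*=(\be'',\be'')_*\}.
\]
On the other hand, $\overline Z_{\be'}$ is obtained by applying the definition of $Z_\beta$ to the representation $(M_{\be},Z_{\be})$, whose coordinate vectors are $\{\coorda_i\mid i\in I_{\be}\}$ carrying the weights $\gam'_i\in\gt^*_{\be}$, and whose inner product is the restriction of $(\;,\;)_*$ to $\gt^*_{\be}$; hence
\[
\overline Z_{\be'}=\Span\{\coorda_i\mid i\in I_{\be},\ (\gam'_i,\be')_*=(\be',\be')_*\}.
\]

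So it suffices to show that, for each $i\in I_{\be}$,
\[
(\gam_i,\be'')_*=(\be'',\be'')_*\quad\Longleftrightarrow\quad(\gam'_i,\be')_*=(\be',\be')_*.
\]
The computation I would carry out uses $\gam_i=\gam'_i+\be$ (valid for $i\in I_{\be}$), $\be''=\be'+\be$, together with the orthogonality relations $(\gam'_i,\be)_*=0$ (because $i\in I_{\be}$, so $\gam'_i$ is orthogonal to $\be$) and $(\be',\be)_*=0$ (because $\be'\in\gt^*_{\be}$, the orthogonal complement of $\R\be$). Expanding bilinearly yields $(\gam_i,\be'')_*=(\gam'_i,\be')_*+(\be,\be)_*$ and $(\be'',\be'')_*=(\be',\be')_*+(\be,\be)_*$, so both conditions differ by the same additive constant $(\be,\be)_*$ and are therefore equivalent; combining with the two coordinate-subspace descriptions above gives $Z_{\be''}\cap Z_{\be}=\overline Z_{\be'}$.

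I do not expect a genuine obstacle here; the only point requiring care is the bookkeeping of which torus and which inner product govern $\overline Z_{\be'}$ — namely the fact, already set up in the text preceding the lemma, that the coordinate vectors of $Z_{\be}$ as an $M_{\be}$-representation are exactly the $\coorda_i$ with $i\in I_{\be}$, that they carry the weights $\gam'_i$ regarded as elements of $\gt^*_{\be}=(\R\be)^{\perp}$, and that one uses the restricted inner product. Once this identification is in place, the lemma is immediate from the one-line orthogonality computation.
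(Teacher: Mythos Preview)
Your proof is correct and follows essentially the same approach as the paper: both reduce to comparing the conditions $(\gam_i,\be'')_*=(\be'',\be'')_*$ and $(\gam'_i,\be')_*=(\be',\be')_*$ for $i\in I_{\be}$, and both use the orthogonality relations $(\gam'_i,\be)_*=0$, $(\be',\be)_*=0$ to expand and see that the two conditions differ by the additive constant $(\be,\be)_*$. The only cosmetic difference is that the paper verifies the two inclusions separately while you package the computation as a single equivalence.
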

\begin{proof}
Let $0\leq i\leq N$. Suppose that $\coorda_i\in Z_{\be''}\cap Z_{\be}$. 
Since $(\be,\be')_*=0$ and $(\gam_i,\be)_*=(\be,\be)$, 
\begin{align*}
(\be'',\be'')_* & = (\be'+\be,\be'+\be)_*
= (\be',\be')_*+(\be,\be)_*, \\
(\gam_i,\be'')_* & =  (\gam_i,\be')_* +(\be,\be)_*
= (\gam'_i,\be')_*+(\be,\be)_*.
\end{align*}
Since $(\gam_i,\be'')_*=(\be'',\be'')_*$, 
we have $(\gam'_i,\be')_*=(\be',\be')_*$. Therefore, 
$\coorda_i\in \overline Z_{\be'}$. Since $Z_{\be''}\cap Z_{\be}$, 
$\overline Z_{\be'}$ are spanned by coordinate vectors, 
$Z_{\be''}\cap Z_{\be}\sub \overline Z_{\be'}$.  

Conversely, suppose that $\coorda_i\in \overline Z_{\be'}$.  
Then $\coorda_i\in Z_{\be}$ of course. 
Since $(\gam'_i,\be)_*=(\be,\be')_*=0$, 
\begin{align*}
(\gam_i,\be'')_* & = (\gam'_i+\be,\be'+\be)_*
= (\gam'_i,\be')_* + (\gam'_i,\be)_* + (\be,\be')_* + (\be,\be)_* \\
& = (\be',\be')_* + (\be,\be)_*
= (\be'',\be''). 
\end{align*}
So $\coorda_i\in Z_{\be''}$. 
This implies that $\overline Z_{\be'} = Z_{\be''}\cap Z_{\be}$. 
\end{proof}

There exists $w\in \weyl$ 
(the Weyl group of $G$) 
such that $\be'''\stackrel{\rm{def}}=w^{-1}\be''\in \gt^*_+$. Since 
$\be''=\sum_{i\in X} a_i\gam_i$, we have 
$\be'''=\sum_{i\in X} a_iw^{-1}\gam_i$.
Since $\be''$ is the closest point of 
$\Conv \{\gam_i\mid i\in X\}$ to the origin, 
$\be'''$ is the closest point of 
$\Conv \{w^{-1}\gam_i\mid i\in X\}$ to the origin.
Therefore, $\be'''\in \gB$.

\begin{prop}
\label{prop:Zbe-stratificiation} 
Suppose that $Z_{\be''}\sub Z_{\be}$
and that $M_{\be''}\sub M_{\be}$. Then
\begin{itemize}
\item[(1)]
$Z^{\sst}_{\be''}=\overline Z^{\sst}_{\be'}$.
\item[(2)]
$M_{\be''\,k}\backslash Z^{\sst}_{\be''\,k}\cong 
\overline M_{\be'\,k}\backslash \overline Z^{\sst}_{\be'\,k}$
\item[(3)]
$\overline S_{\be'}\not=\emptyset$ if and only if 
$S_{\be''}\not=\emptyset$.
\end{itemize}
\end{prop}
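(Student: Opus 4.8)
The plan is to reduce the whole statement to (1). Granting (1), part (2) is immediate: by Lemma~\ref{lem:Zbe''} and the hypothesis $Z_{\be''}\sub Z_{\be}$ we have $Z_{\be''}=\overline Z_{\be'}$, while $\be''=\be+\be'$ and the linearity of $\be\mapsto\lam(\be)$ show that $\lam_{\be''}$ is a positive rational combination of $\lam_{\be}$ and $\overline\lam_{\be'}$; combined with $M_{\be''}\sub M_{\be}=Z_G(\lam_{\be})$ this forces $M_{\be''}\sub Z_G(\lam_{\be})\cap Z_G(\overline\lam_{\be'})=\overline M_{\be'}$, and the reverse inclusion $\overline M_{\be'}\sub M_{\be''}$ holds with no hypothesis, so $M_{\be''}=\overline M_{\be'}$. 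Thus $(M_{\be''},Z_{\be''})$ and $(\overline M_{\be'},\overline Z_{\be'})$ are the same group acting on the same space, with the same semistable locus by~(1), so their rational orbit spaces coincide. Part (3) is then formal: by the definitions in Section~\ref{sec:notation-related-git}, $S_{\be''}\neq\emptyset$ exactly when $Y^{\sst}_{\be''}=Z^{\sst}_{\be''}\times W_{\be''}\neq\emptyset$, i.e.\ exactly when $Z^{\sst}_{\be''}\neq\emptyset$, and likewise $\overline S_{\be'}\neq\emptyset$ exactly when $\overline Z^{\sst}_{\be'}\neq\emptyset$; now apply~(1).

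So everything comes down to (1), that the $G_{\text{st},\be''}$-semistable locus of $Z_{\be''}$ equals the $\overline G_{\text{st},\be'}$-semistable locus of $\overline Z_{\be'}=Z_{\be''}$. The plan is to show that $\overline G_{\text{st},\be'}\sub G_{\text{st},\be''}$ with quotient a subtorus $S$ that acts \emph{trivially} on $Z_{\be''}$, and then to conclude via the numerical criterion: since $S$ fixes every point of $Z_{\be''}$, the $G_{\text{st},\be''}$-orbit and the $\overline G_{\text{st},\be'}$-orbit of a point of $Z_{\be''}$ coincide, and the $T_{\text{st},\be''}$-weight of each $\coorda_i$ spanning $Z_{\be''}$ is just its $\overline T_{\text{st},\be'}$-weight padded by a zero on $X^*(S)$, so the Hilbert--Mumford conditions ``$0\in\Conv\{\gam_i\mid(gz)_i\neq0\}$'' read off the same for both groups. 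The inclusion $\overline G_{\text{st},\be'}\sub G_{\text{st},\be''}$ follows because the roots of $\overline G_{\text{st},\be'}$ are the roots $\al$ of $G$ killed by both $\lam_{\be}$ and $\overline\lam_{\be'}$, hence killed by $\lam_{\be''}$, and because $\overline T_{\text{st},\be'}$ (whose cocharacter lattice is the common annihilator of $\be$ and $\be'$) is annihilated by $\chi_{\be''}$ (whose restriction to $T_{\text{st}}$ is proportional to $\be''$). For the reverse, a root $\al$ of $G_{\text{st},\be''}$ is a root of $M_{\be''}$, and here the hypothesis $M_{\be''}\sub M_{\be}$ enters essentially: it forces $\langle\al,\lam_{\be}\rangle=0$, whence $\langle\al,\overline\lam_{\be'}\rangle=0$ because $\lam_{\be''}\in\mathbb Q_{>0}\lam_{\be}+\mathbb Q_{>0}\overline\lam_{\be'}$, so $\al$ is a root of $\overline G_{\text{st},\be'}$. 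Thus $G_{\text{st},\be''}$ and $\overline G_{\text{st},\be'}$ have the same root system and $S=G_{\text{st},\be''}/\overline G_{\text{st},\be'}$ is a torus, spanned inside $T_{\text{st},\be''}$ by the $1$PS dual to $(\be',\be')_*\,\be-(\be,\be)_*\,\be'$ (the element of $\Span(\be,\be')$ orthogonal to $\be''$). Finally, using the other hypothesis $Z_{\be''}\sub Z_{\be}$: each spanning $\coorda_i$ of $Z_{\be''}$ satisfies $(\gam_i,\be)_*=(\be,\be)_*$ (since $\coorda_i\in Z_{\be}$) and, because $\coorda_i\in Z_{\be''}$ with $(\be,\be')_*=0$, also $(\gam_i,\be')_*=(\be',\be')_*$; hence the weight of $S$ on $\coorda_i$ is $(\be',\be')_*(\gam_i,\be)_*-(\be,\be)_*(\gam_i,\be')_*=0$, i.e.\ $S$ acts trivially on $Z_{\be''}$, as wanted.

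The hard part will be the identification $G_{\text{st},\be''}=\overline G_{\text{st},\be'}\cdot S$ itself, that is, making the matching of maximal tori and root systems of the two stability subgroups precise while pinning down exactly where the two hypotheses $Z_{\be''}\sub Z_{\be}$ and $M_{\be''}\sub M_{\be}$ are used (the root comparison uses the second, the weight computation the first); the subsequent numerical-criterion argument is then routine. The only other point requiring care is $k$-rationality — all the tori and centralizers in play are subgroups of the split torus $T_{\text{st}}$ and the semistable loci are defined by non-vanishing of invariants of reductive $k$-subgroups, so they are defined over $k$ and nothing is lost in passing to $\overline k$ when invoking the numerical criterion. Once (1) is in place in this way, (2) and (3) follow as indicated above.
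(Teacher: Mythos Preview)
Your proof is correct and follows essentially the same strategy as the paper: identify $M_{\be''}=\overline M_{\be'}$ using the hypothesis $M_{\be''}\sub M_{\be}$, observe that $G_{\text{st},\be''}$ and $\overline G_{\text{st},\be'}$ share the same semisimple part and differ only by a one-dimensional torus $S$ (spanned by $\lam_{\be}^a\overline\lam_{\be'}^b$ with $a(\be,\be)_*+b(\be',\be')_*=0$), and use the hypothesis $Z_{\be''}\sub Z_{\be}$ to see that $S$ acts trivially on $Z_{\be''}=\overline Z_{\be'}$. The only cosmetic differences are that the paper records the equality of semisimple parts directly from $M_{\be''}=\overline M_{\be'}$ rather than via a root comparison, and concludes (1) by noting that the rings of invariants for the two groups coincide (hence the semistable loci agree) rather than by invoking the Hilbert--Mumford criterion; both routes encode the same observation.
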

\begin{proof}
(1) If the action of $g\in G$ fixes $\be,\be'$ then 
it fixes $\be''$. So 
$\overline M_{\be'}\sub M_{\be''}$. 
If $g\in M_{\be''}$ 
then $g\in M_{\be}$ by assumption. So $g$ fixes $\be,\be''$. 
Therefore, $g$ fixes $\be'$. 
This implies that $M_{\be''}=\overline M_{\be'}$. 

We may assume that $k=\overline k$. 
Let $\psi_{\be},\psi_{\be''}$ be the restrictions of 
$\chi_{\be},\chi_{\be''}$ to $T_{\text{st}}$. 
Note that $T_{\text{st},\be}=\kernel(\psi_{\be})^{\circ}$, 
$T_{\text{st},\be''}=\kernel(\psi_{\be''})^{\circ}$.
Let $\overline \psi_{\be'}$ be the restriction of 
$\overline{\chi}_{\be'}$ to $T_{\text{st},\be}$. 
Then $\overline T_{\text{st},\be'}=\kernel(\overline \psi_{\be'})^{\circ}$.

It is easy to see that 
$G_{\mathrm{st},\be''}$ (resp. $\overline G_{\mathrm{st},\be'}$) 
is generated by $M^s_{\be''}$ (the semi-simple part) 
and $T_{\text{st},\be''}$ 
(resp. $\overline M^s_{\be'}=M^s_{\be''}$ and $\overline T_{\text{st},\be'}$). 
So the only difference between $G_{\mathrm{st},\be''}$ and 
$\overline G_{\mathrm{st},\be'}$ is the difference  
between $\overline T_{\be'}$ and $T_{\be''}$.

Since $\be''=\be+\be'$, 
$\overline T_{\text{st},\be'}=(\kernel(\psi_{\be})\cap \kernel(\psi_{\be''}))^{\circ}$. 
By assumption and Lemma \ref{lem:Zbe''}, 
$Z_{\be''}=\overline Z_{\be'}$. 
Note that $\lam_{\be}$ (resp. $\lam_{\be'}$) acts on 
$Z_{\be''}=\overline Z_{\be'}$ by scalar multiplication
by weight $(\be,\be)_*$ (resp. $(\be',\be')_*$). 
So for $a,b\in\Z$, $\lam_{\be}^a\lam_{\be'}^b$ acts on 
$Z_{\be''}=\overline Z_{\be'}$ by scalar multiplication
by weight $a(\be,\be)_*+b(\be',\be')_*$. 
Since $\be''=\be+\be'$, 
the weight of $\psi_{\be''}(\lam_{\be}^a\lam_{\be'}^b)$
is proportional to $a(\be,\be)_*+b(\be',\be')_*$. 
Since $k=\overline k$, $k^{\times} \ni t\mapsto t^n \in k^{\times}$ 
is surjective for all $n>0$. 
Therefore, $T_{\be''}/\overline T_{\be'}$ is 
represented by $\lam_{\be}^a\lam_{\be'}^b$ such that 
$a(\be,\be)_*+b(\be',\be')_*=0$ and this acts on 
$Z_{\be''}=\overline Z_{\be'}$ trivially. 
This implies that a polynomial on 
$Z_{\be''}=\overline Z_{\be'}$ is invariant with respect to 
$G_{\mathrm{st},\be''}$ if and only if it is invariant with respect to 
$\overline G_{\mathrm{st},\be'}$. Therefore, 
$Z^{\sst}_{\be''}=\overline Z^{\sst}_{\be'}$. 

(2) $\overline M_{\be'}$ consists of $g\in G$ which fixes (by conjugation)
$\be,\be'$. So $\overline M_{\be'}\sub M_{\be''}$. 
By assumption, $M_{\be''}\sub M_{\be}$. Since $\be'=\be''-\be$, 
$M_{\be''}\sub \overline M_{\be'}$. Therefore, 
$M_{\be''}=\overline M_{\be'}$. So  
(2) follows even though $G_{\mathrm{st},\be''}\not=\overline G_{\mathrm{st},\be'}$.

(3) follows from (1) (2). 
\end{proof}

In the situation of the above proposition, 
we still have to determine if 
$\overline P_{\be'\,k}\backslash \overline Y^{\sst}_{\be'\,k}\cong 
\overline M_{\be'\,k}\backslash \overline Z^{\sst}_{\be'\,k}$.

We review the property of the action of $U_{\be}$ on $Z_{\be},Y_{\be}$. 
For $\gam\in \gt^*,\lam\in\gt$, 
let $\lan \lam,\gam\ran$ be the natural paring. 
The element $\lam(\be)\in\gt$ dual to $\be$ is characterized
by the property that 
$(\be,\gam)_*=\lan \lam(\be),\gam\ran$. 
There is a positive rational number $a>0$ such that 
the 1PS $\lam_{\be}$ is $a\lam(\be)$. 
So $a(\be,\gam)_*=\lan \lam_{\be},\gam\ran$. 
This implies that if $x\in V$ is a weight vector 
with weight $\gam$ with respect to the action of $T_{\text{st}}$ 
then $\lam_{\be}(t)x = t^{a(\be,\gam)_*}x$ for $t\in\gl_1$. 

\begin{lem}
\label{lem:u-action}
If $x\in Y_{\be}$ is a weight vector with weight $\gam$
and $u\in U_{\be}$ then there exist weight vectors 
$y_1\ccd y_n\in Y_{\be}$ with weights $\del_1\ccd \del_n$
such that $ux=x+\sum_{i=1}^ny_i$ and $(\be,\gam)_*<(\be,\del_i)_*$ 
for all $i$. 
\end{lem}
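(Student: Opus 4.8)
The plan is to unwind the definition of $U_{\be}=U(\lam_{\be})$ and use the weight decomposition with respect to $T_{\text{st}}$. First I would recall from Section \ref{sec:notation-related-git} that $U(\lam_{\be})$ consists of those $p\in G$ for which $\lim_{t\to 0}\lam_{\be}(t)p\lam_{\be}(t)^{-1}=1$; equivalently, writing the Lie algebra of $G$ in terms of $T_{\text{st}}$-root spaces, the Lie algebra of $U_{\be}$ is spanned by root spaces $\g_{\alpha}$ with $\langle \lam_{\be},\alpha\rangle >0$, i.e. (by the discussion just before the statement, where $\langle\lam_{\be},\gam\rangle = a(\be,\gam)_*$ with $a>0$) with $(\be,\alpha)_*>0$. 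Since $U_{\be}$ is a unipotent group, a general element $u$ is a product of elements of the form $\exp(\xi)$ with $\xi$ in such a positive root space, so it suffices to treat $u=\exp(\xi)$ and then iterate; for the iteration one just needs the observation that once one has added terms of strictly larger $(\be,\cdot)_*$-value, applying another element of $U_{\be}$ to each of those terms only increases the value further, and these added terms remain inside $Y_{\be}$ because $Y_{\be}$ is $P_{\be}$-stable (hence $U_{\be}$-stable), being $Z_{\be}\oplus W_{\be}$.

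The key computational step is then: if $x$ is a $T_{\text{st}}$-weight vector of weight $\gam$ and $\xi\in\g_{\alpha}$ with $(\be,\alpha)_*>0$, then $\xi\cdot x$ is a weight vector of weight $\gam+\alpha$, and $\exp(\xi)\cdot x = x + \xi\cdot x + \tfrac12 \xi\cdot(\xi\cdot x)+\cdots$ is a sum of weight vectors of weights $\gam,\gam+\alpha,\gam+2\alpha,\dots$. For each of the nonzero higher terms the weight is $\del = \gam + j\alpha$ with $j\geq 1$, so $(\be,\del)_* = (\be,\gam)_* + j(\be,\alpha)_* > (\be,\gam)_*$, which is exactly the claimed inequality. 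So the plan is: reduce to $u=\exp(\xi)$ with $\xi$ in a single positive root space by the unipotent product decomposition; expand $\exp(\xi)x$ as a finite sum of weight vectors; check each new weight strictly increases the pairing with $\be$; and observe all new vectors still lie in $Y_{\be}$ by $U_{\be}$-invariance of $Y_{\be}$, then conclude the general case by induction on the length of a product expression for $u$.

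The mild subtlety — what I'd flag as the ``hard'' part, though it is standard — is keeping the bookkeeping honest when $u$ is a product rather than a single exponential: after applying the first factor one has $x + \sum_i y_i$ with $(\be,\del_i)_* > (\be,\gam)_*$, and applying the next factor to $x$ produces new terms with weights exceeding $(\be,\gam)_*$, while applying it to each $y_i$ produces terms with weights exceeding $(\be,\del_i)_* > (\be,\gam)_*$; collecting everything, the total is again $x$ plus a sum of weight vectors all of whose $\be$-pairings exceed $(\be,\gam)_*$, and all lying in $Y_{\be}$. One should also note at the outset that $x$ itself is fixed modulo these higher terms because the weight-$\gam$ component of $\exp(\xi)x$ is exactly $x$ (the operator $\xi$ strictly raises the weight by $\alpha\neq 0$), which is why the statement can assert $ux = x + \sum y_i$ rather than merely $ux = \sum(\text{stuff})$. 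No deep input is needed beyond the parabolic/unipotent structure of $P(\lam_{\be})$ recalled in Section \ref{sec:notation-related-git} and the elementary weight calculus above.
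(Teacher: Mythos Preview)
Your argument via the root-group decomposition of $U_{\be}$ is correct in spirit, but it takes a different and longer route than the paper's proof. The paper does not decompose $u$ at all: it simply writes $ux=\sum_i y_i$ in the weight decomposition of $Y_{\be}$, computes
\[
\lam_{\be}(t)\,u\,\lam_{\be}(t)^{-1}\,x
= t^{-a(\be,\gam)_*}\lam_{\be}(t)(ux)
= \sum_i t^{a(\be,\del_i)_*-a(\be,\gam)_*}y_i,
\]
and then invokes the defining property $\lim_{t\to 0}\lam_{\be}(t)u\lam_{\be}(t)^{-1}=e_G$ to force every exponent to be nonnegative and the zero-exponent part to equal $x$. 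This is a direct one-step argument from the definition of $U_{\be}$; your approach instead uses the structure theory of the unipotent radical (product of root subgroups) together with an induction on the length of the product, which is more machinery than needed.

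One genuine caution about your version: the paper works over an arbitrary field, in particular allowing positive characteristic, so the expansion $\exp(\xi)x = x + \xi\cdot x + \tfrac{1}{2}\xi^2\cdot x + \cdots$ is not available as written (the denominators $j!$ need not be invertible). Your argument can be repaired by replacing ``$\exp(\xi)$'' with a root-group element $u_{\alpha}(s)$ and using the characteristic-free fact (from $t\,u_{\alpha}(s)\,t^{-1}=u_{\alpha}(\alpha(t)s)$) that $u_{\alpha}(s)$ sends a weight-$\gam$ vector to a polynomial-in-$s$ combination of weight vectors with weights $\gam,\gam+\alpha,\gam+2\alpha,\ldots$; but as you wrote it, the exponential step is a gap in positive characteristic. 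The paper's limit argument sidesteps this issue entirely and is the cleaner route.
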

\begin{proof}
Since $Y_{\be}$ is spanned by weight vectors, there exist 
weight vectors $y_0\ccd y_n\in Y_{\be}\mzer$ with 
distinct weights 
$\gam_0\ccd \gam_n$ such that $ux=y_0+\cdots+y_n$. 
Then 
\begin{align*}
\lam_{\be}(t) u \lam_{\be}(t)^{-1} x 
& =  t^{-a(\be,\gam)_*}\lam_{\be}(t)ux
= \sum_{i=0}^n  t^{-a(\be,\gam)_*}\lam_{\be}(t) y_i
= \sum_{i=0}^n  t^{a(\be,\del_i)_*-a(\be,\gam)_*}y_i
\end{align*}
By definition, $U_{\be}$ consists of elements $u\in P_{\be}$ 
such that 
\begin{equation*}
\lim_{t\to 0}\lam_{\be}(t) u \lam_{\be}(t)^{-1} = e_G.
\end{equation*}
So $(\be,\del_i)_*\geq (\be,\gam)_*$ for all $i$ and 
\begin{equation*}
x = \sum_{i:(\be,\del_i)_*=(\be,\gam)_*}y_i.
\end{equation*}
Since $x$ is a weight vector, we may assume that $y_0=x$ 
and $(\be,\del_i)_*>(\be,\gam)_*$ for $i=1\ccd n$. 
\end{proof}

\begin{cor}
\label{cor:u-action-Zbeta}
If $x\in Z_{\be}$ and $u\in U_{\be}$ then 
$ux=x+y$ with $y\in W_{\be}$.  
\end{cor}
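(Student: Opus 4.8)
The plan is to deduce the corollary directly from Lemma~\ref{lem:u-action}, using only the description of $Z_{\beta}$, $W_{\beta}$ and $Y_{\beta}$ as spans of coordinate vectors.

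First I would expand $x\in Z_{\beta}$ in the chosen coordinate system. Since $Z_{\beta}=\Span\{\coorda_i\mid(\gamma_i,\beta)_*=(\beta,\beta)_*\}$, we have $x=\sum_i c_i\coorda_i$ with the sum over those $i$ satisfying $(\gamma_i,\beta)_*=(\beta,\beta)_*$. Grouping the coordinate vectors appearing in this sum according to their $T_{\text{st}}$-weight, I would write $x=\sum_j x_j$, where each $x_j\in Z_{\beta}\subset Y_{\beta}$ is a weight vector of some weight $\gamma^{(j)}$ with $(\gamma^{(j)},\beta)_*=(\beta,\beta)_*$.

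Next, for each $j$ I would apply Lemma~\ref{lem:u-action} to $x_j$ and the given $u\in U_{\beta}$: it produces weight vectors $y_{j1},\dots,y_{jn_j}\in Y_{\beta}$ of weights $\delta_{j1},\dots,\delta_{jn_j}$ with $ux_j=x_j+\sum_{l}y_{jl}$ and $(\beta,\delta_{jl})_*>(\beta,\gamma^{(j)})_*=(\beta,\beta)_*$ for all $l$. The one point that needs a word of justification — essentially the only non-bookkeeping step — is that a weight vector of $Y_{\beta}$ whose weight $\delta$ satisfies $(\delta,\beta)_*>(\beta,\beta)_*$ necessarily lies in $W_{\beta}$. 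This holds because $Y_{\beta}$ is the span of the coordinate vectors $\coorda_i$ with $(\gamma_i,\beta)_*\geq(\beta,\beta)_*$, and when $y_{jl}$ is written in this basis only those $\coorda_i$ with $\gamma_i=\delta_{jl}$ can occur; for such $i$ one has $(\gamma_i,\beta)_*=(\delta_{jl},\beta)_*>(\beta,\beta)_*$, hence $\coorda_i\in W_{\beta}$, and therefore $y_{jl}\in W_{\beta}$.

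Finally I would sum over $j$: $ux=\sum_j ux_j=\sum_j x_j+\sum_{j,l}y_{jl}=x+y$ with $y=\sum_{j,l}y_{jl}\in W_{\beta}$, which is the assertion. I do not expect any genuine obstacle; the corollary is a routine consequence of Lemma~\ref{lem:u-action}, the only care required being the weight-space bookkeeping described above.
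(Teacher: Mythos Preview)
Your proposal is correct and is exactly the intended deduction; the paper states the corollary without proof, treating it as immediate from Lemma~\ref{lem:u-action}, and your argument spells out precisely that immediate step.
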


Condition \ref{cond:unipotent-eliminate-condition} is stated for 
strata of (\ref{eq:PV}). However, 
we can consider Condition \ref{cond:unipotent-eliminate-condition} 
for general \pv s and by replacing $\be_i$ by 
arbitrary $\be\in\gC$. 

\begin{prop}
\label{prop:PY=MZ}
In the situation of Proposition \ref{prop:Zbe-stratificiation}, 
suppose that Condition \ref{cond:unipotent-eliminate-condition} 
holds for $\be,\be''$ (with respect to $G,V$, etc.). Then 
Condition \ref{cond:unipotent-eliminate-condition} 
holds for $\be'$ (with respect to $M_{\be},Z_{\be}$, etc.). 
\end{prop}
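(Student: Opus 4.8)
The plan is to verify the three clauses of Condition~\ref{cond:unipotent-eliminate-condition} for $\be'$ with respect to $(M_\be,Z_\be)$ by transporting the corresponding clauses for $\be''$ with respect to $(G,V)$ through the dictionary of Proposition~\ref{prop:Zbe-stratificiation}. First I would record the basic comparisons. Since $\be\perp\be'$ and $\be''=\be+\be'$, the dual one parameter subgroups satisfy $\lambda(\be'')=\lambda(\be)+\lambda(\be')$, so $\lam_{\be''}=p\lam_\be+q\overline\lam_{\be'}$ in $X_*(T_{\text{st}})\otimes\Q$ with $p,q>0$. Because $M_\be=Z_G(\lam_\be)$ is centralized by $\lam_\be$ and normalized by $\overline\lam_{\be'}\in T_{\text{st},\be}$, conjugation on $M_\be$ by $\lam_{\be''}(t)$ agrees with conjugation by a positive power of $\overline\lam_{\be'}(t)$; taking limits as $t\to0$ gives $U(\lam_{\be''})\cap M_\be=\overline U_{\be'}$, $P(\lam_{\be''})\cap M_\be=\overline P_{\be'}$, and (using the hypothesis $M_{\be''}\subset M_\be$) $Z_G(\lam_{\be''})\cap M_\be=\overline M_{\be'}=M_{\be''}$. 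A weight computation identical to the one in the proof of Lemma~\ref{lem:Zbe''} --- using $(\gam_i,\be)_*=(\be,\be)_*$ for $\coorda_i\in Z_\be$, $(\be,\be')_*=0$, and $\be''=\be+\be'$ --- shows that for $\coorda_i\in Z_\be$ one has $(\gam_i,\be'')_*>(\be'',\be'')_*$ if and only if $(\gam'_i,\be')_*>(\be',\be')_*$, whence $\overline W_{\be'}=W_{\be''}\cap Z_\be$, and since $\overline Z_{\be'}=Z_{\be''}$ by Lemma~\ref{lem:Zbe''} together with $Z_{\be''}\subset Z_\be$, also $\overline Y_{\be'}=Y_{\be''}\cap Z_\be$. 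Moreover $\lam_\be$ acts on $Z_\be$ (hence on $\overline W_{\be'}\subset Z_\be$ and on $R\in Z_{\be''}\subset Z_\be$) by a single scalar $t^{d}$ with $d>0$.

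With these in hand clause~(1) is immediate: by Proposition~\ref{prop:Zbe-stratificiation}(1) we have $\overline Z^{\sst}_{\be'}=Z^{\sst}_{\be''}$ and $\overline M_{\be'}=M_{\be''}$, so Condition~\ref{cond:unipotent-eliminate-condition}(1) for $\be''$ says precisely that $\overline Z^{\sst}_{\be'\,k^{\sep}}$ is a single $\overline M_{\be'\,k^{\sep}}$-orbit. For clause~(2) I would take $\overline R=R$, the point of $Z^{\sst}_{\be''\,k}=\overline Z^{\sst}_{\be'\,k}$ supplied by Condition~\ref{cond:unipotent-eliminate-condition}(2) for $\be''$. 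Given $y\in\overline W_{\be'\,k^{\sep}}\subset W_{\be''\,k^{\sep}}$, that clause yields $u\in U_{\be''\,k^{\sep}}$ with $uR=R+y$, and the point is to replace $u$ by an element of $\overline U_{\be'}=U_{\be''}\cap M_\be$. Here I would use that $R$ and $y$ both lie in $Z_\be$, where $\lam_\be(t)$ acts by $t^{d}$: since $u$ acts linearly on $V$, one computes $\lam_\be(t)\,u\,\lam_\be(t)^{-1}R=t^{-d}\lam_\be(t)(uR)=t^{-d}\lam_\be(t)(R+y)=R+y$, so $\lam_\be(t)u\lam_\be(t)^{-1}$ again carries $R$ to $R+y$. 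Consequently $h(t):=u^{-1}\lam_\be(t)u\lam_\be(t)^{-1}$ is a (regular) crossed homomorphism from $\mathbb{G}_{\mathrm m}$, acting by $\lam_\be$, into $H:=G_R\cap U_{\be''}$, which is a connected split unipotent group by Condition~\ref{cond:unipotent-eliminate-condition}(3) for $\be''$.

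Every crossed homomorphism from a torus into a connected split unipotent group is principal: one reduces along a $\mathbb{G}_{\mathrm m}$-stable central series with $\mathbb{G}_{\mathrm a}$-quotients to the case of a linear $\mathbb{G}_{\mathrm m}$-module, where it is a direct check on weight spaces. Hence there is $g\in H(k^{\sep})$ with $\lam_\be(t)(ug)\lam_\be(t)^{-1}=ug$ for all $t$, i.e. $\bar u:=ug\in Z_G(\lam_\be)(k^{\sep})\cap U_{\be''}(k^{\sep})=\overline U_{\be'\,k^{\sep}}$, and $\bar uR=ugR=uR=R+y$ lies in $\overline Z_{\be'}\oplus\overline W_{\be'}$ as required; this gives clause~(2). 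Finally, for clause~(3) note that $\lam_\be$ normalizes $U_{\be''}=U(\lam_{\be''})$ (the two one parameter subgroups commute) and normalizes $G_R$ (because $\lam_\be(t)R$ is a scalar multiple of $R$), so it acts on $H=G_R\cap U_{\be''}$; then $(M_\be)_R\cap\overline U_{\be'}=(G_R\cap U_{\be''})\cap Z_G(\lam_\be)=H^{\lam_\be}$. Since the fixed-point subgroup of a torus acting on a connected split unipotent group is connected (proved by the same central-series reduction, using again the vanishing of the cohomology of $\mathbb{G}_{\mathrm m}$ on the unipotent layers), $H^{\lam_\be}$ is connected, which is clause~(3).

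The main obstacle is the replacement step in clause~(2): manufacturing the correction $g\in H$ so that $ug$ lands in the Levi $M_\be$. The observation that unlocks it is that, on the affine subspace $R+\overline W_{\be'}$ that we must reach, $\lam_\be$ acts through the single scalar $t^{d}$, so $\lam_\be(t)u\lam_\be(t)^{-1}$ still solves $uR=R+y$; this turns the obstruction into a crossed homomorphism into a connected split unipotent group and hence into a coboundary. Once this and the two elementary group-theoretic facts about $\mathbb{G}_{\mathrm m}$ acting on split unipotent groups are in place, the rest is bookkeeping with the identifications $\overline U_{\be'}=U_{\be''}\cap M_\be$, $\overline W_{\be'}=W_{\be''}\cap Z_\be$ coming from the weight computations and Proposition~\ref{prop:Zbe-stratificiation}.
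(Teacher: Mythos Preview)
Your argument is correct, but it takes a different and somewhat heavier route than the paper's own proof. The paper exploits the explicit Levi decomposition $U_{\be''}=\overline U_{\be'}\ltimes U_{\be}$ directly: given $y\in\overline W_{\be'}$ and $u=u_1u_2\in U_{\be''}$ with $u_1\in\overline U_{\be'}$, $u_2\in U_{\be}$ and $u_1u_2R=R+y$, Corollary~\ref{cor:u-action-Zbeta} gives $u_2R=R+y_1$ with $y_1\in W_{\be}$; since $u_1\in M_{\be}$ preserves both $Z_{\be}$ and $W_{\be}$, comparing $Z_{\be}$- and $W_{\be}$-components of $u_1R+u_1y_1=R+y$ forces $u_1R=R+y$, so $u_1\in\overline U_{\be'}$ already does the job. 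For clause~(3) the same decomposition gives $G_R\cap U_{\be''}=(G_R\cap\overline U_{\be'})\ltimes(G_R\cap U_{\be})$, and the connectedness of the left side forces that of the quotient $G_R\cap\overline U_{\be'}$.

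By contrast, you recover the $\overline U_{\be'}$-element by a cohomological correction: you conjugate $u$ by $\lam_{\be}(t)$, observe that this preserves the equation $uR=R+y$ because $\lam_{\be}$ acts on $Z_{\be}$ by a single scalar, and then invoke the vanishing of $H^1(\mathbb G_{\mathrm m},H)$ for $H=G_R\cap U_{\be''}$ connected split unipotent (equivalently, conjugacy of maximal tori in the connected solvable group $H\rtimes\mathbb G_{\mathrm m}$) to move $u$ into $Z_G(\lam_{\be})\cap U_{\be''}=\overline U_{\be'}$. Likewise your clause~(3) is the connectedness of torus fixed points in a connected split unipotent group. These two facts are standard (and your central-series sketch is fine once one notes, in positive characteristic, that the relevant $\mathbb G_{\mathrm m}$-actions on the abelian layers are linear; alternatively, both follow cleanly from the structure theory of connected solvable groups). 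One small cosmetic point: with the cocycle convention $h(t)=g^{-1}\cdot{}^tg$, the element that centralizes $\lam_{\be}$ is $ug^{-1}$ rather than $ug$; since $g\in H$ this is harmless. The paper's approach is more elementary and avoids appealing to these group-theoretic facts, while yours is more conceptual and would generalize to situations where one does not have such an explicit factorization of $U_{\be''}$.
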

\begin{proof}
Condition \ref{cond:unipotent-eliminate-condition} (1) 
follows from Proposition \ref{prop:Zbe-stratificiation} (2)
(by replacing $k$ by $k^{\sep}$).  

Suppose that $R\in Z^{\sst}_{\be''\,k}$ satisfies 
Condition \ref{cond:unipotent-eliminate-condition} (2).  
Note that $U_{\be''}=\overline U_{\be'} \ltimes U_{\be}$ 
and $\overline U_{\be'}\sub M_{\be}$. 
We show that $G_R\cap U_{\be''}=(G_R\cap \overline U_{\be'})\ltimes (G_R\cap U_{\be})$. 
 
$G_R\cap U_{\be''}\supset (G_R\cap \overline U_{\be'})\ltimes (G_R\cap U_{\be})$ 
is obvious. Suppose that $u_1\in \overline U_{\be'},u_2\in U_{\be}$, 
$u=u_2u_1$ and $uR=R$. By Corollary \ref{cor:u-action-Zbeta}, 
there exists $y\in W_{\be}$ such that $u_2R=R+y$.  
Since $u_1\in \overline U_{\be'}\sub M_{\be}$, 
$u_1$ commutes with $\lam_{\be}(t)$. So $u_1$ fixes  weight spaces of 
$\lam_{\be}$. This implies that $uR=u_1R+u_1y=R$ and 
$u_1R\in Z_{\be},u_1y\in W_{\be}$. Therefore, $u_1R=R,u_1y=0$. 
Since $\overline U_{\be'}$ is a group, $y=0$, and so $u_2R=R$. 
Hence, 
\begin{equation*}
G_R\cap U_{\be''}=(G_R\cap \overline U_{\be'})\ltimes (G_R\cap U_{\be}).
\end{equation*}

By assumption, $G_R\cap U_{\be''}$ is connected. So 
$G_R\cap \overline U_{\be'}=M_{\be\, R} \cap \overline U_{\be'}$ must be connected
($M_{\be\,R}$ is the stabilizer of $R$ in $M_{\be}$)
and  Condition \ref{cond:unipotent-eliminate-condition} (3) is satisfied for 
$\be'$.   

Suppose that $y\in \overline W_{\be'}$. By assumption, 
there exists $u_1\in \overline U_{\be'\,k^{\sep}}$, $u_2\in U_{\be\,k^{\sep}}$
such that $u_1u_2R = R+y$.  By Corollary \ref{cor:u-action-Zbeta}, 
$u_2R = R+y_1$ with $y_1\in W_{\be}$, $u_1u_2R = u_1R+u_1y_1=R+y$ and 
$u_1y\in W_{\be}$. Since $u_1R,R+y\in Z_{\be}$, 
$u_1R=R+y$ and $u_1y_1=0$.  So Condition \ref{cond:unipotent-eliminate-condition} (2) 
is satisfied for $\be'$. 
\end{proof}

We now explain what has to be done to determine the GIT stratification 
of $(M_{\be},Z_{\be})$.  
Since $(M_{\be},Z_{\be})$ is smaller than $(G,V)$, 
it is very likely that if $\gB$ can be determined 
by computer computation then we can determine $\gB_{\be}$ 
by computer computation also. If $\be'\in\gB_{\be}$ then 
it is in the form 
$\be'=w \be'''-\be$ where $w\in \weyl$ and $\be'''\in\gB$. 
Of course $\be'$ has to be in the Weyl chamber $\gt^*_{\be+}$.
For example, if $(G,V)$ is the \pv{} (\ref{eq:PV}) then 
we have determined the set $\gB$ which consists of $292$ elements. 
The possibilities for $w$ are at most $2880$.
So the possibilities for $(\be''',w)$ are at most 
$292\times 2880$.  What we have to do is to compare 
$\be'$ and $w \be'''-\be$. This is not such a 
big task for computer computation. If the 
condition of Proposition \ref{prop:Zbe-stratificiation} 
is satisfied for $\be,\be''=\be'+\be$ then 
we do not have to worry about whether or not 
$\overline S_{\be'}$ is empty. We expect that 
Proposition \ref{prop:Zbe-stratificiation} is applicable 
in most cases and we only have to determine 
whether or not $\overline S_{\be'}$ is empty 
for exceptional cases.

\bibliographystyle{plain} 
\bibliography{ref4} 

\end{document}